\definecolor{light}{gray}{.9}
\DeclareMathAlphabet{\mathpzc}{OT1}{pzc}{m}{it}
\newcounter{savecntr}
\newcommand{\Ga}[1]{\small $\Gamma_{#1}$}
\newcommand{\field}[1]{\mathbb{#1}} 
 \newcommand{\nz}{\field{N}}
\newcommand{\mb}[1]{\mathbb{#1}}
\newcommand{\mc}[1]{\mathcal{#1}}
\newcommand{\R}{\mb{R}}
\newcommand{\mbf}[1]{\mathbf{#1}}
\DeclareMathOperator{\range}{Ran}
\newcommand{\lp}{\langle}
\newcommand{\rp}{\rangle}
\newcommand{\ve}{\varepsilon}
\DeclareMathOperator{\hess}{Hess}
\def\Hess{{\rm Hess}}
\def\det{{\rm det}}
\def\supp{\mathop{\rm supp} \nolimits} 
\def\Ran{{\rm Ran}}
\def\E{{\mathbb E}}
\def\P{{\mathbb P}}
\def \Sp{{\rm  Sp\;}}
\def\and {{\rm \; and \;}}
\def\Ker {{\rm \; Ker  \;}}
\def\Im {{\rm  Im\;}}
\def\dim {{\rm \; dim  \;}}
\newcommand {\pa}{\partial}
\newtheorem{theorem}{Theorem}
\newtheorem{proposition}{Proposition}
\newtheorem{definition}[proposition]{Definition}
\newtheorem{lemma}[proposition]{Lemma}
\newtheorem{corollary}[proposition]{Corollary}
\newtheorem{remark}[proposition]{Remark}
\newcommand{\comment}[1]{ }
\title{Sharp asymptotics of the first exit point density}
\author{Giacomo Di Ges\`u\setcounter{savecntr}{\value{footnote}}\thanks{Current affiliation: Institut f\"{u}r Analysis und Scientific Computing, E101-TU Wien, Wiedner Hauptstr. 8, 1040 Wien, Austria. E-mail: \{giacomo.di.gesu,boris.nectoux\}@asc.tuwien.ac.at}$\ $\footnotemark[2]\, , Tony Leli\`evre\setcounter{savecntr}{\value{footnote}}\thanks{CERMICS, \'Ecole des Ponts, Universit\'e  Paris-Est, INRIA, 77455 Champs-sur-Marne, France. E-mail:  tony.lelievre@enpc.fr}\,, Dorian Le Peutrec\setcounter{savecntr}{\value{footnote}}\thanks{Laboratoire de Math\'ematiques d'Orsay, Univ. Paris-Sud, CNRS, Universit\'e Paris-Saclay, 91405 Orsay, France. E-mail: dorian.lepeutrec@math.u-psud.fr}$\, \ $and Boris Nectoux\footnotemark[1]\, \footnotemark[2]}
\begin{document}

 \maketitle

\begin{abstract}
We consider the exit event from a metastable state for the overdamped
Langevin dynamics $dX_t = -\nabla f(X_t) dt + \sqrt{h} dB_t$. Using
tools from semiclassical analysis, we prove that, starting from the
quasi stationary distribution within the state, the exit event can be
modeled using a jump Markov process parametrized with the Eyring-Kramers
formula, in the small temperature regime $h \to 0$. We provide in
particular sharp asymptotic estimates on the exit distribution which
demonstrate the importance of the prefactors in the Eyring-Kramers
formula. Numerical experiments indicate that the geometric assumptions
we need to perform our analysis are likely to be necessary. These
results also hold starting from deterministic initial conditions within
the well which are sufficiently low in energy. From a modelling
viewpoint, this gives a rigorous justification of the transition state
theory and the Eyring-Kramers formula, which are used to relate the
overdamped Langevin dynamics (a continuous state space Markov dynamics)
to kinetic Monte Carlo or Markov state models (discrete state space
Markov dynamics). From a theoretical viewpoint, our analysis paves a new
route to study the exit event from a metastable state for a stochastic
process.
\end{abstract}



\tableofcontents

\section{Motivation and presentation of the results}\label{intro}

In materials science, biology and chemistry, atomistic models are now
used on a daily basis in order to predict the macroscopic properties  of matter 
from a microscopic description. The basic ingredient is a
potential energy function $f: \R^d \to \R$ which associates to a set
of coordinates of particles the energy of the system. In practice, $d$
is very large, since the system contains many particles (from tens
of thousands to millions).

Using this function $f$, two types of models are built: continuous
state space Markov models (stochastic differential equations), such as
the Langevin or overdamped Langevin dynamics,
and discrete state space Markov models (jump Markov processes). The
objective of the analysis presented in this work is to make a
rigorous link between these two types of approaches, and in particular
to provide a justification of the use of Eyring-Kramers laws to
parameterize jump Markov models, by studying the exit event from a metastable state for the overdamped Langevin dynamics. 

Jump Markov models are used by practitioners for many reasons. From a
modelling viewpoint, new insights can be gained by building such
coarse-grained models, that are easier to handle than a
large-dimensional stochastic differential equation. From a numerical
viewpoint, it is possible to simulate a jump Markov model over
timescales which are much larger than the original Langevin
dynamics. Moreover, there are many algorithms which use the underlying
jump Markov model in order to accelerate the sampling of the original
dynamics~\cite{sorensen-voter-00,voter-97,voter-98}.

This section is organized as follows.  First, the two models under
consideration are introduced, namely the overdamped Langevin dynamics in  Section~\ref{lang}, and the underlying jump Markov process  in Section~\ref{jump}. Next, Section~\ref{biblio} is devoted to a review of the mathematical literature dealing with metastable processes and the exit event from a metastable state. In Section~\ref{sec.qsd}, the notion of quasi stationary distribution is reviewed. This is a crucial tool in our analysis, in order to connect the overdamped Langevin dynamics to a jump Markov process. Then, in Section~\ref{statementthm}, our main result (Theorem~\ref{TBIG0})  is stated. 
In Section~\ref{sec:dis_gene}, we generalize Theorem~\ref{TBIG0} in various directions and discuss the geometric assumptions used to state Theorem~\ref{TBIG0}. Finally, in Section~\ref{sec:sketch}, we give an outline of the proof of Theorem~\ref{TBIG0}, together with the general organization of the paper.

\subsection{Overdamped Langevin dynamics and metastability}\label{lang}

The continuous state space Markov model we consider in this work is
the so-called overdamped Langevin dynamics in $\mathbb R^d$
\begin{equation}\label{eq:langevin}
d X_t = -\nabla f(X_t) d t + \sqrt{h} \, d B_t,
\end{equation}
driven by the potential function $f: \mathbb R^d \to \mathbb R$. We
assume in the following that $f$ is a $C^{\infty}$ Morse function (all
the critical points are non degenerate). The
parameter $h=2k_B T>0$ is proportional to the temperature $T$ and $(B_t)_{t\geq 0}$ is a standard $d$-dimensional Brownian motion. One
henceforth assumes that
$$\exists h_0,\, \forall h < h_0,\,\int_{\mathbb R^d} e^{-\frac{2}{h}f(x)} dx<\infty.$$
The invariant probability measure of (\ref{eq:langevin}) is
\begin{equation}\label{eq:mu}
\frac{ e^{-\frac{2}{h}  f(x)} \, dx}{ \displaystyle\int_{\mathbb R^d}
  e^{-\frac{2}{h}  f(y)} dy  }.
\end{equation}

The basic observation which motivates the use of a jump Markov model
to obtain a reduced description of the dynamics~\eqref{eq:langevin} is
the following. In many practical cases of interest in biology, physics
or chemistry, the
dynamics~\eqref{eq:langevin} is metastable, meaning that the process
$(X_t)_{t \ge 0}$ remains trapped for very long times in some regions
(called metastable states). 
If a state is metastable, the way the process leaves this state should not depend too much on the way it enters the state. 
It is thus tempting to introduce an
underlying jump process among these metastable states.


Let us consider a region $\Omega \subset \R^d$ and the associated exit
event from $\Omega$. More precisely, let us introduce
\begin{equation}\label{eq:tau}
\tau_{\Omega}=\inf \{ t\geq 0 | X_t \notin \Omega      \}
\end{equation}
\label{page.tau}
the first exit time from $\Omega$. The exit event from $\Omega$ is
fully characterized by the couple of random variables $\left
  (\tau_{\Omega}, X_{\tau_{\Omega}}\right)$. 
The focus of this work is the justification of a jump Markov process to
model the exit event from the region $\Omega$, in the small temperature
regime $h \to 0$.






\subsection{From the potential function to a jump Markov
  process} \label{jump}

The potential function $f$ can also be used to build a jump Markov
process to describe the evolution of the system. Jump Markov models
are continuous-time Markov processes with values in a discrete state
space. In molecular dynamics such processes are known as kinetic Monte
Carlo models~\cite{voter-05} or Markov state models~\cite{schuette-98,schuette-sarich-13,bowman-pande-noe-14}.


\paragraph{Kinetic Monte Carlo models.}

The basic requirement to build a kinetic Monte Carlo model is a
discrete collection of states $D\subset \mathbb N$, with associated rates $k_{i,j}\geq 0$ for
transitions from state $i$ to state $j$,  where $(i,j)\in
D\times D$ and $i \neq j$. The neighboring states of state~$i$ are
those states $j$ such that $k_{i,j}>0$.  The dynamics is then given by a jump
Markov process $(Z_t)_{t \ge 0}$ with infinitesimal generator $L \in
\R^{D \times D}$, where $L_{i,j}=k_{i,j}$ for $i \neq j$.

To be more precise, let us describe how to build the jump
process $(Z_t)_{t \ge 0}$ by defining the residence times $(T_n)_{n
  \ge 0}$ and the subordinated Markov chain $(Y_n)_{n \ge 0}$. Starting at time $0$ from a state $Y_0 \in D$, the model
consists in iterating the following two steps over $ n \ge 0$: given
$Y_n$, 
\begin{itemize}
\item first sample the residence time $T_n$ in $Y_n$ as an
exponential random variable with parameter $\sum_{j \neq Y_n}
k_{Y_n,j}$: $\forall i \in D$, $\forall t>0$,
\begin{equation}\label{eq:exit_time_kMC}
\P(T_n>t | Y_n=i) = \exp\Bigg(-\sum_{j \neq i}
k_{i,j} \, t \Bigg),
\end{equation}
\item and then sample {\em independently from $T_n$} the next visited state
$Y_{n+1}$ using the following law:
\begin{equation}\label{eq:exit_point_kMC}
\forall j \neq i, \, \P(Y_{n+1}=j | Y_n=i)=\frac{k_{i,j}}{\sum_{j \neq
    i} k_{i,j}}.
\end{equation}
\end{itemize}
The continuous time Markov process $(Z_t)_{t \ge 0}$ is then defined as:
$$\forall n \ge 0, \, \forall t \in \left[\sum_{m=0}^{n-1}
  T_m,\sum_{m=0}^{n} T_m \right), \, Z_t=Y_n$$
with the convention $\sum_{m=0}^{-1} T_m = 0$.

\paragraph{Transition rates and Eyring Kramers law.} 

Starting from the potential function $f:\R^d \to \R$, one approach to
build a kinetic Monte Carlo model is to consider a collection of
disjoint subsets $(\Omega_k)_{k \in D}$ which form a partition of the
space $\R^d$ and to set the transition
rates $k_{i,j}$ by considering transitions between these subsets, see
for example~\cite{voter-05,wales-03,cameron-14b,fan-yip-yildiz-14}.

The concept of jump rate between two states is one of the fundamental
notions in the modelling of materials. Many papers have been devoted
to the rigorous evaluation of jump rates from a full-atom
description. The most famous formula is probably the rate derived in
the harmonic transition state theory~\cite{marcelin-15,vineyard-57}, which gives
an explicit expression for the rate in terms of the underlying
potential energy function: this is the Eyring-Kramers formula, that we
now introduce.

Let us consider a subset $\Omega$ of $\R^d$, which should be thought
as one of the subsets $(\Omega_k)_{k \in D}$ introduced above, say the
state $k=0$. If
$\Omega$ is metastable (in a sense which will be made precise below),
it seems sensible to model the exit event from $\Omega$ using a jump
Markov model, as introduced in the previous paragraph. As explained above, this requires to define jump rates
$(k_{0,j})$ from the state $0$ to the neighboring states $j$. The aim
of this paper is to prove that  the  exit from $\Omega$ for 
the dynamics~\eqref{eq:langevin} can be approximated using a kinetic Monte-Carlo model with transition rates computed with  the
Eyring-Kramers formula:
\begin{equation}\label{eq:kramers}
k_{0,j}=A_{0,j} \, e^{-\frac{2}{h} \left( f(z_j) - f(x_0) \right) }
\end{equation}
where $A_{0,j} >0 $ is a prefactor, $x_0=\arg\min_{x \in \Omega} f(x)$
is the global minimum of $f$ on $\Omega$ which is assumed to be unique
and $z_j=\arg\min_{z \in \partial \Omega_j} f(z)$ where $\partial
\Omega_j$ denotes the part of the boundary $\partial \Omega$ which
connects the region $\Omega$ (numbered 0) with the neighboring region
numbered $j$ (see Figure \ref{fig:partition}  for a schematic representation when $\Omega$ has $4$ neighboring states). 

%
%
%
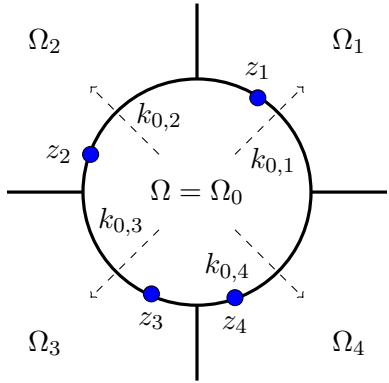
\begin{figure}[h!]
\begin{center}
\begin{tikzpicture}
\tikzstyle{vertex}=[draw,circle,fill=blue,minimum size=6pt,inner sep=0pt]
\draw (0,0) node[]{$\Omega=\Omega_{0}$};
\draw (2,2) node[]{$\Omega_{1}$};
\draw (-2,2) node[]{$\Omega_{2}$};
\draw (-2,-2) node[]{$\Omega_{3}$};
\draw (2,-2) node[]{$\Omega_{4}$};
\draw[very thick,-](0,1.5)--(0,2.5);
\draw[very thick,-](1.5,0)--(2.5,0);
\draw[very thick,-](-1.5,0)--(-2.5,0);
\draw[very thick,-](0,-1.5)--(0,-2.5);
\draw[dashed,->](0.5,0.5)--(1.4,1.4) ;
\draw[dashed,->](-0.5,-0.5)--(-1.4,-1.4) ;
\draw[dashed,->](-0.5,0.5)--(-1.4,1.4) ;
\draw[dashed,->](0.5,-0.5)--(1.4,-1.4) ;
\draw (1,0.4) node[]{$k_{0,1}$};
\draw (-1,-0.4) node[]{$k_{0,3}$};
\draw (-0.5,1) node[]{$k_{0,2}$};
\draw (0.4,-1) node[]{$k_{0,4}$};
\draw[very thick] (1.5,0) arc (0:360:1.5);

\draw (0.8 ,1.25) node[vertex,label=north: {$z_1$}](v){};
\draw (0.5 ,-1.4) node[vertex,label=south: {$z_4$}](v){};
\draw (-1.4,0.5) node[vertex,label=west: {$z_2$}](v){};
\draw (-0.6 ,-1.35) node[vertex,label=south: {$z_3$}](v){};

\end{tikzpicture}
\caption{The domain $\Omega=\Omega_0$ is associated with state $0$.
  The neighboring domains $(\Omega_i)_{i=1,\ldots,4}$ correspond to the
  four possible next visited states, with associated rates $(k_{0,i})_{i=1,\ldots,4}$. For $i=1,\ldots,4$, the point $z_i$ represent the lowest energy  point on $\partial \Omega\cap \partial \Omega_i$.
  }
 \label{fig:partition}
 \end{center}
\end{figure}
\noindent
The prefactors $A_{0,j}$ depend on the dynamics under consideration
and on the potential function $f$ around $x_0$ and $z_j$. If $\Omega$
is the basin of attraction of $x_0$ for the gradient dynamics
$\dot{x}=-\nabla f(x)$ so that the points $z_j$ are order one saddle
points, the prefactor writes for the overdamped Langevin
dynamics~\eqref{eq:langevin}
\begin{equation}\label{eq:kramers_pref}
A_{0,j}=\frac{1}{2\pi} | \lambda^-(z_j)| \sqrt{\frac{ {\rm det} ( {\rm Hess} f)(x_0)}{ | {\rm det} ( {\rm Hess} f)(z_j) | } } ,\end{equation}
 where $\lambda^-(z_j)$ is the negative eigenvalue of the Hessian of
 $f$ at $z_j$. The
 formulas~\eqref{eq:kramers}--\eqref{eq:kramers_pref} have been obtained by
 Kramers~\cite{kramers-40}, but also by many authors previously, see the
 exhaustive review of the literature reported in~\cite{hanggi-talkner-barkovec-90}. We
 also refer to~\cite{hanggi-talkner-barkovec-90} for generalizations to the Langevin dynamics.

\subsection{Review of the mathematical literature on the Eyring-Kramers formula} \label{biblio}

Let us give the main mathematical approaches to rigorously derive the Eyring-Kramers formula or to  study of the
exit event from a domain for a stochastic process. See also~\cite{berglund-13} for a nice review.

\paragraph{Global approaches. }

 Some authors adopt a global approach: they look at the
spectrum of the infinitesimal generator of the continuous space
dynamics in the small temperature regime $h \to 0$.  It can be shown that there are exactly $m$ small
eigenvalues, $m$ being the number of local minima $\{x_1,\ldots,x_m\}$ of $f$, and that each of these
eigenvalues writes asymptotically when $h\to 0$, $A_i\, e^{-\frac 2h (f(z_i)-f(x_i))}$, for $i\in \{1,\ldots,m\}$. One thus recovers  the form of the Eyring-Kramers formula~\eqref{eq:kramers}. 
Here, the
saddle point $z_i$ attached to the local minimum $x_i$ is defined by (it is here
  implicitly assumed that the inf sup value is attained at a single
  saddle point~$z_i$)
$$f(z_i)=\inf_{\gamma \in {\mathcal P}(x_i,B_i)} \sup_{t \in [0,1]}
f(\gamma(t))$$
where ${\mathcal P}(x_i,B_i)$ denotes the set of continuous paths from
$[0,1]$ to $\R^d$ such that $\gamma(0)=x_i$ and $\gamma(1) \in B_i$ with
$B_i$ the union of small balls around local minima lower in energy
than~$x_i$. For the dynamics~\eqref{eq:langevin}, we refer for example to the
work~\cite{helffer-klein-nier-04} based on semi-classical analysis
results for Witten Laplacian and the
articles~\cite{bovier-eckhoff-gayrard-klein-04,bovier-gayrard-klein-05,eckhoff-05} 
where a potential theoretic approach is adopted. In the latter
results, a connection is made between the small eigenvalues and mean
transition times between metastable states. Let us also mention the
earlier results~\cite{miclo-95,holley-kusuoka-stroock-89,davies-82a,davies-82b}.
These spectral approaches give the
cascade of relevant time scales to reach from a local minimum an
other local minimum which is lower in energy. They do not give any information about the typical time scale to go from one local minimum to any other local minimum (say from the global minimum to the second lower minimum for example). 
These global approaches can be used to build jump Markov models using a
Galerkin projection of the infinitesimal generator of $(X_t)_{t \ge 0}$ onto the first $m$
eigenmodes, which gives an excellent approximation of the
infinitesimal generator. This has been extensively investigated by
Sch\"utte and his collaborators~\cite{schuette-sarich-13},
starting with the seminal work~\cite{schuette-98}. 


\paragraph{Local approaches. }

In this work, we are interested in a local approach, namely the study
of the exit event (exit time and exit point) from a fixed given metastable state $\Omega$. The most
famous approach to study the exit event in the small temperature limit is the large deviation
theory~\cite{freidlin-wentzell-84}. It relies essentially on the study of slices of
 the process  defined with a suitable sequence of increasing stopping
times. In the theory of large deviations, the notion of rate functional
is fundamental and gives the cost of deviating from a deterministic
trajectory.

 In the small
temperature regime, large deviation results
provide the exponential rates~\eqref{eq:kramers}, but without the
prefactors and without precise error bounds. It can also be proven that the exit time is exponentially
distributed in this regime, see~\cite{day-83}. For the
dynamics~\eqref{eq:langevin}, a typical result on the exit point distribution
is the following (see~\cite[Theorem 5.1 in Section 5]{freidlin-wentzell-84}): for
all $\Omega'$ compactly embedded in $\Omega$, for any $\gamma >0$, for any $\delta > 0$, there
exists $\delta_0 \in (0,\delta]$ and $h_0 > 0$ such that for
all $h < h_0$, for all $x \in \Omega'$ such that $f(x) <
\min_{\partial \Omega }f$, and for all $y
\in \partial \Omega$,
\begin{equation}\label{eq:LD}
{\rm e}^{-\frac 2h  ( f(y)-\min_{\partial \Omega }f + \gamma)}\le \P_{x} (X_{\tau_\Omega} \in
{\mathcal V}_{\delta_0}(y))  \le
{\rm e}^{-\frac 2h (f(y)-\min_{\partial \Omega }f- \gamma)}
\end{equation}\label{page.px}
where ${\mathcal V}_{\delta_0}(y)$ is a $\delta_0$-neighborhood of $y$ in
$\partial \Omega$. Here and in the following, the subscript~$x$ indicates that the
stochastic process starts from~$x \in \R^d$: $X_0=x$. 

The strength of large deviation theory is that it is
very general: it applies to any dynamics (reversible or non
reversible) and in a very general geometric setting, even though it
may be difficult in such general cases to make explicit the rate
functional, and thus to determine the exit rates. See for
example~\cite{bouchet-reygner-16,landim2017dirichlet} for   recent contributions to the non
reversible case.

Many authors have developed partial differential approach  to the same problem. We refer
to~\cite{day-99} for a comprehensive review. In particular, formal
approaches to study the exit time and the exit point distribution have been developed by
Matkowsky, Schuss and collaborators in~\cite{matkowsky-schuss-77,matkowsky-schuss-79,naeh-klosek-matkowsky-schuss-90,schuss-09}
and by Maier and Stein in~\cite{maier-stein-93,maier-stein-97}, using formal
expansions for singularly perturbed elliptic equations. Some of the
results cited above actually consider more general dynamics
than~\eqref{eq:langevin}. Rigorous version of these derivations have
been obtained
in~\cite{devinatz-friedman-78a,devinatz-friedman-78b,kamin-78,holley-kusuoka-stroock-89, perthame-90,eizenberg-90, mathieu-95,mathieu-94}. 

\paragraph{Rescaling in time and convergence to a jump process. }
Finally, some authors prove the convergence to a jump Markov process using a
rescaling in time. See for example~\cite{kipnis-newman-85} for
a one-dimensional diffusion in a double well, and~\cite{galves-olivieri-vares-87,mathieu-95} for a
similar problem in larger dimension. In \cite{sugiura-95}, a rescaled
in time diffusion process converges to a jump Markov process living on
the global minima of the potential $f$, assuming they are separated
by saddle points having the same heights. We also refer to the recent review paper~\cite{landim2018metastable} for related results.

\medskip

\noindent
In this work, we are interested in precise asymptotics of the
distribution of $X_{\tau_\Omega}$. Our approach is local, justifies the
Eyring-Kramers formula~\eqref{eq:kramers} with the prefactors and provides sharp error
estimates (see~\eqref{resultatkil}). It uses techniques developed in particular in the
previous works \cite{helffer-klein-nier-04,helffer-nier-06,le-peutrec-10,lelievre-nier-15}. Our analysis requires to combine various
tools from semiclassical analysis to address new questions: sharp
estimates on quasimodes far from the critical points for Witten
Laplacians on manifolds with boundary, a precise analysis of the normal
derivative on the boundary of the first eigenfunction of Witten
Laplacians, and fine properties of the Agmon distance on manifolds with
boundary.\\

\noindent 
Let us finally mention that a summary of the results of this work appeared in~\cite{di-gesu-lelievre-le-peutrec-nectoux-17,IHPLLN}.

\subsection{Quasi stationary distribution} \label{sec.qsd}

The quasi stationary distribution  is the cornerstone of our analysis.
The quasi stationary distribution can be seen as a local equilibrium
for a metastable stochastic process when it is trapped in a metastable
region. It is actually useful in order to make precise quantitatively
what a metastable domain is. In all what follows, we focus on the
overdamped Langevin dynamics~\eqref{eq:langevin} and a domain $\Omega
\subset \R^d$. For generalizations
to other processes, we refer to~\cite{collet-martinez-san-martin-13,champagnat2017general,cattiaux2009quasi} and in particular
to~\cite{nier-14} for the spectral analysis of the  Langevin dynamics on a bounded domain.

 \subsubsection{Definition and a first property of quasi stationary distributions}\label{sec:QSD_1st_prop}
Let us first define the quasi stationary distribution.
\begin{definition} \label{defQSD}
Let $\Omega\subset \mathbb R ^d$ and consider the dynamics (\ref{eq:langevin}).  A quasi stationary distribution is a probability measure $\nu_h$ supported in $\Omega$ such that
$$\forall t \ge 0, \, \nu_h(A)=\frac{\displaystyle\int_{\Omega} \P_{x} \left[X_t \in A,   t<\tau_{\Omega}\right]  \nu_h(dx) }{\displaystyle \int_{\Omega}   \P_{x} \left[t<\tau_{\Omega} \right] \nu_h(dx)}.$$
\end{definition}
\noindent 
In  \label{page.nuh} words, if
$X_0$ is distributed according to  $\nu_h$, then $\forall t>0$,  $X_t$
is still distributed according to  $\nu_h$ conditionally on 
$X_s \in \Omega$ for all $s \in (0,t)$.
It is important to notice that $\nu_h$ is not the invariant measure \eqref{eq:mu} of
the original process restricted to $\Omega$, i.e.  $\nu_h\neq \mbf 1_\Omega(x)\, \frac{ e^{-\frac{2}{h}  f(x)} \, dx}{ \int_{\mathbb R^d}
  e^{-\frac{2}{h}  f(y)} dy  }$. 
\medskip

\noindent
In all the following, we will consider that $\Omega$ is a bounded
domain in $\R^d$. In this context, we have the following results from~\cite{le-bris-lelievre-luskin-perez-12}.
\begin{proposition}
Let $\Omega\subset \mathbb R ^d$ be a bounded domain and consider the
dynamics~\eqref{eq:langevin}. Then, there exists a probability measure
$\nu_h$ with support in $\Omega$ such that, whatever the law of the
initial condition $X_0$ with support in $\Omega$, 
\begin{equation}\label{eq:cv_qsd}
\lim_{t \to \infty} \| {\rm Law}(X_t| t < \tau_\Omega) - \nu_h \|_{TV} = 0.
\end{equation}
\end{proposition}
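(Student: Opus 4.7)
The plan is to realize $\nu_h$ as the normalized density of the principal Dirichlet eigenfunction of the generator of (\ref{eq:langevin}), and to derive the total variation convergence from the spectral gap of this operator.

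\textbf{Step 1 (Dirichlet spectral problem).} The infinitesimal generator of (\ref{eq:langevin}) is
\[
L^h = \tfrac{h}{2}\Delta - \nabla f \cdot \nabla,
\]
which is symmetric on $L^2\!\left(\Omega, e^{-\frac{2}{h} f}\,dx\right)$ since the dynamics is reversible with respect to this measure. Its Friedrichs realization with homogeneous Dirichlet boundary conditions on the bounded (smooth) domain $\Omega$ is self-adjoint, non-positive, and has compact resolvent. Hence its spectrum consists of an unbounded increasing sequence of finite multiplicity eigenvalues $0<\lambda_1^h\leq\lambda_2^h\leq\cdots$, with an orthonormal basis $(u_k^h)_{k\geq 1}$ of real smooth eigenfunctions.

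\textbf{Step 2 (Positivity and the candidate QSD).} The Dirichlet heat semigroup $e^{tL^h}$ is positivity-improving on the connected open set $\Omega$ by the elliptic strong maximum principle / Harnack inequality. The Krein--Rutman theorem then gives that $\lambda_1^h$ is simple, $\lambda_1^h<\lambda_2^h$, and that $u_1^h$ can be chosen strictly positive on $\Omega$. Define
\[
\nu_h(dx):=\frac{u_1^h(x)\,e^{-\frac{2}{h}f(x)}\,dx}{\displaystyle\int_\Omega u_1^h(y)\,e^{-\frac{2}{h}f(y)}\,dy}.
\]
Writing $p_t^h(x,y)=e^{-\frac{2}{h}f(y)}\sum_{k\geq 1}e^{-\lambda_k^h t}u_k^h(x)u_k^h(y)$ for the sub-Markov Dirichlet density (reversible form) and using the orthonormality of the $u_k^h$, a direct computation yields, for any Borel $A\subset\Omega$,
\[
\int_\Omega \P_x(X_t\in A,\,t<\tau_\Omega)\,\nu_h(dx)\;=\;e^{-\lambda_1^h t}\,\nu_h(A),
\]
and in particular $\int_\Omega\P_x(t<\tau_\Omega)\,\nu_h(dx)=e^{-\lambda_1^h t}$. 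Dividing the first identity by the second yields Definition~\ref{defQSD}.

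\textbf{Step 3 (Convergence in total variation).} Let $\mu_0$ be an initial law with $\mu_0(\Omega)=1$, and set $c_k:=\int_\Omega u_k^h\,d\mu_0$ and $\alpha_k:=\int_\Omega u_k^h\,e^{-\frac{2}{h}f}\,dy$. The spectral decomposition of the Dirichlet transition kernel gives, for any Borel $A\subset\Omega$,
\[
\P_{\mu_0}(X_t\in A\mid t<\tau_\Omega)\;=\;\frac{\displaystyle\sum_{k\geq 1}e^{-\lambda_k^h t}\,c_k\int_A u_k^h\,e^{-\frac{2}{h}f}\,dy}{\displaystyle\sum_{k\geq 1}e^{-\lambda_k^h t}\,c_k\,\alpha_k}.
\]
Since $u_1^h>0$ on $\Omega$ and $\mu_0(\Omega)=1$, one has $c_1>0$ and $\alpha_1>0$. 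Factoring out the $k=1$ terms in numerator and denominator, each remaining term carries a factor $e^{-(\lambda_k^h-\lambda_1^h)t}$ with $\lambda_k^h-\lambda_1^h\geq\lambda_2^h-\lambda_1^h>0$. Bounding the tails uniformly in $A$ via $\|u_k^h\|_{L^\infty}\|u_k^h\|_{L^1(e^{-\frac{2}{h}f}dx)}$ and the standard $L^2\to L^\infty$ smoothing bound for $e^{tL^h}$ on a bounded domain (which yields in particular trace-class behaviour for $t>0$), one deduces
\[
\bigl\|\operatorname{Law}(X_t\mid t<\tau_\Omega)-\nu_h\bigr\|_{TV}\;=\;O\!\left(e^{-(\lambda_2^h-\lambda_1^h)\,t}\right).
\]

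\textbf{Main difficulty.} The delicate point is to upgrade the pointwise spectral identity for a fixed Borel set $A$ to a bound uniform in $A$, which is what total variation convergence (as opposed to mere convergence in distribution) requires. This needs quantitative $L^\infty$ control of the higher eigenfunctions $u_k^h$ with summable weights against $e^{-\lambda_k^h t}$; this is where the smoothing properties of the Dirichlet heat semigroup on the bounded domain $\Omega$ enter in an essential way.
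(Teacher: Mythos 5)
Your proposal is correct: the paper itself gives no proof of this proposition but refers to \cite{le-bris-lelievre-luskin-perez-12}, and the argument there is precisely your spectral-decomposition route (principal Dirichlet eigenfunction of $L^{(0)}_{f,h}$ giving $\nu_h$ as in~\eqref{eq:expQSD}, then the spectral gap $\lambda_2^h-\lambda_1^h>0$ for the total variation convergence). You also correctly identify the only genuinely technical point, namely justifying the term-by-term expansion and its uniformity in $A$ via the $L^2_w\to L^\infty$ smoothing (ultracontractivity/trace-class property) of the Dirichlet semigroup on the bounded smooth domain, which is exactly how the cited reference closes the argument.
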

\noindent
Here and in the following, ${\rm Law}(X_t| t < \tau_\Omega)$ denotes
the law of $X_t$ conditional to the event $\{t<\tau_\Omega\}$.
A corollary of this proposition is that the quasi stationary distribution~$\nu_h$ exists and is unique.\\
This proposition also explains why it is relevant to consider the
quasi stationary distribution for a metastable domain. The domain
$\Omega$ is metastable if the convergence in~\eqref{eq:cv_qsd} is much
quicker than the exit from $\Omega$.
In the following of this paper, we will assume
that $\Omega$ is a metastable domain, and we will thus consider the
exit event from $\Omega$, assuming that $X_0$ is distributed according
to the quasi stationary distribution $\nu_h$. Let us also mention that we will obtain       results
starting form deterministic initial conditions $X_0=x \in \Omega$ which are sufficiently low in energy (see Section~\ref{sec:unpoint}).



\subsubsection{An eigenvalue problem related to the quasi stationary distribution}

In this section, a connection is made between the quasi stationary
distribution and an eigenvalue problem for the infinitesimal generator
of the dynamics (\ref{eq:langevin}) 
\begin{equation}\label{eq:L0}
L^{(0)}_{f,h}=-\nabla f \cdot \nabla  + \frac{h}{2}  \ \Delta
\end{equation}
\label{page.LOFH}
with Dirichlet boundary conditions on $\partial \Omega$. In the notation $L^{(0)}_{f,h}$, the superscript $(0)$ indicates
that we consider an operator on functions, namely $0$-forms. 
Here and in the following, we assume that the domain $\Omega$ is a
connected open bounded~$C^{\infty}$ domain in~$\mathbb R^d$. 
Let us introduce the weighted $L^2$ space
$$L^2_w(\Omega)=\left\{u:\Omega \to \R, \, \int_\Omega u^2(x)
 e^{-\frac{2}{h} f(x)} \, dx < \infty\right\}$$
(the weighted Sobolev spaces $H^k_w(\Omega)$ are defined
similarly). The subscript $w$ in the notation $L^2_w(\Omega)$ and $H^{k}_w(\Omega)$ refers to the
fact that the weight  function~$x\in \Omega \mapsto e^{-\frac{2}{h} f(x)} $ appears in the inner product.  

The basic
observation to define our functional framework is that the operator
$L^{(0)}_{f,h}$ is self-adjoint on $L^2_w(\Omega)$. 
Indeed, for any smooth test functions $u$ and $v$  with compact supports in $\Omega$, one has
$$\int_\Omega (L^{(0)}_{f,h}u) v e^{-\frac{2}{h} f} = \int_\Omega
(L^{(0)}_{f,h}v) u e^{-\frac{2}{h} f} = - \frac{h}{2} \int_\Omega
 \nabla u\cdot \nabla v\, e^{-\frac{2}{h} f}.$$
This gives a proper framework to introduce the Dirichlet realization
$L^{D,(0)}_{f,h}(\Omega)$ on $\Omega$ of the operator~$L^{(0)}_{f,h}$:
\begin{proposition} \label{fried}
The Friedrichs extension associated with the quadratic form 
$$\phi \in C^{\infty}_c(\Omega)\mapsto \frac{h}{2} \int_{\Omega}\left
  \vert \nabla \phi\right\vert^2 e^{-\frac{2}{h}f(x)} dx,$$ 
on $L^2_w(\Omega)$, 
is denoted
$-L^{D,(0)}_{f,h}(\Omega)$. It is
a  non negative unbounded self adjoint operator on
$L^2_w(\Omega)$ with
domain $$D\left(L^{D,(0)}_{f,h}(\Omega)\right)=H^1_{w,0}(\Omega)\cap
H^2_w(\Omega)$$
where $H^1_{w,0}(\Omega)=\{u \in H^1_w(\Omega), \, u=0 \text{ on
} \partial \Omega\}$.
\end{proposition}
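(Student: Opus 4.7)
The plan is to apply the classical Friedrichs extension theorem to the densely defined, symmetric, non-negative quadratic form
$$Q_h(\phi,\psi) := \frac{h}{2}\int_{\Omega}\nabla\phi\cdot\nabla\psi\, e^{-\frac{2}{h}f(x)}\,dx, \qquad \phi,\psi\in C^\infty_c(\Omega),$$
and then to identify the resulting operator domain with $H^1_{w,0}(\Omega)\cap H^2_w(\Omega)$. The starting observation is that since $\Omega$ is bounded and $f\in C^\infty(\overline\Omega)$, the weight $e^{-\frac{2}{h}f}$ is bounded above and below by strictly positive constants on $\overline\Omega$ (for each fixed $h>0$). Consequently the weighted spaces $L^2_w(\Omega)$, $H^1_w(\Omega)$, $H^2_w(\Omega)$ coincide as sets with the usual unweighted Sobolev spaces and carry equivalent norms. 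This reduces all functional-analytic issues to their standard counterparts.

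First, I would verify that $Q_h$ is closable in $L^2_w(\Omega)$ (this is automatic from symmetry and non-negativity) and identify its closure: the form domain is by definition the completion of $C^\infty_c(\Omega)$ for the graph norm $\phi\mapsto (\|\phi\|_{L^2_w}^2+Q_h(\phi,\phi))^{1/2}$, which, by the equivalence of norms above and the standard characterization of $H^1_0$ in terms of $C^\infty_c$, equals $H^1_{w,0}(\Omega)$. The Friedrichs extension theorem (see e.g.\ Reed--Simon) then produces a unique non-negative self-adjoint operator, which I call $-L^{D,(0)}_{f,h}(\Omega)$, whose form domain is $H^1_{w,0}(\Omega)$ and which is characterized by
$$u\in D(-L^{D,(0)}_{f,h}(\Omega))\iff u\in H^1_{w,0}(\Omega)\text{ and }\exists\, g\in L^2_w(\Omega),\ \forall v\in H^1_{w,0}(\Omega),\ Q_h(u,v)=\langle g,v\rangle_{L^2_w},$$
in which case $-L^{D,(0)}_{f,h}(\Omega)u=g$.

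Next I would identify the action of the operator and the regularity of its domain. For $u,v\in C^\infty_c(\Omega)$, an integration by parts gives
$$Q_h(u,v)=-\frac{h}{2}\int_\Omega \diver\!\bigl(e^{-\frac{2}{h}f}\nabla u\bigr)\,v\,dx=\int_\Omega\Bigl(-\frac{h}{2}\Delta u+\nabla f\cdot\nabla u\Bigr)v\,e^{-\frac{2}{h}f}dx=\langle -L^{(0)}_{f,h}u,v\rangle_{L^2_w},$$
so, by density of $C^\infty_c(\Omega)$ in $H^1_{w,0}(\Omega)$, the operator acts as the distributional expression $-L^{(0)}_{f,h}=\nabla f\cdot\nabla-\frac{h}{2}\Delta$ on its domain. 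Therefore $u\in D(-L^{D,(0)}_{f,h}(\Omega))$ if and only if $u\in H^1_{w,0}(\Omega)$ and the distribution $-\frac{h}{2}\Delta u+\nabla f\cdot\nabla u$ belongs to $L^2_w(\Omega)$. Since $f\in C^\infty(\overline\Omega)$ and $\Omega$ is bounded, $\nabla f\cdot\nabla u\in L^2(\Omega)=L^2_w(\Omega)$ for any $u\in H^1_w(\Omega)$, so the condition reduces to $\Delta u\in L^2(\Omega)$.

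Finally, I would invoke elliptic regularity: for $u\in H^1_0(\Omega)$ with $\Delta u\in L^2(\Omega)$ on a bounded domain $\Omega$ with $C^\infty$ boundary, the standard $H^2$-regularity theorem for the Dirichlet Laplacian (see e.g.\ Evans, \emph{Partial Differential Equations}, Ch.~6) gives $u\in H^2(\Omega)=H^2_w(\Omega)$. Conversely, any $u\in H^1_{w,0}(\Omega)\cap H^2_w(\Omega)$ manifestly satisfies $L^{(0)}_{f,h}u\in L^2_w(\Omega)$ and, by integration by parts using the zero trace of $u$, obeys $Q_h(u,v)=\langle -L^{(0)}_{f,h}u,v\rangle_{L^2_w}$ for all $v\in H^1_{w,0}(\Omega)$. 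This double inclusion yields $D(-L^{D,(0)}_{f,h}(\Omega))=H^1_{w,0}(\Omega)\cap H^2_w(\Omega)$. The main (and only) non-trivial ingredient is the elliptic $H^2$-regularity step used to upgrade $\Delta u\in L^2$ with zero trace to full $H^2$-regularity; everything else is abstract Friedrichs machinery combined with the equivalence of weighted and unweighted norms on a bounded domain.
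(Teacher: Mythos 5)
Your argument is correct: the paper states this proposition without proof as a standard fact, and your route (boundedness of the weight reducing everything to unweighted spaces, the Friedrichs/representation-theorem characterization of the domain, and Dirichlet $H^2$ elliptic regularity to upgrade $u\in H^1_0$ with $\Delta u\in L^2$ to $u\in H^2$) is exactly the standard argument the authors implicitly rely on. Only a cosmetic remark: in the converse inclusion the boundary term in the integration by parts vanishes because of the zero trace of the test function $v$, not of $u$ (whose zero trace is what puts it in $H^1_{w,0}(\Omega)$).
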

\label{page.LODFH}
\noindent
The compact injection  $H^1_w(\Omega)\subset L^2_w(\Omega)$ (which follows from~\cite[Theorem 1 in Section 5.7]{evans-10}  together with the fact that if a sequence $(u_n)_{n\in \mathbb N}$ is bounded in  $H^1_w(\Omega)$ then, $(e^{-\frac 1hf}\, u_n)_{n\in \mathbb N}$ is bounded in  $H^1(\Omega)$),   implies that the operator $L^{D,(0)}_{f,h}(\Omega)$ has  compact
resolvent.  
 Consequently, its spectrum is purely discrete. Let us introduce $\lambda_h >0$ the smallest eigenvalue
 of $-L^{D,(0)}_{f,h}(\Omega)$. One has the 
 following proposition, which
 follows from standard results for the first eigenfunction of an
 elliptic operator, see for example~\cite[Section 6.3 and Theorem 2 in Section 6.5]{evans-10} and~\cite[page 128]{le-bris-lelievre-luskin-perez-12}.
\begin{proposition} \label{prop:QSD_spec}
   The smallest eigenvalue $\lambda_h$
 of $-L^{D,(0)}_{f,h}(\Omega)$  is non degenerate and its associated eigenfunction $u_h$
 has a sign on~$\Omega$.  Moreover $u_h \in C^{\infty} (\overline \Omega)$.
\end{proposition}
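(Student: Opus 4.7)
The statement collects three standard features of the first Dirichlet eigenvalue of a symmetric, strictly elliptic second order operator with smooth coefficients, adapted to the weighted setting. I would begin by noting that, after integration by parts (as already recorded in the excerpt), $-L^{D,(0)}_{f,h}(\Omega)$ can be written in divergence form with respect to the weight $e^{-\frac{2}{h}f}$:
\begin{equation*}
-L^{(0)}_{f,h} u \;=\; -\frac{h}{2}\, e^{\frac{2}{h}f}\,\diver\!\bigl(e^{-\frac{2}{h}f}\nabla u\bigr),
\end{equation*}
so that the quadratic form of Proposition \ref{fried} is the standard Dirichlet form associated with this operator. The spectrum being discrete (by the compact embedding $H^1_w(\Omega)\hookrightarrow L^2_w(\Omega)$ recalled just before the statement), the Courant--Hilbert min-max formula yields
\begin{equation*}
\lambda_h \;=\; \inf_{\phi \in H^1_{w,0}(\Omega)\setminus\{0\}}\; \frac{\tfrac{h}{2}\int_\Omega |\nabla \phi|^2\, e^{-\frac{2}{h}f}\,dx}{\int_\Omega \phi^2\, e^{-\frac{2}{h}f}\,dx},
\end{equation*}
and the infimum is attained at some $u_h\in H^1_{w,0}(\Omega)$.

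The plan is then to establish the three conclusions in the following order. First, \emph{sign}: since the Rayleigh quotient depends only on $|\nabla \phi|^2$ and since $|\nabla|\phi||=|\nabla \phi|$ a.e.\ by the Stampacchia truncation lemma, $|u_h|$ is also a minimizer, so $u_h$ can be taken nonnegative. Second, \emph{smoothness}: the equation $-L^{(0)}_{f,h} u_h = \lambda_h u_h$ is a linear elliptic equation with $C^\infty$ coefficients on the $C^\infty$ domain $\Omega$ with homogeneous Dirichlet data; interior elliptic regularity gives $u_h\in C^\infty(\Omega)$, and Agmon--Douglis--Nirenberg/Schauder-type boundary regularity (bootstrapping through the $H^s$ scale and then to $C^\infty$) gives $u_h \in C^\infty(\overline{\Omega})$. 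Third, \emph{strict positivity}: the strong maximum principle applied to the equation satisfied by $u_h\geq 0$ (which contains no zeroth order term, or whose zeroth order term $-\lambda_h$ is handled by the standard version due to Hopf) rules out an interior zero, so $u_h>0$ in $\Omega$.

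Finally, I would deduce \emph{non-degeneracy} from strict positivity: if $\tilde{u}_h \in H^1_{w,0}(\Omega)$ were a second eigenfunction associated with $\lambda_h$ and linearly independent from $u_h$, then by applying the minimization argument to $\tilde u_h$ we could also assume $\tilde u_h\geq 0$ and (by the same strong maximum principle) $\tilde u_h>0$ in $\Omega$. But two strictly positive functions cannot be orthogonal in $L^2_w(\Omega)$, whereas eigenfunctions associated with distinct eigenvalues---or chosen orthogonal in a degenerate eigenspace---must be. The contradiction shows that the eigenspace is one-dimensional. A slightly more robust variant is to use the Hopf boundary lemma to ensure that any positive combination $u_h - t\tilde u_h$ must vanish in the interior for some $t>0$, again forcing $\tilde u_h$ to be a scalar multiple of $u_h$.

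\textbf{Main obstacle.} None of the steps is genuinely difficult, but care is required at two points: (i) checking that the strong maximum principle and Hopf lemma apply to the precise first order operator $-\nabla f\cdot \nabla +\frac{h}{2}\Delta$ with the ``wrong sign'' of $-\lambda_h$ on the zeroth order term (the standard formulation in, e.g., Gilbarg--Trudinger, covers this case because $u_h\geq 0$); and (ii) tracking the weighted function spaces throughout, to make sure that weak solutions in $H^1_{w,0}(\Omega)\cap H^2_w(\Omega)$ genuinely are classical solutions to which the maximum and regularity theorems apply---this is immediate since $e^{-\frac{2}{h}f}$ is bounded above and below on the bounded domain $\Omega$, so the weighted and unweighted Sobolev spaces coincide as sets with equivalent norms.
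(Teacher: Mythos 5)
Your proposal is correct, and it is essentially the argument the paper relies on: the paper gives no proof of its own here but simply cites the standard elliptic theory (Evans, Sections 6.3 and 6.5, and Le Bris et al.), and your route — Rayleigh quotient with the weighted form, truncation to get a signed minimizer, elliptic bootstrapping to $C^{\infty}(\overline{\Omega})$, strong maximum principle for strict positivity, and simplicity via the impossibility of two positive $L^2_w$-orthogonal eigenfunctions — is exactly the standard proof those references contain. Your two cautionary remarks (handling the zeroth order term $-\lambda_h$ using $u_h\geq 0$, and the equivalence of the weighted and unweighted Sobolev norms on the bounded domain) are precisely the points that need care, so nothing is missing.
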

\noindent
Without  \label{page.lambdah}
 loss of
 generality, one can assume that:
\begin{equation}\label{eq.u_norma0}
u_h > 0 \text{ on } \Omega \ \text{ and }  \int_{\Omega} u_h^2(x)\ e^{-\frac{2}{h}f(x)} dx=1.
\end{equation}
\label{page.uh}
The eigenvalue-eigenfunction couple $(\lambda_h,u_h)$ satisfies:
\begin{equation} \label{eq:u}
\left\{
\begin{aligned}
 -L^{(0)}_{f,h}\, u_h &=  \lambda_h u_h    \ {\rm on \ }  \Omega,  \\ 
u_h&= 0 \ {\rm on \ } \partial \Omega.
\end{aligned}
\right.
\end{equation}
The link between the quasi
stationary distribution $\nu_h$ (see Definition~\ref{defQSD}) and $u_h$
is given by the following proposition (see for example \cite{le-bris-lelievre-luskin-perez-12}):
\begin{proposition} \label{uniqueQSD}
The unique quasi stationary distribution $\nu_h$ associated with the
dynamics~\eqref{eq:langevin} and the domain $\Omega$ is given by:
\begin{equation} \label{eq:expQSD}
\nu_h(dx)=\frac{\displaystyle  u_h(x) e^{-\frac{2}{h}  f(x)}}{\displaystyle \int_\Omega u_h(y) e^{-\frac{2}{h}  f(y)}dy}dx,
\end{equation}
\label{page.nuh2}
where $u_h$ is the eigenfunction associated with the smallest eigenvalue of $-L^{D,(0)}_{f,h}(\Omega)$ (see Proposition~\ref{prop:QSD_spec}).
\end{proposition}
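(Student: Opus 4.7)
The plan is to verify directly that the probability measure $\nu_h$ defined by~\eqref{eq:expQSD} satisfies Definition~\ref{defQSD}; uniqueness is then immediate from the preceding proposition, since~\eqref{eq:cv_qsd} forces any QSD to coincide with the total variation limit and therefore to be unique.

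First I would identify the Dirichlet semigroup $(e^{tL^{D,(0)}_{f,h}(\Omega)})_{t\geq 0}$ on $L^2_w(\Omega)$ produced by Proposition~\ref{fried} with the killed semigroup of the overdamped Langevin dynamics:
\begin{equation*}
\bigl(e^{tL^{D,(0)}_{f,h}(\Omega)}\varphi\bigr)(x) \;=\; \E_x\bigl[\varphi(X_t)\,\mathbf 1_{t<\tau_\Omega}\bigr]\qquad \text{for a.e.\ } x\in\Omega.
\end{equation*}
For smooth compactly supported $\varphi$, this Feynman--Kac representation is obtained by applying It\^o's formula to $s\mapsto (e^{(t-s)L^{D,(0)}_{f,h}(\Omega)}\varphi)(X_s)$ up to $s=t\wedge\tau_\Omega$, using standard parabolic regularity of the heat kernel of $L^{D,(0)}_{f,h}(\Omega)$ up to $\partial\Omega$ (which is smooth), and is then extended to all $\varphi\in L^2_w(\Omega)$ by density and by the contractivity of both semigroups.

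Second, I would combine the self-adjointness of $L^{D,(0)}_{f,h}(\Omega)$ on $L^2_w(\Omega)$ with the eigenrelation $-L^{D,(0)}_{f,h}(\Omega)u_h=\lambda_h u_h$ from~\eqref{eq:u}: for any bounded measurable $\varphi:\Omega\to\R$,
\begin{align*}
\int_\Omega \E_x\bigl[\varphi(X_t)\mathbf 1_{t<\tau_\Omega}\bigr]\,u_h(x)\,e^{-\frac 2h f(x)}\,dx
&= \bigl\langle e^{tL^{D,(0)}_{f,h}(\Omega)}\varphi,\,u_h\bigr\rangle_{L^2_w} \\
&= \bigl\langle \varphi,\,e^{-\lambda_h t}u_h\bigr\rangle_{L^2_w} \\
&= e^{-\lambda_h t}\int_\Omega \varphi(x)\,u_h(x)\,e^{-\frac 2h f(x)}\,dx.
\end{align*}
Specialising this identity to $\varphi=\mathbf 1_A$ and $\varphi\equiv 1$, and then forming the ratio (both numerator and denominator being strictly positive, since $u_h>0$ on $\Omega$ by Proposition~\ref{prop:QSD_spec}), reproduces the defining relation of Definition~\ref{defQSD} for the candidate measure. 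This shows that the right-hand side of~\eqref{eq:expQSD} is a QSD, and uniqueness identifies it with $\nu_h$.

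The main obstacle is the Feynman--Kac identification of the abstractly defined Friedrichs semigroup with the probabilistic killed semigroup: Proposition~\ref{fried} gives $L^{D,(0)}_{f,h}(\Omega)$ through a quadratic form, while the killed semigroup is naturally pointwise. Once this bridge is in place, the QSD property collapses to a one-line consequence of the self-adjointness with respect to the Boltzmann weight $e^{-\frac 2h f}dx$ and of the fact that $u_h$ is an eigenfunction of $-L^{D,(0)}_{f,h}(\Omega)$.
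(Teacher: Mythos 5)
Your argument is correct, and it is essentially the standard proof: the paper itself does not prove this proposition but defers to the cited reference \cite{le-bris-lelievre-luskin-perez-12}, where the identification of the QSD proceeds exactly as you describe — identify the Friedrichs (Dirichlet) semigroup with the killed semigroup via Feynman--Kac and parabolic regularity, use self-adjointness in $L^2_w$ together with $e^{tL^{D,(0)}_{f,h}(\Omega)}u_h=e^{-\lambda_h t}u_h$ to verify Definition~\ref{defQSD} for the candidate measure, and conclude uniqueness from the total-variation convergence~\eqref{eq:cv_qsd}. Your handling of the two delicate points (the bridge between the form-defined semigroup and the probabilistic one, and the passage from $C^\infty_c$ test functions to indicators, which lie in $L^2_w(\Omega)$ since $\Omega$ is bounded) is sound, so there is nothing to add.
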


\subsubsection{Back to the jump Markov process}\label{sec:back_jump}

As explained in Section~\ref{sec:QSD_1st_prop}, if the process remains
for a sufficiently long time in the domain $\Omega$, it is natural to
consider the exit event starting from the quasi stationary
distribution attached to $\Omega$. The next proposition characterizes
the law of this exit event (see for example~\cite{le-bris-lelievre-luskin-perez-12}).
\begin{proposition}\label{indep1}
Let us consider the dynamics~\eqref{eq:langevin} and the quasi
stationary distribution $\nu_h$ associated with the domain $\Omega$. If
$X_0$ is distributed according to $\nu_h$, the random variables
$\tau_{\Omega}$ and $X_{\tau_{\Omega}}$ are independent. Furthermore, $\tau_{\Omega}$ is exponentially distributed with parameter $\lambda_h$ and the law of $X_{\tau_{\Omega}}$ has a density with respect to the Lebesgue measure on $\partial \Omega$ given by
\begin{equation}\label{eq:dens}
z\in \partial \Omega \mapsto - \frac{h}{2\lambda_h} \frac{ \partial_n
  u_h(z) e^{-\frac{2}{h} f(z)}}{\displaystyle \int_\Omega u_h(y) e^{-\frac{2}{h} f(y)}dy},
\end{equation}
where $u_h$ is the eigenfunction associated with the smallest eigenvalue $\lambda_h$  of $-L^{D,(0)}_{f,h}(\Omega)$ (see Proposition~\ref{prop:QSD_spec}). 
\end{proposition}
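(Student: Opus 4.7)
The plan is to prove the three assertions in order: exponentiality of $\tau_\Omega$, independence of $\tau_\Omega$ and $X_{\tau_\Omega}$, and finally the explicit density. Throughout, write $Z := \int_\Omega u_h(y) e^{-\frac{2}{h}f(y)}\, dy$, so that by Proposition~\ref{uniqueQSD} one has $\nu_h = Z^{-1} u_h e^{-\frac{2}{h}f}\, dx$, and denote by $(P_t)_{t\geq 0}$ the killed semigroup $P_t\phi(x) := \E_x[\phi(X_t)\mathbf{1}_{t<\tau_\Omega}]$, whose generator is the Dirichlet realization $L^{D,(0)}_{f,h}(\Omega)$; by Proposition~\ref{fried}, $(P_t)_{t\ge 0}$ is self-adjoint on $L^2_w(\Omega)$.

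For the exponential law of $\tau_\Omega$, I use $P_t u_h = e^{-\lambda_h t} u_h$ (Proposition~\ref{prop:QSD_spec}) together with self-adjointness to compute
$$\P_{\nu_h}[t<\tau_\Omega] = \frac{1}{Z}\langle P_t \mathbf{1}, u_h\rangle_{L^2_w(\Omega)} = \frac{1}{Z}\langle \mathbf{1}, P_t u_h\rangle_{L^2_w(\Omega)} = \frac{e^{-\lambda_h t}}{Z}\langle \mathbf{1}, u_h\rangle_{L^2_w(\Omega)} = e^{-\lambda_h t}.$$
For independence, I apply the strong Markov property at the deterministic time $t$: for any bounded measurable $\phi:\partial\Omega \to \R$,
$$\E_{\nu_h}[\mathbf{1}_{\tau_\Omega > t}\,\phi(X_{\tau_\Omega})] = \E_{\nu_h}\bigl[\mathbf{1}_{t<\tau_\Omega}\,\E_{X_t}[\phi(X_{\tau_\Omega})]\bigr].$$
The defining identity of the QSD (Definition~\ref{defQSD}) says precisely that, under $X_0\sim \nu_h$, the conditional law of $X_t$ given $\{t<\tau_\Omega\}$ is again $\nu_h$, so the right-hand side factors as $\P_{\nu_h}[t<\tau_\Omega]\,\E_{\nu_h}[\phi(X_{\tau_\Omega})] = e^{-\lambda_h t}\,\E_{\nu_h}[\phi(X_{\tau_\Omega})]$, which holds for all $t$ and $\phi$ and gives independence.

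For the density, I pick a smooth extension $v\in C^\infty(\overline{\Omega})$ of $\phi$ and invoke Dynkin's formula,
$$\E_x[\phi(X_{\tau_\Omega})] = v(x) + \E_x\Bigl[\int_0^{\tau_\Omega} L^{(0)}_{f,h} v(X_s)\, ds\Bigr].$$
Integrating against $\nu_h$, applying Fubini, and using the identity $\E_{\nu_h}[\mathbf{1}_{s<\tau_\Omega}\,g(X_s)] = e^{-\lambda_h s}\int_\Omega g\, d\nu_h$ (proved just as in the first step),
$$\E_{\nu_h}[\phi(X_{\tau_\Omega})] = \int_\Omega v\, d\nu_h + \frac{1}{\lambda_h}\int_\Omega L^{(0)}_{f,h} v\, d\nu_h = \frac{1}{\lambda_h Z}\int_\Omega (\lambda_h v + L^{(0)}_{f,h}v)\, u_h\, e^{-\frac{2}{h}f}\, dx.$$
Finally, the weighted Green identity, based on $e^{-\frac{2}{h}f} L^{(0)}_{f,h} v = \frac{h}{2}\operatorname{div}(e^{-\frac{2}{h}f}\nabla v)$ and integration by parts against $u_h$, combined with $u_h|_{\partial\Omega}=0$ and $L^{(0)}_{f,h} u_h = -\lambda_h u_h$, transforms the interior integral into the boundary integral
$$\int_\Omega (\lambda_h v + L^{(0)}_{f,h}v)\, u_h\, e^{-\frac{2}{h}f}\, dx = -\frac{h}{2}\int_{\partial\Omega}\phi(z)\, \partial_n u_h(z)\, e^{-\frac{2}{h}f(z)}\, d\sigma(z),$$
and dividing by $\lambda_h Z$ yields exactly \eqref{eq:dens}.

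The only non-routine ingredient is the $C^\infty$ regularity of $u_h$ up to $\partial\Omega$ supplied by Proposition~\ref{prop:QSD_spec}, which legitimizes $\partial_n u_h$ on $\partial\Omega$ and the Green identity manipulation; the remaining steps (self-adjointness of $L^{D,(0)}_{f,h}(\Omega)$, Dynkin's formula, Fubini justified by the uniform bound on $L^{(0)}_{f,h}v$ over $\overline\Omega$ and the exponential tail of $\tau_\Omega$) are standard.
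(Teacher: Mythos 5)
Your proof is correct, and it follows essentially the standard argument that the paper delegates to the cited reference \cite{le-bris-lelievre-luskin-perez-12}: self-adjointness of the killed semigroup together with $P_t u_h=e^{-\lambda_h t}u_h$ for the exponential law, the QSD invariance plus the Markov property for independence, and Dynkin's formula combined with the weighted Green identity $e^{-\frac2h f}L^{(0)}_{f,h}v=\frac h2\,{\rm div}(e^{-\frac2h f}\nabla v)$, $u_h|_{\partial\Omega}=0$ and $L^{(0)}_{f,h}u_h=-\lambda_h u_h$ for the boundary density. Only cosmetic remarks: the relation $P_tu_h=e^{-\lambda_h t}u_h$ comes from $u_h$ being the principal Dirichlet eigenfunction (Proposition~\ref{prop:QSD_spec} plus the spectral theorem) rather than from that proposition alone, and in the density step it suffices to test against $\phi\in C^\infty(\partial\Omega)$ and conclude by density, which your extension argument implicitly does.
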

\noindent
Here and in the following, $\partial_{n}=n\cdot \nabla$ stands for the normal derivative and $n$ is the unit outward normal on~$\partial \Omega$.\\
This proposition shows that, starting from the quasi-stationary
distribution in the domain $\Omega$, the exit event can be modeled by
a jump Markov process {\em without any approximation}. Indeed, using
the notation of Section~\ref{jump}, let us consider that $\Omega
\subset \R^d$ is associated with the state $0$. Let us assume that
$\Omega$ is surrounded be $n$ neighbourding states, associated with
domains $(\Omega_i)_{i=1,\ldots, n}$ (see Figure~\ref{fig:partition}
  for a schematic representation when $n=4$). Let us define the
  transition rates:
  \label{page.ex}
\begin{equation}\label{eq:exact_rates}
\forall i \in \{1, \ldots n\},\, k_{0,i}=\frac{\P_{\nu_h}
  \left(X_{\tau_\Omega} \in \partial \Omega \cap
    \Omega_i\right)}{\E_{\nu_h}(\tau_\Omega)}.
\end{equation}
\label{page.koi}
Then, by Proposition~\ref{indep1}, the exit event is such that:
\begin{itemize}
\item The residence time $\tau_\Omega$ is exponentially distributed
  with parameters $\sum_{i=1}^n k_{0,i}$.
\item The next visited state is independent of the residence time and is $i$ with probability
  $\frac{k_{0,i}}{\sum_{j=1}^n k_{0,j}}$.
\end{itemize}
This is exactly the two properties~\eqref{eq:exit_time_kMC}
and~\eqref{eq:exit_point_kMC} which are required to define a
transition using a jump Markov process. The quasi stationary
distribution can thus be used to parameterize the underlying jump
Markov process if the domains are metastable.
\medskip

\noindent
The question we would like to address in this work is now the
following: what is the error introduced when one approximates the
exact rates~\eqref{eq:exact_rates} using the Eyring-Kramers
formula~\eqref{eq:kramers}--\eqref{eq:kramers_pref}. From
Proposition~\ref{indep1}, since $\E_{\nu_h}(\tau_\Omega)=1/\lambda_h$, one has the following formula for the exact
rates:
\begin{equation}\label{eq:exact_rates_formula}
k_{0,i}=- \frac{h}{2} \frac{ \displaystyle \int_{\partial
    \Omega\cap \partial\Omega_{i}}(\partial_n u_h )(z)\, e^{-\frac{2}{h} f(z)}  \, 
 \sigma (dz) }{\displaystyle \int_{\Omega} u_h(y) \, e^{-\frac{2}{h} f(y)} \, dy}
\end{equation}
where $\sigma$ denotes the Lebesgue measure on $\partial \Omega$.
We will be able to prove that in the small temperature regime $h \to
0$, the exact rates~\eqref{eq:exact_rates_formula} can indeed be 
accurately approximated by the Eyring-Kramers
formula~\eqref{eq:kramers} with explicit error bounds. The asymptotic
analysis is done directly on the rates, and not only on the logarithm
of the rates (which is the typical result obtained with the large
deviation theory for example, see Section~\ref{biblio}).

\subsection{Statement of the main result} \label{statementthm}

We state in this section the main result of this work (Theorem~\ref{TBIG0})
on the asymptotic behavior of the normal derivative $\partial_n u_h$
in the regime $h \to 0$, as well as its corollary on the exit point
density and the accuracy of the approximation of the exit rates by
the Eyring-Kramers formula. \\
This section is organized as follows. We introduce in Section~\ref{sec:def_agmon_intro} a crucial tool in our analysis, the Agmon distance. Then, in Section~\ref{sec:hypoa},  we give the set of hypotheses which will be needed throughout this work. Finally, Section~\ref{sec:main_result} is dedicated to the statement of our main result.

 \subsubsection{Agmon distance}\label{sec:def_agmon_intro}

Our results hold under some geometric assumptions which require to
introduce the so-called Agmon distance. The
objective of this section is to introduce this distance, which is
particularly useful to quantify the decay of eigenfunctions away from
critical points~\cite{simon-84,helffer-sjostrand-84}. We introduce the
Agmon distance in a general setting, namely for $\Omega$ a Riemannian
manifold, but one could think  of $ \Omega$
 as a $C^{\infty}$ connected open bounded subset of $\mathbb R^d$.


\begin{definition} \label{L}
Let $\overline \Omega$ be a $C^{\infty}$ oriented connected  compact  Riemannian  manifold of dimension $d$ with boundary $\partial \Omega$ and $f: \overline \Omega\to \mathbb R$ be $C^{\infty}$. Define $g : \overline \Omega \to  \mathbb R$ by
\begin{equation}\label{eq:def_g}
\forall x \in \Omega, \ g(x)= \left\vert \nabla f(x)\right\vert
\text{ and } \ \forall x \in \partial \Omega, \ g(x)= \left\vert
  \nabla_T f(x)\right\vert,
\end{equation}
\label{page.g} 
where for any $x \in \partial \Omega$, $\nabla_Tf(x)$ denotes the
tangential gradient of the function $f$ on $\partial \Omega$, i.e.  $\nabla_Tf(x)=\nabla f(x)-(\nabla f(x)\cdot n) \, n$, where $n$ is the unit outward normal  to~$\pa \Omega$ at~$x$.   
One defines the length $L$ of a Lipschitz curve $\gamma: I\to \overline \Omega$, where~$I\subset \mathbb R $ is an interval, by
$$L(\gamma,I):= \int_I g\left(\gamma(t)\right) \left\vert \gamma'(t) \right\vert  dt \in [0+\infty].$$
\end{definition}
\noindent
Let \label{page.L}  us recall that the 
Rademacher theorem (see for example~\cite{evans-gariepy-92}) states
that every Lipschitz function admits almost everywhere a derivative (which is then bounded by the Lipschitz constant). Therefore, if $I$ is bounded, then $L(\gamma,I)<\infty$. Let us now define the Agmon distance.
\begin{definition} \label{def.agmon-intro}
Let $g$ be the function introduced in Definition~\ref{L}. The Agmon distance between $x\in \overline \Omega$ and $y \in \overline \Omega$ is defined by
\begin{equation} \label{eq:definition}
d_a\left(x,y\right)=\inf _{\gamma\in {\rm Lip}(x,y)} L\left(\gamma,(0,1)\right),
\end{equation} 
\label{page.dagmon}
where ${\rm Lip}\left(x,y\right)$ is the set of  curve $\gamma :[0,1] \to \overline \Omega$ which are Lipschitz with $\gamma(0)=x$, $\gamma(1)=y$.
\end{definition}
\noindent
The Agmon distance is obviously symmetric, non negative and satisfies
the triangular inequality. It is a distance if the critical points of
$f$ and $f_{\big | \partial \Omega}$ are isolated (see Proposition~\ref{opoo} below). Let us mention that in the case when $\Omega$ is a manifold without boundary, the Agmon distance introduced in Definition~\ref{def.agmon-intro} coincides with the Agmon distance defined in \cite[Appendix 2]{helffer-sjostrand-85}. 

We will give in Section~\ref{agmonproperty} more details about the
Agmon distance we consider. In particular, it will be shown that the
Agmon distance to the critical points of $f|_{\partial \Omega}$
coincides with the solution to the eikonal equation $\vert \nabla \Phi
\vert ^2=\vert \nabla f \vert^2$ in neighborhoods of the critical
points. This requires to use the tangential gradient of $f$ on
$\partial \Omega$ in the definition of the Agmon distance (see~\eqref{eq:def_g}).


\subsubsection{Notations and hypotheses}  \label{sec:hypoa}

As already stated above, we assume that $\Omega$ is a connected open
bounded $C^{\infty}$ domain of $\mathbb R^d$ and $f :  \overline{\Omega}  \to
\mathbb R$ is a $C^{\infty}$ function.\footnote{Actually, as explained
  in Section~\ref{sec:gram_schmidt}, we will perform the
analysis in a more general setting, namely when $\overline{\Omega}$ is
a $C^\infty$ oriented connected compact Riemannian manifold. In this
introductory section, we stick to a simpler presentation, with
$\Omega$ a subset of $\R^d$.}
We will need the following set of assumptions:

\begin{itemize}
\item[\textbf{[H1]}] The function $f : \overline{\Omega} \to \mathbb
R$ is a Morse function on $\Omega$ and the restriction of $f$ to the
boundary of $\Omega$ denoted by $f | _{  \partial \Omega}$, is a Morse
function.  The function $f$ does not have any critical point on
$\partial \Omega$. 

\item[\textbf{[H2]}] The function $f$ has a unique global minimum $x_0\in \Omega$ in $\overline{\Omega}$:
$$\min_{\partial \Omega}f>\min_{\overline \Omega}f= \min_{\Omega}f=f(x_0).$$ 
The point $x_0$ is the unique critical point of $f$ in
$\overline{\Omega}$.  The function
$f|_{\partial \Omega}$ has exactly $n \ge 1$ local minima denoted by
$(z_i)_{i=1,\ldots,n}$ such that $f(z_1)\leq
f(z_2)\leq \ldots \leq f(z_n)$.
 \label{page.z1zn}
\item[\textbf{[H3]}] $\partial_n f>0$ on $\partial \Omega$.
\end{itemize}
In the following, $n_0 \in \{1, \ldots, n\}$ denotes the number of points in $\arg\min f|_{\partial \Omega}$:
$$f(z_1)=\ldots
= f(z_{n_0}) < f(z_{n_0+1}) \le \ldots \leq f(z_n).$$ 
 \label{page.n0}
We will need to define the basins of attraction of the local minima
$z_i$ for the
dynamics $\dot{x}=-\nabla_T f(x)$ in $\partial \Omega$, where, we
recall, for any $x \in \partial \Omega$, $\nabla_T f(x)$ denotes the
tangential gradient of $f$ on $\partial \Omega$.

\begin{definition} \label{Bz}
Assume that \textbf{[H1]}  holds. For each local minimum $z
\in \partial \Omega$, one denotes by $B_{z}\subset \partial \Omega$
the basin of attraction of $z$ for the dynamics in $\partial \Omega$
$\dot{x}=-\nabla_T f(x)$: denoting by~$\varphi_{t}(y)$ the solution to $ \frac{d}{dt}\varphi_{t}(y)=-\nabla_T f(\varphi_{t}(y))$
with initial condition $\varphi_{0}(y)=y\in \pa \Omega$,  one has 
 $B_{z}:= \big \{y\in\pa \Omega, \, \lim_{t \to \infty} \varphi_{t}(y)
 =z   \big \}$. Notice that $B_z$ is an
 open subset of $\pa \Omega$.
Additionally, one defines $B_{z}^c:=\partial \Omega \setminus B_{z}$.
\end{definition}
 \label{page.Bzi}
From this definition, one obviously has that for  each local minimum
$z \in \partial \Omega$, for any  $x \in B_{z}$, $f(x) \ge
f(z)$.
On Figure~\ref{fig:H1H2H3H4}, one gives a schematic representation in dimension $2$ of a function~$f$ satisfying the assumptions~\textbf{[H1]},~\textbf{[H2]}, and~\textbf{[H3]}, and of its restriction to~$\pa \Omega$, in the case  $n=4$ and $n_0=2$.

\begin{figure}[h!]
\begin{center}
\begin{tikzpicture}[scale=0.5]
\tikzstyle{vertex}=[draw,circle,fill=black,minimum size=4pt,inner sep=0pt]
\tikzstyle{ball}=[circle, dashed, minimum size=1cm, draw]
\tikzstyle{point}=[circle, fill, minimum size=.01cm, draw]
\draw [thick, rounded corners=10pt] (1,0.5) -- (-0.25,2.5) -- (1,5) -- (5,6.5) -- (7.6,3.75) -- (6,1) -- (4,0) -- (2,0) --cycle;
 
\draw [ densely dashed, rounded corners=10pt] (0.9,0.5) -- (-0,2.5) -- (3,5.5) -- (5.6,4.5) --(5.5,1) -- (4,-0.1) -- (2.2,1.5) --cycle;

\draw [densely dashed] (3.4,3) circle (1);
\draw [densely dashed] (3.4,3) circle (1.5);

\draw [densely dashed] (-8.9,3) -- (-7.9,3);
     \draw  (-5.5,3) node[]{$\text{level sets of $f$}$};

     \draw  (6.7,3.5) node[]{$\Omega$};
   
\draw (1.7,5.3) node[vertex,label=north: {$z_4$}](v){};
\draw (3.4,3) node[vertex,label=south: {$x_0$}](v){};
\draw (4.2,0.1) node[vertex,label=south west: {$z_2$}](v){};
\draw (0.38,1.45) node[vertex,label=south west: {$z_1$}](v){};
\draw (6.2,5.2) node[vertex,label=north east: {$z_3$}](v){};
\end{tikzpicture}

\hspace{0.7cm}

\begin{tikzpicture}[scale=0.75]
\tikzstyle{vertex}=[draw,circle,fill=black,minimum size=4pt,inner sep=0pt]
\tikzstyle{ball}=[circle, dashed, minimum size=1cm, draw]
\tikzstyle{point}=[circle, fill, minimum size=.01cm, draw]

\draw [dashed] (-5.4,-0.6)--(6,-0.6);
\draw [dashed,->] (-5.4,-0.6)--(-5.4,3);
\draw [dashed] (6,-0.6)--(6,3);
\draw [densely dashed,<->] (-5.25,3)--(5.97,3);
 \draw (0,3.34) node[]{$\pa \Omega$};
 \draw (-5.9,2.8) node[]{$f|_{\pa \Omega}$};
 \draw[thick] (-5.4,2) ..controls  (-5.22,1.96).. (-5,1.6)  ;
\draw[thick] (-5,1.6) ..controls  (-3,-1.4).. (-2,2.2)  ;
\draw[thick] (-1.6,2.2) ..controls  (0,-1.5).. (1.8,2)  ;
\draw[thick] (-2,2.2) ..controls  (-1.8,2.81).. (-1.6,2.2)   ;
\draw[thick] (2,2) ..controls  (1.91,2.13).. (1.8,2)   ;
\draw[thick] (4.1,2.5) ..controls  (3,-0.3).. (2,2)  ;
\draw[thick]  (4.1,2.5) ..controls  (4.28,2.66).. (4.8,1.5)  ;
\draw[thick]  (4.8,1.5)  ..controls  (5.04,1).. (5.5,1.6)  ;
\draw[thick]  (5.5,1.6)   ..controls  (5.87,2).. (6,2)  ;

\draw (5.1,1.1) node[vertex,label=south: {$z_4$}](v){};
\draw (3,0.3) node[vertex,label=south: {$z_3$}](v){};
\draw (-3.17,-0.6) node[vertex,label=south: {$z_1$}](v){};
\draw (0,-0.6)  node[vertex,label=south: {$z_2$}](v){};

\draw[<->]  (-5.4,-1.4) -- (-1.92,-1.4) ;
\draw[<->]  (-1.9,-1.4) -- (1.89,-1.4) ;
\draw[<->]   (1.92,-1.4) -- (4.1,-1.4) ;
\draw[<->]   (4.1,-1.4)-- (6,-1.4) ;
 \draw (-4,-1.8) node[]{$B_{z_1}$};
  \draw (0,-1.8) node[]{$B_{z_2}$};
   \draw (3.2,-1.8) node[]{$B_{z_3}$};
  \draw (5,-1.8) node[]{$B_{z_4}$};
\end{tikzpicture}

\caption{Schematic representation  in dimension $2$ of a function $f$ satisfying the assumptions \textbf{[H1]}, \textbf{[H2]}, and \textbf{[H3]}, and of its restriction $f|_{\pa \Omega}$ to  $\pa \Omega$.  On the figure, $n=4$ and $n_0=2$. }
 \label{fig:H1H2H3H4}

\end{center}
\end{figure}
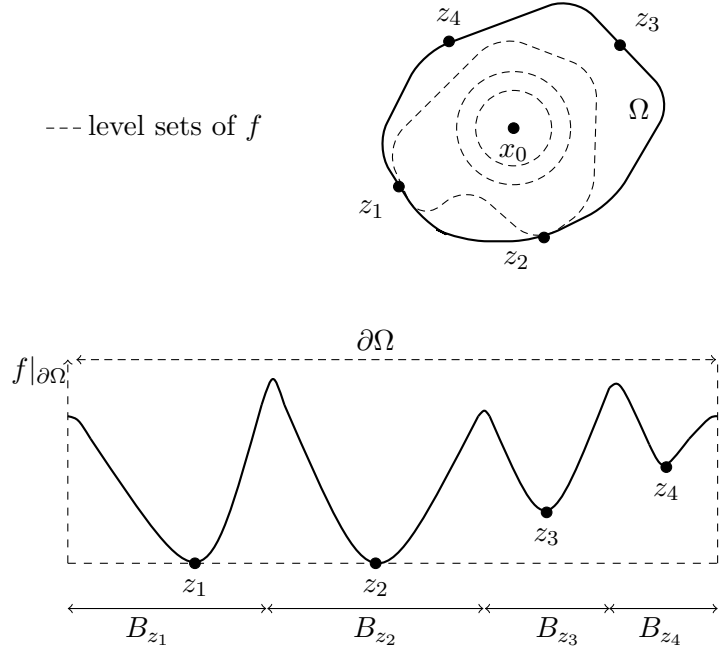

As a consequence of the assumption \textbf{[H1]}, the determinants of the
Hessians of $f$ (resp. of $f|_{\partial \Omega}$) at the critical points of
$f$ (resp. of  $f|_{\partial \Omega}$) are non zero. These
quantities appear in the prefactors of the Eyring-Kramers law (see
Equation~\eqref{resultatkil} below).

\begin{remark}\label{re:Hessian}
Let us recall how the Hessians are defined. 
Let $\phi: N \to \R$ be a $C^\infty$ function defined on a Riemannian $C^\infty$
manifold $N$ of dimension $d$. By standard results of Riemannian geometry, the Hessian $\Hess \, \phi(x)$ of $\phi$ at a point $x \in N$ is defined as a bilinear symmetric form acting on vectors in the tangent space $T_x N$ as:
\begin{equation}\label{eq:Hessian}
\forall X,Y \in \Gamma(T N),\, \Hess \, \phi(X,Y) = \nabla_X d\phi(Y)
\end{equation}
where $\nabla$ is the covariant derivative (Levi-Civita connection) and $d\phi$ is the differential of~$\phi$. Then, $\det \, \Hess \,\phi(x)$ is defined as the determinant of the bilinear form $\Hess \, \phi(x)$ in any orthonormal basis of $T_x N$.

In practice, $\det \, \Hess \, \phi(x)$ can be computed at a critical point of $\phi$  using a local chart as follows. Let us assume that $x_0$ is a critical point of  $\phi$: $d_{x_0} \phi = 0$. Let us introduce
$\psi: y\in U \mapsto \psi(y)\in V$ a local chart around~$x_0$, where $U \subset \R^d$ is a neighborhood
of $0$, $V \subset N$ is a neighborhood of $x_0$ and $\psi(0)=x_0$. Let us assume in
addition that the vectors $(e_i)_{i=1,\ldots,d}:=\left(\frac{\partial \psi}{\partial
    y_i}(0)\right)_{i=1,\ldots,d}$ are orthonormal (thus defining an
orthonormal basis of $T_{x_0} N$). Let us introduce
the symmetric matrix $H$ associated with the second order differential
of $\phi \circ \psi$ at point $0$: $\forall (u,v) \in \R^d \times \R^d$
$$D^2_0 (\phi \circ \psi) \left(\sum_{i=1}^d u_i e_i, \sum_{i=1}^d v_i e_i
\right)=u^T H v.$$
Then 
$$\det \, \Hess \, \phi(x_0)  = \det \, H.$$
 This formula is only valid at a critical point and is a direct consequence of the definition~\eqref{eq:Hessian} of the Hessian and the explicit expression of the Levi Civita connection in the local chart~$\psi$:
$$\nabla_X d\phi(Y)|_x= \sum_{i,j=1}^d \left( \frac{\partial^2 (\phi\circ \psi)}{\partial {y_i \partial y_j}  }(y) -\sum_{k=1}^d \Gamma^k_{i,j}(\psi(y))\frac{\partial (\phi\circ \psi)}{\partial{y_k}  }(y)    \right)  Y_iX_j $$
where $x=\psi(y)\in V$, $\Gamma^k_{i,j}(x)$ are the Christoffel symbols of the connection $\nabla$ associated with the basis $\left ((\partial_{y_j}\psi)(\psi^{-1}(x)\right)_{j=1,\dots,n}$ of $T_xN$ and $(X_j)_{j=1,\dots,n}$ (respectively $(Y_j)_{j=1,\dots,n}$) are the coordinates of $X$ (respectively $Y$) in this basis. \end{remark}

\subsubsection{Main result}  \label{sec:main_result}

In view of equations~\eqref{eq:dens}
and~\eqref{eq:exact_rates_formula}, we need to give an estimate of three
quantities in order to analyze the exit point density and the
asymptotic of the transition rates in the regime $h \to 0$:
$\int_{\Sigma} (\partial_n u_h) \,e^{-\frac{2}{h} f}$ for a subset $\Sigma$ of
$\partial \Omega$, $\int_\Omega u_h \, e^{-\frac{2}{h} f}$ and
$\lambda_h$, where, we recall $(\lambda_h,u_h)$ is defined by
\eqref{eq:u}. We will consider a subset $\Sigma$ such that $\Sigma
\subset B_{z_i}$ for a local minimum $z_i$ (see Definition~\ref{Bz}
for the definition of $B_{z_i}$).  

\begin{theorem} \label{TBIG0}
Assume that \textbf{[H1]}, \textbf{[H2]} and \textbf{[H3]} hold. Moreover assume that
\begin{itemize}
\item $\forall i\in \left\{1,\ldots,n\right\}$, \begin{equation} \label{hypo1}
 \inf_{z\in B_{z_i}^c} d_a(z,z_i) >\max[f(z_n)-f(z_i),f(z_i)-f(z_1)],
 \end{equation}
\item  and \begin{equation} \label{hypo2} f(z_1)-f(x_0)>f(z_n)-f(z_1).  \end{equation}
\end{itemize}
Then, for all $i\in \left\{1,\ldots,n\right\}$ and all open set
$\Sigma_i \subset \partial \Omega$ containing $z_i$ and such that
$\overline\Sigma_i \subset B_{z_i}$, in the limit $h\to 0$
 \label{page.Sigmai}
\begin{equation}\label{eq:dnuh_asymptot}
\int_{\Sigma_i}   (\partial_n u_h )\,  e^{- \frac{2}{h}  f} d\sigma=A_i(h) \,e^{-\frac{2f(z_i)-f(x_0)}{h}}\left( 1+ O(h) \right),
\end{equation}
 where $u_h$ is the eigenfunction associated with the smallest eigenvalue of $-L^{D,(0)}_{f,h}(\Omega)$ (see Proposition~\ref{prop:QSD_spec}) which satisfies \eqref{eq.u_norma0} and
 $$A_i(h)=-\frac{ ({\rm det \ Hess } f   (x_0))^{1/4}   \partial_nf(z_i)   2\pi^{\frac{d-2}{4}}    }{    \sqrt{ {\rm det \ Hess } f|_{\partial \Omega}   (z_i) }  } h^{\frac{d-6}{4}}.$$
 \end{theorem}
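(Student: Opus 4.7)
My strategy is to conjugate the problem to a Witten Laplacian, and then combine interior Gaussian quasimodes, Agmon decay estimates, and a boundary WKB construction near each $z_i$. Set $w_h := u_h\,e^{-f/h}$. A direct computation using the expression of $L^{(0)}_{f,h}$ shows that $w_h$ is the first Dirichlet eigenfunction of the Witten Laplacian
$$\Delta_{f,h}^{(0)} \;=\; -\tfrac{h^{2}}{2}\Delta + \tfrac{1}{2}|\nabla f|^{2} - \tfrac{h}{2}\Delta f,$$
with eigenvalue $h\lambda_h$, and $\|w_h\|_{L^{2}(\Omega)}=1$ by~\eqref{eq.u_norma0}. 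Since $w_h$ vanishes on $\partial \Omega$, one has $\partial_{n}u_h = e^{f/h}\partial_{n}w_h$ there, hence
$$\int_{\Sigma_i}(\partial_n u_h)\, e^{-\frac{2}{h}f}\,d\sigma \;=\; \int_{\Sigma_i}(\partial_n w_h)\, e^{-\frac{1}{h}f}\,d\sigma,$$
and the whole analysis reduces to estimating the normal derivative of $w_h$ on the boundary.

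I would then build an interior quasimode $\tilde w_h := Z_h^{-1/2}\,\chi_{0}(x)\,e^{-(f(x)-f(x_0))/h}$, with $\chi_{0}$ a smooth cutoff localizing near $x_0$ and vanishing close to $\partial \Omega$, and $Z_h$ chosen so that $\|\tilde w_h\|_{L^{2}}=1$. Laplace's method yields $Z_h\sim (\pi h)^{d/2}/\sqrt{\det \Hess f(x_0)}$, hence $\tilde w_h(x_0)\sim \pi^{-d/4}h^{-d/4}(\det \Hess f(x_0))^{1/4}$. A direct computation shows that $\Delta^{(0)}_{f,h}\tilde w_h$ is exponentially small in $L^{2}$; combined with the spectral gap (the second Dirichlet eigenvalue of $\Delta^{(0)}_{f,h}$ is of order $h$, while $h\lambda_h$ is exponentially small, and hypothesis~\eqref{hypo2} guarantees the desired scale separation with respect to any other exponentially small eigenvalue potentially generated by competing features), this gives $w_h = \tilde w_h + O(e^{-c/h})$ in $L^{2}$. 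Global Agmon estimates on $\overline{\Omega}$, for which hypothesis~\eqref{hypo1} is crucial, upgrade this to the weighted pointwise bound $|w_h(x)|\le C\,h^{-N}\,e^{-d_a(x,x_0)/h}$, together with the identity $d_a(z_i,x_0) = f(z_i)-f(x_0)$ and the absence of shorter Agmon path through alternative boundary minima.

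The sharp analysis then takes place in a neighborhood of each $z_i$. In boundary normal coordinates $(x',x_d)$ with $x_d$ along the outward normal ($x_d<0$ inside $\Omega$), the eikonal equation $|\nabla \Phi|^{2}=|\nabla f|^{2}$ admits two smooth solutions matching the interior WKB: $\Phi_+ := f-f(x_0)$ (ingoing from $x_0$) and a reflected $\Phi_-$ with $\partial_{x_d}\Phi_-(z_i)=-\partial_n f(z_i)$ and $\Phi_-(z_i)=\Phi_+(z_i)$. Because the leading transport equation reduces to $\nabla a \cdot \nabla f = 0$, the amplitude is constant along gradient lines, so a Dirichlet-adapted quasimode near $z_i$ takes the form
$$w_h(x) \;\approx\; \tilde w_h(x_0)\bigl(e^{-\Phi_+(x)/h}-e^{-\Phi_-(x)/h}\bigr),$$
which vanishes on $\partial \Omega$ and matches the interior Gaussian in the bulk. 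Differentiating at $x_d=0$ yields
$$\partial_n w_h(x',0) \;\approx\; -\tilde w_h(x_0)\,\frac{2\,\partial_n f(x',0)}{h}\,e^{-(f(x',0)-f(x_0))/h}.$$
Laplace's method around the nondegenerate minimum $z_i$ of $f|_{\partial \Omega}$ on $\overline{\Sigma_i}\subset B_{z_i}$, using $f(x',0)=f(z_i)+\tfrac{1}{2}\,{x'}^{T}\Hess f|_{\partial \Omega}(z_i)\,x' + O(|x'|^{3})$, produces a Gaussian factor $(\pi h)^{(d-1)/2}/\sqrt{\det \Hess f|_{\partial \Omega}(z_i)}$ times $\partial_n f(z_i)$. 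Collecting $h^{-d/4}$ (normalization), $h^{(d-1)/2}$ (Laplace) and $h^{-1}$ (boundary derivative) gives $h^{(d-6)/4}$; combined with $\pi^{-d/4}\pi^{(d-1)/2}=\pi^{(d-2)/4}$, this reproduces $A_i(h)$ exactly and contributes the expected $e^{-(2f(z_i)-f(x_0))/h}$.

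The main obstacle is the rigorous boundary WKB construction: producing a quasimode that simultaneously satisfies the Dirichlet boundary condition, matches the interior Gaussian throughout the basin of $x_0$, and is sharp enough in $L^{\infty}$ near $z_i$ so that the error in $\partial_n w_h$ is of lower order than the leading $h^{(d-6)/4}$. This demands sharp estimates on quasimodes away from critical points for Witten Laplacians on manifolds with boundary, together with fine properties of the Agmon distance in that setting; in particular the fact that the eikonal equation on $\partial \Omega$ involves only the \emph{tangential} gradient of $f$ (cf.~\eqref{eq:def_g}) is what makes the ingoing/reflected decomposition consistent at the critical point $z_i$ of $f|_{\partial \Omega}$. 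Once these boundary quasimodes and the associated uniform error estimates are established, the remaining steps are a standard Laplace method and the bookkeeping of $h$-uniform remainder terms.
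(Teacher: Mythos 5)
Your reduction to $w_h=u_h e^{-f/h}$ and the interior Gaussian quasimode reproduce what the paper does for the $0$-form part (its $\tilde u$ and Proposition~\ref{moyenneu}), but from there your route diverges from the paper's, and the divergence is exactly where the gap sits. The paper never tries to read $\partial_n w_h$ off a boundary-layer ansatz for the scalar eigenfunction; it differentiates the eigenvalue problem, observes $\nabla u_h\in{\rm Ran}\,\pi^{(1)}_h$ for the $1$-form Witten Laplacian, and builds $n$ quasi-modes $\tilde\phi_i$ (eigenforms of auxiliary mixed-boundary Witten Laplacians on domains $\dot\Omega_i$), each with Agmon decay $e^{-d_a(\cdot,z_i)/h}$ and an $O(h^\infty)$-accurate WKB comparison near $z_i$; the boundary integrals are then extracted through the Gram--Schmidt decomposition of Proposition~\ref{ESTIME}. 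Your plan replaces all of this by the two-phase ansatz $w_h\approx \tilde w_h(x_0)\bigl(e^{-\Phi_+/h}-e^{-\Phi_-/h}\bigr)$ near each $z_i$. The problem is that the only error control you propose — the bulk $L^2$ comparison $w_h=\tilde w_h+O(e^{-c/h})$ and the pointwise Agmon bound $|w_h|\le Ch^{-N}e^{-d_a(\cdot,x_0)/h}$ — lives at the wrong scale: the target \eqref{eq:dnuh_asymptot} is an exponentially small boundary integral, of size $h^{(d-6)/4}e^{-(2f(z_i)-f(x_0))/h}$, and $d_a(z_i,x_0)=f(z_i)-f(x_0)$ means the Agmon bound is saturated at exactly the exponential scale of the eigenfunction near $z_i$, so it yields no relative-$O(h)$ (nor even relative-$o(1)$) control of $\partial_n w_h$ on $\Sigma_i$, and a fortiori no way to resolve the strictly smaller integrals at the higher saddles $z_i$, $i>n_0$, which are depressed by the extra factor $e^{-2(f(z_i)-f(z_1))/h}$. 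Turning the local ansatz into an estimate with that precision requires a weighted ($e^{\Psi/h}$-type) comparison between the eigenfunction and a full WKB expansion valid on a neighborhood of all of $B_{z_i}$ — this is precisely the content of Sections~\ref{sec:agmonestimate}--\ref{sec:wkb} of the paper, carried out for $1$-forms because for the scalar problem the reflected phase $\Phi_-$ only exists near $\partial\Omega$ and no single local model isolates the contribution of $z_i$ from those of the other saddles.

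There is also a structural reason your outline cannot be completed as stated: your boundary formula $\partial_n w_h\approx -\tilde w_h(x_0)\,\tfrac{2\partial_n f}{h}\,e^{-(f-f(x_0))/h}$ is exactly the "universal" exit density $\propto \partial_n f\,e^{-2f/h}$ of the Remark following Corollary~\ref{co.gene_sigma}, and the paper's numerical experiments in Section~\ref{sec:numeric} show that this asymptotics is \emph{false at the prefactor level} when \eqref{hypo1} fails. Hence any correct proof must use \eqref{hypo1} (and \eqref{hypo2}) in a quantitative way; in the paper they enter through the Agmon-distance separation $\inf_{B_{z_i}^c}d_a(\cdot,z_i)>\max[f(z_n)-f(z_i),f(z_i)-f(z_1)]$ that makes the quasi-mode errors in assumptions 2(a), 3, 4 of Proposition~\ref{ESTIME} negligible relative to each diagonal term. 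In your sketch \eqref{hypo1} is invoked only as helping "global Agmon estimates" for the decay of $w_h$ from $x_0$, where it plays no role (that decay needs nothing beyond \textbf{[H1]}--\textbf{[H3]}); no step of your argument would change if \eqref{hypo1} were dropped, so the argument cannot be sound. To repair it you would have to quantify how the boundary-layer profile near $z_i$ is contaminated by the other generalized saddle points and show this contamination is $O(h)$ relative under \eqref{hypo1}--\eqref{hypo2} — which is essentially the paper's $1$-form quasi-mode machinery in disguise.
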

 
Let us mention that the importance of the  geometric assumptions  \eqref{hypo1} and ~\eqref{hypo2} will be  discussed in Section \ref{sec:numeric}.
 
 \begin{remark}
As will become clear in the proof of Theorem \ref{TBIG0}, it can actually be proven that for all $i\in \left\{1,\ldots,n\right\}$, the residual $r_i(h)=O(h)$ appearing in~\eqref{eq:dnuh_asymptot} admits a full asymptotic expansion in $h$: there exists a sequence $(b_{k,i})_{k \ge 0}\in \mathbb R^{\mathbb N}$ such that for all $N\in \mathbb N$, in the limit $h \to 0$,
 $$r_i(h) = h\, \sum_{k=0}^{N} b_{k,i}h^k+ O(h^{N+2}).$$
 We do not state our main result with this expansion since, for general domains $\Omega$, the explicit computations of the sequence $(b_{k,i})_{k \ge 0}$ is not possible in practice. This remark also holds for all the residuals $O(h)$  in the next results. 
 \end{remark}
 \begin{proposition} \label{moyenneu}
Assume that \textbf{[H1]}, \textbf{[H2]} and \textbf{[H3]} hold. Then when $h\to 0$
$$\int_{\Omega} u_h(x) \ e^{- \frac{2}{h} f(x) }dx= \frac{ \pi ^{\frac{d}{4} } }{  \left({\rm det \ Hess } f   (x_0)  \right)^{1/4}  } \  h^{\frac d4} \ e^{-\frac{1}{h}f(x_0)} (1+O(h) ),$$
where $u_h$ is the eigenfunction associated with the smallest eigenvalue of $-L^{D,(0)}_{f,h}(\Omega)$ (see Proposition~\ref{prop:QSD_spec}) which satisfies \eqref{eq.u_norma0}.
 \end{proposition}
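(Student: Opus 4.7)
The plan is to combine the Laplace method on the weight $e^{-\frac{2}{h}f}$ with the fact that the ground state $u_h$ is, up to exponentially small errors, essentially constant on any fixed compact neighborhood of the unique global minimum $x_0 \in \Omega$. Since $f$ attains its minimum on $\overline{\Omega}$ only at the interior point $x_0$ by \textbf{[H2]}, standard Laplace asymptotics yield
$$\int_\Omega e^{-\frac{2}{h}f(x)}\, dx \; = \; \frac{(\pi h)^{d/2}}{\sqrt{\det \Hess f(x_0)}}\, e^{-\frac{2}{h}f(x_0)}(1+O(h)).$$
If we take for granted for the moment that $u_h$ is smooth near $x_0$ and approximately equal to a constant $c(h) = u_h(x_0)(1+O(h))$ on a suitable neighborhood, with an exponentially small contribution outside, then the very same expansion applied to $u_h\, e^{-\frac{2}{h}f}$ and to $u_h^2\, e^{-\frac{2}{h}f}$ yields respectively
$$\int_\Omega u_h\, e^{-\frac{2}{h}f}\, dx = c(h)\, \frac{(\pi h)^{d/2}}{\sqrt{\det \Hess f(x_0)}}\, e^{-\frac{2}{h}f(x_0)}(1+O(h))$$
and, using the normalization \eqref{eq.u_norma0},
$$1 = c(h)^2\, \frac{(\pi h)^{d/2}}{\sqrt{\det \Hess f(x_0)}}\, e^{-\frac{2}{h}f(x_0)}(1+O(h)).$$

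The second relation determines
$$c(h) = \frac{(\det \Hess f(x_0))^{1/4}}{(\pi h)^{d/4}}\, e^{\frac{1}{h}f(x_0)}(1+O(h)),$$
and substituting into the first display, the exponents and determinants telescope to give exactly
$$\int_\Omega u_h\, e^{-\frac{2}{h}f}\, dx = \frac{\pi^{d/4}}{(\det \Hess f(x_0))^{1/4}}\, h^{d/4}\, e^{-\frac{1}{h}f(x_0)}(1+O(h)),$$
which is the claim. So once the local structure of $u_h$ near $x_0$ is controlled, the proposition reduces to one line of arithmetic.

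The real work, and the main obstacle, is the rigorous justification of the ``essentially constant near $x_0$'' step with a multiplicative $(1+O(h))$ control (not merely up to an exponentially small error). I would obtain it by comparing $u_h$ with an explicit quasimode $\psi_h = Z_h^{-1}\chi$, where $\chi \in C_c^\infty(\Omega)$ equals $1$ on a neighborhood of $x_0$ and $Z_h$ is chosen so that $\|\psi_h\|_{L^2_w}=1$. Using $\psi_h$ as a test function in the Rayleigh quotient shows that $\lambda_h$ is exponentially small (of order $e^{-\frac{2}{h}(f(z_1)-f(x_0))}$ by standard Dirichlet Witten-Laplacian estimates), and Agmon-type exponential decay estimates for the ground state of $-L^{D,(0)}_{f,h}(\Omega)$ then force $u_h$ to concentrate in any prescribed small neighborhood of $x_0$ up to an exponentially small $L^2_w$ error. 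Elliptic regularity for $-L^{(0)}_{f,h} u_h = \lambda_h u_h$ promotes this $L^2_w$ concentration to a pointwise $C^k$ statement on any compact subset of $\Omega$, legitimizing the Laplace expansions above with the claimed $(1+O(h))$ remainder. This semiclassical quasimode/Agmon input is precisely what underlies Theorem~\ref{TBIG0}, so Proposition~\ref{moyenneu} will in the end be a by-product of the same analysis, rather than an independent ingredient.
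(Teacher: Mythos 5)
Your reduction to arithmetic is fine, and your quasimode $\psi_h=Z_h^{-1}\chi$ is exactly the paper's $\tilde u$ (Definition~\ref{tildeu}); the weak point is the justification of the key claim ``$u_h=c(h)(1+O(h))$ pointwise near $x_0$''. The chain you propose — Rayleigh quotient gives $\lambda_h$ exponentially small, Agmon estimates give $L^2_w$-concentration of $u_h$ near $x_0$, elliptic regularity upgrades this to a pointwise $C^k$ statement — does not deliver near-constancy: concentration plus regularity constrains \emph{where} the weighted mass of $u_h$ sits, not the \emph{profile} of $u_h$ on a neighborhood of $x_0$ (a Gaussian-shaped ground state would pass both tests), and a Harnack-type argument for the $h$-dependent operator only gives constants growing in $1/h$. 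What actually pins the profile is spectral: $-L^{D,(0)}_{f,h}(\Omega)$ has exactly one eigenvalue below $\tfrac{\sqrt h}{2}$, so by Lemma~\ref{quadra} the exponentially small Dirichlet energy of $\psi_h$ (Lemma~\ref{nablauu}) forces $\|(1-\pi_h^{(0)})\psi_h\|_{L^2_w}=O(e^{-c/h})$, whence $u_h=\pi_h^{(0)}\psi_h/\|\pi_h^{(0)}\psi_h\|_{L^2_w}$ is exponentially close to $\psi_h$ in $L^2_w$ (Lemma~\ref{pi0u} and \eqref{eq:uh}). You name the quasimode comparison but never invoke this projection/spectral-gap step, and without it your determination of $c(h)$ from the normalization \eqref{eq.u_norma0} is unsupported.

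Once you have $\|u_h-\psi_h\|_{L^2_w}=O(e^{-c/h})$, the pointwise upgrade is unnecessary: Cauchy--Schwarz gives $\int_\Omega u_h e^{-\frac 2h f}=\int_\Omega \psi_h e^{-\frac 2h f}+O\big(e^{-\frac ch}\big)e^{-\frac 1h f(x_0)}$, and $\int_\Omega \psi_h e^{-\frac 2h f}=\int_\Omega\chi e^{-\frac 2h f}\big/\big(\int_\Omega\chi^2 e^{-\frac 2h f}\big)^{1/2}$ is evaluated by Laplace's method, giving the stated asymptotics; this is precisely the paper's proof in Section~\ref{sec:moyeu}, and your normalization computation of $c(h)$ is just the Laplace evaluation of $Z_h$ in disguise. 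If you insist on the pointwise route, it can be salvaged after the projection step by applying interior elliptic estimates to $w=u_h-\psi_h$, which solves $L^{(0)}_{f,h}w=-\lambda_h u_h$ on $\{\chi=1\}$ (the $h^{-N}$ losses are harmless against exponentially small errors), but this buys nothing for the present proposition.
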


\begin{proposition}\label{lambdah}
Assume that \textbf{[H1]}, \textbf{[H2]}, and
\textbf{[H3]} hold. Then, in the
limit $h \to 0$,
\begin{equation}\label{eq:lhh}
\lambda_h= \frac{\sqrt{ {\rm det \ Hess } f   (x_0) }  }{\sqrt{\pi h}}\sum_{i=1}^{n_0}\frac{  \partial_nf(z_i)    }{    \sqrt{ {\rm det \ Hess } f|_{ \partial \Omega}   (z_i) }  } \ e^{-\frac{2}{h}(f(z_1)-f(x_0))}\left( 1+ O(h) \right),
\end{equation}
where $\lambda_h$ is the smallest eigenvalue of $-L^{D,(0)}_{f,h}(\Omega)$ (see Proposition~\ref{prop:QSD_spec}).
\end{proposition}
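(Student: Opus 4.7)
The plan is to combine a Green-type identity with the two sharp estimates already established. Starting from $-L^{(0)}_{f,h}u_h=\lambda_h u_h$ on $\Omega$ with $u_h|_{\partial\Omega}=0$, one uses the intertwining
\[
e^{-\frac{2}{h}f}\,L^{(0)}_{f,h} u_h \;=\; \diver\!\Bigl(\tfrac{h}{2}\, e^{-\frac{2}{h}f}\,\nabla u_h\Bigr)
\]
and integrates over $\Omega$; the divergence theorem then gives
\[
\lambda_h \int_{\Omega} u_h\, e^{-\frac{2}{h}f}\,dx \;=\; -\frac{h}{2}\int_{\partial\Omega} (\partial_n u_h)\, e^{-\frac{2}{h}f}\,d\sigma.
\]
Proposition~\ref{moyenneu} provides the asymptotics of the denominator, so everything reduces to evaluating the full boundary integral in the numerator via Theorem~\ref{TBIG0}.

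For the numerator, I would fix open sets $\Sigma_i\ni z_i$ with $\overline{\Sigma_i}\subset B_{z_i}$ for $i=1,\dots,n$, chosen so that the residual set $R:=\partial\Omega\setminus \bigcup_i \Sigma_i$ is a small neighborhood of the union of separatrices between the basins $B_{z_i}$. By \textbf{[H1]}, the critical points of $f|_{\partial\Omega}$ lying on those separatrices are saddle points of positive index whose values are strictly greater than $f(z_1)$ (since the unstable manifold of such a saddle reaches some~$z_j$ and $f$ strictly decreases along it), so a suitable choice of $(\Sigma_i)_i$ guarantees $\min_R f\geq f(z_1)+\delta$ for some $\delta>0$. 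Applying Theorem~\ref{TBIG0} on each $\Sigma_i$ produces dominant contributions for $i=1,\dots,n_0$, all at the exponential rate $e^{-(2f(z_1)-f(x_0))/h}$, while the indices $i>n_0$ yield $e^{-(2f(z_i)-f(x_0))/h}$ with $f(z_i)>f(z_1)$, hence are exponentially suppressed. The contribution of $R$ is handled by a crude semiclassical bound $|\partial_n u_h|\leq C h^{-k}\,e^{f(x_0)/h}$ on $\partial\Omega$ (a standard consequence of the Dirichlet elliptic regularity of the Witten Laplacian together with the normalization $\int u_h^2 e^{-2f/h}=1$), which gives
\[
\Bigl|\int_R (\partial_n u_h)\,e^{-\frac{2}{h}f}\,d\sigma\Bigr|\;\leq\; C\,h^{-k}\,\sigma(R)\,e^{-\frac{2}{h}(f(z_1)-f(x_0))}\,e^{-\frac{2\delta}{h}},
\]
exponentially negligible compared with the leading term.

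Substituting the resulting asymptotics back, the powers of~$h$ combine as $h\cdot h^{(d-6)/4}/h^{d/4}=h^{-1/2}$, the powers of~$\pi$ as $\pi^{(d-2)/4}/\pi^{d/4}=\pi^{-1/2}$, the exponentials as $e^{-(2f(z_1)-f(x_0))/h}/e^{-f(x_0)/h}=e^{-2(f(z_1)-f(x_0))/h}$, the factors $(\det\Hess f(x_0))^{1/4}$ from numerator and denominator merge into $(\det\Hess f(x_0))^{1/2}$, and the sign $-h/2$ in Green's identity cancels the minus sign built into~$A_i(h)$. Direct bookkeeping then produces exactly the formula~\eqref{eq:lhh}.

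The genuinely delicate ingredient is the residual estimate over~$R$. It uses in an essential way (i)~the Morse structure of $f|_{\partial\Omega}$, which creates the strict energy gap $\delta>0$ between $f(z_1)$ and the energies realized on the separatrices, and (ii)~a uniform control of $\partial_n u_h$ on $\partial\Omega$ with the correct exponential prefactor $e^{f(x_0)/h}$, which, while standard, is non-trivial in the semiclassical regime. Once these are granted, the rest of the argument is an exercise in combining the two already established sharp asymptotics.
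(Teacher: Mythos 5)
Your overall scheme (the Green identity $\lambda_h \int_\Omega u_h e^{-\frac2h f} = -\frac h2 \int_{\partial\Omega} (\partial_n u_h) e^{-\frac2h f} d\sigma$, combined with Proposition~\ref{moyenneu} and Theorem~\ref{TBIG0}, and your final bookkeeping of the powers of $h$, $\pi$ and the exponentials) is arithmetically consistent, but it does not prove the statement as given. Theorem~\ref{TBIG0} requires the extra geometric assumptions \eqref{hypo1} and \eqref{hypo2}, whereas Proposition~\ref{lambdah} is asserted under \textbf{[H1]}--\textbf{[H3]} only (and the paper stresses it even holds under weaker assumptions). The paper's own proof avoids the boundary altogether: it starts from the variational identity $\lambda_h=\frac h2\|\nabla u_h\|^2_{L^2_w}$, uses $\nabla u_h\in\range\pi_h^{(1)}$, and expands $\nabla u_h$ on the Gram--Schmidt basis built from the quasi-modes, needing only the interaction terms $\langle\nabla u_h,\psi_j\rangle_{L^2_w}$ and the weaker quasi-mode estimates of Proposition~\ref{pr.lh}, which hold without \eqref{hypo1}--\eqref{hypo2}. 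So your route, even if completed, establishes the asymptotics of $\lambda_h$ only under strictly stronger hypotheses.

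The second genuine gap is the residual estimate on $R=\partial\Omega\setminus\bigcup_i\Sigma_i$. The pointwise bound $|\partial_n u_h|\le C h^{-k}e^{f(x_0)/h}$ on $\partial\Omega$ does not follow from ``Dirichlet elliptic regularity together with the normalization'': the normalization only says $\|u_h e^{-f/h}\|_{L^2(\Omega)}=1$, which after removing the weight yields a factor $e^{\max_{\overline\Omega}f/h}$, not $e^{f(x_0)/h}$. To gain the factor $e^{f(x_0)/h}$ at a boundary point $x$ you need an Agmon decay estimate for $v_h=u_h e^{-f/h}$ of the type $e^{d_a(\cdot,x_0)/h}v_h=O(h^{-N})$ up to the boundary (using $d_a(x,x_0)\ge f(x)-f(x_0)$), together with an $h$-dependent elliptic/trace argument converting this into a pointwise control of $\partial_n v_h$ on $\partial\Omega$. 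The paper never establishes such estimates for $u_h$ itself (its Agmon estimates, Proposition~\ref{pr.Agmon}, concern the auxiliary eigenforms $u^{(1)}_{h,i}$ of the mixed Witten Laplacians on the subdomains $\dot\Omega_i$); indeed the decomposition \eqref{eq:dnuh_decomp} over quasi-modes is precisely designed to avoid ever integrating $\partial_n u_h$ over the whole boundary. Your geometric observation that $\min_R f\ge f(z_1)+\delta$ after a suitable choice of the $\Sigma_i$ is correct, and, granting the pointwise bound, the $R$-contribution is indeed negligible; note however that your displayed bound carries the wrong exponent, $e^{-\frac2h(f(z_1)-f(x_0))}$ instead of the correct $e^{-\frac1h(2f(z_1)-f(x_0))}e^{-\frac{2\delta}{h}}$ coming from $e^{f(x_0)/h}e^{-\frac2h(f(z_1)+\delta)}$.
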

\noindent
Theorem~\ref{TBIG0} is the main contribution of this
work. Actually Theorem~\ref{TBIG0} will be proven in a more general
framework: namely when $\overline \Omega$ is a $C^{\infty}$ connected compact
oriented Riemannian $d$-dimensional manifold with  boundary $\partial
\Omega$. Theorem~\ref{TBIG0}, Proposition~\ref{moyenneu} and Proposition~\ref{lambdah}  are respectively proved in Sections \ref{sec:goodquasimodes},  \ref{sec:moyeu} and \ref{sec:lambdah}.  
  For the sake of completeness, we provide a proof of Proposition~\ref{lambdah} in our specific setting, but this result actually holds under weaker geometric assumptions, see~\cite{di-gesu-lelievre-le-peutrec-nectoux-16} or \cite{helffer-nier-06}.  
\medskip

\noindent
These results have the following consequence on the first exit point
distribution and the estimate of the exact rates~$(k_{0,i})_{i=1,\ldots,n}$ using the
Eyring-Kramers formula (see Section~\ref{sec:back_jump}). We recall
that~$(X_t)_{t\geq 0}$ denotes the solution to~(\ref{eq:langevin}),
$\tau_{\Omega}$ is the exit
time from the domain~$\Omega$ and~$\nu_h$ is the quasi stationary distribution associated with~$(X_t)_{t\ge 0}$ and~$\Omega$.

\begin{corollary}\label{co.proba-sigmai}
 Under the hypotheses of Theorem~\ref{TBIG0}, for $i\in
\{1,\ldots,n\}$ and  for all
 open sets $\Sigma_i \subset \partial \Omega$ containing $z_i$ and
 such that $\overline\Sigma_i \subset B_{z_i}$, in the limit $h\to 0$:
\begin{align} 
\nonumber  
\mathbb P_{\nu_h} \left[ X_{\tau_{\Omega}} \in \Sigma_i\right]&=  \frac{\partial_nf(z_i)  }{ \sqrt{ {\rm det \ Hess } f_{|\partial \Omega }   (z_i) }}  
\left ( \sum_{k=1}^{n_0} \frac{ \partial_nf(z_k) }{\sqrt{ {\rm det \ Hess } f_{|\partial \Omega }   (z_k) } } \right)^{-1}  \\
\label{eq:limproba}
&\quad \times e^{-\frac{2}{h} (f(z_i)-f(z_1))}  (   1+   
 O(h)  ).
 \end{align}
\end{corollary}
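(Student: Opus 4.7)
The plan is to obtain \eqref{eq:limproba} by a direct combination of Proposition~\ref{indep1} with the three asymptotic estimates already stated: Theorem~\ref{TBIG0} for the numerator of the exit density, Proposition~\ref{moyenneu} for the $L^2_w$-type normalization integral, and Proposition~\ref{lambdah} for the principal eigenvalue~$\lambda_h$. No new analytical input is required; the corollary is, by design, the arithmetic consequence of these three asymptotics once they are plugged into the exact formula for the exit distribution.

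The first step is to use Proposition~\ref{indep1}: integrating the density \eqref{eq:dens} over $\Sigma_i\subset\pa\Omega$ yields
\begin{equation*}
\mathbb P_{\nu_h}\!\left[X_{\tau_\Omega}\in\Sigma_i\right]
=-\frac{h}{2\lambda_h}\,
\frac{\displaystyle\int_{\Sigma_i}(\pa_n u_h)(z)\,e^{-\frac{2}{h}f(z)}\,\sigma(dz)}
{\displaystyle\int_\Omega u_h(y)\,e^{-\frac{2}{h}f(y)}\,dy}.
\end{equation*}
The geometric hypotheses \eqref{hypo1} and \eqref{hypo2} guarantee that Theorem~\ref{TBIG0} applies to each $\Sigma_i$; together with the normalization~\eqref{eq.u_norma0} this gives the three required asymptotic ingredients.

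The second step is to substitute them. Inserting Theorem~\ref{TBIG0} and Proposition~\ref{moyenneu} into the ratio above, the exponential factors combine into $e^{-\frac{2}{h}(f(z_i)-f(x_0))}$ (the $f(x_0)$-term in the denominator promotes the $2f(z_i)-f(x_0)$ to $2(f(z_i)-f(x_0))$), and the $h$-powers reduce to $h^{(d-6)/4-d/4}=h^{-3/2}$. The $(\det\,\Hess f(x_0))^{1/4}$ from $A_i(h)$ meets the $(\det\,\Hess f(x_0))^{-1/4}$ from Proposition~\ref{moyenneu}, producing a full $\sqrt{\det\,\Hess f(x_0)}$. After simplification one obtains
\begin{equation*}
-\frac{h}{2\lambda_h}\cdot
\frac{\int_{\Sigma_i}\pa_n u_h\,e^{-\frac{2}{h}f}\,d\sigma}{\int_\Omega u_h\,e^{-\frac{2}{h}f}}
=\frac{\sqrt{\det\,\Hess f(x_0)}\;\pa_n f(z_i)}{\sqrt{\pi\,\det\,\Hess f_{|\pa\Omega}(z_i)}}\cdot\frac{h^{-1/2}}{\lambda_h}\;e^{-\frac{2}{h}(f(z_i)-f(x_0))}\bigl(1+O(h)\bigr).
\end{equation*}

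The third step is to replace $\lambda_h$ by its equivalent from Proposition~\ref{lambdah}. The quantity $h^{-1/2}/\lambda_h$ produces the factor $\sqrt{\pi}/\sqrt{\det\,\Hess f(x_0)}$ divided by $\sum_{k=1}^{n_0}\pa_n f(z_k)/\sqrt{\det\,\Hess f_{|\pa\Omega}(z_k)}$, together with the exponential gain $e^{\frac{2}{h}(f(z_1)-f(x_0))}$. The prefactor $\sqrt{\det\,\Hess f(x_0)}$ and the $\sqrt{\pi}$ cancel against those above, and the two exponential terms combine into $e^{-\frac{2}{h}(f(z_i)-f(z_1))}$. This is exactly \eqref{eq:limproba}, the $O(h)$ errors remaining multiplicative since none of the three asymptotics vanishes to leading order.

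There is no genuine obstacle here: the argument is purely a bookkeeping computation combining the three asymptotics. The only point that deserves attention is the sign (the density \eqref{eq:dens} involves a minus sign in front of $\pa_n u_h$, but $\pa_n u_h\le 0$ on $\pa\Omega$ since $u_h>0$ in $\Omega$ and vanishes on the boundary, which is consistent with the sign of $A_i(h)$ being negative in Theorem~\ref{TBIG0}) and the careful matching of the half-integer powers of~$h$; both are handled in the calculation above.
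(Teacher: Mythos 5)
Your proposal is correct and follows essentially the same route as the paper's own proof (Section 5.1.3): one integrates the exact density from Proposition~\ref{indep1} over $\Sigma_i$, combines Propositions~\ref{moyenneu} and~\ref{lambdah} to evaluate $\frac{2\lambda_h}{h}\int_\Omega u_h e^{-\frac{2}{h}f}$, and then inserts Theorem~\ref{TBIG0} for $\int_{\Sigma_i}(\partial_n u_h)e^{-\frac{2}{h}f}d\sigma$. Your bookkeeping of the powers of $h$, the Hessian determinants and the exponential factors is accurate, so nothing further is needed.
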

\begin{sloppypar}
\noindent
As a simple consequence of Corollary~\ref{co.proba-sigmai}, we obtain the expected 
result that $(X_t)_{t \ge 0}$ leaves $\Omega$ around the global
minima of $f$ on $\partial \Omega$:  for any 
 collection of open sets $(\Sigma_j)_{1 \le j \le n_0}$ such that for all $j\in \{1,\dots,n_0\}$, $\overline \Sigma_j \subset B_{z_j}$ and $z_j \in \Sigma_j$, in
 the limit $h\to 0$,
$ \mathbb P_{\nu_h}\left [ X_{\tau_{\Omega}} \in \bigcup_{j=1}^{n_0}\Sigma_j\right] =1+ O (h)$. 
Actually, this latter  result can be proven with an exponentially small residual ($O (h)$ is replaced by $O
\left(e^{-c/h}\right)$ for some positive $c$) in a more general setting (see for instance
\cite{kamin-78,perthame-90,sugiura-95,freidlin-wentzell-84,day1987r,day1984a}). Let us also refer
to~\cite{di-gesu-lelievre-le-peutrec-nectoux-16} where we discuss this result in a more general
setting (for example $f$ can have several critical points in $\Omega$ and the assumptions~\eqref{hypo1} and~\eqref{hypo2} are not
needed). 

\end{sloppypar}
\begin{corollary}\label{cor:rates}
Let us assume that the hypotheses  of  Theorem~\ref{TBIG0} are satisfied. 
Let  $i\in
\{1,\ldots,n\}$ and   
 $\Sigma_i \subset \partial \Omega$ be an open set  containing $z_i$  
 such that $\overline\Sigma_i \subset B_{z_i}$. 
Using the notation of Section~\ref{sec:back_jump}, assume that
$\Sigma_i$ is the common boundary between $\Omega$ and another domain
$\Omega_i\subset \mathbb R^d$.  Under the hypotheses of Theorem~\ref{TBIG0}, the transition rate given
by~\eqref{eq:exact_rates}, to go from $\Omega$ to $\Omega_i$ satisfies, in the limit $h \to 0$,
\begin{equation} \label{resultatkil}
k_{0,i}=\frac{1}{\sqrt{\pi h}}  \partial_nf(z_i)  \frac{ \sqrt{  {\rm det \ Hess } f  (x_0) } }{  \sqrt{  {\rm det \ Hess } f_{|\partial \Omega }   (z_i) }} e^{-\frac{2}{h}(f(z_i)-f(x_0))} (1+ O (h)).
\end{equation} 
\end{corollary}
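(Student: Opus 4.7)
The plan is to derive Corollary~\ref{cor:rates} by combining three ingredients: the exact expression for $k_{0,i}$ in terms of $\lambda_h$ and the exit distribution (Proposition~\ref{indep1}), the sharp asymptotics of the exit probability (Corollary~\ref{co.proba-sigmai}), and the sharp asymptotics of $\lambda_h$ (Proposition~\ref{lambdah}). No new analytical work is needed beyond multiplication and bookkeeping of constants.

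First I would rewrite the exact rate formula. By Proposition~\ref{indep1}, starting from the quasi stationary distribution $\nu_h$, the random variables $\tau_{\Omega}$ and $X_{\tau_\Omega}$ are independent, with $\tau_\Omega \sim \mathrm{Exp}(\lambda_h)$, hence $\E_{\nu_h}(\tau_\Omega) = 1/\lambda_h$. Since $\Sigma_i$ is assumed to equal the common boundary $\partial\Omega \cap \partial\Omega_i$, definition~\eqref{eq:exact_rates} gives
\begin{equation*}
k_{0,i} \;=\; \lambda_h \cdot \mathbb{P}_{\nu_h}\!\left[X_{\tau_\Omega} \in \Sigma_i\right].
\end{equation*}

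Next, I would substitute the two asymptotic estimates. By Corollary~\ref{co.proba-sigmai}, in the limit $h\to 0$,
\begin{equation*}
\mathbb{P}_{\nu_h}\!\left[X_{\tau_\Omega}\in\Sigma_i\right]
= \frac{\partial_n f(z_i)}{\sqrt{\det\mathrm{Hess}\, f_{|\partial\Omega}(z_i)}}
\Bigg(\sum_{k=1}^{n_0}\frac{\partial_n f(z_k)}{\sqrt{\det\mathrm{Hess}\, f_{|\partial\Omega}(z_k)}}\Bigg)^{\!-1}
e^{-\frac{2}{h}(f(z_i)-f(z_1))}(1+O(h)),
\end{equation*}
while by Proposition~\ref{lambdah},
\begin{equation*}
\lambda_h = \frac{\sqrt{\det\mathrm{Hess}\, f(x_0)}}{\sqrt{\pi h}}\sum_{k=1}^{n_0}\frac{\partial_n f(z_k)}{\sqrt{\det\mathrm{Hess}\, f_{|\partial\Omega}(z_k)}}\, e^{-\frac{2}{h}(f(z_1)-f(x_0))}(1+O(h)).
\end{equation*}
Multiplying these two expressions, the sum $\sum_{k=1}^{n_0}\partial_n f(z_k)/\sqrt{\det\mathrm{Hess}\, f_{|\partial\Omega}(z_k)}$ cancels, the exponentials combine via $(f(z_i)-f(z_1))+(f(z_1)-f(x_0)) = f(z_i)-f(x_0)$, and the two $(1+O(h))$ factors merge into a single $(1+O(h))$. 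This yields exactly~\eqref{resultatkil}.

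There is no real obstacle here: the corollary is a direct algebraic consequence of results already proved (Theorem~\ref{TBIG0}, Proposition~\ref{moyenneu}, Proposition~\ref{lambdah} and Corollary~\ref{co.proba-sigmai}). The only subtlety to check is that the residuals are controlled multiplicatively at order $h$, which is immediate since both factors are of the form $C_i(1+O(h))$ with $C_i\neq 0$; the product of two such expansions remains of the form $C_i C_j (1+O(h))$. One should also note that since $\Sigma_i$ is open, contains $z_i$, and $\overline{\Sigma}_i\subset B_{z_i}$, the geometric hypotheses of Theorem~\ref{TBIG0} ensure that all the quoted asymptotics apply on $\Sigma_i$, giving the Eyring--Kramers formula with sharp prefactor.
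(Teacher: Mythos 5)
Your argument is correct and is essentially the route the paper intends: the corollary is exactly the product of the exact rate identity $k_{0,i}=\lambda_h\,\P_{\nu_h}[X_{\tau_\Omega}\in\Sigma_i]$ (from Proposition~\ref{indep1} and~\eqref{eq:exact_rates}) with the asymptotics of Proposition~\ref{lambdah} and Corollary~\ref{co.proba-sigmai}, and your bookkeeping of the cancelling sum, the exponents and the $(1+O(h))$ factors is right. Note only that an equivalent (and marginally leaner) variant is to start from~\eqref{eq:exact_rates_formula} and apply Theorem~\ref{TBIG0} and Proposition~\ref{moyenneu} directly, in which case $\lambda_h$ never enters; since Corollary~\ref{co.proba-sigmai} is itself derived from these same ingredients, the two computations coincide.
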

\noindent
This corollary thus gives a justification of the Eyring-Kramers formula and the Transition State Theory to build Markov models.
As stated in the assumptions, the exit rates are obtained assuming $\partial_n f >
0$ on $\partial \Omega$: the local minima $z_1, \ldots ,z_n$ of $f$ on
$\partial \Omega$ are therefore
not saddle points of $f$ but so-called {\em generalized saddle points} (see~\cite{helffer-nier-06,le-peutrec-10}). This  appellation ''generalized saddle points'' is justified by the fact that, under  \textbf{[H1]}, \textbf{[H2]}, \textbf{[H3]} and when~$f$ is extended by~$-\infty$ outside~$\overline \Omega$ (which  is consistent with the Dirichlet boundary conditions used to define $L^{D,(0)}_{f,h}$), the points $(z_i)_{i=1,\ldots,n}$ are geometrically   saddle points of $f$:~$z_i$ is a local minimum of $f|_{\pa \Omega}$ and a  local maximum of~$f|_{D_i}$, where $D_i$ is  the straight line passing through~$z_i$ and orthogonal to  $\pa \Omega$   at~$z_i$.
 In a future work, we intend to extend these results to the case where
the points $(z_i)_{1 \le i \le n}$ are saddle points of~$f$, in which
case we expect to prove the same result~\eqref{resultatkil} for the exit
rates, with a modified prefactor:
$$A_{0,i}=\displaystyle\frac{1}{\pi}|\lambda^-(z_i)| \frac{\displaystyle \sqrt{\det\,
  \Hess \, f(x_0)}}{\displaystyle\sqrt{|\det\,  \Hess \
   f(z_i)|}}$$
(this formula can be obtained using formal expansions on
the exit time and Laplace's method). Notice that the latter formula differs from~\eqref{eq:kramers}--\eqref{eq:kramers_pref} by a
multiplicative factor $1/2$ since $\lambda_h$ is the exit rate
from $\Omega$ and not the transition rate to one of the neighboring state. Concerning this multiplicative factor $1/2$,  we refer for example  to the
remark on page 408 in~\cite{bovier-eckhoff-gayrard-klein-04},~\cite[Remark 10]{IHPLLN}, and the results on
asymptotic exit times in~\cite{maier-stein-93}. This
factor is due to the fact that once on the saddle point, the process
has a probability one half to go back to $\Omega$, and a probability one half
to effectively leave $\Omega$, in the limit $h\to0$. This
multiplicative factor does not have any influence on the law of the
next visited state which only involves ratio of the rates $k_{0,i}$,
see Section~\ref{sec:back_jump} and Equation~\eqref{eq:limproba}.


\subsection{Discussion and generalizations}\label{sec:dis_gene}

As explained above, the interest of Theorem~\ref{TBIG0} is that it
justifies the use of the Eyring-Kramers formula to model the exit
event using a jump Markov model including the prefactors. It gives in particular the relative
probability to leave $\Omega$ through each of the local minima $z_i$ of $f$ on the
boundary $\partial \Omega$. Moreover, one obtains an estimate of the
relative error  on the exit probabilities (and
not only on the logarithm of the exit probabilities as
in~\eqref{eq:LD}): it is of order $h$, see Equation~\eqref{eq:limproba}.

In Section~\ref{sec:unpoint}, we explain how this result can be
generalized to a situation where the process $(X_t)_{t \ge 0}$ is
assumed to start under another initial condition than the quasi
stationary distribution.  The importance
of the geometric assumption~\eqref{hypo1}-\eqref{hypo2} (resp. assumption~\eqref{eq:hypo2_bis}) to obtain the 
asymptotic result of Corollary~\ref{co.proba-sigmai} (resp. its generalization to deterministic initial conditions, see Corollary~\ref{cc1}) is discussed in Section~\ref{sec:numeric}. Finally, in Section~\ref{sec:gen_sigma}, we discuss extensions to less stringent conditions than~\eqref{hypo1}-\eqref{hypo2}. Moreover
the exit
through subsets of $\partial \Omega$ which do not necessarily contain
one of the local minima~$z_i$ of $f|_{\partial \Omega}$ is considered: this shows in particular
the interest of estimating the prefactors in the asymptotic approximations of the
exit rates.
 
 \subsubsection{Extension of the result to other initial conditions}
 \label{sec:unpoint}
%
%
The question we would like to address in this section is how to
generalize Corollary~\ref{co.proba-sigmai},  to a deterministic initial condition: $X_0=x$ for $x \in \Omega$.

\begin{corollary}\label{cc1}
Let us assume that all the hypotheses of Corollary~\ref{co.proba-sigmai} are
satisfied, and that in addition there exists $i_0\in \{2,\ldots,n\}$ such
that 
\begin{equation}\label{eq:hypo2_bis}
2( f(z_{i_0})-f(z_1))<f(z_1)-f(x_0).
\end{equation}
 Let $j\in \{1,\ldots,i_0\}$ and $\alpha \in \R$ be such
that  $$f(x_0) < \alpha< 2 f(z_1)-f(z_{j}).$$ Then, for $i\in
\{1,\ldots,j\}$ and for all
 open sets $\Sigma_i \subset \partial \Omega$ containing $z_i$ and
 such that $\overline\Sigma_i \subset B_{z_i}$, we have uniformly in $x\in
f^{-1}( (-\infty, \alpha ]) \cap \Omega$, in the limit $h\to 0$:
\begin{equation}\label{eq.pxx}
\P_{x} [ X_{\tau_{\Omega}}\in \Sigma_i]=      \frac{\partial_nf(z_i)  }{ \sqrt{ {\rm det \ Hess } f_{|\partial \Omega }   (z_i) }}  
\left ( \sum_{k=1}^{n_0} \frac{ \partial_nf(z_k) }{\sqrt{ {\rm det \ Hess } f_{|\partial \Omega }   (z_k) } } \right)^{-1}  e^{-\frac{2}{h} (f(z_i)-f(z_1))} (   1 +   
 O(h) ).
 \end{equation}
\end{corollary}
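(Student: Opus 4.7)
The plan is to reduce the problem to the quasi-stationary case already established in Corollary~\ref{co.proba-sigmai} by combining the strong Markov property at an intermediate time $t_h$ with the spectral decomposition of the Dirichlet semigroup $(e^{tL^{D,(0)}_{f,h}(\Omega)})_{t\ge 0}$ on $L^2_w(\Omega)$ in the orthonormal basis $(u_k)_{k\ge 1}$ of eigenfunctions associated with the eigenvalues $0<\lambda_1<\lambda_2\le\cdots$. Setting $v(y):=\mathbb P_y[X_{\tau_\Omega}\in\Sigma_i]$, the strong Markov property gives
$$
\mathbb P_x[X_{\tau_\Omega}\in\Sigma_i] \;=\; \mathbb P_x\bigl[\tau_\Omega\le t_h,\,X_{\tau_\Omega}\in\Sigma_i\bigr] \;+\; \mathbb E_x\bigl[v(X_{t_h})\mathbf{1}_{\tau_\Omega>t_h}\bigr],
$$
and the second summand admits the eigenmode expansion $\sum_{k\ge 1} e^{-\lambda_k t_h}u_k(x)\langle v,u_k\rangle_{L^2_w}$. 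I will pick $t_h=h^{-M}$ with $M$ large, so that $e^{-\lambda_1 t_h}=1+O(e^{-c/h})$ (since $\lambda_1$ is exponentially small by Proposition~\ref{lambdah}) while $e^{-\lambda_k t_h}$ is super-polynomially small for $k\ge 2$, thanks to the polynomial lower bound on $\lambda_k-\lambda_1$ coming from the spectral analysis of $-L^{D,(0)}_{f,h}(\Omega)$.

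To identify the $k=1$ contribution I use the expression~\eqref{eq:expQSD} of $\nu_h$ to rewrite
$$
\langle v,u_1\rangle_{L^2_w} \;=\; \biggl(\int_\Omega u_1\,e^{-\tfrac{2}{h}f}\biggr)\mathbb P_{\nu_h}[X_{\tau_\Omega}\in\Sigma_i],
$$
so that the principal mode equals $u_1(x)\bigl(\int_\Omega u_1\,e^{-2f/h}\bigr)\cdot \mathbb P_{\nu_h}[X_{\tau_\Omega}\in\Sigma_i]$ up to a $(1+O(e^{-c/h}))$ factor, while the higher modes are absolutely negligible. The analytic input I then need is the pointwise WKB-type estimate
$$
u_1(x)\int_\Omega u_1(y)\,e^{-\tfrac{2}{h}f(y)}\,dy \;=\; 1+O(h), \qquad \text{uniformly on } f^{-1}\bigl((-\infty,\alpha]\bigr),
$$
which I expect to obtain from the sharp asymptotics of $u_1$ near $x_0$ (already invoked in the proofs of Theorem~\ref{TBIG0} and Proposition~\ref{moyenneu}) together with the fact that, by~\textbf{[H2]}, $\{f\le\alpha\}$ lies entirely in the basin of attraction of $x_0$ for the flow $\dot x=-\nabla f$: on this set $u_1$ is, at leading order, the constant $(\det\Hess f(x_0))^{1/4}(\pi h)^{-d/4}e^{f(x_0)/h}$, which is exactly the reciprocal of the asymptotic of $\int_\Omega u_1\,e^{-2f/h}$ given by Proposition~\ref{moyenneu}.

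The remaining term $\mathbb P_x[\tau_\Omega\le t_h,X_{\tau_\Omega}\in\Sigma_i]$ will be controlled by the uniform short-time bound $\mathbb P_x[\tau_\Omega\le t_h]\le C\,e^{-\tfrac{2}{h}(f(z_1)-\alpha)}$ valid on $\{f\le\alpha\}$, itself a consequence of the same spectral expansion applied to the constant function $1$ (or equivalently of a sharpened Freidlin--Wentzell estimate compatible with a polynomial time window). The assumption $\alpha<2f(z_1)-f(z_j)$ is exactly what makes $2(f(z_1)-\alpha)>2(f(z_j)-f(z_1))\ge 2(f(z_i)-f(z_1))$ for every $i\le j$, rendering this contribution negligible compared to $\mathbb P_{\nu_h}[X_{\tau_\Omega}\in\Sigma_i]\asymp e^{-2(f(z_i)-f(z_1))/h}$ extracted from Corollary~\ref{co.proba-sigmai}; assumption~\eqref{eq:hypo2_bis} then guarantees that the admissible range $(f(x_0),\,2f(z_1)-f(z_j))$ for $\alpha$ is non-empty when $j$ reaches $i_0$, with enough margin for the WKB step and the short-time bound to close simultaneously. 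The main obstacle will be proving the pointwise estimate on $u_1$ with the required $O(h)$ accuracy throughout $\{f\le\alpha\}$ together with the uniform short-time exit bound; once these two ingredients are secured, the remainder of the proof is a direct semigroup comparison culminating in an application of Corollary~\ref{co.proba-sigmai} to obtain~\eqref{eq.pxx}.
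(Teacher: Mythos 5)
Your overall architecture (strong Markov property at a time $t_h=h^{-M}$, spectral decomposition of the Dirichlet semigroup, identification of the principal mode with $\P_{\nu_h}[X_{\tau_\Omega}\in\Sigma_i]$ via~\eqref{eq:expQSD}, and a short-time exit bound killing the pre-$t_h$ contribution) is coherent, and your bookkeeping of the exponents correctly shows why $\alpha<2f(z_1)-f(z_j)$ makes the short-time term negligible. But the argument does not close as written, because its central analytic ingredient — the uniform pointwise estimate $u_1(x)\int_\Omega u_1\,e^{-2f/h}=1+O(h)$ on all of $f^{-1}((-\infty,\alpha])\cap\Omega$ — is neither proven in the paper nor a consequence of the results you invoke. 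The proofs of Theorem~\ref{TBIG0} and Proposition~\ref{moyenneu} control $u_h$ only through $L^2_w$-projections onto quasi-modes and through boundary flux integrals; they give no pointwise information on $u_h$ away from a neighborhood of $x_0$, let alone a relative error $O(h)$ uniformly up to the level set $\{f=\alpha\}$ with $\alpha$ close to $2f(z_1)-f(z_j)$. The only pointwise leveling statement available in the paper is Lemma~\ref{de1} (Devinatz--Friedman), which concerns $\tilde u_h$ and carries \emph{no rate}; upgrading it to a uniform $1+O(h)$ (or even $1+o(1)$ with a quantitative exponential error, which is what you would actually need to also run your spectral version of the short-time bound) on the whole sublevel set is a genuinely nontrivial piece of analysis that your proposal leaves unproved while presenting it as "already invoked". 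The same circularity affects your alternative derivation of $\P_x[\tau_\Omega\le t_h]\le Ce^{-\frac2h(f(z_1)-\alpha)}$ via the expansion of the constant function: it again requires exponentially accurate pointwise leveling of $u_1$; the Freidlin--Wentzell route is sounder but still needs an argument for a time window of length $h^{-M}$ and uniformity over $K_\alpha$. Finally, note that in your scheme the lower constraint encoded in~\eqref{eq:hypo2_bis} never visibly enters (you only use $\alpha>f(x_0)$ plus "enough margin"), which is a further sign that the hardest step has not been located precisely.

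For comparison, the paper avoids any pointwise eigenfunction asymptotics: it writes $\E_{\nu_h}[F(X_{\tau_\Omega})]=Z_h(\Omega)^{-1}\int_\Omega w_h\,\tilde u_h\,e^{-2f/h}$ with $w_h(x)=\E_x[F(X_{\tau_\Omega})]$, splits the integral over $K_\alpha$ and its complement, and uses (i) the rate-free leveling of $\tilde u_h$ only to get $\lim h\ln Z_h(\Omega)=-2f(x_0)$, and (ii) Eizenberg's leveling of the \emph{harmonic extension} $w_h$ on $K_\alpha$ (Lemma~\ref{le.eiz}), which comes with an exponential rate $e^{-\frac2h(f(z_1)-\alpha)}$. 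This makes all deterministic-versus-QSD errors exponentially small, so the $O(h)$ precision is inherited directly from Corollary~\ref{co.proba-sigmai}; the factor $2$ in~\eqref{eq:hypo2_bis} is exactly what makes the two constraints $\alpha>f(x_0)+f(z_j)-f(z_1)$ and $\alpha<2f(z_1)-f(z_j)$ compatible. If you want to salvage your route, you would need to either prove the quantitative pointwise leveling of $u_1$ on $K_\alpha$ independently, or replace it by a $w_h$-leveling argument — at which point you essentially recover the paper's proof.
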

Let us give a simple example to illustrate this result. In a situation where $n=2$, this corollary shows that the
estimates we have obtained on the probability to exit in a
neighborhood of $z_2$ under the
assumption $X_0 \sim \nu_h$ are still valid if $X_0=x$ for $x \in
f^{-1}\left((-\infty,2f(z_1)-f(z_2))\right) \cap \Omega$ under the assumption $f(z_1) - f(x_0) > 2
(f(z_2)-f(z_1))$, which is a stronger assumption than~\eqref{hypo2}.

\subsubsection{On the geometric assumptions~\eqref{hypo1},~\eqref{hypo2} and~\eqref{eq:hypo2_bis}}\label{sec:numeric}

\underline{On the geometric assumption~\eqref{hypo1}.}
\medskip

\noindent
The question we would like to address is the following:
is  the assumption~\eqref{hypo1} necessary for the result on the exit
point density~\eqref{eq:limproba} to hold?

In order to test this assumption numerically, we consider the
following simple two-dimensional setting. The potential function is $$f(x,y)=x^2+y^2-ax,$$
with $a\in(0,1/9)$, and the domain $\Omega$ is defined by (see Figure \ref{fig:domaine}):
$$\Omega=(-1,1)^2\cup \left\{\left(x,y\right)| x^2+(y-1)^2<1\right\}\cup \left\{\left(x,y\right)| x^2+(y+1)^2<1\right\}.$$
The two local minima of $f$ on $\partial \Omega$ are $z_1=(1,0)$ and
$z_2=(-1,0)$. Notice that $f(z_2)-f(z_1)=2a>0$. The potential $f$ has
a unique critical point in $\Omega$, namely the global minimum
$x_0=(a/2,0)$. Let us check that the assumptions of
Theorem~\ref{TBIG0} are satisfied in this setting (i.e. for $a\in (0,\frac 19)$). Indeed, the inequality $f(z_1)-f(x_0)>f(z_2)-f(z_1)$ is satisfied if and only if $1-3a+\frac{a^2}{4}>0$ i.e. if and only if $a\notin  (2(3-\sqrt 8), 2(3+\sqrt 8))$. Moreover, using Proposition~\ref{agmonz1}, the inequality $d_a(z_1,B_{z_1}^c)>f(z_2)-f(z_1)$ is satisfied. Finally, to check that the inequality  $d_a(z_2,B_{z_2}^c)>f(z_2)-f(z_1)$ is satisfied we use Proposition~\ref{gradient} with $W=\{(x,y)\in \mathbb R^2, \, \vert (x,y)-z_2\vert \le\frac 13\}\cap \overline \Omega$ and $W'=\{(x,y)\in \mathbb R^2, \,\vert (x,y)-z_2\vert \le \frac 23\}\cap \overline \Omega$. In that case, one has $\alpha=\frac 13$ (where $\alpha$ is defined by \eqref{eq.alpha_agmon}) and thus  the inequality 
$$\alpha  \inf_{x\in \overline{W'\setminus  W}} g(x)=\frac 13 \min \left ( \frac 23, \Big \vert 2\big (-1+\frac 23\big)-a\Big \vert \right)=\frac 13 \min\left  ( \frac 23, \frac 23+a \right) >f(z_2)-f(z_1) =2a$$
is satisfied if and only if $a<\frac 19$. 

  \begin{figure}[h]
\begin{center}
\begin{tikzpicture}
\tikzstyle{vertex}=[draw,circle,fill=red,minimum size=6pt,inner sep=0pt]
\draw[->,dashed] (-3,0)--(3,0);
\draw[->,dashed] (0,-2.3)--(0,2.5);

\draw (1,1)--(1,-1);
\draw[|-|,ultra thick,blue]  (-1,1)--(-1,-1) node[midway,above left=0.2cm]{$\Sigma_2$};
\draw[ultra thick] (-1,0) node[below left=0.1cm]{$z_2$};
\draw (1,1) arc (0:180:1cm);
\draw (-1,-1) arc (180:360:1cm);
\draw [very thick] (-1,0) node[vertex] (v) {};
\draw [very thick] (1,0) node[vertex] (v) {};
\draw[ultra thick] (1,0) node[below right=0.1cm]{$z_1$};
\draw [very thick](0.1,0) node[vertex,label=below:{$x_0 $}] (v) {};
 \end{tikzpicture}
\caption{The domain $\Omega$.}
 \label{fig:domaine}
 \end{center}
\end{figure}
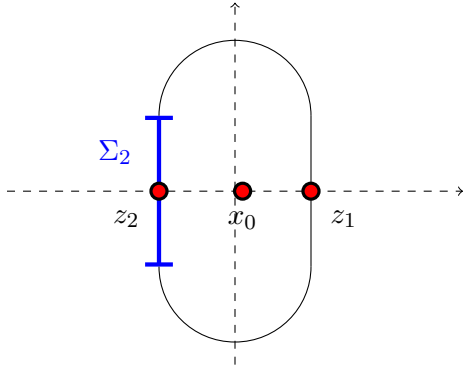

Let us consider the segment $\Sigma_2$ joining the two points
$(-1,-1)$ and $(-1,1)$. This subset of $\partial \Omega$ contains
the highest saddle point $z_2$ and is included in $B_{z_2}$. From
Theorem~\ref{TBIG0}, we expect that, in the limit $h \to 0$,
\begin{align*}
\P_{\nu_h}\left[X_{\tau_{\Omega}} \in \Sigma_2\right] = \exp \left(G\left(\frac{2}{h}\right)\right) (1+O(h))
\end{align*}
where
\begin{align*}
G\left(x\right) &=\ln\left[\frac{\partial_n f(z_2) \sqrt{ {\rm det \ Hess } f|_{\partial \Omega} 
    (z_1) } }{\partial_n f(z_1) \sqrt{ {\rm det \ Hess }
    f|_{\partial {\Omega} }   (z_2) }}\right]  -x\, (f(z_2)-f(z_1)).
\end{align*}
The function $G$ is compared for various value sod $h$  to
the numerically estimated function $F$ defined by  $F\left(\frac{2}{h}\right)=\ln
\left( \P_{\nu_h}\left[X_{\tau_{\Omega}} \in \Sigma_2\right]\right)$.  In
practice, the quasi stationary distribution~$\nu_h$ is sampled using a Fleming-Viot particle system (the convergence
diagnostics is based on  a Gelman-Rubin statistics,
see~\cite{binder-lelievre-simpson-15}) composed of $10^{5}$
particles. The probability $\mathbb P_{\nu_h}(X_{\tau_{\Omega}} \in
\Sigma_2)$ is estimated using a Monte Carlo procedure using $6\times
10^{5}$ particles distributed according to the quasi stationary
distribution $\nu_h$. The dynamics~\eqref{eq:langevin} is discretized in
time using an Euler-Maruyama scheme with a timestep $\Delta t$ which
is made precise in the captions of the figures. On
Figures~\ref{fig:res1} and~\ref{fig:res2}, we observe an excellent agreement between the theory and the numerical results.
\begin{figure}[h!]
\centering
\includegraphics[height=5.5cm]{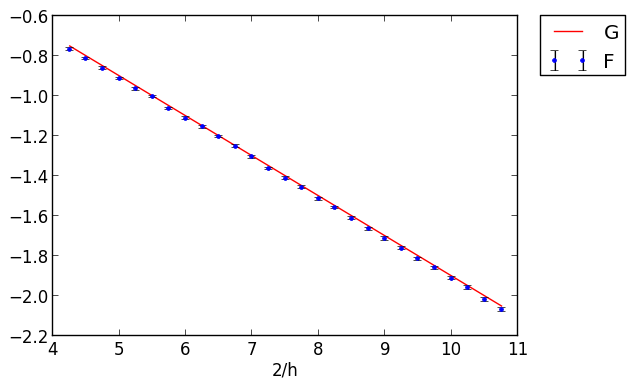}
  \caption{Logarithm of the  probability $\mathbb P_{\nu_h}\left(X_{\tau_{\Omega}} \in \Sigma_2\right)$ as a function of $\frac 2h$:
 comparison of the theoretical result function ($G$) with the numerical
    result (function $F$, $\Delta t=5.10^{-3}$); $a=1/10$.}
  \label{fig:res1}
\end{figure}

\begin{figure}[h!]
\centering
\includegraphics[height=5.5cm]{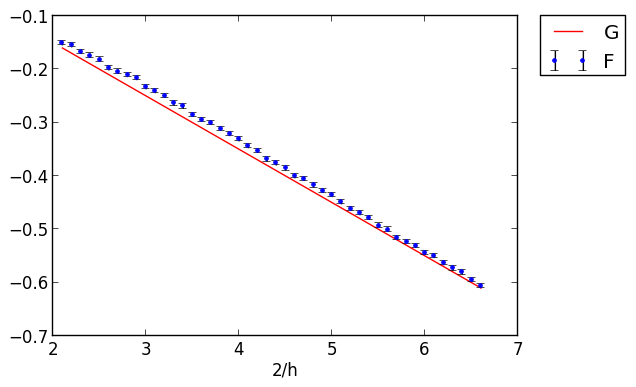}
  \caption{Logarithm of the  probability $\P_{\nu_h}(X_{\tau_{\Omega}} \in \Sigma_2)$ as function of $\frac 2h$ as a function of $\frac 2h$:
    comparison of the theoretical result function ($G$) with the numerical
    result (function $F$, $\Delta t=2.10^{-3}$);  $a=1/20$.}
  \label{fig:res2}
\end{figure}
\medskip

\noindent
Now, the potential function $f$ is modified such that the
assumption~\eqref{hypo1} is not satisfied anymore. More precisely, the potential
function is 
$$f(x,y)=\left(y^2-2 \ a(x) \right)^3,$$
 with  $a(x)=a_1x^2+b_1x+ 0.5$
where $a_1$ and $b_1$ are chosen such that 
$a(-1+\delta)=0$, $a(1)=1/4$  for $\delta=0.05$. We have $f(z_1)=-1/8$ and
$f(z_2)=-8 (a(-1))^3> 0 > f(z_1)$. Moreover, two 'corniches' (which are in the level set $f^{-1}(\{0\})$ of $f$, and on which $|\nabla f|=0$) on
the 'slopes of the hills' of the potential $f$ join the point $(-1+\delta,0)$ to
$B_{z_2}^c$ (at the points $(1,-1/\sqrt 2)\in B_{z_2}^c$ and $(1,1/\sqrt 2)\in B_{z_2}^c$) so that $\inf_{z\in B_{z_2}^c}
d_a(z,z_2) < f(z_2)-f(z_1)$. Indeed, in that case assumption~\eqref{hypo1} is not
satisfied since 
\begin{align*}
\inf_{z\in B_{z_2}^c} d_a(z,z_2)&\leq d_a\left(z_2, (1,1/\sqrt 2)\right)\\
&\leq d_a\left(z_2, (0,-1+\delta)\right)+ d_a\left((0,-1+\delta), (1,1/\sqrt 2)\right)\\
&=f(z_2)-f(0,-1+\delta)+0\\
&=f(z_2) < f(z_2)-f(z_1).
\end{align*}
Notice that the Hessians $(\Hess \ f|_{\partial
  \Omega})(z_1)$ and $(\Hess \  f|_{\partial
  \Omega})(z_2)$ are nonsingular. The
functions $f_{|\Omega}$ and $f|_{\partial \Omega}$ are not Morse functions, but an arbitrarily  small perturbation (which we neglect here)  turns them into Morse functions. When
comparing the numerically estimated probability $\P_{\nu_h}(X_{\tau_{\Omega}} \in \Sigma_2)$, with
the theoretical asymptotic result in the limit $h\to 0$, we observe a discrepancy on the
prefactors, see Figure~\ref{fig:res3}.

\begin{figure}[!h]
\centering
\includegraphics[height=5.5cm]{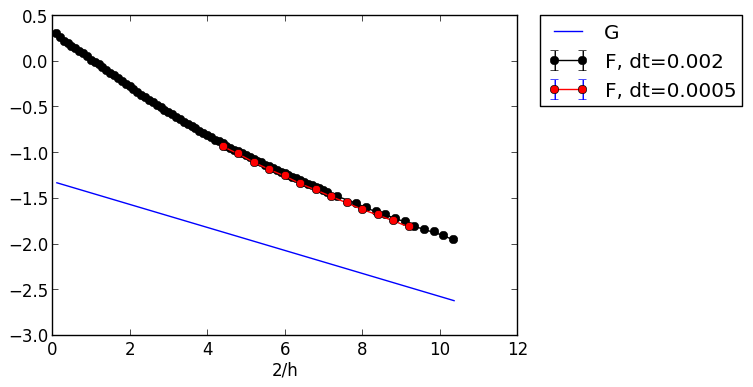}
  \caption{Logarithm of the  probability $\P_{\nu_h}(X_{\tau_{\Omega}} \in \Sigma_2)$ as a function of $\frac 2h$:
      comparison of the theoretical result function ($G$) with the numerical
    result (function $F$, $\Delta t=2.10^{-3}$ and $\Delta t=5.10^{-4}$).}
  \label{fig:res3}
\end{figure}

Therefore, it seems that assumption~\eqref{hypo1} is indeed required to get
an accurate description of the dynamics by the jump Markov process
using the Eyring-Kramers law to estimate the rates between the neighboring
states.
\medskip

\noindent
\underline{On the geometric assumptions~\eqref{hypo2} and~\eqref{eq:hypo2_bis}.}
\medskip

\noindent
To discuss the necessity of the assumptions \eqref{eq:hypo2_bis} in Corollary~\ref{cc1} and~\eqref{hypo2} in Corollary~\ref{co.proba-sigmai}, we consider a one-dimensional case, where the law of  $X_{\tau_\Omega}$ when $X_0=x$ has an explicit expression. Let $f:\mathbb R\to \mathbb R$ be $C^{\infty}$ and let $z_1,z_2\in \mathbb R$ such that $z_1<z_2$. Let us assume that $f'(z_1)<0$, $f'(z_2)>0$, $f(z_1)<f(z_2)$ and $f$ has only one critical point in $(z_1,z_2)$ denoted by $x_0$. This implies in particular that $f(x_0)=\min_{[z_1,z_2]}f<f(z_1)$. Moreover let us assume that $f''(x_0)>0$. Therefore, the hypotheses \textbf{[H1]}-\textbf{[H2]}-\textbf{[H3]} hold. For $x\in [z_1,z_2]$, let us denote by $w_h(x)=\P_{x}[X_{\tau_{(z_1,z_2)}}=z_2]$. It is standard that using a Feynman-Kac formula, $w_h$ solves the elliptic boundary value poblem
$$\frac{h}{2} w_h'' - w_h'f'=0, \ w_h(z_1)=0,\ w_h(z_2)=1.$$ 
Therefore, one has for $x\in [z_1,z_2]$: 
$$w_h(x)=\frac{ \displaystyle{\int_{z_1}^x e^{\frac 2h f}}}{\displaystyle{\int_{z_1}^{z_2} e^{\frac 2h f}}}.$$ Let $x\in [z_1,z_2]$. Using Laplace's method, if $f(x)<f(z_1)$, one obtains in the limit $h\to 0$:
$$\P_{x}[X_{\tau_{(z_1,z_2)}}=z_2]=-\frac{f'(z_2)}{f'(z_1)}e^{-\frac 2h (f(z_2)-f(z_1))}(1+O(h)),$$
if $f(x)=f(z_1)$, $x\neq z_1$,  it holds in the limit $h\to 0$:
$$\P_{x}[X_{\tau_{(z_1,z_2)}}=z_2]=f'(z_2)\left (\frac{1}{f'(x)}-\frac{1}{f'(z_1)}\right )e^{-\frac 2h (f(z_2)-f(z_1))}(1+O(h)),$$
and if $f(x)>f(z_1)$, it holds in the limit $h\to 0$:
$$\P_{x}[X_{\tau_{(z_1,z_2)}}=z_2]=\frac{f'(z_2)}{f'(x)}e^{-\frac 2h (f(z_2)-f(x))}(1+O(h)).$$
Therefore, in dimension one, the estimate~\eqref{eq.pxx} holds if and only if $x\in f^{-1}( (-\infty, f(z_1)))$. In accordance with Corollary~\ref{cc1}, the asymptotic~\eqref{eq.pxx} only holds for initial conditions which are sufficiently low in energy. However, we observe that in this simple one-dimensional setting, the assumption~\eqref{eq:hypo2_bis} is not needed. We do not know if the result of Corollary~\ref{cc1} would hold in general without the assumption~\eqref{eq:hypo2_bis}.

Let us now discuss the assumption~\eqref{hypo2} in the framework of Theorem~\ref{TBIG0} and Corollary~\ref{co.proba-sigmai}. 
From \eqref{eq:expQSD}, one has:
$$\P_{\nu_h}[X_{\tau_{(z_1,z_2)}}=z_2]=\frac{\displaystyle{ \int_{z_1}^{z_2} u_h w_h e^{-\frac 2h f}}}{\displaystyle{\int_{z_1}^{z_2} u_he^{-\frac 2h f}}}.$$
Using Lemma~\ref{nablauu}, Lemma~\ref{pi0u} and~\eqref{eq:uh}, one has  for some $c>0$, for any $\delta>0$ and for $h$ small enough: 
$$u_h(x)= \frac{\chi(x)}{\sqrt{\int_{z_1}^{z_2} \chi^2 e^{-\frac 2h f}}}(1+\alpha_h)+r(x),\ {\rm for}\ x\in \overline \Omega$$
 with $\alpha_h\in \mathbb R$, $\alpha_h=O(e^{-\frac ch})$,  $\int_{z_1}^{z_2} r^2 e^{-\frac 2h f}=O(e^{-\frac 2h (f(z_1)-f(x_0)-\delta)})$ and where $\chi\in C^{\infty}_{c}(z_1,z_2)$ is given by Lemma~\ref{nablauu}.  Therefore, one has:
 \begin{align*}
 \P_{\nu_h}[X_{\tau_{(z_1,z_2)}}=z_2]&=\frac{1}{\int_{z_1}^{z_2} u_he^{-\frac 2h f}}\left [ \frac{ \int_{z_1}^{z_2} \chi (x) \int_{z_1}^{x} e^{\frac 2h (f(y)-f(x))}dydx }{  \int_{z_1}^{z_2} e^{\frac 2h f}\sqrt{\int_{z_1}^{z_2} \chi^2 e^{-\frac 2h f}}} (1+\alpha_h) + \int_{z_1}^{z_2} rw_he^{-\frac 2h f}  \right].
 \end{align*}
Using Proposition~\ref{moyenneu} and Laplace's method, one gets for any $\delta>0$, in the limit $h\to 0$:
$$\P_{\nu_h}[X_{\tau_{(z_1,z_2)}}=z_2]=-\frac{f'(z_2)}{f'(z_1)}e^{-\frac 2h (f(z_2)-f(z_1))}(1+O(h))+O(e^{-\frac 1h (f(z_2)-f(x_0)+f(z_1)-f(x_0)-\delta)}).$$ 
Therefore, the result of Corollary~\ref{co.proba-sigmai} holds if 
\begin{equation} \label{eq.st1}
2(f(z_1)-f(x_0))>f(z_2)-f(z_1).
\end{equation}
This explicit computation in dimension one shows that the result of Corollary 1 indeed requires an assumption of the type: the height of the energy barrier to leave the well $f(z_1)-f(x_0)$ is sufficiently large compared to the largest difference in energy of the saddle points $f(z_2)-f(z_1)$. Notice that~\eqref{eq.st1} differs from (21) by a multiplicative factor~$\frac 12$. We do not know if the result of Corollary~\ref{co.proba-sigmai} would hold in general under the weaker assumption~\eqref{eq.st1}. Finally, let us mention that when $d=1$, \eqref{hypo1} is always satisfied.

\subsubsection{Extension of the results to a subset of generalized saddle points and to more general subsets of
  $\partial \Omega$}\label{sec:gen_sigma}

It is actually possible to generalize the result of Theorem~\ref{TBIG0} and
Corollary~\ref{co.proba-sigmai} to less stringent conditions than~\eqref{hypo1}-\eqref{hypo2} and to more general subsets $\Sigma \subset \partial \Omega$.

\begin{theorem} \label{th.gene_sigma}
Assume that \textbf{[H1]}, \textbf{[H2]} and \textbf{[H3]} hold. Assume that there exist $k_0\in\{1,\dots,n\}$ and $f^*\in \mathbb R$ such that $f(z_{k_0}) \le f^*\le f(z_{k_0+1})$ 
(with the convention $f(z_{k_0+1})=+\infty$ if $k_0=n$),
\label{page.fstar}
\begin{equation}
\label{eq.hyp.gene.1}
\left\{
\begin{aligned}[c]
&\forall i\in \{1,\ldots,k_0\}, \, \, \inf_{z\in B_{z_i}^c} d_a(z,z_i) >\max[f^*-f(z_i),f(z_i)-f(z_1)],  \\
&\forall i\in \{k_{0}+1,\ldots,n\},\, \, \inf_{z\in B_{z_i}^c} d_a(z,z_i) >f^*-f(z_1), 
\end{aligned}
\right. 
\end{equation}
and, 
\begin{equation} \label{eq.hyp.gene.2}
f(z_1)-f(x_0)>f^*-f(z_1).
\end{equation}
Let $u_h$ be the eigenfunction associated with the smallest eigenvalue of $-L^{D,(0)}_{f,h}(\Omega)$ (see Proposition~\ref{prop:QSD_spec}) which satisfies \eqref{eq.u_norma0}.
\begin{enumerate}
\item  For all $i\in \{1,\ldots,k_0\}$ and for all smooth open set $\Sigma_i \subset \partial \Omega$ containing $z_i$ and such that $\overline\Sigma_i \subset B_{z_i}$, the limit \eqref{eq:dnuh_asymptot} holds for $\int_{\Sigma_i}  ( \partial_n u_h)\,   e^{- \frac{2}{h}  f} d\sigma$ and the limit \eqref{eq:limproba} holds for $\P_{\nu_h} \left[ X_{\tau_{\Omega}} \in \Sigma_i\right]$. Moreover, if $f(z_{k_0+1})>f(z_{k_0})$, for all $i\in \{k_0+1,\ldots,n\}$ and for all smooth open set $\Sigma_i \subset \partial \Omega$ containing $z_i$ and such that $\overline\Sigma_i \subset B_{z_i}$, there exist $\ve >0$ and $h_0>0$ such that for all $h\in (0,h_0)$ 
\begin{equation}  \label{eq.dnuh_expch}
 \int_{\Sigma_i}   (\partial_{n}u_h)\, e^{-\frac{2}{h}f} d\sigma =\left (\int_{\Sigma_{k_0}}   \   (\partial_{n}u_h)  \,   e^{-\frac{2}{h}f} d\sigma \right) O\left ( e^{-\frac{\ve}{h}} \right),
  \end{equation}
  and \begin{equation} \label{eq.expch}
  \P_{\nu_h} \left[ X_{\tau_{\Omega}} \in \Sigma_i\right]=\P_{\nu_h} \left[ X_{\tau_{\Omega}} \in \Sigma_{k_0}\right] O \left( e^{-\frac{\ve}{h}}  \right)
    \end{equation}
\item Let $j_0 \in \{1, \ldots, k_0\}$ and $\Sigma\subset \partial \Omega$ be a smooth open set such that $\overline \Sigma\subset B_{z_{j_0}}$ and $\inf_{\Sigma} f=f^*$. Let $(B^*,p^*)\in \mathbb R_+^*\times \mathbb R$ be such that 
 \begin{equation}\label{eq:DL}
\int_{\Sigma} (\partial_nf) \, e^{-\frac{2}{h} f } d\sigma = B^* \ h^{p^*}
\ e^{-\frac{2}{h} f^*}\left( 1+ O(h) \right).
\end{equation}
\label{page.Bstar}
Then, one obtains in the limit $h\to 0$
 \begin{equation}\label{eq.gene_dnuh}
\int_{\Sigma}    (\partial_n u_h)\,   e^{- \frac{2}{h} f}  \   d\sigma=-\frac{ 2B^* \left({\rm det \ Hess }f(x_0)\right)^{\frac 14}}{\pi^{\frac d4}}h^{p^*-\frac d4-1}e^{-\frac{1}{h}\left(2f^*-f(x_0)\right)}\left( 1+ O(h) \right)
 \end{equation}
 and
 \begin{equation}\label{eq.gene_sig}
 \P_{\nu_h} [ X_{\tau_{\Omega}} \in \Sigma]=\frac{ B^*}{\pi^{\frac{d-1}{2}}} \left ( \sum_{k=1}^{n_0} \frac{ \partial_nf(z_k) }{\sqrt{ {\rm det \ Hess } f_{|\partial \Omega }   (z_k) } } \right)^{-1}h^{p^*-\frac{d-1}{2}} e^{-\frac{2}{h}\left( f^*-f(z_1)\right)}\left( 1+ O(h) \right).
 \end{equation}
 \end{enumerate}

\end{theorem}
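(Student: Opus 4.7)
The plan is to extend the semiclassical analysis of Theorem~\ref{TBIG0}, replacing the rôle of $f(z_n)$ by the threshold $f^*$, and to complement it with a uniform boundary-layer asymptotic for $\partial_{n}u_{h}$ on subsets of $\partial\Omega$ away from the generalized saddle points.

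For Part~1 and $i\in\{1,\dots,k_0\}$, the hypotheses~\eqref{eq.hyp.gene.1}--\eqref{eq.hyp.gene.2} are structurally identical to~\eqref{hypo1}--\eqref{hypo2} after substituting $f^*$ for $f(z_n)$. Consequently the Agmon decay estimates and the quasimode construction underlying Theorem~\ref{TBIG0} transfer verbatim, yielding~\eqref{eq:dnuh_asymptot} and~\eqref{eq:limproba} at these saddles. For $i\in\{k_0+1,\dots,n\}$, the weaker Agmon condition on the second line of~\eqref{eq.hyp.gene.1} only delivers a bound of the form $\int_{\Sigma_i}(\partial_n u_h)\,e^{-2f/h}\,d\sigma=O(e^{-(2f^*-f(x_0)+\varepsilon)/h})$ for some $\varepsilon>0$. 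Since the strict-inequality assumption gives $f(z_{k_0})\le f^*<f(z_{k_0+1})\le f(z_i)$, this is exponentially smaller than the leading $\Sigma_{k_0}$-contribution, which by Part~1 is of order $h^{(d-6)/4}e^{-(2f(z_{k_0})-f(x_0))/h}$; this establishes~\eqref{eq.dnuh_expch}, and~\eqref{eq.expch} follows by dividing by the common denominator in Proposition~\ref{indep1}.

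Part~2 hinges on a local boundary-layer description of $u_h$ on $\partial\Omega$. The analysis supporting Theorem~\ref{TBIG0} and Proposition~\ref{moyenneu} already shows that $u_h$ is asymptotically equal in the bulk of $\Omega$ to the normalization constant
$$c_h:=(\det\Hess f(x_0))^{1/4}(\pi h)^{-d/4}\,e^{f(x_0)/h}.$$
Near a boundary point $z$, using~\textbf{[H3]} and freezing the tangential variables, the eigenvalue equation in the inward normal variable $t$ reduces at leading order to $\tfrac{h}{2}u''+\partial_n f(z)\,u'\simeq 0$ with $u(0)=0$ and $u(+\infty)=c_h$, whose explicit solution $u(t)=c_h\bigl(1-e^{-2\partial_n f(z)t/h}\bigr)$ yields, uniformly in $z$ on relatively compact subsets of $\partial\Omega$ avoiding the critical set of $f|_{\partial\Omega}$, the asymptotic
$$\partial_n u_h(z)=-\tfrac{2c_h}{h}\,\partial_n f(z)\bigl(1+O(h)\bigr).$$
Integrating this identity against $e^{-2f/h}$ over $\Sigma$ and invoking the prescribed Laplace expansion~\eqref{eq:DL} produces~\eqref{eq.gene_dnuh}, and combining it with Propositions~\ref{indep1},~\ref{moyenneu} and~\ref{lambdah}, the powers of $h$ and the exponential factors reorganize to give exactly~\eqref{eq.gene_sig}.

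The main obstacle is upgrading the formal boundary-layer computation into a rigorous estimate of $\partial_n u_h$ that is uniform in $z$ and has sharp $O(h)$ control. I would write $u_h=c_h(1-\psi_h)+r_h$, with $\psi_h$ a WKB-type profile built in the normal direction to match the Dirichlet condition at $\partial\Omega$ and the bulk value $c_h$, and estimate the residual $r_h$ by Agmon-type bounds, using~\eqref{eq.hyp.gene.2} to guarantee enough decay. The delicate point is to link this tubular description with the existing quasimode construction at the $z_i$, so that~\eqref{eq:DL} can be integrated uniformly over $\Sigma$, even when $\Sigma$ approaches the boundary of the basin $B_{z_{j_0}}$, without losing the $O(h)$ accuracy---this is where the semiclassical machinery developed for Theorem~\ref{TBIG0} is reused in full.
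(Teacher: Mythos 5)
Your treatment of Part~1 is essentially the paper's route (the hypotheses \eqref{eq.hyp.gene.1}--\eqref{eq.hyp.gene.2} play the role of \eqref{hypo1}--\eqref{hypo2} with $f^*$ in place of $f(z_n)$, and the quasi-mode machinery of Proposition~\ref{ESTIME} is rerun with weaker estimates for the quasi-modes attached to $z_{k_0+1},\dots,z_n$; this is what Proposition~\ref{ESTIME3} and its proof do). The genuine gap is in Part~2. Your key claim is a uniform boundary-layer asymptotic $\partial_n u_h(z)=-\tfrac{2c_h}{h}\,\partial_n f(z)\,(1+O(h))$ on compact subsets of $\partial\Omega$ avoiding the critical points of $f|_{\partial\Omega}$, obtained by freezing the tangential variables and matching a one-dimensional profile to the bulk value $c_h$. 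As derived, this argument uses only \textbf{[H3]} and never uses where $\Sigma$ sits relative to the basins $B_{z_i}$ or the Agmon distances; it would therefore apply verbatim in the situation of Section~\ref{sec:numeric} where \eqref{hypo1} fails, and integrating it against $e^{-2f/h}$ would reproduce the ``naive'' prefactor there as well --- in contradiction with the numerical evidence of Figure~\ref{fig:res3}, which is precisely the paper's point that such a relative-$O(h)$ leveling of $\partial_n u_h$ is \emph{not} a local consequence of \textbf{[H1]}--\textbf{[H3]}. Concretely, the matching value of $u_h$ near a boundary point with $f(z)=f^*>f(z_1)$ is an exponentially small effect relative to the well around $x_0$: the leveling result you invoke (the analogue of Lemma~\ref{de1}, or the $L^2_w$ statements behind Proposition~\ref{moyenneu}) gives no multiplicative $(1+O(h))$ control of $u_h$ or $\nabla u_h$ in such regions, and the residual $r_h$ in your ansatz $u_h=c_h(1-\psi_h)+r_h$ is exactly of the size of the quantity you are trying to compute; Agmon decay of the $0$-form around $x_0$ plus \eqref{eq.hyp.gene.2} does not control it to relative $O(h)$ on $\Sigma$.

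The paper closes this gap through the $1$-form complex rather than through $u_h$ itself: $\nabla u_h$ is decomposed over quasi-modes built from eigenforms of the mixed Witten Laplacians $\Delta^{M,(1)}_{f,h}(\dot\Omega_j)$, the set $\Gamma_{1,j_0}$ is chosen large and strongly stable in $B_{z_{j_0}}$ so as to contain $\overline\Sigma$, and the decisive input is the Agmon-weighted WKB comparison of Proposition~\ref{pr.WKB-comparison}, $\|e^{\Psi/h}(u^{(1)}_{h}-c_{z_{j_0}}(h)u^{(1)}_{z_{j_0},wkb})\|_{H^1(V_{\Gamma_{St}})}=O(h^\infty)$, which gives \emph{relative} $O(h^\infty)$ accuracy on the whole stable neighborhood $V_{\Gamma_{St}}\supset\Sigma$ (the unweighted comparison of Proposition~\ref{apriori}, sufficient for Theorem~\ref{TBIG0}, is not enough here). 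This is what makes Lemma~\ref{lemmawkb} and hence assumption 4(c) of Proposition~\ref{ESTIME3} available, after which \eqref{eq:DL} is integrated and \eqref{eq.gene_dnuh}--\eqref{eq.gene_sig} follow as in your final computation (which is formally correct). To salvage your approach you would have to prove your boundary-layer estimate with relative $O(h)$ accuracy \emph{using} the Agmon hypotheses and the restriction $\overline\Sigma\subset B_{z_{j_0}}$, which in effect amounts to redoing the weighted WKB analysis of Section~\ref{sec:wkb_precis}; as written, the step is unsupported and, in the stated generality, false.
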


In practice, the expansion~\eqref{eq:DL} is given by  
Laplace's method. Theorem~\ref{th.gene_sigma} is a generalization of Theorem~\ref{TBIG0}. Indeed,~\eqref{eq.hyp.gene.1}-\eqref{eq.hyp.gene.2} is weaker than \eqref{hypo1}-\eqref{hypo2} (\eqref{hypo1}-\eqref{hypo2} implies \eqref{eq.hyp.gene.1}-\eqref{eq.hyp.gene.2} for $k_0=n$ and $f^*=f(z_n)$) and item~2 gives an asymptotic result on the exit probability through $\Sigma \subset B_{z_{j_0}}$ even if $z_{j_0} \not\in \overline{\Sigma}$.

As an illustration, let us state a corollary of this theorem, which
demonstrates the importance of obtaining a precise asymptotic result including the prefactors.  Let us consider a simple
situation with only two local
minima $z_1$ and $z_2$ on the boundary, with $f(z_1) <
f(z_2)$. Let us now compare the two exit probabilities (see
Figure~\ref{fig:compa} for a schematic representation of the geometric
setting):
\begin{itemize}
\item The probability to leave
through $\Sigma_2$ such that $\overline{\Sigma_2} \subset B_{z_2}$
and  $z_2 \in \Sigma_2$;
\item The probability to leave through $\Sigma$ such that
  $\overline{\Sigma} \subset B_{z_1}$ and
$\inf_{\Sigma} f=f(z_2)$.
\end{itemize}
By classic results from the large deviation theory (see for
example~\eqref{eq:LD}) the probability to exit through $\Sigma$ and
$\Sigma_2$ both scale like a prefactor times ${\rm
  e}^{-\frac{2}{h}(f(z_2)-f(z_1))}$:  the difference can only be read from the
prefactors. Actually, using item 2 in Theorem~\ref{th.gene_sigma}, one obtains the existence of $C>0$ such that 
in the limit $h \to 0$ (see Corollary~\ref{co.gene_sigma} below),
\begin{equation}\label{eq:exit_sigma}
\frac{\P_{\nu_h}(X_{\tau_\Omega} \in \Sigma)}{\P_{\nu_h}(X_{\tau_\Omega} \in \Sigma_2)} \sim C
\sqrt{h} .
\end{equation}
The probability to leave through $\Sigma_2$ (namely through the generalized saddle point $z_2$)
is thus  larger than through $\Sigma$, even though the two regions
are at the same height.
This result explains why the local minima
of $f$ on the boundary (namely the generalized saddle points) play
such an important role when studying the exit event. Let us now state
the precise result.
\begin{corollary} \label{co.gene_sigma}
Assume \textbf{[H1]}, \textbf{[H2]},  \textbf{[H3]}. Assume that
$f|_{\partial \Omega}$ has only two local minima $z_1$ and $z_2$ such that $f(z_1)<f(z_2)$ and,   
\begin{equation} \label{hypo11_bis} 
\text{   for $j\in \{1,2\}$, }  \inf_{z\in B_{z_j}^c} d_a(z,z_j) >f(z_2)-f(z_1),
\end{equation} 
 and
 \begin{equation}\label{hypo22_bis}
 f(z_1)-f(x_0)>f(z_2)-f(z_1).
 \end{equation}
Let $\Sigma \subset \partial \Omega$ be a smooth open set such that
$\overline{\Sigma} \subset B_{z_1}$. Assume moreover that $\inf_{\Sigma}
f=f(z_2)$ and that the infimum is attained at a single point $z^*$:
$\inf_{\Sigma}f=f(z^*)$ (necessarily $z^*\in \partial
\Sigma$). Finally, let us assume that $z^*$ is a non degenerate
minimum  of $f_{|\partial \Sigma }$ and $\partial_{n(\partial
  \Sigma)} f_{|\partial \Sigma}(z^*)<0$ where $n(\partial \Sigma)$ is the unit outward
normal to $\partial \Sigma\subset \partial \Omega$. Then, one has the following asymptotic expansion of $\P_{\nu_h}
\left[ X_{\tau_{\Omega}} \in \Sigma\right]$  in the limit $h \to 0$:
\begin{align*}
         \P_{\nu_h} \left[ X_{\tau_{\Omega}} \in \Sigma\right]    \  
        & =-\frac{\sqrt{h}}{2\sqrt \pi} \frac{\partial_{n}f(z^*)}{\partial_{n(\partial \Sigma)}f(z^*)     \sqrt {{\rm det \ Hess } f_{|\partial \Sigma }   (z^*) }}  
        \left ( \sum_{k=1}^{n_0} \frac{ \partial_nf(z_k) }{\sqrt{ {\rm det \ Hess } f_{|\partial \Omega }   (z_k) } } \right)^{-1}  \\
         &\quad \times e^{-\frac{2}{h}(f(z_2)-f(z_1))} 
(  1     +      
 O(h)   ),        
\end{align*}
\label{page.zstar}
with by convention, ${\rm det \ Hess } f_{|\partial \Sigma }   (z^*) =1$ if $d=2$.
   \end{corollary}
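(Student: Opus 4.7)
The plan is to invoke item~2 of Theorem~\ref{th.gene_sigma} with the choices $k_0=1$, $j_0=1$ and $f^*=f(z_2)$, and to determine the prefactor $B^*$ and the exponent $p^*$ in~\eqref{eq:DL} by applying Laplace's method to the surface integral $\int_{\Sigma}(\partial_n f)\, e^{-\frac{2}{h} f}\, d\sigma$.

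First I would check that the geometric hypotheses of Theorem~\ref{th.gene_sigma} are satisfied in our setting. Since $n=2$ and $n_0=1$, one has $f(z_{k_0})=f(z_1)\le f^*=f(z_2)=f(z_{k_0+1})$. The case $i=1$ of the first line of~\eqref{eq.hyp.gene.1} becomes $\inf_{z\in B_{z_1}^c}d_a(z,z_1)>f(z_2)-f(z_1)$ and the case $i=2$ of the second line reads $\inf_{z\in B_{z_2}^c}d_a(z,z_2)>f(z_2)-f(z_1)$: both are precisely~\eqref{hypo11_bis}. Assumption~\eqref{eq.hyp.gene.2} coincides with~\eqref{hypo22_bis}. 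The requirements on $\Sigma$ in item~2 ($\overline\Sigma\subset B_{z_{j_0}}$ and $\inf_\Sigma f=f^*$) are part of the hypotheses of the corollary.

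The heart of the proof is then a Laplace expansion for a surface integral over $\Sigma\subset\partial\Omega$ whose unique minimizer of $f|_{\overline\Sigma}$ is the boundary point $z^*\in\partial\Sigma$. I would choose local coordinates $(x_1,\ldots,x_{d-1})$ on $\partial\Omega$ near $z^*$ such that $\Sigma$ corresponds locally to $\{x_1<0\}$ with $x_1$ oriented along $n(\partial\Sigma)$, and $(x_2,\ldots,x_{d-1})$ forming normal coordinates for the induced metric on $\partial\Sigma$ at $z^*$. Thanks to the hypotheses $\partial_{n(\partial\Sigma)}f(z^*)<0$ and $\Hess(f|_{\partial\Sigma})(z^*)>0$, one has the Taylor expansion
\[
f(x)-f(z^*)=\partial_{n(\partial\Sigma)}f(z^*)\,x_1+\tfrac{1}{2}\sum_{i,j=2}^{d-1}(\Hess f|_{\partial\Sigma})_{ij}(z^*)\,x_ix_j+O\bigl(|x|^3+x_1|x|\bigr).
\]
Integration of the linear part in $x_1$ over $(-\infty,0]$ contributes $h/\bigl(2|\partial_{n(\partial\Sigma)}f(z^*)|\bigr)$, while the tangential Gaussian integration contributes $(\pi h)^{(d-2)/2}/\sqrt{\det\Hess f|_{\partial\Sigma}(z^*)}$ (with the convention $=1$ if $d=2$). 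Evaluating the smooth factor $\partial_n f$ at $z^*$ yields
\[
p^*=\frac{d}{2},\qquad B^*=-\frac{\pi^{(d-2)/2}\,\partial_n f(z^*)}{2\,\partial_{n(\partial\Sigma)}f|_{\partial\Sigma}(z^*)\,\sqrt{\det\Hess f|_{\partial\Sigma}(z^*)}},
\]
which is positive thanks to~\textbf{[H3]} and the sign assumption on $\partial_{n(\partial\Sigma)}f(z^*)$.

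Substituting these values into~\eqref{eq.gene_sig}, using $p^*-(d-1)/2=1/2$ and $\pi^{(d-2)/2}/\pi^{(d-1)/2}=1/\sqrt\pi$, and keeping $n_0=1$ in the sum, the various factors recombine into the expression announced in the corollary. The main technical point is the Laplace-type analysis at a boundary minimum with non-vanishing transverse gradient: one must verify that the cross terms $x_1x_i$ in the Taylor expansion contribute only to the $O(h)$ remainder (the extra factor of $x_1$ is absorbed by the transverse exponential, effectively producing a further factor of $h$), and that the region away from a small neighborhood of $z^*$ in $\Sigma$ contributes exponentially small terms since $f>f^*$ on its closure.
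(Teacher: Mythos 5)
Your proof is correct and follows essentially the same route as the paper: reduce to item~2 of Theorem~\ref{th.gene_sigma} with $j_0=1$ and $f^*=f(z_2)$, then identify $B^*$ and $p^*$ in~\eqref{eq:DL} by a boundary Laplace expansion at $z^*$; your values $p^*=\frac d2$ and $B^*=-\pi^{\frac{d-2}{2}}\partial_nf(z^*)/\bigl(2\,\partial_{n(\partial\Sigma)}f(z^*)\sqrt{\det \Hess f|_{\partial\Sigma}(z^*)}\bigr)$ agree with the paper's Laplace formula, and the substitution into~\eqref{eq.gene_sig} is the same. The only (immaterial) difference is that you take $k_0=1$ while the paper takes $k_0=2$; since $f^*=f(z_2)$, both choices satisfy $f(z_{k_0})\le f^*\le f(z_{k_0+1})$ and lead to the same conditions \eqref{eq.hyp.gene.1}--\eqref{eq.hyp.gene.2} and the same conclusion.
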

Corollaries \ref{co.proba-sigmai}, \ref{cc1} and \ref{co.gene_sigma} imply the result~\eqref{eq:exit_sigma} announced above.
\begin{remark}
By using Laplace's method, one can check that the asymptotic results obtained in Corollaries \ref{co.proba-sigmai}, \ref{cc1} and \ref{co.gene_sigma} on the law of $X_{\tau_\Omega}$  imply that the density of $X_{\tau_\Omega}$ with respect to the Lebesgue measure on $\partial \Omega$ is, in the limit $h \to 0$,
\begin{equation}\label{eq:approx_exit_density}
z \in \partial \Omega \mapsto  \frac{ \partial_nf(z) \, e^{-\frac{2}{h} f(z)} }{\int_{\partial \Omega} \partial_nf \, e^{-\frac{2}{h} f } d\sigma } (1+O(h)).
\end{equation}
This indeed yields  the same asymptotic limits on the exit distribution. This is reminiscent of previous results obtained in  \cite{kamin-78,day1987r, perthame-90}, where the authors proved that, starting from a deterministic initial condition in $\Omega$, $X_{\tau_\Omega}$ has a density with respect to the Lebesgue measure on $\partial \Omega$ which satisfies, in the limit $h \to 0$,
$z \in \partial \Omega \mapsto  \frac{ \partial_nf(z) \, e^{-\frac{2}{h} f(z)} }{\int_{\partial \Omega} \partial_nf \, e^{-\frac{2}{h} f } d\sigma } + o(1)$, which is however a less precise estimate than~\eqref{eq:approx_exit_density}.
\end{remark}

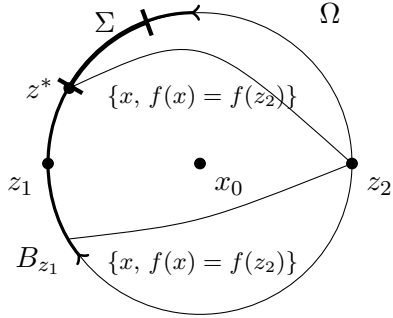
\begin{figure}
\begin{center}
\begin{tikzpicture}
\tikzstyle{vertex}=[draw,circle,fill=black,minimum size=4pt,inner sep=0pt]

\draw (2,2) node[label=right:{$\!\!\!\!\!\!\!\!\!\!\!\!\!\!\Omega$}] (v) {};
\draw[] (2,0) arc (0:360:2cm) ;
\draw[very thick, >-<] (0,2) arc (90:220:2cm) node[label=left:{$B_{z_1}$}] (v) {};
\draw[ultra thick, |-|] (-1.72,1) arc (150:110:2.cm) node[label=left:{$\Sigma~$}] (v) {};

\draw (0 ,0) node[vertex,label=south east: {$x_0$}](v){};
\draw (-2 ,0) node[vertex,label=south west: {$z_1$}](v){};
\draw (2,0 ) node[vertex,label=south east: {$z_2$}](v){};
\draw (-1.72 ,1 ) node[vertex,label=west: {$z^*$}](v){};
\draw (2,0) ..controls (0,1.8) .. (-1.72 ,1 )
node[midway,below=0.3cm]{\footnotesize $\{x, \, f(x)= f(z_2)\}$} ;
\draw (2,0) ..controls (0,-0.8) .. (-1.72 ,-1 )
node[midway,below=0.3cm]{\footnotesize $\{x, \, f(x)= f(z_2)\}$} ;
\end{tikzpicture}
\caption{Schematic representation of the geometric setting of
  Corollary~\ref{co.gene_sigma}. The subset~$\Sigma$ is such that
  $\overline{\Sigma} \subset B_{z_1}$ and $\inf_{\Sigma} f=f(z_2)$.}
 \label{fig:compa}
 \end{center}
\end{figure}


\subsection{Strategy for the proof of Theorem~\ref{TBIG0} and outline of the paper} \label{sec:sketch}

The aim of this section is to give an overview of the strategy for the
proof of Theorem~\ref{TBIG0}. In view of~\eqref{eq:dnuh_asymptot}, we would like to
identify the asymptotic behavior of the normal derivative $\partial_n
u_h$ on $\partial \Omega$ in the limit $h \to 0$. We recall that $(\lambda_h,u_h)$ satisfies  the
eigenvalue problem~\eqref{eq:u}. By differentiating~\eqref{eq:u},
we observe that $\nabla u_h$
satisfies
\begin{equation}\label{eq:L1_eig}
\left\{
\begin{aligned}
L^{(1)}_{f,h} \nabla u_h &= - \lambda_h \nabla u_h \text{ on $\Omega$,}\\
\nabla_T u_h & = 0 \text{ on $\partial \Omega$,}\\
\left(\frac{h}{2} {\rm div} - \nabla f \cdot \right) \nabla u_h & = 0 \text{ on $\partial \Omega$,}\\
\end{aligned}
\right.
\end{equation}
where
\begin{equation}\label{eq:L1}
L^{(1)}_{f,h}= \frac{h}{2} \Delta - \nabla f \cdot \nabla - {\rm
  Hess} \, f
\end{equation}
is an operator acting on $1$-forms (namely on vector fields).
Therefore $\nabla u_h$ is an eigen-$1$-form 
of  the operator $-L^{(1)}_{f,h}$ with tangential Dirichlet boundary
conditions (see~\eqref{eq:L1_eig}), associated with the small eigenvalue
$\lambda_h$. 

It is known (see for
example~\cite{helffer-nier-06}) that in our geometric setting, $-L^{D,(0)}_{f,h}(\Omega)$ admits exactly one
eigenvalue smaller than $\frac{\sqrt{h}}{2}$, namely $\lambda_h$ with
associated eigenfunction $u_h$ (this is because~$f$ has only one local
minimum in $\Omega$) and that $-L^{D,(1)}_{f,h}(\Omega)$ admits exactly $n$
eigenvalues smaller than $\frac{\sqrt{h}}{2}$  (where, we recall, $n$ is the
number of local minima of~$f$ on $\partial \Omega$). Actually, all these small
eigenvalues are exponentially small in the regime $h \to 0$,
the other eigenvalues being bounded from below by a constant in this regime.
The idea is then to construct an appropriate basis (with so called
quasi-modes, which are localized on the generalized saddle points $(z_i)_{i =1, \ldots ,n}$) of the eigenspace associated with small eigenvalues for $L^{D,(1)}_{f,h}(\Omega)$, and then
to decompose $\nabla u_h$ along this basis.

The article is organized as follows. In
Section~\ref{sec:gram_schmidt}, we introduce the general setting for
the proof of our results, and the Gram-Schmidt procedure which allows,
starting from a set of quasi-modes, to compute the projection of
(an approximation of) $\nabla u_h$ along the quasi-modes. In order to  quantify the distance between the space
spanned by these quasi-modes and the eigenspace of $L^{D,(1)}_{f,h}(\Omega)$
associated with small eigenvalues, we
need to use so-called Agmon estimates. Section~\ref{agmonproperty} is
devoted to a presentation of the main properties of the Agmon distance
which intervenes in these estimates. The most technical part is the
effective construction of the quasi-modes using auxiliary simpler
eigenvalue problems associated with each of the local minima
$(z_i)_{i =1, \ldots ,n}$. This is explained in
Section~\ref{sec:quasi-modes} which concludes the proof of Theorem \ref{TBIG0}. Finally, Section~\ref{sec:proofs}
concludes the paper by providing the proofs of all the other results stated above, in particular Theorem~\ref{th.gene_sigma}.
\medskip

\noindent
For the ease of the reader, a list of the main notation used in this work is provided at the end of this work.

\section{General setting and strategy for the proof of Theorem~\ref{TBIG0}}\label{sec:gram_schmidt}

The general setting for proving the results presented in
Section~\ref{intro} will be the following: $\overline \Omega$ is a
$C^{\infty}$ oriented connected  compact  Riemannian  manifold of
dimension $d$ with boundary $\partial \Omega$ and the function $f$ is
a $C^{\infty}$ real valued function defined on $\overline \Omega$. One defines
$\Omega:=\overline{\Omega}\setminus \partial \Omega$. In particular,
Theorem~\ref{TBIG0} will actually be proven in this framework. Notice that the assumptions
\textbf{[H1]}, \textbf{[H2]} and \textbf{[H3]}  are
still meaningful in this more general setting. 







In order to use previous results from the literature on semi-classical
analysis, we will transform the original problem~\eqref{eq:u} on
$(\lambda_h,u_h)$ associated with weighted Hilbert space
$H^q_w(\Omega)$ to an eigenvalue problem on the standard (non-weighted) Hilbert spaces
$H^q(\Omega)$, by using a unitary transformation which relates the
operator $L^{(p)}_{f,h}$ to the Witten Laplacians $\Delta^{(p)}_{f,h}$. This is explained in Section~\ref{sec:witten}, together
with some first well-known results on the spectrum of Witten
Laplacians. Then, in Section~\ref{sec:quasi-modes_gram_schmidt}, we
explain what are the requirements on the quasi-modes we will build in
order to obtain the estimate~\eqref{eq:dnuh_asymptot}, see
Proposition~\ref{ESTIME}. Section~\ref{estime} is finally devoted to
the proof of Proposition~\ref{ESTIME}.

\subsection{Witten Laplacians} \label{sec:witten}


 
\subsubsection{Notations for Sobolev spaces}
\label{notation}
For $p\in\{0,\ldots,d\}$, one denotes by
$\Lambda^pC^{\infty}(\overline{\Omega})$ the space of $C^{\infty}$ $p$-forms on
$\overline\Omega$\label{page.cinfty}. Moreover, $\Lambda^pC^{\infty}_T(\overline\Omega)$ is the set of
$C^{\infty}$ $p$-forms $v$ such that $\mbf tv=0$ on $\partial \Omega$, where $\mathbf t$ denotes the tangential trace on forms (see for instance~\cite[p. 27]{GSchw}). Likewise, the set $\Lambda^pC^{\infty}_N(\overline\Omega)$ is the set of $C^{\infty}$ $p$-forms $v$ such that $\mbf n v=0$ on $\partial \Omega$, where $\mathbf n$ denotes the normal trace on forms defined by: for all $w\in \Lambda^pC^{\infty}(\overline\Omega)$, $\mathbf n w:=w|_{\partial \Omega}-\mathbf t w$\label{page.cinftyt}. 

For $p\in \{0, \ldots,d\}$ and $q\in \mathbb N$, one denotes by
$\Lambda^pH^q_w(\Omega)$\label{page.wsobolevq} the weighted Sobolev spaces of $p$ forms with
regularity index $q$, for the weight $e^{-\frac{2}{h} f(x)}dx$ on
$\Omega$: $v\in \Lambda^pH^q_w(\Omega)$ if and only if for all multi-index $\alpha$ with $\vert \alpha \vert \le q$, the $\alpha$ derivative of $v$ is in $\Lambda^pL^2_w(\Omega)$ where $\Lambda^pL^2_w(\Omega)$ is the completion of the space $\Lambda^pC^{\infty}(\overline{\Omega})$ for the norm 
$$w\in \Lambda^pC^{\infty}(\overline{\Omega})\mapsto\sqrt{ \int_{\Omega} \vert w\vert^2e^{-\frac 2h f}}.$$
 See for example~\cite{GSchw} for an introduction to Sobolev spaces on manifolds with boundaries. For $p\in\{0,\ldots,d\}$ and $q> \frac{1}{2}$, the set $\Lambda^pH^q_{w,T}(\Omega)$ is defined by 
$$\Lambda^pH^q_{w,T}(\Omega):= \left\{v\in \Lambda^pH^q_w(\Omega)
  \,|\,  \mathbf  tv=0 \ {\rm on} \ \partial \Omega\right\}.$$
  \label{page.wsobolevqt}
Notice that $\Lambda^pL^2_w(\Omega)$ is the space
$\Lambda^pH^0_w(\Omega)$, and that $\Lambda^0H^1_{w,T}(\Omega)$ is
the space $H^{1}_{w,0}(\Omega)$ that we introduced in Proposition~\ref{fried} to
define the domain of $L^{D,(0)}_{f,h}(\Omega)$. Likewise for $p\in\{0,\ldots,d\}$ and $q> \frac{1}{2}$, the set $\Lambda^pH^q_{w,N}(\Omega)$ is defined by 
$$\Lambda^pH^q_{w,N}(\Omega):= \left\{v\in \Lambda^pH^q_w(\Omega)
  \,|\,  \mathbf  nv=0 \ {\rm on} \ \partial \Omega\right\}.$$
We will denote by $\Vert . \Vert_{H^q_w}$  the norm on the weighted space $\Lambda^pH^q_w
(\Omega)$. Moreover $\langle
\cdot , \cdot\rangle_{L^2_w}$ denotes the scalar product in $\Lambda^pL^2_w (\Omega)$.  \label{page.psl2w}\\
Finally, we will also use the same notation without the index $w$ to
denote the standard Sobolev spaces defined with respect to the
Lebesgue measure on $\Omega$ \label{page.psl2}.

\subsubsection{Witten Laplacians on a manifold with boundary}

Let us first recall some basic properties of Witten Laplacians, as
well as the link between those and the operators $L^{(p)}_{f,h}$
introduced above (see~\eqref{eq:L0} and~\eqref{eq:L1}). As already
explained above, we will actually need in this article to work only
with $0$ and $1$-forms ($p\in \{0,1\}$). For an introduction to the Hodge theory and the Hodge Laplacians on manifolds with boundary, one can refer to \cite{GSchw}. 
\medskip

\noindent
Denote by $d$ the exterior derivative on $\Omega$,
$$d^{(p)}: \Lambda^pC^{\infty}(\Omega )\to \Lambda^{p+1}C^{\infty}(\Omega),$$
and $(d^{(p)})^*$ its adjoint. The exterior derivative is $2$ nilpotent, 
$$d^{(p+1)}\circ d^{(p)} =0,$$
and therefore $(d^{(p)})^*\circ (d^{(p+1)})^* =0$. In all what
follows, the superscript $(p)$ may be
omitted when there is no ambiguity.

Let us now introduce the so called distorted exterior derivative 
\begin{equation}\label{eq:dfh}
d^{(p)}_{f,h}:=e^{-\frac{f}{h}}\, h\, d^{(p)}\, e^{\frac{f}{h}}=hd^{(p)}+df\wedge
\end{equation}
and its formal adjoint
\begin{equation}\label{eq:d*fh}
(d^{(p)}_{f,h})^*:=e^{\frac{f}{h}}\,h\,(d^{(p)})^* \,e^{\frac{-f}{h}}=h(d^{(p)})^*+\mathbf{i}_{\nabla f}.
\end{equation}
The distorted exterior derivative was firstly introduced by Witten in~\cite{witten-82}.
\begin{definition}
The Witten Laplacian is the non negative differential operator
$$\Delta^{(p)}_{f,h}:=\left(d^{(p)}_{f,h}+\left(d^{(p)}_{f,h}\right)^*\right)^2.$$
\end{definition}
\noindent
An \label{page.wlaplacien} equivalent  expression of the Witten Laplacians is 
$$\Delta^{(p)}_{f,h}=h^2 \Delta_H^{(p)} + \vert \nabla f\vert^2 + h\left(\mathcal L_{\nabla f} + \mathcal L^*_{\nabla f}\right),$$
where $\mathcal L$ stands for the Lie derivative, $\nabla$ is the covariant derivative associated to the metric   on $\Omega$ and $\Delta_H^{(p)}$ is the Hodge Laplacian acting on $p$-forms, defined by:
$$\Delta_H^{(p)}:= \left(d^{(p)}+\left(d^{(p)}\right)^*\right)^2.$$
We recall that $\Delta_H^{(p)}$ is a positive operator (in $\R^d$, $\Delta_H^{(0)}=-\sum_{i=1}^d \partial^2_{x_i,x_i}$).
The operator $\mathcal L_{\nabla f} + \mathcal L^*_{\nabla f}$ is an
operator of order $0$ (namely a multiplicative operator). On $0$-forms, namely on functions, the Witten Laplacian has the following expression 
\begin{equation}\label{eq:laplwitt0}
\Delta^{(0)}_{f,h}=h^2 \Delta_H^{(0)} + \vert \nabla f\vert^2 + h\,\Delta_H^{(0)}  f.
\end{equation}
Let us  now make precise the natural Dirichlet and Neumann boundary conditions  for Witten Laplacians on a manifold with boundary    (see~\cite{helffer-nier-06}).
\begin{proposition} \label{wittenquadra}
The Dirichlet realization of $\Delta^{(p)}_{f,h}$ on $\Omega$ is the
operator $\Delta^{D,(p)}_{f,h}(\Omega)$ with domain
$$D\left(\Delta^{D,(p)}_{f,h}(\Omega)\right)=\left\{  v \in
  \Lambda^pH^2(\Omega)  | \ \mathbf  t v=0, \ \mathbf  t d^*_{f,h} v=0
\right\}.$$
The Neumann realization of $\Delta^{(p)}_{f,h}$ on $\Omega$ is the
operator $\Delta^{N,(p)}_{f,h}(\Omega)$ with domain
$$D\left(\Delta^{N,(p)}_{f,h}(\Omega)\right)=\left\{  v \in
  \Lambda^pH^2(\Omega)  | \  \mathbf  n v=0, \  \mathbf n d_{f,h}v=0
\right   \}.$$
 The operators
$\Delta^{D,(p)}_{f,h}(\Omega)$ and $\Delta^{N,(p)}_{f,h}(\Omega)$ are both self adjoint operators with compact resolvent. 
\end{proposition}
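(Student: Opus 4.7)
The plan is to obtain each realization as the self-adjoint operator associated, via the standard representation theorem, with a closed densely defined non-negative quadratic form on $\Lambda^{p}L^{2}(\Omega)$, and then to identify its domain using elliptic regularity.

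\textbf{Step 1 (Quadratic forms).} I would introduce
\[
Q^{D}_{f,h}(v) := \|d^{(p)}_{f,h}v\|_{L^{2}}^{2}+\|(d^{(p-1)}_{f,h})^{*}v\|_{L^{2}}^{2}, \qquad v\in \Lambda^{p}H^{1}_{T}(\Omega),
\]
and
\[
Q^{N}_{f,h}(v) := \|d^{(p)}_{f,h}v\|_{L^{2}}^{2}+\|(d^{(p-1)}_{f,h})^{*}v\|_{L^{2}}^{2}, \qquad v\in \Lambda^{p}H^{1}_{N}(\Omega).
\]
Both are manifestly non-negative and densely defined (smooth forms with the required trace being dense in the respective $H^{1}$ spaces). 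Using the explicit formulas $d^{(p)}_{f,h}=hd+df\wedge$ and $(d^{(p)}_{f,h})^{*}=hd^{*}+\mathbf{i}_{\nabla f}$, and a Gaffney-type inequality on a compact manifold with boundary (see for instance \cite{GSchw}), one obtains positive constants $C,C'$ such that for $v$ in either form domain
\[
C\|v\|_{H^{1}}^{2} \le Q^{\bullet}_{f,h}(v)+\|v\|_{L^{2}}^{2} \le C'\|v\|_{H^{1}}^{2},
\]
which proves closedness of $Q^{D}_{f,h}$ and $Q^{N}_{f,h}$.

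\textbf{Step 2 (Friedrichs extensions and boundary conditions).} The representation theorem then produces non-negative self-adjoint operators whose action, by the definition $\Delta^{(p)}_{f,h}=(d_{f,h}+d^{*}_{f,h})^{2}$ and the nilpotency $d_{f,h}d_{f,h}=0$, $d^{*}_{f,h}d^{*}_{f,h}=0$, coincides on smooth forms with $\Delta^{(p)}_{f,h}$. The specific boundary conditions appearing in the statement arise through Green's formula for the distorted differentials: writing out $\langle d^{(p)}_{f,h}u,w\rangle_{L^{2}}-\langle u,(d^{(p)}_{f,h})^{*}w\rangle_{L^{2}}$ as a boundary integral involving $\mathbf{t}u$ and $\mathbf{n}w$ shows that the essential condition $\mathbf{t}v=0$ (for Dirichlet) forces the natural condition $\mathbf{t}(d^{(p-1)}_{f,h})^{*}v=0$ on the operator domain, and symmetrically $\mathbf{n}v=0$ forces $\mathbf{n}d^{(p)}_{f,h}v=0$ in the Neumann case.

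\textbf{Step 3 ($H^{2}$ regularity and compact resolvent).} To identify the operator domain with $\Lambda^{p}H^{2}(\Omega)$ intersected with the boundary conditions, I would invoke elliptic regularity up to the boundary. The principal symbol of $\Delta^{(p)}_{f,h}$ is $h^{2}|\xi|^{2}\,\mathrm{Id}$, so the interior ellipticity is clear. The Dirichlet boundary pair $(\mathbf{t}v,\mathbf{t}d^{*}_{f,h}v)$ and the Neumann pair $(\mathbf{n}v,\mathbf{n}d_{f,h}v)$ are exactly the lower order perturbations of the absolute and relative Hodge boundary systems; they satisfy the Shapiro--Lopatinski covering condition for the Hodge Laplacian on $p$-forms, and this property is preserved under the zero-order perturbation. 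Standard elliptic regularity (Agmon--Douglis--Nirenberg type estimates) then yields the desired $H^{2}$-characterization. Finally, compactness of the resolvent follows from the Rellich--Kondrachov embedding $\Lambda^{p}H^{2}(\Omega)\hookrightarrow \Lambda^{p}L^{2}(\Omega)$, which is compact since $\overline{\Omega}$ is compact with smooth boundary.

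The main obstacle in this plan is Step 3: verifying that the mixed boundary systems (one essential Dirichlet-type condition together with one natural condition that itself involves a first order operator applied to $v$) do fulfill the Shapiro--Lopatinski covering condition uniformly up to the boundary, and deducing from this the $H^{2}$ regularity with the precise trace identifications. In practice this is handled by reducing to the unperturbed Hodge case, for which the result is classical (see \cite{GSchw}), and observing that the zero-order terms $df\wedge$ and $\mathbf{i}_{\nabla f}$ do not alter the symbol computation on the boundary.
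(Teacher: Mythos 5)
Your proposal is correct and follows essentially the route the paper relies on: the paper does not prove Proposition~\ref{wittenquadra} in detail but refers to \cite[Section 2.4]{helffer-nier-06} and \cite[Section 4.2.3]{le-peutrec-10}, noting only that $\Delta^{D,(p)}_{f,h}(\Omega)$ is the Friedrichs extension of the quadratic form~\eqref{eq:fried_Witten} on $\Lambda^pH^1_T(\Omega)$, which is precisely your Step~1. Your Steps~2--3 (Green's formula yielding the natural conditions $\mathbf{t}d^*_{f,h}v=0$, resp.\ $\mathbf{n}d_{f,h}v=0$, reduction of the $H^2$ regularity and the boundary system to the Hodge-Laplacian case of \cite{GSchw} since $df\wedge$ and $\mathbf{i}_{\nabla f}$ are zero-order, and Rellich compactness) are exactly the arguments carried out in those references.
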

\noindent
We \label{page.wlaplaciend} recall that ${\bf t}$ denotes the tangential trace on forms and
that ${\bf n}\omega=\omega - {\bf t} \omega$ is the normal trace. The
proof of Proposition~\ref{wittenquadra} can be found
in~\cite[Section 2.4]{helffer-nier-06} and in \cite[Section 4.2.3]{le-peutrec-10}. It is a generalization of what is stated in
\cite{GSchw} for the Hodge Laplacians. One can check that the operator $\Delta^{D,(p)}_{f,h}\left (\Omega\right )   $ is actually the Friedrichs extension associated to the quadratic form
\begin{equation}\label{eq:fried_Witten}
 v \in \Lambda^pH^1_T\left (\Omega\right ) \mapsto \left\Vert d^{(p)}_{f,h}v\right \Vert_{
    L^2}^2 +\left \Vert \left (d^{(p)}_{f,h}\right
   )^*v\right \Vert_{ L^2}^2.
\end{equation}
The following properties are easily checked for $v\in D(\Delta^{D,(p)}_{f,h}(\Omega))$ such that $d_{f,h}v\in D(\Delta^{D,(p+1)}_{f,h}(\Omega))$ and $d_{f,h}^*v\in D(\Delta^{D,(p-1)}_{f,h}(\Omega))$:
\begin{equation}\label{eq:comutation}
d_{f,h}\Delta^{D,(p)}_{f,h}(\Omega)v=\Delta^{D,(p+1)}_{f,h}(\Omega)d_{f,h}v
\text{ and } \  d_{f,h}^*\Delta^{D,(p)}_{f,h}(\Omega)v=\Delta^{D,(p-1)}_{f,h}(\Omega)d_{f,h}^*v.
\end{equation}
Similar relations hold for $\Delta^N_{f,h}(\Omega)$. 
\medskip

\noindent
One can relate the infinitesimal generator $L^{(0)}_{f,h}$ of the diffusion
(\ref{eq:langevin}) to the Witten Laplacian $\Delta^{(0)}_{f,h}$
through the unitary transformation:
$$  \phi \in L^2_w(\Omega) \mapsto e^{-\frac{f}{h}} \phi \in L^2(\Omega).$$
Indeed, one can check that
\begin{equation}\label{eq:unitary_p0}
\Delta^{D,(0)}_{f,h}(\Omega) = -2\, h \, e^{-\frac{f}{h}} \,  L^{D,(0)}_{f,h}(\Omega)  \, e^{\frac{f}{h}}.
\end{equation}
Let us now generalize this to $p$-forms, using extensions of
$L^{(0)}_{f,h}$ to $p$-forms.
\begin{proposition} \label{Lp}
The Friedrichs extension associated with the quadratic form 
$$v \in \Lambda^pC^{\infty}_T(\Omega)\mapsto \frac{h}{2}
\left[\left\Vert d^{(p)}v\right\Vert_{  L^2_w(\Omega)}^2
  +\left\Vert
   e^{\frac{2f}{h}}  \left(d^{(p)}\right)^* e^{-\frac{2f}{h}} v\right\Vert_{ L^2_w(\Omega)}^2\right]$$ 
on $\Lambda^pL^2_w(\Omega)$, is denoted
$\left(-L^{D,(p)}_{f,h}(\Omega), \
  D\left(-L^{D,(p)}_{f,h}(\Omega)\right)\right)$. The operator $-L^{D,(p)}_{f,h}(\Omega)$ is a  positive unbounded self adjoint operator on $\Lambda^pL^2_w(\Omega)$. Besides, one has 
$$D\left(-L^{D,(p)}_{f,h}(\Omega)\right )=\left\{  v \in    \Lambda^pH^2_w(\Omega) \, | \, \mathbf  tv=0, \ \mathbf  t d^*\left(e^{-\frac{2f}{h}}v\right)=0   \right\}.$$
\end{proposition}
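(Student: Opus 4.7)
The plan is to reduce the statement to the already known Dirichlet realization of the Witten Laplacian (Proposition~\ref{wittenquadra}) by conjugation with the unitary multiplication map
$$U \colon v \in \Lambda^p L^2_w(\Omega) \longmapsto e^{-f/h} v \in \Lambda^p L^2(\Omega).$$
Unitarity follows from $\|Uv\|_{L^2}^2 = \int_\Omega |v|^2 e^{-2f/h} = \|v\|_{L^2_w}^2$. Since $f \in C^\infty(\overline\Omega)$ and $\overline\Omega$ is compact, for each fixed $h>0$ multiplication by $e^{\pm f/h}$ is a bounded isomorphism $\Lambda^p H^k_w(\Omega) \leftrightarrow \Lambda^p H^k(\Omega)$ for every $k \in \mathbb N$, it preserves $\Lambda^p C^\infty_T(\overline\Omega)$, and it commutes with the tangential trace $\mathbf t$ (being a positive scalar).

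The first concrete step is to conjugate the relevant first-order operators: from~\eqref{eq:dfh}--\eqref{eq:d*fh} one gets directly
$$U\, d^{(p)}\, U^{-1} = \tfrac{1}{h}\, d^{(p)}_{f,h}, \qquad U\,\bigl(e^{2f/h}(d^{(p)})^* e^{-2f/h}\bigr)\, U^{-1} = e^{f/h}(d^{(p)})^*e^{-f/h} = \tfrac{1}{h}\,(d^{(p)}_{f,h})^*.$$
Applying these identities to the defining quadratic form yields, for every $v \in \Lambda^p C^\infty_T(\overline\Omega)$,
$$\tfrac{h}{2}\Bigl[\|d^{(p)}v\|_{L^2_w}^2 + \|e^{2f/h}(d^{(p)})^*e^{-2f/h}v\|_{L^2_w}^2\Bigr] = \tfrac{1}{2h}\Bigl[\|d^{(p)}_{f,h}(Uv)\|_{L^2}^2 + \|(d^{(p)}_{f,h})^*(Uv)\|_{L^2}^2\Bigr],$$
which is exactly $\tfrac{1}{2h}$ times the Friedrichs quadratic form~\eqref{eq:fried_Witten} of $\Delta^{D,(p)}_{f,h}(\Omega)$ evaluated at $Uv$. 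Since $U$ is bijective on the dense core $\Lambda^p C^\infty_T(\overline\Omega)$, the two quadratic forms have equivalent closures. Proposition~\ref{wittenquadra} then implies that the Friedrichs extension of our form is a positive self-adjoint operator on $\Lambda^p L^2_w(\Omega)$, linked to the Witten Laplacian by the operator identity $-L^{D,(p)}_{f,h}(\Omega) = \tfrac{1}{2h}\, U^{-1}\, \Delta^{D,(p)}_{f,h}(\Omega)\, U$, which also transfers the compactness of the resolvent.

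It remains to identify the domain. By unitary equivalence, $v \in D(-L^{D,(p)}_{f,h}(\Omega))$ iff $Uv \in D(\Delta^{D,(p)}_{f,h}(\Omega))$. Using Proposition~\ref{wittenquadra}, this amounts to $Uv \in \Lambda^p H^2(\Omega)$, $\mathbf t (Uv) = 0$ and $\mathbf t (d^{(p)}_{f,h})^*(Uv) = 0$. The $H^2$-regularity transfers to $v \in \Lambda^p H^2_w(\Omega)$ via the weighted $\leftrightarrow$ unweighted isomorphism above; the first boundary condition becomes $\mathbf t v = 0$ since $\mathbf t(e^{-f/h}v) = e^{-f/h}\mathbf t v$; and a direct computation
$$(d^{(p)}_{f,h})^*(Uv) = e^{f/h}\, h\,(d^{(p)})^*\bigl(e^{-f/h} \cdot e^{-f/h} v\bigr) = h\, e^{f/h}\,(d^{(p)})^*\bigl(e^{-2f/h}v\bigr)$$
reduces the second boundary condition to $\mathbf t\,(d^{(p)})^*(e^{-2f/h}v) = 0$, which is precisely the condition stated.

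The only real subtlety, rather than a genuine obstacle, is the bookkeeping under conjugation: one must verify carefully that multiplication by $e^{\pm f/h}$ preserves $H^2$-regularity up to the boundary (using $f \in C^\infty(\overline\Omega)$ and the Leibniz rule on the compact manifold $\overline\Omega$), commutes with the tangential trace, and produces exactly the weighted coderivative $(d^{(p)})^*(e^{-2f/h}v)$ that appears in the statement. Once these are in place, all other assertions of the proposition are pulled across the unitary $U$ from the corresponding ones for the Dirichlet Witten Laplacian.
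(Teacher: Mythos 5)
Your proof is correct and follows essentially the same route as the paper, which identifies $-L^{D,(p)}_{f,h}(\Omega)$ with $\tfrac{1}{2h}e^{\frac{f}{h}}\Delta^{D,(p)}_{f,h}(\Omega)e^{-\frac{f}{h}}$ via the unitary map $v\mapsto e^{-\frac{f}{h}}v$ and pulls back the statement of Proposition~\ref{wittenquadra}, exactly as in the relations~\eqref{eq:unitary_p0} and~\eqref{eq:unitary}. Your conjugation identities, the form computation, and the translation of the boundary conditions $\mathbf t v=0$ and $\mathbf t\, d^*\bigl(e^{-\frac{2f}{h}}v\bigr)=0$ are all accurate.
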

\label{page.generatord}
\noindent
For $p=0$, the differential operator 
$$L_{f,h}^{(0)} =  - \frac{h}{2} \Delta^{(0)}_H  -    \nabla f  \cdot \nabla $$
is the infinitesimal generator~\eqref{eq:L0} of the overdamped
Langevin dynamics~\eqref{eq:langevin}.
 For $p=1$ one gets the operator already introduced in~\eqref{eq:L1}:
\begin{equation} \label{generator1forms}  
L_{f,h}^{(1)}  =   - \frac{h}{2} \Delta^{(1)}_H  -    \nabla f  \cdot \nabla   -   \hess f  ,
\end{equation}
where we recall $\hess f $ is the Hessian of $f$, see Remark~\ref{re:Hessian}.
The generalisation of~\eqref{eq:unitary_p0} to $p$-forms is:
\begin{equation}\label{eq:unitary} 
\Delta^{D,(p)}_{f,h}(\Omega) = -2\, h \, e^{-\frac{f}{h}}\left( L^{D,(p)}_{f,h}(\Omega) \right)\, e^{\frac{f}{h}}.
\end{equation}
The intertwining relation (\ref{eq:comutation})
writes on $L_{f,h}^{D,(p)}(\Omega)$:
\begin{equation}\label{commutationL}
L_{f,h}^{D,(p+1)}(\Omega) d = d L_{f,h}^{D,(p)}(\Omega)
\text{ and } L_{f,h}^{D,(p-1)}(\Omega) d_{2f,h}^{*} = d^{*}_{2f,h} L_{f,h}^{D,(p)}(\Omega).
\end{equation}
Thanks to the relation~\eqref{eq:unitary}, the operators
$L_{f,h}^{D,(p)}(\Omega)$ and $\Delta^{D,(p)}_{f,h}(\Omega)$ have the
same spectral properties. In particular the operators
$L_{f,h}^{D,(p)}(\Omega)$ and $\Delta^{D,(p)}_{f,h}(\Omega)$ both have
compact resolvents, and thus a discrete spectrum. The generalization of
Proposition~\ref{prop:QSD_spec} is the following:   
 \begin{proposition} \label{UU} 
The smallest eigenvalue of $-L_{f,h}^{D,(0)}(\Omega)$, denoted by
 $\lambda_h$, is positive and non degenerate. The associated eigenfunction $u_h$ has sign on $\Omega$. Moreover $u_h \in C^{\infty}\left(\overline \Omega,\mathbb R\right)$.
\end{proposition}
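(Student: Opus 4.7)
My plan is to mirror the proof of Proposition \ref{prop:QSD_spec} in the manifold setting, essentially transferring the classical Dirichlet eigenvalue argument to the weighted space $\Lambda^{0} L^{2}_{w}(\Omega)$ (or equivalently, via the unitary conjugation \eqref{eq:unitary}, to $L^{2}(\Omega)$ and the Witten Laplacian $\Delta^{D,(0)}_{f,h}(\Omega)$). I would proceed in four steps.

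First, I would establish that $-L^{D,(0)}_{f,h}(\Omega)$ has purely discrete spectrum and that its bottom is characterized variationally. This follows exactly as in the discussion after Proposition \ref{fried}: since $\overline{\Omega}$ is a compact Riemannian manifold with smooth boundary, the compact embedding $H^{1}_{w}(\Omega) \hookrightarrow L^{2}_{w}(\Omega)$ (which reduces, via multiplication by $e^{-f/h}$, to the standard Rellich compact embedding on $\Omega$) forces $-L^{D,(0)}_{f,h}(\Omega)$ to have compact resolvent. The self-adjoint Friedrichs framework in Proposition \ref{Lp} then yields $\lambda_{h} = \inf_{\varphi \in H^{1}_{w,0}(\Omega) \setminus \{0\}} \frac{\tfrac{h}{2} \int_{\Omega} |\nabla \varphi|^{2} e^{-2f/h}}{\int_{\Omega} \varphi^{2} e^{-2f/h}}$, and positivity of $\lambda_{h}$ follows from the weighted Poincaré inequality on the bounded domain $\Omega$ with vanishing trace.

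Second, I would prove that any eigenfunction $u_{h}$ associated with $\lambda_{h}$ has a sign. The argument is standard: if $u_{h}$ realizes the infimum of the Rayleigh quotient, so does $|u_{h}|$, because $|\nabla |u_{h}|\, | = |\nabla u_{h}|$ almost everywhere and $|u_{h}| \in H^{1}_{w,0}(\Omega)$. Hence $|u_{h}|$ is also a (nonnegative) ground state. Transferring to the Witten side via $v_{h} = e^{-f/h} u_{h}$, the function $|v_{h}|$ satisfies $\Delta^{D,(0)}_{f,h}(\Omega) |v_{h}| = 2h\lambda_{h} |v_{h}|$ weakly, and $|v_{h}|$ thus solves an elliptic equation with smooth coefficients; the strong maximum principle (or equivalently the Harnack inequality applied to the non-negative solution $|v_{h}|$ of the elliptic equation $h^{2} \Delta^{(0)}_{H} |v_{h}| + (|\nabla f|^{2} + h \Delta^{(0)}_{H} f - 2h\lambda_{h}) |v_{h}| = 0$) yields $|v_{h}| > 0$ in $\Omega$, whence $u_{h}$ has strict sign in $\Omega$.

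Third, non-degeneracy of $\lambda_{h}$ is a direct consequence: if $u_{h}$ and $\tilde{u}_{h}$ were two linearly independent ground states, one could form a ground state in their span orthogonal (in $L^{2}_{w}$) to $u_{h}$; by the previous step this new ground state would also have constant sign, contradicting orthogonality to the strictly signed $u_{h}$.

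Fourth, $C^{\infty}$-regularity up to the boundary is obtained by elliptic regularity. The equation $-L^{(0)}_{f,h} u_{h} = \lambda_{h} u_{h}$ is a linear second-order elliptic equation with $C^{\infty}$ coefficients (the metric, $f$, and thus $\nabla f$ are smooth), and the boundary is smooth with homogeneous Dirichlet data. A standard bootstrap — interior regularity by iterated application of local $H^{k}$ estimates, and boundary regularity via flattening $\partial \Omega$ and the $H^{k}$ boundary estimates as in \cite[Section 6.3]{evans-10} — yields $u_{h} \in \Lambda^{0} H^{k}_{w}(\Omega)$ for every $k$, hence $u_{h} \in C^{\infty}(\overline{\Omega})$ by Sobolev embedding. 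The only mildly delicate point is the boundary regularity step, but since $\partial \Omega$ is $C^{\infty}$ and the Dirichlet condition is compatible with the equation, this is completely routine; there is no genuine obstacle to the proof.
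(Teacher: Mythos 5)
Your proposal is correct and follows essentially the same route as the paper: for Proposition~\ref{UU} the paper simply invokes the standard principal-eigenvalue theory for second-order elliptic operators with Dirichlet boundary conditions (the references given for Proposition~\ref{prop:QSD_spec}, e.g.\ \cite[Section 6.3 and Theorem 2 in Section 6.5]{evans-10}), transported to the weighted space via the unitary conjugation with $e^{-f/h}$, which is exactly the argument you spell out (compact resolvent and variational characterization, $|u_h|$ as a ground state plus Harnack/strong maximum principle for the sign, simplicity by orthogonality of signed ground states, and elliptic bootstrap for $C^\infty(\overline\Omega)$ regularity). No gap to report.
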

\noindent
Without \label{page.uhlambdah} loss of generality, one can assume that $u_h$ satisfies 
\eqref{eq.u_norma0}. 
Thanks to the relation~\eqref{eq:unitary}, the couple
$\left(\mu_h,v_h\right):=\big(2h\lambda_h, u_h\,e^{-\frac{f}{h}} \big)$
is the first eigenvalue and eigenfunction of
$\Delta^{D,(0)}_{f,h}(\Omega)$. The couple $ (\mu_h,v_h)$ satisfies
\begin{equation} \label{eq:mu1}
\left\{
\begin{aligned}
\Delta^{(0)}_{f,h} \, v_h &=  \mu_h \,v_h    \ {\rm on \ }  \Omega,  \\ 
v_h&= 0 \ {\rm on \ } \partial \Omega.
\end{aligned}
\right.
\end{equation} 
Moreover, $v_h>0$ on $\Omega$ and  $$\int_{\Omega} v_h^2(x) \, dx=1.$$
The following lemma, which is a direct consequence of the spectral theorem (see for instance~\cite[Theorem 8.15]{helffer2013spectral}), 
 will be instrumental throughout this work.
\begin{lemma} \label{quadra}
Let $(A,D(A))$ be a non negative self adjoint operator on a Hilbert
Space $\left(\mc H, \Vert.\Vert\right)$ with associated quadratic form
$q_A(u)=(u,Au)$ with domain $D(q_A)$. Then  for any $u\in D(q_A)$ and $b>0$
$$\left\Vert \pi_{[b,+\infty)} (A) \, u\right\Vert^2 \leq \frac{q_A(u)}{b}$$
where, for $E \subset \R$ a Borel set, $\pi_{E}(A)$ is the spectral
projection of the operator $A$ on $E$.
\label{page.piE}
\end{lemma}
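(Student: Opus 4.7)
The plan is to reduce the statement to a direct computation via the spectral theorem applied to the non negative self adjoint operator $A$. Since $A$ is non negative and self adjoint on $\mc H$, there exists a unique spectral resolution $(\pi_E(A))_{E \text{ Borel in } \R}$ supported in $[0,+\infty)$, and for every $u \in D(q_A)$ the standard identifications give
\begin{equation*}
\Vert u \Vert^2 = \int_{[0,+\infty)} d\mu_u(\lambda), \qquad q_A(u) = \int_{[0,+\infty)} \lambda \, d\mu_u(\lambda),
\end{equation*}
where $\mu_u(E) := \langle \pi_E(A) u, u\rangle = \Vert \pi_E(A) u\Vert^2$ is the non negative Borel measure on $[0,+\infty)$ associated with $u$ (the last equality uses that spectral projections are orthogonal projections). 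The point is that $D(q_A)$ coincides with the set of $u$ such that $\lambda \mapsto \lambda$ is integrable against $\mu_u$, so that the second integral above is well defined.

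The core of the argument is then the elementary pointwise inequality $\mathbf{1}_{[b,+\infty)}(\lambda) \le \lambda/b$ for all $\lambda \ge 0$ and $b>0$. Integrating this against $\mu_u$ yields
\begin{equation*}
\Vert \pi_{[b,+\infty)}(A) u \Vert^2 = \int_{[0,+\infty)} \mathbf{1}_{[b,+\infty)}(\lambda)\, d\mu_u(\lambda) \le \frac{1}{b}\int_{[0,+\infty)} \lambda \, d\mu_u(\lambda) = \frac{q_A(u)}{b},
\end{equation*}
which is exactly the claimed bound.

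I do not expect any real obstacle here: the statement is a classical consequence of functional calculus, often called a \emph{Chebyshev type} or \emph{min max} inequality, and the only mild subtlety is to justify using $q_A(u)$ (defined via the Friedrichs quadratic form) rather than $\langle Au, u\rangle$ (which a priori requires $u\in D(A)$). This is handled by the characterization $D(q_A) = D(A^{1/2})$ and the identity $q_A(u) = \Vert A^{1/2} u\Vert^2 = \int \lambda \, d\mu_u(\lambda)$, which comes directly from the spectral theorem and is already implicit in the Hilbert space setting recalled in the paper (see for instance the reference to \cite{helffer2013spectral}). Once this identification is in place, the above two displays give the lemma with no further work.
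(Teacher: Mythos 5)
Your proof is correct and follows exactly the route the paper intends: the paper gives no written proof but simply states the lemma as a direct consequence of the spectral theorem (citing \cite[Theorem 8.15]{helffer2013spectral}), and your Chebyshev-type argument with the spectral measure $\mu_u$, together with the identification $D(q_A)=D(A^{1/2})$ and $q_A(u)=\int \lambda\, d\mu_u(\lambda)$, is precisely the standard way to spell that out.
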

\noindent
This lemma will be in particular applied to the non negative self adjoint operators
$\Delta^{D,(p)}_{f,h}(\Omega)$ and $- L^{D,(p)}_{f,h}(\Omega)$ and
their associated quadratic forms.

\subsubsection{Small eigenvalues of $\Delta^{D,(0)}_{f,h}(\Omega)$ and $L^{D,(0)}_{f,h}(\Omega)$ }
\begin{sloppypar}
According to \cite[Corollary 2.4.4]{helffer-nier-06}, the following relations hold for all $v\in \Lambda^pH^1_{T}(\Omega)$:
$$\pi_{[0,h^{3/2})}\left
  (\Delta^{D,(p+1)}_{f,h}(\Omega)\right)\, d_{f,h}v=d_{f,h}\, \pi_{[0,h^{3/2})}\left
  (\Delta^{D,(p)}_{f,h}(\Omega)\right) v,$$
  and
  $$\pi_{[0,h^{3/2})}\left
  (\Delta^{D,(p-1)}_{f,h}(\Omega)\right)\, d_{f,h}^*v=d_{f,h}^*\, \pi_{[0,h^{3/2})}\left
  (\Delta^{D,(p)}_{f,h}(\Omega)\right) v.$$ 
  For $p\in\{0,\ldots,n\}$, let us define $F^{(p)}_h:= \text{Ran}\left( \pi_{[0,h^{3/2})}\left
  (\Delta^{D,(p)}_{f,h}(\Omega)\right) \right)$. 
Then, according to the previous intertwining relations, one can define a  finite
dimensional  Dirichlet complex structure (see \cite{chang-liu-95}, \cite{helffer-nier-06} and \cite{laudenbach2011morse}):
$$\{0\} \longrightarrow F^{(0)}_h\xrightarrow{d_{f,h} }  F^{(1)}_h\xrightarrow{d_{f,h} } \cdots  F^{(d)}_h \xrightarrow{d_{f,h} }\{0\}$$
$$\{0\} \xleftarrow{d_{f,h}^* }  F^{(0)}_h  \xleftarrow{d_{f,h}^* }  F^{(1)}_h\cdots\xleftarrow{d_{f,h} ^*} F^{(d)}_h\longleftarrow \ \{0\}.$$
For $p\in\{0,\ldots,n\}$, the dimension of the vector space $F^{(p)}_h$   in the regime $h \to 0$ have   been studied
 in \cite[Section~3]{helffer-nier-06} when $\nabla f\neq 0$ on $\pa \Omega$ and when $f:\overline \Omega\to \mathbb R$ and $f_{|\pa \Omega}$ are Morse functions. In particular, it is proved there that  the dimension of $F^{(0)}_h$ (respectively $F^{(1)}_h$) is equal to the number of local minima of $f$ (respectively to the number of generalized critical points of index $1$). A generalized critical point of index~$1$ for $\Delta^{D,(1)}_{f,h}(\Omega)$ is  either a local minimum of $f|_{\partial \Omega}$ such that
$\partial_n f(z_i)>0$ or a saddle point of index~$1$ of~$f$ inside~$\Omega$. In our setting, thanks to assumptions \textbf{[H1]}, \textbf{[H2]} and \textbf{[H3]}, there are $n$ generalized critical points of index~$1$, which are the local minima $(z_i)_{i=1,\ldots,n}$ of $f|_{\partial \Omega}$.
\end{sloppypar}
\begin{proposition} \label{ran}
Under \textbf{[H1]}, \textbf{[H2]}, and \textbf{[H3]}, there exists $h_0>0$ such that for all $h \in (0,h_0)$,
$$\dim  F^{(0)}_h =1 \ \text{ and } \ \dim F^{(1)}_h =n.$$
\end{proposition}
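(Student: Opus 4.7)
The plan is to reduce Proposition~\ref{ran} to an enumeration of the \emph{generalized critical points} of $f$ in $\overline{\Omega}$ and then invoke (or reprove) the spectral asymptotics developed in \cite[Section~3]{helffer-nier-06}. Since $-L_{f,h}^{D,(p)}(\Omega)$ and $\Delta_{f,h}^{D,(p)}(\Omega)$ are unitarily equivalent up to the factor $2h$ via $\phi\mapsto e^{-f/h}\phi$ (cf.~\eqref{eq:unitary}), it suffices to count the eigenvalues of $\Delta_{f,h}^{D,(p)}(\Omega)$ lying below $h^{3/2}$ for $p\in\{0,1\}$. Recall that a generalized critical point of index $p$ for $\Delta_{f,h}^{D,(p)}(\Omega)$ is either an interior Morse critical point of $f$ of index $p$ or a Morse critical point of $f|_{\partial\Omega}$ of index $p$ at which $\partial_n f>0$.

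Under \textbf{[H2]}, the unique critical point of $f$ in $\overline\Omega$ is the global minimum $x_0$, which has interior Morse index $0$. In particular, there are no interior Morse critical points of index $1$. By \textbf{[H1]}, $f|_{\partial\Omega}$ is Morse, and by \textbf{[H3]}, $\partial_n f>0$ on all of $\partial \Omega$, so every critical point of $f|_{\partial\Omega}$ contributes a boundary generalized critical point. Since $f|_{\partial\Omega}$ has no index-$0$ points contributing at the $0$-form level (with $\partial_n f>0$, boundary minima are generalized critical points for $p=1$, not $p=0$) and has exactly $n$ local minima, the total count is: one generalized critical point of index $0$ (namely $x_0$) and exactly $n$ of index $1$ (the points $z_1,\dots,z_n$).

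To convert this count into the claimed dimensions, I would establish matching upper and lower bounds. For the lower bound, I would construct quasi-modes: at index $0$ take $\psi_0 = c_0\chi_0\, e^{-f/h}$ for a cutoff $\chi_0$ supported near $x_0$, and verify by direct computation that $\|(\Delta_{f,h}^{(0)}-\mu_0)\psi_0\|_{L^2} = O(e^{-c/h})$ with $\mu_0 = O(e^{-c/h})$; at index $1$, attach to each $z_i$ a $1$-form quasi-mode $\psi_i$ concentrated near $z_i$, respecting the tangential Dirichlet conditions of $D(\Delta_{f,h}^{D,(1)}(\Omega))$, of WKB type $\psi_i \sim a_i(x,h)\, e^{-\Phi_i/h}$ with $\Phi_i$ a local solution of the eikonal equation $|\nabla\Phi|^2=|\nabla f|^2$ near $z_i$. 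An approximate orthonormality check and Lemma~\ref{quadra} applied to $\Delta_{f,h}^{D,(p)}(\Omega)$ then force $\dim F^{(0)}_h\ge 1$ and $\dim F^{(1)}_h\ge n$. For the upper bound, I would use an IMS partition of unity $1=\chi_\infty^2+\sum_j \chi_j^2$ with each $\chi_j$ supported in a small neighborhood of a generalized critical point: on $\mathrm{supp}(\chi_\infty)$ one has $|\nabla f|^2\ge c>0$, so the zero-order part of $\Delta_{f,h}^{(p)}$ dominates and contributes only eigenvalues $\ge ch$; on each $\mathrm{supp}(\chi_j)$ a harmonic approximation (quadratic model with the appropriate boundary conditions, whose non-degeneracy uses \textbf{[H3]}) produces exactly one eigenvalue below $h^{3/2}$ and a spectral gap of order $h$. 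A min-max argument then caps $\dim F^{(0)}_h\le 1$ and $\dim F^{(1)}_h\le n$.

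The main obstacle is the construction and analysis of the boundary quasi-modes for $p=1$ at the $z_i$'s: unlike the interior saddle-point case, one must build a $1$-form satisfying both $\mathbf{t}\psi_i=0$ and $\mathbf{t}d_{f,h}^*\psi_i=0$, and one must exploit the condition $\partial_n f(z_i)>0$ (which makes $z_i$ a ``generalized saddle point'' in the sense discussed after Corollary~\ref{cor:rates}) to produce a quadratic model whose lowest eigenvalue on the half-space is exponentially small rather than of order $h$. All of these ingredients are carried out in the framework of \cite[Section~3]{helffer-nier-06}, from which Proposition~\ref{ran} then follows by direct specialization to our $n$-point boundary configuration.
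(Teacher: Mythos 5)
Your proposal is correct and follows essentially the same route as the paper, which proves Proposition~\ref{ran} by counting the generalized critical points under \textbf{[H1]}--\textbf{[H3]} (one of index $0$, namely $x_0$, and $n$ of index $1$, namely $z_1,\dots,z_n$) and then simply invoking \cite[Theorem 3.2.3]{helffer-nier-06}; the quasi-mode/IMS/harmonic-approximation argument you sketch is precisely the machinery behind that cited theorem rather than a different approach. One slip to fix: for the tangential Dirichlet realization, a boundary generalized critical point of index $p$ is a critical point of $f|_{\partial\Omega}$ of index $p-1$ (not $p$) with $\partial_n f>0$; your parenthetical remark and your final count do use the correct convention, so the conclusion is unaffected, but the definition as you literally state it would wrongly place the boundary minima among the index-$0$ points and spoil the count $\dim F^{(0)}_h=1$, $\dim F^{(1)}_h=n$.
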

\noindent
 We refer to \cite[Theorem 3.2.3]{helffer-nier-06} for the proof of this
  proposition. 
 \medskip
 
 \noindent
 From~\cite{helffer-nier-06}, each eigenvalue  $\mu$  
of $\Delta^{D,(1)}_{f,h}(\Omega)$ which is smaller than~$h^{3/2}$ is exponentially small when $h\to 0$, i.e.  
$$\limsup_{h\to 0} h\ln \mu <0.$$ 
Thanks to (\ref{eq:unitary}), similar results hold for $L^{D,(p)}_{f,h}(\Omega)$: there exists $h_0>0$ such that for all $h \in (0,h_0)$
$$
\dim  \range   \pi_{[0,\frac{\sqrt h}{2})} \left(-L^{D,(0)}_{f,h}(\Omega)\right)
=1 \text{ and }
\dim  \range   \pi_{[0,\frac{\sqrt h}{2})} \left(-L^{D,(1)}_{f,h}(\Omega)\right)  =n.
$$
The spectral projection $\pi_{[0,\frac{\sqrt h}{2})}\left(-L^{D,(0)}_{f,h}(\Omega)\right)$ is the orthogonal
projection in $L^2_w(\Omega)$ on  ${\rm span}( u_h)$ and thanks to the
intertwining property (\ref{commutationL}), we have the following
crucial property:
\begin{equation}\label{eq:uh_in_span}
\nabla u_h \in \range   \pi_{[0,\frac{\sqrt h}{2})}\left(-L^{D,(1)}_{f,h}(\Omega)\right).
\end{equation}
For the ease of notation, for $p\in \{0,1\}$, we use in the following the notation:
\begin{equation}\label{eq.pih}
\pi^{(p)}_h:=\pi_{[0,\frac{\sqrt h}{2})}\left(
 - L^{D,(p)}_{f,h}(\Omega)\right).
\end{equation}
\label{page.pihp}

\subsection{Statement of the assumptions required for the
  quasi-modes}\label{sec:quasi-modes_gram_schmidt}

\subsubsection{Assumptions on quasi-modes for $L_{f,h}^{D,(p)}$, $p\in \{0,1\}$}
\label{sec221}
The next proposition gives the assumption we need on the quasi-modes
$(  \tilde \psi_i)_{i=1,\ldots,n}$ whose span approximates $\range
\pi_h^{(1)}$, and $\tilde u$ whose span approximates $\range
\pi_h^{(0)}$, in order to prove Theorem~\ref{TBIG0}.
 \begin{proposition} \label{ESTIME} \label{page.pre}
Assume \textbf{[H1]}, \textbf{[H2]} and \textbf{[H3]}. As in the statement of Theorem~\ref{TBIG0}, for
all $i\in \left\{1,\ldots,n\right\}$, $\Sigma_i $ denotes an
open set included in $\partial \Omega$ containing $z_i$ and such that
$\overline\Sigma_i \subset B_{z_i}$.

Let us assume in addition that there exist $n$
quasi-modes $(  \tilde \psi_i)_{i=1,\ldots,n}$ and a family of quasi-modes $(\tilde u=\tilde u_{\delta})_{\delta>0}$ satisfying the following conditions:
\begin{enumerate}
\item For all $ i\in \left\{1,\ldots,n\right\}$, $\tilde \psi_i\in
  \Lambda^1 H^1_{w,T}(\Omega)$ and $\tilde u \in \Lambda^0
  H^1_{w,T}(\Omega)$. The function $\tilde u$ is non negative in $\Omega$.  Moreover, one assumes the following
  normalization: for all $ i\in \left\{1,\ldots,n\right\}$,  $$\left\|\tilde
    \psi_i\right\| _{L^2_w}= \left\|\tilde u \right\| _{L^2_w} = 1.$$
\item
\begin{itemize}
\item[(a)]       There exists $\ve_1>0$  such that for all $ i\in \left\{1,\ldots,n\right\}$, in the limit $h\to 0$:
\begin{equation}\label{eq:assump_2a_psi}
  \left \|    \left(1-\pi_h^{(1)}\right) \tilde \psi_i \right\|_{H^1_w}^2   \   =  \    O \left(e^{-\frac{2}{h}( \max[f(z_n)-f(z_i), f(z_i)-f(z_1)] +\ve_1)}\right).
     \end{equation} 
\item[(b)] For any $\delta>0$, in the limit $h\to 0$: $   \left\|   \nabla  \tilde u\right\|_{L^2_w}^2       =    O \left(e^{-\frac{2}{h}(f(z_1)-f(x_0) - \delta)}\right)$. 
\end{itemize}
\item   There exists $\ve_0>0$   such that  $\forall (i,j)
  \in \left\{1,\ldots,n\right\}^2$ with $i<j$, in the limit $h\to 0$: 
$$\left\lp \tilde \psi_i, \tilde \psi_j\right\rp_{L^2_w}=O\left( e^{-\frac{1}{h}(f(z_j)- f(z_i)+\varepsilon_0)}  \right).$$
\item
\begin{itemize}

\item[(a)] There exist constants
  $(C_i)_{i=1,\ldots,n}$ and  $p$ which do not depend on~$h$ such that for all $i\in \left\{1,\ldots,n\right\}$, in the limit $h\to 0$:
\begin{equation*}
  \left \lp       \nabla \tilde u  ,    \tilde \psi_i\right\rp_{L^2_w}  =        C_i \ h^p  e^{-\frac{1}{h}(f(z_i)- f(x_0))}   \    (  1  +     O(h )   ) .
\end{equation*}
\item[(b)] There exist  constants $(B_i)_{i=1,\dots,n}$ and $m$ which do not depend on~$h$ such that for all $(i,j) \in
  \left\{1,\ldots,n\right\}^2$, in the limit $h\to 0$:
\begin{equation*}
  \int_{\Sigma_i}   \  \tilde \psi_j \cdot n  \   e^{- \frac{2}{h} f}  d\sigma =\begin{cases}   B_i \ h^m   \     e^{-\frac{1}{h} f(z_i)}  \    (  1  +     O(h )    )   &  \text{ if } i=j   \\
 0   &   \text{ if } i\neq j.
  \end{cases} 
  \end{equation*}

  \end{itemize}
   \end{enumerate}
 Then,  for all $i\in \left\{1,\ldots,n\right\}$, in the limit $h\to 0$:
\begin{equation*} 
 \int_{\Sigma_i}  (\partial_{n}u_h)\,   e^{-\frac{2}{h}f} d\sigma =    C_i B_i  \ h^{p+m}  \,  e^{-\frac{1}{h}(2f(z_i)- f(x_0))}    \   (1+  O(h) ),  
  \end{equation*} 
  where $u_h$ is the eigenfunction associated with the smallest eigenvalue of $-L^{D,(0)}_{f,h}(\Omega)$ (see Proposition~\ref{UU}) which satisfies \eqref{eq.u_norma0}.
\end{proposition}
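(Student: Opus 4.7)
The strategy rests on two observations: first, the intertwining identity~\eqref{commutationL} gives $\nabla u_h \in \range \pi_h^{(1)}$, a space of dimension $n$ by Proposition~\ref{ran}; second, Assumption~2(a) ensures that the projected quasi-modes $\psi_i := \pi_h^{(1)} \tilde\psi_i$ are exponentially close to $\tilde\psi_i$ in $H^1_w$, in particular $\|\psi_i\|_{L^2_w}=1+O(e^{-c/h})$. I would therefore apply Gram--Schmidt to $(\psi_i)_{i=1}^n$ in the order $i=1,\ldots,n$ (i.e.\ ranked by increasing $f(z_i)$) to obtain an orthonormal basis $(e_i)_{i=1}^n$ of $\range \pi_h^{(1)}$. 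Assumption~3 ensures that the Gram matrix of $(\psi_i)$ differs from the identity by exponentially small off-diagonal entries, so one obtains the lower-triangular expansion $e_i=\psi_i/\|\psi_i\|_{L^2_w}+\sum_{j<i}\gamma_{ij}\,\psi_j/\|\psi_j\|_{L^2_w}$ in which the coefficients $\gamma_{ij}$ inherit the exponential smallness of Assumption~3.

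\textbf{Spectral comparison of $\tilde u$ and $u_h$; extraction of the coefficients.} Applying Lemma~\ref{quadra} to $-L^{D,(0)}_{f,h}(\Omega)$ with threshold $b=\sqrt h/2$ together with Assumption~2(b) yields $\|(1-\pi_h^{(0)})\tilde u\|_{L^2_w}^2\le \sqrt h\,\|\nabla\tilde u\|_{L^2_w}^2=O(\sqrt h\,e^{-2(f(z_1)-f(x_0)-\delta)/h})$, which is $O(e^{-c/h})$ for some $c>0$ upon choosing $\delta$ small. As $\tilde u$ and $u_h$ are non-negative with unit $L^2_w$-norm, it follows that $\tilde u = a(h)\,u_h + r_h$ with $a(h)=1+O(e^{-c/h})$ and $r_h\in H^1_{w,T}(\Omega)$ vanishing on $\partial\Omega$ with $\|r_h\|_{L^2_w}$ exponentially small. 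To compute the coefficients $\alpha_i := \langle \nabla u_h, e_i\rangle_{L^2_w}$ I would use the self-adjointness of $\pi_h^{(1)}$ together with $\nabla u_h \in \range \pi_h^{(1)}$ to write $\langle \nabla u_h, \psi_i\rangle_{L^2_w} = \langle \nabla u_h, \tilde\psi_i\rangle_{L^2_w}$, then substitute $\nabla u_h = a(h)^{-1}(\nabla\tilde u - \nabla r_h)$ and integrate by parts in $\langle \nabla r_h, \tilde\psi_i\rangle_{L^2_w}$; the boundary term vanishes because $r_h|_{\partial\Omega}=0$, and the bulk term is bounded by $O(h^{-1})\|r_h\|_{L^2_w}\|\tilde\psi_i\|_{H^1_w}$, hence exponentially small. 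Invoking Assumption~4(a) and the exponential smallness of the $\gamma_{ij}$'s then gives $\alpha_i = C_i h^p e^{-(f(z_i)-f(x_0))/h}(1+O(h))$.

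\textbf{Boundary integrals and conclusion.} The final step is to expand
\[
\int_{\Sigma_i} (\partial_n u_h)\,e^{-\tfrac{2}{h}f}d\sigma
= \sum_{j=1}^n \alpha_j \int_{\Sigma_i} e_j\cdot n \, e^{-\tfrac{2}{h}f}d\sigma.
\]
A weighted trace inequality combined with Assumption~2(a) shows that $\int_{\Sigma_i}(\psi_k-\tilde\psi_k)\cdot n\,e^{-\tfrac{2}{h}f}d\sigma$ is exponentially smaller than $e^{-f(z_i)/h}$, so Assumption~4(b) transfers to the $\psi_k$'s up to exponentially small corrections. Since the Gram--Schmidt is lower-triangular, $e_j$ only involves $\psi_k$ with $k\le j$; combined with 4(b) (which kills all $k\ne i$ diagonal terms) and the exponential decay of the $\gamma_{ij}$'s, the only leading contribution comes from $j=k=i$, yielding
\[
\int_{\Sigma_i}(\partial_n u_h)e^{-\tfrac{2}{h}f}d\sigma
= \frac{\alpha_i}{\|\psi_i\|_{L^2_w}}\,B_i h^m\,e^{-\tfrac{1}{h}f(z_i)}(1+O(h))
= C_i B_i\,h^{p+m}\,e^{-\tfrac{1}{h}(2f(z_i)-f(x_0))}(1+O(h)).
\]
The main obstacle is the careful bookkeeping of all the exponentially small remainders arising from Gram--Schmidt, the spectral replacement $u_h\leftrightarrow \tilde u$, and the weighted boundary traces: one must verify that the ordering $f(z_1)\le\cdots\le f(z_n)$ together with the gap constants $\varepsilon_0,\varepsilon_1$ in Assumptions~2 and~3 provides enough ``room'' so that every cross-term of the form $\alpha_j \gamma_{ji}\,B_i h^m e^{-f(z_i)/h}$ with $j\neq i$ is dominated by the diagonal term $\alpha_i B_i h^m e^{-f(z_i)/h}/\|\psi_i\|_{L^2_w}$, and is therefore absorbed into the $O(h)$ factor inherited from Assumptions~4(a) and~4(b).
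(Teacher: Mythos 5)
Your overall architecture (Gram--Schmidt on $\pi_h^{(1)}\tilde\psi_i$, replacing $u_h$ by the projection of $\tilde u$, transferring assumption 4(b) from $\tilde\psi_j$ to $\psi_j$ via a weighted trace estimate, then bookkeeping of cross-terms) is the same as the paper's, but there is a genuine gap in your computation of the interaction coefficients $\lp \nabla u_h,\psi_i\rp_{L^2_w}$. You write $\tilde u=a(h)u_h+r_h$ with $r_h=(1-\pi_h^{(0)})\tilde u$ and estimate $\lp\nabla r_h,\tilde\psi_i\rp_{L^2_w}$ by integrating by parts, getting $O(h^{-1})\|r_h\|_{L^2_w}\|\tilde\psi_i\|_{H^1_w}$, and you declare this ``exponentially small''. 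But exponentially small is not enough: it must be negligible relative to the target $C_i h^p e^{-\frac1h(f(z_i)-f(x_0))}$. Your bound only carries the decay of $r_h$, namely $O\bigl(e^{-\frac1h(f(z_1)-f(x_0)-\delta)}\bigr)$ (and the hypotheses give no quantitative control of $\|\tilde\psi_i\|_{H^1_w}$ at all, only of $\|(1-\pi_h^{(1)})\tilde\psi_i\|_{H^1_w}$). Already for $i=1$ this error exceeds the main term by a factor of order $e^{\delta/h}$ (with $\delta>0$ fixed before $h\to0$), and for any $i$ with $f(z_i)>f(z_1)$ it exceeds it by $e^{\frac1h(f(z_i)-f(z_1)+\delta)}$, so the asymptotics of $\lp\nabla u_h,\psi_i\rp_{L^2_w}$ cannot be recovered this way.

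The missing idea is to never differentiate the spectral remainder of $\tilde u$. Because of the intertwining relation~\eqref{commutationL}, one has $\lp \nabla \pi_h^{(0)}\phi , \pi_h^{(1)} v\rp_{L^2_w}=\lp\nabla\phi,\pi_h^{(1)}v\rp_{L^2_w}$, so writing $u_h=\pi_h^{(0)}\tilde u/\|\pi_h^{(0)}\tilde u\|_{L^2_w}$ the error appears instead as $\lp\nabla\tilde u,(\pi_h^{(1)}-1)\tilde\psi_i\rp_{L^2_w}$, which is a \emph{product} of two exponentially small factors: by Cauchy--Schwarz it is $O\bigl(e^{-\frac1h(f(z_1)-f(x_0)-\delta)}\bigr)\,O\bigl(e^{-\frac1h(\max[f(z_n)-f(z_i),f(z_i)-f(z_1)]+\ve_1)}\bigr)=O\bigl(e^{-\frac1h(f(z_i)-f(x_0)+\ve_1-\delta)}\bigr)$, which beats the main term once $\delta<\ve_1$. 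This is exactly why assumption 2(a) is stated with the exponent $\max[f(z_n)-f(z_i),f(z_i)-f(z_1)]+\ve_1$ rather than a fixed $\ve_1$; the same quantitative exponents (together with $\kappa_{ji}=O(e^{-\frac1h(f(z_j)-f(z_i)+\ve)})$, not mere exponential smallness) are also what make the cross-terms $j\neq k$ in the final sum $\sum_j\lp\nabla u_h,\psi_j\rp_{L^2_w}\int_{\Sigma_k}\psi_j\cdot n\,e^{-\frac2h f}d\sigma$ negligible, a verification you defer but which is where the $\max$ structure is used a second time.
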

\noindent
Let us comment on the assumptions made on the quasi-modes. Assumption~1 gives the proper functional setting and the
normalization. Assumption~2 will be used to show that ${\rm
  span}(\tilde{\psi}_i, \, i=1, \ldots, n)$ (respectively ${\rm
  span}(\tilde{u})$) is included in $\range (\pi^{(1)}_h)$ (respectively
in $\range (\pi^{(0)}_h)={\rm
  span}(u_h)$) up to exponentially small terms.  Assumption~3 states the
quasi-orthonormality of the quasi-modes $(\tilde{\psi}_i)_{i=1,
  \ldots, n}$. Finally, Assumption~4(a) gives the components of the
decomposition of $\nabla\tilde{u}$ over ${\rm
  span}(\tilde{\psi}_i, \, i=1, \ldots, n)$, and Assumption~4(b) is
then used to find the asymptotic behavior of $\int_{\Sigma_i} 
(\partial_{n}u_h)\,   e^{-\frac{2}{h}f} d\sigma$, knowing those of $\int_{\Sigma_i}    \tilde \psi_j \cdot n  \,   e^{- \frac{2}{h}f}     d\sigma$.

\medskip

\noindent
Theorem~\ref{TBIG0} is a consequence of the existence of quasi-modes satisfying this proposition. The
construction of such quasi-modes $\tilde u$ and $(\tilde
  \psi_i)_{i=1,\ldots,n}$ satisfying the requirements of Proposition~\ref{ESTIME} will be the focus of
Section~\ref{sec:quasi-modes}, where explicit values for the constants
$B_i$, $C_i$, $p$ and $m$ will be given in~\eqref{mBi} and \eqref{Cip}. Let us mention that, from~\eqref{mBi} and \eqref{Cip}, it holds  for all $i\in \{1,\ldots,n\}$, ~$C_iB_i<0$, which is  consistent   with the fact that  $\pa_nu_h< 0$ on $\pa \Omega$ (due to the first statement in~\eqref{eq.u_norma0} and the Hopf  Lemma, see~\cite[Section 6.4.2]{evans-10}).

\subsubsection{Assumptions on quasi-modes for $\Delta_{f,h}^{D,(p)}$, $p\in \{0,1\}$} \label{flat}

The quasi-modes $(\tilde{\psi}_i)_{i \in
  \{1,\ldots,n\}}$ will be built using eigenforms of some Witten
Laplacians. It will thus be more convenient to work in non weighted
Sobolev spaces, and to actually consider the $1$-forms (see~\eqref{eq:unitary_p0}): for $i \in
\{1, \ldots ,n\}$,
\begin{equation}\label{eq:phi_psi}
\tilde \phi_i :=e^{-\frac{1}{h} f} \tilde \psi_i \in
\Lambda^1H^1_T(\Omega).
\end{equation}\label{page.tildephi0}
For further references, let us rewrite the hypotheses on the $1$-forms
$(\tilde \psi_j)_{j=1,\ldots,n}$ stated in Proposition~\ref{ESTIME} in
terms of the $1$-forms $(\tilde \phi_i)_{i=1,\ldots,n}$ defined by~\eqref{eq:phi_psi}:
\begin{enumerate}
\item  For all $ i\in \left\{1,\ldots,n\right\}$, $\tilde \phi_i\in
  \Lambda^1 H^1_{T}(\Omega)$ and $\left\|\tilde
    \phi_i\right\| _{L^2} = 1.$

\item   There exist $\ve_1>0$  such that for all $ i\in \left\{1,\ldots,n\right\}$, in the limit $h\to 0$:
\begin{equation}\label{eq:assump_1_phi}
   \left\|   \left ( \  1-\pi_{[0,h^{\frac32} )} \left( \Delta^{D,(1)}_{f,h}(\Omega) \right) \right) \tilde \phi_i\right\|_{H^1}^2   =O \left(e^{-\frac{2}{h}(\max[f(z_n)-f(z_i), f(z_i)-f(z_1)] +\ve_1  )}\right).
\end{equation}
\item   There exists $\ve_0>0$   such that $\forall (i,j)
  \in \left\{1,\ldots,n\right\}^2$, $i<j$, in the limit $h\to 0$:
\begin{equation}\label{eq:assump_2_phi}
\int_{\Omega} \tilde \phi_i(x)\cdot  \tilde \phi_j(x) \,d x= O \left( e^{-\frac{1}{h}[f(z_j)- f(z_i)+\varepsilon_0]} \right).
\end{equation}
\item
\begin{itemize}
\item[(a)] There exist constants $(C_i)_{i = 1,\ldots,n}$ and  $p$ which do not depend on $h$ such that 
  for all $i \in \{1,\ldots,n\}$, in the limit $h\to 0$:
\begin{equation}\label{eq:assump_3_phi}
  \int_{\Omega} \nabla  \tilde u  \cdot \tilde \phi_i \ e^{-\frac{1}{h} f}  =        C_i \ h^p  e^{-\frac{1}{h}(f(z_i)- f(x_0))}   \    (  1  +     O(h )   ) .
\end{equation}

\item[(b)] There exist  constants $(B_i)_{i = 1,\ldots,n}$ and $m$ which do not depend on $h$ such that for all $(i,j) \in
  \{1,\ldots,n\}^2$, in the limit $h\to 0$:
\begin{equation}\label{eq:assump_4_phi}
  \int_{\Sigma_i}   \  \tilde \phi_j \cdot n  \   e^{- \frac{1}{h} f} d\sigma =\begin{cases}   B_i \ h^m   \     e^{-\frac{1}{h} f(z_i)}  \    (  1  +     O(h )    )   &  \text{ if } i=j,   \\
 0   &   \text{ if } i\neq j.
  \end{cases} 
  \end{equation}

  \end{itemize}
  \end{enumerate}
As mentioned above, the construction of quasi-modes $\tilde{u}$ and
$(\tilde{\phi}_i)_{i=1, \ldots,n}$ satisfying those estimates will be the
purpose of Section~\ref{sec:quasi-modes}.

Let us comment on the equivalence between the first assumption here (namely~\eqref{eq:assump_1_phi}) and
assumption 2(a) in Proposition~\ref{ESTIME}
(namely~\eqref{eq:assump_2a_psi}). 
This equivalence is a consequence of the following relation between the projectors which comes from (\ref{eq:unitary}):
$$ \pi_{[0,h^{\frac32} )} \left( \Delta^{D,(1)}_{f,h}(\Omega) \right)  = e^{-\frac{1}{h}f} \pi_h^{(1)} e^{\frac{1}{h}f}.$$
Indeed, using this relation, one has: 
$$\|e^{-\frac{1}{h}f}(1-\pi^{(1)}_h) \tilde \psi_i\|_{H^1}= \left\|  \left ( \  1-\pi_{[0,h^{\frac32} )} \left( \Delta^{D,(1)}_{f,h}(\Omega) \right) \right) \tilde \phi_i \right\|_{H^1}.$$
Moreover, one easily checks that there exists $C>0$ such that, for all $h\in (0,1)$ and for all $u \in \Lambda^p H^1(\Omega)$,
$$\|u\|_{H^1_w} \le \frac{C}{h} \left\|u \, e^{-\frac{1}{h}f}\right\|_{H^1}.$$
Therefore
$$\|(1-\pi^{(1)}_h) \tilde \psi_i\|_{H^1_w} \le \frac{C}{h} \left\|  \left ( \  1-\pi_{[0,h^{\frac32} )} \left( \Delta^{D,(1)}_{f,h}(\Omega) \right) \right) \tilde \phi_i \right\|_{H^1}$$
which shows that~\eqref{eq:assump_1_phi} (with $\ve_1$) implies~\eqref{eq:assump_2a_psi} (with $\ve_1/2$). A similar reasoning shows
that~\eqref{eq:assump_2a_psi} also implies~\eqref{eq:assump_1_phi}, but
this will not be used in the following.

\subsection{Proof of Proposition~\ref{ESTIME} } \label{estime}
In all this section, we assume   that the hypotheses \textbf{[H1]}, \textbf{[H2]} and \textbf{[H3]} hold and we assume the existence of $n+1$ quasi-modes $(\tilde u, ( \tilde
\psi_i)_{i=1,\ldots,n})$ satisfying the conditions of
Proposition~\ref{ESTIME}. In the following, $\varepsilon$ denotes a
positive constant independent of $h$, smaller than $\min (\varepsilon_1,
\varepsilon_0)$, and whose precise value may vary (a finite number of times) from one occurrence
to the other.
\medskip

\noindent
Let us start the proof with two preliminary lemmas relating
$\tilde{u}$ with $u_h$ on the one hand, and $ {\rm
   span}\left(\tilde \psi_j,j=1,\ldots,n\right) $ with $ \range   \pi_h^{(1)}$ on the
 other hand.
\begin{lemma}\label{pi0u} Let us assume that the assumptions of Proposition~\ref{ESTIME} hold. Then there exist $c>0$ and $h_0>0$ such that for $h \in (0,h_0)$,
$$\left\|\pi_h^{(0)}  \tilde u\right\|_{L^2_w}=1+O \left( e^{-\frac{c}{h}}\right).$$
\end{lemma}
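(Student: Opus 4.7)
The plan is to estimate $\|(1-\pi_h^{(0)})\tilde u\|_{L^2_w}^2$ from above by an exponentially small quantity, and then use the Pythagorean identity together with the normalization $\|\tilde u\|_{L^2_w}=1$ (Assumption~1 in Proposition~\ref{ESTIME}) to conclude. Since $-L^{D,(0)}_{f,h}(\Omega)$ is self-adjoint and non-negative on $L^2_w(\Omega)$, the orthogonal projector $\pi_h^{(0)}$ decomposes $\tilde u$ as $\tilde u = \pi_h^{(0)}\tilde u + (1-\pi_h^{(0)})\tilde u$, and thus
\begin{equation*}
\|\pi_h^{(0)}\tilde u\|_{L^2_w}^2 = 1 - \|(1-\pi_h^{(0)})\tilde u\|_{L^2_w}^2.
\end{equation*}

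The core tool is Lemma~\ref{quadra} applied to the non-negative self-adjoint operator $A=-L^{D,(0)}_{f,h}(\Omega)$ with threshold $b=\sqrt h/2$. Since $\tilde u\in \Lambda^0 H^1_{w,T}(\Omega)$ (Assumption~1) belongs to the form domain of $-L^{D,(0)}_{f,h}(\Omega)$ (by Proposition~\ref{fried}), the associated quadratic form evaluated at $\tilde u$ equals $\tfrac{h}{2}\|\nabla \tilde u\|_{L^2_w}^2$. Lemma~\ref{quadra} thus yields
\begin{equation*}
\|(1-\pi_h^{(0)})\tilde u\|_{L^2_w}^2 \;\le\; \frac{\frac{h}{2}\|\nabla \tilde u\|_{L^2_w}^2}{\sqrt h/2} \;=\; \sqrt h\, \|\nabla \tilde u\|_{L^2_w}^2.
\end{equation*}

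Now I invoke Assumption~2(b) of Proposition~\ref{ESTIME}: for any $\delta>0$, $\|\nabla \tilde u\|_{L^2_w}^2=O(e^{-\frac{2}{h}(f(z_1)-f(x_0)-\delta)})$. By~\textbf{[H2]}, $x_0$ is the unique global minimum of $f$ on $\overline\Omega$ and $z_1\in\partial\Omega$, so $f(z_1)-f(x_0)>0$; fixing any $\delta\in(0,f(z_1)-f(x_0))$, set $c_0:=2(f(z_1)-f(x_0)-\delta)>0$. Combining the two estimates gives
\begin{equation*}
\|(1-\pi_h^{(0)})\tilde u\|_{L^2_w}^2 \;=\; O\!\left(\sqrt h\, e^{-c_0/h}\right) \;=\; O\!\left(e^{-c/h}\right)
\end{equation*}
for any $0<c<c_0$, absorbing the $\sqrt h$ factor. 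Plugging this back into the Pythagorean identity and taking a square root (using $\sqrt{1-x}=1+O(x)$ for small $x$) yields $\|\pi_h^{(0)}\tilde u\|_{L^2_w}=1+O(e^{-c/h})$, as claimed.

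There is no serious obstacle here; the argument is a direct application of the min-max/spectral inequality (Lemma~\ref{quadra}) to the quasi-mode $\tilde u$, combined with the exponential smallness of $\|\nabla \tilde u\|_{L^2_w}^2$ provided by Assumption~2(b). The only mild point to be careful about is to convert the estimate on the quadratic form into one on the $(1-\pi_h^{(0)})$-component: this is precisely what Lemma~\ref{quadra} packages, since the complementary spectral subspace corresponds to eigenvalues $\ge \sqrt h/2$ and therefore absorbs at most a factor $2/\sqrt h$ when bounded by the quadratic form.
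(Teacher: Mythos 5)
Your proof is correct and is essentially identical to the paper's own argument: both apply Lemma~\ref{quadra} with $b=\sqrt h/2$ to the quadratic form $\tfrac{h}{2}\|\nabla\tilde u\|_{L^2_w}^2$, yielding $\|(1-\pi_h^{(0)})\tilde u\|_{L^2_w}\le h^{1/4}\|\nabla\tilde u\|_{L^2_w}$, and then invoke Assumption~2(b) with $\delta\in(0,f(z_1)-f(x_0))$ together with the Pythagorean identity and the normalization $\|\tilde u\|_{L^2_w}=1$. No discrepancies to note.
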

\begin{proof}
Since $\tilde u\in \Lambda^0H^1_{w,T}$, $\left\Vert
  (1-\pi_h^{(0)} ) \tilde u\right\Vert _{L^2_w}$ is bounded from above by
$ h^{1/4} \left\Vert \nabla \tilde u \right\Vert_{L^2_w}$
thanks to Lemma~\ref{quadra}. In addition since  $\left\|   \nabla
  \tilde u\right\|_{L^2_w}^2   =
O\left(e^{-\frac{2}{h}[f(z_1)-f(x_0) - \delta]}\right)$ (see
assumption 2(b) in Proposition~\ref{ESTIME}), by taking $\delta \in (0,f(z_1)-f(x_0))$, one gets that  $ \left\|\pi_h^{(0)}  \tilde u \right\|_{L^2_w}  =1+  O \left( e^{-\frac{c}{h}}\right)$.
\end{proof}
\noindent
As a direct consequence of Lemma~\ref{pi0u}, one has that for $h$
small enough $\pi_h^{(0)} \tilde u\neq 0$ and therefore (since moreover $\tilde u$ is non negative in $\Omega$: $\lp u_h, \pi_h\tilde
  u  \rp_{L^2_w}= \lp u_h, \tilde
  u  \rp_{L^2_w}\ge 0$),
\begin{equation}\label{eq:uh}
 u_h  =      \frac{\pi_h^{(0)} \tilde u}{\left\|\pi_h^{(0)}  \tilde
     u\right\|_{L^2_w}}.
\end{equation}
Additionally, one has the following lemma concerning the $1$-forms.
\begin{lemma}\label{indep}
 Let us assume that the assumptions of Proposition~\ref{ESTIME} hold.
Then there exists $h_0$ such that for all $h \in (0,h_0)$,
$$ {\rm span}\left( \pi_h^{(1)}\tilde \psi_i, \,
  i=1, \ldots,n\right)=\range   \pi_h^{(1)}.$$
\end{lemma}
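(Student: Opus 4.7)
The plan is to verify that the $n$ vectors $\pi_h^{(1)}\tilde\psi_1,\ldots,\pi_h^{(1)}\tilde\psi_n$ are linearly independent for $h$ small. Combined with Proposition~\ref{ran}, which gives $\dim\Ran\pi_h^{(1)}=n$, this immediately yields the claim. The natural object to control is therefore their Gram matrix $G(h)\in\R^{n\times n}$ with entries
$$G_{ij}(h):=\left\langle \pi_h^{(1)}\tilde\psi_i,\pi_h^{(1)}\tilde\psi_j\right\rangle_{L^2_w}.$$

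First, I would use that $\pi_h^{(1)}$ is an orthogonal projector on $\Lambda^1L^2_w(\Omega)$, whence
$$G_{ij}(h)=\left\langle \tilde\psi_i,\tilde\psi_j\right\rangle_{L^2_w}-\left\langle (1-\pi_h^{(1)})\tilde\psi_i,(1-\pi_h^{(1)})\tilde\psi_j\right\rangle_{L^2_w}.$$
By the normalization in assumption~1 of Proposition~\ref{ESTIME}, $\langle\tilde\psi_i,\tilde\psi_i\rangle_{L^2_w}=1$. By the quasi-orthogonality assumption~3, for $i\neq j$,
$$\left|\langle\tilde\psi_i,\tilde\psi_j\rangle_{L^2_w}\right|=O\!\left(e^{-\varepsilon_0/h}\right),$$
since $f(z_j)\geq f(z_i)$ when $i<j$ (and the matrix is symmetric). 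For the second term, Cauchy--Schwarz together with assumption~2(a) (and the continuous embedding $H^1_w\hookrightarrow L^2_w$) gives
$$\left|\left\langle (1-\pi_h^{(1)})\tilde\psi_i,(1-\pi_h^{(1)})\tilde\psi_j\right\rangle_{L^2_w}\right|\le \|(1-\pi_h^{(1)})\tilde\psi_i\|_{L^2_w}\,\|(1-\pi_h^{(1)})\tilde\psi_j\|_{L^2_w}=O\!\left(e^{-\varepsilon_1/h}\right),$$
using that each factor is $O(e^{-(M_i+\varepsilon_1)/h})$ for some non-negative exponent $M_i$.

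Combining these bounds, there exists $c>0$ such that
$$G(h)=I_n+R(h),\qquad \|R(h)\|=O\!\left(e^{-c/h}\right),$$
where $\|\cdot\|$ is any matrix norm. Consequently $G(h)$ is invertible for $h$ sufficiently small, which forces $(\pi_h^{(1)}\tilde\psi_i)_{i=1,\ldots,n}$ to be linearly independent in $\Ran\pi_h^{(1)}$. Since Proposition~\ref{ran} ensures $\dim\Ran\pi_h^{(1)}=n$, this family is a basis of $\Ran\pi_h^{(1)}$, which proves the lemma. No step here is delicate: the only substantive input is the exponentially small bound on $\|(1-\pi_h^{(1)})\tilde\psi_i\|$ supplied by assumption~2(a), and the quasi-orthogonality in assumption~3; everything else is linear algebra on the $n\times n$ Gram matrix.
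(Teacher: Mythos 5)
Your proof is correct and follows essentially the same route as the paper: the same projector identity expressing $\langle \pi_h^{(1)}\tilde\psi_i,\pi_h^{(1)}\tilde\psi_j\rangle_{L^2_w}$ in terms of $\langle\tilde\psi_i,\tilde\psi_j\rangle_{L^2_w}$ and $\langle(1-\pi_h^{(1)})\tilde\psi_i,(1-\pi_h^{(1)})\tilde\psi_j\rangle_{L^2_w}$, the bounds from assumptions 1, 2(a) and 3 showing the Gram matrix is $I_n+O(e^{-c/h})$, and the dimension count $\dim\range\pi_h^{(1)}=n$ from Proposition~\ref{ran}. No substantive difference.
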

\begin{proof}
The determinant of the Gram matrix of the $1$-forms  $\left(\pi_h^{(1)}\tilde \psi_i\right)_{i=1,\ldots,n}$ is $1+O(e^{-c/h})$ thanks to the following identity 
\begin{equation}\label{fg}
 \left\lp \pi_h^{(1)}  \tilde \psi_i , \pi_h^{(1)} \tilde \psi_j
 \right\rp_{L^2_w} =-\left\lp \left(\pi_h^{(1)}  -1\right) \tilde \psi_i ,  \left(\pi_h^{(1)}   -1\right) \tilde \psi_j \right\rp_{L^2_w} + \left\lp \tilde \psi_i,  \tilde \psi_j \right\rp_{L^2_w}
 \end{equation}
and the fact that, from assumptions 1, 2(a) and 3 in Proposition~\ref{ESTIME}, there exist $h_0>0$, $c>0$ such that for $h\in (0,h_0)$,
$$\left\lp \tilde \psi_i, \tilde
  \psi_j\right\rp_{L^2_w}=(1-\delta_{i,j})O(e^{-\frac ch}) +
\delta_{i,j} \text{ and } \left\|    \left(1-\pi_h^{(1)}\right) \tilde \psi_i\right\|_{H^1_w}^2  =    O \left(e^{-\frac{c}{h}}\right).
 $$
 Moreover, from Proposition~\ref{ran}, $\dim \range   \pi_h^{(1)}=n$. This proves Lemma~\ref{indep}.
\end{proof}
\noindent
Thanks to Lemma~\ref{indep}, one can build on orthonormal basis $(\psi_i)_{i=1,\ldots,n}$ of ${\rm
  Ran} \left(\pi_h^{(1)}\right)$ using a Gram-Schmidt orthonormalization procedure on
$(\pi_h^{(1)}  (\tilde \psi_i)) _{i=1,\ldots,n}$. This will be done in
Section~\ref{sec:gram} below. Then, since $$\nabla u_h \in {\rm Ran} \left(\pi_h^{(1)}\right)=  {\rm
   span}\left(\psi_j,j=1,\ldots,n\right) $$ (see~\eqref{eq:uh_in_span})
 and  $\Vert \psi_j\Vert_{L^2_w}=1$,  one has
\begin{equation}\label{eq:dnuh_decomp}
\int_{\Sigma_k}   (\partial_{n}u_h) \,  e^{-\frac{2}{h} f}d\sigma=
\sum_{j=1}^n \langle \nabla u_h , \psi_j\rp_{L^2_w}  \int_{\Sigma_k}
\psi_j \cdot n \,  e^{- \frac{2}{h}  f}d\sigma .
\end{equation}
\begin{sloppypar}
\noindent
The proof of Proposition~\ref{ESTIME} then consists in replacing, in the right-hand side of~\eqref{eq:dnuh_decomp}, the function~$u_h$ by its expression~\eqref{eq:uh}
in terms of $\tilde{u}$, and the $(\psi_i)_{i=1,\ldots,n}$ by the
$(\tilde{\psi}_i)_{i=1,\ldots,n}$, and to use the assumptions of
Proposition~\ref{ESTIME} to get an asymptotic equivalent of $\int_{\Sigma_k}
(\partial_{n}u_h)  \, e^{-\frac{2}{h} f}d\sigma$ when $h\to 0$. In Section~\ref{sec:estim_inter}, one 
provides asymptotic equivalents on $\langle \nabla u_h ,
\psi_j\rp_{L^2_w}$ for $j\in \{1,\ldots,n\}$. In Section~\ref{sec:estim_bound}, one gives  asymptotic equivalents 
on $\int_{\Sigma_k}
\psi_j \cdot n \,  e^{- \frac{2}{h}  f}$ for $(k,j)\in \{1,\ldots,n\}^2$. All these results are then
gathered to conclude the proof of Proposition~\ref{ESTIME} in Section~\ref{sec:conc_prop}.
\end{sloppypar}

\subsubsection{Gram-Schmidt orthonormalization}\label{sec:gram}

Using a Gram-Schmidt procedure, one obtains the following result.
\begin{lemma} \label{gram}  Let us assume that the assumptions of Proposition~\ref{ESTIME} hold. Then for all $j\in \left\{1,\ldots,n\right\}$, there exist $(\kappa_{ji})_{i=1,\ldots,j-1}\in \mathbb R^{j-1}$ such that the $1$-forms
\begin{equation}\label{eq:gram}
 v_j:=\pi_h^{(1)}  \left[ \tilde \psi_j + \sum_{i=1}^{j-1} \kappa_{ji}
   \, \tilde \psi_i \  \right],
\end{equation}\label{page.kappaji}
(with the convention $\sum_{i=1}^0=0$) satisfy
\begin{itemize}
\item for all $k\in \left\{1,\ldots,n\right\}$, ${\rm
    span}\left(  v_i,i=1,\ldots,k \right)={\rm span}\left( \pi_h^{(1)}\tilde \psi_i,i=1,\ldots,k\right)$,
\item for all $i\neq j$, $\lp v_i , v_j \rp_{L^2_w}=0$.
\end{itemize}
\end{lemma}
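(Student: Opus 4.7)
The plan is to apply the standard Gram--Schmidt orthogonalization procedure to the family $(\pi_h^{(1)} \tilde \psi_j)_{j=1,\ldots,n}$ inside the finite-dimensional Hilbert space $\range \pi_h^{(1)}$, and then to use the linearity of $\pi_h^{(1)}$ to rewrite each resulting vector in the form \eqref{eq:gram}. The only preliminary input needed is the linear independence of $(\pi_h^{(1)}\tilde\psi_j)_{j=1,\ldots,n}$ for $h$ small enough, which has already been established in the proof of Lemma~\ref{indep}: the identity \eqref{fg} together with Assumptions~1, 2(a) and 3 of Proposition~\ref{ESTIME} gives that the Gram matrix $G_h := \bigl(\lp \pi_h^{(1)}\tilde\psi_i, \pi_h^{(1)}\tilde\psi_j\rp_{L^2_w}\bigr)_{1\le i,j\le n}$ satisfies $G_h = I_n + O(e^{-c/h})$ for some $c>0$, and is thus invertible for $h$ small enough; in particular every nontrivial linear combination of the $\pi_h^{(1)}\tilde\psi_j$'s is nonzero.

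I would then proceed by induction on $j\in\{1,\ldots,n\}$. Set $v_1:=\pi_h^{(1)}\tilde\psi_1$, which is nonzero by the previous remark. Assuming $v_1,\ldots,v_{j-1}$ have been constructed, pairwise $L^2_w$-orthogonal, all nonzero, with ${\rm span}(v_1,\ldots,v_{j-1})={\rm span}(\pi_h^{(1)}\tilde\psi_1,\ldots,\pi_h^{(1)}\tilde\psi_{j-1})$, I define
$$v_j := \pi_h^{(1)}\tilde\psi_j - \sum_{i=1}^{j-1} \frac{\lp \pi_h^{(1)}\tilde\psi_j,\, v_i\rp_{L^2_w}}{\Vert v_i\Vert_{L^2_w}^2}\, v_i.$$
By construction $v_j$ is $L^2_w$-orthogonal to each $v_i$ with $i<j$. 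The induction hypothesis says that each $v_i$ is a linear combination of $\pi_h^{(1)}\tilde\psi_1,\ldots,\pi_h^{(1)}\tilde\psi_i$, and the linearity of $\pi_h^{(1)}$ therefore yields coefficients $(\kappa_{ji})_{1\le i\le j-1}\in\R^{j-1}$ such that
$$v_j = \pi_h^{(1)}\bigl[\tilde\psi_j + \sum_{i=1}^{j-1}\kappa_{ji}\,\tilde\psi_i\bigr],$$
which is precisely the form \eqref{eq:gram}. Moreover $v_j\ne 0$ because it is a nontrivial linear combination of the $j$ linearly independent vectors $\pi_h^{(1)}\tilde\psi_1,\ldots,\pi_h^{(1)}\tilde\psi_j$. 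Finally, the inclusion ${\rm span}(v_1,\ldots,v_j)\subseteq{\rm span}(\pi_h^{(1)}\tilde\psi_1,\ldots,\pi_h^{(1)}\tilde\psi_j)$ is immediate from the construction, and the reverse inclusion follows by a dimension count, completing the induction.

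There is no real obstacle here: the argument is a direct implementation of Gram--Schmidt in $\range \pi_h^{(1)}$, and the only quantitative input, namely the nondegeneracy of the Gram matrix of $(\pi_h^{(1)}\tilde\psi_j)_{j=1,\ldots,n}$, is already a by-product of Assumptions~1, 2(a) and~3 of Proposition~\ref{ESTIME} combined with \eqref{fg}. In particular, the coefficients $\kappa_{ji}$ are determined by the induction but no quantitative control on their size is needed at this stage; such control will be extracted later, when the decomposition \eqref{eq:dnuh_decomp} is analyzed term by term using Assumption~3 of Proposition~\ref{ESTIME}.
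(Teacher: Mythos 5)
Your proof is correct and is exactly the argument the paper intends: the paper states Lemma~\ref{gram} as a direct consequence of the Gram--Schmidt procedure applied to $(\pi_h^{(1)}\tilde\psi_i)_{i=1,\ldots,n}$, whose linear independence follows from the Gram-matrix estimate $1+O(e^{-c/h})$ established in Lemma~\ref{indep}, and the explicit recursive formulas you write down are the same ones the paper uses later in the proof of Lemma~\ref{estimate1}. Your closing remark is also accurate: the quantitative bounds on the $\kappa_{ji}$ are deliberately deferred to Lemma~\ref{estimate1}.
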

\noindent
In the following, we denote by \label{page.zjpsij}
\begin{equation}\label{eq:psij}
Z_j:= \Vert v_j \Vert_{L^2_w} \text{ and } \psi_j:=\frac{v_j}{Z_j}
 \end{equation}
the normalized $1$-forms.
\medskip

\noindent
We are first interested in estimating $\kappa_{ji}$ and $Z_j$.
\begin{lemma} \label{scalarproduct}  Let us assume that the assumptions of Proposition~\ref{ESTIME} hold.
There exist $\ve>0$ and $h_0>0$ such that for all $(i,j) \in
\left\{1,\ldots,n\right\}^2$ with $i<j$ and all $ h\in (0,h_0)$
$$
\left\lp \pi_h^{(1)}  \tilde  \psi_i,  \pi_h^{(1)}  \tilde \psi_j \right\rp_{L^2_w}=O\left(e^{-\frac{1}{h}(f(z_j) - f(z_i)  +     \ve )}\right).
$$
\end{lemma}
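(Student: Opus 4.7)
The plan is to exploit the identity~\eqref{fg}, already used in the proof of Lemma~\ref{indep}, which rewrites
\[
\left\langle \pi_h^{(1)}\tilde\psi_i,\pi_h^{(1)}\tilde\psi_j\right\rangle_{L^2_w}
= \left\langle \tilde\psi_i,\tilde\psi_j\right\rangle_{L^2_w}
- \left\langle (1-\pi_h^{(1)})\tilde\psi_i,(1-\pi_h^{(1)})\tilde\psi_j\right\rangle_{L^2_w},
\]
and then to bound the two terms on the right-hand side separately by appealing to Assumptions~2(a) and~3 of Proposition~\ref{ESTIME}.

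For the first term, Assumption~3 directly yields, since $i<j$,
\[
\left\langle \tilde\psi_i,\tilde\psi_j\right\rangle_{L^2_w} = O\!\left(e^{-\frac{1}{h}(f(z_j)-f(z_i)+\ve_0)}\right).
\]
For the second term, I would apply Cauchy--Schwarz and then Assumption~2(a), using only that $\|\cdot\|_{L^2_w}\le \|\cdot\|_{H^1_w}$, to obtain
\[
\left|\left\langle (1-\pi_h^{(1)})\tilde\psi_i,(1-\pi_h^{(1)})\tilde\psi_j\right\rangle_{L^2_w}\right|
= O\!\left(e^{-\frac{1}{h}(M_i+M_j+2\ve_1)}\right),
\]
where $M_k := \max[f(z_n)-f(z_k),\,f(z_k)-f(z_1)]$.

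The main (and only) point requiring a short argument is to check that $M_i+M_j \ge f(z_j)-f(z_i)$ when $i<j$, so that both terms indeed decay faster than $e^{-\frac{1}{h}(f(z_j)-f(z_i))}$. This follows by bounding $M_i\ge f(z_n)-f(z_i)\ge f(z_j)-f(z_i)$ (since $j\le n$ implies $f(z_j)\le f(z_n)$) and $M_j\ge f(z_j)-f(z_1)\ge 0$, whose sum is $\ge f(z_j)-f(z_i)$. Choosing $\ve := \min(\ve_0,2\ve_1)>0$ then yields the announced estimate for all sufficiently small $h$, which concludes the proof. I do not anticipate any serious obstacle here: the lemma is essentially a bookkeeping consequence of the quasi-orthogonality in Assumption~3 and the quasi-mode approximation in Assumption~2(a).
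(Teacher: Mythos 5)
Your proof is correct and follows essentially the same route as the paper: both use the identity~\eqref{fg}, bound $\langle\tilde\psi_i,\tilde\psi_j\rangle_{L^2_w}$ via Assumption~3, and bound the term involving $(1-\pi_h^{(1)})$ via Cauchy--Schwarz together with Assumption~2(a), the only work being the elementary inequality $M_i+M_j\ge f(z_j)-f(z_i)$ (the paper uses $M_i\ge f(z_n)-f(z_i)$, $M_j\ge f(z_j)-f(z_1)$ and $f(z_n)\ge f(z_1)$, which is the same bookkeeping as yours).
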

\begin{proof}
Using assumption 2(a) in Proposition~\ref{ESTIME}, one gets: for $i<j$,
\begin{align*}
 \left\lp  \left(1- \pi_h^{(1)}  \right)  \tilde \psi_j ,  \left(1-
     \pi_h^{(1)}  \right) \tilde \psi_i\right\rp_{L^2_w} &\le \left \|    \left(1-\pi_h^{(1)}\right) \tilde \psi_k \right\|_{H^1_w}\left \|    \left(1-\pi_h^{(1)}\right) \tilde \psi_k \right\|_{H^1_w}\\
 &= O \left(e^{-\frac{1}{h}(  f(z_n)-f(z_i)+f(z_j)-f(z_1)  +   \ve )}\right)\\
 &= O \left(e^{-\frac{1}{h}(f(z_j)-f(z_i)  +   \ve )}\right).
 \end{align*}
The result is then a consequence of assumption 3 in
Proposition~\ref{ESTIME} and the identity~\eqref{fg}.\end{proof}
\noindent
Notice that since $\pi_h^{(1)}$ is an $L^2_w$-projection, $\left\lp
  \pi_h^{(1)}  \tilde  \psi_i,  \pi_h^{(1)}  \tilde \psi_j
\right\rp_{L^2_w}=\left\lp \pi_h^{(1)}  \tilde  \psi_i,  \tilde \psi_j
\right\rp_{L^2_w}$. This will be used extensively in the following.

\begin{lemma} \label{estimate1}
 Let us assume that the assumptions of Proposition~\ref{ESTIME} hold. 
Then there exist $h_0>0$, $\ve>0$ and $c>0$ such that for all  $j\in \left\{1,\ldots,n\right\}$, $i\in \{1,\ldots,j-1\}$ and $h\in (0,h_0)$
$$
\kappa_{ji}= O \left(e^{-\frac{1}{h}(f(z_j)-f(z_i)+\ve)}\right)
\text{ and } Z_j=1+O \left( e^{-\frac{c}{h}}\right).
$$
\end{lemma}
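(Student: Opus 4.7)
The plan is to proceed by induction on $j \in \{1,\ldots,n\}$, by rewriting the defining orthogonality of $v_j$ in standard Gram--Schmidt form. Since $\pi_h^{(1)}$ is an $L^2_w$-orthogonal projection and ${\rm span}(v_1,\ldots,v_{j-1}) = {\rm span}(\pi_h^{(1)}\tilde\psi_1,\ldots,\pi_h^{(1)}\tilde\psi_{j-1})$ by Lemma~\ref{gram}, requiring $v_j$ to be orthogonal to $v_1,\ldots,v_{j-1}$ forces
$$v_j = \pi_h^{(1)}\tilde\psi_j - \sum_{\ell=1}^{j-1}\alpha_{j\ell}\,v_\ell, \qquad \alpha_{j\ell} := \frac{\left\lp \pi_h^{(1)}\tilde\psi_j, v_\ell\right\rp_{L^2_w}}{Z_\ell^2}.$$
The base case $j=1$ is immediate: $v_1 = \pi_h^{(1)}\tilde\psi_1$, so $Z_1^2 = \|\tilde\psi_1\|_{L^2_w}^2 - \|(1-\pi_h^{(1)})\tilde\psi_1\|_{L^2_w}^2 = 1 + O(e^{-c/h})$ by assumptions 1 and 2(a) of Proposition~\ref{ESTIME}, and there are no $\kappa_{1i}$ to estimate.

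For the inductive step, assume both estimates hold for all indices strictly less than $j$. The main work is controlling $\alpha_{j\ell}$. Expanding $v_\ell = \pi_h^{(1)}[\tilde\psi_\ell + \sum_{i<\ell}\kappa_{\ell i}\tilde\psi_i]$ and applying Lemma~\ref{scalarproduct},
$$\left\lp \pi_h^{(1)}\tilde\psi_j, v_\ell\right\rp_{L^2_w} = O\!\left(e^{-\frac{1}{h}(f(z_j)-f(z_\ell)+\ve)}\right) + \sum_{i<\ell}\kappa_{\ell i}\,O\!\left(e^{-\frac{1}{h}(f(z_j)-f(z_i)+\ve)}\right).$$
Using the inductive bound on $\kappa_{\ell i}$, each cross term is of order $e^{-\frac{1}{h}(f(z_j)+f(z_\ell)-2f(z_i)+2\ve)}$, which is dominated by $e^{-\frac{1}{h}(f(z_j)-f(z_\ell)+2\ve)}$ thanks to the ordering $f(z_i) \le f(z_\ell)$ for $i<\ell$. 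Combined with $Z_\ell^{-2} = 1 + O(e^{-c/h})$, this yields $\alpha_{j\ell} = O(e^{-\frac{1}{h}(f(z_j)-f(z_\ell)+\ve)})$.

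Substituting the inductive expression for each $v_\ell$ into $v_j$ and collecting coefficients produces the desired representation $v_j = \pi_h^{(1)}[\tilde\psi_j + \sum_{i<j}\kappa_{ji}\tilde\psi_i]$ with
$$\kappa_{ji} = -\alpha_{ji} - \sum_{\ell=i+1}^{j-1}\alpha_{j\ell}\,\kappa_{\ell i}.$$
The leading term is $O(e^{-\frac{1}{h}(f(z_j)-f(z_i)+\ve)})$, and each term in the sum is bounded by $e^{-\frac{1}{h}(f(z_j)-f(z_\ell)+\ve)}\cdot e^{-\frac{1}{h}(f(z_\ell)-f(z_i)+\ve)} = e^{-\frac{1}{h}(f(z_j)-f(z_i)+2\ve)}$, establishing the first claim. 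Finally, Pythagoras applied to the orthogonal family $(v_\ell)_{\ell<j}$ gives
$$Z_j^2 = \left\|\pi_h^{(1)}\tilde\psi_j\right\|_{L^2_w}^2 - \sum_{\ell=1}^{j-1}\alpha_{j\ell}^2\, Z_\ell^2 = 1 + O(e^{-c/h}),$$
whence $Z_j = 1 + O(e^{-c/h})$. The only delicate point is the bookkeeping that ensures cross terms in the inductive unfolding never beat the leading exponential rate; this is handled entirely by the monotonicity $f(z_1)\le\cdots\le f(z_n)$ together with Lemma~\ref{scalarproduct}, and so no new analytic input beyond the previous two lemmas is needed.
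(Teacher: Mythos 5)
Your proof is correct and follows essentially the same route as the paper: induction on $j$, expansion of the Gram--Schmidt coefficients in terms of $\lp \pi_h^{(1)}\tilde\psi_j,\pi_h^{(1)}\tilde\psi_\ell\rp_{L^2_w}$ controlled by Lemma~\ref{scalarproduct}, the inductive bounds on the previous $\kappa_{\ell i}$ and $Z_\ell$, the monotonicity $f(z_1)\le\cdots\le f(z_n)$ to absorb cross terms, and Pythagoras together with assumption 2(a) for $Z_j$. The only difference is cosmetic bookkeeping (your recursive relation $\kappa_{ji}=-\alpha_{ji}-\sum_{\ell}\alpha_{j\ell}\kappa_{\ell i}$ versus the paper's fully expanded double sum), which does not change the argument.
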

\begin{proof}
Let us introduce the notation: for all $i \in \{1, \ldots ,n\}$,
$$ r_i:=\left \|    \left(1-\pi_h^{(1)}\right) \tilde \psi_i \right\|_{H^1_w}^2.$$
Let us prove this lemma by induction. Concerning $\psi_1$, one has from Lemma~\ref{gram}
$$\psi_1 = \frac{v_1}{Z_1} \text{ with } v_1=\pi_h^{(1)} \tilde\psi_1.$$
Since $\Vert  \tilde \psi_1 \Vert_{L^2_w}=1$, one has $Z_1=\Vert
\pi_h^{(1)} \tilde \psi_1 \Vert_{L^2_w} \leq 1$. In addition, by
Pythagorean  Theorem and by assumption 2(a) in
Proposition~\ref{ESTIME} on $r_1$, there exists $c>0$ such that for $h$ small enough
$$
Z_1^2 \geq 1-\left\Vert \left(1- \pi_h^{(1)} \right)\tilde \psi_1 \right\Vert^2_{L^2_w} \geq 1-r_1
\geq 1-e^{-\frac{c}{h}}.
$$
Thus  $Z_1    =   1+O \left( e^{-\frac{c}{h}}\right)$. Now, concerning $\psi_2$, one has
$$\psi_2=\frac{v_2}{Z_2} \text{ with } v_2=\pi_h^{(1)} \tilde
  \psi_2 -\left\lp \pi_h^{(1)} \tilde  \psi_2 ,
    \psi_1\right\rp_{L^2_w} \ \psi_1,$$
and thus
$\kappa_{21}=-\frac{1}{Z_1^2}\left\lp \pi_h^{(1)} \tilde \psi_1,
  \tilde \psi_2\right\rp_{L^2_w}=O\left(
  e^{-\frac{1}{h}(f(z_2)-f(z_1)+\varepsilon)}\right)$ (by Lemma
\ref{scalarproduct}). Then one obtains that $Z_2=1+O \left(
  e^{-\frac{c}{h}}\right)$ by a similar reasoning as the one we used
above for~$Z_1$.
\medskip

\noindent
In order to prove Lemma~\ref{estimate1} by induction, let us now assume that for $k\in \left\{1,\ldots,n\right\}$ and for all  $j\in \{1,\ldots,k\}$, $i\in \{1,\ldots,j-1\}$,
$$
\kappa_{ji}= O \left(e^{-\frac{1}{h}[f(z_j)-f(z_i)+\ve]}\right) \ {\rm and} \ 
Z_j=1+O \left( e^{-\frac{c}{h}}\right).
$$
One gets by the Gram-Schmidt procedure which defines the
$(\psi_i)_{i=1,\ldots,n}$,
$$\psi_{k+1}=\frac{v_{k+1}}{Z_{k+1}}$$
where, using the notation $\kappa_{ii}=1$,
\begin{align*}
v_{k+1}&= \pi_h^{(1)} \tilde \psi_{k+1} -\sum_{j=1}^k  \lp \pi_h^{(1)} \tilde \psi_{k+1}  ,     \psi_j  \rp_{L^2_w} \psi_j \\
&= \pi_h^{(1)} \tilde \psi_{k+1} -\sum_{j=1}^k \sum_{l,q=1}^j \frac{1}{Z_{j}^2}  \lp \pi_h^{(1)} \tilde \psi_{k+1}  ,   \pi_h^{(1)} \tilde \psi_l  \rp_{L^2_w} \   \kappa_{jl}\kappa_{jq}    \  \pi_h^{(1)} \tilde \psi_q \\
&= \pi_h^{(1)} \tilde \psi_{k+1} -\sum_{q=1}^k  \pi_h^{(1)} \tilde \psi_q  \sum_{j=q}^k \sum_{l=1}^j \frac{1}{Z_{j}^2}  \lp \pi_h^{(1)} \tilde \psi_{k+1}  ,   \pi_h^{(1)} \tilde \psi_l  \rp_{L^2_w} \   \kappa_{jl}\kappa_{jq}  .
\end{align*}
Then for $q\in \{1, \ldots ,k\}$,
\begin{equation}\label{eq.kappaqk}
\kappa_{(k+1) q} =- \sum_{j=q}^k \sum_{l=1}^j  \frac{1}{Z_{j}^2}  \lp \pi_h^{(1)} \tilde \psi_{k+1}  ,   \pi_h^{(1)} \tilde \psi_l  \rp_{L^2_w} \   \kappa_{jl}\kappa_{jq}.    
\end{equation}
Since $Z_j=1+O \left( e^{-\frac{c}{h}} \right)$, one gets $Z_j^{-1}=1+O \left( e^{-\frac{c}{h}}\right)$. 
Using Lemma~\ref{scalarproduct}, one obtains $\left\lp \pi_h^{(1)} \tilde \psi_{k+1}  ,   \pi_h^{(1)} \tilde \psi_l \right\rp_{L^2_w} = O\left(e^{-\frac{1}{h}[f(z_{k+1}) - f(z_l)  +     \ve ]}\right)$, since $l<k+1$. Moreover,  since $l\leq j$ and $q\leq j$, it holds:  
$$\kappa_{jl}=O \left(e^{-\frac{1}{h}[f(z_j)-f(z_l)]}\right) \text{ and } \kappa_{jq}=O \left(e^{-\frac{1}{h}[f(z_j)-f(z_q)]}\right).$$
Consequently, one obtains that 
$$ \frac{1}{Z_{j}^2}  \lp \pi_h^{(1)} \tilde \psi_{k+1}  ,   \pi_h^{(1)} \tilde \psi_l  \rp_{L^2_w} \   \kappa_{jl}\kappa_{jq}=O \left(e^{-\frac{1}{h}[f(z_{k+1}) - f(z_q) + 2(f(z_j)-f(z_l))+\ve]}\right). $$
Thus, since $f(z_j)\ge f(z_l)$, one obtains from~\eqref{eq.kappaqk}, that   for $q\in \{1, \ldots,k\}$, there exists $\ve >0$ such that for $h$ small enough:
$$\kappa_{(k+1) q} =O\left(e^{-\frac{1}{h}[f(z_{k+1}) - f(z_q)  +     \ve ]}\right).$$
The estimate $Z_{k+1}=1+O \left( e^{-\frac{c}{h}}\right)$ is a consequence of the fact that $(\kappa_{(k+1) q})_{q \in \{1,\ldots ,k\}}$ are exponentially small together with  the estimate 
$$\left\Vert \pi_h^{(1)} \tilde \psi_{k+1}\right\Vert_{L^2_w} = 1+O \left( e^{-\frac{c}{h}}\right).$$
This concludes the proof of Lemma~\ref{estimate1} by induction. 
\end{proof}

\subsubsection{Estimates on the interaction terms $\left(\lp    \nabla
    u_h ,   \psi_j \rp_{L^2_w}\right)_{j \in \{1,\ldots,n\}}$}\label{sec:estim_inter}
\begin{lemma} \label{interaction} Let us assume that the assumptions of Proposition~\ref{ESTIME} hold. Then for $j\in \{1,\ldots,n\}$, one has
  \begin{equation*}
 \lp    \nabla  u_h ,   \psi_j \rp_{L^2_w}   =C_j\ h^p   e^{-\frac{1}{h}(f(z_j)- f(x_0))}   \    ( 1  +     O(h)).
  \end{equation*} 
  \end{lemma}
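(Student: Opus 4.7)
My plan is to exploit three ingredients: the representation $u_h = \pi_h^{(0)}\tilde u / \|\pi_h^{(0)}\tilde u\|_{L^2_w}$ from~\eqref{eq:uh}; the intertwining relation~\eqref{commutationL}, whose spectral counterpart reads $\pi_h^{(1)}\circ \nabla = \nabla \circ \pi_h^{(0)}$ on $\Lambda^0 H^1_{w,T}(\Omega)$ (and which already underlies~\eqref{eq:uh_in_span}); and the Gram--Schmidt expression $v_j = \pi_h^{(1)}\bigl[\tilde\psi_j + \sum_{i<j}\kappa_{ji}\tilde\psi_i\bigr]$ from Lemma~\ref{gram}. Together with assumption~4(a), these should reduce $\lp\nabla u_h,\psi_j\rp_{L^2_w}$ to the claimed asymptotic, up to exponentially smaller remainders.

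Combining the first two ingredients gives $\nabla u_h = \|\pi_h^{(0)}\tilde u\|_{L^2_w}^{-1}\,\pi_h^{(1)}\nabla\tilde u$. A short manipulation using $\psi_j = v_j/Z_j$, $\pi_h^{(1)}\nabla u_h = \nabla u_h$, and the self-adjointness of $\pi_h^{(1)}$ then produces the identity
\begin{equation*}
\lp\nabla u_h,\psi_j\rp_{L^2_w} = \frac{1}{Z_j\,\|\pi_h^{(0)}\tilde u\|_{L^2_w}}\,\lp \nabla\tilde u,\, \tilde\psi_j + \sum_{i=1}^{j-1}\kappa_{ji}\tilde\psi_i - (1-\pi_h^{(1)})\Bigl(\tilde\psi_j + \sum_{i=1}^{j-1}\kappa_{ji}\tilde\psi_i\Bigr)\rp_{L^2_w}.
\end{equation*}
Lemma~\ref{pi0u} and Lemma~\ref{estimate1} give $Z_j\,\|\pi_h^{(0)}\tilde u\|_{L^2_w} = 1 + O(e^{-c/h})$. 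The diagonal contribution $\lp\nabla\tilde u,\tilde\psi_j\rp_{L^2_w}$ is exactly $C_j h^p e^{-(f(z_j)-f(x_0))/h}(1+O(h))$ by assumption~4(a). The off-diagonal terms for $i<j$ combine $\kappa_{ji} = O(e^{-(f(z_j)-f(z_i)+\varepsilon)/h})$ (Lemma~\ref{estimate1}) with assumption~4(a), producing the same exponential as the diagonal term multiplied by an additional $e^{-\varepsilon/h}$ factor, hence absorbed in the $(1+O(h))$ error.

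The delicate point, which I expect to be the main obstacle, concerns the projected remainders $\lp\nabla\tilde u,(1-\pi_h^{(1)})\tilde\psi_i\rp_{L^2_w}$ for $i\in\{1,\ldots,j\}$. Applying Cauchy--Schwarz together with assumption~2(b) ($\|\nabla\tilde u\|_{L^2_w} = O(e^{-(f(z_1)-f(x_0)-\delta)/h})$ for any $\delta>0$) and assumption~2(a) ($\|(1-\pi_h^{(1)})\tilde\psi_i\|_{L^2_w} = O(e^{-(\max[f(z_n)-f(z_i),f(z_i)-f(z_1)]+\varepsilon_1)/h})$) shows that each such remainder is $O(e^{-\theta_i/h})$ with $\theta_i = f(z_1)-f(x_0) + \max[f(z_n)-f(z_i),f(z_i)-f(z_1)] + \varepsilon_1 - \delta$. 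A short case analysis on which quantity realizes the maximum shows that $\theta_i \ge f(z_i)-f(x_0) + \varepsilon_1 - \delta$ in both cases (either directly, or because $f(z_i)\le (f(z_1)+f(z_n))/2$ when the max is $f(z_n)-f(z_i)$). For $i=j$, this already exceeds the leading exponent $f(z_j)-f(x_0)$ by a fixed positive constant; for $i<j$, the extra factor $|\kappa_{ji}|$ provides the additional decay $e^{-(f(z_j)-f(z_i))/h}$, yielding a total exponent of at least $f(z_j)-f(x_0)+\varepsilon_1-\delta$. Thus, choosing $\delta\in(0,\varepsilon_1)$, all remainders are exponentially smaller than the leading term $h^p e^{-(f(z_j)-f(x_0))/h}$. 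This algebraic balance is precisely what motivates the $\max$ in hypothesis~2(a). Gathering all contributions then yields the asymptotic announced in Lemma~\ref{interaction}.
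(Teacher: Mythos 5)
Your proposal is correct and follows essentially the same route as the paper's proof: the identity you derive is exactly the paper's decomposition of $\lp\nabla u_h,\psi_j\rp_{L^2_w}$ via~\eqref{eq:uh}, Lemma~\ref{gram} and the intertwining property, and the estimates (assumption~4(a) for the leading term, Lemma~\ref{estimate1} for $\kappa_{ji}$ and $Z_j$, Lemma~\ref{pi0u}, and Cauchy--Schwarz with assumptions~2(a)--(b) plus the choice $\delta<\varepsilon_1$ for the projected remainders) are the same ones the paper uses. The only cosmetic differences are that you invoke the strong form $\pi_h^{(1)}\nabla=\nabla\pi_h^{(0)}$ rather than the weak relation~\eqref{eq.comut_nabla_pi}, and your case analysis on the $\max$ is slightly more elaborate than needed (the bound $\max[f(z_n)-f(z_i),f(z_i)-f(z_1)]\ge f(z_i)-f(z_1)$ suffices directly, as in the paper).
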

\begin{proof} From~\eqref{commutationL}, for any $\phi \in H^1_{w,T}(\Omega)$ and $v\in L^2_w(\Omega)$, it holds,
\begin{equation}\label{eq.comut_nabla_pi}
\lp \nabla \pi^{(0)}_h\phi , \pi^{(1)}_h v\rp_{L^2_w}=\lp
    \nabla  \phi , \pi^{(1)}_h v \rp_{L^2_w}.
    \end{equation}
Using \eqref{eq:uh}--\eqref{eq:gram}--\eqref{eq:psij}--\eqref{eq.comut_nabla_pi},
  for all $j\in\left\{1,\ldots,n\right\}$, one has
      \begin{align}  \lp    \nabla u_h ,   \psi_j \rp_{L^2_w}    &=\frac{Z^{-1}_j}{\left\|\pi_h^{(0)} \tilde u\right\|_{L^2_w}} \left[   \left\lp       \nabla \tilde u  ,     \pi_h^{(1)} \tilde \psi_j  \right\rp_{L^2_w}   +\sum_{i=1}^{j-1}\kappa_{ji} \  \left\lp       \nabla \tilde u  ,    \pi_h^{(1)} \tilde \psi_i  \right\rp_{L^2_w} \  \right]\nonumber\\
 &=\frac{Z^{-1}_j}{\left\|\pi_h^{(0)} \tilde u \right\|_{L^2_w}} \left[   \left\lp       \nabla \tilde u  ,    \tilde \psi_j  \right\rp_{L^2_w} +  \left\lp   \nabla \tilde u, \left( \pi_h^{(1)}-1   \right) \tilde \psi_j  \right\rp_{L^2_w}   \right]  \nonumber \\
 &\quad +\frac{Z^{-1}_j}{\left\|\pi_h^{(0)} \tilde u \right\|_{L^2_w}}\left[\sum_{i=1}^{j-1}\kappa_{ji} \ \left(  \left\lp       \nabla \tilde u  ,    \tilde \psi_i  \right\rp_{L^2_w} + \left\lp   \nabla \tilde u, \left( \pi_h^{(1)}-1   \right) \tilde \psi_i  \right\rp_{L^2_w}  \right)\  \right].\label{eq:nablauh_psij}
  \end{align} 
From Lemmata \ref{pi0u} and \ref{estimate1}, one has  $\frac{Z_{j}^{-1}}{
  \left\|\pi_h^{(0)} \tilde u\right\|_{L^2_w} }=1+O \left(
  e^{-\frac{c}{h}}\right)$. Using assumptions~2 and~4(a) in
Proposition~\ref{ESTIME} and Lemma~\ref{estimate1}, there exists
$\delta_0>0$ such that for all $\delta\in (0,\delta_0)$,
\begin{align*} 
 \lp    \nabla u_h ,   \psi_j \rp_{L^2_w} &=C_j h^p  e^{-\frac{1}{h}[f(z_j)- f(x_0)]}     (     1 + O(h ) )\\
   &\quad + O\left(  e^{-\frac{1}{h}[f(z_1)- f(x_0) -\delta+ f(z_j)-f(z_1) + \ve]}  \right) \\
   &\quad +\sum_{i=1}^{j-1}O\left(  e^{-\frac{1}{h}[f(z_j)-f(z_i)+\ve +f(z_i)- f(x_0) -\delta]}  \right) \\
    &\quad +\sum_{i=1}^{j-1}O\left(  e^{-\frac{1}{h}[f(z_j)-f(z_i)+\ve +f(z_1)- f(x_0) -\delta+f(z_i)-f(z_1)+\ve]}  \right).
\end{align*}
Therefore choosing $\delta<\varepsilon$, there exists $\ve'>0$ such that
\begin{align*} 
 \lp    \nabla u_h ,   \psi_j \rp_{L^2_w}  
&=C_j h^p  e^{-\frac{1}{h}[f(z_j)- f(x_0)]}     (     1  +     O(h ) )
+ O \left(  e^{-\frac{1}{h}[f(z_j)- f(x_0) + \ve']} \right).
 \end{align*} 
This concludes the proof of Lemma~\ref{interaction}.
\end{proof}
\subsubsection{Estimates on the boundary terms $\left(\int_{\Sigma_k}  \psi_j \cdot n \,  e^{- \frac{2}{h} f}d\sigma\right)_{(j,k)\in \left\{1,\ldots,n\right\}^2}$}\label{sec:estim_bound}
One denotes in this section,  for $k\in \left\{1,\ldots,n\right\}$, $K_k:=\max(f(z_n)-f(z_k), f(z_k)-f(z_1))\ge0$. 
\begin{lemma} \label{gamma}
 Let us assume that the assumptions of Proposition~\ref{ESTIME} hold. 
Then for all $(j,k)\in \left\{1,\ldots,n\right\}^2$, there exists $\ve>0$ such that it holds      \begin{equation*}
   \int_{\Sigma_k}  \psi_j \cdot n  \ e^{- \frac{2}{h} f}  d\sigma   =  \
  \begin{cases}
  O\left( e^{-\frac{1}{h}[f(z_j)+\ve]} \right)  &  \text{ if } k<j,  \\
   B_j h^m  \     e^{-\frac{1}{h} f(z_j)}    \left( 1 +     O(h)   \right)  &       \text{ if } k=j, \\
    O\left(  e^{-\frac{1}{h}[K_j+f(z_k)+\ve]}  \right) +  \sum_{i=1}^{j-1} O\left(  e^{-\frac{1}{h}[f(z_j)-f(z_i)+K_i+f(z_k)+\ve]}  \right) &  \text{ if } k>j. \\
  \end{cases} 
  \end{equation*}
  \end{lemma}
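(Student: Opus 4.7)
The plan is to expand $\psi_j = v_j/Z_j$ using the Gram-Schmidt formula \eqref{eq:gram} and substitute it into the boundary integral. This gives
\[
\int_{\Sigma_k}\psi_j\cdot n\,e^{-\frac{2}{h}f}\,d\sigma=\frac{1}{Z_j}\sum_{i=1}^{j}\kappa_{ji}\int_{\Sigma_k}\pi_h^{(1)}\tilde\psi_i\cdot n\,e^{-\frac{2}{h}f}\,d\sigma,
\]
with the convention $\kappa_{jj}=1$. For each $i\le j$, I would write $\pi_h^{(1)}\tilde\psi_i=\tilde\psi_i-(1-\pi_h^{(1)})\tilde\psi_i$, so that the integral splits into a \emph{main term} governed by assumption 4(b) and a \emph{projection-error term} coming from the tail $(1-\pi_h^{(1)})\tilde\psi_i$.

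For the main term, assumption 4(b) gives the exact asymptotics $B_i h^m e^{-\frac{1}{h}f(z_i)}(1+O(h))$ when $i=k$ and $0$ otherwise. Thus the main-term contributions vanish identically unless $k\le j$: if $k=j$ only $i=j$ survives and yields $B_j h^m e^{-\frac{1}{h}f(z_j)}(1+O(h))$; if $k<j$ only $i=k$ survives and its coefficient $\kappa_{jk}$, estimated via Lemma~\ref{estimate1} as $O(e^{-\frac{1}{h}(f(z_j)-f(z_k)+\varepsilon)})$, yields a contribution $O(e^{-\frac{1}{h}(f(z_j)+\varepsilon')})$; if $k>j$ there is no main term at all.

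The projection-error term is the technical core. To control $\int_{\Sigma_k}(1-\pi_h^{(1)})\tilde\psi_i\cdot n\,e^{-\frac{2}{h}f}d\sigma$, I would use that $\Sigma_k\subset B_{z_k}$, so $f\ge f(z_k)$ on $\Sigma_k$, and apply Cauchy--Schwarz to factor one copy of $e^{-f/h}$ onto the weight and keep one on the $\Sigma_k$ measure. Writing $W_i=e^{-f/h}(1-\pi_h^{(1)})\tilde\psi_i$, the conversion between weighted and unweighted Sobolev norms recalled in Section~\ref{flat} together with the standard trace inequality $\|W_i\|_{L^2(\partial\Omega)}\le C\|W_i\|_{H^1(\Omega)}$ gives
\[
\left|\int_{\Sigma_k}(1-\pi_h^{(1)})\tilde\psi_i\cdot n\,e^{-\frac{2}{h}f}d\sigma\right|\le C h^{-1}\bigl\|(1-\pi_h^{(1)})\tilde\psi_i\bigr\|_{H^1_w}\,e^{-\frac{1}{h}f(z_k)}|\Sigma_k|^{1/2}.
\]
Plugging in the bound~\eqref{eq:assump_2a_psi} and absorbing the polynomial factor $h^{-1}$ into a slightly smaller $\varepsilon>0$, this error is $O(e^{-\frac{1}{h}(K_i+f(z_k)+\varepsilon)})$.

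Finally, multiplying by $\kappa_{ji}=O(e^{-\frac{1}{h}(f(z_j)-f(z_i)+\varepsilon)})$ (for $i<j$; for $i=j$ take $\kappa_{jj}=1$), using $Z_j^{-1}=1+O(e^{-c/h})$, and summing, gives the error contributions listed in the third line of the statement. The remaining task is to check that in the cases $k<j$ and $k=j$ all these error terms are absorbed by the main term: this follows since $K_i\ge f(z_i)-f(z_1)$ implies $-f(z_i)+K_i\ge -f(z_1)$, so all error exponents dominate $f(z_j)$ (respectively, $f(z_j)-\text{main exponent}$) by a strictly positive amount $\varepsilon'>0$. The main obstacle is the careful bookkeeping of exponents in these three cases, but the only analytic input beyond the Gram-Schmidt estimates of Lemma~\ref{estimate1} and the hypotheses of Proposition~\ref{ESTIME} is the weighted trace inequality above.
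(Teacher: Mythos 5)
Your proposal is correct and follows essentially the same route as the paper's proof: the Gram--Schmidt expansion of $\psi_j$, the splitting $\pi_h^{(1)}\tilde\psi_i=\tilde\psi_i-(1-\pi_h^{(1)})\tilde\psi_i$ into main terms controlled by assumption 4(b) and projection errors controlled by a trace inequality (with the $h^{-1}$ loss absorbed into $\ve$), combined with the $\kappa_{ji}$ and $Z_j$ estimates of Lemma~\ref{estimate1}. The exponent bookkeeping you sketch, using $K_i\ge f(z_i)-f(z_1)$ and $f(z_k)\ge f(z_1)$, matches what the paper does implicitly, so no gap remains.
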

\begin{proof} 
Using \eqref{eq:gram}--\eqref{eq:psij} and writing $\pi_h^{(1)}  \tilde
\psi_i=  \tilde \psi_i +\left( \pi_h^{(1)}-1   \right)  \tilde
\psi_i$, one obtains that
  $$\int_{\Sigma_k}  \psi_j \cdot n \, e^{- \frac{2}{h} f}
d\sigma=a_{jk}+b_{jk}+\sum_{i=1}^{j-1}(c_{jki}+d_{jki})$$
with for $(j,k) \in \{1, \ldots ,n\}^2$ and $i \in \{1, \ldots, j-1\}$,
$$a_{jk}=Z_j^{-1}\int_{\Sigma_k}  \tilde \psi_j \cdot n \, e^{-
  \frac{2}{h}f} d\sigma \text{ , }
b_{jk}=Z_j^{-1}\int_{\Sigma_k}  \left( \pi_h^{(1)}-1   \right) \tilde \psi_j \cdot n \, e^{-
  \frac{2}{h}f} d\sigma $$
$$c_{jki}=Z_j^{-1}\kappa_{ji} 
    \int_{\Sigma_k} \tilde  \psi_i \cdot n \, e^{- \frac{2}{h} f}d\sigma 
    \text{ and }
d_{jki}=Z_j^{-1}\kappa_{ji} 
    \int_{\Sigma_k} \left( \pi_h^{(1)}-1   \right)\tilde  \psi_i \cdot n \, e^{- \frac{2}{h} f}d \sigma  .$$
Using the trace theorem and assumption
2(a) in Proposition~\ref{ESTIME}, one has, for some universal constant $C$,
\begin{align}
\int_{\Sigma_k}  \left( \pi_h^{(1)}-1   \right) \tilde \psi_j \cdot n \, e^{-
  \frac{2}{h}f}d\sigma &\le \frac Ch \left \|    \left(\pi_h^{(1)}-1\right) \tilde \psi_j
\right\|_{H^1_w} \sqrt{\int_{\Sigma_k} e^{-\frac{2}{h}f}}\nonumber\\
&=O \left( e^{-\frac{1}{h}[K_j+f(z_k)+\ve]}  \right).
\label{eq:estim_Sk}
\end{align}
If $k=j$ and $i \in \{1, \ldots, j-1\}$, one gets,
using~\eqref{eq:estim_Sk}, Lemma~\ref{estimate1} and assumption 4(b) in Proposition~\ref{ESTIME}:
$$  a_{jk}=B_j h^m  \     e^{-\frac{1}{h} f(z_j)}     ( 1  +
O(h)  ) \text{ , }
  b_{jk}=O \left( e^{-\frac{1}{h}[K_j+f(z_j)+\ve]}  \right), $$
$$ c_{jki}= 0 \text{ and }
  d_{jki}=O\left(  e^{-\frac{1}{h}[f(z_j)-f(z_i)+K_i+f(z_j)+\ve]} \right) .
 $$
  If $k\neq j$ and $i \in \{1, \ldots, j-1\}$,
   one gets using again~\eqref{eq:estim_Sk}, Lemma~\ref{estimate1} and assumption~4(b) in Proposition~\ref{ESTIME}:
$$
  a_{jk}=0 \text{ , }
    b_{jk}=O\left(  e^{-\frac{1}{h}[K_j+f(z_k)+\ve]}  \right),
$$
$$    c_{jki}= \begin{cases}
  O \left( e^{-\frac{1}{h}[f(z_j)+\ve]}  \right)  &  \text{ if } k=i ,  \\
  0 &        \text{ if } k\neq i, \\
  \end{cases} \text{ and }
  d_{jki}=O\left( e^{-\frac{1}{h}[f(z_j)-f(z_i)+K_i+f(z_k)+\ve]}  \right).
 $$
Notice that $c_{jki}=0$ if  $j< k$ and that if $j>k$ there exists only
one $i$ such that $c_{jki}\neq0$, namely $i=k$.
This concludes the proof of the Lemma~\ref{gamma}.
\end{proof}

\subsubsection{Estimates on $\left(\int_{\Sigma_k}  ( \partial_{n}u_h)  \,
    e^{- \frac{2}{h} f} d\sigma\right)_{k\in \{1,\ldots,n \}}$}\label{sec:conc_prop}

We are now in position to conclude the proof of
Proposition~\ref{ESTIME}, by proving that for $k\in\left\{1,\ldots,n\right\}$, one has
 $$\int_{\Sigma_k}   (\partial_{n}u_h)\,   e^{-\frac{2}{h}f}d\sigma =C_kB_k \ h^{p+m} e^{- \frac{1}{h} \left(2f(z_k)-f(x_0)\right)} \left(  1+     O(h )  \right ).$$

\begin{proof}  Let us assume that the assumptions of Proposition~\ref{ESTIME} hold. Let us recall the decomposition~\eqref{eq:dnuh_decomp}:
\begin{equation*}
\int_{\Sigma_k}   (\partial_{n}u_h)  \, e^{-\frac{2}{h} f}d\sigma=
\sum_{j=1}^n \langle \nabla u_h , \psi_j\rp_{L^2_w} \ \int_{\Sigma_k}
\psi_j \cdot n \,  e^{- \frac{2}{h}  f}d\sigma.
\end{equation*}
Using Lemmas~\ref{interaction} and~\ref{gamma} 
, we can now estimate the terms in the sum in the right-hand side.
%
%
If $j>k$, there exist $\ve>0$ and $h_0>0$ such that for all $ h\in (0,h_0)$
\begin{align*}
\langle \nabla u_h , \psi_j\rp_{L^2_w} \ \int_{\Sigma_k}   \psi_j \cdot n  \ e^{- \frac{2}{h} f}d\sigma&=C_jh^p O\left( e^{-\frac{1}{h}[f(z_j)-f(x_0)]}e^{-\frac{1}{h} [f(z_j)+\ve]}\right)\\
&=C_jh^p O\left(e^{-\frac{1}{h}[2f(z_j)-f(x_0)+\ve]}\right)\\
&=C_jh^p O \left(e^{-\frac{1}{h}[2f(z_k)-f(x_0)+\ve]}\right).
\end{align*}
If $j<k$, there exist $\ve>0$ and $h_0>0$ such that for all $ h\in (0,h_0)$
\begin{align*}
&\langle \nabla u_h , \psi_j\rp_{L^2_w} \ \int_{\Sigma_k}   \psi_j
  \cdot  n   \ e^{- \frac{2}{h}f}d\sigma\\
&=O\left(  e^{-\frac{1}{h}[f(z_j)-f(x_0)+K_j+f(z_k)+\ve]}  \right) +  \sum_{i=1}^{j-1} O\left( e^{-\frac{1}{h}[f(z_j)-f(x_0)+ f(z_j)-f(z_i)+K_i+f(z_k)+\ve]}  \right )\\
&=O\left(  e^{-\frac{1}{h}[f(z_j)-f(x_0)+f(z_n)-f(z_j)+f(z_k)+\ve]}  \right) +  \sum_{i=1}^{j-1} O\left( e^{-\frac{1}{h}[f(z_j)-f(x_0)+ f(z_j)-f(z_i)+f(z_n)-f(z_i)+f(z_k)+\ve]}  \right    )\\
&=O\left(  e^{-\frac{1}{h}[f(z_n)+f(z_k)-f(x_0)+\ve]}  \right) +  \sum_{i=1}^{j-1} O\left( e^{-\frac{1}{h}[f(z_n)+f(z_k)-f(x_0)+2(f(z_j)-f(z_i)) +\ve]}  \right   )\\
&=O\left(  e^{-\frac{1}{h}[2f(z_k)-f(x_0)+\ve]}  \right).
\end{align*}
Finally if $j=k$, $\exists \ve>0$ and $\exists h_0>0$ such that for all $ h\in (0,h_0)$
\begin{equation}\label{eq:dnuh_dominant}
\langle \nabla u_h , \psi_k\rp_{L^2_w} \ \int_{\Sigma_k}   \psi_k
 \cdot  n   \ e^{- \frac{2}{h}f}d\sigma=C_kB_k\ h^{p+m} e^{- \frac{1}{h}
   (2f(z_k)-f(x_0))} (  1+     O(h )   ).
\end{equation}

 From these estimates, for a fixed $k\in \{1,\ldots,n\}$, the
 dominant term in the sum in the right-hand side
 of~\eqref{eq:dnuh_decomp} is the one with index 
 $j=k$, namely~\eqref{eq:dnuh_dominant}.
 This concludes the proof of  Proposition~\ref{ESTIME}.
 \end{proof}

\section{On the Agmon distance}\label{agmonproperty}

In this section, we present the main properties of the
Agmon distance introduced in Definition~\ref{def.agmon-intro}. The Agmon distance is
useful to quantify the decay of eigenforms of
  Witten Laplacians away from critical points of $f$ and
$f|_{\partial \Omega}$. The Agmon distance
on a domain without boundary has been extensively
analyzed in many previous works (see in
particular~\cite{helffer-sjostrand-84,helffer-sjostrand-85a,helffer-sjostrand-85b,helffer-sjostrand-86a}). The
aim of this section is to generalize well-known results in the case
without boundary to our context,
namely for bounded domains. Indeed, to the best of our knowledge, this has
not been done in the literature before in a comprehensive way.


For simplicity, all the proofs in this section are made for a bounded
connected open connected $C^{\infty}$ domain $\Omega\subset \mathbb
R^d$ (equipped with the geodesic Euclidean distance \eqref{eq:geodesic_euclidean}) and for a $C^{\infty}$  function  $f:\overline \Omega\to \mathbb R$. The generalization to a $C^{\infty}$ compact connected
Riemannian manifold of dimension $d$ with boundary is
straightforward. The notation $\vert x-y \vert$  denotes the Euclidean distance between $x$ and $y$ in $\mathbb R^d$. If one deals with a compact connected Riemannian manifold of dimension $d$ with boundary, this distance has to be replaced by the  geodesic distance on $\overline \Omega$ for the initial metric and the scalar product between two vectors of $\mathbb R^d$ has to be replaced by the one induced by the initial metric on the tangent space of $\overline \Omega$.   
\medskip

\noindent
This section is organized as follows. Section~\ref{motivationA} is
devoted to an equivalent definition of the Agmon distance, which will
be crucial in the following. In Section~\ref{properties}, we
then give a few useful properties of the Agmon distance. As already
mentioned in Section~\ref{sec:def_agmon_intro}, there is a
link between the Agmon distance and the eikonal equation. This is
explained in Sections~\ref{eiko_agmon} and~\ref{eiko_agmon_basin}. This
link is useful in order to build explicit curves realizing the Agmon
distance, as explained in Section~\ref{minim}.

\subsection{The set $A(x,y)$ and an equivalent definition of the Agmon
distance}
\label{motivationA} 
In order to study the Agmon distance, it will be more convenient for
technical reasons  to restrict the class of curves appearing in the
definition of the Agmon distance (see Definition~\ref{def.agmon-intro}).
\begin{definition} \label{defA}
 For $x,y\in \overline \Omega$, 
we denote by $A\left(x,y\right)$ the set of  curves $\gamma :[0,1] \to \overline \Omega$ which are Lipschitz with $\gamma(0)=x$, $\gamma(1)=y$ and such that the set $\partial \{t\in [0,1]\big | \gamma(t)\in \partial \Omega\}$ is finite.
\end{definition}
\noindent
Here, \label{page.axy} $\partial \{t\in [0,1], \, \gamma(t)\in \partial \Omega\}$
denotes the boundary of the set $\{t\in [0,1], \, \gamma(t)\in \partial \Omega\}$. The main result of this
section is that, under assumption \textbf{[H3]}, the Agmon distance $d_a$
satisfies (compare with~\eqref{eq:definition}):
\begin{equation} \label{daA}
\forall \left(x,y\right)\in \overline{\Omega} \times \overline{\Omega} , \  d_a(x,y)=\inf_{\gamma\in A(x,y)} L(\gamma,(0,1)).
\end{equation}
See Corollary~\ref{equalityagmon} below.

The following lemma will be needed several times throughout this section. It motivates the use of the set $A\left(x,y\right)$ appearing in Definition~\ref{defA}.
\begin{lemma} \label{h}
Let $x,y \in \overline \Omega$ and $\gamma \in A\left(x,y\right)$. Then for any $h: \overline{\Omega}\to \mathbb R$ which is $C^1$, one gets
\begin{equation} \label{equality2}
h(y)-h(x)=\int_{\left\{ t \in [0,1], \,  \gamma(t)\in
    \Omega\right\}}  (\nabla h)(\gamma)\cdot \gamma ' +  \int_{{\rm
    int} \left\{ t \in [0,1], \, \gamma (t)\in \partial \Omega\right\} }  (\nabla_T h)(\gamma )\cdot \gamma'.
\end{equation}
Here, the notation ${\rm int}$ stands for the interior of a domain.
\end{lemma}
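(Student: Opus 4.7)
The plan is to reduce the identity to the fundamental theorem of calculus applied to the Lipschitz (hence absolutely continuous) function $t \mapsto h(\gamma(t))$, and then to decompose the resulting integral according to whether $\gamma(t)$ lies in $\Omega$ or in $\partial\Omega$.

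First, since $\gamma$ is Lipschitz and $h$ is $C^1$ on the compact set $\overline{\Omega}$, the composition $h\circ\gamma$ is Lipschitz on $[0,1]$, hence absolutely continuous. By Rademacher's theorem $\gamma$ is differentiable at almost every $t\in[0,1]$, and the chain rule gives $(h\circ\gamma)'(t) = \nabla h(\gamma(t))\cdot\gamma'(t)$ for a.e.\ such $t$. Consequently
\[
h(y)-h(x) \;=\; \int_0^1 \nabla h(\gamma(t))\cdot \gamma'(t)\, dt.
\]

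Next I would decompose the interval $[0,1]$ into three pairwise disjoint pieces: the open set $E_1:=\{t\in[0,1],\,\gamma(t)\in\Omega\}$ (open by continuity of $\gamma$ and openness of $\Omega$), the open set $E_2:=\operatorname{int}\{t\in[0,1],\,\gamma(t)\in\partial\Omega\}$, and the remainder $F:=\partial\{t\in[0,1],\,\gamma(t)\in\partial\Omega\}$. Because $\gamma\in A(x,y)$, the set $F$ is finite and therefore of Lebesgue measure zero, so it contributes nothing to the integral. Splitting accordingly,
\[
h(y)-h(x) \;=\; \int_{E_1}\nabla h(\gamma)\cdot\gamma'\, dt \;+\; \int_{E_2}\nabla h(\gamma)\cdot\gamma'\, dt.
\]
The integral over $E_1$ already matches the first term on the right-hand side of~\eqref{equality2}, so it only remains to replace $\nabla h$ by $\nabla_T h$ on $E_2$.

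The key step, and the only slightly delicate point, is to verify that for a.e.\ $t\in E_2$ at which $\gamma$ is differentiable, $\gamma'(t)$ is tangent to $\partial\Omega$ at $\gamma(t)$. To see this, fix $t_0\in E_2$ and choose a neighborhood $(t_0-\delta,t_0+\delta)\subset E_2$; by definition $\gamma$ maps this interval into $\partial\Omega$. Using a smooth local defining function $\varphi$ of $\partial\Omega$ near $\gamma(t_0)$ (so $\varphi=0$ on $\partial\Omega$ and $\nabla\varphi$ is a nonvanishing normal), the Lipschitz function $\varphi\circ\gamma$ vanishes identically on $(t_0-\delta,t_0+\delta)$, and differentiating at any point of differentiability yields $\nabla\varphi(\gamma(t))\cdot\gamma'(t)=0$, i.e.\ $\gamma'(t)\cdot n(\gamma(t))=0$. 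Hence $\nabla h(\gamma(t))\cdot\gamma'(t) = \nabla_T h(\gamma(t))\cdot\gamma'(t)$ on $E_2$, which yields the second term of~\eqref{equality2} and concludes the proof. The finiteness of $\partial\{t\in[0,1],\,\gamma(t)\in\partial\Omega\}$ (built into the definition of $A(x,y)$) is precisely what makes the splitting clean; without it one would have to contend with an exceptional set of possibly positive measure where $\gamma$ touches $\partial\Omega$ without staying on it.
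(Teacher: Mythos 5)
Your proof is correct and follows essentially the same route as the paper: write $h(y)-h(x)$ via absolute continuity of $h\circ\gamma$, split $[0,1]$ into $\{\gamma\in\Omega\}$, ${\rm int}\{\gamma\in\partial\Omega\}$ and the finite (hence null) boundary set guaranteed by $\gamma\in A(x,y)$, and replace $\nabla h$ by $\nabla_T h$ on the second piece. The only difference is that you spell out, via a local defining function, the tangency of $\gamma'$ to $\partial\Omega$ on ${\rm int}\{\gamma\in\partial\Omega\}$, a step the paper states without detailed justification.
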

\begin{proof}
Since $\gamma$ is Lipschitz, $h\circ \gamma$ is Lipchitz and thus absolutely continuous. Therefore, one has: 
\begin{align*}
h(y)-h(x) &=\int_0^1 \frac{d}{dt} (h\circ \gamma )\\
&= \int_{\left\{ t \in [0,1], \, \gamma(t)\in \Omega\right\}}
\frac{d}{dt} (h\circ \gamma )+  \int_{{\rm int} \left\{ t \in [0,1], \, \ \gamma(t)\in \partial\Omega\right\} }  \frac{d}{dt} (h\circ \gamma )\\
&\quad + \int_{\partial \left\{ t \in [0,1], \, \gamma(t)\in \partial\Omega\right\} }  \frac{d}{dt} (h\circ \gamma ).
\end{align*}
By definition of the set $A\left(x,y\right)$ (see Definition~\ref{defA})
the set $\partial \left\{ t \in [0,1], \,  \
  \gamma(t)\in \partial\Omega\right\} $ has Lebesgue measure zero, and thus
$\int_{\partial \left\{ t \in [0,1], \,  \gamma(t)\in \partial\Omega\right\} }  \frac{d}{dt} (h\circ \gamma )=0$.
The curve $\gamma$ is continuous and  thus the set $\left\{ t \in
  [0,1], \,  \gamma(t)\in \Omega\right\}$ is open in $[0,1]$. As a consequence, using in addition that since $\gamma$ is Lipschitz, it is almost everywhere differentiable (by the Rademacher Theorem), one has for almost every $t\in [0,1]$:
$$
\frac{d}{dt}h(\gamma)(t)=\left\{
\begin{aligned}
  (\nabla h)\left(\gamma(t)\right)\cdot \frac{d}{dt}\gamma(t)  \ {\rm \ a.e.\
    on \ } \left\{ t \in [0,1],  \ \gamma(t)\in \Omega\right\}  \\ 
 (\nabla_T h)\left(\gamma(t)\right)\cdot  \frac{d}{dt}\gamma(t)  \ {\rm  \ a.e.\ on \ }  {\rm int} \left\{ t \in [0,1], \ \gamma(t)\in \partial\Omega\right\}.
\end{aligned}
\right.
$$
This proves \eqref{equality2}.
\end{proof}
\begin{remark} Notice that there exist Lipschitz curves $\gamma$ such
  that  $\partial \left\{ t \in [0,1], \ \gamma(t)\in \partial
    \Omega\right\}$ has a positive Lebesgue measure. Let us give an
  example. Consider $\Omega=(0,1)\times (0,2)$ and the curve 
$$\gamma: t\in [0,1]\mapsto\left(t,\inf_{y\in K}\vert t-y\vert\right)\in [0,1]^2,$$
where $K$ is the Smith-Volterra-Cantor set in $[0,1]$, such that $K$
is closed, has a positive Lebesgue
measure and has an empty interior (see \cite[Section 2.5]{pugh2002real}). Notice that the
distance $\inf_{y\in K}\vert t-y\vert$ to $K$ is a Lipschitz function
of $t \in (0,1)$, so that $\gamma$ is a Lipschitz function. The set
$K$ is closed and thus  
$$\left\{ t \in [0,1], \,  \gamma(t)\in \partial
  \Omega\right\}=\left\{ t \in [0,1], \, 
  \gamma(t)=0\right\}=K.$$ 
Therefore $\partial \left\{ t \in [0,1], \,  \ \gamma(t)\in \partial\Omega\right\}=\overline{K}\setminus 
({\rm int} K)=K$. 
\end{remark}
\noindent
The following results will be useful to prove the equality~\eqref{daA}
and to prove the existence of curves realizing the Agmon distance in  Section~\ref{minim}.
\begin{proposition} \label{four}
Assume that \textbf{[H3]} holds. Let $
\gamma : [0,1]\to \overline \Omega$ be a Lipschitz curve and assume
that there exists a time $t^*\in [0,1]$ such that $\gamma(t^*)\in \partial \Omega$. Then there exists $(a,b)\in [0,1]^2$, with $a\leq t^*\leq b$ and $a<b$ such that for all $ (t_1,t_2)\in [0,1]^2$, with $a\leq t_1<t_2\leq b$, there exists a Lipschitz curve $\gamma_{12}: [t_1,t_2]\to \overline \Omega$ satisfying 
$$\gamma_{12}(t_1)=\gamma(t_1) \text{ and }\gamma_{12}(t_2)=\gamma(t_2),$$ 
\begin{equation} \label{oo}
 L(\gamma, (t_1,t_2))\geq  L(\gamma_{12}, (t_1,t_2))
\end{equation} 
and, either
  $\left\{ t \in [t_1,t_2], \ \gamma_{12}(t)\in \partial
  \Omega\right\}$ is empty, or its boundary $\partial \left\{ t \in [t_1,t_2], \
  \gamma_{12}(t)\in \partial \Omega\right\}$ consists of isolated
points in $\left\{ t \in [t_1,t_2], \ \gamma_{12}(t)\in \partial
  \Omega\right\}$. Moreover, if the following is satisfied:
\begin{equation}\label{eq:s1s2s3}
\exists (s_1,s_2,s_3)\in
[t_1,t_2]^3, s_1<s_2<s_3,\,
\gamma(s_1)\in \partial \Omega, \gamma(s_2)\in \Omega \text{ and }
\gamma(s_3)\in \partial \Omega,
\end{equation}
then the inequality $\eqref{oo}$ is strict.
\end{proposition}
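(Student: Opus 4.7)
The plan is to perform a local surgery on $\gamma$ inside a tubular neighborhood of $\partial \Omega$, using assumption \textbf{[H3]} to quantify, via a definite gap, the extra Agmon cost of normal excursions that the surgery will erase.

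Using \textbf{[H3]} and compactness of $\partial \Omega$, set $\delta := \tfrac12 \min_{\partial \Omega} \partial_n f > 0$. The identity $|\nabla f|^2 = |\nabla_T f|^2 + (\partial_n f)^2$ on $\partial \Omega$ together with continuity of $\nabla f$ on $\overline\Omega$ yields $\epsilon_1 > 0$ such that the normal map $\Psi(y, s) := y - s\, n(y)$ is a $C^\infty$ diffeomorphism from $\partial \Omega \times [0, \epsilon_1]$ onto a one-sided tubular neighborhood $U_{\epsilon_1}$ of $\partial \Omega$ in $\overline\Omega$, and
\[
|\nabla f(\Psi(y, s))|^2 \ \ge \ |\nabla_T f(y)|^2 + \delta^2 \qquad \text{for all } (y, s) \in \partial \Omega \times [0, \epsilon_1].
\]
Since $\gamma$ is continuous and $\gamma(t^*) \in \partial \Omega$, I pick $a \le t^* \le b$ with $a < b$ and $\gamma([a, b]) \subset U_{\epsilon_1}$, and write $\gamma(t) = \Psi(y(t), s(t))$ with $y$ and $s \ge 0$ Lipschitz on $[a, b]$.

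Given $[t_1, t_2] \subset [a, b]$, I construct $\gamma_{12}$ by cases. If $s(t_1) = s(t_2) = 0$, set $\gamma_{12}(t) := \Psi(y(t), 0)$, a curve lying entirely in $\partial \Omega$. If $(s(t_1), s(t_2)) \neq (0, 0)$ and $\gamma$ avoids $\partial \Omega$ on $(t_1, t_2)$, take $\gamma_{12} := \gamma$. In the remaining case, concatenate a radial descent from $\gamma(t_1)$ to $\Psi(y(t_1), 0)$, a boundary shortcut $t \mapsto \Psi(y(t), 0)$, and a radial ascent from $\Psi(y(t_2), 0)$ to $\gamma(t_2)$, suitably reparametrized. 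In every sub-case $\gamma_{12}$ is Lipschitz, matches $\gamma$ at $t_1, t_2$, and its boundary-time set is $\emptyset$, $[t_1, t_2]$, or contained in $\{t_1, t_2\}$, so its topological boundary in $[t_1, t_2]$ is empty or consists of isolated points of the set.

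The length inequality $L(\gamma_{12}, (t_1, t_2)) \le L(\gamma, (t_1, t_2))$ rests on Lemma~\ref{h} to decompose the Agmon integral into its contributions on $\{s > 0\}$ and $\{s = 0\}$, together with the sharp Cauchy--Schwarz-type bound
\[
|\nabla f(\Psi(y, s))|\, \sqrt{|y'|_{g_{\partial \Omega}}^2 + |s'|^2} \ \ge \ |\nabla_T f(y)|\, |y'|_{g_{\partial \Omega}} + \delta\, |s'|
\]
on $\{s > 0\}$, valid in Fermi coordinates on $U_{\epsilon_1}$ after a final shrinking of $\epsilon_1$ absorbs the $O(\epsilon_1)$ tubular metric cross-terms. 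Integrating, one gets $L(\gamma) \ge L(\gamma_{12}) + \delta \int_{t_1}^{t_2}|s'|\, dt$ in the first case of the construction. Under condition \eqref{eq:s1s2s3}, the function $s$ vanishes at $s_1$ and $s_3$ while attaining $s(s_2) > 0$ between them, so $\int_{t_1}^{t_2} |s'| \ge 2 s(s_2) > 0$, giving the strict improvement $L(\gamma) - L(\gamma_{12}) \ge 2\delta\, s(s_2) > 0$. The main technical obstacle is the length comparison in the third case of the construction, where the costs of the two radial pieces of $\gamma_{12}$ must be balanced against both the normal-excursion and the tangential savings of the boundary shortcut in $\gamma$.
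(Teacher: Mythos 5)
Your local setup (normal coordinates, the uniform gap $|\nabla f(\Psi(y,s))|^2\ge|\nabla_T f(y)|^2+\delta^2$ coming from \textbf{[H3]} and compactness, and the Cauchy--Schwarz split of the Agmon integrand) is sound and is essentially a quantitative variant of what the paper does (the paper uses a multiplicative gap $g(\text{interior})\ge A\,g(\text{projection})$ with $A(1-\varepsilon)>1$; your additive gap $\delta|s'|$ is fine after the shrinking of $\epsilon_1$ you mention, although the correction to absorb is the distortion of the induced tangential metric on inner slices, not cross-terms, which vanish in these coordinates). The genuine gap is your third case. There the curve $\gamma_{12}$ you build (radial descent at $y(t_1)$, boundary shortcut, radial ascent at $y(t_2)$) simply does not satisfy \eqref{oo}: the radial pieces are charged the full $|\nabla f|\ge\partial_n f$, while the Cauchy--Schwarz split only credits $\gamma$ with $\delta|s'|$, and $\delta=\tfrac12\min_{\partial\Omega}\partial_nf$ is strictly smaller than $|\nabla f|$ near $\partial\Omega$. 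This is not a balancing technicality that a finer estimate can resolve; the inequality is false. Concretely, take local coordinates in which $\partial\Omega=\{s=0\}$ and $f$ is affine with $|\nabla_T f|=M>0$, $\partial_nf=p>0$, and let $\gamma$ go in a straight segment from $(y,s)=(0,s_0)$ to $(D,0)$ and then vertically up to $(D,s_1)$ with $0<D\ll s_0$. Then $L(\gamma)=\sqrt{M^2+p^2}\bigl(\sqrt{D^2+s_0^2}+s_1\bigr)$ while your $\gamma_{12}$ costs $\sqrt{M^2+p^2}\,(s_0+s_1)+MD$, which is strictly larger since $\sqrt{M^2+p^2}\bigl(\sqrt{D^2+s_0^2}-s_0\bigr)=O(D^2/s_0)<MD$. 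Moreover, your strictness argument for \eqref{eq:s1s2s3} is only carried out in the first case, whereas \eqref{eq:s1s2s3} can perfectly well occur in the third case (both endpoints off $\partial\Omega$, two boundary touches with an interior excursion in between), so strictness is also unproven exactly where the construction breaks.

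The repair is to never introduce radial segments: set $t_1^+:=\inf\{t\in[t_1,t_2]:\gamma(t)\in\partial\Omega\}$ and $t_2^-:=\sup\{t\in[t_1,t_2]:\gamma(t)\in\partial\Omega\}$ (nonempty in your third case), keep $\gamma_{12}=\gamma$ on $[t_1,t_1^+]$ and $[t_2^-,t_2]$, and replace $\gamma$ by its boundary projection $t\mapsto\Psi(y(t),0)$ only on $[t_1^+,t_2^-]$. Since $\gamma(t_1^+),\gamma(t_2^-)\in\partial\Omega$, the pieces match continuously, the boundary-time set of $\gamma_{12}$ is the interval $[t_1^+,t_2^-]$ (so its boundary is two points), and the comparison \eqref{oo} follows from your pointwise inequality applied only on $[t_1^+,t_2^-]$, with strictness under \eqref{eq:s1s2s3} because the pointwise inequality is strict on the nonempty open subinterval of $(s_1,s_3)$ where $\gamma$ lies in $\Omega$. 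This is precisely the paper's construction; with that modification (and dropping your separate case analysis, which it subsumes) the rest of your argument goes through.
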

\begin{remark}
Notice that if $t^*\in \partial \left\{ t \in [0,1], \
  \gamma(t)\in \partial \Omega\right\}$ is not isolated in \linebreak $\partial
\left\{ t \in [0,1], \ \gamma(t)\in \partial \Omega\right\}$, then there exists a neighborhood $[t_1,t_2]$ of $t^*$ in $[0,1]$ such that~\eqref{eq:s1s2s3} is satisfied and thus the
inequality $\eqref{oo}$ is strict. Therefore if a Lipschitz curve $\gamma$ realizes the infimum of $L$ on ${\rm Lip}(x,y)$, then $\partial \left\{ t \in [0,1], \
  \gamma(t)\in \partial \Omega\right\}$ is finite. This motivates the introduction of the set $A(x,y)$.
\end{remark}
\begin{proof} 

Let $t^*\in [0,1]$ be such that $\gamma(t^*)\in \partial \Omega$. The proof is divided into three steps.\\

\noindent
\underline{Step 1: Introduction of a normal system of coordinates and
  definition of $a$ and $b$.} 
  \medskip

\noindent
Let us consider a neighborhood $V_{\partial
  \Omega}$ of $\gamma(t^*)$ in $\partial \Omega$, and a  smooth
local system of coordinates in $V_{\partial
  \Omega} \subset \partial \Omega$, denoted by $x_T:V_{\partial
  \Omega} \to \R^{d-1}$. Let us now extend it to a tangential and normal system
of coordinates around $\gamma(t^*)$ in $\overline \Omega$, denoted by
$\phi(x)=(x_T,x_N)$. The function $\phi$ is defined from a
neighborhood of  $\gamma(t^*)$ in $\overline \Omega$ to
$\R^d$. Moreover, one has $x_N(x)\geq 0$ and for all $x$,
$x_N(x)=0$ if and only if $x\in \partial \Omega$. One can assume without loss of generality that
$\phi$ is defined on a neighborhood $V_\alpha$ of  $\gamma(t^*)$ in
$\overline \Omega$ such that $\phi(V_\alpha)=U \times [0,\alpha]$ for
$\alpha >0$, and $U \subset \R^{d-1}$. For this normal system of
coordinates, the metric tensor $G$ is such that:
$\forall (x_T,x_N) \in U\times[0,\alpha]$, 
$$G(x_T,x_N)=\begin{pmatrix}
\tilde G (x_T,x_N) & 0\\
 0 & G_{NN}(x_T,x_N)
\end{pmatrix},$$
where  $\tilde G (x_T,x_N) \in \R^{(d-1)\times(d-1)}$ and $G_{NN}(x_T,x_N) \in \R$ are smooth
functions of $(x_T,x_N)$.  The existence of such a
system of coordinates is a consequence of the collar theorem,
see~\cite{GSchw}.  
\medskip

\noindent
Under assumption~\textbf{[H3]} (namely $\partial_nf>0$ on $\partial
\Omega$), there exist constants $A>1$ and $\varepsilon_1>0$ such that
for all $ x_N \in (0,\varepsilon_1]$ and  for all $ x_T \in
U$,
(see~\eqref{eq:def_g} for the definition of~$g$)
\begin{equation} \label{gg} 
g(\phi^{-1}(x_T,x_N))\geq A\,g(\phi^{-1}(x_T,0)).
\end{equation}
Since the local change of coordinates is smooth, for all $\varepsilon
\in (0,1)$, there exists $\eta>0$ such that for all  $x_N \in [0,\eta]$ and for all $x_T \in U$, one has
$$\tilde G\left(x_T,x_N\right)\geq (1-\varepsilon)^2\tilde G(x_T,0).$$
Let us choose $\varepsilon>0$ such that $(1-\varepsilon)\,A>1$. One
can reduce $V_{\alpha}$ such that $0\leq x_N(x) \leq \min
(\eta,\varepsilon_1)$ for all $x\in V_{\alpha}$. By continuity of
$\gamma$, there exist $(a,b)\in [0,1]^2$, with $a\leq t^*\leq b$ and
$a<b$ such that for all $ t\in [a,b]$, $\gamma(t)\in V_{\alpha}$. Let
us introduce the components of $\gamma$ in the normal system of
coordinates: $(\gamma_T(t),\gamma_N(t))=\phi(\gamma(t))$. Let us now
define: for $t\in [a,b]$,
$$\tilde \gamma(t):= \phi^{-1}\left(\gamma_T(t),0\right)\in \partial \Omega.$$
For almost every $t \in (a,b)$, if $\gamma(t)\in \partial \Omega$, $\gamma(t)=\phi^{-1} (\gamma_T(t),0)=\tilde \gamma(t)$, $g(\gamma(t))=g(\tilde \gamma(t))$ and 
\begin{align*}
\vert \gamma'(t)\vert^2 &= \left [\left(\gamma_T,\gamma_N\right)'\right]^{Tr} \,G\left(\gamma_T(t),0\right)\, \left(\gamma_T,\gamma_N\right)'\\
&= \left [\gamma_T'(t)\right]^{Tr}\, \tilde G(\gamma_T(t),0)\, \gamma_T'(t) + G_{NN}\left(\gamma_T(t),0\right)\, \gamma_N'(t)^2\\
&\geq \left [\gamma_T'(t)\right]^{Tr} \,\tilde  G\left(\gamma_T(t),0\right) \,\gamma_T'(t) =\left\vert \tilde \gamma'(t)\right\vert^2,
\end{align*}
where the supersript $^{Tr}$ stands for the transposition operator. 
For almost every  $t \in (a,b)$, if $\gamma(t)\in \Omega$,
\begin{align*}
\vert \gamma'(t)\vert^2 &= \left [\left(\gamma_T,\gamma_N\right)'\right]^{Tr}\, G\left((\gamma_T,\gamma_N)\right) \,\left(\gamma_T,\gamma_N\right)'\\
&= \left [\gamma_T'(t) \right]^{Tr}\,\tilde G\left((\gamma_T,\gamma_N)\right) \,\gamma_T'(t) + G_{NN}\left((\gamma_T,\gamma_N)\right)\, \gamma_N'(t)^2\\
&\geq  (1-\varepsilon)^2 \left [\gamma_T'(t) \right]^{Tr}\,\tilde G\left(\gamma_T(t),0\right) \gamma_T'(t) =(1-\varepsilon)^2\,\left\vert \tilde \gamma'(t)\right\vert^2.
\end{align*}

\begin{sloppypar}
\noindent
\underline{Step 2: Definition of $\gamma_{12}$.} Let $(t_1,t_2)\in [0,1]^2$, with $a\leq t_1<t_2\leq b$. 
\medskip

\noindent
Let us distinguish between two cases.
\begin{itemize}
\item If the set $\left\{t \in [t_1,t_2], \, \gamma(t)\in \partial
    \Omega\right\}$ is  non empty, let us consider $t_1^+:=\inf \left\{t
    \in [t_1,t_2], \, \gamma(t)\in \partial \Omega\right\}$ and
  $t_2^-:=\sup \left\{t \in [t_1,t_2], \, \gamma(t)\in \partial
    \Omega\right\}$. The curve $\gamma_{12}: [t_1,t_2]\to \overline
  \Omega$ is then defined by 
$$
\gamma_{12}(t)= \left\{
    \begin{array}{ll}
        \gamma(t) & \mbox{if } t\in (t_1,t_1^+),  \\
        \tilde \gamma(t) & \mbox{if } t\in (t_1^+,t_2^-),  \\
        \gamma(t) & \mbox{if } t\in (t_2^-,t_2).
    \end{array}
\right.
$$
Observe that for all $t\in (t_1,t_1^+)\cup (t_2^-,t_2)$,
$\gamma(t)= \gamma_{12}(t)$, which implies $g(\gamma(t))\vert
\gamma'(t)\vert=g(\gamma_{12}(t))\vert \tilde
\gamma_{12}'(t)\vert$ almost everywhere in $ (t_1,t_1^+)\cup (t_2^-,t_2)$. Moreover, using
the fact that $A(1-\varepsilon)>1$, for almost every  $t\in (t_1^+,t_2^-)$,
\begin{equation} \label{strictt}
g(\gamma(t))\vert \gamma'(t)\vert \geq A(1-\varepsilon) g(\phi^{-1}(\gamma_T(t),0)) \vert \tilde  \gamma'(t)\vert>g(\tilde \gamma)\vert \tilde  \gamma'(t)\vert=g(\gamma_{12})\vert \gamma_{12}'(t)\vert.
\end{equation}
Therefore \eqref{oo} is satisfied.
\item If the set $\left\{t \in [t_1,t_2], \, \gamma(t)\in \partial
    \Omega\right\}$ is  empty,   which means that $\forall t\in
  [t_1,t_2]$, $\gamma(t)\in \Omega$, then one simply defines the curve $\gamma_{12}: [t_1,t_2]\to \overline \Omega$ by $\gamma_{12}=\gamma$. 
\end{itemize}
In both cases, the curve $\gamma_{12}$ is Lipschitz,
$\gamma_{12}(t_j)=\gamma(t_j)$ for $j\in \{1,2\}$ and \eqref{oo} is
satisfied. Moreover by construction of $\gamma_{12}$, the set
$\partial \left\{ t \in [t_1,t_2], \,  \gamma_{12}(t)\in \partial
  \Omega\right\}$ consists of isolated points in $\left\{ t \in
  [t_1,t_2], \,  \gamma_{12}(t)\in \partial \Omega\right\}$, or is
empty.

\medskip
\noindent
\underline{Step 3: On the strict inequality in~\eqref{oo}.} 
\medskip

\noindent
Assume
that~\eqref{eq:s1s2s3} holds and let us show that the inequality~\eqref{oo} is
strict. Indeed, in that case $t_1^+\le s_1 < s_3 \le t_2^-$ and by continuity of~$\gamma$, there exists $(u_1,u_2)\in (s_1,s_3)^2$ such that
$ u_1<s_2<u_2 $ and   $\gamma([u_1,u_2]) \subset
\Omega$. Thus, the inequality~\eqref{strictt} holds almost everywhere on the open nonempty interval
$(u_1,u_2)$ which implies that $L(\gamma, (u_1,u_2))>
L(\gamma_{12}, (u_1,u_2))$.
This concludes the proof of Proposition~\ref{four}.
\end{sloppypar}
\end{proof}
\noindent
A consequence of the previous proposition is the following result.
\begin{proposition} \label{front}
 Let $x,y \in \overline \Omega$ and assume that \textbf{[H3]} holds. For any Lipschitz curve $\gamma:  [0,1]\to \overline \Omega$ with $\gamma(0)=x$ and $\gamma(0)=y$, there exists $\gamma_1\in A\left(x,y\right)$ such that 
 $L(\gamma,(0,1))\geq L(\gamma_1,(0,1))$. 
\end{proposition}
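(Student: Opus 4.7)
The plan is to isolate the closed set $K := \gamma^{-1}(\partial \Omega)\subset [0,1]$ where $\gamma$ touches the boundary, cover it by finitely many intervals on which Proposition~\ref{four} applies, and then perform the local replacement piece by piece. If $K=\emptyset$ there is nothing to prove, so I will assume $K\neq \emptyset$.

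First, at each $t^*\in K$, Proposition~\ref{four} supplies a nondegenerate interval $[a_{t^*},b_{t^*}]\subset [0,1]$ containing $t^*$ on whose subintervals the replacement is available. After a harmless shrinking I can arrange that $t^*$ lies in a relatively open neighborhood $U_{t^*}\subset [a_{t^*},b_{t^*}]$ of $t^*$ in $[0,1]$, and these $U_{t^*}$ form an open cover of the compact set $K$. Extracting a finite subcover indexed by $t^*_1,\dots,t^*_N$, I set $W:=\bigcup_{i=1}^{N}U_{t^*_i}$. Then $W$ is a relatively open subset of $[0,1]$ containing $K$, and $\gamma$ takes values in $\Omega$ on $[0,1]\setminus W$. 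Decomposing $W$ into its finitely many connected components $O_1,\dots,O_M$, each closure $\overline{O_j}=[\alpha_j,\beta_j]$ is compact and covered by the finite subfamily $\{[a_{t^*_i},b_{t^*_i}]:U_{t^*_i}\subset O_j\}$.

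Next, on each $\overline{O_j}$ the Lebesgue number lemma yields a partition $\alpha_j=c^{(j)}_0<\cdots<c^{(j)}_{L_j}=\beta_j$ such that each $[c^{(j)}_{k-1},c^{(j)}_k]$ lies inside some $[a_{t^*_i},b_{t^*_i}]$. I will then apply Proposition~\ref{four} on each piece to obtain a Lipschitz replacement $\gamma_{j,k}$ with the same endpoints as $\gamma|_{[c^{(j)}_{k-1},c^{(j)}_k]}$, shorter or equal length, and such that $\{t\in [c^{(j)}_{k-1},c^{(j)}_k]:\gamma_{j,k}(t)\in \partial \Omega\}$ is either empty or a single closed subinterval (this structure is clear from the explicit construction $\gamma_{12}$ in the proof of Proposition~\ref{four}). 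Gluing these $\gamma_{j,k}$ together and leaving $\gamma$ unchanged on $[0,1]\setminus W$ defines a Lipschitz curve $\gamma_1:[0,1]\to \overline{\Omega}$ with $\gamma_1(0)=x$, $\gamma_1(1)=y$, and $L(\gamma_1,(0,1))\leq L(\gamma,(0,1))$ by additivity of the length functional.

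Finally, $\gamma_1^{-1}(\partial \Omega)$ is a finite union of closed (possibly degenerate) subintervals of $[0,1]$, so its topological boundary is a finite set and therefore $\gamma_1\in A(x,y)$. The main technical point of this plan is the Lebesgue-number step: since the normal coordinate chart used in Proposition~\ref{four} is tied to a specific boundary point $\gamma(t^*)$, the replacement is only valid on subintervals of a single $[a_{t^*_i},b_{t^*_i}]$, and the Lebesgue number lemma precisely guarantees that the partition can be chosen so that each piece sits inside one such interval. Everything else reduces to bookkeeping about gluing Lipschitz pieces that agree at shared endpoints.
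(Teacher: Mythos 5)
Your proposal is correct, and it organizes the argument differently from the paper. The paper does not cover the whole contact set: it introduces $\mathrm{Ad}(\gamma)$, the set of accumulation points of $\partial\{t\in[0,1],\,\gamma(t)\in\partial\Omega\}$, covers this compact set by intervals from Proposition~\ref{four}, and removes the accumulation points by an induction $\mathcal{P}_N$ on the number of covering intervals; the inductive step requires a careful bookkeeping of the overlaps of the new interval $(a_{N+1},b_{N+1})$ with the previously treated ones and an argument that the newly created junction points do not become accumulation points of the modified curve. You instead cover the full compact set $K=\gamma^{-1}(\partial\Omega)$, split the finitely many components of the covering set into subintervals each lying in a single chart interval $[a_{t^*_i},b_{t^*_i}]$, and apply Proposition~\ref{four} once per subinterval; since the pieces meet only at endpoints where every replacement agrees with $\gamma$, no induction and no overlap analysis are needed, and the contact set of the glued curve is a finite union of closed intervals (by the explicit form of $\gamma_{12}$, which projects onto $\partial\Omega$ between the first and last contact times of the piece), hence has finite boundary. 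This is a genuinely simpler use of the same local replacement lemma; the only thing the paper's version buys is that the curve is modified on a smaller set (near accumulation points only), which is irrelevant for the conclusion. Two small glosses are worth fixing in a final write-up: to have $t^*$ in a relatively open $U_{t^*}\subset[a_{t^*},b_{t^*}]$ you need $a_{t^*}<t^*<b_{t^*}$ for interior $t^*$, which is not literally in the statement of Proposition~\ref{four} but follows from its proof (the paper's own compactness step makes the same implicit choice); and the Lebesgue number lemma is stated for open covers while your covering intervals are closed — replace it by the elementary observation that a closed interval covered by finitely many closed intervals can be partitioned, using the covering intervals' endpoints as partition points, so that each piece lies in one covering interval.
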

\begin{proof}
The set $\partial\left \{ t \in [0,1], \, \gamma(t)\in \partial
  \Omega\right\}$ is closed, so its limit points are its non
isolated points. Let us define ${\rm Ad}(\gamma)$ as the set of
limit points of $\partial \left\{ t \in [0,1], \,
  \gamma(t)\in \partial \Omega\right\}$. If ${\rm Ad}(\gamma)$ is
empty, then $\partial \left\{ t \in [0,1], \,  \gamma(t)\in \partial
  \Omega\right\}$ is empty or consists of isolated points in  
$\partial\left \{ t \in [0,1], \,  \gamma(t)\in \partial
  \Omega\right\}$ and since $\partial \left\{ t \in [0,1], \, \gamma(t)\in \partial \Omega\right\}$ is compact, this implies that 
$\gamma \in A\left(x,y\right)$ and Proposition~\ref{front} is thus
proved by simply taking $\gamma_1=\gamma$. 
\medskip

\noindent
If ${\rm Ad}(\gamma)$ is non empty, we will construct a curve $\gamma_1\in A\left(x,y\right)$ such that 
$$L(\gamma,(0,1))\ge L(\gamma_1,(0,1)).$$
Without loss of generality, one can assume that $0$ and $1$ are not in
${\rm Ad}(\gamma)$. Otherwise one could modify $\gamma$ in neighborhoods of $0$ and $1$ 
without increasing $L(\gamma,(0,1))$ and without changing the end
points using
Proposition~\ref{four}. To prove the result, we will show by induction
on $N \geq 1$ the following property $\mathcal P_N$: for all
$\{t_1,\ldots,t_N\} \subset  {\rm Ad}(\gamma)$, denote by
$(a_j,b_j)_{j=1,\ldots,N}$ the open intervals given  by
Proposition~\ref{four} for each $t_i$; then, it is possible to change
$\gamma$  to construct a Lipschitz curve $\gamma_1: [0,1]\to \overline
\Omega$ with $\gamma_1(0)=x$ and $\gamma_1(0)=y$, such that $\gamma_1=\gamma$ on $[0,1]\setminus \left (\bigcup_{j=1}^N(a_j,b_j)\right )^c$, ${\rm
  Ad}(\gamma_1) \cap \bigcup_{j=1}^N(a_j,b_j)=\emptyset $ and
$L\left(\gamma,(0,1)\right)\ge L(\gamma_1,(0,1))$. 
\medskip

\noindent
The first step to prove $\mathcal P_1$ is just a straightforward
application of Proposition~\ref{four} (choosing $t_1=a_1$ and $t_2=b_2$). Now, let us prove 
$\mathcal P_{N+1}$ assuming $\mathcal P_N$. Let us consider
$\{t_1,\ldots,t_N,t_{N+1}\} \subset  {\rm Ad}(\gamma)$ and denote by
$(a_j,b_j)_{j=1,\ldots,N,N+1}$ the open intervals given by Proposition
\ref{four} for each~$t_i$.  Applying $\mathcal P_N$, it is possible to change $\gamma$ to construct a Lipschitz curve $\gamma_1: [0,1]\to
\overline \Omega$  with $\gamma_1(0)=x$ and $\gamma_1(0)=y$, such that
${\rm Ad}(\gamma_1) \cap \bigcup_{j=1}^N(a_j,b_j) =\emptyset$ and
$L\left(\gamma,(0,1)\right)\ge L\left(\gamma_1,(0,1)\right)$. If
$(a_{N+1},b_{N+1})\subset  \bigcup_{j=1}^N(a_j,b_j)$, then $\mathcal P_{N+1}$ holds taking $\gamma_1$. Otherwise, there exist $K\in \mathbb N^*$ and $(q_1,\ldots,q_K,d_1,\ldots,d_K)\in [0,1]^{2K}$ such that
 $$\left(a_{N+1},b_{N+1}\right)\cap \left[  \bigcup_{j=1}^N(a_j,b_j)\right]^c =\bigcup_{i=1}^K\left[(q_i,d_i)\right],$$
with $0<q_1<d_1<q_2<d_2<\ldots<q_K<d_K<1$; the notation $[($ and $)]$
mean that the extremities can or not belong to the interval. In
addition, for $i \in \{1, \ldots,K\}$, $q_i\in \{a_{N+1}\}\cup \{ b_1,\ldots,b_N\}$ and $d_i\in
\{b_{N+1}\}\cup \{ a_1,\ldots,a_N\}$. Then applying  Proposition~\ref{four} to $\gamma_1$ on each interval $(q_i,d_i)\subset
(a_{N+1},b_{N+1})$, one gets that it is possible to construct a Lipschitz curve $\gamma_2$ with $\gamma_2(0)=x$ and $\gamma_2(0)=y$, such that
 $$L(\gamma_1,(0,1))\geq L(\gamma_2,(0,1)), \ {\rm Ad}(\gamma_2) \cap \bigcup_{i=1}^K(q_i,d_i)=\emptyset.$$
If for some $k\in \{1,\ldots, K\}$ and $j\in \left\{1,\ldots,N\right\}$
$q_k=b_j$, then
$q_k$ is isolated from the left in $\left\{t\in [0,1], \,
  \gamma_1(t)\in \partial \Omega\right\}$ from the construction of
$\gamma_1$ (see Proposition~\ref{four}) and,  by construction of
$\gamma_2$, $q_k$ is also isolated from the right in $\left\{t\in [0,1], \,
  \gamma_2(t)\in \partial \Omega\right\}$. Thus, since there exists $s\in [0,q_k)$ such that  $\gamma_2=\gamma_1$ on $[s,q_k]$, one has $q_k\notin {\rm
  Ad}(\gamma_2)$. A similar reasoning holds for the points $d_k$. Thus 
 $${\rm Ad}(\gamma_2) \cap
  \left(a_{N+1},b_{N+1}\right)\cap \left[
    \bigcup_{j=1}^N(a_j,b_j)\right]^c =\emptyset.$$
 This proves that ${\rm Ad}(\gamma_2) \cap \bigcup_{j=1}^{N+1}(a_j,b_j)
 =\emptyset$ and thus $\mathcal P_{N+1}$.
\medskip

\noindent
By induction, we thus have proven $\mathcal P_N$ for all $N\geq
1$. Now, notice that by a compactness argument, 
 there exist $N\geq 0$ and $\{t_1,\ldots,t_N\} \subset  {\rm Ad}(\gamma)$ such that if one denotes by  $(a_j,b_j)_{j=1,\ldots,N}$ the open intervals given by Proposition~\ref{four} for each $t_i$, then
$$ {\rm Ad}(\gamma) \subset \bigcup_{i=1}^N(a_i,b_i).$$
Applying $\mathcal P_N$ yields the desired result.
\end{proof}
\noindent
A direct consequence of Proposition~\ref{front} is the following.
\begin{corollary}\label{equalityagmon}
 Assume that \textbf{[H3]} holds. Then the Agmon distance $d_a$ introduced in Definition~\ref{def.agmon-intro} satisfies \eqref{daA}: for all $\left(x,y\right)\in \overline{\Omega}^2$
 $$d_a\left(x,y\right)=\inf _{\gamma\in A\left(x,y\right)}L\left(\gamma,(0,1)\right).$$
 \end{corollary}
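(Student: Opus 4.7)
The plan is to deduce the corollary directly from Proposition~\ref{front}, which does the real work. There are two inequalities to establish, one trivial and one requiring Proposition~\ref{front}.

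First, since $A(x,y) \subset \mathrm{Lip}(x,y)$, taking the infimum over the smaller class can only produce a larger (or equal) value:
\begin{equation*}
d_a(x,y) \;=\; \inf_{\gamma \in \mathrm{Lip}(x,y)} L(\gamma,(0,1)) \;\le\; \inf_{\gamma \in A(x,y)} L(\gamma,(0,1)).
\end{equation*}

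For the reverse inequality, fix an arbitrary $\gamma \in \mathrm{Lip}(x,y)$. By Proposition~\ref{front} (which is where assumption \textbf{[H3]} enters), there exists $\gamma_1 \in A(x,y)$ with $L(\gamma_1,(0,1)) \le L(\gamma,(0,1))$. Hence
\begin{equation*}
\inf_{\tilde\gamma \in A(x,y)} L(\tilde\gamma,(0,1)) \;\le\; L(\gamma_1,(0,1)) \;\le\; L(\gamma,(0,1)).
\end{equation*}
Taking the infimum over $\gamma \in \mathrm{Lip}(x,y)$ yields $\inf_{A(x,y)} L(\cdot,(0,1)) \le d_a(x,y)$, which combined with the first inequality gives the equality~\eqref{daA}. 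No further obstacle arises, since the technical difficulty (reducing an arbitrary Lipschitz curve to one whose contact set with $\partial\Omega$ has finite boundary, without increasing the length) has already been handled in Proposition~\ref{front}.
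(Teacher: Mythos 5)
Your proposal is correct and follows exactly the route intended by the paper, which states the corollary as a direct consequence of Proposition~\ref{front}: the inclusion $A(x,y)\subset\mathrm{Lip}(x,y)$ gives one inequality, and Proposition~\ref{front} (where \textbf{[H3]} is used) gives the other. Nothing is missing.
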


\subsection{First properties of the  Agmon distance} \label{properties}
In this section, we aim at giving the basic properties of the Agmon distance. 

\subsubsection{Upper bounds on $d_a$ and topology of $(\overline{\Omega},d_a)$}
\begin{proposition} \label{propo5}
The Agmon distance $(x,y)\in \overline \Omega \times\overline \Omega \mapsto d_a(x,y)$ is symmetric and satisfies the triangular inequality. Moroever,  
 there exists a constant $C$ such that for all $x,y \in \overline \Omega$,
\begin{equation} \label{eq:ineq2}
 d_a\left(x,y\right) \leq C \vert x-y \vert .
\end{equation} 
For any fixed $y \in \overline \Omega$,  $x \in \overline \Omega\mapsto
d_a\left(x,y\right)$ is Lipschitz. Its gradient is well
defined almost everywhere and satisfies for $y \in \overline \Omega$ and for almost every $x \in \Omega$,
\begin{equation} \label{eq:eqq}
\vert \nabla_x d_a\left(x,y\right) \vert  \leq \vert \nabla f (x) \vert.
\end{equation} 
Moreover, if  \textbf{[H3]}  holds, for all $x,y \in \overline \Omega$, we have
\begin{equation} \label{eq:ineq}
\vert f(x)-f(y)\vert \leq d_a\left(x,y\right).
\end{equation} 

\end{proposition}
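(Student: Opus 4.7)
The proof splits naturally into four independent claims; I will treat each in turn, using the tools introduced just above the statement.

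The \emph{symmetry} $d_a(x,y)=d_a(y,x)$ is immediate from the substitution $t\mapsto 1-t$: if $\gamma\in\mathrm{Lip}(x,y)$ then $\tilde\gamma(t):=\gamma(1-t)\in\mathrm{Lip}(y,x)$ and $L(\tilde\gamma,(0,1))=L(\gamma,(0,1))$, since $g\circ\gamma$ and $|\gamma'|$ are invariant under time reversal. The \emph{triangle inequality} follows by concatenating (after affine reparametrization) $\gamma_1\in\mathrm{Lip}(x,z)$ and $\gamma_2\in\mathrm{Lip}(z,y)$, both in $\overline\Omega$, producing a Lipschitz curve in $\mathrm{Lip}(x,y)$ whose $L$-length is the sum $L(\gamma_1,(0,1))+L(\gamma_2,(0,1))$. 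For the estimate \eqref{eq:ineq2}, compactness of $\overline\Omega$ and smoothness of $f$ yield a finite constant $M:=\max(\sup_{\Omega}|\nabla f|,\sup_{\partial\Omega}|\nabla_Tf|)$, so $g\le M$ on $\overline\Omega$. Since $\overline\Omega$ is a compact connected $C^\infty$ manifold with boundary embedded in $\mathbb R^d$, it is quasi-convex: there is a constant $C_0>0$ such that any two points $x,y\in\overline\Omega$ can be joined by a Lipschitz curve in $\overline\Omega$ of Euclidean length at most $C_0|x-y|$. For any such curve $L(\gamma,(0,1))\le M C_0|x-y|$, which gives \eqref{eq:ineq2}.

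The \emph{Lipschitz continuity} of $x\mapsto d_a(x,y)$ is a direct consequence of the triangle inequality combined with \eqref{eq:ineq2}: $|d_a(x,y)-d_a(x',y)|\le d_a(x,x')\le C|x-x'|$. Rademacher's theorem (applied in local charts) then gives the a.e.\ existence of $\nabla_xd_a(x,y)$ on $\overline\Omega$. To prove the bound \eqref{eq:eqq}, fix $y$ and $x\in\Omega$. Since $\Omega$ is open, for $x'$ sufficiently close to $x$ the straight segment $\gamma_{x,x'}(s):=x+s(x'-x)$, $s\in[0,1]$, lies entirely in $\Omega$, hence
\[
d_a(x,x')\le L(\gamma_{x,x'},(0,1))=|x'-x|\int_0^1|\nabla f(x+s(x'-x))|\,ds.
\]
Given a unit vector $v$, taking $x'=x+tv$ and sending $t\downarrow 0$ gives $|\partial_v d_a(\cdot,y)(x)|\le|\nabla f(x)|$ at any point $x$ of differentiability; supremizing over $v$ yields \eqref{eq:eqq}.

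For the lower bound $|f(x)-f(y)|\le d_a(x,y)$ under \textbf{[H3]}, I invoke Corollary~\ref{equalityagmon} to replace the infimum over $\mathrm{Lip}(x,y)$ in the definition of $d_a$ by the infimum over the smaller class $A(x,y)$. This is precisely where \textbf{[H3]} enters (through Proposition~\ref{four}). It then suffices to prove $|f(y)-f(x)|\le L(\gamma,(0,1))$ for every $\gamma\in A(x,y)$. Applying Lemma~\ref{h} with $h=f$,
\[
f(y)-f(x)=\int_{\{\gamma\in\Omega\}}\nabla f(\gamma)\cdot\gamma'\,dt+\int_{\mathrm{int}\{\gamma\in\partial\Omega\}}\nabla_Tf(\gamma)\cdot\gamma'\,dt,
\]
and the Cauchy--Schwarz inequality pointwise, together with the very definition \eqref{eq:def_g} of $g$, bounds the absolute value of the right-hand side by $\int_{E}g(\gamma)|\gamma'|\,dt$, where $E:=\{\gamma\in\Omega\}\cup\mathrm{int}\{\gamma\in\partial\Omega\}$. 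By the definition of $A(x,y)$, the complement $(0,1)\setminus E\subset\partial\{\gamma\in\partial\Omega\}$ is finite, hence has Lebesgue measure zero, so $\int_E g(\gamma)|\gamma'|\,dt=L(\gamma,(0,1))$. Taking the infimum over $\gamma\in A(x,y)$ yields \eqref{eq:ineq}.

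The main obstacle lies in this final step: since $g$ is defined by two distinct formulas on $\Omega$ and on $\partial\Omega$, one must be able to split the integral along the two phases of $\gamma$ without losing any mass on the interface; this is exactly why the reduction to $A(x,y)$ (and thus \textbf{[H3]}) is indispensable, and why the proof of symmetry and of the upper bound, which do not require this splitting, are essentially immediate.
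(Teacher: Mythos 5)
Your proof is correct and follows essentially the same route as the paper's: the gradient bound \eqref{eq:eqq} via straight segments in a small ball around $x$ and a limiting argument, and the lower bound \eqref{eq:ineq} via the reduction to $A(x,y)$ (Corollary~\ref{equalityagmon}, hence \textbf{[H3]}), Lemma~\ref{h} and Cauchy--Schwarz, are exactly the arguments of the paper. The only deviation is that for \eqref{eq:ineq2} you invoke quasi-convexity of the smooth compact domain $\overline\Omega$ as a known fact, whereas the paper establishes this by hand in Lemma~\ref{lemma2} through a compactness/contradiction argument with boundary-flattening charts; since quasi-convexity of such domains is standard, this shortcut is legitimate.
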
 
\begin{proof}
The  inequality (\ref{eq:ineq2}) is proved below in
Lemma~\ref{lemma2}.
 The proof of (\ref{eq:eqq}) is standard, see for instance~\cite[p. 53]{dimassi-sjostrand-99}. For the ease of the reader, let us recall the proof. 
Let $y \in \overline \Omega$ and
$x \in \Omega$. Since $\Omega$ is open, there exists an open ball $B \subset
\Omega$ with center $x$. Let $z \in B$. For $t\in [0,1]$, the path $\gamma(t)=tz+(1-t)x$ is included in $B$. Then, one obtains
\begin{align*}
\vert d_a\left(x,y\right)-d_a(z,y) \vert \leq d_a(x,z) &\leq \vert x-z\vert \int_0^1 g(tz+(1-t)x) \, dt\\ 
&=\vert x-z\vert \int_0^1 \vert \nabla f \vert (x+t(z-x)) \, dt.
\end{align*}
Since $f$ is smooth, up to considering a smaller ball $B$ centered at
$x$, there exists a constant $c>0$ such that for all $z\in B$, for all
$t \in [0,1]$, $\vert \nabla f \vert (x+t(z-x)) \le \vert  \nabla f \vert (x)+c\left\vert x-z\right\vert$. 
Thus, for all $z \in B$, it holds $\vert d_a\left(x,y\right)-d_a(z,y)\vert \leq \left\vert x-z\right\vert \left( \vert  \nabla f \vert(x)+c\left\vert x-z\right\vert  \right)$. 
As a consequence, for any fixed $y \in \overline \Omega$ and for almost $x \in \Omega$ one
gets (\ref{eq:eqq}), by considering the limit $z \to x$ in the
previous inequality. 
Let us now prove the inequality~(\ref{eq:ineq}). For any $\gamma \in
A\left(x,y\right)$, using Lemma~\ref{h}, one has:
\begin{align*}
\vert f(x)-f(y) \vert &=\left\vert \int_0^1 \frac{d}{dt} (f\circ \gamma )\, dt \right\vert \\
&=\left\vert\, \int_{\left\{ t \in [0,1], \ \gamma (t)\in \Omega \right\}}  (\nabla f)(\gamma )\cdot \gamma ' \, dt+  \int_{{\rm int} \left\{ t \in [0,1], \ \gamma (t)\in \partial \Omega\right\} }  (\nabla_T f)(\gamma )\cdot \gamma '\, dt \right\vert \\
&\leq \int_{0}^1  g(\gamma(t) ) \, \left\vert \gamma '(t)\right\vert \, dt.
\end{align*}
Therefore (\ref{eq:ineq}) is proved by taking the infimum over
$\gamma \in A\left(x,y\right)$ in the right-hand side (see Corollary~\ref{equalityagmon}). 
\end{proof} 
\noindent
Let us now give the proof of~\eqref{eq:ineq2}.
\begin{lemma} \label{lemma2}
 The function $\left(x,y\right) \in \overline \Omega \times \overline
 \Omega \mapsto  d_a\left(x,y\right)$ is bounded and satisfies 
 \begin{equation} \label{eq:Li}
\sup_{(x,y) \in \overline \Omega\times \overline \Omega, \, x\neq y} \frac{ d_a\left(x,y\right)}{ \ \vert x-y\vert }<\infty.
\end{equation} 
 \end{lemma}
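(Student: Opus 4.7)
The plan is to reduce the Agmon estimate to a purely metric statement about $\overline{\Omega}$, namely its quasi-convexity for the Euclidean distance, and then exploit the smoothness of $\partial \Omega$ via the boundary charts already used in the proof of Proposition~\ref{four}.

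The first step is straightforward: set $M := \sup_{\overline{\Omega}} |\nabla f|$, which is finite by $C^\infty$ regularity of $f$ and compactness of $\overline{\Omega}$. Since $|\nabla_T f| \leq |\nabla f|$, we have $g \leq M$ on $\overline{\Omega}$, so for every Lipschitz curve $\gamma :[0,1]\to \overline{\Omega}$,
$$L(\gamma,(0,1)) \;=\; \int_0^1 g(\gamma(t))\,|\gamma'(t)|\,dt \;\leq\; M \int_0^1 |\gamma'(t)|\,dt.$$
Hence it is enough to prove the geometric statement: there exists a constant $C_0 > 0$ such that every pair $x,y \in \overline{\Omega}$ can be joined by a Lipschitz curve contained in $\overline{\Omega}$ with Euclidean length at most $C_0|x-y|$. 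Taking the infimum over such curves then gives $d_a(x,y) \leq M C_0 |x-y|$, which proves \eqref{eq:Li}; boundedness of $d_a$ follows since $|x-y|\leq \mathrm{diam}(\overline{\Omega}) < \infty$.

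For the geometric statement I would argue locally plus by compactness. Near an interior point $x_0 \in \Omega$, any ball $B(x_0,r) \subset \Omega$ trivially allows connection of two of its points by a straight segment of length $|x-y|$. Near a boundary point $z_0 \in \partial \Omega$, introduce a tangential-normal chart $\phi : V_{z_0} \to U \times [0,\alpha]$ as in the proof of Proposition~\ref{four}, where $U \subset \R^{d-1}$; smoothness of $\phi$ and $\phi^{-1}$ yields bi-Lipschitz constants $c_{z_0} \leq C_{z_0}$ on the closure. For $x,y \in V_{z_0}$ with coordinates $(x_T,x_N), (y_T,y_N) \in U\times [0,\alpha]$, the straight segment $t\mapsto ((1-t)x_T + t y_T,\,(1-t)x_N + t y_N)$ lies in $U \times [0,\alpha]$ by convexity, and its $\phi^{-1}$-image is a Lipschitz curve in $\overline{V_{z_0}} \subset \overline{\Omega}$ of Euclidean length at most $C_{z_0} c_{z_0}^{-1}|x-y|$. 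Cover $\overline{\Omega}$ by finitely many such charts $(V_k)_{k=1,\ldots,K}$ (interior or boundary) and let $\delta_0 > 0$ be a Lebesgue number of this cover: if $|x-y| < \delta_0$ then $\{x,y\}$ lies in a single chart and the local bound applies with a uniform constant $C_1 := \max_k C_k c_k^{-1}$. For $|x-y| \geq \delta_0$, path-connectedness of $\overline{\Omega}$ together with its compactness yields a uniform upper bound $D$ on the intrinsic distance (chain finitely many chart-segments), so the ratio is at most $D/\delta_0$. Taking $C_0 := \max(C_1, D/\delta_0)$ concludes the proof.

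The genuinely delicate step is the existence of the tangential-normal chart of controlled bi-Lipschitz type near $\partial \Omega$, which rests on the collar neighborhood theorem and the $C^\infty$ regularity of $\partial \Omega$; however, this chart has already been set up in Step 1 of the proof of Proposition~\ref{four}, so it can be invoked directly. The only remaining care is to track uniformity of the constants $C_{z_0}, c_{z_0}$, which follows from the compactness of $\partial \Omega$ (a finite subcover suffices). In the Riemannian setting, the same argument works verbatim with Euclidean distance replaced by the geodesic distance on $\overline{\Omega}$ for the initial metric and straight segments replaced by geodesics in local charts; the comparability of the chart metric with the Euclidean one on compact sets supplies the bi-Lipschitz constants needed above.
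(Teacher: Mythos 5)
Your proof is correct, and it differs from the paper's mainly in its global organization rather than in the local geometry. The paper first bounds $d_a$ and then proves \eqref{eq:Li} by contradiction: it takes a sequence $(x_k,y_k)$ violating the bound, extracts a Euclidean-convergent subsequence, and treats the limit point according to whether it lies in $\Omega$ (straight segment) or on $\partial\Omega$ (a chart $\phi:B(x,r)\to B(0,1)$ sending $\overline\Omega\cap B(x,r)$ to the half-ball $Q_-$, with straight segments in the chart and the Lipschitz constants of $\phi,\phi^{-1}$). You instead factor the statement cleanly as $d_a(x,y)\le \|g\|_{L^\infty}\, d_e(x,y)$ and reduce everything to the quasi-convexity of $\overline\Omega$, globalized by a Lebesgue-number argument for close pairs and a finite chaining bound $D/\delta_0$ for distant pairs; this proves boundedness and \eqref{eq:Li} in one stroke and isolates the purely metric content, which is arguably tidier and immediately portable to the manifold setting. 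The local ingredient is the same in both proofs (bi-Lipschitz straightening of the boundary and segments in chart coordinates), so the approaches are close cousins. One point to tighten: when you claim the straight segment between $(x_T,x_N)$ and $(y_T,y_N)$ stays in $U\times[0,\alpha]$ \emph{by convexity}, you need the chart image to actually be convex; $U$ as produced by the collar construction need not be. This is easily repaired by shrinking each chart so that its image is convex (a coordinate ball times $[0,\alpha]$, or a half-ball as in the paper's choice of $Q_-$), and the finite subcover then still exists, but the remark should be made explicit. Your appeal to connectedness for the uniform bound $D$ is fine since $\Omega$ is assumed connected, so $d_e$ is finite, locally comparable to the Euclidean distance, hence continuous and bounded on the compact set $\overline\Omega\times\overline\Omega$.
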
 
\begin{proof}
 Let us first prove by contradiction that $\left(x,y\right) \in
 \overline \Omega \times \overline \Omega \mapsto
 d_a\left(x,y\right)$ is bounded. Let us assume that there exists a
 sequence $(x_k,y_k)_{k\geq1}\in \overline \Omega \times \overline
 \Omega$ such that for all $k\geq 1$, 
\begin{equation}\label{eq:contradict_1}
d_a(x_k,y_k)\geq k.
\end{equation} Up to the
 extraction of a subsequence, it can be assumed that $\lim_{k \to
   \infty} (x_k,y_k)= \left(x,y\right) \in \overline \Omega \times
 \overline \Omega$ (the convergence being for the Euclidean metric).  Notice that 
\begin{equation}\label{eq:triang_ineq}
d_a(x_k,y_k)\leq d_a(x_k,x)+d_a\left(x,y\right)+d_a(y_k,y).
\end{equation}
Let us consider $d_a(x_k,x)$. If $x\in \Omega$ then there exist an open  ball
$B\subset \Omega$ centered on $x$ and an integer $N$ such that for all
$ k\geq N$, $x_k \in B$ and therefore $\gamma(t)=tx_k+(1-t)x \in B$
for all $t\in (0,1)$. Then, by definition of the Agmon distance, for all $k\geq 1$,
$$d_a(x_k,x) \leq \Vert g\Vert_{L^{\infty}(B)}\,  \vert x-x_k \vert.$$
If $x\in \partial \Omega$ then there exist  $r>0$ and a $C^{\infty}$
bijective map $\phi \ : \ B(x,r) \to B(0,1)$ such that
$\phi(x)=0$, $\phi ( B(x,r) \cap \partial \Omega)=Q_0$ and $\phi(
B(x,r) \cap \overline \Omega)=Q_-$, where $Q_0:=\{y=(y_1,\ldots,y_d),
\ \vert y \vert \leq 1, \ y_d=0\}$ and $Q_-:=\{y=(y_1,\ldots,y_d), \
\vert y \vert \leq 1,\ y_d\leq0\}$. Moreover, there exists $N$ such
that for all $ k\geq N$, $x_k \in B(x,r) \cap \overline \Omega$. Now,
for any $k \ge N$, let us consider  $\gamma(t)=\phi^{-1}((1-t)\phi(x_k)+t\phi(x))$. 
Notice that $\gamma \in A(x_k,x)$ and satisfies $\gamma([0,1]) \subset
B(x,r)$. Moreover $c(t)=\phi\left(\gamma(t)\right)=(1-t)\phi(x_k)+ t \phi(x)
\in  Q_- $ for $ t\in [0,1]$. Then, one has:
\begin{align*}
d_a(x_k,x) \leq \int_0^1  g(\gamma) \vert \gamma' \vert  & \leq \|g\|_{L^\infty(B(x,r))}  \int_0^1 \vert \gamma' \vert \\
& = \|g\|_{L^\infty(B(x,r))}  \int_0^1 \vert  {\rm Jac}(\phi^{-1})(c) c'
  \vert \\
& \leq \|g\|_{L^\infty(B(x,r))}  \|  {\rm Jac}(\phi^{-1})\|_{L^\infty(B(0,1))} |\phi(x_k)-\phi(x)|,
\end{align*}
and therefore, since $\phi$ is Lipschitz,
\begin{equation}\label{eq:d_xk_x}
d_a(x_k,x) \leq C \vert x_k-x\vert,
\end{equation}
where $C$ is a constant independent of $k \ge N$.
This shows that $d_a(x_k,x)$ is bounded. The same reasoning shows that
$d_a(y,y_k)$ is bounded. This yields a contradiction, considering the
inequality~\eqref{eq:triang_ineq} and~\eqref{eq:contradict_1}. 
\medskip

\noindent
 To show (\ref{eq:Li}), one proceeds in the same way. Assume that there
 exists a sequence $(x_k,y_k)\in \overline \Omega \times \overline
 \Omega$ such that 
\begin{equation}\label{eq:contradict_2}
d_a(x_k,y_k)\geq k \left\vert
   x_k-y_k\right\vert.
\end{equation}
Up to the extraction of a subsequence, it can
 be assumed that  $\lim_{k \to \infty} (x_k,y_k)= (x,y) \in \overline \Omega \times \overline \Omega$. If $x\neq y$, then, for $k$ sufficiently large
$$\frac{d_a(x_k,y_k)}{ \vert x_k-y_k\vert }\leq 2 \, \,
\frac{\sup_{(x,y)\in \overline \Omega}d_a(x,y)}{ \vert x-y\vert
}<\infty$$
which contradicts~\eqref{eq:contradict_2}.
If $x=y\in \Omega$, then, for all $k$ sufficiently large, the curve
$\gamma(t)=tx_k+(1-t)y_k$ is with values in $\Omega$ and therefore for
all $k$ sufficiently large, $d_a(x_k,y_k) \leq \Vert g \Vert_{L^{\infty}(\overline \Omega)}
\vert y_k-x_k \vert$. 
This again leads to a contradiction when $k \to \infty$.
Finally, if $x=y\in \partial \Omega$, using the same reasoning as
above to prove~\eqref{eq:d_xk_x}, one has the existence of a constant
$C$ such that for all $k$ sufficiently large, $
d_a(x_k,y_k) \leq C \vert x_k-y_k\vert$, 
which again contradicts~\eqref{eq:contradict_2}. This concludes
the proof of Lemma~\ref{lemma2}.
\end{proof}
\noindent
A consequence of the previous lemma is the following proposition.
\begin{proposition} \label{opoo} Assume that \textbf{[H1]} holds. 
The space $\left(\overline \Omega, d_a\right)$ is a compact separated
metric space. Moreover the topology of the metric space
$\left(\overline \Omega, d_a\right)$ is equivalent to the topology
induced by the Euclidean metric on $\overline{\Omega}$.
\end{proposition}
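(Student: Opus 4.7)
The plan is to reduce the proposition to a standard compact-to-Hausdorff argument: the only real content is showing that $d_a$ is actually a distance, after which the topological equivalence and compactness will follow from the Lipschitz bound~\eqref{eq:ineq2} together with the fact that $\overline\Omega$ is Euclidean-compact.

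Symmetry, nonnegativity and the triangle inequality are already built into Proposition~\ref{propo5}, so only the implication $d_a(x,y)=0 \Rightarrow x=y$ needs a proof; this will be the main obstacle. Let $\mathcal{C}$ denote the (finite, by \textbf{[H1]} and compactness of $\overline\Omega$) union of the critical points of $f$ in $\Omega$ and of the critical points of $f|_{\partial\Omega}$ on $\partial\Omega$. Given $x\neq y$, I would fix a Euclidean ball $B(x,r)$ small enough that $y\notin\overline{B(x,r)}$ and $\overline{B(x,r)}\cap\mathcal{C}\subseteq\{x\}$, and by Corollary~\ref{equalityagmon} restrict attention to curves $\gamma\in A(x,y)$ with first exit time $\tau$ from $B(x,r)$. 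If $x\notin\mathcal{C}$, then combining the continuity of $\nabla f$ on $\overline\Omega$, of $\nabla_T f$ on $\partial\Omega$, and the fact from \textbf{[H1]} that $\nabla f$ does not vanish on $\partial\Omega$, one gets $g\geq c>0$ on $\overline{B(x,r)}\cap\overline\Omega$ after possibly shrinking $r$, so $L(\gamma,(0,\tau))\geq c\, r$. If instead $x\in\mathcal{C}$, the Morse hypothesis yields $g(z)\geq c\,|z-x|$ on $\overline{B(x,r)}\cap\overline\Omega$ (the subtle subcase being $x\in\partial\Omega$, where the Morse bound $|\nabla_T f(z)|\gtrsim|z-x|$ on $\partial\Omega$ has to be glued with the interior bound $|\nabla f(z)|\geq c'>0$ coming from \textbf{[H1]}); then, setting $\rho(t):=|\gamma(t)-x|$ so that $\rho(0)=0$, $\rho(\tau)=r$ and $|\rho'|\leq|\gamma'|$ a.e., one concludes
$$L(\gamma,(0,\tau))\;\geq\; c\int_0^\tau\rho\,|\rho'|\,dt \;\geq\; c\left|\int_0^\tau \rho\,\rho'\,dt\right| \;=\; \tfrac{c\,r^2}{2}.$$
Either way the bound is uniform in $\gamma$, so $d_a(x,y)>0$.

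Once $d_a$ is known to be a metric, Proposition~\ref{propo5}'s inequality~\eqref{eq:ineq2} says that the identity $\mathrm{id}:(\overline\Omega,|\cdot|)\to(\overline\Omega,d_a)$ is Lipschitz, hence continuous; so the $d_a$-topology is coarser than the Euclidean one. The Euclidean $\overline\Omega$ being compact, its continuous image $(\overline\Omega,d_a)$ is compact as well, and, being a metric space, it is automatically Hausdorff (i.e.\ separated). A continuous bijection from a compact space to a Hausdorff space is a homeomorphism, which forces the two topologies to coincide and closes the proof.
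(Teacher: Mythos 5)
Your proof is correct, but it follows a genuinely different route from the paper's. For the separation property, the paper does not look at the behaviour of $g$ near $x$ at all: it picks two radii $0<r_1<r_2<d_e(x,y)$ so that the annulus $\{z:\,r_1<d_e(x,z)<r_2\}$ avoids the finitely many zeros of $g$, and bounds $L(\gamma,(0,1))$ below by $\inf_{\text{annulus}}g$ times the width of the annulus, which every curve from $x$ to $y$ must traverse. This only uses the finiteness of the critical sets (the paper even remarks afterwards that separation holds under this weaker hypothesis), whereas your argument exploits the Morse nondegeneracy quantitatively ($g(z)\gtrsim|z-x|$ near a critical point, glued with the interior bound $|\nabla f|\geq c'>0$ coming from the no-boundary-critical-point part of \textbf{[H1]}) to get the explicit lower bound $c\,r^{2}/2$; both are sound, the paper's being slightly more economical in hypotheses, yours being more explicit. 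For the topological statements, the paper argues sequentially (extract a Euclidean-convergent subsequence, use \eqref{eq:ineq2}, identify limit points via separation), while your "continuous bijection from a compact space to a Hausdorff space is a homeomorphism" argument is shorter and cleaner; it buys you the equivalence of topologies and compactness in one stroke from \eqref{eq:ineq2}.

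One small repair is needed: you invoke Corollary~\ref{equalityagmon} to restrict to curves in $A(x,y)$, but that corollary is proved under \textbf{[H3]}, which Proposition~\ref{opoo} does not assume. Fortunately the restriction is superfluous: the infimum defining $d_a$ runs over all of ${\rm Lip}(x,y)$, and your lower bound on $L(\gamma,(0,\tau))$ (via $\rho(t)=|\gamma(t)-x|$ and $|\rho'|\le|\gamma'|$ a.e.) is valid for an arbitrary Lipschitz curve, so you should simply drop the appeal to $A(x,y)$.
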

\begin{proof} 
Let us show that for any
$(x,y) \in \overline\Omega \times \overline \Omega$, if $x\neq y$ then
$d_a\left(x,y\right)>0$.  Let us denote by $d_e$ the geodesic
distance on $\overline \Omega$ for 
  the  Euclidean metric: for all $x,y \in \overline{\Omega}$,
\begin{equation}\label{eq:geodesic_euclidean}
d_e\left(x,y\right):=\inf _{\gamma}\int_0^1  \left\vert \gamma'(t) \right\vert \, dt,
\end{equation}
where the infimum is taken over all the paths $\gamma :[0,1] \to \overline \Omega $ which are Lipschitz  
with $\gamma(0)=x$ and $\gamma(1)=y$. Since \textbf{[H1]} holds, the functions $f$
and $f|_{ \partial \Omega}$ have a finite number of critical points, and thus,
there exist $0<r_1<r_2<d_e(x,y)$ such that the infimum of $g$ on the
set $C(r_1,r_2):= \left\{ z\in \overline \Omega, \,   r_1<d_e(x,z)<r_2 \right\}$ is positive i.e. $c(r_1,r_2):=\inf_{C(r_1,r_2)} g>0$. For any path $\gamma\in A\left(x,y\right)$, one has
$$   \int_0^1 \vert \gamma'(t)\vert \,g(\gamma(t)) \, dt \geq c(r_1,r_2) r(C(r_1,r_2)),$$
where $r(C(r_1,r_2)):=\inf_{z\in C(r_1,r_2)}\sup_{y\in
  C(r_1,r_2)}d_e(z,y)>0$. Then $d_a\left(x,y\right)>0$. If $x=y$, it
is clear that $d_a\left(x,y\right)=0$ since
$L\left(\gamma,(0,1)\right)=0$ where $\gamma(t)=x$ for all $t\in
[0,1]$. This shows that $\left(\overline \Omega, d_a\right)$
  is separated. 

The fact that $\left(\overline \Omega, d_a\right)$ is compact comes
from the inequality~\eqref{eq:ineq2} proved in Lemma~\ref{lemma2}.
Indeed, since  $\left(\overline \Omega, d_a\right)$ is a metric space,
it is sufficient to prove the sequential compactness. Let $(x_n)_{n
  \ge0}$ be a sequence in $\overline \Omega$. Since $\overline{\Omega}$ is
compact for the Euclidean metric, one can extract a converging
subsequence $(x'_n)_{n \ge 0}$ for the Euclidean  metric. From~\eqref{eq:ineq2}, this
subsequence is also converging for $d_a$, which ends the proof.
\medskip

\noindent
Let us finally prove the equivalence of the topologies on
$\overline{\Omega}$. From Lemma~\ref{lemma2}, it is obvious that if a
sequence $(x_n)_{n \ge 0}$ converges to $x$ in $\overline{\Omega}$ for the Euclidean metric, then
$d_a(x_n,x)$ converges to $0$. Conversely, let us assume that
$(x_n)_{n \ge0}$ is a sequence of  $\overline{\Omega}$ such that
$d_a(x_n,x)$ converges to $0$, for a point $x \in
\overline{\Omega}$. Since $\overline{\Omega}$ is compact for the
Euclidean metric, it is enough to show that $x$ is the only limit
point of the sequence to show that $(x_n)_{n \ge0}$ converges to $x$
for the Euclidean metric. From Lemma~\ref{lemma2}, any limit point $y$ for
the Euclidean metric is also an limit point for the Agmon distance,
and thus, since $\left(\overline \Omega, d_a\right)$ is a separated
space, $y=x$. This concludes the proof.
\end{proof}
\noindent
Notice that from the proof, it is obvious that the topology is separated as soon as $f$ and $f|_{\partial \Omega}$ have a finite number of critical points, which is a weaker assumption than~\textbf{[H1]}.
\medskip

\noindent
Finally, the following lemma will be useful in the following. 
\begin{lemma} \label{inca} Assume that  \textbf{[H3]}  holds.
Let $I\subset \mathbb R$ be an interval and $\gamma: I \to
\overline{\Omega}$ a Lipschitz curve  such that $\partial \{t\in I,
\ \gamma(t)\in \partial \Omega\}$ is finite and such that $x:=\lim_{t\to \left(\inf I\right)^+} \gamma(t)$ and $y:=\lim_{t\to \left(\sup I\right)^-} \gamma(t)$ exist. Then one has
$$d_a\left(x,y\right)\leq L\left(\gamma, I\right).$$
\end{lemma}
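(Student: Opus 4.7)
\medskip

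\noindent
The plan is to reduce to the definition of the Agmon distance by upgrading the curve $\gamma$ to an element of ${\rm Lip}(x,y)$ (in fact, of $A(x,y)$) with the same length. The proof will proceed in three short steps.

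\medskip
\noindent
First, extend $\gamma$ by continuity to the closed interval $\overline I=[\inf I,\sup I]$ by setting $\gamma(\inf I):=x$ and $\gamma(\sup I):=y$ whenever these endpoints are not already in $I$. Since $\gamma$ is Lipschitz on $I$ with some constant $K$, the inequality $|\gamma(s)-\gamma(t)|\le K|s-t|$ persists in the limit $s\to (\inf I)^+$ and $t\to (\sup I)^-$, so the extension (still denoted $\gamma$) is Lipschitz on $\overline I$ with the same constant. Moreover, adding (at most) the two endpoints of $\overline I$ to $\{t\in I,\,\gamma(t)\in\partial\Omega\}$ keeps its topological boundary finite.

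\medskip
\noindent
Second, reparametrize affinely to $[0,1]$: let $\phi:[0,1]\to\overline I$ be given by $\phi(s)=(1-s)\inf I+s\sup I$ and define $\tilde\gamma:=\gamma\circ\phi$. Then $\tilde\gamma$ is Lipschitz, $\tilde\gamma(0)=x$, $\tilde\gamma(1)=y$, and $\partial\{s\in[0,1],\,\tilde\gamma(s)\in\partial\Omega\}=\phi^{-1}\bigl(\partial\{t\in\overline I,\,\gamma(t)\in\partial\Omega\}\bigr)$ is finite, so $\tilde\gamma\in A(x,y)\subset{\rm Lip}(x,y)$. The change of variables $t=\phi(s)$ together with $|\tilde\gamma'(s)|=(\sup I-\inf I)|\gamma'(\phi(s))|$ (valid almost everywhere by Rademacher's theorem) yields
\begin{equation*}
L(\tilde\gamma,(0,1))=\int_0^1 g(\tilde\gamma(s))|\tilde\gamma'(s)|\,ds=\int_I g(\gamma(t))|\gamma'(t)|\,dt=L(\gamma,I).
\end{equation*}

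\medskip
\noindent
Third, apply the very definition of the Agmon distance (Definition~\ref{def.agmon-intro}): since $\tilde\gamma\in{\rm Lip}(x,y)$,
\begin{equation*}
d_a(x,y)\le L(\tilde\gamma,(0,1))=L(\gamma,I),
\end{equation*}
which is the claim. There is no real obstacle here; the only point requiring mild care is verifying that the continuous extension at the endpoints of $I$ preserves the Lipschitz property and the finiteness of the boundary set, both of which follow immediately from the standing hypotheses.
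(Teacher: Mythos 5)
Your argument has a genuine gap: it implicitly assumes that $I$ is a bounded interval. The extension of $\gamma$ to $\overline I$ by its endpoint values, the affine reparametrization $\phi(s)=(1-s)\inf I+s\sup I$, and the identity $|\tilde\gamma'(s)|=(\sup I-\inf I)\,|\gamma'(\phi(s))|$ are all meaningless when $\inf I=-\infty$ or $\sup I=+\infty$. But the unbounded case is precisely the one the lemma is designed for and the one used repeatedly in the paper (the curves produced by the eikonal/gradient flows in Remarks~\ref{curve1} and~\ref{curve2}, Propositions~\ref{da1}, \ref{equaa}, \ref{TTH} and Theorem~\ref{exis} are defined on $(-\infty,0]$ or $[0,+\infty)$ and only converge to their endpoint as $t\to\mp\infty$). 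Moreover, the reduction cannot be repaired by a smarter change of variables: a Lipschitz curve on $[0,+\infty)$ admitting a limit at infinity may have infinite Euclidean length (think of a spiral converging to a critical point) while still having finite Agmon length $L(\gamma,I)$, since $g$ may vanish along the curve; such a curve admits no Lipschitz reparametrization on $[0,1]$, so it simply is not an element of ${\rm Lip}(x,y)$ and the definition of $d_a$ cannot be invoked directly.

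For bounded $I$ your steps are fine (and in fact you do not even need membership in $A(x,y)$, since the infimum in Definition~\ref{def.agmon-intro} is over ${\rm Lip}(x,y)$). The way to treat the general case is the paper's route: for any compact $[a,b]\subset I$ your reparametrization argument gives $d_a(\gamma(a),\gamma(b))\leq L(\gamma,(a,b))\leq L(\gamma,I)$; then let $a\to(\inf I)^+$ and $b\to(\sup I)^-$ and use the continuity of the Agmon distance (Proposition~\ref{propo5}, i.e. $d_a(u,v)\leq C|u-v|$ from Lemma~\ref{lemma2}) together with $\gamma(a)\to x$, $\gamma(b)\to y$ to conclude $d_a(x,y)\leq L(\gamma,I)$. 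You should make this limiting step, and the appeal to the continuity of $d_a$, explicit.
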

\begin{proof} Let $(a,b)\in I^2$ with $a<b$ and define for $u\in
  [0,1]$, $\gamma_{ab}(u)=\gamma(a+u(b-a))$. Then $\gamma_{ab}\in
  A(\gamma(a),\gamma(b))$.  By definition of the Agmon distance (see
  Definition~\ref{def.agmon-intro}), $d_a(a,b)\leq
  L\left(\gamma_{ab},(0,1)\right)=L\left(\gamma,(a,b)\right)$. Taking
  the limits $a\to \left(\inf I\right)^+$,  $b\to \left(\sup
    I\right)^-$ and using the continuity of the Agmon distance, one
  obtains that $d_a\left(x,y\right)\leq L\left(\gamma,I\right)$. Lemma
  \ref{inca} is proved.\end{proof}
  \noindent
As a simple consequence of this lemma, we have the following simple
remark. 
\begin{remark}\label{rem:agmon_distance_to_min}
Let $x^*$ be a local minimum of $f$ (from \textbf{[H3]}, $x^*\in \Omega$). Then, for all $x$ in the basin of
attraction of $x^*$ for the dynamics 
\begin{equation}\label{eq:agmon_distance_to_min_dyn}
\gamma'=\left\{
\begin{aligned}
  - \nabla f(\gamma)  & \text{ in  }  \Omega  \\ 
 - \nabla_T f(\gamma) & \text{ on  } \partial \Omega 
\end{aligned}
\right.
\end{equation} it holds $x \in \Omega$ and
\begin{equation}\label{eq:agmon_distance_to_min}
d_a(x^*,x)=f(x)-f(x^*).
\end{equation}
Indeed, from~\eqref{eq:ineq}, we already have $d_a(x^*,x)\ge
f(x)-f(x^*)$. To prove the reverse inequality, from Lemma~\ref{inca},
it is enough to exhibit a Lipschitz curve  $\tilde \gamma: I \to
\overline{\Omega}$ such that $\partial \{t\in I,
\ \tilde \gamma(t)\in \partial \Omega\}$ is finite, $L(\tilde \gamma,I)=f(x)-f(x^*)$ and $\lim_{t\to \left(\inf
    I\right)^+}\tilde  \gamma(t)=x^*$ and $\lim_{t\to \left(\sup I\right)^-}
\tilde \gamma(t)=x$. Such a curve is given on the interval $I=(-\infty,0]$ by   $\tilde \gamma: t\in I\mapsto \gamma(-t)$ where $\gamma$ is the solution to~\eqref{eq:agmon_distance_to_min_dyn}
with initial condition $\gamma(0)=x$. Notice that if $\exists t_0$ such that $\tilde \gamma(t_0) \in \partial \Omega$, then $\forall t \le t_0$, $\tilde \gamma(t) \in \partial \Omega$. Thus, since $\lim_{t\to -\infty}
\tilde  \gamma(t)=x^*\in \Omega$, one has: for all $t\leq 0,\tilde   \gamma(t)\in \Omega$. Therefore,
\begin{align*} 
f(x)-f(x^*)&=\int_{-\infty }^0\frac{d}{dt} f \circ \tilde \gamma(t) \, dt  =\int_{-\infty }^0 \!\!  \nabla f (\tilde \gamma(t)) \cdot \tilde \gamma\,'(t) \, d t=\int_{-\infty }^0\!\!  \vert \nabla f (\tilde \gamma(t)) \vert^2 dt\\
&=\int_{-\infty }^0 \vert \nabla f (\tilde \gamma(t))\vert \left\vert\tilde \gamma\, '(t)\right\vert \, d t=\int_{-\infty }^0 g\left(\tilde \gamma(t)\right) \left\vert \tilde \gamma\, '(t) \right\vert \, d  t\\
&= \lim_{t\to -\infty} L(\tilde \gamma, (t,0)).
\end{align*}
This concludes the proof of~\eqref{eq:agmon_distance_to_min}.
Notice that for $\ve>0$ small enough, the set $f^{-1}\left (  [f(x^*),f(x^*)+\ve)   \right)\cap B(x^*,\ve)\subset \Omega$ is a neighborhood of $x^*$ which is included  in the basin of
attraction of $x^*$ for the dynamics~\eqref{eq:agmon_distance_to_min_dyn}. Therefore~\eqref{eq:agmon_distance_to_min} holds in a neighborhood of $x^*$.
\end{remark}

\subsubsection{A lower bound on the Agmon distance} \label{lower bound}
In this section, easily computable lower bounds on the Agmon distance are provided. This is in particular useful to check if the hypothesis~\eqref{hypo1} appearing in Theorem~\ref{TBIG0} is satisfied, see for example
Section~\ref{sec:numeric}. 

\begin{proposition} \label{gradient}
Let $z\in \overline \Omega$ and denote by $W$ and $W'$ two  closed neighborhoods of $z$ in $\overline \Omega$ with $W\subset W'$. Define  
\begin{equation}\label{eq.alpha_agmon}
\alpha:=\inf \{d_e\left(x,y\right),  \, x\in \overline \Omega\setminus W' , \, y \in W \},
\end{equation}
where $d_e$ denotes the geodesic distance for the Euclidean metric, see~\eqref{eq:geodesic_euclidean}.
Assume that $\alpha>0$ and that there exists $K>0$ such that
\begin{equation*} \label{eq:HVV}
\inf_{x\in \overline{W'\setminus  W}} g(x) > \frac{ K }{ \alpha },
\end{equation*}
where $g$ has been introduced in Definition~\ref{L}.
 Then, for all set $B \subset \overline{\Omega}$ such that $B\cap \overline{W'}=\emptyset$,  $$\inf_{y\in B} d_a(z,y) >K,$$ where $d_a$ is the Agmon distance (see Definition~\ref{def.agmon-intro}).
 \end{proposition}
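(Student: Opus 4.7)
The plan is to show that every Lipschitz curve from $z$ to any $y\in B$ must traverse the ``annulus'' $\overline{W'\setminus W}$, and that on this annulus the integrand $g(\gamma(t))|\gamma'(t)|$ produces a contribution strictly bigger than $K$.

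Fix $y \in B$ and let $\gamma : [0,1]\to \overline\Omega$ be an arbitrary Lipschitz curve with $\gamma(0)=z$ and $\gamma(1)=y$. Since $B\cap \overline{W'}=\emptyset$ and $W'$ is closed, $y\notin W'$, while $z\in W\subset W'$, so one may define
\[
t_2 := \inf\bigl\{t\in[0,1]\,:\,\gamma(t)\notin W'\bigr\}\in(0,1],\qquad
t_1 := \sup\bigl\{t\in[0,t_2]\,:\,\gamma(t)\in W\bigr\}\in[0,t_2].
\]
By continuity of $\gamma$ and closedness of $W$ and $W'$, one has $\gamma(t_1)\in W$ and $\gamma(t_2)\in W'$; moreover $\gamma(t_2)$ lies in the relative boundary $\partial W'$, since $t_2$ is a limit of times at which $\gamma$ is outside $W'$. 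By the very definition of $t_1$ and $t_2$, for every $t\in(t_1,t_2)$ one has $\gamma(t)\in W'\setminus W\subset \overline{W'\setminus W}$.

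The next step is the key geometric observation: $d_e(\partial W',W)\ge \alpha$. Indeed, for any $x\in\partial W'$ there exists a sequence $x_n\in \overline\Omega\setminus W'$ with $x_n\to x$, so that for all $w\in W$, $d_e(x,w)=\lim_n d_e(x_n,w)\ge \alpha$ by definition of $\alpha$ and continuity of $d_e$. Setting $m:=\inf_{\overline{W'\setminus W}}g$, which by hypothesis satisfies $m>K/\alpha$, one bounds
\begin{align*}
L(\gamma,(0,1))\ \ge\ L(\gamma,(t_1,t_2))
&=\int_{t_1}^{t_2}g(\gamma(t))\,|\gamma'(t)|\,dt\\
&\ge\ m\int_{t_1}^{t_2}|\gamma'(t)|\,dt
\ \ge\ m\,d_e(\gamma(t_1),\gamma(t_2))\ \ge\ m\,\alpha,
\end{align*}
where the penultimate inequality uses that the Euclidean (or Riemannian) length of the restricted curve dominates the geodesic distance between its endpoints.

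Since the lower bound $m\alpha$ does not depend on $\gamma$ nor on $y\in B$, taking the infimum over $\gamma\in\mathrm{Lip}(z,y)$ and then over $y\in B$ yields
\[
\inf_{y\in B} d_a(z,y)\ \ge\ m\,\alpha\ >\ K,
\]
which is the desired conclusion. The only mildly delicate point is the distance estimate $d_e(\partial W', W)\ge\alpha$ (needed because $\gamma(t_2)$ lies on $\partial W'$ rather than strictly outside $W'$); everything else is a straightforward continuity and definition-chasing argument.
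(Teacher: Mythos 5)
Your proof is correct and follows essentially the same route as the paper's: isolate the portion of the curve crossing $\overline{W'\setminus W}$ via the times $t_1,t_2$, bound $g$ from below there, and bound the length of that portion by $\alpha$ through the geodesic distance between its endpoints. The only differences are cosmetic — you justify the estimate $d_e(\gamma(t_2),W)\ge\alpha$ by a limiting argument (a point the paper leaves implicit, since its $\gamma(t_2)$ also lies only in $\overline{\overline\Omega\setminus W'}$) and conclude directly from $m\alpha>K$ instead of extracting an $\varepsilon$ as the paper does.
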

 \begin{proof}
By assumption, there exists $\varepsilon>0$ such that
\begin{equation*} 
\inf_{x\in \overline{W'\setminus  W}} g(x) \geq \frac{ K }{ \alpha }+ \varepsilon.
\end{equation*}
 Let $y \in B$ and $\gamma\in {\rm Lip}(z,y)$. Let us define 
$$t_2=\inf\{ t\in [0,1], \, \gamma(t)\notin W'\}, \ t_1=\sup\{ t\in [0,1],\, t<t_2,\, \gamma(t) \in W\}.$$ 
Since $\gamma$ is continuous and $\alpha>0$, it holds $0<t_1<t_2<1$ and one has $\gamma(t)\in \overline{ W'\setminus  W}$ for all $t\in [t_1,t_2]$, $\gamma(t_1)\in  \overline W=W$ and  $\gamma(t_2)\in \overline{\overline \Omega\setminus W'}$. Then, one has:
 \begin{align*}
 \int_0^1  g\left(\gamma(t)\right) \left\vert \gamma'(t) \right\vert \, dt &\geq  \int_{t_1}^{t_2}  g\left(\gamma(t)\right) \left\vert \gamma'(t) \right\vert \, dt\\
 &\geq \left(\frac{ K }{ \alpha }+\varepsilon\right) \int_{t_1}^{t_2}  \left\vert \gamma'(t) \right\vert \, dt\\
  &\geq \left(\frac{ K }{ \alpha }+\varepsilon\right) \alpha
  =K+\varepsilon \alpha.
 \end{align*}
Since $\varepsilon \alpha$ is independent of $y\in  B$ and since
$\varepsilon \alpha$ is also independent of the curve $\gamma$, one can take the infimum over $\gamma$ and $y\in B$. Thus $\inf_{y\in B} d_a(z,y) >K$. 
 \end{proof}

 \noindent
We now give a simple sufficient condition for the
hypotheses~\eqref{hypo1} to hold,  in the case where $f|_{\partial
  \Omega}$ has only two local minima. This result is based on
Proposition~\ref{PoPo} that will be proven in Section~\ref{sec:equality_agmon} below and which shows that
$$d_a(z_1,z_2)>f(z_2)-f(z_1).$$
In particular, the condition stated in the following proposition has been used in Section~\ref{sec:numeric} in order to check 
  hypothesis~\eqref{hypo1}.
\begin{proposition} \label{agmonz1}
Assume that \textbf{[H1]} and \textbf{[H3]} hold and assume in
addition that $f|_{\partial \Omega}$ has only two local minima $z_1$
and $z_2$ (with $f(z_1)\leq f(z_2)$) on $\partial \Omega$. Then, if $z_2$ is the only global minimum of
$f|_{\partial \Omega}$ on $B_{z_1}^c$, one has
$$\inf_{z\in B_{z_1}^c} d_a(z_1,z) > f(z_2)-f(z_1).$$
 \end{proposition}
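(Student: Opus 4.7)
The plan is to combine a compactness argument on $B_{z_1}^c$ with two estimates that prevent the infimum from being attained at a value $\leq f(z_2)-f(z_1)$: the upcoming Proposition~\ref{PoPo} handles the point $z_2$ itself, while the standard inequality~\eqref{eq:ineq} of Proposition~\ref{propo5} handles every other point of $B_{z_1}^c$.

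First, I would set up the compactness. By Definition~\ref{Bz}, $B_{z_1}$ is an open subset of the compact set $\partial \Omega$, so $B_{z_1}^c$ is closed in $\partial \Omega$, hence compact. Moreover $B_{z_1}^c$ is nonempty since basins of attraction are disjoint and $z_2 \in B_{z_2} \subset B_{z_1}^c$. By Proposition~\ref{opoo}, the function $z \mapsto d_a(z_1,z)$ is continuous on $\overline{\Omega}$ (with respect to the Euclidean topology), so it attains its infimum on $B_{z_1}^c$ at some point $z^* \in B_{z_1}^c$.

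Next, I would split into two cases to bound $d_a(z_1,z^*)$ strictly from below by $f(z_2) - f(z_1)$. If $z^* = z_2$, I invoke Proposition~\ref{PoPo} directly to conclude that
\[
d_a(z_1,z^*) = d_a(z_1,z_2) > f(z_2) - f(z_1).
\]
If $z^* \neq z_2$, then by the hypothesis that $z_2$ is the \emph{only} global minimum of $f|_{\partial \Omega}$ on $B_{z_1}^c$, one has $f(z^*) > f(z_2)$. Under assumption \textbf{[H3]}, inequality~\eqref{eq:ineq} of Proposition~\ref{propo5} yields
\[
d_a(z_1,z^*) \geq f(z^*) - f(z_1) > f(z_2) - f(z_1).
\]
In both cases, $\inf_{z\in B_{z_1}^c} d_a(z_1,z) = d_a(z_1,z^*) > f(z_2)-f(z_1)$, which is the desired conclusion.

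The main obstacle, and the real content of the result, is the strict inequality $d_a(z_1,z_2) > f(z_2)-f(z_1)$ that is supplied by Proposition~\ref{PoPo}; once that is granted, the argument above is essentially a short compactness plus case-split. Note that assumption \textbf{[H1]} is used only to guarantee isolated critical points so that $d_a$ is a genuine distance (Proposition~\ref{opoo}) and that the basins $B_{z_i}$ partition $\partial \Omega$ up to a negligible separatrix set, which is what makes the case-split on $z^*$ exhaustive.
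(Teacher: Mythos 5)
Your proof is correct and uses the same two key ingredients as the paper's own argument, namely Proposition~\ref{PoPo} to handle $z_2$ and the inequality~\eqref{eq:ineq} combined with the uniqueness of the global minimum to handle the rest of $B_{z_1}^c$. The only cosmetic difference is that you attain the infimum at a point $z^*$ and split cases, whereas the paper argues with a small ball around $z_2$ and uniform $\varepsilon$-bounds; the substance is identical.
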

\begin{proof}
Proposition~\ref{PoPo}  and the continuity of the Agmon distance
ensure that there exist an open ball $B_2\subset B_{z_1}^c$ centered at $z_2$, and $\ve>0$ such that for all $z\in B_2$
$$ d_a(z_1,z) \geq  f(z_2)-f(z_1) +\ve.$$
Since $z_2$ is the only global minimum of
$f|_{\partial \Omega}$ on $B_{z_1}^c$, there exists $\ve'>0$, such that for all $z\in B_{z_1}^c \setminus B_2$, $f(z)\geq f(z_2) +\ve'$. In addition, from the  inequality~(\ref{eq:ineq}), for all $z\in B_{z_1}^c \setminus B_2$, it holds
$$ d_a(z_1,z) \geq  f(z)-f(z_1) \geq f(z_2)-f(z_1) +\ve'.$$
Consequently $\inf_{z\in B_{z_1}^c} d_a(z_1,z) > f(z_2)-f(z_1)$. 
\end{proof}

\subsection{Agmon distance near critical points of $f$ or
  $f|_{ \partial \Omega}$ and eikonal equation} \label{eiko_agmon}

We will show that the Agmon distance $d_a$ locally solves the eikonal equation in a
neighborhood of any critical point of
$f|_{\partial \Omega}$ or $f$ (or equivalently, any point $x$ such that $g(x)=0$, see~\eqref{eq:def_g}).

\subsubsection{The Agmon distance near critical points of $f$}

\begin{proposition} \label{existe}Let us assume that \textbf{[H1]} holds.
Let $x^*\in \Omega$ be such that $\nabla f(x^*)=0$. Let us denote by
$(\mu_1,\ldots,\mu_d)\in (\mathbb R^*)^d$ the eigenvalues of the
Hessian of $f$ at $x^*$. Then there exist a neighborhood $V^*$ of
$x^*$ in $ \Omega$ and a $C^{\infty}$ function
$\Phi:V^* \to \R$ such that
\begin{equation} \label{eq:eikonalll}
\left\{
\begin{aligned}
\vert \nabla \Phi \vert ^2&=\vert \nabla f \vert ^2,\\
\Phi(x_1,\ldots,x_d)&=\frac 12\sum_{i=1}^d \vert \mu_i \vert \, \left(x_i-x_i^*\right)^2 +O\left(\vert x-x^*\vert^3\right).
\end{aligned}
\right.
\end{equation} 
Moreover, one has the following uniqueness result: if $\widetilde \Phi$ is
a $C^\infty$ real valued function defined on a neighborhood
$\widetilde{V^*}$ of $x^*$ satisfying~\eqref{eq:eikonalll}, then $\widetilde
\Phi=\Phi$ on $V^* \cap\widetilde{V^*}$. 
\end{proposition}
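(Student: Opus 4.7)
The plan is to realize $d\Phi$ as the graph representation of the unstable manifold of an appropriate Hamiltonian flow at the fixed point $(x^*,0)$. I would introduce the Hamiltonian $p(x,\xi) = \tfrac{1}{2}(|\xi|^2 - |\nabla f(x)|^2)$ on $T^*\Omega$, so that the eikonal equation $|\nabla\Phi|^2=|\nabla f|^2$ translates into the geometric condition that the Lagrangian graph $\Lambda_\Phi = \{(x,\nabla\Phi(x))\}$ lies in the characteristic hypersurface $\{p=0\}$. The associated Hamiltonian vector field is $H_p = (\xi,\, \hess f(x)\cdot\nabla f(x))$, which vanishes at $(x^*,0)$; a direct computation shows that its linearization at $(x^*,0)$ is the block matrix $\bigl(\begin{smallmatrix}0 & I \\ (\hess f(x^*))^2 & 0\end{smallmatrix}\bigr)$, whose eigenvalues are $\pm|\mu_i|$. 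Diagonalizing $\hess f(x^*)$ in an orthonormal eigenbasis $(v_i)_{i=1,\ldots,d}$, I would identify the unstable eigenspace as $E^+ = \operatorname{span}\{(v_i,|\mu_i|v_i)\}$, i.e.\ the graph of the positive symmetric square root $|\hess f(x^*)|$ of $(\hess f(x^*))^2$.

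Next, because $f$ is Morse at $x^*$ the fixed point is hyperbolic, and the smooth (un)stable manifold theorem produces a $d$-dimensional $C^\infty$ local unstable manifold $\Lambda^+$, tangent to $E^+$ at $(x^*,0)$ and consisting of points whose backward $H_p$-orbit tends to $(x^*,0)$. Since $p$ is constant along the flow and $p(x^*,0)=0$, continuity forces $p\equiv 0$ on $\Lambda^+$. The main technical point, which I expect to be the central obstacle, is to verify that $\Lambda^+$ is Lagrangian. The infinitesimal check is easy: $\omega((v_i,|\mu_i|v_i),(v_j,|\mu_j|v_j)) = |\mu_j|v_i\cdot v_j - |\mu_i|v_j\cdot v_i = 0$, so $E^+$ is Lagrangian. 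I would propagate this to all of $\Lambda^+$ via flow invariance: for $q\in \Lambda^+$ and $X,Y\in T_q\Lambda^+$, preservation of $\omega$ gives $\omega_q(X,Y) = \omega_{\phi_{-t}(q)}(d\phi_{-t}X,\,d\phi_{-t}Y)$; as $t\to\infty$ the backward flow contracts tangent vectors exponentially on the unstable manifold while $\omega$ stays bounded, so the right-hand side vanishes in the limit.

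Since $E^+$ is the graph of a linear isomorphism from the $x$-factor to the $\xi$-factor, the projection $\pi:(x,\xi)\mapsto x$ restricts to a local diffeomorphism from $\Lambda^+$ onto a ball $V^*$ around $x^*$. Writing $\Lambda^+\cap\pi^{-1}(V^*) = \{(x,\alpha(x))\}$ for a smooth $1$-form $\alpha$, the Lagrangian property forces $d\alpha=0$, and (shrinking $V^*$ to be simply connected) I obtain $\alpha = d\Phi$ for a smooth function $\Phi$ on $V^*$, normalized by $\Phi(x^*)=0$. The inclusion $\Lambda^+\subset\{p=0\}$ gives the eikonal equation on $V^*$; the fact that $(x^*,0)\in\Lambda^+$ gives $\nabla\Phi(x^*)=0$; and the tangency $T_{(x^*,0)}\Lambda^+ = E^+$ gives $\hess\Phi(x^*) = |\hess f(x^*)|$. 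Taylor expanding $\Phi$ in the eigenbasis $(v_i)$ yields the claimed quadratic expansion.

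For uniqueness, suppose $\widetilde\Phi$ satisfies \eqref{eq:eikonalll} on a neighborhood $\widetilde{V^*}$. Its graph $\widetilde\Lambda = \{(x,\nabla\widetilde\Phi(x))\}$ is Lagrangian and contained in $\{p=0\}$; any such Lagrangian is automatically $H_p$-invariant, since $\omega(H_p,\cdot) = dp$ vanishes on tangent vectors to $\widetilde\Lambda$, and the Lagrangian condition then places $H_p$ in $(T\widetilde\Lambda)^{\perp_\omega} = T\widetilde\Lambda$. The prescribed quadratic expansion forces $\hess\widetilde\Phi(x^*) = |\hess f(x^*)|$ and hence $T_{(x^*,0)}\widetilde\Lambda = E^+$, so by the uniqueness part of the stable manifold theorem $\widetilde\Lambda$ coincides with $\Lambda^+$ near $(x^*,0)$. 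Therefore $\nabla\widetilde\Phi = \nabla\Phi$ on $V^*\cap\widetilde{V^*}$, and the common normalization $\widetilde\Phi(x^*) = \Phi(x^*) = 0$ built into the quadratic expansion yields $\widetilde\Phi = \Phi$ on $V^*\cap\widetilde{V^*}$.
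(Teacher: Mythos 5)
Your argument is correct, and it is in substance the proof that the paper delegates to a citation: the paper's own treatment of this proposition consists of invoking \cite[Proposition 2.3.6]{helffer2006semi} for the general case where $|\nabla f|^2$ is replaced by a smooth non-negative function $W$ with a non-degenerate minimum, the only verification being that ${\rm Hess}\, W(x^*)=2\,({\rm Hess}\, f(x^*))^2$ is non-degenerate. What you have written out is precisely the classical Hamilton--Jacobi proof behind that reference: the Hamiltonian $p(x,\xi)=\tfrac12(|\xi|^2-|\nabla f(x)|^2)$, the hyperbolic fixed point $(x^*,0)$ with linearization $\bigl(\begin{smallmatrix}0 & I\\ ({\rm Hess}f(x^*))^2 & 0\end{smallmatrix}\bigr)$, the Lagrangian local unstable manifold $\Lambda^+$ tangent to the graph of $|{\rm Hess}f(x^*)|$, its projection to a graph $\{(x,\nabla\Phi(x))\}$, and uniqueness via flow-invariance of any Lagrangian graph inside $\{p=0\}$. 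So you gain a self-contained proof where the paper only checks hypotheses of a cited result; the trade-off is that you must supply the standard dynamical facts yourself. The one place where your write-up is terse is the appeal to ``the uniqueness part of the stable manifold theorem'': the graph of $\nabla\widetilde\Phi$ is only known to be locally invariant and tangent to $E^+$, so to conclude it lies in $W^u_{\rm loc}$ one should add that the backward time-one map restricted to this graph has derivative at $(x^*,0)$ equal to a linear contraction (the backward flow on $E^+$), hence is a contraction on a small neighborhood, so backward orbits of its points stay near $(x^*,0)$ and converge to it, which is the defining characterization of the local unstable manifold; with that one line the uniqueness step is complete, and both manifolds, being graphs over the same base, coincide, giving $\nabla\widetilde\Phi=\nabla\Phi$ and then $\widetilde\Phi=\Phi$ from the common normalization encoded in \eqref{eq:eikonalll}.
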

\noindent
Let us notice that $\Phi(x^*)=0$. In addition, up to choosing a smaller
neighborhood $V^*$ of $x^*$, one can assume that $\Phi$ is  positive on $V^*\setminus \{x^*\}$. The
point $x^*$ is then a  non degenerate minimum of $\Phi$. 
\begin{proof}
The proof is made in \cite[Proposition 2.3.6]{helffer2006semi} in the more general setting
where $\vert \nabla f\vert^2$ is replaced in (\ref{eq:eikonalll}) by a
smooth positive function $W$ around a non degenerate minimum $y^*$ of
$W$ such that $W(y^*)=0$. Here $W= \vert \nabla f \vert^2$ and
$y^*=x^*$. This leads to $\nabla W= 2{\rm Hess }f \left(\nabla
  f\right)$ and thus ${\rm Hess  }W(x^*)=2 \left({\rm Hess }
  f\right)^2 (x^*)$ is a non degenerate matrix. Therefore $x^*$ is indeed a non degenerate minimum of $W= \vert \nabla f \vert^2$. 
\end{proof}

\begin{proposition}\label{da1} Let us assume that \textbf{[H1]} and \textbf{[H3]}  hold.
Let $x^*\in \Omega$ be such that $\nabla f(x^*)=0$. Then there exists a neighborhood $U^*$ of $x^*$ in $\Omega$ such that for all $ x\in U^*$
\begin{equation} \label{e2} 
d_a (x^*,x)=\Phi(x),
\end{equation}
where $\Phi$ is the smooth solution to (\ref{eq:eikonalll}) and $d_a$ is the Agmon distance.
\end{proposition}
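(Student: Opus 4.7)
The plan is to prove the two inequalities $d_a(x^*,x)\le \Phi(x)$ and $d_a(x^*,x)\ge \Phi(x)$ separately, working in well-chosen nested neighborhoods $\overline{U^*}\subset V^*\subset \overline{V^*}\subset\Omega$ (such nesting is possible since $x^*\in\Omega$ which is open). The neighborhood $V^*$ will be the domain of $\Phi$ given by Proposition \ref{existe}, shrunk if necessary so that $\Phi>0$ on $V^*\setminus\{x^*\}$ (this is possible since the quadratic part of $\Phi$ near $x^*$ is $\frac12\sum|\mu_i|(x_i-x_i^*)^2$, which is positive definite). We then further shrink $U^*$ so that $\sup_{U^*}\Phi < \inf_{\partial V^*}\Phi$, which is possible by continuity since $\Phi(x^*)=0$ and the latter infimum is strictly positive by compactness of $\partial V^*$ and positivity of $\Phi$ away from $x^*$.

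For the upper bound $d_a(x^*,x)\le\Phi(x)$, I would use the gradient flow of $\Phi$. Let $\gamma:[0,\infty)\to V^*$ solve $\gamma'(t)=-\nabla\Phi(\gamma(t))$ with $\gamma(0)=x$. Since $\Phi$ has a non-degenerate minimum at $x^*$ (its Hessian at $x^*$ is the positive-definite matrix $\mathrm{diag}(|\mu_1|,\ldots,|\mu_d|)$), the flow stays in $V^*$ for $U^*$ small enough and $\gamma(t)\to x^*$ exponentially fast. For any $T>0$ define the Lipschitz curve $\gamma_T:s\in[0,1]\mapsto \gamma(T(1-s))$ going from $\gamma(T)$ to $x$ inside $V^*\subset\Omega$. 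Using the eikonal equation $|\nabla\Phi|=|\nabla f|=g$ on $\Omega$, a change of variables gives
\begin{equation*}
L(\gamma_T,(0,1))=\int_0^T g(\gamma(u))|\gamma'(u)|\,du=\int_0^T |\nabla\Phi(\gamma(u))|^2\,du=\Phi(x)-\Phi(\gamma(T)).
\end{equation*}
Concatenating $\gamma_T$ with any short Lipschitz path from $x^*$ to $\gamma(T)$ (e.g.\ a straight line, whose $d_a$-length tends to $0$ by continuity of $d_a$ proved in Proposition \ref{propo5}) and letting $T\to\infty$ so that $\gamma(T)\to x^*$ and $\Phi(\gamma(T))\to 0$, we conclude $d_a(x^*,x)\le\Phi(x)$.

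For the lower bound $d_a(x^*,x)\ge\Phi(x)$, by Corollary \ref{equalityagmon} (which applies thanks to \textbf{[H3]}) it suffices to estimate $L(\gamma,(0,1))$ from below for every $\gamma\in A(x^*,x)$. I would distinguish two cases. If $\gamma([0,1])\subset V^*$, then since $V^*\subset\Omega$ the curve never touches $\partial\Omega$, so Lemma \ref{h} (with $h=\Phi$) reduces to
\begin{equation*}
\Phi(x)=\Phi(x)-\Phi(x^*)=\int_0^1 (\nabla\Phi)(\gamma)\cdot\gamma'\,dt,
\end{equation*}
and Cauchy--Schwarz together with $|\nabla\Phi|=g$ yields $\Phi(x)\le L(\gamma,(0,1))$. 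Otherwise, let $T=\inf\{t\in[0,1]:\gamma(t)\notin V^*\}\in(0,1]$; by continuity $\gamma(T)\in\partial V^*$, and $\gamma|_{[0,T]}$ still lies in $\overline{V^*}\subset\Omega$, so the same Cauchy--Schwarz argument applied to $\gamma|_{[0,T]}$ gives
\begin{equation*}
L(\gamma,(0,1))\ge L(\gamma|_{[0,T]},(0,T))\ge \Phi(\gamma(T))\ge \inf_{\partial V^*}\Phi>\Phi(x),
\end{equation*}
the last inequality by the choice of $U^*$. Taking the infimum over $\gamma\in A(x^*,x)$ finishes the proof.

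The main delicate point is the upper bound: the gradient curve of $\Phi$ reaches $x^*$ only in infinite time, so it cannot be directly reparametrized as a Lipschitz curve on $[0,1]$ joining $x^*$ to $x$; the truncate-and-concatenate limit argument above is the natural workaround, and relies crucially on the continuity of $d_a$ established in Proposition \ref{propo5}. The rest is essentially a clean consequence of the eikonal identity $|\nabla\Phi|=g$ and Lemma \ref{h}, with the choice $\sup_{U^*}\Phi<\inf_{\partial V^*}\Phi$ ensuring that curves escaping $V^*$ are automatically too long.
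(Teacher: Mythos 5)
Your proof is correct and follows essentially the same route as the paper: the lower bound comes from the chain rule plus Cauchy--Schwarz with $|\nabla \Phi|=|\nabla f|$ applied to curves confined near $x^*$, and the upper bound from the gradient flow of $\Phi$. The only differences are cosmetic: you confine curves via the gap condition $\sup_{U^*}\Phi<\inf_{\partial V^*}\Phi$ where the paper works with the sublevel set $U^*=\Phi^{-1}([0,\varepsilon))$, and your truncate-and-concatenate limit argument re-derives in this special case exactly what the paper obtains by invoking Lemma~\ref{inca}.
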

\noindent
 For the ease of the reader, let us give the proof of Proposition~\ref{da1} which is similar to the proof of~\cite[Proposition A.1]{dimassi-sjostrand-99}.
\begin{proof}
\begin{sloppypar}
Notice that hypothesis \textbf{[H3]} allows us to use Corollary~\ref{equalityagmon}. Let $\Phi$ be a smooth solution to (\ref{eq:eikonalll}) on a neighborhood
 $V^*$ of $x^*$, as defined in Proposition~\ref{existe} and such that
 $\Phi$ is positive on $V^*\setminus \{x^*\}$. There exists $\varepsilon>0$ such that
 $U^*:=\Phi^{-1}([0,\varepsilon))\subset V^*$ is a neighborhood of
 $x^*$ (consider for example $\varepsilon=\inf
 \left\{ \Phi(x), x\in V^* \setminus B(x^*,r)  \right\}>0$ where $r>0$
 is such that $B(x^*,2r) \subset  V^*$). 
\end{sloppypar}
Let us first prove that for $x\in U^*$, $\Phi(x)\le d_a(x,x^*)$.
For $x\in U^*$, one has $\Phi(x)< \ve$ and thus
$\Phi^{-1}([0,\Phi(x)))\subset U^*$. Let $\gamma$ belong to
$A\left(x^*,x\right)$. Let us define the time $t_0:=\inf \left\{t\in
  [0,1], \ \gamma(t)\notin \Phi^{-1}([0,\Phi(x)))\right\}$. By
continuity of the curve $\gamma$, one has $t_0>0$,
$\Phi(\gamma(t_0))=\Phi(x)$ and for all $t\in [0,t_0)$, $\gamma(t)\in
\Phi^{-1}([0,\Phi(x)))\subset U^*$. Thus, since the curve $\gamma$ is Lipschitz and since for all $t\in [0,t_0)$, $\gamma(t)\in \Omega$, one has
\begin{align*}
\Phi(x)&= \int_0^{t_0} \frac{ d   }{d  t   } \Phi (\gamma)(t) \, d  t=\int_0^{t_0} \nabla \Phi (\gamma(t)) \cdot \gamma'(t) \, dt \\
&\leq \int_0^{t_0} \vert \nabla \Phi (\gamma(t)) \vert \left\vert \gamma'(t) \right\vert \  dt \\
&\leq \int_0^{t_0} \vert \nabla f (\gamma(t)) \vert \left\vert \gamma'(t) \right\vert \  dt \le \int_0^{1} g\left(\gamma(t)\right) \left\vert  \gamma'(t) \right\vert \, dt  = L\left(\gamma,(0,1)\right).
\end{align*}
Taking the infimum on the right-hand side over $\gamma\in
A\left(x^*,x\right)$, one gets $\Phi(x)\leq d_a\left(x^*,x\right)$,
for all $x\in U^*$. Let us now prove the reverse inequality: for $x\in U^*$, $\Phi(x)\ge d_a(x,x^*)$. For $x\in
U^*$, let us define a curve $\gamma:\R_+ \to U^*$ by
$$
\forall t \ge 0, \, \gamma'(t)=- \nabla \Phi \left(\gamma(t)\right)
\text{ and } \gamma(0)=x.
$$
Since the function $t\mapsto \Phi\left(\gamma(t)\right)$ is decreasing, the curve $\gamma$ always belongs to $U^*$ and is defined on $\mathbb R_+$.  Moreover $\gamma$ is $C^{\infty}$ and satisfies $\lim_{t\to +\infty}\gamma(t)=x^*$. 
Since $\gamma$ is with values in $U^* \subset \Omega$, one has
\begin{align*} 
-\Phi(x)&=\int_0^{+\infty }\frac{d}{dt} \Phi \circ \gamma(t) \, dt  =\int_0^{+\infty }  \nabla \Phi (\gamma(t)) \cdot \gamma'(t) \, d t= -\int_0^{+\infty}  \vert \nabla \Phi (\gamma(t)) \vert^2 dt\\
&=-\int_0^{+\infty } \vert \nabla \Phi (\gamma(t))\vert \left\vert\gamma'(t) \right\vert \, d t=- \int_0^{+\infty } g\left(\gamma(t)\right) \left\vert  \gamma'(t) \right\vert \, d  t=- \lim_{t\to +\infty} L(\gamma, (0,t)).
\end{align*}
Then, thanks to Lemma~\ref{inca}, it holds: $d_a\left(x,x^*\right)\leq L(\gamma, (0,+\infty))=\Phi(x)$. 
Therefore $\Phi(x)=d_a\left(x^*,x\right)$ for all $x\in U^*$.
\end{proof}
\begin{remark}
\noindent 
Let us \label{curve1} mention a simple consequence of the previous proof that will be
useful in the following. If $x^*\in \Omega$ is such that $\nabla
f(x^*)=0$, there exists a neighborhood $U^*$ of $x^*$ in $\Omega$ such
that for all $x\in U^*$, there exists a $C^{\infty}$ curve $\gamma: \mathbb R_+\to  \Omega$ such that 
$$d_a (x^*,x)=\int_0^{+\infty} \vert \nabla f\left(\gamma(t)\right) \vert \left\vert \gamma'(t) \right\vert \, dt,$$
with $\gamma(0)=x$ and $\lim_{t\to +\infty}\gamma(t)=x^*$. The curve $\gamma$ is defined by 
\begin{equation}\label{eq.int-gamma'-eik}
\gamma'(t)=-\nabla\Phi\left(\gamma(t)\right), \quad \gamma(0)=x,
\end{equation}
where $\Phi$ solves (\ref{eq:eikonalll}).  In addition $\{ t\in [0,\infty), \ \gamma(t)\in \partial \Omega\}$ is
empty.
\end{remark}

\subsubsection{The Agmon distance near
  critical points of $f|_{ \partial \Omega}$}

Let us first define the Agmon distance in the boundary $\partial \Omega$.
\begin{definition} \label{defff}
The Agmon distance between $x\in \partial \Omega$ and $y \in \partial \Omega$ in the boundary $\partial \Omega$ is defined by
\begin{equation} \label{eq:definition2}
d_a^{\partial \Omega}\left(x,y\right)=\inf _{\gamma}\int_0^1  \left\vert \nabla_Tf \left(\gamma(t)\right)\right\vert \left\vert \gamma'(t) \right\vert \, dt,
\end{equation} 
\label{page.dapaomega} 
where the infimum is taken over all the paths $\gamma :[0,1]
\to \partial \Omega$ which are Lipschitz with
$\gamma(0)=x$ and $\gamma(1)=y$. 
\end{definition}
Similarly to Remark~\ref{rem:agmon_distance_to_min}, one has:
\begin{remark}\label{rem:dadomega}
If $x^*$ is a local minimum of $f|_{\partial \Omega}$, one has
$d_a^{\partial \Omega}(x^*,x)=f(x)-f(x^*)$ for all $x \in \partial
\Omega$ which is in the
basin of attraction of $x^*$ in $\partial \Omega$ for the gradient
dynamics $\gamma'=-\nabla_T f(\gamma)$.
\end{remark}
The next proposition
is the equivalent of Proposition~\ref{da1} for that Agmon distance in
$\partial \Omega$. Since $\partial \Omega$ is a smooth manifold
without boundary, the next result is a direct consequence of well known results from  \cite{helffer-88},
\cite{dimassi-sjostrand-99} and \cite{evans-10}.
\begin{proposition}\label{daaa} Let us assume that \textbf{[H1]} holds.
Let $x^*\in \partial \Omega$ be such that $\nabla_Tf(x^*)=0$. Then there exists a neighborhood $U^*$ of $x^*$ in $\partial \Omega$ such that $y\mapsto d_a^{\partial \Omega} (x^*,y)$ is smooth on $U^*$ and  $\forall x\in U^*$,
\begin{equation} \label{eee} 
\left\vert \nabla_T d_a^{\partial \Omega} (x^*,x)\right\vert^2=\vert \nabla_T f (x)\vert^2.
\end{equation}
\end{proposition}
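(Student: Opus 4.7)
The plan is to reduce the statement to a direct application of the analogs of Propositions~\ref{existe} and \ref{da1} in the boundaryless Riemannian setting, applied to the closed manifold $\partial \Omega$ equipped with the induced metric. Observe first that under \textbf{[H1]}, the restriction $f|_{\partial \Omega}$ is a Morse function on $\partial \Omega$, so the assumption $\nabla_T f(x^*)=0$ means that $x^*$ is a critical point of $f|_{\partial \Omega}$ with non degenerate Hessian. In particular, writing $W := |\nabla_T f|^2$ on $\partial \Omega$, one has $\nabla_T W = 2 \, \Hess(f|_{\partial \Omega})(\nabla_T f)$, so that $\Hess W(x^*) = 2\,(\Hess f|_{\partial \Omega})^2(x^*)$ is positive definite and $x^*$ is a non degenerate minimum of $W$ at which $W(x^*)=0$.

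The first step is to construct a local smooth solution to the eikonal equation on $\partial \Omega$. Since $\partial \Omega$ is a smooth $(d-1)$-dimensional manifold without boundary, the classical Hamilton--Jacobi / stable-manifold construction (as stated, for instance, in \cite[Proposition~2.3.6]{helffer2006semi}) applies directly in any chart around $x^*$: there exist a neighborhood $V^*$ of $x^*$ in $\partial \Omega$ and a $C^\infty$ function $\Phi : V^* \to \R$ such that
\begin{equation*}
|\nabla_T \Phi|^2 = |\nabla_T f|^2 \quad \text{on } V^*, \qquad \Phi(x)=\tfrac12 \sum_{i=1}^{d-1} |\mu_i|\, (x_i-x_i^*)^2 + O(|x-x^*|^3)
\end{equation*}
in local coordinates adapted to the eigenvectors of $\Hess(f|_{\partial \Omega})(x^*)$, where the $\mu_i$ are the non zero eigenvalues of $\Hess(f|_{\partial \Omega})(x^*)$. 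In particular $\Phi(x^*)=0$ and, shrinking $V^*$, one may assume $\Phi > 0$ on $V^*\setminus \{x^*\}$, so that $x^*$ is a non degenerate minimum of $\Phi$.

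The second step identifies $\Phi$ with $d_a^{\partial \Omega}(x^*,\cdot)$ on a smaller neighborhood $U^*\subset V^*$. This is a carbon copy of the proof of Proposition~\ref{da1}: choose $\varepsilon>0$ small enough that $U^*:=\Phi^{-1}([0,\varepsilon))\subset V^*$ is a relatively compact neighborhood of $x^*$ in $\partial \Omega$. For any Lipschitz curve $\gamma:[0,1]\to \partial \Omega$ joining $x^*$ to $x\in U^*$, stopping at the first time $t_0$ at which $\Phi(\gamma(t_0))=\Phi(x)$ and using Cauchy--Schwarz together with the eikonal equation gives
\begin{equation*}
\Phi(x)=\int_0^{t_0}\!\nabla_T\Phi(\gamma)\cdot \gamma' \, dt \,\le\, \int_0^{1}|\nabla_T f(\gamma)|\,|\gamma'|\,dt,
\end{equation*}
and taking the infimum in $\gamma$ yields $\Phi(x)\le d_a^{\partial \Omega}(x^*,x)$. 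For the reverse inequality, the integral curve $\gamma$ of $-\nabla_T\Phi$ starting at $x$ stays in $U^*$ for all $t\ge 0$, tends to $x^*$ as $t\to +\infty$, and satisfies
\begin{equation*}
\int_0^{+\infty}|\nabla_T f(\gamma)|\,|\gamma'|\,dt = \int_0^{+\infty}|\nabla_T\Phi(\gamma)|^2\,dt = \Phi(x),
\end{equation*}
so the (obvious boundaryless analog of) Lemma~\ref{inca} gives $d_a^{\partial \Omega}(x^*,x)\le \Phi(x)$.

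Combining the two inequalities, $d_a^{\partial \Omega}(x^*,\cdot)\equiv \Phi$ on $U^*$, which is smooth and satisfies $|\nabla_T d_a^{\partial \Omega}(x^*,\cdot)|^2 = |\nabla_T \Phi|^2 = |\nabla_T f|^2$ on $U^*$. The only real content is the solvability of the eikonal equation near a non degenerate minimum of $|\nabla_T f|^2$ on the manifold $\partial \Omega$; everything else is a direct transposition of the arguments already given for Propositions~\ref{existe} and \ref{da1} to a smooth manifold without boundary, which is why the statement is essentially a quotation of results from \cite{helffer-88,dimassi-sjostrand-99,evans-10}.
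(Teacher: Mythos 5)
Your proof is correct and follows essentially the same route as the paper: the paper checks that $x^*$ is a non degenerate minimum of $W=|\nabla_T f|^2$ with $W(x^*)=0$ and then invokes \cite[Proposition~2.3.6]{helffer2006semi} for the smoothness of the Agmon distance and the eikonal equation on the closed manifold $\partial\Omega$. You perform the same verification and merely unpack that citation into the two steps the paper itself uses in the interior case (Propositions~\ref{existe} and~\ref{da1}), namely the local construction of the eikonal solution $\Phi$ near the non degenerate minimum and its identification with $d_a^{\partial\Omega}(x^*,\cdot)$ via the Cauchy--Schwarz upper bound and the $-\nabla_T\Phi$ gradient-flow lower bound, which is a faithful transposition to the boundaryless setting.
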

\begin{proof}
 The boundary $\partial \Omega$ is a $C^{\infty}$ compact manifold and
 $x^*$ is a non degenerate minimum of $\vert \nabla_Tf\vert^2$. The
 proof is made in \cite[Proposition 2.3.6]{helffer2006semi} in the more general setting where
 $\vert \nabla_Tf\vert^2$ is replaced in (\ref{eee}) and in
 (\ref{eq:definition2}) by a smooth non negative function $W$ around a
 non degenerate minimum $y^*$ of $W$ such that $W(y^*)=0$. Here $W= \vert \nabla_T
 f \vert^2$ and $y^*=x^*$. This leads to $\nabla W= 2\,{\rm
   Hess}\left(f|_{\partial \Omega}\right) \left(\nabla_T f\right)$ and
 therefore $x^*$ is a critical point of $W= \vert \nabla_T f \vert^2$
 (which turns out to be a minimum). In addition, since $\nabla_T
 f(x^*)=0$, one gets that ${\rm Hess }\, W(x^*)=2\,\left( {\rm Hess}
   \left(f|_{\partial \Omega}\right)\right)^2(x^*)$ which is a non
 degenerate matrix. 
\end{proof}
\begin{proposition} \label{eikonal} Let us assume that  \textbf{[H1]}  and \textbf{[H3]}  hold.
Let $x^*\in \partial \Omega$ be such that $\nabla_Tf(x^*)=0$. Then, 
there exist a neighborhood $V^*$ of $x^*$ in $\overline \Omega$ and a
$C^{\infty}$ function $\Phi:V^* \to \R$ such that 
\begin{equation} \label{eikonalequation}
\left\{
\begin{aligned}
\vert  \nabla \Phi \vert^2 &=  \vert  \nabla f \vert^2   \text{ in }  \Omega \cap V^*,  \\ 
\Phi &= d_a^{\partial \Omega}(x^*,.) \text{ on } \partial \Omega  \cap V^*, \\
\partial_n \Phi  &< 0 \text{ on } \partial \Omega  \cap V^*.
\end{aligned}
\right.
\end{equation}
Moreover, one has the following uniqueness results: if $\widetilde
\Phi$ is a $C^\infty$ real valued function defined on a neighborhood
$\widetilde{V^*}$ of $x^*$ satisfying~\eqref{eikonalequation}, then $\widetilde \Phi=\Phi$ on $\widetilde{V^*}
\cap V^*$.  
Finally, up to choosing a smaller neighborhood $V^*$ of $x^*$, one can
assume that:
 \begin{itemize}
\item The function $\Phi$ is positive on $ V^*\setminus \{x^*\}$, so that
$x^*$ is a non degenerate minimum of $\Phi$ on $V^*$. 
\item According to~\eqref{eee} and~\eqref{eikonalequation}, it holds on $V^*\cap \pa \Omega$,  $\vert \nabla_T \Phi\vert =\vert \nabla_T f \vert$.
 \end{itemize}
\end{proposition}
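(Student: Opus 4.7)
The plan is to construct $\Phi$ by solving the Hamilton--Jacobi equation $|\nabla \Phi|^2 = |\nabla f|^2$ locally near $x^*$ via the method of characteristics, taking as Cauchy data the boundary value $\Phi_0 := d_a^{\partial \Omega}(x^*,\cdot)$ prescribed on $\partial \Omega$ (which is smooth near $x^*$ by Proposition~\ref{daaa}).

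First I would check that the boundary data are compatible with the eikonal equation. On $\partial \Omega$, the tangential gradient of $\Phi$ is imposed to be $\nabla_T \Phi_0$, which satisfies $|\nabla_T \Phi_0|^2 = |\nabla_T f|^2$ by Proposition~\ref{daaa}. For the eikonal relation $|\nabla_T \Phi|^2 + (\partial_n \Phi)^2 = |\nabla_T f|^2 + (\partial_n f)^2$ to hold on the boundary, one must set $(\partial_n \Phi)^2 = (\partial_n f)^2$, and the sign condition $\partial_n \Phi < 0$ together with assumption \textbf{[H3]} uniquely forces $\partial_n \Phi = -\partial_n f$. This prescribes a smooth initial covector field $p_0$ on $\partial \Omega$ near $x^*$ with $|p_0|^2 = |\nabla f|^2$, i.e.\ $H(y,p_0(y)) = 0$ where $H(x,p) := |p|^2 - |\nabla f(x)|^2$.

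Next I would apply the classical method of characteristics for first-order nonlinear PDEs (see e.g.~\cite[Chapter 3]{evans-10}) to the Hamiltonian $H$. The bicharacteristic system reads $\dot x = \partial_p H = 2p$, $\dot p = -\partial_x H = 2\,(\hess f)(x)\nabla f(x)$. The non-characteristic transversality condition on $\partial \Omega$ is $\partial_p H(y,p_0(y)) \cdot n = 2\, p_0(y) \cdot n = -2\, \partial_n f(y) < 0$, which holds by \textbf{[H3]} and forces the characteristics to enter $\Omega$. Integrating from $\partial \Omega \cap W^*$ (for $W^*$ a small neighborhood of $x^*$ in $\partial \Omega$) for small time $t \in [0,\varepsilon)$, the map $(y,t) \mapsto x(t;y)$ is a $C^\infty$ diffeomorphism onto a neighborhood $V^* \subset \overline \Omega$ of $x^*$. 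Setting
\begin{equation*}
\Phi\bigl(x(t;y)\bigr) := \Phi_0(y) + \int_0^t 2|p(s;y)|^2\, ds,
\end{equation*}
and using the conservation $H(x(s;y),p(s;y)) \equiv 0$, yields a $C^\infty$ function on $V^*$ with $\nabla \Phi(x(t;y)) = p(t;y)$, hence $|\nabla \Phi|^2 = |\nabla f|^2$ in $\Omega \cap V^*$, together with the prescribed Cauchy data. Uniqueness of $\Phi$ (and of the analogous smooth solution $\widetilde \Phi$ on $\widetilde{V^*}$) is a direct consequence of this construction: any $C^\infty$ solution of~\eqref{eikonalequation} produces the same Cauchy data $(\Phi_0,p_0)$ on the boundary, hence agrees with $\Phi$ along each bicharacteristic issuing from $\partial \Omega$, and these foliate $V^* \cap \widetilde{V^*}$.

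Finally, for the qualitative properties, I would work in tangential--normal coordinates $(x_T,x_N)$ near $x^*$ with $x_N \geq 0$ measuring the inward distance to $\partial \Omega$. By Remark~\ref{rem:dadomega} and the fact that $x^*$ is a non-degenerate minimum of $f|_{\partial\Omega}$, one has $\Phi_0(x_T) = \tfrac{1}{2}\, x_T^T Q\, x_T + O(|x_T|^3)$ with $Q$ a positive definite matrix, while the construction gives $\partial_{x_N} \Phi(x_T,0) = \partial_n f(x_T,0) > 0$ on $W^*$. Taylor expanding then produces
\begin{equation*}
\Phi(x_T,x_N) = \partial_n f(x^*)\, x_N + \tfrac{1}{2}\, x_T^T Q\, x_T + O\!\left(|x_T|^3 + x_N|x_T| + x_N^2\right),
\end{equation*}
which is strictly positive on $V^* \setminus \{x^*\}$ after possibly shrinking $V^*$, yielding the claimed (mixed quadratic--linear) non-degenerate minimum at $x^*$. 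The identity $|\nabla_T \Phi| = |\nabla_T f|$ on $V^* \cap \partial \Omega$ follows directly from $\Phi|_{\partial \Omega} = \Phi_0$ together with Proposition~\ref{daaa}. The main technical point is the transversality condition verified via \textbf{[H3]}: without $\partial_n f > 0$, the bicharacteristic field would be tangent to $\partial \Omega$ and one could not extend $\Phi$ off the boundary by this method.
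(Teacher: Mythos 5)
Your proposal is correct and follows essentially the same route as the paper: smoothness of the Cauchy data $d_a^{\partial\Omega}(x^*,\cdot)$ near $x^*$ from Proposition~\ref{daaa}, then the method of characteristics for $|\nabla\Phi|^2=|\nabla f|^2$ (the paper simply cites \cite{dimassi-sjostrand-99} and \cite{evans-10} for this step), with \textbf{[H3]} providing exactly the non-characteristic/transversality condition and the sign selection $\partial_n\Phi=-\partial_n f$, and the same positivity argument near $x^*$. Your write-up just makes explicit the details the paper delegates to the references.
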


\begin{proof} From Proposition~\ref{daaa}, the function $x\in \partial \Omega\mapsto d_a^{\partial \Omega}(x^*,x)$ is smooth near $x^*$. 
Then, the result stated  can be proven using the method of characteristics, see \cite[Theorem 1.5]{dimassi-sjostrand-99} or \cite[Section 3.2]{evans-10}. Let us mention that the proof crucially relies on the assumption
$\partial_nf(x^*)>0$. The fact that one can reduce $V^*$ such that
$\Phi$ is positive on $   V^*\setminus \{x^*\}$ is a consequence of $\partial_n \Phi<0$ on $\partial \Omega  \cap V^*$ together with the fact that $x^*$ is the only minimum of $d_a^{\partial \Omega}(x^*,.)$ (which is positive on $\partial \Omega\setminus \{x^*\}$).
\end{proof}

Let us state a simple corollary of Proposition~\ref{eikonal} and
Remark~\ref{rem:dadomega}.
\begin{corollary}\label{cor:eikonal_min}
 Let us assume that \textbf{[H1]} and \textbf{[H3]}  hold.
Let $x^*\in \partial \Omega$ be a local minimum of $f|_{\partial \Omega}$. Then
there exist a neighborhood $V^*$ of $x^*$ in $\overline \Omega$ and a
$C^{\infty}$ function $\Phi:V^* \to \R$ such that 
\begin{equation} \label{eikonalequation_min}
\left\{
\begin{aligned}
\vert  \nabla \Phi \vert^2 &=  \vert  \nabla f \vert^2   \text{ in }  \Omega \cap V^*,  \\ 
\Phi &= f-f(x^*) \text{ on } \partial \Omega  \cap V^*, \\
\partial_n \Phi  &< 0 \text{ on } \partial \Omega  \cap V^*.
\end{aligned}
\right.
\end{equation}
Moreover, one has the following uniqueness results: if $\widetilde
\Phi$ is a $C^\infty$ real valued function defined on a neighborhood
$\widetilde{V^*}$ of $x^*$ satisfying~\eqref{eikonalequation}, then $\widetilde \Phi=\Phi$ on $\widetilde{V^*}
\cap V^*$.  
Finally, up to choosing a smaller neighborhood $V^*$ of $x^*$, one can
assume that $\Phi$ is positive on $ V^*\setminus \{x^*\}$, and that 
$\Phi-f > -f(x^*)$ in $V^* \cap (\partial \Omega)^c$. As a consequence,
\begin{equation}\label{eq:carac_boundary}
\{x \in V^*,\, \Phi(x)=f(x)-f(x^*)\} \subset \partial \Omega.
\end{equation}
\end{corollary}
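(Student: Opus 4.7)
The strategy is to deduce the corollary from Proposition~\ref{eikonal} by specializing to the case of a local minimum of $f|_{\partial \Omega}$ and identifying the resulting boundary data via Remark~\ref{rem:dadomega}. Since $x^*$ is a local minimum of $f|_{\partial \Omega}$, it is in particular a critical point ($\nabla_T f(x^*)=0$), so Proposition~\ref{eikonal} applies and provides a neighborhood $V^*$ of $x^*$ in $\overline{\Omega}$ and a smooth function $\Phi:V^*\to\mathbb R$ satisfying $|\nabla \Phi|^2 = |\nabla f|^2$ in $\Omega \cap V^*$, $\partial_n\Phi<0$ on $\partial \Omega \cap V^*$, and $\Phi = d_a^{\partial \Omega}(x^*,\cdot)$ on $\partial\Omega\cap V^*$. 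After possibly shrinking $V^*$ so that $\partial \Omega \cap V^*$ lies in the basin of attraction of $x^*$ for the tangential gradient dynamics $\gamma'=-\nabla_T f(\gamma)$ (possible by \textbf{[H1]} since $x^*$ is an isolated local minimum of $f|_{\partial \Omega}$), Remark~\ref{rem:dadomega} gives $d_a^{\partial \Omega}(x^*,x) = f(x) - f(x^*)$ on this portion of the boundary. This establishes~\eqref{eikonalequation_min}, while the uniqueness assertion and the positivity $\Phi > 0$ on $V^* \setminus \{x^*\}$ are directly inherited from Proposition~\ref{eikonal}.

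The genuinely new content is the strict inequality $\Phi - f > -f(x^*)$ throughout $V^* \cap (\partial \Omega)^c = V^* \cap \Omega$. Setting $\psi := \Phi - (f-f(x^*))$, we have $\psi = 0$ on $\partial \Omega \cap V^*$, and combining~\eqref{eikonalequation_min} with hypothesis~\textbf{[H3]} gives
\[
\partial_n \psi = \partial_n \Phi - \partial_n f < 0 \quad \text{on } \partial \Omega \cap V^*,
\]
which, as $n$ is the outward normal, forces $\psi>0$ in some one-sided tubular neighborhood $\mathcal T$ of $\partial \Omega \cap V^*$ inside $\Omega$.

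To propagate $\psi>0$ from $\mathcal T$ to all of $V^* \cap \Omega$, I would invoke the characteristic method for the eikonal equation. Along the flow $\gamma'(t)=\nabla \Phi(\gamma(t))$, which at each boundary point has inward-pointing normal component (since $\partial_n \Phi<0$), the eikonal equation and Cauchy--Schwarz yield
\[
\tfrac{d}{dt}\psi(\gamma(t)) = |\nabla \Phi|^2 - \nabla f \cdot \nabla \Phi = |\nabla f|^2 - \nabla f \cdot \nabla \Phi \ge 0,
\]
with strict inequality unless $\nabla \Phi=\nabla f$; this equality fails on $\partial \Omega \cap V^*$ because there $\partial_n \Phi<0<\partial_n f$, and hence $\psi>0$ strictly along each such characteristic for $t>0$ small. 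The main obstacle is to ensure, by shrinking $V^*$, that every point of $V^*\cap \Omega$ lies on such a characteristic originating in $\partial \Omega \cap V^*$. Since by \textbf{[H3]} the gradient $\nabla f$ does not vanish on $\partial\Omega$, the same holds for $\nabla \Phi$ on a small enough $V^*$; the Jacobian of the parametrization $(x_0,t)\mapsto \gamma_{x_0}(t)$ at $(x^*,0)$ has full rank because $\nabla\Phi(x^*)=-\partial_n f(x^*)\, n$ is transverse to $\partial \Omega$, so by the inverse function theorem this map is a local diffeomorphism onto a neighborhood of $x^*$ in $\overline \Omega$, yielding the required foliation. The inclusion~\eqref{eq:carac_boundary} is then an immediate consequence.
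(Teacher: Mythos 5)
Your derivation of~\eqref{eikonalequation_min}, of the uniqueness statement and of the positivity of $\Phi$ from Proposition~\ref{eikonal} combined with Remark~\ref{rem:dadomega} (after shrinking $V^*$ so that $\partial\Omega\cap V^*$ lies in the basin of attraction of $x^*$ for $\gamma'=-\nabla_T f(\gamma)$) is exactly the paper's argument. Where you diverge is the strict inequality $\Phi-f>-f(x^*)$ on $V^*\cap\Omega$: the paper simply observes that $\psi:=\Phi-f+f(x^*)$ vanishes on $\partial\Omega\cap V^*$ with $\partial_n\psi=\partial_n\Phi-\partial_n f<0$ there, and concludes by shrinking $V^*$ — i.e.\ it stops at the first-order boundary expansion, which is all that is needed since the statement explicitly allows passing to a smaller neighborhood of $x^*$. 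You instead propagate positivity along the characteristics $\gamma'=\nabla\Phi$, using $\frac{d}{dt}\psi(\gamma)=|\nabla f|^2-\nabla f\cdot\nabla\Phi\ge 0$ by the eikonal equation and Cauchy--Schwarz, with strict inequality at boundary points because $\partial_n\Phi<0<\partial_n f$ forces $\nabla\Phi\neq\nabla f$ there, together with the transversality $\nabla\Phi(x^*)=-\partial_nf(x^*)\,n$ to get that the characteristics foliate a one-sided neighborhood of $x^*$. This is correct (the foliation argument is the same inverse-function-theorem step that underlies the method of characteristics already used to build $\Phi$), and it even yields the slightly stronger information that $\psi$ is nondecreasing along the flow of $\nabla\Phi$; but it is heavier than necessary, and your intermediate claim that the sign of $\partial_n\psi$ alone gives a one-sided tubular neighborhood of the whole of $\partial\Omega\cap V^*$ where $\psi>0$ needs a uniformity (compactness) remark near the edge of $\partial\Omega\cap V^*$ — a point that disappears if, as in the paper, you only ask for positivity on a small neighborhood of $x^*$. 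The deduction of~\eqref{eq:carac_boundary} is then immediate in both approaches.
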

\begin{proof}
All the statements but~\eqref{eq:carac_boundary} are direct
consequences of Proposition~\ref{eikonal} and the fact that
$d_a^{\partial \Omega}(x^*,x)=f(x)-f(x^*)$, thanks to
Remark~\ref{rem:dadomega}. Now, notice that on $\partial \Omega \cap
V^*$, $\Phi-f=-f(x^*)$ and $\partial_n(\Phi -f)<0$ so that, up to
choosing a smaller neighborhood~$V^*$ of~$x^*$, one can assume that
$\Phi-f > -f(x^*)$ in $V^* \cap (\partial \Omega)^c$. This concludes the proof of~\eqref{eq:carac_boundary}.
\end{proof}
We are now in position to state the main result of this
section.
\begin{proposition} \label{equaa} Let us assume that \textbf{[H1]} and \textbf{[H3]}  hold.
Let $x^*\in \partial \Omega$ be such that $\nabla_Tf(x^*)=0$. Then,  there exists a neighborhood $U^*$ of $x^*$ in $\overline \Omega$ such that for all $x\in U^*$
$$d_a\left(x,x^*\right)=\Phi(x),$$
where $\Phi$ solves (\ref{eikonalequation}) and $d_a$ is the Agmon distance.
\end{proposition}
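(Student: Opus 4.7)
The plan mirrors the proof of Proposition~\ref{da1}, adapted to handle a boundary critical point. Let $V^*$ and $\Phi$ come from Proposition~\ref{eikonal}, with $\Phi > 0$ on $V^* \setminus \{x^*\}$ and $\Phi(x^*) = d_a^{\partial \Omega}(x^*, x^*) = 0$, and set $U^* := \Phi^{-1}([0, \varepsilon))$ with $\varepsilon$ small enough that $\overline{U^*} \subset V^*$. The proof splits into establishing the two inequalities $d_a(x^*, x) \geq \Phi(x)$ and $d_a(x^*, x) \leq \Phi(x)$ for $x \in U^*$.

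For the upper bound, for any $\gamma \in A(x^*, x)$ I would set $t_0 := \inf\{t \in [0,1] : \Phi(\gamma(t)) \geq \Phi(x)\}$, so that $\gamma([0, t_0]) \subset V^*$ and $\Phi(\gamma(t_0)) = \Phi(x)$. Applying Lemma~\ref{h} to $\Phi$ along the restricted curve expresses $\Phi(x) = \Phi(\gamma(t_0)) - \Phi(x^*)$ as the sum of an interior integral of $\nabla \Phi \cdot \gamma'$ and a boundary integral of $\nabla_T \Phi \cdot \gamma'$. Bounding by Cauchy--Schwarz and using the two identities $|\nabla \Phi| = |\nabla f|$ on $\Omega \cap V^*$ (eikonal equation~\eqref{eikonalequation}) and $|\nabla_T \Phi| = |\nabla_T f|$ on $\partial \Omega \cap V^*$ (last bullet of Proposition~\ref{eikonal}, via~\eqref{eee}) gives $\Phi(x) \leq L(\gamma, (0, t_0)) \leq L(\gamma, (0,1))$. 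Taking the infimum over $\gamma \in A(x^*, x)$ and invoking Corollary~\ref{equalityagmon} yields $d_a(x^*, x) \geq \Phi(x)$.

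For the lower bound, I would explicitly build a minimizing curve. For $x \in \Omega \cap U^*$, let $\gamma$ solve $\gamma' = -\nabla \Phi(\gamma)$ with $\gamma(0) = x$. Since $|\nabla f(x^*)| = \partial_n f(x^*) > 0$ by \textbf{[H3]}, the eikonal equation forces $|\nabla \Phi|^2 \geq c > 0$ on $V^*$ (after shrinkage), so $\tfrac{d}{dt}\Phi(\gamma) = -|\nabla \Phi|^2 \leq -c$ drives the flow into $\partial \Omega$ at some $t_0 \leq \Phi(x)/c$, and a direct computation shows this interior segment contributes $\Phi(x) - \Phi(\gamma(t_0))$ to the length. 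If $\gamma(t_0) \neq x^*$, I would extend the curve along $\partial \Omega$ by the tangential flow $\tilde\gamma' = -\nabla_T \Phi(\tilde\gamma)$; since $\Phi|_{\partial \Omega} = d_a^{\partial \Omega}(x^*, \cdot)$ admits $x^*$ as a non-degenerate minimum (by the boundary analog of Proposition~\ref{existe} applied to $|\nabla_T f|^2$, noting Proposition~\ref{daaa}), this flow converges to $x^*$ provided $U^*$ is taken inside its basin of attraction, and it contributes $\Phi(\gamma(t_0))$ to the length. The concatenated curve is Lipschitz with only finitely many boundary-incidence endpoints, has total length $\Phi(x)$, and Lemma~\ref{inca} gives $d_a(x^*, x) \leq \Phi(x)$. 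The case $x \in \partial \Omega \cap U^*$ reduces to the boundary piece alone.

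The main obstacle, in contrast with the interior situation of Proposition~\ref{da1}, is that here $\nabla \Phi(x^*) = \partial_n \Phi(x^*)\, n \neq 0$, so the minimizer cannot be a single smooth gradient trajectory asymptoting to $x^*$. Instead, the sign condition $\partial_n \Phi < 0$ on $\partial \Omega \cap V^*$ forces the interior $-\nabla \Phi$ flow to strike $\partial \Omega$ transversally in finite time, after which one must splice in the tangential flow on $\partial \Omega$. Verifying that this concatenation actually belongs to $A(x^*, x)$ and that the boundary piece does reach $x^*$ is the most delicate bookkeeping, and it is precisely why the conclusion must be drawn from Lemma~\ref{inca} rather than from a single smooth gradient flow.
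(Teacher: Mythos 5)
Your proposal is correct and follows essentially the same route as the paper: the inequality $d_a(x^*,x)\ge\Phi(x)$ via Lemma~\ref{h} together with the identities $|\nabla\Phi|=|\nabla f|$ in $\Omega\cap V^*$ and $|\nabla_T\Phi|=|\nabla_T f|$ on $\pa\Omega\cap V^*$, and the reverse inequality by flowing along $-\nabla\Phi$ in the interior, splicing in the tangential flow $-\nabla_T\Phi$ on $\pa\Omega$, and concluding with Lemma~\ref{inca}. The only cosmetic adjustment is that, since $\Phi$ is defined only on $V^*$, the time $t_0$ should be taken as the first exit time from the sublevel set $\Phi^{-1}([0,\Phi(x)))\subset U^*$ (as in the paper) rather than as $\inf\{t:\Phi(\gamma(t))\ge\Phi(x)\}$, which presupposes $\Phi$ is defined along the whole curve.
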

\begin{proof}
Notice that hypothesis \textbf{[H3]} allows us to use  Corollary~\ref{equalityagmon}. The proof follows the same lines of  the proof of  Proposition~\ref{da1}.
Let $x^*\in \partial \Omega$ be such that $\nabla_Tf(x^*)=0$. Let
$\Phi$ be the smooth solution to (\ref{eikonalequation}) on a neighborhood $V^*$ of $x^*$ and
such that $\Phi$ is positive on $V^*\setminus \{x^*\}$ and   it holds $\vert \nabla_T \Phi\vert =\vert \nabla_T f \vert$ on $V^*\cap \pa \Omega$, as
defined in Proposition~\ref{eikonal}. One chooses
$\varepsilon>0$ sufficiently small such that
$U^*:=\Phi^{-1}([0,\varepsilon))\subset V^*$. Notice that $U^*$ is a neighborhood of
$x^*$ in $\overline \Omega$. 

\medskip
\noindent
\underline{Step 1.} Let us first prove that for all $x \in U^*$, $\Phi(x) \le d_a(x^*,x)$.
\medskip

\noindent
For $x\in U^*$, one has $\Phi(x)< \ve$ and thus $\Phi^{-1}([0,\Phi(x)))\subset U^*$. Let $\gamma$ belong to $A\left(x^*,x\right)$. Let us define the time $t_0:=\inf \left\{t\in [0,1],\ \gamma(t)\notin \Phi^{-1}([0,\Phi(x)))\right\}$. By continuity of the curve $\gamma$, one has $t_0>0$, $\Phi(\gamma(t_0))=\Phi(x)$ and for all $t\in [0,t_0)$, $\gamma(t)\in \Phi^{-1}(\left[0,\Phi(x)) \right)\subset U^*$. Thus, using  Lemma~\ref{h}, one obtains
\begin{align*}
\Phi(x)&= \int_0^{t_0} \frac{ d   }{d  t} \Phi \circ \gamma(t) \, d  t = \int_0^{t_0} \nabla \Phi (\gamma(t)) \cdot \gamma'(t) \, dt \\
&=\int_{ {\rm int} \left\{  t\in (0,t_0), \ \gamma(t) \in \partial
    \Omega \right\}  } \nabla_T \Phi (\gamma(t)) \cdot  \gamma'(t) \, dt+\int_{ \left\{  t\in (0,t_0), \ \gamma(t) \in \Omega \right\}  } \nabla \Phi (\gamma(t)) \cdot \gamma'(t) \, dt.
\end{align*}
On the one hand, 
\begin{align*}
 \int_{ {\rm int} \left\{  t\in (0,t_0), \ \gamma(t) \in \partial
     \Omega \right\}  }  \nabla_T \Phi (\gamma(t)) \cdot  \gamma'(t) \, dt  &\leq \int_{ {\rm int} \left\{  t\in (0,t_0), \ \gamma(t) \in \partial \Omega \right\}  }  \left\vert \nabla_{T} \Phi( \gamma(t) ) \right\vert  \left\vert  \gamma'(t) \right\vert \, dt \\
&\leq \int_{ {\rm int} \left\{  t\in (0,t_0), \ \gamma(t) \in \partial \Omega \right\}  }  \vert \nabla_T f (\gamma(t))\vert  \left\vert  \gamma'(t) \right\vert \, dt,
\end{align*}
where one used the last statement in  Proposition~\ref{eikonal}. On the other hand, using \eqref{eikonalequation}, one obtains
\begin{align*}
\int_{ \left\{  t\in (0,t_0), \ \gamma(t) \in \Omega \right\}  }
\nabla \Phi (\gamma(t)) \cdot \gamma'(t) \, dt &\leq
\int_{ {\rm int} \left\{  t\in (0,t_0), \ \gamma(t) \in \Omega \right\}  }  \vert \nabla \Phi (\gamma(t)) \vert  \left\vert\gamma'(t) \right\vert \, dt \\
&\leq \int_{ {\rm int} \left\{  t\in (0,t_0), \ \gamma(t) \in \Omega \right\}  }  \vert \nabla f (\gamma(t))\vert  \left\vert\gamma'(t) \right\vert \, dt.
\end{align*}
Thus one gets
\begin{align*} 
\Phi(x)&\leq \int_{ {\rm int} \left\{  t\in (0,t_0), \ \gamma(t)
    \in \partial \Omega \right\}  }  \vert \nabla_T f (\gamma(t))\vert
 \left\vert\gamma'(t) \right\vert \, dt+\int_{
  \left\{  t\in (0,t_0), \  \gamma(t) \in \Omega \right\}  }  \vert \nabla  f (\gamma(t))\vert \left\vert \gamma'(t) \right\vert \, dt\\
&= \int_0^{t_0} g\left(\gamma(t)\right) \left\vert \gamma'(t) \right\vert \, d  t \leq \int_0^{1} g\left(\gamma(t)\right) \left\vert \gamma'(t) \right\vert \, d  t =L\left(\gamma,(0,1)\right). 
\end{align*}
Taking the infimum on the right-hand side over $\gamma\in A\left(x^*,x\right)$, one gets $\Phi(x)\leq d_a\left(x^*,x\right)$, for all $x\in U^*$. 

\medskip
\noindent
\underline{Step 2.} Let us now prove the reverse inequality: $\forall x\in U^*$, $d_a(x,x^*)\le \Phi(x)$.
\medskip

\noindent
 Let us define the following vector field on $U^*$,
\begin{equation} \label{eq:X}
X:=\left\{
\begin{aligned}
  - \nabla \Phi   & \text{ in  }  \Omega \cap U^*, \\ 
 - \nabla_T \Phi & \text{ on  } \partial \Omega \cap U^*.
\end{aligned}
\right.
\end{equation}
For $x\in U^*$, let us define the curve $\gamma$ by
\begin{equation} \label{eq:c}
\forall t \ge 0, \,  \gamma'(t)=X\left(\gamma(t)\right) \text{ and } \gamma(0)=x.
\end{equation}
This curve is well defined for all positive time using the
Cauchy-Lipschitz theorem and the fact that $(\gamma(t))_{t \ge 0}$
remains in $U^*$ for all positive time.

Indeed, if $x \in
\partial \Omega \cap U^*$, then $\gamma$ solves  $ \gamma'(t)=-
\nabla_T \Phi (\gamma(t))$ (with $\gamma(0)=x$). Since the function $t\mapsto \Phi\left(\gamma(t)\right)$ is
decreasing, the curve $\gamma$ remains in $\partial \Omega \cap
U^*=\partial \Omega \cap \Phi^{-1}([0,\varepsilon))$ and is defined on
$\mathbb R_+$.  Moreover, $\lim_{t\to +\infty}\gamma(t)=x^*$. 

Besides, if $x \in \Omega \cap U^*$, let us introduce the first time
$t_{\partial \Omega}$ such that $\gamma(t_{\partial \Omega}) \not \in \Omega \cap U^*$ for the curve
solution to  $ \gamma'(t)=-\nabla \Phi (\gamma(t))$  (with
$\gamma(0)=x$). If $t_{\partial \Omega}=\infty$,
then $\gamma$ belongs to $U^* \cap \Omega$ for all time, and since $t\mapsto \Phi\left(\gamma(t)\right)$ is
decreasing, necessarily, $\lim_{t\to +\infty}\gamma(t)=x^*$. If
$t_{\partial \Omega}<
\infty$, then, since $t\mapsto \Phi\left(\gamma(t)\right)$ is
decreasing and $U^*=\Phi^{-1}([0,\varepsilon))$, necessarily,
$\gamma(t_{\partial \Omega}) \in \partial \Omega \cap U^*$. The curve $\gamma$ is then defined on
$[t_{\partial \Omega},\infty)$ as above, for an initial condition in $\partial \Omega \cap U^*$.

We have thus shown that the function  $\gamma$ is globally defined,
piecewise $C^{\infty}$,  continuous, remains in $\partial \Omega$ if
it enters $\partial \Omega$, and satisfies 
$$\lim_{t\to +\infty}\gamma(t)=x^*.$$
Recall that $t_{\partial \Omega}=\inf \left\{t\in [0,+\infty), \ \gamma(t)\in \partial \Omega\right\}\in [0,\infty]$. One has
\begin{align*} 
-\Phi(x)&=\int_0^{+\infty} \frac{d}{dt} \Phi \circ
\gamma(t) \, dt\\
&=\int_0^{t_{\partial \Omega} }  \nabla \Phi (\gamma(t)) \cdot  \gamma'(t) \, d t+ \int_{t_{\partial \Omega}}^{+\infty } \nabla_T \Phi (\gamma(t)) \cdot \gamma'(t)  \, dt \\
&=-\left(\int_0^{t_{\partial \Omega} }  \vert \nabla \Phi (\gamma(t)) \vert^2 dt + \int_{t_{\partial \Omega}}^{+\infty } \vert \nabla_T \Phi (\gamma(t)) \vert^2 dt\right)\\
&=-\left(\int_0^{t_{\partial \Omega} } \vert \nabla \Phi (\gamma(t))\vert \left\vert  \gamma'(t) \right\vert \, d t+ \int_{t_{\partial \Omega}}^{+\infty } \vert \nabla_T \Phi (\gamma(t))\vert \left\vert  \gamma'(t) \right\vert \, dt\right)\\
&=- \int_0^{+\infty } g\left(\gamma(t)\right) \left\vert  \gamma'(t) \right\vert \, d  t=-\lim_{t\to +\infty} L(\gamma, (0,t)).
\end{align*}
Thanks to Lemma  \ref{inca},
$$d_a\left(x,x^*\right)\leq L(\gamma, (0,+\infty))=\Phi(x).$$
In conclusion, $\Phi(x)=d_a\left(x^*,x\right)$ for all $x\in U^*$.
\end{proof}
\begin{remark}\label{curve2}
Let us mention a simple consequence of the previous proof that will be
useful in the following. If $x^*\in \partial \Omega$ is such that
$\nabla_Tf(x^*)=0$, there exists a neighborhood $U^*$ of~$x^*$ such
that for all $x\in U^*$, there exists a piecewise $C^{\infty}$ and continuous curve $\gamma: \mathbb R_+\to \overline \Omega$ such that 
$$d_a (x^*,x)=\int_0^{+\infty} g \left(\gamma(t)\right) \left\vert \gamma'(t) \right\vert \, dt,$$
with $\gamma(0)=x$ and $\lim_{t\to +\infty}\gamma(t)=x^*$. The curve $\gamma$ is solution to~\eqref{eq:X}--\eqref{eq:c}.
In addition, the set 
$\partial \{ t\in [0,\infty), \ \gamma(t)\in \partial \Omega\}$ either consists of one point or is empty.
\end{remark}

\subsection{Curves realizing the Agmon distance} \label{minim}
In this section, it is proven that for any two points $x\in
\overline \Omega$ and $y\in \overline \Omega$, their exists a finite
number of curves $(\gamma_i)_{i=1,\ldots,N}$ defined on the intervals $(I_i)_{i=1,\ldots,N}$ such that the sum of their lengths $L(\gamma_i,I_i)$ equals
the Agmon distance $d_a\left(x,y\right)$. The precise statement is
given in the following theorem.
\begin{theorem} \label{exis}
Assume that \textbf{[H1]} and \textbf{[H3]}  hold. Let $x,y \in \overline \Omega$. Then
there exists a finite number of Lipschitz curves
$(\gamma_j)_{j=1,\ldots,N}$ which are defined on possibly unbounded
intervals $I_j\subset \mathbb R$, with values in $\overline{\Omega}$, such that for all $ j\in
\left\{1,\ldots,N\right\}$,  the sets $\partial \{ t\in I_j, \ \gamma_j(t)\in \partial \Omega\}$ are finite and 
$$d_a\left(x,y\right)=\sum_{j=1}^{N} L\left(\gamma_j,I_j\right).$$
Additionally, by construction, the intervals
$(I_j)_{j \in \{1, \ldots , N\}}$ are either
$[0+\infty)$, $(-\infty,0]$ or $[0,1]$. Moreover, if $I_j=[0,+\infty)$ or $I_j=(-\infty,0]$, then $\gamma_j$ is continuous and
piecewise $C^{\infty}$ (see Lemma~\ref{agmoncurve} below  for a more precise definition of the curves $\gamma_j$ in this case). If $I_j=[0,1]$, then $\gamma_j \in
A(\gamma_j(0),\gamma_j(1))$. Finally the curves $((\gamma_1,I_1),\ldots,(\gamma_N,I_N))$ are ordered such that 
$$\lim_{t\to (\inf I_1)^+} \gamma_1(t)=x, \quad \lim_{t\to (\sup I_N)^-} \gamma_N(t)=y,$$
and for all $k\in \{1,\ldots,N-1\}$,
$$\lim_{t\to \left(\sup I_k\right)^-} \gamma_k(t)=\lim_{t\to \left(\inf I_{k+1}\right)^+} \gamma_{k+1}(t).$$
\end{theorem}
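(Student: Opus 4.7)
The plan is to produce the decomposition by combining a compactness argument on ``regular'' pieces of a minimizing sequence with the explicit eikonal curves furnished by Remarks~\ref{curve1} and~\ref{curve2} near critical points. First I invoke Corollary~\ref{equalityagmon} (which applies under \textbf{[H3]}) to select a minimizing sequence $(\gamma_n)_{n \ge 1} \subset A(x,y)$ with $L(\gamma_n,(0,1)) \to d_a(x,y)$, each reparametrized to have $|\gamma_n'|$ constant in $t$ on each of the (finitely many) maximal subintervals of $\{t : \gamma_n(t) \in \Omega\}$ and of $\mathrm{int}\{t : \gamma_n(t) \in \partial \Omega\}$. Under \textbf{[H1]}, the critical set $\mathcal{C} := \{c \in \Omega : \nabla f(c) = 0\} \cup \{c \in \partial \Omega : \nabla_T f(c) = 0\}$ is finite, and by Propositions~\ref{da1} and~\ref{equaa} each $c \in \mathcal{C}$ admits a neighborhood $U_c \subset \overline\Omega$ and a $C^\infty$ function $\Phi_c$ satisfying the eikonal equation, with $d_a(c,\cdot) = \Phi_c$ on $U_c$. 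Shrinking, I may assume the $U_c$ are pairwise disjoint and that $g \ge \varepsilon_0 > 0$ on the compact set $K := \overline\Omega \setminus \bigcup_c U_c$.

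Next I control the structure of the $\gamma_n$ relative to the $U_c$. On the portion of $[0,1]$ where $\gamma_n$ lies in $K$, the Euclidean length is at most $L(\gamma_n,\cdot)/\varepsilon_0$, uniformly bounded. I then label the successive ``excursions'' of $\gamma_n$: a maximal sequence of open intervals $J_n^{(k)} \subset [0,1]$ on which $\gamma_n$ is in a single $U_{c_k^n}$, separated by times when $\gamma_n \in K$. On each $K$-piece, Arzelà--Ascoli yields (along a subsequence) a uniform limit that is a Lipschitz curve in $A$ parametrized on $[0,1]$, producing pieces $\gamma_j$ of the third type. The number of excursions stays bounded: each excursion into $U_{c}$ that actually approaches $c$ contributes at least a fixed positive Agmon cost $\delta_c > 0$ to the next traversal (because returning to $\partial U_c$ requires Agmon distance $\ge \inf_{\partial U_c} \Phi_c$), so no more than $(d_a(x,y)+1)/\min_c \delta_c$ excursions occur, and excursions that stay in a smaller sub-neighborhood without approaching $c$ can be straightened away using Proposition~\ref{four} at no extra Agmon length, up to reducing $U_c$ further if needed.

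For each preserved excursion into $U_{c_k}$, the entry point $p_n$ and exit point $q_n$ converge (along a subsequence) to limit points $p_k, q_k \in \overline{U_{c_k}}$; the lower semicontinuity inequality $L(\gamma_n, J_n^{(k)}) \ge d_a(p_n, c_{k}) + d_a(c_{k}, q_n) - o(1)$ forces (in the minimizing limit) the entering and leaving pieces each to have Agmon length exactly $\Phi_{c_k}(p_k)$ and $\Phi_{c_k}(q_k)$ respectively. I realize these pieces by the explicit curves furnished by Remarks~\ref{curve1} and~\ref{curve2}: the entering piece is $\eta^{p_k}_{c_k} : [0,+\infty) \to \overline\Omega$, a continuous piecewise-$C^\infty$ solution of the gradient flow~\eqref{eq.int-gamma'-eik} (or~\eqref{eq:X}--\eqref{eq:c}) with $\eta^{p_k}_{c_k}(0) = p_k$ and $\lim_{t \to \infty} \eta^{p_k}_{c_k}(t) = c_k$, of Agmon length $\Phi_{c_k}(p_k)$, and similarly the leaving piece is the time-reversal, parametrized on $(-\infty, 0]$. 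These furnish the pieces of the first two types. Glueing in the natural order the intermediate $A$-pieces on $[0,1]$ with the entering/leaving half-line pieces at each visited critical point yields a total concatenation with $\sum_j L(\gamma_j, I_j) \le d_a(x,y)$; the reverse inequality is the triangular inequality (Proposition~\ref{propo5}) applied to the endpoints.

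The main obstacle is the critical point analysis: a minimizing sequence can spend parameter time in $U_c$ without approaching $c$, and can also make arbitrarily many small oscillations between distinct $U_{c_k}$'s that disappear in the limit. Handling this requires using Proposition~\ref{four} to straighten curves along $\partial \Omega$ without increasing Agmon length (so that the minimizing property is preserved when reducing the $U_c$), and using the positive Agmon-distance separation of distinct critical points together with the uniform bound $L(\gamma_n,(0,1)) \le d_a(x,y) + 1$ to cap the number of genuine excursions. Once the number of pieces is shown to be uniformly bounded, the desired finite decomposition and its ordering follow by passing to a subsequence and concatenating in the unique order dictated by the parameter $t$.
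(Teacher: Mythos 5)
Your overall skeleton (compactness away from the zeros of $g$, plus the explicit eikonal gradient-flow curves of Remarks~\ref{curve1} and~\ref{curve2} near the critical points) is the right raw material, but the way you extract the finite decomposition directly from one minimizing sequence has two genuine gaps. First, your treatment of excursions into a neighborhood $U_c$ that do \emph{not} approach $c$ relies on Proposition~\ref{four}, which is a boundary-collar argument built on \textbf{[H3]} ($\partial_n f>0$): it pushes pieces of a curve onto $\partial\Omega$ and cannot be used to straighten away, or even shorten, excursions into a neighborhood of an \emph{interior} critical point of $f$ (nor does it remove visits to neighborhoods of boundary critical points of $f|_{\partial\Omega}$). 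Since you only bound the number of excursions that come close to $c$, the number of pieces in your decomposition is not shown to be finite, and the two-scale fix you hint at (shrinking $U_c$) changes the combinatorics with $n$ and is never stabilized.

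Second, and more seriously, the replacement step is unjustified: for a general excursion entering $U_{c_k}$ at $p_n$ and leaving at $q_n$ one only has $L(\gamma_n,J_n^{(k)})\ge d_a(p_n,q_n)$, not $L(\gamma_n,J_n^{(k)})\ge d_a(p_n,c_k)+d_a(c_k,q_n)-o(1)$, so substituting the two eikonal half-curves through $c_k$ (of total cost $d_a(p_k,c_k)+d_a(c_k,q_k)$) may strictly increase the length, and your claimed bound $\sum_j L(\gamma_j,I_j)\le d_a(x,y)$ does not follow. What is missing is precisely the information that the limit passes through $c_k$ \emph{additively}, i.e.\ $d_a(p_k,q_k)=d_a(p_k,c_k)+d_a(c_k,q_k)$; this is the role of the sets $A_k(\cdot)$ of \eqref{Ak} in the paper. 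The paper's proof avoids your difficulty by first performing a purely metric reduction: Lemma~\ref{lem1} selects a \emph{maximal} chain of critical points $b_0=x,b_1,\dots,b_N,b_{N+1}=y$ with $d_a(x,y)=\sum_i d_a(b_i,b_{i+1})$ and strict triangle inequalities with respect to every other critical point; then Lemma~\ref{popo2} inserts points on small Agmon spheres around $b_k,b_{k+1}$ so that the middle leg satisfies the hypotheses of Proposition~\ref{popo1} and Corollary~\ref{coco1} (minimizing sequences avoid fixed neighborhoods of all zeros of $g$, hence Lemma~\ref{gpositive} applies), while the end pieces are the explicit curves of Lemma~\ref{agmoncurve}. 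You need either to import this chain-selection/strict-inequality mechanism or to supply a genuine argument that a minimizing sequence forces asymptotic additivity through each visited critical point, with a number of visits bounded independently of the approach scale; as written, the proposal does neither.
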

\noindent
This section is entirely  dedicated to the proof of Theorem
\ref{exis}. In the following, one denotes by 
$$\{x_1,\ldots,x_m\}=\{x\in \overline \Omega, \, g(x)=0\},$$ 
\label{page.x1xm}
where $g$ is defined by~\eqref{eq:def_g} (there is a finite number of zeros of the function $g$ thanks to \textbf{[H1]}). 

\subsubsection{Preliminary results}
Let us first consider the simple case when the curve realizing the Agmon distance does not meet zeros of the function $g$.

\begin{lemma} \label{gpositive} Assume that \textbf{[H1]} and \textbf{[H3]}  hold. 
Let $\left(x,y\right)\in \overline \Omega\times \overline \Omega$. Let
$(\gamma_n)_{n \ge0} \in A\left(x,y\right)^{\mathbb N}$ be a
minimizing sequence of curves for $d_a\left(x,y\right)$: $\lim_{n\to
  \infty}L(\gamma_n,(0,1))= d_a\left(x,y\right)$. In addition, assume
that for each $k \in \{1,\ldots,m\}$, there exists a neighborhood
$V_k$ of $x_k$ in $\overline \Omega$, such that:
 $$\forall n \in \mathbb N, \, \forall k \in \{1,\ldots,m\},\, \Ran(\gamma_n)\cap V_k=\emptyset.$$ Then, there exists $\gamma \in A\left(x,y\right)$ such that 
$$L\left(\gamma,(0,1)\right)=d_a\left(x,y\right).$$
\end{lemma}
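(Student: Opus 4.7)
The strategy is a classical direct method in the calculus of variations, exploiting the fact that along the minimizing sequence the weight $g$ in the length functional is uniformly bounded away from zero. First I would reparametrize each $\gamma_n$ by its Euclidean arc length. Set $K := \overline{\Omega} \setminus \bigcup_{k=1}^{m} V_k$, which is compact and does not contain any zero of $g$; hence $c_0 := \inf_K g > 0$. Since $\Ran(\gamma_n) \subset K$ for every $n$, one has the uniform Euclidean-length bound
\[
\ell_n := \int_0^1 |\gamma_n'(t)|\,dt \;\le\; \frac{1}{c_0} \int_0^1 g(\gamma_n(t))\,|\gamma_n'(t)|\,dt \;=\; \frac{L(\gamma_n,(0,1))}{c_0},
\]
and the right-hand side is bounded because $L(\gamma_n,(0,1)) \to d_a(x,y) < \infty$. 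Replacing $\gamma_n$ by its constant-Euclidean-speed reparametrization on $[0,1]$ (which does not change the length $L$), one may therefore assume that $(\gamma_n)_{n\ge 0}$ is uniformly Lipschitz: $|\gamma_n'| \le M$ a.e. on $[0,1]$ for some $M$ independent of $n$.

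Next, I would apply the Arzelà--Ascoli theorem to extract from $(\gamma_n)$ a subsequence (still denoted $(\gamma_n)$) that converges uniformly on $[0,1]$ to a Lipschitz curve $\gamma : [0,1] \to \overline{\Omega}$ with $\gamma(0)=x$, $\gamma(1)=y$ and $|\gamma'|\le M$ a.e. By the Banach--Alaoglu theorem applied in $W^{1,\infty}((0,1);\R^d)$, one may also assume that $\gamma_n' \rightharpoonup \gamma'$ in the weak-$*$ topology of $L^\infty((0,1);\R^d)$. It remains to pass to the liminf in the functional $L(\cdot,(0,1))$. Since $g$ is continuous on the compact set $\overline{\Omega}$ and $\gamma_n \to \gamma$ uniformly, $g\circ\gamma_n \to g\circ\gamma$ uniformly on $[0,1]$; together with the uniform $L^1$-bound on $|\gamma_n'|$, this gives
\[
\liminf_{n\to\infty} L(\gamma_n,(0,1)) \;=\; \liminf_{n\to\infty} \int_0^1 g(\gamma(t))\,|\gamma_n'(t)|\,dt \;\ge\; \int_0^1 g(\gamma(t))\,|\gamma'(t)|\,dt \;=\; L(\gamma,(0,1)),
\]
where the inequality is the classical weak lower semicontinuity of $v \mapsto \int f\,|v|$ with respect to the convex integrand $|\cdot|$ and the nonnegative continuous weight $f = g\circ\gamma$. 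Consequently $L(\gamma,(0,1)) \le d_a(x,y)$.

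Finally, the limit $\gamma$ is only guaranteed to be Lipschitz, not necessarily an element of $A(x,y)$, because the set $\partial\{t \in [0,1] \mid \gamma(t) \in \partial\Omega\}$ could a priori fail to be finite. I would remove this by invoking Proposition~\ref{front}, which, under \textbf{[H3]}, produces $\tilde\gamma \in A(x,y)$ with $L(\tilde\gamma,(0,1)) \le L(\gamma,(0,1)) \le d_a(x,y)$. Since by Corollary~\ref{equalityagmon} the reverse inequality $L(\tilde\gamma,(0,1)) \ge d_a(x,y)$ holds by definition of $d_a$ as the infimum over $A(x,y)$, we conclude $L(\tilde\gamma,(0,1)) = d_a(x,y)$, as required. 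The main analytic points are the uniform Lipschitz bound (which crucially uses the uniform positivity of $g$ away from the $V_k$) and the weak lower semicontinuity step; the boundary-regularity issue is handled cleanly by deferring to Proposition~\ref{front}.
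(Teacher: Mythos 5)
There is a genuine gap in your lower semicontinuity step: you assert that ``$g$ is continuous on the compact set $\overline{\Omega}$'' and deduce $g\circ\gamma_n \to g\circ\gamma$ uniformly. But by the definition~\eqref{eq:def_g}, $g=|\nabla f|$ in $\Omega$ while $g=|\nabla_T f|$ on $\partial\Omega$, and under \textbf{[H3]} one has $\partial_n f>0$ on $\partial\Omega$, so $|\nabla f|^2=|\nabla_T f|^2+(\partial_n f)^2>|\nabla_T f|^2$ there. Hence $g$ is discontinuous at \emph{every} boundary point, and the claimed uniform (or even pointwise) convergence $g(\gamma_n(t))\to g(\gamma(t))$ fails precisely in the relevant situation where $\gamma(t)\in\partial\Omega$ while $\gamma_n(t)\in\Omega$. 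Consequently your identity $\liminf_n L(\gamma_n,(0,1))=\liminf_n\int_0^1 g(\gamma(t))|\gamma_n'(t)|\,dt$ is unjustified as written, and this is exactly the delicate point of the lemma (the neighborhoods $V_k$ only keep the curves away from the zeros of $g$, not away from $\partial\Omega$).

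The gap is repairable, and the repaired argument is what the paper does: the discontinuity of $g$ goes in the favorable direction, namely for every $t$ one has $\liminf_{n\to\infty} g(\gamma_n(t))\ge g(\gamma(t))$ (if $\gamma(t)\in\Omega$ this is continuity of $|\nabla f|$; if $\gamma(t)\in\partial\Omega$ the limit points of $g(\gamma_n(t))$ lie in $\{|\nabla f(\gamma(t))|,|\nabla_T f(\gamma(t))|\}$, both $\ge g(\gamma(t))$). One then combines this pointwise liminf with Fatou's lemma (in the index of the weight) and with the weak $H^1$ lower semicontinuity of the convex functional $v\mapsto\int_0^1 g(\gamma(t))|v'(t)|\,dt$ with the \emph{fixed} weight $g\circ\gamma$, via a two-index $\liminf$ argument, to conclude $\int_0^1 g(\gamma)|\gamma'|\le d_a(x,y)$. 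Apart from this step, your structure (uniform Lipschitz bound through a reparametrization exploiting $\inf_K g>0$, Arzel\`a--Ascoli plus weak compactness, and the final reduction to $A(x,y)$ via Proposition~\ref{front} and Corollary~\ref{equalityagmon}) matches the paper's proof; note only that the paper uses the reparametrization $\phi_n(t)=\frac{L(\gamma_n,(0,t))+t}{L(\gamma_n,(0,1))+1}$ rather than arc length, which avoids any issue with intervals where $|\gamma_n'|=0$.
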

\begin{proof}
Let $M$ be such that for all $ n$, $L(\gamma_n,(0,1))\leq M$ and let us define
$$c:=\inf_{\overline \Omega \setminus (V_1\cup\ldots \cup V_m)} g>0.$$
One defines for $t\in [0,1]$, $\phi_n(t)=\frac{L(\gamma_n,(0,t))+t}{L(\gamma_n,(0,1))+1}$. The map $\phi_n$ is strictly increasing and continuous from $[0,1]$ to $[0,1]$. Therefore it admits an inverse. Setting $\tilde \gamma_n(u):=\gamma_n \circ \phi_n^{-1}(u)$, one gets $L(\gamma_n,(0,1))=L(\tilde \gamma_n,(0,1))$ and 
\begin{align*}
\left\vert \tilde \gamma_n' \right\vert ( \phi_n(t)) &=\frac{ \left\vert  \gamma_n'(t) \right\vert}{ g( \gamma_n(t)) \,  \left\vert \gamma_n'(t)\right\vert+1   } \, \left(L\left(\gamma_n,(0,1)\right)+1\right)\\
&\leq\frac{  \left\vert \gamma_n'(t) \right\vert}{ c \, \left\vert \gamma_n'(t)\right\vert+1   } \, \left(L\left(\gamma_n,(0,1)\right)+1\right)\\
&\leq \frac{1}{c}\left(L\left(\gamma_n,(0,1)\right)+1\right)\\
&\leq  \frac{1}{c}\left(M+1\right).
\end{align*}
 Thus, up to replacing $\gamma_n$ by $\tilde \gamma_n$, one may assume
 that the Lipchitz constants of $\gamma_n$ are bounded uniformly in
 $n$. In addition since for all $t\in [0,1]$, $\gamma_n(t)\in
 \overline \Omega$,  the sequence $(\gamma_n)_{n \ge 0}$ is relatively
 compact in $C^0([0,1], \overline \Omega)$. Thus, up to the extraction
 of  a subsequence, there exists a Lipschitz curve $\gamma$ such that $\lim_{n\to \infty}\gamma_n = \gamma$ uniformly on $[0,1]$. Moreover since  $(\gamma_n)_{n\ge 0}$ is bounded in $H^1([0,1],\overline \Omega)$, up to the extraction of a subsequence, $(\gamma_n)_{n\ge 0}$ converges weakly to $\gamma$ in $H^1([0,1],\overline \Omega)$. It is not difficult to see that for all $t\in \mathbb [0,1]$,
$$\liminf_{n\to \infty} g\left(\gamma_n(t)\right)\geq g\left(\gamma(t)\right).$$
Indeed, for $t\in [0,1]$, there are two cases:
\begin{itemize}
\item If $\gamma(t)\in \Omega$, then for $n$ large enough, all the
  points $\gamma_n(t)$ are in $\Omega$ and thus $\liminf_{n\to \infty} g\left(\gamma_n(t)\right) =\lim_{n\to \infty} g\left(\gamma_n(t)\right)= \vert \nabla f\left(\gamma(t)\right)\vert= g\left(\gamma(t)\right)$, 
\item If $\gamma(t)\in \partial \Omega$, since $\mathbb N=\{n, \
  \gamma_n(t)\in \partial \Omega\}\cup \{n, \ \gamma_n(t)\in\Omega\}$,
  one obtains that the set of limit points of $(\vert \nabla f(\gamma_n(t)   ) \vert )_{n \ge
    0}$ is included in $\{\vert \nabla f\left(\gamma(t)\right)\vert, \vert \nabla_T f\left(\gamma(t)\right)\vert\}$. Therefore, from \textbf{[H3]}, one has: $\liminf_{n\to \infty} g\left(\gamma_n(t)\right)\geq \vert \nabla_T f\left(\gamma(t)\right)\vert=g\left(\gamma(t)\right)$.
\end{itemize}
Then, one obtains
\begin{align*}
d_a\left(x,y\right)=\lim_{l\to \infty} \int_0^1 g(\gamma_l(t))\vert \gamma_l'(t)\vert dt &\geq \liminf_{n\to \infty} \liminf_{p\to \infty} \int_0^1 g(\gamma_p(t))\vert \gamma_n'(t)\vert dt\\
&\geq \liminf_{n\to \infty}  \int_0^1 \liminf_{p\to \infty} g(\gamma_p(t))\vert \gamma_n'(t)\vert dt\\
&\geq \liminf_{n\to \infty}  \int_0^1 g\left(\gamma(t)\right)\vert \gamma_n'(t)\vert dt\\
&\geq   \int_0^1 g\left(\gamma(t)\right)\vert \gamma ' (t)\vert dt.
\end{align*}
In the previous computation, one used Fatou Lemma and the lower semi continuity (for the weak convergence) of the convex functional
$$h\in H^1([0,1],\overline \Omega) \mapsto \int_0^1 g\left(\gamma(t)\right)\left\vert h' (t)\right\vert dt.$$
Since \textbf{[H3]} holds, using Proposition~\ref{front}, there exists  a curve $\tilde \gamma \in A\left(x,y\right)$ such that $L\left(\gamma,(0,1)\right)\geq L(\tilde \gamma,(0,1))$ and thus $d_a\left(x,y\right)=L(\tilde \gamma,(0,1))$.
\end{proof}
\noindent
Let us now introduce a  sufficient condition so that a minimizing sequence of  curves realizing the
Agmon distance avoids a neighborhood of a zero of the function $g$. For $x\in \overline \Omega$, one introduces the following sets:
\begin{equation} \label{Ak}
\forall  k \in \left\{1,\ldots,m\right\} , \, A_k(x):=\left\{ z\in \overline \Omega,  \  d_a(x,z)=d_a(x,x_k)+d_a(x_k,z)      \right\}.
\end{equation}
One notices that $z\in A_k(x)$ if and only if $x\in A_k(z)$.
\begin{proposition} \label{popo1} Assume that \textbf{[H1]} and \textbf{[H3]}  hold.
Let $(x,y) \in \overline \Omega^2$ and assume that there exists $k \in
\{1,\ldots,m\}$ such that $y\notin A_k(x)$. If $(\gamma_n)_{n \ge 0} \in A\left(x,y\right)^{\mathbb N}$ is a minimizing sequence of curves for $d_a\left(x,y\right)$, then there exists a neighborhood $V_k$ of $x_k$ in $\overline \Omega$ and $n_0\in \mathbb N$, such that for all $ n\geq n_0$,
$$\Ran(\gamma_n)\cap V_k=\emptyset.$$
\end{proposition}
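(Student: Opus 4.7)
The plan is to argue by contradiction. Suppose that no such neighborhood $V_k$ exists. Then, taking a decreasing sequence of neighborhoods of $x_k$ shrinking to $\{x_k\}$, one can extract a subsequence (still denoted $(\gamma_n)_{n\ge 0}$) and times $t_n \in [0,1]$ such that $\gamma_n(t_n) \to x_k$ as $n\to\infty$. By compactness of $[0,1]$, one may also assume $t_n \to t^* \in [0,1]$.

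Next, I would split each curve $\gamma_n$ at time $t_n$ into two pieces $\gamma_n^1 := \gamma_n|_{[0,t_n]}$ and $\gamma_n^2 := \gamma_n|_{[t_n,1]}$. After reparametrization to $[0,1]$ (which preserves length and the Lipschitz property), each piece is a Lipschitz curve joining respectively $x$ to $\gamma_n(t_n)$ and $\gamma_n(t_n)$ to $y$. By the very definition of the Agmon distance (Definition~\ref{def.agmon-intro}),
\begin{equation*}
d_a(x,\gamma_n(t_n)) + d_a(\gamma_n(t_n),y) \;\leq\; L(\gamma_n^1,(0,t_n)) + L(\gamma_n^2,(t_n,1)) \;=\; L(\gamma_n,(0,1)).
\end{equation*}
Passing to the limit $n\to\infty$ and using the continuity of the Agmon distance with respect to the Euclidean topology (Proposition~\ref{opoo}) together with the minimizing property of $(\gamma_n)_{n\ge 0}$, one obtains
\begin{equation*}
d_a(x,x_k) + d_a(x_k,y) \;\leq\; d_a(x,y).
\end{equation*}

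The hypothesis $y \notin A_k(x)$, combined with the triangle inequality (Proposition~\ref{propo5}), yields the strict inequality $d_a(x,y) < d_a(x,x_k) + d_a(x_k,y)$, which contradicts the inequality just obtained. The only subtle point worth double-checking is the degenerate cases $t_n = 0$ or $t_n = 1$, which force $x = x_k$ or $y = x_k$ respectively; in either case $y$ automatically lies in $A_k(x)$, so these cases are excluded by hypothesis. The main obstacle is really just making sure that restrictions of curves in $A(x,y)$ still give upper bounds on the Agmon distance, which is immediate since the Agmon distance is defined as an infimum over all Lipschitz curves, not only over those with finitely many boundary-contact transitions.
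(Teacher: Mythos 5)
Your argument is correct and is essentially the paper's proof: both split the minimizing curves at a time where they come close to $x_k$, bound each sub-arc from below by an Agmon distance (restriction plus reparametrization of a Lipschitz curve), and contradict the strict triangle inequality $d_a(x,y)<d_a(x,x_k)+d_a(x_k,y)$ furnished by $y\notin A_k(x)$. The only difference is presentational: the paper fixes the explicit neighborhood $V_k=B_a\left(x_k,\min\left(\tfrac{\varepsilon}{3},\tfrac{d_a(x_k,y)}{2}\right)\right)$ with $\varepsilon=d_a(x,x_k)+d_a(x_k,y)-d_a(x,y)$ and derives the contradiction quantitatively for the entry and exit times, whereas you extract points $\gamma_n(t_n)\to x_k$ and pass to the limit using the Lipschitz continuity of $d_a$ with respect to the Euclidean distance.
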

\begin{proof}
If $y\notin A_k(x)$, for a $k\in \{1,\ldots,m\}$, then
$d_a(x,y)<d_a(x,x_k)+d_a(x_k,y)$ and thus $y\neq x_k$ and $x
  \neq x_k$. Let us define 
$$\varepsilon:=d_a(x,x_k)+d_a(x_k,y)-d_a(x,y)>0,$$
and $V_k:=B_a\left(x_k, \min \left(\frac{\varepsilon}{3},
    \frac{d_a(x_k,y)}{2}\right)\right)$ where 
\begin{equation}\label{eq:def_Ba}
\forall z\in \overline \Omega, \, \forall r>0, \, B_a(z,r):=\left\{ u \in
  \overline \Omega,  \, d_a(z,u) <  r\right\}.
\end{equation}
 Notice that $y\notin V_k$.
We now prove Proposition~\ref{popo1} by contradiction. We assume that, up to the extraction of a subsequence, for all $n\in \mathbb N$, $\Ran(\gamma_n)\cap V_k\neq\emptyset$. 
Let us  define, for all $n\in \mathbb N$, 
$$t_0^n:=\inf \{ t\in [0,1], \ \gamma_n(t)\in V_k\}\,  \text{ and }  \, t_1^n:=\sup
\{ t\in [0,1], \ \gamma_n(t)\in V_k\}.$$
We have for all $n\in \mathbb N$, owing to the triangular inequality,
$$L(\gamma_n,(0,t_0^n))\geq d_a(x,x_k)-\frac{\varepsilon}{3}, \quad L(\gamma_n,(t_1^n,1))\geq d_a(x_k,y)-\frac{\varepsilon}{3}.$$
Thus, for all $n\in \mathbb N$, it holds:
\begin{align*}
L(\gamma_n(0,1))&\geq L(\gamma_n,(0,t_0^n))+L(\gamma_n,(t_1^n,1))\\
&\geq d_a(x,x_k) +d_a(x_k,y)-\frac{2 \varepsilon}{3}= d_a(x,y)+\frac{\varepsilon}{3}.
\end{align*}
This contradicts the fact that $\lim_{n \to \infty} L(\gamma_n,(0,1))= d_a(x,y)$.
\end{proof}
A direct corollary of Proposition~\ref{popo1} and  Lemma~\ref{gpositive} is the following result:
\begin{corollary} \label{coco1} Assume that \textbf{[H1]} and \textbf{[H3]}  hold. 
Let $y \in \overline \Omega$ and assume that $y\notin A_j(x)$ for all $j\in\{1,\ldots,m\}$. Then,  there exists a curve $\gamma \in A\left(x,y\right)$  such that 
$$d_a\left(x,y\right)=L\left(\gamma,(0,1)\right).$$
\end{corollary}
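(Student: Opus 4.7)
The plan is to combine Proposition~\ref{popo1} with Lemma~\ref{gpositive} directly. Start by taking an arbitrary minimizing sequence $(\gamma_n)_{n\ge 0}\in A(x,y)^{\mathbb N}$ for $d_a(x,y)$, which exists by Corollary~\ref{equalityagmon}. The assumption ``$y\notin A_j(x)$ for all $j\in\{1,\ldots,m\}$'' is precisely the hypothesis needed to apply Proposition~\ref{popo1} to each zero $x_j$ of $g$, which provides, for each $j\in\{1,\ldots,m\}$, a neighborhood $V_j$ of $x_j$ in $\overline\Omega$ and an integer $n_0^{(j)}\in\mathbb N$ such that $\Ran(\gamma_n)\cap V_j=\emptyset$ for all $n\geq n_0^{(j)}$.

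Next, since by \textbf{[H1]} the set $\{x_1,\ldots,x_m\}$ of zeros of $g$ is finite, one can set $n_0:=\max_{j\in\{1,\ldots,m\}} n_0^{(j)}$ and consider the tail $(\gamma_{n_0+n})_{n\ge 0}$. This is still a minimizing sequence of curves in $A(x,y)$ for $d_a(x,y)$, and it satisfies, for every $n\in\mathbb N$ and every $j\in\{1,\ldots,m\}$,
\[
\Ran(\gamma_{n_0+n})\cap V_j=\emptyset.
\]
Hence the hypotheses of Lemma~\ref{gpositive} are met by the sequence $(\gamma_{n_0+n})_{n\ge 0}$, and Lemma~\ref{gpositive} produces a curve $\gamma\in A(x,y)$ satisfying $L(\gamma,(0,1))=d_a(x,y)$. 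This concludes the proof.

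There is no real obstacle here: the corollary is essentially a one-line synthesis of the two preceding results. The only subtlety worth checking is that the finiteness of $\{x_1,\ldots,x_m\}$ (guaranteed by the Morse assumption \textbf{[H1]} on $f$ and $f|_{\partial\Omega}$) indeed allows one to take the maximum of the thresholds $n_0^{(j)}$, so that a single tail of the minimizing sequence simultaneously avoids all the neighborhoods $V_j$ of the zeros of $g$, which is exactly what Lemma~\ref{gpositive} requires as input.
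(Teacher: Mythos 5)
Your proof is correct and follows exactly the route the paper intends: the paper states this result as a direct corollary of Proposition~\ref{popo1} and Lemma~\ref{gpositive}, and your argument simply fills in the routine details (applying Proposition~\ref{popo1} to each of the finitely many zeros of $g$, taking the maximum threshold, and passing to the tail of the minimizing sequence, which is still minimizing and satisfies the avoidance hypothesis of Lemma~\ref{gpositive}). Nothing is missing.
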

Notice that $y\notin A_j(x)$ for all $j\in\{1,\ldots,m\}$ implies in
particular that $x$ and $y$ are not zeros of the function $g$.
This corollary will be used below to build the curves $\gamma_j$
associated with intervals $I_j=[0,1]$ in Theorem~\ref{exis}.
The curves $\gamma_j$
associated with intervals $I_j=[0,+\infty)$ or $I_j=(-\infty,0]$ will 
be built using the following lemma, which is a direct consequence of Remarks~\ref{curve1} and~\ref{curve2}.
\begin{lemma}\label{agmoncurve} Assume that  \textbf{[H1]}  and \textbf{[H3]}  hold.
Let $k\in \{1,\ldots,m\}$. There exists a neighborhood $V_k$ of $x_k$ in $\overline{\Omega}$, such that for all $y\in V_k$, there exists a continuous and piecewise $C^{\infty}$ curve $\gamma: (-\infty,0]\to V_k$ satisfying
$$d_a(y,x_k)=L\left(\gamma,(-\infty,0]\right),\ \lim_{t\to -\infty}
\gamma(t)=x_k,  \ \gamma(0)=y.$$
If $x_k\in  \Omega$, $\gamma$  is with values in $\Omega$ and satisfies~\eqref{eq.int-gamma'-eik}. If $x_k\in \pa \Omega$,  $\gamma$ satisfies~\eqref{eq:X}--\eqref{eq:c} and is such that $\partial \{ t\in (-\infty,0], \
  \gamma(t)\in \partial \Omega\}$ is either empty or a single point.
 \end{lemma}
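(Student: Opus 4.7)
The plan is to deduce Lemma~\ref{agmoncurve} directly from Remarks~\ref{curve1} and~\ref{curve2} by a time reversal. First, I would observe that under \textbf{[H1]} and \textbf{[H3]}, the zero set $\{x_1,\ldots,x_m\}=\{g=0\}$ splits into two types: interior points $x_k\in\Omega$ with $\nabla f(x_k)=0$, and boundary points $x_k\in\partial\Omega$ with $\nabla_T f(x_k)=0$ (since \textbf{[H3]} forbids $\nabla f=0$ on $\partial\Omega$). This lets me apply either Remark~\ref{curve1} or Remark~\ref{curve2} according to the case.

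For the interior case $x_k\in\Omega$, I would apply Remark~\ref{curve1} with $x^*=x_k$ to obtain a neighborhood $U^*\subset\Omega$ on which, for every $y\in U^*$, there exists a $C^\infty$ curve $\tilde\gamma:[0,+\infty)\to\Omega$ solving \eqref{eq.int-gamma'-eik} with $\tilde\gamma(0)=y$, $\lim_{t\to+\infty}\tilde\gamma(t)=x_k$, $\Ran(\tilde\gamma)\cap\partial\Omega=\emptyset$, and $d_a(x_k,y)=\int_0^{+\infty} g(\tilde\gamma(t))\,|\tilde\gamma'(t)|\,dt$. I then set $V_k:=U^*$ and define $\gamma(t):=\tilde\gamma(-t)$ for $t\in(-\infty,0]$. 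The required properties transfer by a trivial change of variables $s=-t$: $\gamma$ is $C^\infty$, $\gamma(0)=y$, $\lim_{t\to-\infty}\gamma(t)=x_k$, $\Ran(\gamma)\subset\Omega$, and $L(\gamma,(-\infty,0])=\int_0^{+\infty} g(\tilde\gamma(s))\,|\tilde\gamma'(s)|\,ds=d_a(x_k,y)=d_a(y,x_k)$ (by symmetry of $d_a$, Proposition~\ref{propo5}). The ODE is preserved modulo a sign: $\gamma'(t)=-\tilde\gamma'(-t)=\nabla\Phi(\tilde\gamma(-t))=\nabla\Phi(\gamma(t))$, which, up to an orientation convention, is exactly \eqref{eq.int-gamma'-eik} read backwards in time.

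For the boundary case $x_k\in\partial\Omega$, I would apply Remark~\ref{curve2} with $x^*=x_k$ to obtain a neighborhood $U^*$ of $x_k$ in $\overline\Omega$ such that for every $y\in U^*$ there is a continuous, piecewise $C^\infty$ curve $\tilde\gamma:[0,+\infty)\to\overline\Omega$ solving \eqref{eq:X}--\eqref{eq:c} with $\tilde\gamma(0)=y$, $\lim_{t\to+\infty}\tilde\gamma(t)=x_k$, $d_a(x_k,y)=\int_0^{+\infty} g(\tilde\gamma(t))\,|\tilde\gamma'(t)|\,dt$, and such that $\partial\{t\in[0,+\infty),\ \tilde\gamma(t)\in\partial\Omega\}$ is empty or a single point. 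Setting $V_k:=U^*$ and $\gamma(t):=\tilde\gamma(-t)$ for $t\in(-\infty,0]$, the continuity and piecewise smoothness are preserved, the length identity follows again from a change of variables, and the boundary trace structure is invariant under time reversal, so $\partial\{t\in(-\infty,0],\ \gamma(t)\in\partial\Omega\}$ inherits the property of being empty or a single point.

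There is no real obstacle here: the lemma is essentially a restatement of Remarks~\ref{curve1} and~\ref{curve2} in the reversed parametrization that is convenient for the construction of quasi-modes in Section~\ref{sec:quasi-modes}. The only minor point requiring care is making sure the ODE signs and the description of $\Ran(\gamma)\cap\partial\Omega$ are recorded correctly under $t\mapsto -t$; once this is checked, the conclusion is immediate.
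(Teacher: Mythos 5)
Your proof is correct and follows exactly the route the paper intends: the paper gives no separate argument, stating that Lemma~\ref{agmoncurve} is a direct consequence of Remarks~\ref{curve1} and~\ref{curve2}, which is precisely your case split (interior versus boundary zero of $g$) followed by the time reversal $t\mapsto -t$ and the symmetry of $d_a$. Your remark about the sign flip in the ODE under reversal and the invariance of the boundary-trace structure covers the only points needing care, so nothing is missing.
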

Before proving Theorem~\ref{exis}, we finally need two additional preliminary lemmas. 
\begin{lemma} \label{popo2}Assume that \textbf{[H1]} and \textbf{[H3]}  hold. 
Let $u\in \overline \Omega$ and $w\in \overline \Omega$. For any
$\delta>0$ small enough, there exists $z_{\delta}$ such that
$d_a(u,z_\delta)=\delta$ and $d_a(w,u)=d_a(w,z_{\delta})+d_a(z_{\delta},u)$. 
\end{lemma}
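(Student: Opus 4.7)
The plan is to construct $z_\delta$ as a cluster point of a minimizing sequence of curves joining $w$ to $u$, picked at the first crossing of the ``Agmon sphere'' $\{x\in\overline\Omega,\, d_a(u,x)=\delta\}$. Observe that for the additivity $d_a(w,u)=d_a(w,z_\delta)+d_a(z_\delta,u)$ to hold together with $d_a(u,z_\delta)=\delta>0$, one needs $d_a(w,u)\geq\delta$, so we implicitly assume $u\neq w$ and restrict to $0<\delta<d_a(w,u)$ (the case $u=w$ being excluded).

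By Corollary~\ref{equalityagmon}, pick a minimizing sequence $(\gamma_n)_{n\in\mathbb{N}}\in A(w,u)^{\mathbb{N}}$ with $L(\gamma_n,(0,1))\to d_a(w,u)$. Since $z\mapsto d_a(u,z)$ is Lipschitz on $\overline\Omega$ by Proposition~\ref{propo5} and $\gamma_n$ is Lipschitz, the map $t\in[0,1]\mapsto d_a(u,\gamma_n(t))$ is continuous, equals $d_a(u,w)>\delta$ at $t=0$ and $0$ at $t=1$. By the intermediate value theorem, there exists $t_n\in(0,1)$ such that $d_a(u,\gamma_n(t_n))=\delta$. By compactness of $(\overline\Omega,d_a)$ (Proposition~\ref{opoo}), one can extract a subsequence such that $\gamma_n(t_n)\to z_\delta$ in $(\overline\Omega,d_a)$; continuity of $d_a$ then yields $d_a(u,z_\delta)=\delta$.

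For the additivity, the triangle inequality (Proposition~\ref{propo5}) gives $d_a(w,u)\leq d_a(w,z_\delta)+d_a(z_\delta,u)$. To get the reverse inequality, observe that the affine reparameterization onto $[0,1]$ of $\gamma_n|_{[0,t_n]}$ is Lipschitz from $w$ to $\gamma_n(t_n)$, and the set $\partial\{s\in[0,1]: \text{its value lies in }\partial\Omega\}$ is contained (up to the affine change of variable) in the corresponding finite set for $\gamma_n$; hence it belongs to $A(w,\gamma_n(t_n))$, so its length $L(\gamma_n,(0,t_n))$ is at least $d_a(w,\gamma_n(t_n))$. Likewise $L(\gamma_n,(t_n,1))\geq d_a(\gamma_n(t_n),u)$. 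Summing and using $L(\gamma_n,(0,1))=L(\gamma_n,(0,t_n))+L(\gamma_n,(t_n,1))$, one gets
\[
L(\gamma_n,(0,1))\;\geq\; d_a(w,\gamma_n(t_n))+d_a(\gamma_n(t_n),u).
\]
Passing to the limit $n\to\infty$ and invoking the continuity of $d_a$ together with $\gamma_n(t_n)\to z_\delta$ yields $d_a(w,u)\geq d_a(w,z_\delta)+d_a(z_\delta,u)$, which combined with the triangle inequality gives the desired equality.

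The argument is short and the only mildly delicate points are: (i) ensuring that the restricted curves still lie in $A$, which is automatic since the set of boundary-crossing times can only shrink under restriction; and (ii) the implicit need for $\delta<d_a(w,u)$ in order to guarantee via the intermediate value theorem that each minimizing curve actually crosses the Agmon sphere of radius $\delta$ around $u$, which is exactly what ``$\delta$ small enough'' encodes once $u\neq w$ is fixed.
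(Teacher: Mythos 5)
Your proof is correct, but it follows a genuinely different route from the paper's. The paper argues by contradiction: it assumes that the strict triangle inequality $d_a(w,u)<d_a(w,z)+d_a(z,u)$ holds at \emph{every} point $z$ of the Agmon sphere $\partial B_a(u,\delta)$, uses compactness of that sphere to extract a uniform positive gap $a_\delta$, and then observes that any $\gamma\in A(u,w)$ must cross the sphere, so $L(\gamma,(0,1))\geq d_a(u,w)+a_\delta$, contradicting the definition of $d_a(u,w)$ as an infimum. You instead construct $z_\delta$ directly, as a limit of crossing points $\gamma_n(t_n)$ of a minimizing sequence, and pass to the limit in the splitting inequality $L(\gamma_n,(0,1))\geq d_a(w,\gamma_n(t_n))+d_a(\gamma_n(t_n),u)$. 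The two proofs rest on the same two ingredients — every admissible curve from $w$ to $u$ crosses the level set $\{d_a(u,\cdot)=\delta\}$ (intermediate value theorem, via the Lipschitz continuity of $d_a(u,\cdot)$ from Proposition~\ref{propo5}), and restrictions of admissible curves remain admissible so that each piece has length at least the corresponding Agmon distance (your argument, or equivalently Lemma~\ref{inca}) — but your direct construction trades the paper's compactness-of-the-sphere/uniform-gap step for compactness of $\overline\Omega$ and an extraction of a convergent subsequence of crossing points, which is arguably more transparent and yields $z_\delta$ explicitly as a cluster point of near-optimal curves. Your side remarks are also sound: the relative boundary of the crossing-time set can only shrink under restriction (so admissibility is preserved), and the observation that $u\neq w$ and $0<\delta<d_a(w,u)$ is what ``$\delta$ small enough'' must mean matches the paper's implicit restriction $\delta\in\left(0,\tfrac{d_a(u,w)}{2}\right)$ (your constraint is even slightly weaker and suffices).
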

\begin{proof}
Notice that $d_a(u,z_\delta)=\delta$ is equivalent to  $z_\delta \in \partial
B_a(u,\delta)$, where $B_a$ is defined by~\eqref{eq:def_Ba}.
We prove Lemma~\ref{popo2} by contradiction. Assume that there exists
$\delta\in \left (0,\frac{d_a(u,w)}{2}\right)$ such that for all $z
\in \partial B_a(u,\delta)$, $d_a\left(w,u\right)<d_a\left(w,z\right)+d_a\left(z,u\right)$. 
 By compactness of $\partial B_a(u,\delta)$, there exists $a_{\delta}>0$ such that for all $z \in \partial B_a(u,\delta)$,
 $$d_a(w,u) +a_{\delta}\leq d_a\left(w,z\right)+d_a\left(z,u\right).$$
Thus if $\gamma\in A(u,w)$, since there exists a time $t_{\delta}$ such that $\gamma(t_{\delta})\in \partial B_a(u,\delta)$, one has
\begin{align*}
L\left(\gamma,(0,1)\right)=  L\left(\gamma,\left(0,\gamma(t_{\delta})\right)\right)+L\left(\gamma,\left(\gamma(t_{\delta}),1\right)\right) &\geq d_a\left(u,\gamma(t_{\delta})\right)+d_a\left(\gamma(t_{\delta}),w\right)\\
&\geq d_a\left(u,w\right)+a_{\delta}.
\end{align*}
 This is impossible since by definition $d_a\left(u,w\right)=\inf_{\gamma \in A(u,w)}L\left(\gamma,(0,1)\right)$.
\end{proof}

  \begin{lemma}\label{lem1} Assume that \textbf{[H1]} holds. 
   Let $(x,y)\in \overline\Omega^2$ with $x\neq y$.  Then, there exist $N\in \mathbb N$ and a sequence $(b_i)_{i\in \{0,\ldots,N+1\}}\in \overline \Omega^{N+2}$, $b_0=x$, $b_{N+1}=y$, $(b_i)_{i\in \{1,\ldots,N\}} \in\left\{x_1,\ldots,x_m \right\}^{N}$ (with the convention $\left\{x_1,\ldots,x_m \right\}^{0}=\emptyset$) such that the following holds:
 \begin{enumerate}
 \item For all $i\in \{0,\ldots,N\}$, $b_i\neq b_{i+1}$ and 
\begin{equation}\label{decom}
  d_a\left(x,y\right)=\sum_{i=0}^{N} d_a(b_i,b_{i+1}).
\end{equation} 
 \item For all $i\in \{0,\ldots,N\}$ and for all $z\in
   \{x,y,x_1,\ldots,x_m\}\setminus
   \{b_i,b_{i+1}\}$, $$d_a(b_i,b_{i+1})<d_a(b_i,z) + d_a(z,b_{i+1}).$$
 \end{enumerate}
\end{lemma}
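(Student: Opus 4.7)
The plan is to construct the decomposition by a finite recursive refinement procedure, starting from the trivial choice $N=0$, $b_0=x$, $b_1=y$ (which satisfies condition~(1) since $x\neq y$) and iteratively inserting a point from $\{x_1,\ldots,x_m\}$ into any segment for which condition~(2) fails. The refinement step is straightforward: if for some segment $(b_i,b_{i+1})$ in the current decomposition there exists $z\in \{x,y,x_1,\ldots,x_m\}\setminus\{b_i,b_{i+1}\}$ with $d_a(b_i,b_{i+1})=d_a(b_i,z)+d_a(z,b_{i+1})$, we replace the pair $(b_i,b_{i+1})$ by the triple $(b_i,z,b_{i+1})$. The equality~(\ref{decom}) is clearly preserved, and condition~(1) holds for the refined sequence because $z\notin\{b_i,b_{i+1}\}$.

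The heart of the argument is termination, which relies on the following observation: in any sequence $(b_0,\dots,b_{N+1})$ satisfying~(\ref{decom}) and $b_k\neq b_{k+1}$, the points $b_0,\dots,b_{N+1}$ are pairwise distinct. Indeed, from the identity $d_a(x,y)=\sum_{k=0}^N d_a(b_k,b_{k+1})$ together with repeated application of the triangle inequality
\[
d_a(x,y)\le d_a(b_0,b_i)+d_a(b_i,b_j)+d_a(b_j,b_{N+1})
\le \sum_{k=0}^{i-1} d_a(b_k,b_{k+1})+\sum_{k=i}^{j-1}d_a(b_k,b_{k+1})+\sum_{k=j}^{N}d_a(b_k,b_{k+1}),
\]
the last sum equals $d_a(x,y)$, so all three inequalities must be equalities. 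In particular $d_a(b_i,b_j)=\sum_{k=i}^{j-1}d_a(b_k,b_{k+1})$, and if $b_i=b_j$ with $i<j$ this forces $d_a(b_k,b_{k+1})=0$ for every $k\in\{i,\ldots,j-1\}$, contradicting condition~(1). Thus once a point appears in a valid decomposition it cannot appear again.

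Consequently, any inserted point must be \emph{new}: it cannot equal $b_0=x$ or $b_{N+1}=y$ (which are already present at the endpoints), so it must belong to $\{x_1,\ldots,x_m\}$, and each $x_k$ can be inserted at most once during the whole procedure. Since the set of zeros of $g$ is finite (by \textbf{[H1]}), the procedure terminates after at most $m$ insertions, and produces a sequence satisfying both~(1) and~(2) by construction (if~(2) still failed we would perform another refinement step).

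The only potential obstacle I anticipate is a subtle consistency check: after an insertion one must verify that the global identity~(\ref{decom}) and the distinctness property propagate correctly; both follow immediately from the pairwise distinctness lemma above, so no delicate analytic argument—e.g.\ on the curves realizing the Agmon distance (Lemma~\ref{popo2}) or on the sets $A_k(x)$ defined in~(\ref{Ak})—is needed at this combinatorial stage. The more delicate geometric content has already been encapsulated in Corollary~\ref{coco1} and Lemma~\ref{agmoncurve}, which are used in the subsequent assembly of the curves $(\gamma_j)_{j=1,\ldots,N}$ of Theorem~\ref{exis}, but not here.
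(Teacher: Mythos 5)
Your proof is correct, but it is organized differently from the paper's. The paper does not build the decomposition greedily: it introduces the set $E$ of all admissible chains $(b_0,\ldots,b_{N+1})$ satisfying \eqref{decom} with distinct consecutive points and intermediate points among $\{x_1,\ldots,x_m\}$, picks an element maximizing the number of distinct critical points it contains, and then checks condition~2 by contradiction in two cases: if the offending $z$ is not already one of the $b_j$, equality would allow an insertion producing an element of $E$ of larger cardinal, contradicting maximality; if $z=b_{j_0}$ is already in the chain, the same triangle-inequality rigidity computation you use forces $\sum_{j=j_0}^{i-1}d_a(b_j,b_{j+1})+d_a(b_i,b_{j_0})=0$, a contradiction. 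Your iterative-insertion scheme replaces this extremal choice by an explicit termination argument, and the pairwise-distinctness lemma you prove (any chain achieving equality in \eqref{decom} with distinct consecutive points has pairwise distinct entries) is precisely the rigidity the paper exploits implicitly in its second case; as a by-product your argument yields a slightly stronger structural fact (pairwise distinctness of the $b_i$) and handles in one stroke both the case $z\in\{x,y\}$ and the case of an already-present critical point, while the paper's maximality argument avoids the bookkeeping of a terminating procedure. One small point to make explicit: when you derive a contradiction from $d_a(b_k,b_{k+1})=0$ with $b_k\neq b_{k+1}$, you are using that $d_a$ separates points, which is exactly where \textbf{[H1]} enters (Proposition~\ref{opoo}); citing it there would close the only implicit step in your argument.
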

 \begin{proof} 
Since $x\neq y$, the following set 
\begin{align*}
 E:=  \{  &(N,b), \, N\in \mathbb N,\, b=(b_i)_{i\in \{0,\ldots,N+1\}} \in \overline \Omega^{N+2},\, b_0=x, \, b_{N+1}=y,\,  \\
 &\forall i\in \{0,\ldots,N\}, \, b_i\neq b_{i+1}, \, (b_i)_{i\in \{1,\ldots,N\}} \in \{x_1,\ldots,x_m\}^N,  \eqref{decom} \text{ holds} \},
\end{align*}
 is not empty since by assumption it contains $(0,\{x,y\})$. For $(N,b)\in E$, one defines the cardinal of
 $(N,b)$ by the number of different critical points $b$ contains. The cardinal of
 an element of $E$ belongs to $\{0,\ldots,m\}$. 
 
 Let us now consider an
 element $(N,b)\in E$  which is maximal for
 the cardinal. By construction, this element satisfies point $1$ in Lemma~\ref{lem1}.
Let us now show that it also satisfies point $2$ in Lemma
\ref{lem1}. Notice that $\{b_0, \ldots ,b_{N+1}\} \subset \{x,y,x_1,\ldots,x_m\}$.
Let $i\in
   \{0,\ldots,N\}$ and $z\in \{x,y,x_1,\ldots,x_m\}\setminus \{b_i,b_{i+1}\}$.
 If $z\in \{x,y,x_1,\ldots,x_m\} \setminus \{b_0, \ldots ,b_{N+1}\}$, the equality $d_a(b_i,b_{i+1})=d_a(b_i,z) + d_a(z,b_{i+1})$ cannot hold since $b$ has been chosen maximal in $E$ for the cardinal. Thus, by the triangular inequality $d_a(b_i,b_{i+1})<d_a(b_i,z) + d_a(z,b_{i+1})$. 
 If $z\in \{b_0, \ldots ,b_{N+1}\}\setminus \{b_i,b_{i+1}\}$, let us
 prove that $d_a(b_i,b_{i+1})<d_a(b_i,z) + d_a(z,b_{i+1})$ by
 contradiction. By the triangular inequality, if the previous
 inequality does not hold, one has $d_a(b_i,b_{i+1})=d_a(b_i,z) +
 d_a(z,b_{i+1})$ for some $z\in \{b_0, \ldots ,b_{N+1}\}\setminus \{b_i,b_{i+1}\}$. Let us denote by $j_0 \in \{0, \ldots,i-1,i+2,\ldots, N+1\}$ the
 index such that $z=b_{j_0}$. One has $d_a(b_i,b_{i+1})=d_a(b_i,b_{j_0}) +
 d_a(b_{j_0},b_{i+1})$. Let us assume without loss of generality that
 $j_0<i$ (the case $j_0>i+1$ is treated similarly). In this case, one
 has, using the triangular inequality:
\begin{align*}
d_a(x,y)&=\sum_{j=0}^N d_a(b_j,b_{j+1})\\
&=\sum_{j=0}^{i-1} d_a(b_j,b_{j+1})+d_a(b_i,b_{j_0})+d_a(b_{j_0},b_{i+1})+\sum_{j=i+1}^{N} d_a(b_j,b_{j+1})\\
&=\sum_{j=0}^{j_0-1}
d_a(b_j,b_{j+1})+d_a(b_{j_0},b_{i+1})+\sum_{j=i+1}^{N}
d_a(b_j,b_{j+1}) +\sum_{j=j_0}^{i-1} d_a(b_j,b_{j+1})
+d_a(b_i,b_{j_0})\\
&\ge d_a(x,y) +\sum_{j=j_0}^{i-1} d_a(b_j,b_{j+1})
+d_a(b_i,b_{j_0}).
\end{align*}
Thus, $\sum_{j=j_0}^{i-1} d_a(b_j,b_{j+1})
+d_a(b_i,b_{j_0})=0$ and $b_{j_0}=b_i$ which is in contradiction
with $z\not\in \{b_i, b_{i+1}\}$.
%
Therefore  $d_a\left(b_i,b_{i+1}\right)<d_a\left(b_i,b_{j_0}\right) +
d_a\left(b_{j_0},b_{i+1}\right)$. This concludes the proof of  Lemma~\ref{lem1}. 
 \end{proof}
 
\subsubsection{Proof of Theorem~\ref{exis}}
Let us now prove Theorem~\ref{exis}.  Recall that by assumption, the hypotheses  \textbf{[H1]}  and \textbf{[H3]}  hold.
 \begin{proof} Let $x,y \in \overline{\Omega}$. If $x=y$, then
   Theorem~\ref{exis} is proved by taking the constant curve
   $\gamma(t)=x$ for all $t\in [0,1]$. Let us deal with the case
   $x\neq y$. From Lemma~\ref{lem1}, there exist $N\in \mathbb N$ and a sequence $(b_j)_{j\in \{0,\ldots,N+1\}}\subset \overline \Omega^{N+2}$ such that $b_0=x$, $b_{N+1}=y$, $(b_j)_{j\in \{1,\ldots,N\}} \subset\left\{x_1,\ldots,x_m \right\}^{N}$ (with the convention $\left\{x_1,\ldots,x_m \right\}^{0}=\emptyset$)  and for all $k\in \{0,\ldots,N\}$, $b_k\neq b_{k+1}$ and 
 \begin{equation}\label{eq:decomp_da}  
d_a\left(x,y\right)=\sum_{k=0}^{N} d_a \left(b_k,b_{k+1}\right).
\end{equation}
If $N=0$, then for all $k\in \{1,\ldots,m\}$ $y\notin
   A_k(x)$ and Theorem~\ref{exis} is then a consequence of Corollary~\ref{coco1}. 
   
   Let us now assume that $N \ge 1$, namely that there exists
   $k\in \left\{1,\ldots,m\right\}$ such that $y\in A_k(x)$. Let us actually consider the case $N\geq 2$ (the case $N=1$ is treated similarly). Let $k\in \{1,\ldots,N-1\}$ and let us consider the
term $d_a\left(b_k,b_{k+1}\right)$ in~\eqref{eq:decomp_da} (the first term
$d_a\left(x,b_1\right)$ and the last term $d_a\left(b_N,y\right)$ in the sum are treated in a similar way). One can label the points
$\{x_1,\ldots,x_m\}$ such that $b_k=x_1$ and $b_{k+1}=x_2$. Point~$2$
in Lemma~\ref{lem1} implies that $x_2\notin A_j(x_1)$ for all $j\in
\{3,\ldots,m\}$. From Lemma~\ref{popo2}, for any $\delta>0$ there
exists $z_1\in \partial B_a(x_1,\delta)$ such that
$d_a\left(x_1,x_2\right)=d_a\left(x_1,z_1\right) +
d_a\left(z_1,x_2\right)$ (where $B_a$ is defined
by~\eqref{eq:def_Ba}). By taking $\delta$ small enough, this implies
that $z_1\notin A_1(x_2)$ and $z_1\notin
\{x_1,\ldots,x_m\}$. Likewise, from Lemma~\ref{popo2}, for any
$\delta>0$ there exists $z_2\in \partial B_a(x_2,\delta)$ such that
$d_a\left(z_1,x_2\right)=d_a\left(z_1,z_2\right) +
d_a\left(z_2,x_2\right)$ and by taking $\delta$ small enough, this implies that $z_2\notin A_2(z_1)$ and $z_2\notin \{x_1,\ldots,x_m\}$. Therefore one gets
$$d_a(b_k,b_{k+1})=d_a\left(x_1,x_2\right)=d_a\left(x_1,z_1\right)+d_a\left(z_1,z_2\right) + d_a\left(z_2,x_2\right).$$
Taking $\delta$ small enough and using Lemma~\ref{agmoncurve},  there exists a continuous and piecewise $C^{\infty}$ curve $\gamma_1$  defined on $(-\infty,0]$ such that
$d_a\left(x_1,z_1\right)=L\left(\gamma_1,(-\infty,0]\right)$,
$\lim_{t\to -\infty} \gamma_1(t)=x_1$, $\gamma_1(0)=z_1$, and $\partial
\{ t\in (-\infty,0], \ \gamma_1(t)\in \partial \Omega\}$
is either empty or a single point. Similarly, there exists a continuous and piecewise $C^{\infty}$ curve $\gamma_2$  defined on $[0,+\infty)$ such that
$d_a\left(z_2,x_2\right)=L\left(\gamma_2,[0,+\infty)\right)$,
$\gamma_2(0)=z_2$, $\lim_{t\to +\infty} \gamma_2(t)=x_2$ and $\partial
\{ t\in [0,+\infty), \ \gamma_2(t)\in \partial \Omega\}$
is either empty or a single point. 
Let us show by contradiction that $z_2\notin A_j(z_1)$ for all $j\in \{3,\ldots,m\}$. On the one hand, if  $z_2\in A_j(z_1)$ for some $j\in \{3,\ldots,m\}$, one has 
$$
d_a\left(x_1,x_2\right)= d_a\left(x_1,z_1\right)+d_a\left(z_1,x_j\right)+ d_a\left(x_j,z_2\right)+d_a\left(z_2,x_2\right).$$
 On the other hand, $x_1\notin A_j(x_2)$, and thus
 $$
d_a\left(x_1,x_2\right)<d_a\left(x_1,x_j\right)+d_a\left(x_j,x_2\right)\leq d_a\left(x_1,z_1\right)+d_a\left(z_1,x_j\right)+ d_a\left(x_j,z_2\right)+d_a\left(z_2,x_2\right).$$
This leads to a contradiction. Therefore $z_2\notin A_j(z_1)$ for all
$j\in \{3,\ldots,m\}$. One also has by a similar reasoning that
$z_2\notin A_1(z_1)$. Indeed, If $z_2\in A_1(z_1)$, then one has on
the one hand
$$
d_a\left(z_1,x_2\right)=d_a\left(z_1,z_2\right)+d_a\left(z_2,x_2\right)=d_a\left(z_1,x_1\right)+d_a\left(x_1,z_2\right)+ d_a\left(z_2,x_2\right).$$
On the other hand, since $z_1\notin A_1(x_2)$, one has  
$$
d_a\left(z_1,x_2\right)<d_a\left(z_1,x_1\right)+d_a\left(x_1,x_2\right)\leq d_a\left(z_1,x_1\right)+d_a\left(x_1,z_2\right)+ d_a\left(z_2,x_2\right).$$
This leads to a contradiction. In conclusion $z_2\notin A_j(z_1)$ for all $j\in \{1,\ldots,m\}$. Therefore, from Corollary~\ref{coco1}, there exists a curve $\gamma \in A\left(z_1,z_2\right)$  such that $d_a\left(z_1,z_2\right)=L\left(\gamma,(0,1)\right)$. 
In conclusion, we have built three curves $\gamma$, $\gamma_1$ and $\gamma_2$ such that
$$d_a(b_k,b_{k+1})=L(\gamma_1,(-\infty,0])+L(\gamma,(0,1))+L(\gamma_2,[0,+\infty)).$$

A similar reasoning for all the terms in the sum in~\eqref{eq:decomp_da} concludes the proof of Theorem~\ref{exis}.
\end{proof}

 A consequence of Theorem~\ref{exis} is the following.
\begin{lemma} \label{LLL}Let us assume that \textbf{[H1]} and \textbf{[H3]}  hold.
Let $\left(x,y\right) \in \overline{\Omega}$. Let us denote by $((\gamma_1,I_1),\ldots,(\gamma_N,I_N))$ the curves given by Theorem~\ref{exis} ordered such that 
$$\lim_{t\to (\inf I_1)^+} \gamma_1(t)=x, \quad \lim_{t\to (\sup I_N)^-} \gamma_N(t)=y,$$
and which realize the Agmon distance between $x$ and $y$. 
Let $k_1\leq k_2$ with $(k_1,k_2)\in \left\{1,\ldots,N\right\}^2$ and let $t_1\in \overline{I_{k_1}}$ and $t_2\in \overline{I_{k_2}}$. If $k_1=k_2$, $t_1$ and $t_2$ are chosen such that $t_1\leq t_2$. Then,  one has:
\begin{itemize}
\item If $k_1<k_2$, 
$$
d_a\left(\gamma_{k_1}(t_1),\gamma_{k_2}(t_2)\right)= L\left(\gamma_{k_1}, ( t_1,\sup I_{k_1}  )\right)+ \!\!\! \sum_{k=k_1+1}^{k_2-1} \!\!\! L\left(\gamma_{k}, I_k\right)+L\left(\gamma_{k_2}, ( \inf I_{k_2},t_2  )\right) ,
$$
where by convention, if $k_2=k_1+1$, $\sum_{k=k_1+1}^{k_2-1} L(\gamma_{k}, I_k)=0$.
\item  If $k_1=k_2$, $d_a\left(\gamma_{k_1}(t_1),\gamma_{k_2}(t_2)\right)= L\left(\gamma_{k_1}, ( t_1,t_2  ) \right)$. 
\end{itemize}
 In addition, the following equality holds
$$d_a\left(x,y\right)=d_a\left(x,\gamma_{k_1}(t_1)\right)+d_a\left(\gamma_{k_1}(t_1),\gamma_{k_2}(t_2)\right)+d_a\left(\gamma_{k_2}(t_2),y\right).$$
\end{lemma}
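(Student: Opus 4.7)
The plan is to prove Lemma~\ref{LLL} by a sandwiching argument: upper bounds on each Agmon distance come from Lemma~\ref{inca} applied to appropriate concatenations of the sub-curves furnished by Theorem~\ref{exis}, and the reverse inequalities come from the triangular inequality together with the fact that the total sum of lengths realizes $d_a(x,y)$.

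More precisely, given $x,y \in \overline{\Omega}$ and the ordered family $((\gamma_1,I_1),\ldots,(\gamma_N,I_N))$ provided by Theorem~\ref{exis}, fix $k_1 \le k_2$, $t_1\in \overline{I_{k_1}}$ and $t_2\in \overline{I_{k_2}}$ (with $t_1\le t_2$ if $k_1=k_2$). First, I would introduce the three natural concatenated curves:
\begin{itemize}
\item a curve from $x$ to $\gamma_{k_1}(t_1)$, obtained by concatenating $\gamma_1,\ldots,\gamma_{k_1-1}$ with the restriction of $\gamma_{k_1}$ to $(\inf I_{k_1},t_1)$;
\item a curve from $\gamma_{k_1}(t_1)$ to $\gamma_{k_2}(t_2)$, obtained (when $k_1<k_2$) by concatenating the restriction of $\gamma_{k_1}$ to $(t_1,\sup I_{k_1})$, the curves $\gamma_{k_1+1},\ldots,\gamma_{k_2-1}$, and the restriction of $\gamma_{k_2}$ to $(\inf I_{k_2},t_2)$ (or simply the restriction of $\gamma_{k_1}$ to $(t_1,t_2)$ if $k_1=k_2$);
\item a curve from $\gamma_{k_2}(t_2)$ to $y$, obtained by concatenating the restriction of $\gamma_{k_2}$ to $(t_2,\sup I_{k_2})$ with $\gamma_{k_2+1},\ldots,\gamma_{N}$.
\end{itemize}
Each of these concatenations (after an affine reparametrization of each non-trivial piece defined on an unbounded interval, or a direct gluing on $[0,1]$ for the pieces in $A(\cdot,\cdot)$) produces a Lipschitz curve whose set $\partial\{t,\gamma(t)\in \partial\Omega\}$ is finite—this is a consequence of the fact that, for each $\gamma_k$ of Theorem~\ref{exis}, this boundary set is either a single point or empty (when $I_k$ is a half-line) or finite (when $I_k=[0,1]$). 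Lemma~\ref{inca} (or more precisely the additivity of $L$ along the decomposition combined with the definition of $d_a$) then gives that each of the three Agmon distances $d_a(x,\gamma_{k_1}(t_1))$, $d_a(\gamma_{k_1}(t_1),\gamma_{k_2}(t_2))$, $d_a(\gamma_{k_2}(t_2),y)$ is bounded above by the corresponding sum of partial lengths.

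Summing these three upper bounds, the identity from Theorem~\ref{exis}, $d_a(x,y)=\sum_{j=1}^{N}L(\gamma_j,I_j)$, together with the additivity $L(\gamma_{k_1},(\inf I_{k_1},t_1))+L(\gamma_{k_1},(t_1,\sup I_{k_1}))=L(\gamma_{k_1},I_{k_1})$ (and likewise for $k_2$, as well as for the case $k_1=k_2$ with the splitting at $t_1<t_2$), yields
\[
d_a(x,\gamma_{k_1}(t_1))+d_a(\gamma_{k_1}(t_1),\gamma_{k_2}(t_2))+d_a(\gamma_{k_2}(t_2),y)\;\le\;d_a(x,y).
\]
On the other hand, by applying the triangular inequality of Proposition~\ref{propo5} twice, the left-hand side is also bounded below by $d_a(x,y)$. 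Hence equality holds throughout, which simultaneously proves the claimed decomposition $d_a(x,y)=d_a(x,\gamma_{k_1}(t_1))+d_a(\gamma_{k_1}(t_1),\gamma_{k_2}(t_2))+d_a(\gamma_{k_2}(t_2),y)$ and forces each of the three upper bounds from Lemma~\ref{inca} to be attained, giving the explicit expressions announced in the two cases $k_1<k_2$ and $k_1=k_2$.

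I do not anticipate any serious obstacle: the only mildly delicate point is to check that the concatenated curves are admissible for Lemma~\ref{inca}, which reduces to verifying that the boundary-contact set remains finite under gluing—this follows directly from the structural properties of the curves $\gamma_k$ listed in Theorem~\ref{exis} (finite boundary-contact set for $I_k=[0,1]$, and at most one contact point for the half-line pieces coming from Lemma~\ref{agmoncurve}). Once admissibility is granted, the proof is a purely arithmetic sandwich argument.
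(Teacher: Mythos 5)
Your sandwich argument is correct and is essentially the argument the paper has in mind (the paper only states that the lemma follows "by contradiction and the triangular inequality": if one of the three distances were strictly smaller than the corresponding partial sum of lengths, the triangle inequality would give $d_a(x,y)<\sum_j L(\gamma_j,I_j)=d_a(x,y)$, which is exactly your two-sided bound phrased as a contradiction). One small remark: you can sidestep the only delicate point you flag — Lipschitz regularity of the reparametrized concatenations — by applying Lemma~\ref{inca} to each restricted piece $(\gamma_k,I_k)$ separately (each already has finite boundary-contact set and well-defined endpoint limits) and chaining the resulting upper bounds with the triangle inequality, which yields the same inequalities without any gluing.
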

\noindent
The proof of  Lemma~\ref{LLL} is done easily reasoning by contradiction and using the triangular inequality on the Agmon distance. 

\subsubsection{On the equality
  in~(\ref{eq:ineq})}\label{sec:equality_agmon}
We end up this section with some results in case of equality in the
inequality (\ref{eq:ineq}). We will prove in particular
Proposition~\ref{PoPo} which has been used in Section~\ref{lower
  bound} above to give lower bounds on the Agmon distance.
\begin{corollary} \label{ccc}  Let us assume that \textbf{[H1]} and \textbf{[H3]}  hold. 
Let $x,y \in \overline \Omega$ with $f(x)\leq f(y)$. Let us denote by $((\gamma_1,I_1),\ldots,(\gamma_N,I_N))$ the  curves given by Theorem~\ref{exis} ordered such that 
$$\lim_{t\to (\inf I_1)^+} \gamma_1(t)=x, \quad \lim_{t\to (\sup I_N)^-} \gamma_N(t)=y,$$
and which realize the Agmon distance between $x$ and $y$. If it holds: 
$$d_a\left(x,y\right)= f(y)-f(x) ,$$
then for all $i\in \left\{1,\ldots,N\right\}$, there exist measurable
functions $\lambda_i : I_i\to \mathbb R_+$ such that for almost every
$t$ in  $\left\{t\in I_i, \ \gamma_i(t)\in \Omega \right\}$ 
\begin{equation}\label{eq.lambdai_i}
\gamma_i'(t)=\lambda_i(t) \nabla f \left(\gamma_i(t) \right),
\end{equation}
and such that for almost every $t$ in ${\rm int}\left\{t\in I_i, \ \gamma_i(t)\in \partial \Omega \right\}$
\begin{equation}\label{eq.lambdai_i2}
\gamma_i'(t)=\lambda_i(t) \nabla_T f \left(\gamma_i(t) \right).
\end{equation}
Moreover, for all $i\in\{1,\ldots,N\}$, $\lambda_i\in L^{\infty}(I_i,\mathbb R_+)$. Finally, if $I_i$ is unbounded (i.e. $I_i=[0,+\infty)$ or $I_i=(-\infty,0]$),  it holds   for almost every $t\in I_i$,  $\lambda_i=1$.
\end{corollary}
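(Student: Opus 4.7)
The strategy is to leverage the pointwise Cauchy--Schwarz inequalities $\nabla f(\gamma(t))\cdot\gamma'(t)\le|\nabla f(\gamma(t))|\,|\gamma'(t)|$ and $\nabla_T f(\gamma(t))\cdot\gamma'(t)\le|\nabla_T f(\gamma(t))|\,|\gamma'(t)|$, which upon integration via Lemma~\ref{h} give $f(\gamma_i(\sup I_i^-)) - f(\gamma_i(\inf I_i^+)) \le L(\gamma_i, I_i)$ for each piece. Summing over $i$ telescopes the left-hand side (by the ordering property at the end of Theorem~\ref{exis}) to $f(y)-f(x)$, while the right-hand side sums to $d_a(x,y)$; the hypothesis $d_a(x,y)=f(y)-f(x)$ then forces equality on each piece, and the equality case of Cauchy--Schwarz delivers the parallelism relations \eqref{eq.lambdai_i} and \eqref{eq.lambdai_i2}.

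Concretely, for bounded pieces $I_i=[0,1]$ the curve belongs to $A(\gamma_i(0),\gamma_i(1))$ so Lemma~\ref{h} applies directly. For unbounded pieces $I_i=[0,+\infty)$ or $(-\infty,0]$, I would apply Lemma~\ref{h} on a truncation $[0,T]$ or $[-T,0]$ (which belongs to $A$ since by Lemma~\ref{agmoncurve} the set $\partial\{t\in I_i:\gamma_i(t)\in\partial\Omega\}$ is empty or a single point), and then let $T\to\infty$, using continuity of $f$ and the existence of the limit at the critical endpoint. Once each equality $\Delta_i:=f(\gamma_i(\sup))-f(\gamma_i(\inf))=L(\gamma_i,I_i)$ is established, one has simultaneously the pointwise equality $\nabla f(\gamma_i)\cdot\gamma_i'=|\nabla f(\gamma_i)|\,|\gamma_i'|$ a.e.\ on $\{\gamma_i\in\Omega\}$ and its tangential analogue a.e.\ on $\mathrm{int}\{\gamma_i\in\partial\Omega\}$. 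The function $\lambda_i$ can then be defined measurably by $\lambda_i(t):=(\gamma_i'(t)\cdot\nabla f(\gamma_i(t)))/|\nabla f(\gamma_i(t))|^2$ where the denominator does not vanish (and set to $0$ otherwise, similarly on the boundary part with $\nabla_T f$), and the sign-positivity $\lambda_i\ge 0$ is built into the Cauchy--Schwarz equality case.

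For the $L^\infty$ bound, I would combine the Lipschitz bound on $|\gamma_i'|$ with a lower bound $\inf_{t\in I_i}g(\gamma_i(t))>0$. For bounded intervals $I_i=[0,1]$ this lower bound comes from the construction of $\gamma_i$ in the proof of Theorem~\ref{exis} through Corollary~\ref{coco1} and Lemma~\ref{gpositive}: $\gamma_i$ avoids a fixed neighborhood of the finite set $\{x_1,\dots,x_m\}$ of zeros of $g$. For unbounded intervals, Lemma~\ref{agmoncurve} together with Propositions~\ref{da1} and~\ref{equaa} give $\gamma_i'=-\nabla\Phi(\gamma_i)$ in $\Omega$ (resp.\ $\gamma_i'=-\nabla_T\Phi(\gamma_i)$ on $\partial\Omega$) with $\Phi$ satisfying the eikonal equation $|\nabla\Phi|=|\nabla f|$. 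Thus $|\gamma_i'|=|\nabla f(\gamma_i)|$ almost everywhere, and combined with $\gamma_i'=\lambda_i\,\nabla f(\gamma_i)$ and $\lambda_i\ge0$ this forces $\lambda_i=1$ a.e.\ on $I_i$, as claimed. The main technical subtlety I anticipate is the truncation-and-limit step on unbounded intervals: one must check that the split of the Lemma~\ref{h} integral between the $\Omega$-part and the $\partial\Omega$-part passes to the limit, which relies on the fact from Lemma~\ref{agmoncurve} that a boundary-touching time is at most one isolated point, so that the Lebesgue measures involved are stable as $T\to\infty$.
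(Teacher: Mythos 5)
Your proposal is correct and follows essentially the same route as the paper: telescoping the identity of Lemma~\ref{h} over the chain of curves, using Cauchy--Schwarz and the equality hypothesis $d_a(x,y)=f(y)-f(x)$ to force pointwise collinearity with a nonnegative factor, then getting $\lambda_i\in L^\infty$ on bounded pieces from the fact that the curves avoid the critical points of $f$ and $f|_{\partial\Omega}$, and getting $\lambda_i=1$ on unbounded pieces by comparing $\gamma_i'=-\nabla\Phi(\gamma_i)$ (resp. $-\nabla_T\Phi(\gamma_i)$) from Lemma~\ref{agmoncurve} with the eikonal identities $|\nabla\Phi|=|\nabla f|$ and $|\nabla_T\Phi|=|\nabla_T f|$. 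Your explicit truncation argument for the unbounded intervals is a small additional care the paper leaves implicit, but it does not change the substance of the argument.
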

\begin{proof}  
Using Lemma~\ref{h}, one gets using first the triangular inequality and then the Cauchy-Schwarz inequality 
\begin{align*}
f(y)-f(x)&= \sum_{k=1}^N \left( \int_{\{ t \in I_k, \  \gamma_k(t)\in \Omega\}}  (\nabla f)( \gamma_k)\cdot  \gamma_k' +  \int_{{\rm int} \{ t \in   I_k, \  \gamma_k(t)\in \partial \Omega\} }  (\nabla_T f)( \gamma_k)\cdot  \gamma_k'\right)\\
&\leq \sum_{k=1}^N \left( \int_{\{ t \in   I_k, \  \gamma_k(t)\in \Omega\}}  \vert \nabla f( \gamma_k)\vert \vert   \gamma_k' \vert +  \int_{{\rm int} \{ t \in   I_k, \  \gamma_k(t)\in \partial \Omega\} }  \vert \nabla_T f ( \gamma_k)\vert\vert  \gamma_k'\vert  \right)\\
&=\sum_{k=1}^N  L\left(\gamma_k,  I_k\right)=d_a\left(x,y \right).
\end{align*}
If $d_a\left(x,y\right)=f(y)-f(x)$, then the previous inequality is
necessarily an equality. Using the cases
  of equality in both the triangular inequality and the
Cauchy-Schwarz inequalities, for $k\in
\left\{1,\ldots,N\right\}$, there exists a nonnegative function 
$\lambda_k:I_k\to \mathbb R_+$ such that~\eqref{eq.lambdai_i} and~\eqref{eq.lambdai_i2} hold.

Let $i\in \{1,\ldots,N\}$. 
Let us first consider the case when 
$I_i=[0,1]$. Then, by construction, the curve $\gamma_i$ does not meet any
critical points of the functions~$f$ and~$f|_{\partial \Omega}$. This
implies that $\inf_{I_i} |\nabla f(\gamma_i)|>0$ and $\inf_{I_i} |\nabla_T f(\gamma_i)|>0$, and thus, since
$\|\gamma_i'\|_{L^\infty}<\infty$, one concludes that $\lambda_i\in L^{\infty}([0,1],\mathbb R_+)$.\\
Let us now consider the case when $I_i$ is not bounded. Using the construction of the
curves $(\gamma_k)_{k=1,\ldots,N}$,  this implies that $I_i$ is either
$(-\infty,0]$ or $[0,+\infty)$ and $\gamma_i$ is constructed using the
gradient flow of the eikonal solution near a critical point $x^*$ of
$f$ or of $f|_{\partial \Omega}$ (see Lemma~\ref{agmoncurve}). Let us assume that $x^*$ is a critical point of
$f|_{\partial \Omega}$ and $I_i=[0,+\infty)$ (the other cases are
treated similarly). Let $\Phi$ be the solution to~\eqref{eikonalequation} on the neighborhood  $V^*$ of $x^*$ in $\overline \Omega$ (see Proposition~\ref{eikonal}). The curve $\gamma_i$ satisfies by construction, Ran$(\gamma_i)\subset V^*$,  $
\lim_{t\to \infty} \gamma_i(t)=x^*$, 
 and on  $ (-\infty, 0]$,
\begin{equation}\label{eq.gammai'-1}
\gamma_i'=\left\{
\begin{aligned}
 &-\nabla \Phi (\gamma_i)  \ {\rm in \ }  \Omega,  \\ 
& -\nabla_T \Phi (\gamma_i) \ {\rm on \ } \partial \Omega.
\end{aligned} 
\right.
\end{equation} 
In addition, by the
previous reasoning, one also has on  $ (-\infty, 0]$,
\begin{equation}\label{eq.gammai'-2}
\gamma_i'=\left\{
\begin{aligned}
 & \lambda_i \nabla f (\gamma_i)  \ {\rm in \ }  \Omega,  \\ 
& \lambda_i\nabla_T f (\gamma_i) \ {\rm on \ } \partial \Omega,
\end{aligned} 
\right.
\end{equation}
for some measurable function $\lambda_i:[0,+\infty)\to \mathbb R_+$. 
Furthermore, from the last point in Proposition~\ref{eikonal},  it holds on $V^*\cap \pa \Omega$, 
\begin{equation}\label{eq.cong-eiko}
\vert \nabla_T \Phi\vert =\vert \nabla_T f\vert.
\end{equation}
Taking the norm in~\eqref{eq.gammai'-1} and~\eqref{eq.gammai'-2},    and using the fact
that~$\Phi$ solves \eqref{eikonalequation}  together with the
equality~\eqref{eq.cong-eiko}, one obtains that $\lambda_i(t)=1$ for almost every  $t\in I_i$. 
This concludes the proof of Corollary~\ref{ccc}. 
\end{proof}

\begin{sloppypar}
\noindent
Let us define the notion of generalized integral curves. 
\begin{definition}\label{generalized_curve}
Let $D\subset \overline \Omega$ be a $C^{\infty}$ domain and $X\in C^{\infty}(D,\mathbb R)$. Let $N\in \mathbb N^*$ and for $i\in \{1,\dots,N\}$, let  $I_i\subset \mathbb R$ be an interval and $\gamma_i: I_i\to D$ be Lipschitz and such that 
 $$\lim_{t\to (\inf I_1)^+} \gamma_1(t) \ \text{ and }  \ \lim_{t\to
   (\sup I_N)^-} \gamma_N(t) \text{ exist}$$
and for all $k\in \{1,\ldots,N-1\}$,
$$\lim_{t\to \left(\sup I_k\right)^-} \gamma_k(t)=\lim_{t\to \left(\inf I_{k+1}\right)^+} \gamma_{k+1}(t).$$
The set of curves $\{\gamma_1,\dots,\gamma_N\}$ is a generalized integral curve of the vector field 
$$\left\{
\begin{aligned}
  \nabla X   & \text{ in  }  D\cap \Omega, \\ 
 \nabla_T X & \text{ on  } D\cap \partial \Omega,
\end{aligned}
\right.
$$
 if for all $i\in \left\{1,\ldots,N\right\}$, there exist measurable
functions $\lambda_i : I_i\to \mathbb R_+$ such that for almost every
$t$ in  $\left\{t\in I_i, \ \gamma_i(t)\in D\cap \Omega \right\}$:
 $\gamma_i'(t)=\lambda_i(t) \nabla X \left(\gamma_i(t) \right)$, 
and such that for almost every $t$ in ${\rm int}\left\{t\in I_i, \ \gamma_i(t)\in \partial \Omega\cap D \right\}$: $\gamma_i'(t)=\lambda_i(t) \nabla_T X \left(\gamma_i(t) \right)$.
\end{definition}
The notion of generalized integral curve has been introduced in the case of manifolds without boundary in  \cite{helffer-sjostrand-85}. As introduced in Definition~\ref{generalized_curve}, the set of curves $\{\gamma_1,\dots,\gamma_N\}$ given by Corollary~\ref{ccc} is a generalized integral curve of the vector field $$\left\{
\begin{aligned}
  \nabla f   & \text{ in  }  \Omega, \\ 
 \nabla_T f & \text{ on  } \partial \Omega.
\end{aligned}
\right.
$$ Let us mention that in the case when $\Omega$ is a manifold without boundary, Corollary~\ref{ccc} is exactly \cite[Lemma A2.2]{helffer-sjostrand-85}.
\end{sloppypar}
\medskip

\noindent
Let us end this section with the following proposition which which has been used in Section~\ref{lower bound}. 
\begin{proposition} \label{PoPo} Let us assume that \textbf{[H1]} and
  \textbf{[H3]} hold.
Let us denote by $\{z_1,\ldots,z_n\}$  the local minima of $f|_{\partial \Omega}$ ordered such that $f(z_1)\leq f(z_2)\leq\ldots\leq f(z_n)$. Then, for all $i<j$, $(i,j)\in \{1,\ldots,n\}^2$, one has
$$d_a\left(z_i,z_j\right)>f(z_j)-f(z_i).$$
\end{proposition}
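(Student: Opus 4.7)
The plan is to argue by contradiction. By~\eqref{eq:ineq} we always have $d_a(z_i, z_j) \ge f(z_j) - f(z_i)$, so one only needs to rule out equality; assume therefore $d_a(z_i, z_j) = f(z_j) - f(z_i)$. By Theorem~\ref{exis}, this distance is achieved by a finite concatenation of Lipschitz curves $(\gamma_k, I_k)_{k=1, \dots, N}$ from $z_i$ to $z_j$, and by Corollary~\ref{ccc} each $\gamma_k$ is a generalized integral curve of $\nabla f$ inside $\Omega$ or of $\nabla_T f$ on $\partial \Omega$, with multiplier $\lambda_k \equiv 1$ on every unbounded $I_k$. Since $z_j\in\{x_1,\dots,x_m\}$, the last curve $\gamma_N$, defined on $I_N=[0,+\infty)$ and built via Lemma~\ref{agmoncurve}, tends to $z_j$ as $t \to +\infty$ but never attains $z_j$ at any finite time.

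The first intermediate step would be to show that $f$ is nondecreasing along this concatenation. For any point $x$ on the concatenated curve, the additivity of length combined with~\eqref{eq:ineq} gives
\begin{align*}
f(z_j) - f(z_i) = d_a(z_i, z_j) &= d_a(z_i, x) + d_a(x, z_j) \\
&\ge |f(x) - f(z_i)| + |f(z_j) - f(x)| \ge f(z_j) - f(z_i),
\end{align*}
so equality must hold throughout, which forces $f(z_i) \le f(x) \le f(z_j)$ together with the pointwise identities $d_a(z_i, x) = f(x) - f(z_i)$ and $d_a(x, z_j) = f(z_j) - f(x)$.

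I would then invoke the local eikonal description of the Agmon distance near $z_j$. Using Corollary~\ref{cor:eikonal_min} and Proposition~\ref{equaa}, pick a neighborhood $V^*$ of $z_j$ and a smooth function $\Phi$ on $V^*$ such that $\Phi = d_a(\cdot, z_j)$ on $V^*$, $\Phi = f - f(z_j)$ on $V^* \cap \partial \Omega$, $\Phi > f - f(z_j)$ strictly on $V^* \cap \Omega$, and $|\nabla \Phi|^2 = |\nabla f|^2$ in $V^* \cap \Omega$. Since $\gamma_N(t) \to z_j$, I can choose $t$ large enough so that $x := \gamma_N(t) \in V^* \setminus \{z_j\}$; the previous step then yields $\Phi(x) = f(z_j) - f(x)$. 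If $x \in \partial \Omega$, then one also has $\Phi(x) = f(x) - f(z_j)$, forcing $f(x) = f(z_j)$ hence $\Phi(x) = 0$, so $x = z_j$ (since $\Phi > 0$ on $V^* \setminus \{z_j\}$), a contradiction.

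The main remaining case, and the main obstacle, is $x \in \Omega$. On the portion of $\gamma_N$ lying in $V^* \cap \Omega$, the construction of Lemma~\ref{agmoncurve} (with the time-reversal used in the proof of Theorem~\ref{exis}) yields $\gamma_N'(t) = \nabla \Phi(\gamma_N(t))$, whereas Corollary~\ref{ccc} with $\lambda_N \equiv 1$ gives $\gamma_N'(t) = \nabla f(\gamma_N(t))$; hence $\nabla \Phi = \nabla f$ along the curve. Differentiating the identity $\Phi(\gamma_N(t)) = f(z_j) - f(\gamma_N(t))$ in $t$ produces $\nabla \Phi \cdot \gamma_N' = -\nabla f \cdot \gamma_N'$, and substituting $\gamma_N' = \nabla \Phi = \nabla f$ forces $|\nabla f(\gamma_N(t))|^2 = -|\nabla f(\gamma_N(t))|^2$, i.e.\ $\nabla f(\gamma_N(t)) = 0$. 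This contradicts $\nabla f(z_j) = \partial_n f(z_j)\, n(z_j) \ne 0$ (from~\textbf{[H3]}) and the continuity of $\nabla f$, provided $V^*$ is chosen small enough. Conceptually, the heart of the argument is the incompatibility, near the generalized saddle point $z_j$, between the eikonal flow of $\Phi$, which reaches $z_j$ only asymptotically, and the ascending gradient flow of $f$, whose velocity at $z_j$ is nonzero and normal to $\partial \Omega$.
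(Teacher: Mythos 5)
Your reduction to the terminal curve $\gamma_N$ contains a sign error that is fatal to the interior case. The last curve produced by Theorem~\ref{exis} is defined on $[0,+\infty)$, converges to $z_j$ as $t\to+\infty$, and is the \emph{descending} flow of the eikonal solution: by Lemma~\ref{agmoncurve} and~\eqref{eq:X}--\eqref{eq:c} it satisfies $\gamma_N'=-\nabla\Phi(\gamma_N)$ in $\Omega$ (the ascending sign $+\nabla\Phi$ only occurs for the \emph{first} curve, parametrized on $(-\infty,0]$ and emanating from $z_i$). Combining with Corollary~\ref{ccc} ($\lambda_N\equiv 1$) gives $\nabla f=-\nabla\Phi$ along $\gamma_N$, and then differentiating $\Phi\circ\gamma_N=f(z_j)-f\circ\gamma_N$ yields $-|\nabla\Phi|^2=-|\nabla f|^2$, i.e.\ nothing beyond the eikonal equation; your conclusion $\nabla f(\gamma_N(t))=0$ disappears. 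Worse, the gap cannot be repaired by any purely local argument at $z_j$: on the normal fiber $\{x'(x)=x'(z_j)\}$ one has $f_{+}\equiv 0$ (see~\eqref{eq:nablaf+}), hence $d_a(x,z_j)=\Phi(x)-f(z_j)=f_{-}(x)=f(z_j)-f(x)$, so there genuinely exist interior curves approaching $z_j$ (the steepest-ascent curve along the inward normal) that realize exactly the identities you derived in your first step. Thus the configuration "$x=\gamma_N(t)\in\Omega\cap V^*$ with $d_a(x,z_j)=f(z_j)-f(x)$" is locally consistent and cannot produce a contradiction; this is precisely why $z_j$ is a \emph{generalized} saddle point.

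The missing ingredient is global: one must use that the concatenation also starts at the boundary local minimum $z_i$. This is what the paper does. Its Step~1 applies the eikonal description near $z_i$ (Corollary~\ref{cor:eikonal_min} and Proposition~\ref{equaa}): since $\gamma_1'$ equals both $\nabla\Phi$ and $\nabla f$, the quantity $f-\Phi$ is constant along $\gamma_1$, equal to $f(z_i)$, and~\eqref{eq:carac_boundary} then forces $\gamma_1\subset\partial\Omega$. Step~2 propagates this along the whole concatenation using $\partial_n f>0$ (hypothesis \textbf{[H3]}) in a tubular coordinate system, so the curves never re-enter $\Omega$. Only then does Step~3 conclude at $z_j$: the tail is an ascending integral curve of $\nabla_T f$ \emph{on} $\partial\Omega$ converging to $z_j$, which is impossible since $z_j$ is a local minimum of $f|_{\partial\Omega}$. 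Your boundary sub-case near $z_j$ is fine (and would mesh with such a propagation argument), but without establishing that the tail of the curve actually lies on $\partial\Omega$, the interior case remains open and your proof does not go through.
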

\begin{proof} From the inequality \eqref{eq:ineq}, one has
  $d_a\left(z_i,z_j\right)\geq f(z_j)-f(z_i)$ for all $i<j$. Let us prove
  Proposition~\ref{PoPo} by contradiction. Assume that
  $d_a\left(z_i,z_j\right)= f(z_j)-f(z_i)$ for some $i<j$. Denote by $((\gamma_1,I_1),\ldots,(\gamma_m,I_m))$ the curves given by Theorem~\ref{exis} ordered such that 
$$\lim_{t\to (\inf I_1)^+} \gamma_1(t)=z_i, \quad \lim_{t\to (\sup I_m)^-} \gamma_m(t)=z_j,$$
and which realize the Agmon distance between $z_i$ and $z_j$. Since
$d_a\left(z_i,z_j\right)= f(z_j)-f(z_i)$, from Corollary~\ref{ccc},
for all $i\in \left\{1,\ldots,m\right\}$, there exist measurable
functions $\lambda_i : I_i\to \mathbb R_+$ such that for almost every
$t$ in  $\left\{t\in I_i, \ \gamma_i(t)\in \Omega \right\}$,
$\gamma_i'(t)=\lambda_i(t) \nabla f \left(\gamma_i(t) \right)$, and
such that for almost every $t$ in ${\rm int}\left\{t\in I_i, \
  \gamma_i(t)\in \partial \Omega \right\}$, $\gamma_i'(t)=\lambda_i(t)
\nabla_T f \left(\gamma_i(t) \right)$. 
 Let us recall that from Remark
\ref{curve2}, $I_1=(-\infty, 0]$ and $I_m=[0,+\infty)$ since $z_i$ and
$z_j$ are critical points of~$f|_{\partial \Omega}$. 

\medskip
\noindent
\underline{Step 1.} Let us show that  for  all $ t\in (-\infty, 0]$, $\gamma_1(t)\in \partial \Omega$. 
\medskip

\noindent
On the one hand, from Remark~\ref{curve2}, $\lim_{t\to -\infty} \gamma_1(t)=z_i$ and on  $ (-\infty, 0]$,
$$\gamma_1'=\left\{
\begin{aligned}
 &\nabla \Phi (\gamma_1)  \ {\rm in \ }  \Omega  \\ 
& \nabla_T \Phi (\gamma_1) \ {\rm on \ } \partial \Omega,
\end{aligned} 
\right.
$$
where $\Phi$ solves \eqref{eikonalequation}. On the other hand, from Corollary~\ref{ccc}, one has   on  $ (-\infty, 0]$,
$$
\gamma_1'(t)=\left\{
\begin{aligned}
 & \nabla f (\gamma_1)  \ {\rm in \ }  \Omega  \\ 
& \nabla_T f (\gamma_1) \ {\rm on \ } \partial \Omega.
\end{aligned} 
\right.
$$
Then, for all $t\leq 0$, one has $\frac{d}{dt} \left( f
  (\gamma_1)(t)-\Phi (\gamma_1(t))\right) =0$. Therefore there exists
$C>0$ such that   for  all $ t\in (-\infty, 0]$, $\gamma_1(t)\in
\left\{x, \, f(x)-\Phi(x)=C\right\}$. Since $\lim_{t\to -\infty}
\gamma_1(t)=z_i$ and   $(f-\Phi)(z_i)=f(z_i)$, one gets that
$C=f(z_i)$ and thus for  all $ t\in (-\infty, 0]$, $\gamma_1(t)\in
\left\{x,\, f(x)-\Phi(x)=f(z_i)\right\}$. From
Corollary~\ref{cor:eikonal_min} and Proposition~\ref{equaa}, $\gamma_1$ lives in a neighborhood
$U^*$ of $z_i$ such that (see Equation~\eqref{eq:carac_boundary}):
$$\left\{x,\, f(x)-\Phi(x)=f(z_i)\right\} \subset \partial \Omega.$$
We thus get that for all $t\le0$, $\gamma_1(t) \in \partial \Omega$,
and then $\gamma_1'(t)=\nabla_T f (\gamma_1(t))=\nabla_T \Phi (\gamma_1(t))$.   Step 1 is proved.

\medskip
\noindent
\underline{Step 2.}  We are going to show that for  all $ t\in I_k$,
$\gamma_k(t)\in \partial \Omega$.
\medskip

\noindent
 If it is not the case, from Step 1,
there exist $k\in \{2,\ldots,m\}$ and $t_k\in I_k$ such that
$\gamma_k(t_k) \in \Omega$. Let us define the first time, denoted by
$t^*$, for which the curves $((\gamma_2,I_2),\ldots,(\gamma_m,I_m))$
leave~$\partial \Omega$.  By construction of the curves $\gamma_1,\dots,\gamma_m$, there are two cases: either $t^*$ is finite (and thus belongs to ${\rm int}(I_k)$ for $k\in \{1,\dots,m\}$) or, there exist $j\in \{2,\dots,m-1\}$, $s<0$ and $z\in \pa \Omega$ such that $g(z)=0$, $\lim_{t\to +\infty} \gamma_j(t)=z$, $\lim_{t\to -\infty} \gamma_{j+1}(t)=z$ and $\gamma_{j+1}(-\infty,s)\subset \Omega$ in which case   $t^*=-\infty$. 
 Let us assume
that $t^*$ is finite and belongs to ${\rm int}(I_k)$ (the other case is treated similarly).

As in Step 1 of the proof of Proposition~\ref{four}, let us now
introduce a smooth tangential and
normal system of coordinates around $\gamma_k(t^*)$ in $\overline
\Omega$, denoted by $\phi(x)=(x_T,x_N)$. The function $\phi$ is defined
from a neighborhood of  $\gamma_k(t^*)$ in $\overline \Omega$ to
$\R^d$. Moreover, one has $x_N\geq 0$ and $x_N(x)=0$ if and only if
$x\in \partial \Omega$. We may assume that the neighborhood $V_\alpha \subset \R^d$ on which
$\phi$ is defined is such that
$\phi(V_\alpha)=U\times[0,\alpha]$ for $\alpha >0$ and $U \subset
\R^{d-1}$. Since $\partial_nf>0$ on $\partial \Omega$, $\alpha>0$ can be
chosen small enough such that $\nabla f(x) \cdot n(x)>0$ for all $x\in
V_{\alpha}$ where $n(x)=-\frac{\nabla x_N(x)}{\vert
\nabla x_N(x)\vert}$. Indeed, for $x\in \partial \Omega$, $n(x)$ is nothing but the
unit outward normal to $\partial \Omega$.

Now, by continuity of the curve $\gamma_k$, there exists $\ve>0$ such that $[t^*, t^*+\ve]\subset I_k$ and for all $t\in [t^*, t^*+\ve]$, $\gamma_k(t)\in V_{\alpha}$. The mapping $t\in [t^*, t^*+\ve]\mapsto x_N\left(\gamma_k(t)\right)$ is Lipschitz and satisfies: for almost every $s\in (t^*, t^*+\ve)$, 
$$\frac{d}{ds}x_N\left(\gamma_k(s)\right)=-\vert \nabla x_N(\gamma_k(s))\vert \, \gamma_k'(s)\cdot n\left(\gamma_k(s)\right).$$
Then, for all $t\in  [t^*, t^*+\ve]$, one has:
$$
\frac{d}{ds}x_N\left(\gamma_k(s)\right)=\left\{
\begin{aligned}
 &0 \text{ for  a.e.  }  s \in {\rm int}\, \left\{ u\in (t^*,t), \ \gamma_k(u)\in \partial \Omega\right\}  \\ 
&-\vert \nabla x_N(\gamma_k(s))\vert \,
\lambda_k(s)\,\nabla f(\gamma_k(s))\cdot n(\gamma_k(s)) \text{ for  a.e.  }  s \in  \left\{ u\in (t^*,t), \ \gamma_k(u)\in \Omega\right\}.
\end{aligned} 
\right.
$$
Since $\partial \left\{ u\in (t^*,t), \ \gamma_k(u)\in \partial
  \Omega\right\}$ is of Lebesgue measure zero (see Theorem~\ref{exis})
and since $\nabla f \cdot n >0$ in $V_\alpha$, one has from Lemma~\ref{h}, for all $t\in  [t^*, t^*+\ve]$
$$x_N\left(\gamma_k(t)\right)=x_N\left(\gamma_k(t)\right)-x_N\left(\gamma_k(t^*)\right)=\int_{t^*}^t\frac{d}{ds}x_N\left(\gamma_k(s)\right)\, ds\leq 0.$$
This implies that for all $t\in  [t^*, t^*+\ve]$, $x_N\left(\gamma_k(t)\right)=0$ and thus  $\gamma_k(t)\in \partial \Omega$ for all $t\in  [t^*, t^*+\ve]$. This contradicts the definition of $t^*$. Step 2 is proved.

\medskip
\noindent
\underline{Step 3.} End of the proof of Proposition~\ref{PoPo}. 
\medskip

\noindent
From the last two steps, for all $t\in
[0,+\infty)$, $\gamma_m(t)\in \partial \Omega$. From
Corollary~\ref{ccc}, one has
$\gamma_m'(t)=\nabla_Tf\left(\gamma_m(t)\right)$ for all $t\in
[0,+\infty)$ and therefore, the map $t\in [0,+\infty)\mapsto
f\left(\gamma_m(t)\right)$ is increasing (indeed, one has
$\frac{d}{dt}f\left(\gamma_m(t)\right)=\left\vert \nabla_T
  f\left(\gamma_m(t)\right)\right\vert^2$). This is impossible since
$z_j$ is a local minimum of $f|_{\partial \Omega}$. This concludes the
proof of Proposition~\ref{PoPo} by contradiction. 
\end{proof}

\subsection{Agmon distance in a neighborhood of the basin of
  attraction of a local minimum of $f|_{\partial \Omega}$ and eikonal equation}
  \label{eiko_agmon_basin}
The aim of this section is to generalize the results of Section~\ref{eiko_agmon} to relate the Agmon distance and the solution to an eikonal equation on a neighborhood of a basin of attraction $B_z$ (see Definition~\ref{Bz}) of a local minimum $z$ of $f|_{\pa \Omega}$. 
Let first introduce a  solution to the eikonal equation $\vert  \nabla \phi \vert^2 =  \vert  \nabla f \vert^2$ defined globally on a neighborhood of the boundary~$\partial \Omega$. 

\begin{proposition} \label{eikonalboundary} Let us assume that   \textbf{[H3]} holds.
There exists a neighborhood of~$\partial \Omega$ in~$\overline \Omega$, denoted $V_{\partial \Omega}$, such that there exists $\Phi \in C^{\infty}(V_{\partial \Omega},\mathbb R)$ satisfying 
\begin{equation} \label{eikonalequationboundary}
\left\{
\begin{aligned}
\vert  \nabla \Phi \vert^2 &=  \vert  \nabla f \vert^2   \ {\rm in \ }  \Omega \cap V_{\partial \Omega}  \\ 
\Phi &= f \ {\rm on \ } \partial \Omega  \\
\partial_n \Phi&=-\partial_n f \ {\rm on \ } \partial \Omega .\end{aligned}
\right.
\end{equation}
Moreover\label{page.vpaomega}, one has the following uniqueness results: if $\tilde \Phi$ is
a $C^{\infty}$ real valued function defined on a neighborhood
$\tilde{V}$ of $\partial \Omega$ satisfying~\eqref{eikonalequationboundary}, then $\tilde \Phi =\Phi$ on
$\tilde{V} \cap V_{\partial \Omega}$.
\end{proposition}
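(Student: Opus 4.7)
The plan is to apply the method of characteristics for first-order nonlinear PDEs to the boundary-value problem \eqref{eikonalequationboundary}, using hypothesis \textbf{[H3]} as the non-characteristic condition. The argument parallels the local construction already used in Proposition~\ref{eikonal}, but the Cauchy data here are globally defined on the compact hypersurface $\partial \Omega$, so the construction yields a full tubular neighborhood $V_{\partial \Omega}$ rather than a neighborhood of a single critical point.

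First, I would derive the Cauchy data on $\partial \Omega$. From $\Phi = f$ on $\partial \Omega$ one has $\nabla_T \Phi = \nabla_T f$, and combined with $\partial_n \Phi = -\partial_n f$ this determines $\nabla \Phi|_{\partial \Omega} = \nabla_T f - (\partial_n f)\, n$. The compatibility with the eikonal equation is automatic since $|\nabla f|^2 = |\nabla_T f|^2 + (\partial_n f)^2$. Writing the PDE as $F(x,p) := |p|^2 - |\nabla f(x)|^2 = 0$, the Hamilton system reads
\begin{equation*}
\dot x = 2p, \qquad \dot p = 2\, \Hess f(x)\, \nabla f(x), \qquad \dot z = 2|p|^2,
\end{equation*}
with initial data at $t=0$ given by $x(0) = y \in \partial \Omega$, $p(0) = \nabla_T f(y) - \partial_n f(y)\, n(y)$, $z(0) = f(y)$. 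The transversality $\dot x(0)\cdot n(y) = -2\,\partial_n f(y) < 0$ holds uniformly on $\partial \Omega$ by \textbf{[H3]}, so $\partial \Omega$ is non-characteristic and the flow enters $\Omega$.

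Second, by smoothness of the initial data and compactness of $\partial \Omega$, a uniform $\tau > 0$ can be chosen such that the map $(y,t) \in \partial \Omega \times [0,\tau) \mapsto x(t;y) \in \overline \Omega$ is a smooth diffeomorphism onto a neighborhood $V_{\partial \Omega}$ of $\partial \Omega$ in $\overline \Omega$ (the Jacobian at $t=0$ is invertible by transversality, and the inverse function theorem then extends to a uniform collar). Defining $\Phi(x(t;y)) := z(t;y)$ produces a smooth function on $V_{\partial \Omega}$; the eikonal equation $|\nabla \Phi|^2 = |\nabla f|^2$ follows from the standard computation $\frac{d}{dt} F(x(t),p(t)) = 0$ along characteristics together with the identity $\nabla \Phi(x(t;y)) = p(t;y)$, which is preserved along the flow.

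For uniqueness, if $\widetilde \Phi \in C^{\infty}(\widetilde V, \R)$ is another solution, its Cauchy data on $\partial \Omega$ coincide with those of $\Phi$ by the first step, so $\nabla \widetilde \Phi|_{\partial \Omega} = p(0;\cdot)$. Differentiating $F(x, \nabla \widetilde \Phi(x)) = 0$ along the characteristic curve shows that $t \mapsto (x(t;y), \nabla \widetilde \Phi(x(t;y)), \widetilde \Phi(x(t;y)))$ satisfies the same Hamilton system with the same initial conditions as $(x(t;y), p(t;y), z(t;y))$, so by uniqueness for ODEs they coincide, whence $\widetilde \Phi = \Phi$ on $\widetilde V \cap V_{\partial \Omega}$. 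The only point requiring a little care is the verification that the characteristic map $(y,t) \mapsto x(t;y)$ is a diffeomorphism onto a \emph{uniform} collar of $\partial \Omega$; this is routine using a tubular coordinate system together with compactness of $\partial \Omega$, and does not constitute a serious obstacle.
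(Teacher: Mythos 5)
Your proposal is correct and follows essentially the same route as the paper: both rest on the method of characteristics for the eikonal equation with Cauchy data $\Phi=f$, $\partial_n\Phi=-\partial_nf$ on $\partial\Omega$, with \textbf{[H3]} providing the non-characteristic (transversality) condition and compactness of $\partial\Omega$ providing the globalization. The only cosmetic difference is that the paper quotes the local existence and uniqueness results near each boundary point and patches finitely many such neighborhoods, whereas you run the characteristic flow from the whole compact boundary to produce a uniform collar directly; the underlying argument is the same.
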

\begin{proof} 
Let $z\in \partial \Omega$. Using \cite{dimassi-sjostrand-99} or \cite{evans-10}, thanks to \textbf{[H3]}, there exists a neighborhood of $z$ in $\overline \Omega$, denoted by $\mathcal V_z$, such that there exists $\Phi \in C^{\infty}(\mathcal V_z,\mathbb R)$ satisfying 
$$
\left\{
\begin{aligned}
\vert  \nabla \Phi \vert^2 &=  \vert  \nabla f \vert^2   \ {\rm in \ }  \Omega \cap \mathcal V_z  \\ 
\Phi &= f \ {\rm on \ } \partial \Omega \cap \mathcal V_z  \\
\partial_n \Phi&=-\partial_n f \ {\rm on \ } \partial \Omega \cap \mathcal V_z. \end{aligned}
\right.$$
Moreover, $\mathcal V_z$ can be chosen such that the following
uniqueness result holds: if a function $\tilde \Phi \in C^{\infty}(\mathcal V_z,\mathbb R)$
satisfies the previous equalities, then $\tilde \Phi =\Phi$ on $\mathcal V_z$. Now, one concludes using the fact that $\partial \Omega$ is compact and can thus be covered by a finite number of these neighborhoods $(\mathcal V_z)_{z\in \partial \Omega}$.
\end{proof} 
\begin{remark} Let us mention another standard approach to prove
Proposition~\ref{eikonalboundary}, using the notion of viscosity
solutions. Let us recall some results  from~\cite[Theorem 5.1]{lions-82}. For $\left(x,y\right)\in \overline{\Omega}^2$, one defines 
$$\tilde d\left(x,y\right):=\inf_{T>0,\gamma}  \int_0^T \left\vert \nabla f\left(\gamma(t)\right)\right\vert dt  ,$$
where the infimum is taken over $T>0$ and over Lipschitz curves
$\gamma: [0,T]\to \overline \Omega$ which satisfy $\gamma(0)=x$,
$\gamma(T)=y$, $\vert \gamma'\vert\leq 1$. Then,  $v(x):=\inf \left\{
  f(y)+\tilde d\left(x,y\right), \ y\in \partial \Omega   \right\}$ is Lipschitz and is a viscosity solution to
$$
\left\{
\begin{aligned}
\vert  \nabla v \vert &=  \vert  \nabla f \vert   \ {\rm in \ }  \Omega   \\ 
v &= f \ {\rm on \ } \partial \Omega.
 \end{aligned}
\right.$$
Let us notice that this implies $\vert \partial_nv \vert=\vert \partial_nf \vert$ on $\pa \Omega$. 
To prove Proposition~\ref{eikonalboundary} using this result, one has
to show that $v$ is $C^{\infty}$ near $\partial \Omega$ and
$ \partial_nv =-\partial_nf$. This is a consequence of the
characteristic method, see~\cite[Section 1.2]{lions-82}. \end{remark}
\begin{remark}
Let $x^*$ be a local minimum of $f|_{\partial \Omega}$ and let us
denote by $\tilde{\Phi}$ the solution to the eikonal
equation~\eqref{eikonalequation_min} introduced in
Corollary~\ref{cor:eikonal_min}, defined on a neighborhood $V^*$ of~$x^*$. Then, one has on $V^* \cap V_{\partial \Omega}$:
$$\tilde{\Phi}=\Phi-f(x^*)$$
where $\Phi$ is the solution
to~\eqref{eikonalequationboundary}. 
\end{remark}
\noindent
Let us now introduce the function $f_-$ which will be used in the sequel. 
\begin{proposition} \label{ffmoins} Assume that   \textbf{[H3]}  holds.
Let $\Phi \in C^\infty(V_{\partial \Omega},\R)$ be the function introduced in Proposition~\ref{eikonalboundary}.
Let us define the function $f_- \in C^\infty(V_{\partial \Omega},\R)$ by
\begin{equation} \label{fmoins}
f_-=\frac{\Phi-f}{2}  .
\end{equation}
\label{page.fmoins}
Then, $f_-=0$ on $\partial \Omega$, and up to choosing a smaller neighborhood $V_{\partial \Omega}$ of
$\partial \Omega$, the function $ f_-$ is positive in $V_{\partial \Omega}\setminus \partial \Omega$ and $\vert \nabla f_-\vert >0$ on $V_{\partial \Omega}$.
\end{proposition}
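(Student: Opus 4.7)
The first assertion $f_-=0$ on $\partial\Omega$ is immediate from the boundary condition $\Phi=f$ on $\partial\Omega$ in~\eqref{eikonalequationboundary}. To handle the other two assertions, my plan is to first compute $\nabla f_-$ at any point $z\in\partial\Omega$ and then propagate the information by continuity and by a one-dimensional Taylor expansion in the inward normal direction.

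Since $\Phi-f\equiv 0$ on $\partial\Omega$, the tangential part of $\nabla(\Phi-f)$ vanishes on $\partial\Omega$, so $\nabla_T f_-=0$ on $\partial\Omega$. The normal component is handled by the third line of~\eqref{eikonalequationboundary}: $\partial_n\Phi=-\partial_n f$ on $\partial\Omega$, whence
\[
\partial_n f_- \;=\; \tfrac{1}{2}(\partial_n\Phi-\partial_n f) \;=\; -\partial_n f \quad\text{on }\partial\Omega.
\]
Consequently $\nabla f_-(z)=-\partial_n f(z)\,n(z)$ for every $z\in\partial\Omega$. Assumption~\textbf{[H3]} gives $\partial_n f>0$ on the compact set $\partial\Omega$, so $|\nabla f_-|=|\partial_n f|$ is bounded below by a positive constant on $\partial\Omega$. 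By continuity of $\nabla f_-$ on the smooth (hence closed) neighborhood $V_{\partial\Omega}$, after shrinking $V_{\partial\Omega}$ if necessary, we have $|\nabla f_-|>0$ on all of $V_{\partial\Omega}$.

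It remains to show $f_->0$ on $V_{\partial\Omega}\setminus\partial\Omega$. The natural tool is a tubular neighborhood of $\partial\Omega$: for $\eta>0$ small, every $x\in V_{\partial\Omega}$ can be uniquely written $x=z-t\,n(z)$ with $z\in\partial\Omega$ and $t\in[0,\eta]$, and $x\in\partial\Omega$ iff $t=0$. Along the inward normal ray $\gamma(t)=z-tn(z)$, smoothness of $f_-$ gives
\[
f_-(z-tn(z)) \;=\; f_-(z) \;-\; t\,\nabla f_-(z)\!\cdot\! n(z) \;+\; O(t^2) \;=\; t\,\partial_n f(z) \;+\; O(t^2),
\]
and the $O(t^2)$ remainder is uniform in $z\in\partial\Omega$ by compactness. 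Since $\inf_{\partial\Omega}\partial_n f>0$, choosing $\eta$ small enough ensures $f_-(x)\ge \tfrac{t}{2}\inf_{\partial\Omega}\partial_n f>0$ whenever $t>0$, i.e.\ on $V_{\partial\Omega}\setminus\partial\Omega$. After this final shrinking of $V_{\partial\Omega}$, all three assertions of the proposition hold.

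There is no real obstacle in the argument: once the boundary conditions of $\Phi$ from~\eqref{eikonalequationboundary} are combined with~\textbf{[H3]}, everything follows from continuity and a first-order Taylor expansion; the only point requiring minor care is to shrink $V_{\partial\Omega}$ consistently so that both $|\nabla f_-|>0$ and $f_->0$ (off $\partial\Omega$) simultaneously hold, which is handled by the uniform compactness argument on $\partial\Omega$.
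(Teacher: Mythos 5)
Your proof is correct and follows essentially the same route as the paper, which argues in one line that $\partial_n(\Phi-f)=-2\partial_n f<0$ together with $\Phi=f$ on $\partial\Omega$ gives, after shrinking $V_{\partial\Omega}$, both $\Phi>f$ off the boundary and $\vert\nabla(\Phi-f)\vert>0$. You merely make explicit the tubular-neighborhood/Taylor-expansion and compactness details that the paper leaves implicit.
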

\begin{proof}
Since $\partial_n(\Phi-f)=-2\partial_n f<0$ and $\Phi=f$ on $\partial
\Omega$,  then, up to choosing a smaller neighborhood $V_{\partial \Omega}$ of
$\partial \Omega$, one has $\Phi> f $ on $V_{\partial \Omega}\setminus \partial \Omega$ and $\vert \nabla (\Phi-f)\vert >0$ on $V_{\partial \Omega}$.
\end{proof}

\noindent
We are now in position to prove the main result of this
section. 
   
\begin{proposition} \label{TTH} Let us assume that \textbf{[H1]} and \textbf{[H3]}  hold. Let $\Phi$ be the function given
by Proposition~\ref{eikonalboundary}. 
Denote by $z$ a local minimum of $f|_{\partial \Omega}$ and denote by
$B_{z} \subset \partial \Omega$ the associated basin of attraction
(see Definition~\ref{Bz}). Besides, let $\Gamma_{z}
\subset \partial \Omega$ be an open domain such that $\overline
{\Gamma_{z}}\subset B_{z}$ and $z\in \Gamma_{z}$. Then there exists a
neighborhood  of $\overline{\Gamma_{z}}$ in $\overline \Omega$, denoted by $V_{\Gamma_{z}}$, such that $\overline{\partial V_{\Gamma_{z}} \cap \partial \Omega}\subset  B_{z}$ and for all $x\in V_{\Gamma_{z}}$,
$$d_a(x,z)=\Phi(x)-f(z).$$
\end{proposition}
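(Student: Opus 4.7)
The approach is to combine the global eikonal solution $\Phi$ of Proposition~\ref{eikonalboundary} with its flow in order to extend the local identity $d_a(\cdot,z)=\Phi-f(z)$ provided by Proposition~\ref{equaa} near $z$ to a full neighborhood of $\overline{\Gamma_z}$. I would work with the Lipschitz vector field $X$ on $V_{\partial\Omega}$ equal to $-\nabla\Phi$ in $\Omega$ and to $-\nabla_T\Phi=-\nabla_Tf$ on $\partial\Omega$: its tangential restriction is precisely the gradient flow defining the basin $B_z$, and its trajectories from the interior are outward-transverse to $\partial\Omega$ since $\partial_n\Phi=-\partial_nf<0$ by \textbf{[H3]}. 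Picking an open $\Gamma_z'\subset\partial\Omega$ with $\overline{\Gamma_z}\subset\Gamma_z'\subset\overline{\Gamma_z'}\subset B_z$, and $c_0>0$ small enough that $\overline W\subset V_{\partial\Omega}$ with $W=\{f_-<c_0\}$ (possible thanks to Proposition~\ref{ffmoins}), I would define $V_{\Gamma_z}$ as a thin backward-flow tube over $\Gamma_z'$ contained in $W$. Then $V_{\Gamma_z}$ is an open neighborhood of $\overline{\Gamma_z}$ in $\overline\Omega$ with $V_{\Gamma_z}\cap\partial\Omega=\Gamma_z'$, so $\overline{\partial V_{\Gamma_z}\cap\partial\Omega}\subset\overline{\Gamma_z'}\subset B_z$, as required.

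The upper bound $d_a(x,z)\leq\Phi(x)-f(z)$ for $x\in V_{\Gamma_z}$ would be obtained from the forward orbit $\gamma(t)=\psi_t(x)$ on $[0,\infty)$: by construction this curve hits $\partial\Omega$ inside $\Gamma_z'\subset B_z$ in finite time and then flows along $-\nabla_Tf$ to $z$, so $\partial\{t\geq 0:\gamma(t)\in\partial\Omega\}$ consists of a single point. Combining the eikonal equation $|\nabla\Phi|^2=|\nabla f|^2$ in $\Omega$ with $|\nabla_T\Phi|=|\nabla_Tf|$ on $\partial\Omega$ (forced by $\Phi=f$ there) yields $g(\gamma)|\gamma'|=-\frac{d}{dt}\Phi(\gamma)$, and integration gives $L(\gamma,[0,\infty))=\Phi(x)-f(z)$; Lemma~\ref{inca} then provides the bound.

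For the lower bound, by Corollary~\ref{equalityagmon} it suffices to show $L(\gamma,(0,1))\geq\Phi(x)-f(z)$ for every $\gamma\in A(x,z)$. When $\gamma$ remains in $W$, the argument used in Step~1 of the proof of Proposition~\ref{equaa} applies verbatim via Lemma~\ref{h}, Cauchy--Schwarz and the eikonal identities. The hard part, which I expect to be the main obstacle, is the case where $\gamma$ leaves $V_{\partial\Omega}$, on which $\Phi$ is not defined. My plan is to split $\gamma$ at the first entry time $t_1$ and the last exit time $t_2$ from $\{f_-\geq c_0\}$; at both times \eqref{fmoins} gives $\Phi(\gamma(t_i))=f(\gamma(t_i))+2c_0$. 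The two end pieces stay in $\overline W\subset V_{\partial\Omega}$, so the previous chain-rule/Cauchy--Schwarz argument produces
\[
L(\gamma,(0,t_1))\geq\Phi(x)-f(\gamma(t_1))-2c_0,\qquad L(\gamma,(t_2,1))\geq f(\gamma(t_2))+2c_0-f(z),
\]
while the middle piece is bounded below by $|f(\gamma(t_1))-f(\gamma(t_2))|$ using Lemma~\ref{inca} and \eqref{eq:ineq}. Summing, the $\pm 2c_0$ contributions cancel and an elementary sign analysis of $f(\gamma(t_1))-f(\gamma(t_2))$ shows the total is $\geq\Phi(x)-f(z)$, with no smallness condition on $c_0$ nor any quantitative control of the depth of the excursion of $\gamma$ into $\Omega\setminus V_{\partial\Omega}$. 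Together with the upper bound this delivers the claimed identity on $V_{\Gamma_z}$.
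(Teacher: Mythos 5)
Your proof is correct, and it shares the paper's overall skeleton (the upper bound via the generalized flow of $-\nabla\Phi$ / $-\nabla_T\Phi$ together with Lemma~\ref{inca} is essentially identical to the paper's Step~3, including the forward-invariance of the neighborhood, which the paper encodes by requiring $\partial_n\Phi>0$ on $\partial V_{\Gamma_z}\cap\Omega$ and you encode by taking a flow tube). The genuine difference is in the lower bound. The paper does not argue over arbitrary admissible curves: it invokes Theorem~\ref{exis} to get curves realizing $d_a(x,z)$, splits the distance exactly at the first/last crossing of the level set $\{f_-=\varepsilon\}$ via Lemma~\ref{LLL}, and must choose $\varepsilon$ so that $\{f_-=\varepsilon\}\cap\Omega$ contains no critical point of $f$ in order to guarantee that these crossing times are finite and interior to the intervals $I_k$. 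You instead work directly with an arbitrary $\gamma\in A(x,z)$ (legitimate by Corollary~\ref{equalityagmon}), split it at the first exit from and last return to $W=\{f_-<c_0\}$, bound the two collar pieces by the eikonal/chain-rule argument (Lemma~\ref{h} plus $|\nabla\Phi|=|\nabla f|$, $|\nabla_T\Phi|=|\nabla_T f|$) and the excursion by $|f(\gamma(t_1))-f(\gamma(t_2))|$ via~\eqref{eq:ineq}; since $f_-=c_0$ at both splitting points, $\Phi-f$ takes the same value there, the $\pm 2c_0$ cancel, and $-a+b+|a-b|\ge 0$ finishes it. This buys you something: no appeal to the (heavy) existence theorem for minimizing curves, no exact additivity lemma, and no regular-value condition on the cut level $c_0$ — only $\overline W\subset V_{\partial\Omega}$ — which is exactly the same cancellation the paper exploits, reached by a cheaper route. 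Two cosmetic points to tidy up in a write-up: since $f_-$ is only defined on $V_{\partial\Omega}$, phrase $t_1,t_2$ as the first exit from / last return to $W$ (continuity and $\overline W\subset V_{\partial\Omega}$ then force $f_-(\gamma(t_i))=c_0$); and Lemma~\ref{h} is stated for functions $C^1$ on $\overline\Omega$, so either note that the argument is local to $\overline W$ or extend $\Phi$ smoothly before applying it, as the paper implicitly does.
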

\noindent
Notice that in this proposition, $\Gamma_{z}$ can be chosen  as large as needed  in $B_{z}$.
\begin{proof} Let $\Phi$ be the function given
by Proposition~\ref{eikonalboundary}. The proof is divided into three steps.
 
\medskip
\noindent
\underline{Step 1.} Let us first define $V_{\Gamma_{z}}$. 
\medskip

\noindent
To this end let us denote by $ f_-$ and $V_{\partial \Omega}$ respectively the function and the neighborhood of $\partial \Omega$  given by Proposition~\ref{ffmoins}.  For $\varepsilon>0$ small enough one defines 
\begin{equation}\label{eq.Vepsilon}
V_{\varepsilon}= \left\{ y\in \Omega, \ 0\leq  f_- (y)\leq  \varepsilon \right\}\subset V_{\partial \Omega}.
\end{equation}
The parameter $\ve>0$ can be chosen such that there is no critical point of $f$ on $\partial V_{\ve} \cap \Omega=\{ y\in \Omega, \  f_-(y)=
 \varepsilon\}$. The set $V_{\ve}$ is a neighborhood of $\partial \Omega$ in $\overline{\Omega}$ (see Figure~\ref{fig:ve} for a schematic representation). Let us now fix such a $\ve>0$. Assumption \textbf{[H3]} together with the fact that  $\partial_n\Phi<0$ on $\partial \Omega$, imply that there exists a neighborhood $V_{\Gamma_{z}}$
of $\overline{\Gamma_{z}}$ in $\overline \Omega$, such that $V_{\Gamma_{z}}\subset V_{\varepsilon}$, $\overline{\partial
  V_{\Gamma_{z}} \cap \partial \Omega}\subset  B_{z}$ and
$$\partial_n\Phi>0, \ {\rm on}\ \partial V_{\Gamma_{z}}\cap \Omega.$$
 The set $V_{\Gamma_z}$ is 
schematically represented on Figure~\ref{fig:ve_z}.
\begin{figure}[!h]
\begin{center}
\begin{tikzpicture}
\draw (0,0) ellipse (3 and 2)  ;
\draw (0,0) ellipse (2.6 and 1.5)  ;
\tikzstyle{vertex}=[draw,circle,fill=black,minimum size=6pt,inner sep=0pt]
\draw (-3,0) node[vertex,label=above left:{$z$}] (v) {};
\draw[dashed,<->] (0,-2)--(0,-1.5) node[above=0.1cm] {$\ve$};
\draw (0,-2) node[midway,below=2cm] {Domain $\Omega$};
\draw (1,-1) node[above=3cm] {$V_{\ve}$};
\draw (1,-1) node[right=1.7cm] {$\partial \Omega$};
\draw[dashed,->] (0.7,2.2)--(0,1.7);
\end{tikzpicture}
\caption{The set $V_{\ve}$.}
 \label{fig:ve}
 \end{center}
\end{figure}

\begin{figure}
\begin{center}
\begin{tikzpicture}

\tikzstyle{vertex}=[draw,circle,fill=black,minimum size=6pt,inner sep=0pt]
\draw[-] (-3.6,7)--(3.6,7);
\draw[thick, ->] (-4,6)--(4,6);
\draw[->] (0,4)--(0,7.6);
\draw[<->] (-3,6.1)--(-3,6.9) node[below left] {$V_{\ve}$} ;
\draw (0.3,7.7) node[below] { $ f_-$};

\draw (-2.7,6) ..controls (0,6.8) .. (2.7,6) node[midway,right=0.9cm]{$V_{\Gamma_z}$} ;

\draw (0 ,6) node[vertex,label=north east: {$z$}](v){};
\draw[<->](-2.1,5.5)--(2.1,5.5)  node[midway,below right] {\Ga{z}} ;
\draw[<->](-3.5,4.5)--(3.5,4.5)  node[midway,below right] {$B_{z}$} ;

\draw[dashed] (-2.1,5.5)--(-2.1,6) ;
\draw[dashed] (2.1,5.5)--(2.1,6) node[midway,right=1cm] {$\partial \Omega$};
\draw[dashed] (-3.5,4.5)--(-3.5,6);
\draw[dashed] (3.5,4.5)--(3.5,6);

\end{tikzpicture} 
\caption{The set $V_{\Gamma_z}$.}
 \label{fig:ve_z}
\end{center}
\end{figure}
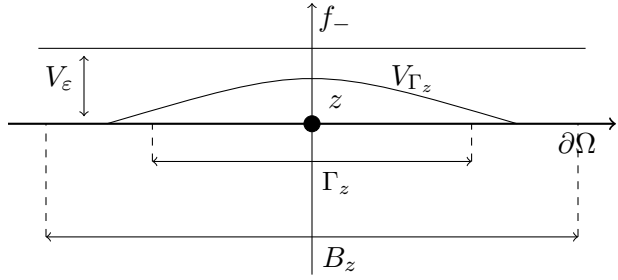
\medskip
\noindent
\underline{Step 2.} Let us first prove that for all $x\in V_{\Gamma_{z}}$, $d_a(x,z)\geq \Phi(x)-f(z)$. 
\medskip

\noindent
For $x\in V_{\Gamma_{z}}$,  denote by $((\gamma_1,I_1),\ldots,(\gamma_N,I_N))$ the curves given by Theorem~\ref{exis} ordered such that 
$$\lim_{t\to (\inf I_1)^+} \gamma_1(t)=z, \quad \lim_{t\to (\sup I_N)^-} \gamma_N(t)=x,$$
and which realize the Agmon distance between $x$ and $z$. 
 One has to deal with the two following cases: 
 \begin{enumerate}
 \item  either $\forall$ $k\in \left\{1,\ldots,N\right\}$, $\forall t\in I_k$, $ \gamma_k(t)\in V_{\ve},$
 \item or $\exists$ $k\in \left\{1,\ldots,N\right\}$ and  $\exists$ $t\in I_k$,  $\gamma_k(t)\in \overline{\Omega}\setminus V_{\ve}$.
 \end{enumerate}
In the first case, since $\Phi$ is defined on $V_{\ve}$, it holds 
$$\Phi(x)-f(z)=\Phi(x)-\Phi(z)= \sum_{j=1}^N\int_{I_j} \frac{ d   }{d  t} \Phi \circ \gamma_j(t) \, dt.$$
 Using Lemma~\ref{h} and the fact that $|\nabla \Phi|=g$ on $\Omega\cap V_{\partial \Omega}$ and $|\nabla_T \Phi|=g$ on $\partial \Omega$, it holds, for all $j \in \{1, \ldots ,N\}$,
$$\int_{I_j} \frac{ d   }{d  t} \Phi \circ \gamma_j(t) \, dt \le L(\gamma_j,I_j)$$
and thus
 $$\Phi(x)-f(z)\leq \sum_{j=1}^N L(\gamma_j, I_j)=d_a(x,z).$$
Let us now consider
the second case. Let us introduce $k_1 \in \{1,\ldots,N\}$ and $t_1\in I_{k_1}$  such that for all $t<t_1$, $\gamma_{k_1}(t) \in
V_\ve$, for all $k \in \{1, \ldots, k_1-1\}$, for all $t\in I_k$,
$\gamma_k(t) \in V_\ve$ and such that there exists $\beta>0$ such that for all $t\in (t_1,t_1+\beta]$, $\gamma_{k_1}(t) \notin
V_\ve$. The couple $(k_1,t_1)$ thus represents the
``first time'' the curves $\gamma_1,\ldots,\gamma_N$ leave
$V_{\ve}$. Likewise, let us introduce $k_2 \in \{1,\ldots,N\}$ and $t_2\in I_{k_2}$  such that for all $t>t_2$, $\gamma_{k_2}(t) \in
V_\ve$, for all $k \in \{k_2+1, \ldots, N\}$, for all $t\in I_k$,
$\gamma_k(t) \in V_\ve$ and such that there exists $\beta>0$ such that for all $t\in [t_2-\beta,t_2)$, $\gamma_{k_2}(t) \notin
V_\ve$. The couple $(k_2,t_2)$ thus represents the
``last time'' the curves $\gamma_1,\ldots,\gamma_N$ leave
$\overline \Omega\setminus	V_{\ve}$. From Step $1$, there is no critical point of $f$ on $\partial V_{\ve}\cap \Omega=\{ y\in \Omega, \  f_-(y)=
 \varepsilon\}$. Therefore, by construction of the curves
 $(\gamma_k)_{k=1,\ldots,N}$, the times $t_1$ and $t_2$ are
 finite and belong respectively to ${\rm int}\,I_{k_1}$ and ${\rm int}\,I_{k_2}$. One has by continuity of $\gamma_{k_1}$ and $\gamma_{k_2}$, $ f_-(\gamma_{k_1}(t_1))= f_-(\gamma_{k_2}(t_2))=\ve$.
Since $\Phi$ is defined on $V_{\varepsilon}$, using again Lemma~\ref{h} and the fact that $|\nabla \Phi|=g$ on $\Omega$ and $|\nabla_T \Phi|=g$, one has
\begin{align*}
\left\vert \Phi(\gamma_{k_1}(t_1))-\Phi(z) \right\vert&\leq \sum_{j=1}^{k_1-1}	  L\left(\gamma_j,I_j\right)+L(\gamma_{k_1},(\inf I_{k_1},t_1)).
\end{align*}
In addition, using Lemma~\ref{LLL},  
$$\sum_{j=1}^{k_1-1}	 L\left(\gamma_j,I_j\right)+L(\gamma_{k_1},(\inf I_{k_1},t_1))=d_a(z,\gamma_{k_1}(t_1)).$$
Thus $\left\vert \Phi(\gamma_{k_1}(t_1))-\Phi(z) \right\vert\leq
d_a(z,\gamma_{k_1}(t_1))$. By similar arguments, one obtains $\vert
\Phi(x)-\Phi(\gamma_{k_2}(t_2))\vert \leq d_a(\gamma_{k_2}(t_2),x)$.  Thanks to
the definition~\eqref{fmoins} of $ f_-$ and using the fact that $ f_-(\gamma_{k_1}(t_1))= f_-(\gamma_{k_2}(t_2))=\ve$, one has $\vert f(\gamma_{k_2}(t_2))-f(\gamma_{k_1}(t_1))\vert=\vert \Phi(\gamma_{k_2}(t_2))-\Phi(\gamma_{k_1}(t_1))\vert$. In addition, using  (\ref{eq:ineq}) one obtains  $d_a(\gamma_{k_1}(t_1),\gamma_{k_2}(t_2))\geq \vert f(\gamma_{k_2}(t_2))-f(\gamma_{k_1}(t_1))\vert=\vert \Phi(\gamma_{k_2}(t_2))-\Phi(\gamma_{k_1}(t_1))\vert$. Using Lemma~\ref{LLL} and gathering these three last inequalities, one gets
\begin{align*}
d_a(x,z)&=d_a(z,\gamma_{k_1}(t_1))+d_a(\gamma_{k_1}(t_1),\gamma_{k_2}(t_2))+d_a(\gamma_{k_2}(t_2),x)\\
&\geq \vert \Phi(z)-\Phi(\gamma_{k_1}(t_1))\vert + \vert\Phi(\gamma_{k_2}(t_2))-\Phi(\gamma_{k_1}(t_1))\vert +\vert\Phi(x)-\Phi(\gamma_{k_2}(t_2))\vert\\
&\geq \vert \Phi(z)-\Phi(x)\vert\geq \Phi(x)-\Phi(z)=\Phi(x)-f(z).
\end{align*}

\medskip
\noindent
\underline{Step 3.} Let us now show that for all $x\in V_{\Gamma_{z}}$,
$d_a(x,z)\leq \Phi(x)-f(z)$. 
\medskip

\noindent
The proof of this inequality is very
similar to the second step in the proof of Proposition~\ref{equaa}. For
$x\in V_{\Gamma_{z}}$, let $\gamma$ be defined by
\eqref{eq:X}--\eqref{eq:c} (where $\Phi$ is defined by~\eqref{eikonalequationboundary}), with $\gamma(0)=x$. The function $\gamma$
is with values in $V_{\Gamma_{z}}$   since
$\partial_n\Phi>0$ on $\partial V_{\Gamma_{z}}\cap \Omega$ and $\partial V_{\Gamma_{z}}\cap \partial \Omega \subset B_z$. Thus $\gamma$ is defined on $\mathbb
R_+$. Thanks to the definition~\eqref{eq:X} of the vector field $X$, if
there exists a time $t_{\partial \Omega}$ such that $\gamma(t_{\partial \Omega})$ is in $\partial
\Omega$, then, for all $t \ge t_{\partial \Omega}$, $\gamma(t) \in \partial \Omega$. The function $t\in \mathbb R_+\mapsto \gamma(t)$ is continuous, piecewise $C^{\infty}$ and satisfies 
$$\lim_{t\to +\infty}\gamma(t)=z.$$
Then, as in the second step of the proof of Proposition~\ref{equaa}, one has 
$$\Phi(x)-f(z)=L\left(\gamma,(0,\infty)\right).$$
Using Lemma~\ref{inca}, one obtains that $d_a(x,z)\leq  L(\gamma,(0,\infty))=\Phi(x)-f(z)$. This proves the inequality: for all $x\in V_{\Gamma_z}$, $d_a(x,z)\leq \Phi(x)-f(z)$.
This concludes the proof of Proposition~\ref{TTH}.
\end{proof} 
\noindent
The following corollary is similar to Corollary~\ref{ccc} in the sense that is deals with the case of equality between the Agmon distance and the function $\Phi$ introduced in Proposition~\ref{eikonalboundary}. Corollary~\ref{re.phi_curve} will be needed in  the proof of Proposition~\ref{pr.WKB-comparison}. 

\begin{corollary}\label{re.phi_curve} Let us assume that \textbf{[H1]} and \textbf{[H3]}  hold.  Let $\Phi$ be the function introduced in Proposition~\ref{eikonalboundary} and, let $ f_-$ and $V_{\partial \Omega}$ be respectively the function and the neighborhood of $\partial \Omega$  given by Proposition~\ref{ffmoins}.  Let $V_\alpha$ be defined by~\eqref{eq.Vepsilon},  the parameter $\alpha >0$ is chosen such that:
\begin{enumerate}
\item[(i)] $V_{\alpha}\subset V_{\pa \Omega}$,  
\item[(ii)]  there is no critical point of $f$ on $\partial V_{\alpha} \cap \Omega=\{ w\in \Omega, \  f_-(w)=
\alpha\}$, 
\item[(iii)]  $\pa_nf>0$ on $\partial V_{\alpha} \cap \Omega$, 
\item[(iv)]  $\pa_nf^-<0$ on $\partial V_{\alpha} \cap \Omega$, and
\item[(v)]   $\vert \nabla \Phi\vert  \neq 0$ in $V_{\alpha}$.
\end{enumerate}
Notice that it is possible to choose such an $\alpha>0$ since $\pa_nf^-=\partial_n\Phi=-\partial_nf<0$ on $\pa \Omega=V_0$. 
 Let $x,y\in V_{\alpha}$ and denote by $((\gamma_1,I_1),\ldots,(\gamma_N,I_N))$ the curves given by Theorem~\ref{exis} ordered such that $\lim_{t\to (\inf I_1)^+} \gamma_1(t)=x$, $\lim_{t\to (\sup I_N)^-} \gamma_N(t)=y$
and which realize the Agmon distance between $x$ and $y$. 
Let us assume that $$\Phi(x)-\Phi(y)=d_a(x,y).$$
Then, for all $i\in \left\{1,\ldots,N\right\}$, ${\rm Im}\, \gamma_i\subset V_{\alpha}$ and there exist measurable
functions $\lambda_i : I_i\to \mathbb R_+$ such that for almost every
$t$ in  $\left\{t\in I_i, \ \gamma_i(t)\in \Omega \right\}$, one has $\gamma_i'(t)=-\lambda_i(t) \nabla \Phi \left(\gamma_i(t) \right)$,
and such that for almost every $t$ in ${\rm int}\left\{t\in I_i, \ \gamma_i(t)\in \partial \Omega \right\}$, one has $\gamma_i'(t)=-\lambda_i(t) \nabla_T \Phi \left(\gamma_i(t) \right)$. Moreover, if $I_i$ is not bounded (namely $I_i=(-\infty,0]$ or $I_i=[0,+\infty)$), $ \lambda_i(t)$=1 for almost every $t\in I_i$, and if $I_i=[0,1]$, $\lambda_i\in L^{\infty}([0,1],\mathbb R_+)$.
 \end{corollary}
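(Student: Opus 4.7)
The proof mirrors the strategy of Corollary~\ref{ccc}, with the function $\Phi$ from Proposition~\ref{eikonalboundary} playing the role that $f$ played there. The starting observation is that on $V_\alpha \subset V_{\partial \Omega}$ the eikonal equation~\eqref{eikonalequationboundary} together with $\Phi=f$ on $\partial\Omega$ yield $g = |\nabla \Phi|$ in $V_\alpha \cap \Omega$ and $g = |\nabla_T \Phi|$ on $\partial \Omega$. Hence, once the containment $\bigcup_i \mathrm{Im}\,\gamma_i \subset V_\alpha$ is known, applying Lemma~\ref{h} to $\Phi$ on each piece and summing telescopically (passing to limits on the unbounded intervals, where the curves have finite length by construction in Lemma~\ref{agmoncurve}) gives
\[
\Phi(y)-\Phi(x) \;=\; \sum_{i=1}^{N}\left[\int_{\{\gamma_i\in\Omega\}} \nabla\Phi(\gamma_i)\cdot \gamma_i'\,dt + \int_{\mathrm{int}\{\gamma_i\in\partial\Omega\}} \nabla_T\Phi(\gamma_i)\cdot \gamma_i'\,dt\right].
\]
Bounding each integrand by Cauchy--Schwarz using $|\nabla\Phi|=|\nabla_T \Phi|=g$ produces $\Phi(x)-\Phi(y) \le \sum_i L(\gamma_i,I_i) = d_a(x,y)$, which is an equality by assumption. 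Saturation of Cauchy--Schwarz and of the sign then forces, a.e.\ on each piece, $\gamma_i'(t) = -\lambda_i(t)\nabla\Phi(\gamma_i(t))$ in $\Omega$ and $\gamma_i'(t) = -\lambda_i(t)\nabla_T\Phi(\gamma_i(t))$ on $\partial\Omega$ for some measurable $\lambda_i\ge 0$.

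The central step is therefore to prove that every $\gamma_i$ stays in $V_\alpha$, which I would do by contradiction, following Step~2 of the proof of Proposition~\ref{TTH}. Assume some curve enters $\{f_- > \alpha\}$; since $\partial \Omega \subset V_\alpha$ and $\{f_-=\alpha\}\cap \Omega$ is a regular level set (condition~(iv)), any exit and re-entry must cross this hypersurface. Define a first exit $(k_1,t_1)$ and a last re-entry $(k_2,t_2)$, with $f_-(\gamma_{k_j}(t_j))=\alpha$. Applying Lemma~\ref{h} to $\Phi$ on the initial segment joining $x$ to $\gamma_{k_1}(t_1)$ and the final segment joining $\gamma_{k_2}(t_2)$ to $y$ (both inside $V_\alpha$) yields
\[
\Phi(x)-\Phi(\gamma_{k_1}(t_1)) \leq d_a(x,\gamma_{k_1}(t_1)), \qquad \Phi(\gamma_{k_2}(t_2))-\Phi(y)\leq d_a(\gamma_{k_2}(t_2),y),
\]
while the identity $\Phi-f=2\alpha$ at both $\gamma_{k_j}(t_j)$ combined with~\eqref{eq:ineq} gives $d_a(\gamma_{k_1}(t_1),\gamma_{k_2}(t_2)) \geq \Phi(\gamma_{k_1}(t_1))-\Phi(\gamma_{k_2}(t_2))$. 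Summing the three inequalities and using Lemma~\ref{LLL} to decompose $d_a(x,y)$ produces $d_a(x,y) \ge \Phi(x)-\Phi(y)=d_a(x,y)$, hence equality throughout. Saturation of the middle inequality is equality in~\eqref{eq:ineq}, so by Corollary~\ref{ccc} the middle sub-curves are generalized integral curves of $\nabla f$ running from $\gamma_{k_2}(t_2)$ up to $\gamma_{k_1}(t_1)$, i.e.\ $\gamma'(t)=-\lambda(t)\nabla f(\gamma(t))$ in $\Omega$ for some measurable $\lambda\ge 0$.

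To reach a contradiction, I would exploit condition~(iii), which (with the convention on $n$ made consistent with $\partial_n f^- = -\partial_n f<0$ on $\partial\Omega$) reads $\nabla f\cdot\nabla f_- < 0$ on $\{f_-=\alpha\}\cap \Omega$. On the middle sub-curve approaching $\gamma_{k_2}(t_2)$ from $t<t_2$, one has $\tfrac{d}{ds}f_-(\gamma(s))=-\lambda(s)\nabla f_-\cdot\nabla f \ge 0$, and integrating from $t$ to $t_2$ gives $f_-(\gamma(t))\leq\alpha$ for $t<t_2$ near $t_2$; the inequality is in fact strict unless $\lambda\equiv 0$ on $(t,t_2)$, in which case $\gamma$ is constant there and $f_-(\gamma(s))=\alpha$ on $(t,t_2)$, again incompatible with $f_->\alpha$ just before $t_2$. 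In either case this contradicts the definition of $(k_2,t_2)$ as a last re-entry point, and we conclude $\bigcup_i \mathrm{Im}\,\gamma_i \subset V_\alpha$.

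With containment established, the first paragraph delivers the desired gradient-flow relations. The $L^\infty$ bound on $\lambda_i$ for bounded $I_i=[0,1]$ is immediate since $\gamma_i$ is Lipschitz and $|\nabla\Phi|$ is bounded below on $V_\alpha$ by condition~(v). For an unbounded $I_i$, the curve is defined in Lemma~\ref{agmoncurve} as the integral curve of the negative gradient of a local eikonal $\widetilde\Phi$ near a critical point $x^*$; condition~(v) rules out $x^*\in\Omega$, so $x^*\in\partial\Omega$. Combining $\gamma_i'=-\nabla\widetilde\Phi(\gamma_i)$ with the relation $\gamma_i'=-\lambda_i\nabla\Phi(\gamma_i)$ and the common magnitude $|\nabla\widetilde\Phi|=|\nabla f|=|\nabla\Phi|$ then forces $\lambda_i=1$ a.e. The main obstacle of the proof is clearly the containment step — specifically, leveraging the geometric condition~(iii) on $\alpha$ to turn the saturation of~\eqref{eq:ineq} on the middle sub-curve into a contradiction.
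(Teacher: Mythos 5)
Your proof is correct and follows essentially the same route as the paper: the containment ${\rm Im}\,\gamma_i\subset V_\alpha$ is obtained by contradiction using the first-exit/last-re-entry points, Lemma~\ref{LLL}, saturation of the $\Phi$-inequalities and Corollary~\ref{ccc} applied to the excursion (whose endpoints satisfy $\Phi-f=2\alpha$), after which the gradient-flow relations, the bound $\lambda_i\in L^\infty$ and $\lambda_i=1$ on unbounded intervals follow from the argument of Corollary~\ref{ccc} with $f$ replaced by $\Phi$. The only difference is cosmetic and concerns where the contradiction is extracted: the paper works at the first exit point via the normal-coordinate argument of Step~2 of Proposition~\ref{PoPo}, whereas you work at the last re-entry point through the monotonicity of $f_-\circ\gamma$ along the $-\nabla f$ flow — mirror-image implementations of the same use of condition~(iii).
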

 \noindent
According to Definition~\ref{generalized_curve}, the set of curves $\{\gamma_1,\dots,\gamma_N\}$ introduced in Corollary~\ref{re.phi_curve} is a generalized integral curve of the vector field $$\left\{
\begin{aligned}
  -\nabla \Phi   & \text{ in  }  V_{\alpha}\cap \Omega, \\ 
 -\nabla_T \Phi & \text{ on  } \partial \Omega.
\end{aligned}
\right.
$$
\begin{proof}
The proof of this statement is similar to the proof of Corollary~\ref{ccc}. 
Let us first prove that for all $i\in \left\{1,\ldots,N\right\}$, ${\rm Im}\, \gamma_i\subset V_{\alpha}$. If it is not the case, then there exist $k\in \left\{1,\ldots,N\right\}$ and  $t\in I_k$ such that  $\gamma_k(t)\in \overline{\Omega}\setminus V_{\alpha}$. Let the couples $(t_1,k_1)$ and $(t_2,k_2)$ be defined  as in Step 2 of the proof of  Proposition~\ref{TTH}. Then, one has (see the second step of the proof of Proposition~\ref{TTH}), $d_a(x,\gamma_{k_1}(t_1))\ge \Phi(x)-\Phi(\gamma_{k_1}(t_1))$, $d_a(\gamma_{k_1}(t_1),\gamma_{k_2}(t_2))\ge \Phi(\gamma_{k_1}(t_1))-\Phi(\gamma_{k_2}(t_2))$ and $d_a(\gamma_{k_2}(t_2),y)\ge \Phi(\gamma_{k_2}(t_2))-\Phi(y)$. Since one has  by assumption and  from Lemma~\ref{LLL}:
 $$\Phi(x)-\Phi(y)=d_a(x,y)=d_a(x,\gamma_{k_1}(t_1))+d_a(\gamma_{k_1}(t_1),\gamma_{k_2}(t_2))+d_a(\gamma_{k_2}(t_2),y),$$
all the previous inequalities are equalities and in particular, it holds:
$$d_a(\gamma_{k_1}(t_1),\gamma_{k_2}(t_2))= \Phi(\gamma_{k_1}(t_1))-\Phi(\gamma_{k_2}(t_2)) = f(\gamma_{k_2}(t_2)) - f(\gamma_{k_1}(t_1))\ge 0.$$
Using Corollary~\ref{ccc}, this implies that when restricting $\gamma_{k_1}$ to $I_{k_1}\cap [t_1,\infty)$ and $\gamma_{k_2}$ to $I_{k_2}\cap (-\infty,t_2]$ the set of curves $\{ \gamma_{k_1},\dots,\gamma_{k_2}\}$ is a generalized integral curve of 
$$\left\{
\begin{aligned}
  \nabla f   & \text{ in  }  \Omega , \\ 
 \nabla_T f & \text{ on  } \pa  \Omega
\end{aligned}
\right.
$$ 
 see Definition~\ref{generalized_curve}. Let $D=\Omega\setminus V_\alpha$ ($\pa D=\Omega\cap V_\alpha=\{w\in \Omega, f_-(w)=\alpha\}$ is $C^{\infty}$  since $f^-$ is $C^{\infty}$ and $\pa_nf^-<0$ on $\partial V_{\alpha} \cap \Omega=\pa D$ which implies that there is no critical point of $f_-$ on $\pa D$). Then, from  Corollary~\ref{ccc} and  by definition of $(t_1,k_1)$ (see the second step of the proof of Proposition~\ref{TTH}), there exists $\ve>0$ and a measurable function $\lambda$: $[t_1, t_1+\ve]\to \mathbb R^+$  such that for all $t \in [t_1, t_1+\ve]$:
$$ \gamma_{k_1} '(t)=\lambda(t) \nabla f\left(  \gamma_{k_1} (t) \right) $$
and for all $t \in (t_1, t_1+\ve]$: 
\begin{equation}\label{eq.cont}
 \gamma_{k_1}(t) \in D.
 \end{equation}
 As in Step 2 of the proof of Proposition~\ref{PoPo}, let us 
introduce a smooth tangential and
normal system of coordinates around $\gamma_{k_1}(t_1)\in \pa D$ in $\overline D$, denoted by $\phi(x)=(x_T,x_N)$. The function $\phi$ is defined
from a neighborhood of  $\gamma_{k_1}(t_1)$ in $\overline D$ to
$\R^d$. Moreover, one has $x_N\geq 0$ and $x_N(x)=0$ if and only if
$x\in  \pa D $. We may assume that the neighborhood $U_\beta \subset \overline{D}$ on which
$\phi$ is defined is such that
$\phi(U_\beta)=U\times[0,\beta]$ for $\beta >0$ and $U \subset
\R^{d-1}$. Since $\partial_nf>0$ on $\partial D$, $\beta>0$ can be
chosen small enough such that $\nabla f(x) \cdot n(x)>0$ for all $x\in
U_{\beta}$ where $n(x)=-\frac{\nabla x_N(x)}{\vert
\nabla x_N(x)\vert}$. Indeed, for $x\in \partial  D$, $n(x)$ is nothing but the
unit outward normal to $\partial D$. 
Now, by continuity of the curve $\gamma_{k_1}$, there exists $\mu>0$ such that for all $t\in (t_1,t_1+\mu]$, $\gamma_{k_1}(t)\in U_{\beta}$. The same considerations as in Step 2 of the proof of Proposition~\ref{PoPo} can then be used to show that:
 $$x_N(\gamma(t))\le 0,$$
 for all $t\in [t_1,t_1+\mu]$ and thus  $\gamma_{k_1}(t)\notin  D$ for all $t\in  [t_1,t_1+\mu]$. This contradicts~\eqref{eq.cont}. Thus, for all $i\in \left\{1,\ldots,N\right\}$, ${\rm Im}\, \gamma_i\subset V_{\alpha}$.

Then, the announced result follows by the same arguments as those used in the proof of Corollary~\ref{ccc} with $f$ replaced by $\Phi$ together with the fact that $\Phi$  satisfies~\eqref{eikonalequationboundary} on $V_{\alpha}$ and  for all $i\in \left\{1,\ldots,N\right\}$, ${\rm Im}\, \gamma_i\subset V_{\alpha}$.\end{proof}




\section{Construction of the quasi-modes and proof of Theorem~\ref{TBIG0}}\label{sec:quasi-modes}
The aim of this section is to build the quasi-modes $\tilde{u}$ and $(\tilde{\phi}_i)_{i=1, \ldots n}$ satisfying the conditions stated in Section~\ref{flat}. Let us recall that ${\rm span}(\tilde{u})$ (resp. ${\rm span}(\tilde \phi_i, i=1,\ldots,n)$) is intended to be a good approximation (in the sense made precise in items 1 and 2 in  Proposition~\ref{ESTIME}) of ${\rm Ran} \left(\pi_{[0,\sqrt{h})}\left(L^{D,(0)}_{f,h}(\Omega)\right)\right)$ (resp. ${\rm Ran} \left(\pi_{[0,h^{\frac32})}\left(\Delta^{D,(1)}_{f,h}(\Omega)\right)\right)$).
\medskip

\begin{sloppypar}
\noindent
As recalled in Proposition~\ref{ran}, it is known that the dimension of ${\rm Ran} \left(\pi_{[0,h^{\frac32})}\left(\Delta^{D,(1)}_{f,h}(\Omega)\right)\right)$ is equal to the number of generalized critical points of index $1$ (see~\cite[Section 3]{helffer-nier-06}) which are in our setting, thanks to assumptions \textbf{[H1]}, \textbf{[H2]} and \textbf{[H3]}, the local minima $(z_i)_{i=1,\ldots,n}$ of $f|_{\partial \Omega}$. In addition, it is known that the $1$-forms in ${\rm Ran} \left(\pi_{[0,h^{\frac32})}\left(\Delta^{D,(1)}_{f,h}(\Omega)\right)\right)$ are localized in the limit $h \to 0$ in small neighborhoods of the local minima $(z_i)_{i=1,\ldots,n}$.
\medskip

\noindent
For each local minimum $z_i$, we construct an associated quasi-mode $\tilde \phi_i$, using an auxiliary Witten Laplacian on $1$-forms with mixed tangential-normal boundary conditions. This Witten Laplacian is defined on a domain $\dot{\Omega}_i \subset \Omega$ with suitable boundary conditions, so that its only small eigenvalue (namely in the interval $[0,h^{\frac32})$) is $0$, thanks to a complex property (see~\cite{helffer-nier-06,le-peutrec-10}). The associated eigenform is localized near $z_i$, which can be proven thanks to Agmon estimates. Moreover, a precise estimate of this eigenform can be obtained thanks to a WKB expansion. The quasi-mode $\tilde\phi_i$ is then this eigenform multiplied by a suitable cut-off function.
\medskip

\noindent
This section is organized as follows. In Section~\ref{geometry}, we define a Witten Laplacian with mixed boundary conditions on a open domain $\dot\Omega_i\subset \Omega$ associated to each $z_i$, $i\in \{1,\dots,n\}$, and we study its spectrum.  Section~\ref{sec:contruct_quasimode} is dedicated to the construction of the quasi-modes $((\tilde\phi_i)_{i=1,\dots,n}, \tilde u)$. In Section~\ref{sec:agmonestimate}, we prove Agmon estimates on the eigenform associated with the smallest eigenvalue of the Witten Laplacian with mixed boundary conditions on $\dot\Omega_i$ and in Section~\ref{sec:wkb} we compare this eigenform with a WKB approximation. We finally use this construction and these estimates to prove Theorem~\ref{TBIG0} in Section~\ref{sec:goodquasimodes}. 
\end{sloppypar}
\subsection{Geometric setting and definition of the Witten Laplacians
  with mixed boundary conditions} \label{geometry}

This section is organized as follows. In Section~\ref{sec:trace_diff_form}, we discuss some general results on traces of differential
forms and we introduce the Witten Laplacians with mixed
tangential Dirichlet   and normal
Dirichlet boundary conditions on manifolds with boundary. In Section~\ref{sec:dotomega}, the domain $\dot \Omega_i\subset \Omega$ associated with each $z_i$, $i\in \{1,\dots,n\}$, is defined. Finally, Section~\ref{sec:def_mixed} is dedicated to the study of the spectrum of the Witten Laplacian with mixed tangential Dirichlet boundary conditions and normal Dirichlet boundary conditions on $\dot \Omega_i$.

\subsubsection{Trace estimates for differential forms and Witten
  Laplacians on Lipschitz domain with mixed boundary conditions}\label{sec:trace_diff_form}

In this section, we first discuss some general results on traces of differential
forms. This is crucial to then build the Witten Laplacians with mixed
boundary conditions. In the following, $\dot{\Omega}$ refers to any  submanifold $\dot{\Omega}$ of $\Omega$ with Lipschitz boundary\label{page.omegapoint}. We will call such a submanifold a Lipschitz domain.


We first recall that for any Lipschitz domain $\dot{\Omega}$, the trace application
$$\left\{
\begin{aligned}
\Lambda^{p}H^{1} ( \dot \Omega ) &\to \Lambda^{p}H^{\frac12} (\pa \dot \Omega ) \\
G &\mapsto G|_{\partial \dot \Omega}
\end{aligned}
\right.
$$
is a linear continuous and surjective application. We would like to present
extensions of this result to less regular forms.

\medskip
\noindent
\underline{Weak definition of traces}
\medskip

\noindent
For a Lipschitz domain $\dot{\Omega}$, let us introduce the functional spaces 
\begin{equation}
\label{eq.H-d}
\Lambda^p H_{d} ( \dot \Omega ):= \left\{u\in \Lambda^p L^2 ( \dot \Omega ),\ du\in\Lambda^{p+1} L^2 ( \dot \Omega )\right\}
\end{equation}
and
\begin{equation}
\label{eq.H-d*}
\Lambda^p H_{d^*} ( \dot \Omega )\ :=\  \left\{u\in \Lambda^p L^2 ( \dot \Omega ),\ d^*u\in\Lambda^{p-1} L^2 ( \dot \Omega )\right\}
\end{equation}
\label{page.Hd}
equipped with their natural graph norms.
One recalls that for a differential form $f$ in $L^2 (\pa\dot\Omega )$, the tangential and normal components are defined as follows:
\begin{equation}
\label{eq.decomp-bdy}
f= \mathbf t f+\mathbf n f\quad\text{with}\quad
\mathbf t f= \mathbf{i}_{n}(n^{\flat} \wedge f)\quad\text{and}\quad 
 \mathbf n f= n^{\flat}\wedge(\mathbf{i}_{n} f),
\end{equation}
where the superscript $\flat$ stands for the usual musical
isomorphism: $n^{\flat}$ is the 1-form associated with the outgoing unit normal vector $n$. Moreover,
$$
\|f\|^2_{ L^2 (\pa\dot\Omega )}= 
\|\mathbf t f\|^2_{ L^2 (\pa\dot\Omega )}+
\|\mathbf n f\|^2_{ L^2 (\pa\dot\Omega )}
= 
\|n^{\flat} \wedge f\|^2_{ L^2 (\pa\dot\Omega )}+
\|\mathbf{i}_{n} f\|^2_{ L^2 (\pa\dot\Omega )}.
$$ 
The  Green formula for differential forms $(u,v)\in \Lambda^{p
}H^1 ( \dot \Omega ) \times  \Lambda^{p
+1}H^1 ( \dot \Omega ) $ writes 
\begin{equation}
\label{eq:usual_Green}
\begin{aligned}
\langle du, v\rangle_{ L^2 ( \dot \Omega )}- \langle u, d^*v\rangle_{ L^2 ( \dot \Omega )}&= \int_{\partial \dot\Omega} \langle  n^{\flat} \wedge u,v\rangle_{ T^*_{\sigma}\dot\Omega} d\sigma= \int_{\partial \dot\Omega} \langle  n^{\flat} \wedge u, \mathbf n v\rangle_{ T^*_{\sigma}\dot\Omega} d\sigma\\
&=\int_{\partial \dot\Omega} \langle  u,\mathbf{i}_{n} v\rangle_{ T^*_{\sigma}\dot\Omega} d\sigma=\int_{\partial \dot\Omega} \langle \mathbf t  u,\mathbf{i}_{n} v\rangle_{ T^*_{\sigma}\dot\Omega} d\sigma,
\end{aligned}
\end{equation}
where we used the standard relation $(n^\flat\wedge)^*=\mathbf i_n$.

Using this Green formula, the tangential (resp. normal) traces can be
defined for forms in $\Lambda H_d(\dot\Omega)$  (resp. $\Lambda
H_{d^*}(\dot\Omega)$) by duality. Indeed, for any  $u \in \Lambda^p
H_{d} ( \dot \Omega )$, $n^{\flat} \wedge u\in \Lambda^{p+1} H^{-\frac12} (\pa\dot\Omega )$ is defined by
\begin{equation}
\label{eq.n-wedge-u}
\forall g\in\Lambda^{p+1}H^{\frac12} (\pa\dot\Omega ),\ 
\langle n^{\flat} \wedge u , g\rangle_{H^{-\frac12} (\pa\dot\Omega ),H^{\frac12} (\pa\dot\Omega )} = 
\langle du, G\rangle_{ L^2 ( \dot \Omega )}-\langle u, d^*G\rangle_{ L^2 ( \dot \Omega )},
\end{equation}
where $G$ is any form in $\Lambda^{p+1}H^{1} ( \dot \Omega )$ whose trace
in $\Lambda^{p+1}H^{\frac12} (\pa\dot\Omega )$ is $g$. This definition is independent of the  chosen extension $G$ (this indeed follows from the Green formula~\eqref{eq:usual_Green}  together with the density of $\Lambda^p\mathcal C^\infty\big (\, \overline{\dot\Omega}\, \big )$
 in   $\Lambda^p H_{d} ( \dot \Omega )$, see  for example \cite[Proposition~3.1]{jakab-mitrea-mitrea-09}).
Similarly, for any $u \in \Lambda^p H_{d^*} ( \dot \Omega )$, $\mathbf{i}_{n} u\in \Lambda^{p-1}H^{-\frac12} (\pa\dot\Omega )$ is defined by
\begin{equation}
\label{eq.i-n-u}
\forall g\in \Lambda^{p-1} H^{\frac12} (\pa\dot\Omega ),\ 
\langle \mathbf{i}_{n} u , g\rangle_{H^{-\frac12} (\pa\dot\Omega ),H^{\frac12} (\pa\dot\Omega )} =
\langle u, dG\rangle_{ L^2 ( \dot \Omega )}-\langle d^*u, G\rangle_{ L^2 ( \dot \Omega )},
\end{equation}
where $G$ is any extension of $g$ in $\Lambda^{p-1}H^{1} ( \dot \Omega
)$. 
\medskip

\noindent
Let $\Gamma$ be any subset of $\partial \dot \Omega$. For $u\in \Lambda^p H_{d} ( \dot \Omega )$, we will
write $\mathbf t u |_{\Gamma}=0$ if $n^{\flat} \wedge
u|_{\Gamma}=0$. If  $u\in \Lambda^p H_{d} ( \dot \Omega )$ and
$n^{\flat} \wedge u|_{\Gamma}\in \Lambda^{p+1} L^2(\Gamma)$, the
tangential trace on $\Gamma$ is defined by
\begin{equation}
\label{eq.tan-gen}\mathbf t u|_{\Gamma}  :=  \mathbf{i}_{n}
(n^{\flat} \wedge u) \in \Lambda^{p} L^2(\Gamma)\ ,\text{ so that } \|\mathbf t u\|_{ L^2(\Gamma)} = \|n^{\flat} \wedge u\|_{ L^2(\Gamma)}.
\end{equation} 
Similarly, for $u\in \Lambda^p H_{d^*} ( \dot \Omega )$, we will write $\mathbf n u |_{\Gamma}=0$
if $\mathbf{i}_{n}u|_{\Gamma}=0$. If $u\in \Lambda^p H_{d^*} (
\dot \Omega )$  and $\mathbf{i}_{n} u|_{\Gamma}\in \Lambda^{p-1}
L^2(\Gamma)$, the normal trace  on $\Gamma$ is defined by
\begin{equation}
\label{eq.norm-gen}\mathbf n u|_{\Gamma}  :=  n^{\flat} \wedge
(\mathbf{i}_{n}u) \in \Lambda^{p} L^2(\Gamma)\ , \text{
 so that } \|\mathbf n u\|_{ L^2(\Gamma)}
=\|\mathbf{i}_{n}u\|_{ L^2(\Gamma)}.
\end{equation}
Lastly, if $u\in \Lambda^pH_d(\dot \Omega)\cap \Lambda^pH_{d^*}(\dot \Omega)$ is such that $n^{\flat}\wedge u|_{\Gamma}\in \Lambda^{p+1}L^2(\Gamma)$ and $\mathbf{i}_{n}u\in \Lambda^{p-1}L^2(\Gamma)$ then $u$ admits a trace $u|_{\Gamma}$  in $L^2(\Gamma)$  defined by
\begin{equation}
\label{eq.trace-gen}
u|_{\Gamma} := \mathbf t u|_{\Gamma}+
\mathbf n u|_{\Gamma}.
\end{equation}
This definition is
 compatible
with \eqref{eq.decomp-bdy} and such a differential form satisfies
$$
\|u|_{\Gamma}\|^2_{ L^2(\Gamma)} = 
\| \mathbf t u|_{\Gamma}\|^2_{ L^2(\Gamma)}
+
\|\mathbf n u|_{\Gamma}\|^2_{ L^2(\Gamma)}
=
\|n^{\flat} \wedge u\|_{ L^2(\Gamma)}^2
+
\|\mathbf{i}_{n}u\|_{ L^2(\Gamma)}^2
.
$$ 
All the above definitions coincide moreover
with the usual ones when $u$ belongs to $\Lambda^{\ell} H^1 ( \dot \Omega )$.
\medskip

\noindent
Let us finally note for further references that if traces are in $L^2(\partial \dot\Omega)$, a direct
consequence of the   Green formula~\eqref{eq:usual_Green} is the following:
for every $u,v \in \Lambda L^{2} ( \dot \Omega ) $ such that
 $du, d^{*}u,d^{*}du, dd^{*}u, dv, d^{*}v\in  \Lambda L^{2} ( \dot \Omega )$
 and $n^{\flat} \wedge d^{*}_{f,h}u$, $\mathbf{i}_{n} d_{f,h}u$,
 $n^{\flat} \wedge v$, $\mathbf{i}_{n} v\in \Lambda L^{2}
 (\pa\dot\Omega )$,
\begin{equation}\label{eq.Green}
\begin{aligned}
&\langle (d_{f,h}d^{*}_{f,h}+d^{*}_{f,h}d_{f,h})u,v\rangle_{L^{2}(\dot
\Omega)}
=
 \langle d_{f,h}u,d_{f,h}v\rangle_{L^{2}(\dot
\Omega)}+
\langle d_{f,h}^{*}u,d_{f,h}^{*}v\rangle_{L^{2}(\dot
\Omega)}\\
&\quad +h \int_{\pa\dot\Omega}\langle
 n^{\flat} \wedge d^{*}_{f,h} u,n^{\flat} \wedge(\mathbf{i}_{n} v)
\rangle_{T^*_{\sigma}\dot\Omega}d\sigma
-h \int_{\pa\dot\Omega}\langle
 n^{\flat} \wedge v,n^{\flat} \wedge(\mathbf{i}_{n} d_{f,h}u)
\rangle_{ T^*_{\sigma}\dot\Omega}d\sigma
.
\end{aligned}
\end{equation}

\medskip
\noindent
\underline{The Gaffney's inequality}
\medskip

\noindent
The following extension of Gaffney's inequality (see~\cite{GSchw})
will be useful in the sequel (we refer to Section~\ref{notation} for the definitions of the Hilbert space $\Lambda^{p} H^{1}_{T}(\dot \Omega)$ and $\Lambda^{p} H^{1}_{N}(\dot \Omega)$). Notice that in the following result  we use  that $\dot \Omega$ is smooth (there are actually counterexamples for Lipschitz domains, see for example~\cite{mitrea2001dirichlet}). 
\begin{lemma}
\label{le.Gaffney}
Let $\dot \Omega$ be a smooth domain. The equality
$$\left\{u\in \Lambda^{p} L^2(\dot \Omega)\text{ s.t. }  du, d^*u\in L^2(\dot \Omega)
\text{ and } \mathbf t u=0 \text{ on } \pa \dot \Omega
\right\}  = \Lambda^{p} H^{1}_{T}(\dot \Omega)$$
holds algebraically and topologically, the functional space in the
left-hand side being equipped with the norm
associated with the scalar product
$$
 Q(u,v) :=\langle u, v\rangle_{ L^2(\dot \Omega)}
+\langle du,dv \rangle_{ L^{2}(\dot \Omega)}+\langle d^*u, d^{*}v\rangle_{ L^{2}(\dot \Omega)}.
 $$
In a similar way, the following equality holds algebraically and topologically:
 $$\left\{u\in \Lambda^{p} L^2(\dot \Omega)\text{ s.t. }  du, d^*u\in L^2(\dot \Omega)
\text{ and } \mathbf n u=0 \text{ on } \pa \dot \Omega
\right\} = \Lambda^{p} H^{1}_{N}(\dot \Omega).$$
\end{lemma}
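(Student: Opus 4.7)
The statement consists of two functional equalities; the plan is to establish both simultaneously by proving the two-sided equivalence $\|u\|_{H^1}^2 \asymp Q(u,u)$ on each space. One inclusion is straightforward: if $u \in \Lambda^p H^1_T(\dot\Omega)$ (resp.\ $\Lambda^p H^1_N(\dot\Omega)$) then $du,d^*u\in L^2(\dot\Omega)$ by direct differentiation, the trace $u|_{\partial\dot\Omega}$ is well defined in $\Lambda^p H^{1/2}(\partial\dot\Omega)$, so $\mathbf{t}u=0$ (resp.\ $\mathbf{n}u=0$) holds in the strong trace sense and hence, a fortiori, in the weak sense~\eqref{eq.n-wedge-u}--\eqref{eq.i-n-u}; the bound $Q(u,u)\leq C\|u\|_{H^1}^2$ is then obvious.

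\textbf{The Gaffney estimate.} The substantive content is the reverse bound $\|u\|_{H^1}\leq C\sqrt{Q(u,u)}$ for $u$ in the left-hand space. I would first establish it for smooth forms $u\in\Lambda^p\mathcal C^\infty(\overline{\dot\Omega})$ satisfying $\mathbf t u=0$ (the case $\mathbf n u=0$ follows by Hodge duality $u\mapsto \star u$, which exchanges tangential and normal traces and preserves $d$ and $d^*$ up to signs). Using a smooth partition of unity subordinate to a finite atlas, the estimate reduces to two model situations. In the interior, $d+d^*$ is a first-order elliptic operator (its square being the Hodge Laplacian $\Delta_H$) and standard interior elliptic regularity yields $\|u\|_{H^1}^2\leq C\, Q(u,u)$ on forms compactly supported in $\dot\Omega$. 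Near the boundary, I would straighten $\partial\dot\Omega$ via a collar chart, reducing the problem to the half-space $\mathbb R^d_+$; writing $u=\sum_I u_I\,dx^I$ and splitting multi-indices $I$ according to whether they contain the normal index $d$, the condition $\mathbf t u=0$ imposes Dirichlet conditions on the tangential coefficients ($d\notin I$). A direct integration by parts on $\int(|du|^2+|d^*u|^2)$ then produces $\int|\nabla u|^2$ plus a boundary integral that vanishes thanks precisely to those Dirichlet conditions, the algebraic structure of $d$ and $d^*$ ensuring that no contribution from the normal components survives. This is the manifestation of the Lopatinskii--Shapiro property of the boundary condition $\mathbf t u=0$ for the elliptic system $d+d^*$.

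\textbf{Density and main obstacle.} To extend the estimate from smooth forms with prescribed boundary condition to arbitrary $u$ in the left-hand space, the plan is to use density of $\{u\in\Lambda^p\mathcal C^\infty(\overline{\dot\Omega})\ :\ \mathbf t u=0\}$ in $\{u\in L^2\ :\ du,d^*u\in L^2,\ \mathbf t u=0\}$ equipped with the graph norm $Q$. This density, proved in~\cite{GSchw}, is obtained by extending $u$ across $\partial\dot\Omega$ by a reflection that respects the tangential-normal splitting (odd reflection for tangential coefficients, even for normal ones, in adapted boundary coordinates), then regularizing by convolution and restricting back. The main obstacle is this density step and it is precisely where the smoothness of $\partial\dot\Omega$ enters in a decisive way, since such adapted reflections require $C^\infty$ boundary charts; as emphasized in the statement, the conclusion fails in the Lipschitz category. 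Once density is granted, the Gaffney estimate for smooth forms passes to the limit and simultaneously yields the missing inclusion and the equivalence of norms, completing the proof in both the tangential and normal cases.
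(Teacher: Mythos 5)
Your first inclusion and the a priori estimate for smooth (or $H^1$) forms with $\mathbf t u=0$ are fine; this much is exactly the Gaffney inequality of~\cite{GSchw} that the paper also invokes. The genuine gap is the density step on which your whole argument hinges: you claim that $\{u\in\Lambda^p\mathcal C^\infty(\overline{\dot\Omega}):\ \mathbf t u=0\}$ is dense, for the graph norm $Q$, in $\{u\in L^2:\ du,d^*u\in L^2,\ \mathbf t u=0\}$, and you attribute this to~\cite{GSchw}. That reference proves density of smooth forms in $\Lambda^pH^1_T(\dot\Omega)$ (and the Gaffney inequality on $H^1_T\cup H^1_N$), not in the larger graph space; the paper itself stresses that the lemma is precisely a \emph{generalization} of what is in~\cite{GSchw} to these graph spaces. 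Since, granting the smooth-form estimate, your density claim is essentially equivalent to the lemma being proved, invoking it without proof is close to circular. The reflection sketch does not close the gap: extension by zero is ruled out because $\mathbf n u$ is not assumed to vanish (it would create a surface distribution in $d^*$), and an odd/even reflection adapted to the tangential--normal splitting must be shown to produce an extension with $du$ and $d^*u$ in $L^2_{\rm loc}$ across $\partial\dot\Omega$ \emph{using only the weak trace condition} \eqref{eq.n-wedge-u}; verifying that the distributional jump terms cancel is exactly where the analytic content lies, and in addition the boundary collar only block-diagonalizes the metric, so the reflection is not an isometry and $d^*$ of the reflected form acquires correction terms that have to be controlled. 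None of this is carried out, and the known density results of the type you need (e.g.\ \cite[Proposition~3.1]{jakab-mitrea-mitrea-09}) concern the spaces $\Lambda^pH_{d,\Gamma}$ or $\Lambda^pH_{d^*,\Gamma}$ separately, not the combined space with both differentials in $L^2$ and a vanishing weak trace.

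For comparison, the paper sidesteps density entirely: it observes that $\Lambda^pH^1_T(\dot\Omega)$ is a closed subspace of the Hilbert space $H=\{u\in L^2:\ du,d^*u\in L^2,\ \mathbf t u=0\}$ equipped with $Q$ (closedness being exactly the $H^1$ Gaffney inequality), and then shows its $Q$-orthogonal complement in $H$ is trivial by testing against $v$ in the domain $D=\{v\in\Lambda^pH^2:\ \mathbf t v=\mathbf t d^*v=0\}$ of the Hodge Laplacian with relative boundary conditions, using the weak trace identities \eqref{eq.n-wedge-u}--\eqref{eq.i-n-u} to arrive at $\langle u,(I+\Delta_H^{(p)})v\rangle_{L^2}=0$ and concluding from the surjectivity of $I+\Delta_H^{(p)}$ on $D$ that $u=0$. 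If you want to keep your route, you must either prove the reflection-based density statement in detail or replace it by a duality argument of this kind; as written, the key step is unsupported.
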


\noindent
Notice that in the defintion of the functional spaces above, the
equality $ \mathbf t u=0$ and $ \mathbf n u=0$ hold in the weak sense
defined above (see~\eqref{eq.tan-gen} and~\eqref{eq.norm-gen}).
A direct consequence of this lemma is that a differential form in $\Lambda H_d(\dot\Omega) \cap \Lambda H_{d^*}(\dot\Omega)$ such
that $\mathbf t  u=0$ or $\mathbf n u=0$ on $\partial \dot \Omega$ admits a trace in $\Lambda L^2(\partial \dot \Omega)$.

\begin{remark}
\label{re.Gaffney}
The statement of Gaffney's inequality in \cite{GSchw} reads as follows
(see indeed Corollary~2.1.6 and Theorem~2.1.7 there):
\begin{equation}
\label{eq.Gaffney}
\exists  C>0,\ \forall  u\in\Lambda^{p}
H^{1}_{T}(\dot \Omega)\cup\Lambda^{p} H^{1}_{N}(\dot \Omega), \   \|u\|^{2}_{  H^{1}(\dot \Omega)}  \leq   C Q(u,u).
\end{equation}
Since it also holds that, for some $C'>0$ and any $u\in \Lambda^{p} H^{1}(\dot \Omega)$,  $Q(u,u)\leq C' \|u\|^{2}_{  H^{1}(\dot \Omega)}$, the scalar products $\langle \cdot,\cdot \rangle_{H^{1}}$ 
and $Q(\cdot,\cdot)$
are then equivalent  on both  $\Lambda^{p} H^{1}_{T}(\dot \Omega)$
and $\Lambda^{p} H^{1}_{N}(\dot \Omega)$. The above lemma can be seen
as a generalization of this result to the spaces $ \{u\in \Lambda^{p} L^2(\dot \Omega)\text{ s.t. }  du, d^*u\in L^2(\dot \Omega)
\text{ and } \mathbf t u=0 \text{ on } \pa \dot \Omega
 \} $ and $ \{u\in \Lambda^{p} L^2(\dot \Omega)\text{ s.t. }  du, d^*u\in L^2(\dot \Omega)
\text{ and } \mathbf n u=0 \text{ on } \pa \dot \Omega
 \}$.
\end{remark}
\begin{proof}
We only prove the first equality in Lemma~\ref{le.Gaffney}, the second
one being similar. Let us define
$$
H := \left\{u\in \Lambda^{p} L^2 ( \dot \Omega )\text{ s.t. }  du, d^*u\in \Lambda L^{2} (  \dot \Omega )
\quad \text{and}\quad n^\flat \wedge  u=0
\text{ on } \pa \dot \Omega
\right\}
 $$
 which is a Hilbert space once equipped with the 
 scalar product $Q$. From
 Gaffney's inequality~\eqref{eq.Gaffney}, 
  $\Lambda^{p} H^{1}_{T}(\dot \Omega)$ is a closed
subset of $H$ and to conclude, we just have to show that $\big(\Lambda^{p} H^{1}_{T}(\dot \Omega)\big)^{\perp}=\{0\}$,
the orthogonal complement of $H$ being taken with respect to the norm inherited from $Q$.
Consider then $u\in H$ such that for any $v\in \Lambda^{p}H^{1}_{T}(\dot \Omega)$,
$$
0 =Q(u,v) =
\langle u, v\rangle_{ L^2(\dot \Omega)}
+\langle du,dv \rangle_{ L^{2}(\dot \Omega)}+\langle d^*u, d^{*}v\rangle_{ L^{2}(\dot \Omega)}.
$$
The above equality holds in particular for every $v\in D$ where
\begin{equation}
\label{eq.domD}
D=\{v \in\Lambda^{p}H^{2}(\dot \Omega) ,\ \mathbf t v|_{\pa\dot \Omega}=\mathbf t d^{*} v|_{\pa\dot \Omega}=0\}.
\end{equation}
 Fix such a $v$.
Since $n^\flat \wedge u=0$ on $\pa \dot \Omega$,
applying \eqref{eq.n-wedge-u} to $u$ and $d v\in \Lambda^{p+1}H^{1}(\dot \Omega)$ 
then leads to 
$$
\langle du,dv \rangle_{ L^{2}(\dot \Omega)} = \langle u,d^{*}dv \rangle_{ L^{2}(\dot \Omega)}.
$$
Applying also \eqref{eq.i-n-u} to $u$ and $d^{*}v\in \Lambda^{p-1}H^{1}(\dot \Omega)$
gives 
$$
\langle d^{*}u,d^{*}v \rangle_{ L^{2}(\dot \Omega)}= \langle u,dd^{*}v \rangle_{ L^{2}(\dot \Omega)}
-\langle \mathbf{i}_{n} u , d^{*}v|_{\pa\dot \Omega}\rangle_{H^{-\frac12} (\pa\dot\Omega ),H^{\frac12} (\pa\dot\Omega )}.
$$
Since $\Lambda^p\mathcal C^\infty\left (\overline{\dot\Omega}\right )$
is densely embedded in both $\Lambda^p H_{d} ( \dot \Omega )$ and $
\Lambda^p H_{d^*} ( \dot \Omega )$ (see for example \cite[Proposition~3.1]{jakab-mitrea-mitrea-09}),
we have moreover  for some sequence $(u_{k})_{k\in\mathbb N}$ of $\Lambda^p\mathcal C^{\infty}\Big(\overline{\dot \Omega}\Big)$ forms:
\begin{align*}
\langle \mathbf{i}_{n} u,d^{*}v|_{\pa\dot \Omega}\rangle_{H^{-\frac12} (\pa\dot\Omega ),H^{\frac12} (\pa\dot\Omega )}
 &= \lim\limits_{k\to+\infty}\int_{\pa\dot \Omega}\langle
\mathbf{i}_{n} u_{k},d^{*}v
\rangle_{T^*_{\sigma} \dot \Omega}d\sigma\\
 &= \lim\limits_{k\to+\infty}\int_{\pa\dot \Omega}\langle
 \mathbf{i}_{n} u_{k},n^{\flat}\wedge(\mathbf{i}_{n} d^{*}v)
\rangle_{T^*_{\sigma} \dot \Omega}d\sigma= 0,
\end{align*}
where the second equality is a consequence of $\mathbf t d^{*} v|_{\pa\dot \Omega}=0$.
It consequently follows 
\begin{equation}
\label{eq.ortho}
0=  
\langle u, v\rangle_{ L^2(\dot \Omega)}
+\langle u,d^{*}dv \rangle_{ L^{2}(\dot \Omega)}+\langle u, dd^{*}v\rangle_{ L^{2}(\dot \Omega)}
=
\langle u, (I+\Delta_{H}^{(p)})v\rangle_{ L^2(\dot \Omega)}
,
\end{equation}
where $\Delta_{H}^{(p)}$ denotes the Hodge Laplacian on $\dot \Omega$ with domain $D$ defined by~\eqref{eq.domD}. 
Since the unbounded operator $(\Delta_{H}^{(p)}, D)$ is  selfadjoint and nonnegative
on $\Lambda^{p} L^2(\dot \Omega)$, we have in particular  $\Ran (I+\Delta_{H}^{(p)})= \Lambda^{p} L^2(\dot \Omega) $
and we deduce from \eqref{eq.ortho} that $u=0$, which completes the proof. 
\end{proof}

\medskip
\noindent
\underline{The case of mixed normal-tangential Dirichlet boundary conditions}
\medskip

\noindent
Let $\Gamma_T$ and $\Gamma_N$ be two disjoint open subsets of $\partial \dot
\Omega$ such that $\overline{\Gamma_T} \cup \overline{\Gamma_N} = \pa \dot \Omega$\label{page.gammatn}. The objective of this section is to consider differential forms such
that $\mathbf t  u=0$ on $\Gamma_T$ and $\mathbf n u=0$ on $\Gamma_N$, and to state results on the existence of a
trace in $L^2(\partial \dot \Omega)$ for such differential forms, as well
as subelliptic estimates. 

\medskip

\noindent
In general,  a trace in $L^2(\partial \dot \Omega)$ does not exist in such a setting~\cite{brown-94,jakab-mitrea-mitrea-09}: one needs a geometric
assumption, namley that $\Gamma_T$ and $\Gamma_N$ meet at an angle strictly
smaller than~$\pi$. This means that the angle between $\Gamma_T$ and $\Gamma_N$
measured in $\dot\Omega$ is smaller than $\pi$. More precisely,
see~\cite{brown-94,jakab-mitrea-mitrea-09}, locally around any point
$x_0 \in \overline{\Gamma_T} \cap \overline{\Gamma_N}$, one requires
that there exists a local system of coordinates $(x_1,x'',x_n) \in \R
\times \R^{d-2} \times \R$ on a
neighborhood $V_0$ of $x_0$, and two Lipschitz functions
$\tilde \varphi:\R^{n-1} \to \R$ and $\tilde \psi:\R^{n-2} \to \R$ such
that $\dot \Omega \cap V_0=\{x_n > \tilde \varphi(x_1,x'')\}$, $\Gamma_T \cap
V_0=\{x_n=\tilde \varphi(x_1,x'') \text{ and } x_1 > \tilde \psi(x'')\}$ and $\Gamma_N \cap
V_0=\{x_n=\tilde \varphi(x_1,x'') \text{ and } x_1 < \tilde  \psi(x'')\}$ and
\begin{equation}\label{eq:angle}
\begin{aligned}
\partial_{x_1} \tilde \varphi (x_1,x'') \ge \kappa &\text{ on }
x_1>\tilde \psi(x'')\\
\partial_{x_1}\tilde \varphi (x_1,x'')  \le -\kappa &\text{ on }
x_1<\tilde \psi(x'')
\end{aligned}
\end{equation}
for some positive $\kappa$. This is equivalent to the existence of a
smooth vector field $\theta$ on $\partial \dot \Omega$ such that
$\langle \theta , n \rangle < 0$ on $\Gamma_T$ and $\langle \theta,
n\rangle > 0$ on
$\Gamma_N$, which is one of the key ingredient of the proofs used in~\cite{brown-94,jakab-mitrea-mitrea-09}.
\medskip

\noindent
Let $\Gamma$ be any open Lipschitz subset of $\partial \dot \Omega$\label{page.gammam}. According to \cite[Proposition~3.1]{jakab-mitrea-mitrea-09},
the space
$$
\left\{u\in\Lambda^p\mathcal C^\infty\left (\overline{\dot\Omega}\right ),\ 
u\equiv 0 \text{ in a neighborhood of } \pa\dot\Omega\setminus \Gamma\right\}
$$
is densely embedded in both
$$
\Lambda^p H_{d,\Gamma} ( \dot \Omega ) := \left\{u\in\Lambda^p H_{d} ( \dot \Omega ),\ 
\supp(n^{\flat} \wedge u)\subset \overline\Gamma\right\}
$$
and
$$
\Lambda^p H_{d^*,\Gamma} ( \dot \Omega ) := \left\{u\in\Lambda^p H_{d^*} ( \dot \Omega ),\ 
\supp(\mathbf{i}_{n}  u)\subset \overline\Gamma\right\}.
$$
In \label{page.Hdgamma} addition, according to \cite[Theorem~3.4]{jakab-mitrea-mitrea-09}, for
 $(u,v)\in\Lambda^p H_{d} ( \dot \Omega )\times\Lambda^{p+1} H_{d^*} ( \dot \Omega )$
satisfying 
the trace conditions 
$$\mathbf{i}_{n} v \in\Lambda^{p} L^2 (\pa\dot\Omega ),\   \supp \mathbf{i}_{n} v\subset \Gamma
\quad\text{and}\quad n^{\flat} \wedge u \in \Lambda^{p+1} L^2(\Gamma),$$
or
$$n^{\flat} \wedge u\in \Lambda^{p+1} L^2 (\pa\dot\Omega ),\ 
\supp(n^{\flat} \wedge u)\subset\Gamma\quad\text{and}\quad 
\mathbf{i}_{n} v \in\Lambda^{p} L^2(\Gamma), $$
one has the following Green formula (compare
with~\eqref{eq:usual_Green}):
\begin{equation}
\label{eq.gen-Green}
\begin{aligned}
\langle du, v\rangle_{ L^2 ( \dot \Omega )}-\langle u, d^*v\rangle_{
  L^2 ( \dot \Omega )}  &=  \int_{\Gamma}\langle
 n^{\flat} \wedge u,n^{\flat} \wedge(\mathbf{i}_{n} v)
\rangle_{T^*_{\sigma}\dot\Omega}d\sigma\\
&  = \int_{\Gamma}\langle
\mathbf{i}_{n} (n^{\flat} \wedge u),\mathbf{i}_{n} v
\rangle_{T^*_{\sigma}\dot\Omega} d \sigma
.
\end{aligned}
\end{equation}
One is now ready to state the following proposition implied
by Theorems~1.1 and 1.2 of \cite{jakab-mitrea-mitrea-09} (see also Theorems~4.1
and~4.2  of \cite{goldshtein-mitrea-mitrea-11}).
\begin{proposition}
\label{pr.QTN}
Let us assume that $\dot \Omega$ is a Lipschitz domain. Let $\Gamma_T$
and $\Gamma_N$ be two disjoint Lipschitz open subsets of $\partial \dot
\Omega$ such that $\overline{\Gamma_T} \cup \overline{\Gamma_N} = \pa
\dot \Omega$ and such that $\Gamma_T$ and $\Gamma_N$ meet at an angle
strictly smaller than $\pi$. Then, the following results hold:

\noindent
{(i)} Let $u$ be a differential form such that
 $$
u\in \Lambda^{p} L^{2} ( \dot \Omega ), \, du \in  L^{2} ( \dot \Omega ), d^*u\in  L^{2} ( \dot \Omega ),\, 
\mathbf t u|_{\Gamma_T}=0 \text{ and } 
\mathbf n u|_{\Gamma_N}=0.$$
Then $u$ satisfies 
$$u\in\Lambda^{p} H^{\frac12} ( \dot \Omega )\quad\text{and}\quad
\mathbf{i}_{n}u,\ n^{\flat} \wedge u \in
\Lambda^{p}L^2 (\pa\dot\Omega )
$$
as well as the subelliptic estimate:
\begin{equation}\label{eq:subelliptic}
\|u\|_{ H^{\frac12} ( \dot \Omega )}+
\|u|_{\pa\dot\Omega}\|_{ L^{2} (\pa\dot\Omega )}\
\leq\ C\left(\|u\|_{ L^{2} ( \dot \Omega )}
+\|du\|_{ L^{2} ( \dot \Omega )}+\|d^*u\|_{ L^{2} ( \dot \Omega )} \right),
\end{equation}
where $u|_{\pa\dot\Omega}$ is defined by \eqref{eq.trace-gen}.

\medskip
\noindent
(ii) The unbounded operators $d_{T}^{(p)} ( \dot \Omega )$ and $\delta_{N}^{(p)} ( \dot \Omega )$  on $\Lambda^{p} L^{2} ( \dot \Omega )$
defined by
$$
d_{T}^{(p)} ( \dot \Omega ) = d_{f,h}^{(p)}$$
with domain 
$$D\left (d_{T}^{(p)} ( \dot \Omega )\right)=\left\{u\in \Lambda^{p}L^2 ( \dot \Omega ),\ d_{f,h}u \in \Lambda^{p+1}L^2(\dot \Omega),\ \mathbf{t}u|_{\Gamma_T}=0 \right\},
$$
and
$$
\delta_{N}^{(p)} ( \dot \Omega ) = \left ( d_{f,h}^{(p)}\right)^*$$
with domain 
$$ D\left(\delta_{N}^{(p)} ( \dot \Omega )\right)=\left\{u\in \Lambda^{p} L^2 ( \dot \Omega ),\ d^*_{f,h}u\in \Lambda^{p-1}L^2( \dot \Omega ),\ \mathbf{n}u|_{\Gamma_N}=0 \right\} ,
$$
are closed, densely defined, and adjoint one of each other\label{page.dtp}.
\end{proposition}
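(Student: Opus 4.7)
\bigskip
\noindent
\textbf{Proof plan for Proposition~\ref{pr.QTN}.}

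My overall strategy is to invoke the cited results of Jakab--Mitrea--Mitrea and Goldshtein--Mitrea--Mitrea, but to sketch the essential ingredients so that the reader sees why the crease hypothesis \eqref{eq:angle} is what powers the statement. The proof naturally splits: part~(i) is the analytic heart, and part~(ii) is an almost formal consequence of (i) together with the generalized Green formula \eqref{eq.gen-Green}.

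For part~(i), I would start by introducing a finite partition of unity $(\chi_k)_k$ on $\overline{\dot\Omega}$ subordinated to a covering by coordinate patches of three types: interior patches, patches meeting only one of $\Gamma_T$ or $\Gamma_N$, and patches centred at a point of $\overline{\Gamma_T}\cap\overline{\Gamma_N}$. Away from the crease, each $\chi_k u$ satisfies a purely tangential or purely normal boundary condition on the relevant part of $\partial\dot\Omega$, and so by Lemma~\ref{le.Gaffney} it lies in $H^1$ with the full Gaffney bound; in particular its trace is in $L^2$. The only serious work is therefore concentrated in the crease patches, and the goal there is to upgrade from $L^2$-control of $du$, $d^*u$ to $H^{1/2}$-control of $u$ together with an $L^2$-trace.

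The key tool for a crease patch is the smooth vector field $\theta$ on $\partial\dot\Omega$, equivalent to \eqref{eq:angle}, satisfying $\langle\theta,n\rangle<0$ on $\Gamma_T$ and $\langle\theta,n\rangle>0$ on $\Gamma_N$. After straightening $\partial\dot\Omega$ locally and extending $\theta$ smoothly into $\dot\Omega$, one writes a Rellich--Nečas type identity for the quadratic form $\tfrac12 d(|u|^2)\cdot \theta$, integrates by parts, and uses the algebraic identity $|du|^2+|d^*u|^2=|\nabla u|^2+$ lower order boundary curvature terms (which is the pointwise form underlying Gaffney). The key cancellation is that the boundary contribution splits as an integral on $\Gamma_T$ involving only $\mathbf{i}_n u$ and $\langle\theta,n\rangle$ (since $n^\flat\wedge u=0$ there) and an integral on $\Gamma_N$ involving only $n^\flat\wedge u$ and $\langle\theta,n\rangle$ (since $\mathbf{i}_n u=0$ there); both come with a favourable sign by the crease condition. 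This yields an $L^2(\partial\dot\Omega)$-bound on the full trace $u|_{\partial\dot\Omega}$ controlled by $\|u\|_{L^2}+\|du\|_{L^2}+\|d^*u\|_{L^2}$. A duality/interpolation argument, combined with the resulting $L^2$-trace bound, then gives the $H^{1/2}$ interior regularity. Gluing the patches with the partition of unity and absorbing the commutator terms into the right-hand side produces \eqref{eq:subelliptic}. The hard part is precisely this crease multiplier estimate: without the strict-inequality crease hypothesis one cannot in general expect a trace in $L^2$, only in a weaker dual space, and $H^{1/2}$ regularity is the sharp outcome (counterexamples at angle $\pi$ are classical).

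For part~(ii), density of each domain is immediate from the Jakab--Mitrea--Mitrea density result quoted just before the proposition: smooth forms vanishing near $\partial\dot\Omega\setminus\Gamma_T$ (resp.~$\partial\dot\Omega\setminus\Gamma_N$) are dense in $\Lambda^p H_{d,\Gamma_T}(\dot\Omega)$ (resp.~$\Lambda^p H_{d^*,\Gamma_N}(\dot\Omega)$), and these are contained in $D(d_T^{(p)})$ (resp.~$D(\delta_N^{(p)})$). Closedness of $d_T^{(p)}$ is direct from the graph norm definition, once one notes that by (i) any $L^2$-limit $u$ of elements $u_n\in D(d_T^{(p)})$ with $du_n\to v$ in $L^2$ has a well-defined trace in $L^2(\partial\dot\Omega)$, and the condition $\mathbf{t}u_n|_{\Gamma_T}=0$ passes to the limit in $L^2(\Gamma_T)$; the same argument applies to $\delta_N^{(p)}$. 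Finally, the adjoint identity $(d_T^{(p)})^*=\delta_N^{(p+1)}$ reduces to verifying, for smooth $u\in D(d_T^{(p)})$ and $v\in D(\delta_N^{(p+1)})$, that
\[
\langle d_{f,h}u,v\rangle_{L^2(\dot\Omega)}=\langle u,d_{f,h}^*v\rangle_{L^2(\dot\Omega)},
\]
which is exactly the content of the generalized Green formula \eqref{eq.gen-Green} applied with $\Gamma=\Gamma_T$ (the boundary integrand $\langle n^\flat\wedge u,n^\flat\wedge\mathbf{i}_n v\rangle$ vanishes, since $n^\flat\wedge u=0$ on $\Gamma_T$ by hypothesis and $\mathbf{i}_n v=0$ on $\Gamma_N$). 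The opposite inclusion in the adjoint identification follows from the fact that any $v$ in the domain of the adjoint necessarily has $d_{f,h}^*v\in L^2$ and, testing against smooth $u$ supported away from $\Gamma_N$, enforces $\mathbf{n}v|_{\Gamma_N}=0$ in the weak trace sense \eqref{eq.i-n-u}, which by part~(i) is actually an $L^2$-trace condition. This closes the proof.
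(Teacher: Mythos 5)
The paper does not actually prove this proposition: it is stated as a direct consequence of Theorems~1.1 and~1.2 of Jakab--Mitrea--Mitrea (and Theorems~4.1--4.2 of Goldshtein--Mitrea--Mitrea), with the only added remark being that one may replace $d,d^*$ by $d_{f,h},d_{f,h}^*$ because $d_{f,h}=hd+df\wedge$, $d_{f,h}^*=hd^*+\mathbf i_{\nabla f}$ are zeroth-order perturbations and $(df\wedge)^*=\mathbf i_{\nabla f}$. So the first line of your plan (invoke the cited results) already coincides with what the paper does, and in that sense your proposal is acceptable. However, your expanded sketch, read as a self-contained argument, has two concrete defects that you should be aware of.

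First, in part~(i) you claim that on patches meeting only $\Gamma_T$ or only $\Gamma_N$ the localized form $\chi_k u$ lies in $H^1$ ``by Lemma~\ref{le.Gaffney}''. That lemma is stated, and is only true, for smooth domains; the paper explicitly recalls that Gaffney's inequality fails on Lipschitz domains (the counterexamples of Mitrea cited there). Since Proposition~\ref{pr.QTN} assumes only a Lipschitz boundary, the regularity away from the crease is not $H^1$ but again only $H^{1/2}$ with $L^2$ traces, and it must be obtained by the same Rellich--Ne\v{c}as multiplier estimates you describe for the crease patches; the work is therefore not ``concentrated in the crease patches'', it is needed on the whole Lipschitz boundary, with the crease hypothesis entering only to handle the interface. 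Second, in part~(ii) your closedness argument invokes part~(i) for a sequence $u_n\in D(d_T^{(p)})$, but such $u_n$ only satisfy $u_n, d_{f,h}u_n\in L^2$ and carry no control on $d_{f,h}^*u_n$, so part~(i) does not apply and they have no $L^2$ trace in general; closedness should instead be deduced from the continuity of the weak trace $u\mapsto n^\flat\wedge u\in H^{-\frac12}(\pa\dot\Omega)$ with respect to the graph norm of $d_{f,h}$, via the duality definition~\eqref{eq.n-wedge-u}, so that the support condition on $n^\flat\wedge u$ passes to the limit. The same misuse of (i) occurs in your identification of the adjoint domain (the condition $\mathbf n v|_{\Gamma_N}=0$ only needs to hold in the weak sense~\eqref{eq.i-n-u}, which is exactly how $D(\delta_N^{(p)})$ is defined), and the test forms there should be taken supported away from $\Gamma_T$, not away from $\Gamma_N$: forms vanishing near $\Gamma_N$ make the relevant boundary pairing vanish identically and give no information on $\Gamma_N$.
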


\noindent
Note that in the point~{\em (i)} of Proposition~\ref{pr.QTN}, $d$ and $d^*$
can be replaced by $d_{f,h}$ and $d^*_{f,h}$ owing to the relations
$d_{f,h}=hd+df\wedge$ and $d^*_{f,h}=hd^*+\mathbf i_{\nabla f}$.
Moreover, the point~{\em (ii}) is actually proven in \cite{jakab-mitrea-mitrea-09,goldshtein-mitrea-mitrea-11}
for $d$ and $d^*$ but remains true for $d_{f,h}$ and $d^*_{f,h}$
since $(df\wedge)^*=\mathbf i_{\nabla f}$ on $L^2 ( \dot \Omega )$.

\medskip
\noindent
\underline{The mixed Witten Laplacian $\Delta^M_{f,h}(\dot \Omega)$}
\medskip

\noindent
We are now in position to define the mixed Witten Laplacian $\Delta^M_{f,h}(\dot \Omega)$ (the upperscript $M$ stands for mixed boundary conditions) with
tangential Dirichlet boundary conditions on $\Gamma_T$ and normal
Dirichlet boundary conditions on $\Gamma_N$
(see~\cite{jakab-mitrea-mitrea-09,goldshtein-mitrea-mitrea-11} for more
results on such operators). The operator $\Delta_{f,h}^{M,(p)} ( \dot \Omega )$  on $L^2 (
 \dot \Omega )$ is defined by
\begin{equation}
\label{eq.DeltaTN}
\Delta_{f,h}^{M,(p)} ( \dot \Omega ) := d_{T}^{(p-1)} ( \dot \Omega )\circ \delta_{N}^{(p)} ( \dot \Omega )+\delta_{N}^{(p+1)} ( \dot \Omega )\circ d_{T}^{(p)} ( \dot \Omega ),
\end{equation}
in \label{page.deltam} the sense of composition of unbounded operators, where $d_T$ and
$\delta_N$ have been introduced in Proposition~\ref{pr.QTN}. Notice that 
for any $u\in\Lambda^{p} H_{d} ( \dot \Omega )$ such that $\mathbf t u|_{\Gamma_{T}}=0$,   
one has  $du\in \Lambda^{p+1}H_{d}( \dot \Omega )$  and 
$\mathbf t du|_{\Gamma_{T}}=0$. The latter is easy to check when  $u\in \Lambda^{p} H^{2} ( \dot \Omega )$
and can be proved here using~\eqref{eq.n-wedge-u} together with the density of $
\big\{u\in\Lambda^p\mathcal C^\infty\big (\overline{\dot\Omega}\big ),\ 
u\equiv 0 \text{ in a neighborhood of }  \pa \dot \Omega \setminus \overline{\Gamma_T}\big\}$ into  $  \Lambda^p H_{d,\pa \dot \Omega \setminus \overline{\Gamma_T}} ( \dot \Omega ) $. Likewise, one has 
 $d_{f,h}d_{f,h}=0$ in the distributional sense and 
$\mathbf t d_{f,h}u|_{\Gamma_{T}}=0$
for $u\in\Lambda^{p} H_{d} ( \dot \Omega )$
such that $\mathbf t u|_{\Gamma_{T}}=0$.
This implies in particular 
\begin{equation}\label{eq.dom-dT}
\left\{
\begin{aligned}
&\overline{ \Im  d_{T}}\subset \Ker d_{T} \quad\text{and}\quad d_{T}^{2}=0,\\
&\overline{ \Im  \delta_{N}}\subset \Ker \delta_{N} \quad\text{and}\quad
\delta_{N}^{2}=0.
\end{aligned}
\right.
\end{equation} 
 Owing to this last relation and
to Proposition~\ref{pr.QTN}, a result due to Gaffney 
(see e.g. the proof of \cite[Propositions 2.3 and 2.4]{goldshtein-mitrea-mitrea-11})
states that
$\Delta_{f,h}^{M,(p)} ( \dot \Omega )$
is a densely defined nonnegative selfadjoint  operator on~$ L^{2} (
\dot \Omega )$ (with domain defined below in~\eqref{eq.domDeltaTN}). 
\medskip

\noindent
The domain $D\left( \mathcal Q_{f,h}^{M,(p)} ( \dot \Omega
    ) \right)$ of the closed quadratic form $\mathcal Q_{f,h}^{M,(p)} ( \dot \Omega )$
associated with  $\Delta_{f,h}^{M,(p)} ( \dot \Omega )$
is given by
\begin{align*}
 D\left ( \mathcal
  Q^{M,(p)}_{f,h} ( \dot \Omega )\right )&= D\left (d_{T}^{(p)} ( \dot \Omega )\right )\cap
D\left (\delta_{N}^{(p)} ( \dot \Omega )\right)\\
&=\left\{v\in \Lambda^{p} L^{2} ( \dot \Omega ), \, dv \in  L^{2} ( \dot \Omega ), d^*v\in  L^{2} ( \dot \Omega ),\, 
\mathbf t v|_{\Gamma_T}=0 \text{ and } 
\mathbf n v|_{\Gamma_N}=0\right\}
\end{align*}
\label{page.qm}
and for any $u,v\in D\left( \mathcal Q^{M,(p)}_{f,h} ( \dot \Omega )\right)$, 
$$\mathcal Q_{f,h}^{M,(p)} ( \dot \Omega )(u,v)=\langle d_{T} u,d_{T}v \rangle_{L^2} +\langle \delta_{N}u,\delta_{N}v \rangle_{L^2}.$$
This is proven in \cite[Theorem 2.8]{goldshtein-mitrea-mitrea-11}.
\medskip

\noindent
The domain $D\left (\Delta_{f,h}^{M,(p)} ( \dot \Omega )\right)$ is
explicitly given by: 
\begin{equation}
\label{eq.domDeltaTN}
\begin{aligned}
D\left (\Delta_{f,h}^{M,(p)} ( \dot \Omega )\right) =\Big\{&u\in L^{2}
( \dot \Omega )\text{ s.t. }
d_{f,h} u,\  d^*_{f,h}u,\  d^*_{f,h}d_{f,h} u,\   d_{f,h}d^*_{f,h} u
\in  L^{2} ( \dot \Omega ), \\
& \mathbf{t}u|_{\Gamma_T}=0,\ 
\mathbf{t}d^*_{f,h}u|_{\Gamma_T}=0,\ 
\mathbf{n}u|_{\Gamma_{N}}=0,\ 
\mathbf{n}d_{f,h}u|_{\Gamma_{N}}=0
\Big\}.
\end{aligned}
\end{equation}
The traces $\mathbf{t}d^*_{f,h}u$ and $\mathbf{n}d_{f,h} u$ are a
priori defined in  $ H^{-\frac12} ( \pa \dot \Omega )$ but actually belong
to~$ L^{2} (\pa  \dot \Omega )$. Indeed,
we have $\mathbf{n}d_{f,h}u|_{\Gamma_{N}}=0$ by definition of $D\left (\Delta_{f,h}^{M,(p)} ( \dot \Omega )\right )$ and $\mathbf t d_{f,h}u|_{\Gamma_{T}}=0$ by~\eqref{eq.dom-dT},
so  $d_{f,h}u$ is in $D\left ( \mathcal Q^{M,(p+1)}_{f,h} ( \dot
  \Omega )\right )$ and therefore has
a trace in $ L^{2} ( \dot \Omega )$ according to Proposition~\ref{pr.QTN}. This argument also holds for $d^{*}_{f,h}u\in D\left ( \mathcal Q^{M,(p-1)}_{f,h} ( \dot \Omega )\right )$. 
\medskip

\noindent
We end up this section with the following lemma which will be
frequently used in the sequel.
\begin{lemma}
\label{le.GreenWeak}
Let us assume that the assumptions of Proposition~\ref{pr.QTN} are satisfied. 
Let us moreover  assume that $\Gamma_T$ is $C^\infty$ and that there exist two disjoint $C^\infty$ open subsets $\Gamma_{N,1}$ and $\Gamma_{N,2}$  of $\pa \dot \Omega$ such that 
\begin{equation}
\label{eq.decGammaN}  
\overline{\Gamma_N}=\overline{\Gamma_{N,1}}\cup \overline{\Gamma_{N,2}}.
\end{equation}
Then, the following formula holds: for any $u\in D\left ( \mathcal Q^{M,(p)}_{f,h} ( \dot \Omega )\right)$,
\begin{align}
\mathcal Q^{M,(p)}_{f,h} ( \dot \Omega )(u,u)&=\left\| d_{f,h}u\right\|^{2}_{L^{2} ( \dot \Omega )}+
\left\|d_{f,h}^{*}u\right\|^{2}_{L^{2} ( \dot \Omega )}\nonumber\\
&=
\nonumber
 h^{2}\left\| du\right\|^{2}_{L^{2} ( \dot \Omega )}+
h^{2}\left\| d^{*}u\right\|^{2}_{L^{2} ( \dot \Omega )}\\
\nonumber
&\quad+ \left\|\left|\nabla f\right|u\right\|^{2}_{L^{2} ( \dot \Omega )}
+h\langle(\mathcal{L}_{\nabla f}+\mathcal{L}_{\nabla
  f}^{*})u,u\rangle_{L^{2} ( \dot \Omega )}
\label{eq.compfor}\\
&\quad-h\left(\int_{\Gamma_T}-\int_{\Gamma_{N}}\right) \langle u,u
\rangle_{T^{*}_{\sigma}\dot\Omega} \pa_{n}
f \,  d\sigma,   
\end{align}
where $\mathcal{L}$ stands for the Lie derivative. 
\end{lemma}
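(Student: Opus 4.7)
The first equality is an immediate consequence of the definition of $\mathcal{Q}^{M,(p)}_{f,h}(\dot\Omega)$ as the closed quadratic form associated with the nonnegative selfadjoint operator $\Delta^{M,(p)}_{f,h}(\dot\Omega) = d_T\delta_N + \delta_N d_T$: on the form domain $D(d_T)\cap D(\delta_N)$ one has $\mathcal{Q}^{M,(p)}_{f,h}(\dot\Omega)(u,u) = \|d_T u\|^2_{L^2} + \|\delta_N u\|^2_{L^2}$, which equals $\|d_{f,h}u\|^2_{L^2} + \|d^*_{f,h}u\|^2_{L^2}$ since $d_T$ and $\delta_N$ act as $d_{f,h}$ and $d^*_{f,h}$.

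For the second equality, I plan to expand $d_{f,h} = hd + df\wedge$ and $d^*_{f,h} = hd^* + \mathbf{i}_{\nabla f}$ to obtain
$$\|d_{f,h}u\|^2 + \|d^*_{f,h}u\|^2 = h^2\|du\|^2 + h^2\|d^*u\|^2 + \bigl[\|df\wedge u\|^2 + \|\mathbf{i}_{\nabla f}u\|^2\bigr] + 2h\bigl[\langle du, df\wedge u\rangle + \langle d^*u, \mathbf{i}_{\nabla f}u\rangle\bigr].$$
The bracketed zeroth-order terms collapse via the pointwise identity $(df\wedge)\mathbf{i}_{\nabla f} + \mathbf{i}_{\nabla f}(df\wedge) = |\nabla f|^2\,\Id$ on $p$-forms (a consequence of the mutual adjointness of $df\wedge$ and $\mathbf{i}_{\nabla f}$), yielding the summand $\||\nabla f|u\|^2$. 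To handle the cross term I use Cartan's formula $\mathcal{L}_{\nabla f} = d\mathbf{i}_{\nabla f} + \mathbf{i}_{\nabla f}d$ together with its $L^2$-adjoint $\mathcal{L}^*_{\nabla f} = d^*(df\wedge) + (df\wedge)d^*$, and apply the Green formula \eqref{eq.gen-Green} to $\langle d\mathbf{i}_{\nabla f}u, u\rangle$ and to $\langle d^*(df\wedge u), u\rangle$. After simplifying one of the resulting boundary integrands by the algebraic identity $\mathbf{i}_n(df\wedge u) = (\partial_n f)u - df\wedge\mathbf{i}_n u$, this produces the intermediate formula
$$h\langle(\mathcal{L}_{\nabla f} + \mathcal{L}^*_{\nabla f})u, u\rangle = 2h\langle du, df\wedge u\rangle + 2h\langle d^*u, \mathbf{i}_{\nabla f}u\rangle + h\int_{\partial\dot\Omega}\bigl[2\langle \mathbf{i}_{\nabla f}u, \mathbf{i}_n u\rangle - (\partial_n f)\langle u, u\rangle\bigr]d\sigma.$$

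To identify the boundary integral with the claimed $-h(\int_{\Gamma_T} - \int_{\Gamma_N})(\partial_n f)\langle u,u\rangle\, d\sigma$, I decompose $\nabla f = (\partial_n f)n + \nabla_T f$ along $\partial\dot\Omega$. The Leibniz rule $\mathbf{i}_{\nabla_T f}(n^\flat\wedge\mathbf{i}_n u) = \langle\nabla_T f, n\rangle\mathbf{i}_n u - n^\flat\wedge\mathbf{i}_{\nabla_T f}\mathbf{i}_n u$ combined with $\langle\nabla_T f, n\rangle = 0$ gives $\langle\mathbf{i}_{\nabla f}u, \mathbf{i}_n u\rangle = (\partial_n f)|\mathbf{n}u|^2 + \langle\mathbf{i}_{\nabla_T f}\mathbf{t}u, \mathbf{i}_n u\rangle$. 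On $\Gamma_T$ the condition $\mathbf{t}u = 0$ forces $|u|^2 = |\mathbf{n}u|^2$ and annihilates the tangential inner product, so the integrand reduces to $(\partial_n f)|u|^2$; on $\Gamma_N$ the condition $\mathbf{n}u = 0$ gives $\mathbf{i}_n u = 0$ and $|u|^2 = |\mathbf{t}u|^2$, so the integrand reduces to $-(\partial_n f)|u|^2$. Subtracting this boundary remainder from the identity above and inserting the result into the expansion of $\|d_{f,h}u\|^2 + \|d^*_{f,h}u\|^2$ then delivers the claim.

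The main technical obstacle is that an element $u\in D(\mathcal{Q}^{M,(p)}_{f,h}(\dot\Omega))$ only has $H^{1/2}$-regularity by Proposition~\ref{pr.QTN}(i), so the integrations by parts above must be justified. I plan to first prove the identity for smooth forms vanishing near the corner set $\overline{\Gamma_T}\cap\overline{\Gamma_N}$ and near one prescribed smooth piece of $\partial\dot\Omega$ at a time — the density results recalled before Proposition~\ref{pr.QTN}, applied successively with $\Gamma$ equal to $\Gamma_T$, $\Gamma_{N,1}$ and $\Gamma_{N,2}$, make this possible precisely because these three pieces are assumed smooth — and then pass to the limit using the subelliptic estimate \eqref{eq:subelliptic}, which controls both the $L^2$-trace on $\partial\dot\Omega$ and the $H^{1/2}$-norm of $u$ by $\mathcal{Q}^{M,(p)}_{f,h}(\dot\Omega)(u,u)^{1/2}$.
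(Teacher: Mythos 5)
Your algebraic computation is correct and is, in substance, the computation the paper performs: expand $d_{f,h}=hd+df\wedge$ and $d^*_{f,h}=hd^*+\mathbf{i}_{\nabla f}$, collapse the zeroth-order terms through $\mathbf{i}_{\nabla f}(df\wedge u)+df\wedge(\mathbf{i}_{\nabla f}u)=|\nabla f|^2u$, trade the cross terms for $h\langle(\mathcal L_{\nabla f}+\mathcal L_{\nabla f}^*)u,u\rangle$ plus boundary pairings via Green formulas, and evaluate the boundary integrand piecewise using $\mathbf t u=0$ on $\Gamma_T$ and $\mathbf n u=0$ on $\Gamma_N$; your identity $\mathbf{i}_n(df\wedge u)=(\partial_n f)u-df\wedge \mathbf{i}_n u$ and the tangential/normal orthogonality are exactly the steps used in the paper, the only organizational difference being that the paper invokes \eqref{eq.gen-Green}, whose support hypotheses localize the two boundary contributions to $\Gamma_N$ and $\Gamma_T$ from the outset, while you keep a single integral over $\partial\dot\Omega$ and split afterwards.

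The gap is in your last paragraph, i.e., in the analytic justification for a general $u\in D\left(\mathcal Q^{M,(p)}_{f,h}(\dot\Omega)\right)$. The density statements recalled before Proposition~\ref{pr.QTN} concern $\Lambda^p H_{d,\Gamma}(\dot\Omega)$ and $\Lambda^p H_{d^*,\Gamma}(\dot\Omega)$ \emph{separately}, i.e., approximation in the graph norm of $d$ alone or of $d^*$ alone; applying them ``successively with $\Gamma=\Gamma_T,\Gamma_{N,1},\Gamma_{N,2}$'' does not produce smooth forms converging to $u$ simultaneously in $\|\cdot\|_{L^2}+\|d\cdot\|_{L^2}+\|d^*\cdot\|_{L^2}$ while (even approximately) satisfying both mixed boundary conditions. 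Such a density-in-the-form-domain result is precisely what your limiting argument needs: without it the volume terms of the quadratic form do not converge along your approximants, and, more seriously, the subelliptic estimate \eqref{eq:subelliptic} controls the $L^2(\partial\dot\Omega)$ trace only for forms satisfying $\mathbf t\,\cdot|_{\Gamma_T}=0$ and $\mathbf n\,\cdot|_{\Gamma_N}=0$, so it cannot be applied to $u-u_k$ unless the $u_k$ themselves lie in the form domain — exactly the delicate point near the corners $\overline{\Gamma_T}\cap\overline{\Gamma_N}$ and $\overline{\Gamma_{N,1}}\cap\overline{\Gamma_{N,2}}$, where $u$ is only $H^{1/2}$. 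The paper avoids any global approximation of $u$: the Green formula \eqref{eq.gen-Green} is applied directly to $u$ (legitimate, since the pairings are defined through the weak traces \eqref{eq.n-wedge-u}--\eqref{eq.i-n-u} and $u|_{\partial\dot\Omega}\in L^2$ by Proposition~\ref{pr.QTN}), and the pointwise identification of the boundary integrands with $\partial_n f\,\langle u,u\rangle$ is carried out only where it is licit: a cut-off plus Lemma~\ref{le.Gaffney} gives $H^1$ regularity of $u$ away from the corner sets, the algebra is done on the truncated pieces $\Gamma_{N,j}^{\varepsilon}$ (and analogously on $\Gamma_T$), and one lets $\varepsilon\to0$ by dominated convergence using the global $L^2(\partial\dot\Omega)$ bound on the trace. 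Replacing your approximation scheme by this localization--exhaustion argument (or proving the missing density result) is needed to complete the proof.
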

 \noindent
Notice that the boundary integral terms are well defined since
$u|_{\partial \dot \Omega} \in L^2(\dot \Omega)$ thanks to point {\em
  (i)} in Proposition~\ref{pr.QTN}.
From the proof of Lemma~\ref{le.GreenWeak}, it will be clear that~\eqref{eq.compfor} actually holds if  $\Gamma_T$ and $\Gamma_N$ are  only piecewise smooth. In the following, we will only need the result for $\Gamma_T$  smooth and $\Gamma_N$ the union of two smooth pieces and this is why we present this result in this setting. 
\begin{proof}
For $u\in  D\left ( \mathcal Q^{M,(p)}_{f,h} ( \dot \Omega )\right)$,
one first gets by straightforward computations,
\begin{align*}
\left\| d_{f,h}u\right\|^{2}_{L^{2} ( \dot \Omega )}+
\left\|d_{f,h}^{*}u\right\|^{2}_{L^{2} ( \dot \Omega )}&=
h^{2}\left\| du\right\|^{2}_{L^{2} ( \dot \Omega )}+
h^{2}\left\| d^{*}u\right\|^{2}_{L^{2} ( \dot \Omega )}
+\left\| d f\wedge u\right\|^{2}_{L^{2} ( \dot \Omega )}
\\
&\quad+ \left\| \mathbf i_{\nabla f} u\right\|^{2}_{L^{2} ( \dot \Omega )} + \  h\langle d f\wedge u,
du \rangle_{L^{2} ( \dot \Omega )} + h \langle d u,
d f\wedge u \rangle_{L^{2} ( \dot \Omega )} \\
&\quad +h\langle d^{*} u,
\mathbf i_{\nabla f} u \rangle_{L^{2} ( \dot \Omega )} + h \langle \mathbf i_{\nabla f} u,
d^{*}u \rangle_{L^{2} ( \dot \Omega )}\\
&=
h^{2}\left\| du\right\|^{2}_{L^{2} ( \dot \Omega )}+
h^{2}\left\| d^{*}u\right\|^{2}_{L^{2} ( \dot \Omega )}+
\left\|\left|\nabla f\right|u\right\|^{2}_{L^{2} ( \dot \Omega )}\\
 &\quad+h\langle (\mathcal L_{\nabla f}+  \mathcal L^{*}_{\nabla f}) u, u \rangle_{L^{2} ( \dot \Omega )}
 +   h\langle d f\wedge u,
du \rangle_{L^{2} ( \dot \Omega )} \\
&\quad -h
\langle d^{*}(d f\wedge u),
u \rangle_{L^{2} ( \dot \Omega )}
-h\langle d \mathbf i_{\nabla f} u , u \rangle_{L^{2} ( \dot \Omega )}
 + h  \langle \mathbf i_{\nabla f} u,
d^{*}u \rangle_{L^{2} ( \dot \Omega )} ,
\end{align*}
where the last equality holds thanks to the  relations
\begin{equation}\label{eq.relationII}
(df\wedge)^*=\mathbf i_{\nabla f}, \ \mathcal L_{\nabla f}=d\circ \mathbf i_{\nabla f}+\mathbf i_{\nabla f}\circ d\quad\text{and}\quad
\mathbf{i}_{\nabla f}(df\wedge u)
+
df\wedge\left(\mathbf{i}_{\nabla f}u\right)
= 
\left|\nabla f\right|^{2}u.
\end{equation}
To get the boundary integral terms in~\eqref{eq.compfor}
one uses \eqref{eq.gen-Green}, which gives here, since $u\in  D\left ( \mathcal Q^{M,(p)}_{f,h} ( \dot \Omega )\right)$
and  $df\wedge u$, $\mathbf i_{\nabla f} u\in \Lambda H_{d} ( \dot \Omega )\cap \Lambda H_{d^{*}} ( \dot \Omega )$:
\begin{align}
\nonumber
&\langle d f\wedge u,
du \rangle_{L^{2} ( \dot \Omega )} -
\langle d^{*}(d f\wedge u),
u \rangle_{L^{2} ( \dot \Omega )} = \!\!\int_{\Gamma_{N}}\!\!\langle {n}^{\flat}\wedge u ,  {n}^{\flat}\wedge \mathbf i_{n} (df\wedge u)\rangle_{ T^*_{\sigma} \dot\Omega}d\sigma\\
\label{eq.trace-norm}
&= \int_{\Gamma_{N,1}}\!\!\langle {n}^{\flat}\wedge u ,  {n}^{\flat}\wedge \mathbf i_{n} (df\wedge u)\rangle_{ T^*_{\sigma} \dot\Omega}d\sigma   +\int_{\Gamma_{N,2}}\!\!\langle {n}^{\flat}\wedge u ,  {n}^{\flat}\wedge \mathbf i_{n} (df\wedge u)\rangle_{ T^*_{\sigma} \dot\Omega}d\sigma,
\end{align}
where the last equality follows from~\eqref{eq.decGammaN}. Likewise, one has:
\begin{equation}
\label{eq.trace-tang}  
 \langle \mathbf i_{\nabla f} u,
d^{*}u \rangle_{L^{2} ( \dot \Omega )}
-\langle d \mathbf i_{\nabla f} u , u \rangle_{L^{2} ( \dot \Omega )}
 = 
-\!\!\int_{\Gamma_{T}}\!\!\!\!\langle {n}^{\flat}\wedge \mathbf i_{\nabla f} u ,  {n}^{\flat}\wedge \mathbf i_{n} u \rangle_{ T^*_{\sigma}\dot\Omega}d\sigma.
\end{equation}
Since $u\in  D\big ( \mathcal Q^{M,(p)}_{f,h} ( \dot \Omega )\big)$, and $\Gamma_T$, $\Gamma_{N,1}$, and $\Gamma_{N,2}$ are smooth open subsets of $\pa \dot \Omega$,  Lemma~\ref{le.Gaffney} implies  that  $u$ is in $\Lambda ^{p} H^{1}$   outside $(\overline{\Gamma_{T}}\cap \overline{\Gamma_{N}}) \cup (\overline{\Gamma_{N,1}}\cap \overline{\Gamma_{N,2}})$, by  a localization argument. Therefore, $u$   admits a boundary trace defined a.e. on $\pa\dot\Omega$ and belonging to
$ L^{2}_{\text{loc}}(\pa\dot\Omega\setminus(\overline{\Gamma_{T}}\cap \overline{\Gamma_{N}}) \cup (\overline{\Gamma_{N,1}}\cap \overline{\Gamma_{N,2}}) )$.
But this trace has to be $u|_{\pa\dot\Omega}$ as defined by \eqref{eq.trace-gen}
and is hence in $ L^{2} (\pa\dot\Omega )$ owing to   item $(i)$ in 
  Proposition~\ref{pr.QTN}. Let us now conclude the proof of Lemma~\ref{le.GreenWeak}. Let us  consider~\eqref{eq.trace-norm}. For $j\in \{1,2\}$ and $\ve >0$, one defines  
$\Gamma_{N,j}^{\varepsilon}:= \{x\in\Gamma_{N,j}, \
d^{\pa\dot\Omega}(x,\pa\Gamma_{N,j})>\varepsilon\}$. One then has for $j\in \{1,2\}$: 
\begin{align*}
\int_{\Gamma_{N,j}}\langle {n}^{\flat}\wedge u ,  {n}^{\flat}\wedge \mathbf i_{n} (df\wedge u)\rangle_{ T^*_{\sigma}\dot\Omega}d\sigma
&= \lim_{\varepsilon\to 0^{+}}\int_{\Gamma_{N,j}^{\varepsilon}}\langle {n}^{\flat}\wedge u ,  {n}^{\flat}\wedge \mathbf i_{n} (df\wedge u)\rangle_{ T^{*}_{\sigma}\dot\Omega}d\sigma\\\
 &=\lim_{\varepsilon\to 0^{+}}\int_{\Gamma_{N,j}^{\varepsilon}}\langle  u ,\mathbf i_{n}  ({n}^{\flat}\wedge \mathbf i_{n} (df\wedge u))\rangle_{ T^{*}_{\sigma}\dot\Omega}d\sigma\\\
  &=\lim_{\varepsilon\to 0^{+}}\int_{\Gamma_{N,j}^{\varepsilon}}\langle  u , \mathbf i_{n} (df\wedge u)\rangle_{ T^{*}_{\sigma}\dot\Omega}d\sigma\\
  &=\lim_{\varepsilon\to 0^{+}}\int_{\Gamma_{N,j}^{\varepsilon}}(\pa_{n}f\, \langle  u , u\rangle_{ T^{*}_{\sigma}\dot\Omega}- \langle  u , df\wedge \mathbf i_{n}u\rangle _{ T^{*}_{\sigma}\dot\Omega}   )d\sigma\\
  &=\int_{\Gamma_{N,j}}\pa_{n}f\, \langle  u , u\rangle_{ T^{*}_{\sigma}\dot\Omega}d\sigma,
\end{align*}
where we used the usual trace properties for $H^{1}$ forms on $\Gamma_{N,j}^{\varepsilon}$,
the fact that $\mathbf i_{n}u =0$ at the second to last line 
and the Lebesgue dominated convergence theorem at the last line. From~\eqref{eq.trace-norm}, one then has:
$$ \int_{\Gamma_{N}} \langle {n}^{\flat}\wedge u ,  {n}^{\flat}\wedge \mathbf i_{n} (df\wedge u)\rangle_{ T^*_{\sigma} \dot\Omega}d\sigma= \int_{\Gamma_{N}}\pa_{n}f\, \langle  u , u\rangle_{ T^{*}_{\sigma}\dot\Omega}d\sigma.$$
The   fact that $\int_{\Gamma_{T}}\langle {n}^{\flat}\wedge \mathbf i_{\nabla f} u ,  {n}^{\flat}\wedge \mathbf i_{n} u \rangle_{ T^*_{\sigma}\dot\Omega}d\sigma=  \int_{\Gamma_{T}}\pa_{n}f\, \langle  u , u\rangle_{ T^{*}_{\sigma}\dot\Omega}d\sigma$ is proved  similarly.  This concludes the proof of Lemma~\ref{le.GreenWeak}. 
\end{proof}

\subsubsection{Construction of the domain $\dot\Omega_i$}\label{sec:dotomega}
In this section, we assume \textbf{[H1]}, \textbf{[H2]} and \textbf{[H3]}. Let us consider $z_i\in \{z_1,\ldots,z_n\}$ a local minimum of $f|_{\pa \Omega}$. The objective of this
section is to build the domain $\dot\Omega_i$ on which the 
Witten Laplacian with mixed tangential-normal Dirichlet  boundary conditions will
be defined.
This auxiliary operator  is  such that $z_i$
remains  the only generalized critical point. 
\medskip

\noindent
Let us recall that $x_0 \in \Omega$ is
the minimum of $f$ on $\overline{\Omega}$. Let  $\Omega_{0}$ be a small smooth
open neighborhood  of $x_0$  such that the 
$\partial_n f < 0$ on
$\Gamma_{0}=\pa\Omega_{0}$,  $n$ being the outward normal derivative
to $\Omega \setminus \Omega_0$. Let $\Gamma_{1,i}$
denote a subset of $B_{z_i}$, as large as we want in $B_{z_i}$, and such that $z_i\in \Gamma_{1,i}$. The basic idea is to define
$\dot\Omega=\Omega\setminus\overline{\Omega_{0}}$ and to consider a
Witten Laplacian on $\dot \Omega$, with tangential Dirichlet boundary conditions on $\Gamma_{0}\cup \Gamma_{1,i}$
and with normal Dirichlet boundary conditions on
$\pa\Omega\setminus\overline{\Gamma_{1,i}}$. This would indeed yield an
operator on a domain $\dot{\Omega}$ with a single generalized critical
point, namely $z_i$. 
\medskip

\noindent
There is however a technical difficulty in this approach, related to
the fact that differential forms with mixed normal and tangential
Dirichlet boundary conditions are singular at the boundary between the
domains where tangential and normal boundary conditions are applied,
as explained in Section~\ref{sec:trace_diff_form}. With the previous construction,
$\Gamma_{1,i}$ and $\pa\Omega\setminus\overline{\Gamma_{1,i}}$ meet at an
angle~$\pi$. We therefore need to define a domain $\dot{\Omega}_i$
stricly included in $\Omega \setminus\overline{\Omega_{0}}$, with
boundary $\pa \dot \Omega_i=\overline{\Gamma_0} \cup \overline{\Gamma_{1,i}} \cup
\overline{\Gamma_{2,i}}$ where $\Gamma_0=\pa\Omega_{0}$ as defined above,
$\Gamma_{1,i} \cap \Gamma_{2,i}=\emptyset$, $\Gamma_{1,i} \subset B_{z_i}$ is
as large as we want in $B_{z_i}$ and $\Gamma_{2,i}$ meets
$\Gamma_{1,i}$ at an angle strictly smaller than $\pi$
(see~\eqref{eq:angle} above for a proper definition). We will then
consider a Witten Laplacian with tangential Dirichlet boundary conditions on
$\Gamma_{0}\cap \Gamma_{1,i}$ and normal Dirichlet boundary conditions on
$\Gamma_{2,i}$. Moreover, in order not to introduce new generalized
critical point on $\Gamma_{2,i}$, we would like to keep the property
$\partial_n f >0$ on $\Gamma_{2,i}$ (where $n$ denotes the outward normal
derivative to $\dot \Omega_i$). The aim of this
section is indeed to define such a domain $\dot{\Omega}_i$.

\medskip

\noindent
\underline{A system of coordinates on a neighborhood of $\partial \Omega$.} 

\medskip

\noindent
Let us consider the function $f_-$ defined on a
neighborhood $V_{\partial \Omega}$ of $\partial \Omega$, as introduced in
Proposition~\ref{ffmoins}. Recall that $f_-(x)=0$ for $x \in \partial
\Omega$ and that $V_{\partial \Omega}$
can be chosen such that $f_->0$ on $V_{\partial
  \Omega}\setminus \partial \Omega$ and $|\nabla f_-|\neq 0$ on
$V_{\partial \Omega}$. Let us now consider $\varepsilon >0$ such that 
$$V_\varepsilon=\{y \in \Omega, \, 0 \le f_-(y) \le \varepsilon \}
\subset V_{\partial \Omega}.$$
For any $x \in V_\varepsilon$, the dynamics
\begin{equation}\label{eq:flow_x'}
\left\{
\begin{aligned}
\gamma_{x}'(t)&=-\frac{\nabla f_-}{|\nabla f_-|^2}(\gamma_{x}(t))\\
\gamma_x(0)&=x
\end{aligned}
\right.
\end{equation}
is such that $\gamma_x(t_x)\in \partial \Omega$, where
$$t_x=\inf\{t, \, \gamma_x(t) \not\in {\rm int}\, V_\varepsilon\}.$$
This is indeed a consequence of the fact that $\frac{d}{dt}
f_-(\gamma_x(t)) =-1 < 0$ on $[0,t_x)$. Notice that this also implies that  $t_x$ describes $[0,\ve]$ when $x$ describes $V_\ve$. 

The application
$$\Gamma:\left\{ 
\begin{aligned}
V_\varepsilon & \to  \partial \Omega \times [-\varepsilon,0]\\
x & \mapsto (\gamma_x(t_x),-t_x)
\end{aligned}
\right.
$$
defines a $C^\infty$ diffeomorphism.  The inverse application of $\Gamma$ is
$(x',x_d) \in \partial \Omega \times [-\varepsilon,0] \mapsto \gamma_{x'}(x_d)$.
\label{page.xd}

\begin{definition}\label{def:x'xd} Let us assume that the hypothesis \textbf{[H3]} holds. Let us define the following system of coordinates for $x \in V_{\varepsilon}$:
\begin{equation}\label{eq:x'xd}
\forall x \in V_{\varepsilon}, \, (x'(x),x_d(x))= (\gamma_x(t_x),-t_x) \in \partial \Omega \times [-\varepsilon,0].
\end{equation}
\end{definition}
 \noindent
Notice that, by construction (since $\frac{d}{dt}
f_-(\gamma_x(t)) =-1$), 
$$x_d(x)=-f_-(x).$$
Thus, in this system of coordinates, $\{x_{d}=0\} = \partial
\Omega$ and $\{x_{d}<0\} = \Omega \cap V_\varepsilon$.
We will sometimes need to use a local system of coordinates in
$\partial \Omega$, that we will then denote by the same notation
$x'$. By using the same procedure as above, $(x',x_d)$ then defines a
local system of coordinates. Let us make this precise. For $y\in \partial
  \Omega$, let us consider $x':V_y \to \R^{d-1}$ a smooth local
system of coordinates in $\partial \Omega$, in a neighborhood $V_y
\subset \partial \Omega$ of $y$. These coordinates are then extended
in a neighborhood of $V_y$ in $\Omega$, as constant along the integral
curves of $\gamma'(t)=\frac{\nabla f_-}{|\nabla f_-|^2}(\gamma(t))$, for
$t \in [0,\varepsilon]$. 
The function $x\mapsto (x',x_d)$ (where, we recall, $x_d(x)=-f_-(x)$) thus defines a smooth system
of coordinates in a neighborhood $W_y$ of~$y$ in $\overline\Omega$. In this system of coordinates,
the   metric tensor $G$ writes:
\begin{align*}
G(x',x_d)=G_{dd}(x',x_d)\, dx_{d}^2+\sum_{i,j=1}^{d-1}G_{ij}(x',x_d)\, dx_{i}dx_{j}, 
\end{align*}
where $x'=(x_1, \ldots, x_{d-1})$. In particular if $\psi: V_y\to \mathbb R$ is a Lipschitz function which only depends on $x'$, it holds a.e. on $V_y$:
\begin{equation}\label{eq.psi(x')_0}
|\nabla \psi(x',x_d)|=|\nabla (\psi|_{\Sigma_{x_d}})(x')|,
\end{equation}
where $\forall a>0$, $\Sigma_a:=\{x \in V_\ve, \, x_d(x)=a\}$ is endowed
with the Riemannian structure induced by the Riemannian structure in
$\Omega$.\label{page.sigmaa}

\medskip
\noindent
\underline{Definitions of the functions $\Psi_i$, $f_{+,i}$ and $f_{-,i}$.}

\begin{definition} \label{fplus}  Let us assume that the hypotheses \textbf{[H1]} and \textbf{[H3]} hold. Let us consider~$z_i$ a local minimum of $f|_{\partial \Omega}$ as introduced in hypothesis \textbf{[H2]}. Let us define on  $\Omega$ the following Lipschitz functions 
$$\Psi_i(x):=d_a(x,z_i), \quad
f_{+,i}:=\frac{\Psi_i+ f-f(z_i)}{2} \text{ and } f_{-,i}:=\frac{\Psi_i-( f-f(z_i))}{2}.
$$
\end{definition}
\noindent
Owing 
\label{page.fplus}
  to $\Psi_i(x)=d_a(x,z_i)\geq|f(x)-f(z_i)|$ for all $x\in\Omega$, the functions $f_{\pm,i}$ are non negative and 
$$
f=f(z_i)+f_{+,i}-f_{-,i} \text{ and } \Psi_i=f_{+,i}+f_{-,i}
\text{ on } \Omega.
$$ 
 Let $\Gamma_{1,i}\subset B_{z_i}$ be an open smooth $d-1$ dimensional manifold with boundary such that $z_i\in  \Gamma_{1,i}$ and $\overline {\Gamma_{1,i}}\subset B_{z_i}$. \label{page.gamma1i}
From Proposition~\ref{TTH}, there exists a neighborhood  of $\overline{\Gamma_{1,i}}$ in $\overline \Omega$, denoted $V_{\Gamma_{1,i}}$\label{page.vgamma1i}, such that $\overline{\partial V_{\Gamma_{1,i}} \cap \partial \Omega}\subset  B_{z_i}$ and for all $x\in V_{\Gamma_{1,i}}$,
$$
\Psi_i(x)=\Phi(x)-f(z_i)
$$
where $\Phi$ is the solution to the eikonal equation in a neighborhood of the boundary (see Proposition~\ref{eikonalboundary}). 
Notice that it implies that on $V_{\Gamma_{1,i}}$ the function
$f_{-,i}$ coincides with the function $f_-$ defined in Proposition
\ref{ffmoins} on $V_{\partial \Omega} \cap V_{\Gamma_{1,i}}$.
Moreover, it implies that the functions $f_{\pm,i}$ are $C^{\infty}$  on $V_{\Gamma_{1,i}}$ and one has:
$$
\text{ on } V_{\Gamma_{1,i}}\cap \partial \Omega, \quad  f_{+,i}= f -f(z_i)
 ,
\quad  f_{-,i}= 0 
 ,
\quad \pa_{n} f_{+,i} = 0 , 
\text{ and }
\pa_{n} f_{-,i} = -\pa_{n} f,
$$
where $n$ is the unit outward normal to $\Omega$. Therefore, as in
Proposition~\ref{ffmoins}, up to choosing a smaller neighborhood
$V_{\Gamma_{1,i}}$ of $\overline{\Gamma_{1,i}}$ in $\overline \Omega$,
the function $ f_{-,i}$ is positive on
$V_{\Gamma_{1,i}}\setminus \partial \Omega$ and such that
\begin{equation}\label{eq:nablaf-}
|\nabla f_{-,i}| \neq  0 \text{ in } V_{\Gamma_{1,i}}.
\end{equation}
 Besides, since $\vert \nabla \Psi_i\vert =\vert \nabla f \vert$ in $V_{\Gamma_{1,i}}$, one has
\begin{equation}\label{eq:f+f-}
\nabla f_{+,i}\cdot\nabla f_{-,i}=0 \text{ in }
V_{\Gamma_{1,i}},
\end{equation}
and thus
$$\vert \nabla \Psi_i\vert^2 =\vert \nabla f\vert^2= \vert \nabla
f_{+,i} \vert^2 + \vert \nabla f_{-,i}\vert^2\text{ in }
V_{\Gamma_{1,i}}.$$
In the following, we will assume in addition that $V_{\Gamma_{1,i}}$
is sufficiently small so that the system of
coordinates  $(x',x_d)$ introduced in Definition~\ref{def:x'xd} is
well defined on $V_{\Gamma_{1,i}}$. A consequence of~\eqref{eq:f+f-} is that $\frac{d}{dt}
f_{+,i}(\gamma_x(t))=0$, where $\gamma_x$
satisfies~\eqref{eq:flow_x'}. Thus, in the system of coordinates $(x',x_d)$,
the functions $f_{+,i}$, $\Psi_i$ and~$f$ write:
\begin{align*}
&f_{+,i}(x',x_d)=f_{+,i}(x',0), \,\Psi_i(x',x_d)=f_{+,i}(x',0) -x_{d}\\
&\text{and } f(x',x_d)=f(z_i)+f_{+,i}(x',0)+x_d.
\end{align*}
Notice that by construction 
\begin{equation}\label{eq:nablaf+}
\forall x \in V_{\Gamma_{1,i}},\,  |\nabla f_{+,i}|(x)=0 \iff x'(x)=x'(z_i).
\end{equation}
Indeed, $f_{+,i}(x',x_d)=f_{+,i}(x',0)$ and
$x' \mapsto f_{+,i}(x',0)=f(x',0)-f(z_i)$ has a single critical point at $x'(z_i)$.

\medskip
\noindent
\underline{Strongly stable domain in $B_{z_i}$.}
\medskip

\noindent
In order to build an appropriate domain $\dot \Omega_i$, we will need to
define $\Gamma_{1,i} \subset B_{z_i}$ as a strongly stable domain, as
defined now.
\begin{definition} \label{stronglystable}
A smooth open set $ A\subset \pa\Omega$ is called strongly stable if 
$$\forall \sigma\in \partial A, \quad
\langle\nabla \big (f|_{\pa\Omega}\big)(\sigma), n_{\sigma}( A )\rangle_{T_{\sigma}\pa\Omega} >  0,
$$
where $n_{\sigma}( A)\in T_{\sigma}\pa \Omega$ denotes the outward normal to $A$ at $\sigma\in \partial A$.
\end{definition} 
\noindent
Notice that $\nabla (f|_{\pa\Omega} )=\nabla_T f=\nabla f_{+,i}$ (this is due to the fact that on $B_{z_i}$, one has $f-f(z_i)=\Psi_i$ and thus $\nabla_Tf=\nabla_T\Psi_i$). Thus, the
strong stability condition appearing in Definition~\ref{stronglystable} is equivalent to 
\begin{equation}\label{eq.panf>0}
\forall \sigma\in \partial A, \quad
\partial_{n_{\sigma}( A )} f_{+,i}(\sigma) >  0.
\end{equation}
The name "stable" is justified by the following: if $  A\subset \pa\Omega$ is strongly stable, then for any curve satisfying for all $t> 0$, $\gamma'(t)=-\nabla  (f|_{\pa\Omega}\big)\left(\gamma(t)\right)$ with $\gamma(0)\in \overline A$, one has for all $t\geq 0$, $\gamma(t)\in \overline A$.

The following proposition will be needed to get the existence of an arbitrary large and strongly stable domain in $B_{z_i}$. 

\begin{proposition} \label{stronglystableexis} Let us assume that the hypotheses \textbf{[H1]} and \textbf{[H2]} hold.  For all compact sets  $K\subset B_{z_i}$ there exists a $C^{\infty}$ open domain $A$ which is strongly stable in the sense of Definition~\ref{stronglystable}, simply connected and such that $K\subset A$ and $\overline{A}\subset B_{z_i}$.
\end{proposition}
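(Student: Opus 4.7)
The plan is to obtain $A$ as the preimage of a small strongly stable neighborhood of $z_i$ under the tangential gradient flow on $\partial \Omega$, and to transfer strong stability through this flow via an equivariance identity. As a \emph{seed} near $z_i$: since $z_i$ is a non-degenerate local minimum of $f|_{\partial \Omega}$ by \textbf{[H1]}, the Morse lemma provides a chart around $z_i$ in which $f|_{\partial \Omega} - f(z_i)$ is a positive definite quadratic form. Choosing $\epsilon>0$ small enough that $\{y \in \partial \Omega: f(y) \le f(z_i) + \epsilon\}$ lies both in this chart and in $B_{z_i}$, the sublevel set
$$V_\epsilon := \{y\in\partial\Omega: f(y) < f(z_i)+\epsilon\}$$
is smooth, diffeomorphic to an open ball (hence simply connected), has $\overline{V_\epsilon} \subset B_{z_i}$, and is strongly stable in the sense of Definition~\ref{stronglystable}, since $\nabla_T f = \nabla(f|_{\partial\Omega})$ points in the outward normal direction on the regular level set $\{f|_{\partial\Omega} = f(z_i) + \epsilon\}$ surrounding a local minimum.

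Next, denote by $\varphi_t$ the flow on $\partial \Omega$ of the smooth vector field $-\nabla_T f$; since $\partial \Omega$ is compact, $\varphi_t$ is a smooth diffeomorphism of $\partial \Omega$ for every $t \in \R$. By definition of $B_{z_i}$, for every $y \in K$ there exists $t_y > 0$ with $\varphi_{t_y}(y) \in V_{\epsilon/2}$; a standard compactness argument then yields a common $T>0$ such that $\varphi_T(K) \subset V_{\epsilon/2}$. Define
$$A \ :=\ \varphi_T^{-1}(V_\epsilon) \ =\ \bigl\{y \in \partial\Omega : f(\varphi_T(y)) < f(z_i) + \epsilon\bigr\}.$$
As the diffeomorphic image of $V_\epsilon$ under $\varphi_T^{-1}$, the set $A$ is open, $C^\infty$, and simply connected, and it contains $K$ by construction. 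The inclusion $\overline A \subset B_{z_i}$ follows because any $y \in \overline A$ satisfies $\varphi_T(y) \in \overline{V_\epsilon} \subset B_{z_i}$, and the strong stability of $V_\epsilon$ forces $\varphi_t(y) \to z_i$ as $t \to \infty$.

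The only delicate point is the verification that $A$ is strongly stable, which hinges on the equivariance identity
\begin{equation}\label{eq:equivariance_pl}
\forall y \in \partial\Omega,\ \forall t\in\R,\qquad D\varphi_t(y)\,\nabla_T f(y) \ =\ \nabla_T f(\varphi_t(y)).
\end{equation}
This holds because both sides, viewed as vector fields along the orbit $t\mapsto \varphi_t(y)$, solve the same linear ODE $\frac{D}{dt}Z(t) = -\hess(f|_{\partial\Omega})(\varphi_t(y))\cdot Z(t)$ with the same initial value at $t=0$, and hence coincide by uniqueness. Given \eqref{eq:equivariance_pl}, at $\sigma \in \partial A$ with $\tilde\sigma := \varphi_T(\sigma) \in \partial V_\epsilon$, the outward normal $n_\sigma(A)$ is a positive multiple of $\nabla(f|_{\partial\Omega}\circ\varphi_T)(\sigma) = (D\varphi_T(\sigma))^{\!*}\, \nabla_T f(\tilde\sigma)$, so
$$\langle \nabla_T f(\sigma), n_\sigma(A)\rangle_{T_\sigma\partial\Omega} \ \propto\ \langle D\varphi_T(\sigma)\,\nabla_T f(\sigma), \nabla_T f(\tilde\sigma)\rangle_{T_{\tilde\sigma}\partial\Omega} \ =\ |\nabla_T f(\tilde\sigma)|^2 \ >\ 0,$$
the last inequality using that $\tilde\sigma$ lies on a regular level set of $f|_{\partial\Omega}$. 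The main obstacle is precisely this equivariance identity \eqref{eq:equivariance_pl}, which is the mechanism by which strong stability on $\partial V_\epsilon$ propagates back to $\partial A$ despite the nonlinear nature of the flow $\varphi_T$.
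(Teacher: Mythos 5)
Your construction is, at bottom, the same as the paper's: both proofs seed with a small sublevel-set neighborhood of $z_i$ on $\partial\Omega$ and enlarge it by the gradient flow of $f|_{\partial\Omega}$ until it swallows $K$ (pulling the seed back by the time-$T$ map of $-\nabla_T f$ is the same as pushing it forward by the ascending flow, which is what the paper does by flowing $\partial L_a$ along $+\nabla(f|_{\partial\Omega})$ for time $T$). Your execution is in fact slicker: since $A=\varphi_T^{-1}(V_\epsilon)$ is the image of the seed under a diffeomorphism of $\partial\Omega$, smoothness, openness and simple connectivity come for free, and strong stability follows from the equivariance identity $D\varphi_T(y)\,\nabla_T f(y)=\nabla_T f(\varphi_T(y))$, which is correct (it is just the statement that a flow pushes its own generating field forward to itself, or equivalently your linearized-ODE argument along the orbit), whereas the paper verifies strong stability of $A_T$ by a separate diffeomorphism/contradiction argument and smoothness via a boundary homotopy.

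There is, however, one concrete slip in the seed: as written, $V_\epsilon=\{y\in\partial\Omega:\ f(y)<f(z_i)+\epsilon\}$ is a \emph{global} sublevel set of $f|_{\partial\Omega}$, and the requirement that it lie in the Morse chart and in $B_{z_i}$ for small $\epsilon$ cannot be met unless $z_i$ is the unique global minimum of $f|_{\partial\Omega}$: under \textbf{[H2]} one has $f(z_1)\le f(z_i)$, so for $i\ge 2$ (or when several minima share the lowest value) the set $\{f\le f(z_i)+\epsilon\}$ contains other local minima and their neighborhoods, hence is neither contained in $B_{z_i}$ nor a topological ball. The fix is immediate and is exactly what the paper does: take $V_\epsilon$ to be the connected component containing $z_i$ of the sublevel set (equivalently, intersect with $B_{z_i}$, as in the paper's $L_a=f|_{\partial\Omega}^{-1}([f(z_i),f(z_i)+a))\cap B_{z_i}$); for $\epsilon$ small this component lies in the Morse chart, is a smooth ball whose closure is contained in $B_{z_i}$, is forward-invariant for $\varphi_t$ (which is also what your "standard compactness argument" producing a common $T$ implicitly uses), and the rest of your argument then goes through verbatim.
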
 
\begin{proof}
For the ease of notation, we drop the subscript $i$ in the proof.
One will first construct the set $A$. Then it will be proven that $A$ has the  stated properties.
For $a>0$, let us define
$$L_a:=f|_{\partial \Omega}^{-1}\big ( [f(z),f(z)+a) \big) \cap B_{z}.$$
For a fixed $a>0$ small enough $L_a$ is a $C^{\infty}$ simply connected open set
(which contains~$z$) with boundary the level set $f|_{\partial
  \Omega}^{-1}(\{f(z)+a\})$. The domain $L_a$ is $C^{\infty}$ since $f$ is $C^\infty$.
  
   Let us define for $x\in B_{z}$ the curves $\gamma_x$ by 
$$
\gamma_x'(t)=\nabla f|_{\partial \Omega}(\gamma_x(t)), \quad \gamma_x(0)=x.
$$
For any $x\in \partial L_a$, for all $t>0$, $\gamma_x(-t)\in L_a$ since $t\geq 0\mapsto f|_{\partial \Omega}(\gamma_x(-t))$ is decreasing ($\frac{d}{dt}f|_{\partial \Omega}(\gamma_x(-t))=-\vert \nabla f|_{\partial \Omega}(\gamma_x(-t))\vert^2$ and $f|_{\partial \Omega}(\gamma_x(0))=a$). Let us now define for $T> 0$
$$
A_T:=\{  \gamma_x(t), \  x\in \partial L_a,  t\in [0,T)    \} \cup L_a \subset B_{z}.
$$
One clearly has $A_T\subset A_{T'}$ if $T<T'$.
One claims that $A_T$ is a $C^{\infty}$  simply connected open set which satisfies 
$$\forall \sigma\in \partial A_T, \quad
\partial_{n_{\sigma}( A_T )} f|_{\pa\Omega}(\sigma) > \ 0.
$$
 Let us first prove that $A_T$ is $C^{\infty}$. One has $\partial A_T=\{  \gamma_x(T), \ x\in \partial L_a  \}$. The boundary of $A_T$ is thus a $C^{\infty}$ homotopy of $\partial L_a$ where the homotopy function is 
$$H(t,x)=\gamma_x(t).$$ 
Additionally since this homotopy is with values in $B_{z}$ and since $L_a$ is simply connected (because $L_a$ can be asymptotically retracted on $z$ in the sense that for all $x\in L_a$, $\lim_{t\to -\infty}H(t,x)=z$), $A_T$ is simply connected. Let us prove that  $A_T$ is open. Let us denote by $d_{\partial \Omega}$ the geodesic distance in $\partial \Omega$. Let $x_0\in  A_T\setminus \overline L_a$. There exists a time $t_0\in (0,T)$ such that $\gamma_{x_0}(-t_0)\in L_a$. Let us define $\ve_0=d_{\partial \Omega}(\gamma_{x_0}(-t_0),\partial L_a)/2>0$. Since the mapping $y\mapsto \gamma_y(-t_0)$ is $C^\infty$, there exists $\ve_1>0$ such that if $d_{\partial \Omega}\left(x,y\right)\leq \ve_1$ then $d_{\partial \Omega}(\gamma_y(-t_0),\gamma_{x_0}(-t_0))\leq \ve_0/2$ and thus $\gamma_y(-t_0)\in L_a$. Moreover, since $B_{z}\setminus \overline L_a$ is open, it can be assumed, taking maybe $\ve_1>0$ smaller, that $B_{\partial \Omega}(x_0,\ve_1)\subset B_{z}\setminus \overline L_a$. Then, by continuity, for all  $  y\in B_{\partial \Omega}(x_0,\ve_1)$, there exists $t_0(y)\in (0,t_0)\subset (0,T)$ such that $\gamma_y(-t_0(y))\in \pa L_a$, which implies that $y\in A_T\setminus \overline L_a$. Thus $A_T\setminus \overline L_a$ is open. In addition, since $L_a$ is open and since $\overline L_a\subset A_T$, one has that ${\rm int \ }(A_T)= {\rm int \ }(A_T\setminus \overline L_a)\cup  \overline L_a=(A_T\setminus \overline L_a)\cup  \overline L_a=A_T$. Therefore the set $A_T$ is open. \\Let us now prove that $A_T$ is strongly stable (see Definition~\ref{stronglystable}). By construction, $A_T$ is stable for the dynamics $\gamma'=-\nabla f|_{\pa \Omega}(\gamma)$ and thus  one has 
$$\forall \sigma\in \partial A_T, \quad
\partial_{n_{\sigma}( A_T )} f|_{\pa\Omega}(\sigma) \geq \ 0.
$$
Let us defined now the function
$$\Upsilon :x\in B_{z}\setminus \overline L_a \mapsto (x',t)\in \pa L_a\times \mathbb R_+\ {\rm s.t} \ \gamma_{x'}(t)=x.$$
Notice that $\Upsilon$ is a $C^{\infty}$ diffeomorphism from $B_z$ onto its range, and let us denote $F:=\Upsilon^{-1}$ its inverse function ($F(x',t)=\gamma_{x'}(t)$). Assume that there exists $x\in  A_T$ such that $\partial_{n_{x}( A_T )} f|_{\pa\Omega}(x)=0$ and let $(x',T)=\Upsilon(x)$. This implies that $\nabla f|_{\pa\Omega}(x)\in T_x\partial A_T$ and thus $\partial _t F(x',T)\in  T_x\partial A_T$. Furthermore $\Ran \left(d_{x'}F(.,T)\right)=  T_x\partial A_T$ and thus $d_{(x',T)}F$ is not invertible which contradicts the fact that $F$ is a diffeomorphism. \\
It remains to prove that for any compact set $K\subset B_{z}$, there exists 
$T>0$ such that $K\subset A_T$. One has 
$$ B_{z}=\bigcup_{T>0} A_T.$$
Indeed, if $x\in \overline L_a$, $x\in A_T$ for all $T>0$ and if $x\in B_{z} \setminus \overline L_a$, $\lim_{t\to \infty} \gamma_x(-t)= z$ and thus there exists $s>0$ such that $ \gamma_x(-s)\in \partial L_a$ which implies that $x\in A_s$. Let $K\subset B_{z}$ be a compact set. Then $K\subset \bigcup_{T>0} A_T$ and thus by compactness there exists a sequence $(T_j)_{j=1,\ldots,N}\subset \mathbb R^N$, with  $0<T_1<\ldots<T_m$ such that $K \subset \bigcup_{j=1}^m A_{T_j}=A_{T_m}$. This concludes the proof.
\end{proof}

\medskip
\noindent
\underline{Construction of the domain $\dot{\Omega}_i$.}
\medskip

\noindent
 In this
section, we introduce the domain $\dot{\Omega}_i$ (associated with $z_i$) on which  the
auxiliary Witten Laplacian with mixed tangential-normal Dirichlet
boundary conditions is constructed. 
\label{page.omegaipoint}

\begin{proposition}\label{prop:dotomega}  Let us assume that the hypotheses \textbf{[H1]}, \textbf{[H2]} and \textbf{[H3]} hold.
Let us fix a neighborhood $\Omega_0$ of $x_0$ (the global
minimum of $f$ in $\Omega$) such that 
$$\partial_n f < 0 \text{ on } \Gamma_0 := \partial \Omega_0 \label{page.gamma0}$$
where $n$ denotes the outward normal to $\Omega \setminus \Omega_0$ on $\Gamma_0$. Let us consider a critical point $z_i$ of $f|_{\partial
  \Omega}$. Then there exists a smooth open subset $\Gamma_{1,i}$ of
$B_{z_i}$ containing $z_i$ and arbitrarily large
 in $B_{z_i}$, a neighborhood
$V_{\Gamma_{1,i}}$ of $\overline{\Gamma_{1,i}}$ in $\overline{\Omega}$
such that $\overline{V_{\Gamma_{1,i}} \cap \partial \Omega} \subset
B_{z_i}$ 
 and a Lipschitz subset $\dot
\Omega_i$ of $\Omega \setminus \Omega_0$ which are such that the following properties are
satisfied:
\begin{enumerate}
\item Following Proposition~\ref{TTH},
$$\forall x \in V_{\Gamma_{1,i}},\, d_a(x,z_i)=\Phi(x) - f(z_i)$$
where $\Phi$ is the solution to the eikonal
equation~\eqref{eikonalequationboundary} ;
\item The system of coordinates $(x',x_d)$ is defined on
  $V_{\Gamma_{1,i}}$, see Definition~\ref{def:x'xd} ;
\item $\partial \dot \Omega_i$ is composed of two connected components:
  $\Gamma_0$ and  $\overline{\Gamma_{1,i}} \cup
  \overline{\Gamma_{2,i}}$, where $\Gamma_{2,i}$ is an open subset of $\pa \dot \Omega_{i}$, $\Gamma_{1,i}\cap \Gamma_{2,i}=\emptyset$, and  $\Gamma_{2,i}$    is such that there exist two disjoint~$C^\infty$ open subsets $\Gamma_{2,i}^1$ and $\Gamma_{2,i}^2$  of $\pa \dot \Omega_{i}$ such that    \label{page.gamma2i}
$$ 
\overline{\Gamma_{2,i}}=\overline{\Gamma_{2,i}^1}\cup \overline{\Gamma_{2,i}^2}
 \, ;
 $$
\item $\Gamma_{1,i}$ and $\Gamma_{2,i}$ meet  at an angle smaller than
  $\pi$, see~\eqref{eq:angle} for a precise definition ;
\item It holds,
\begin{equation}\label{eq:dnfpositive}
\forall x \in \partial \dot \Omega_i \setminus \Big( \big (\overline{\Gamma_{2,i}^1}\cap \overline{\Gamma_{2,i}^2}
 \, \big )\cup \big (\overline{\Gamma_{1,i}}\cap \overline{\Gamma_{2,i}}
 \, \big ) \cup \Gamma_0\Big ), \ \partial_n
f(x) > 0
\end{equation}
where $n$ is the outward normal to $\dot \Omega_i$ ;
\item It holds,
\begin{equation}\label{eq:dnf+positive}
\forall x \in \Gamma_{2,i} \cap V_{\Gamma_{1,i}}, \, \partial_n
f_{+,i}(x) > 0;
\end{equation}
\item Moreover, for all $\delta > 0$,  $\dot\Omega_i$ (and $\Omega_0$) can be chosen such that
\begin{equation}\label{eq:gamma2_close_to_Bzc}
\sup \{d_e(x,y), \, x \in \Gamma_{2,i}, \, y \in B_{z_i}^c \}\le
\delta
\end{equation}
and
\begin{equation}\label{eq:gamma0_close_to_x0}
\sup\{d_e(x_0,x), \, x \in \Gamma_0 \} \le \delta
\end{equation}
where, we recall, $d_e$ denotes the geodesic distance for the
Euclidean metric on $\overline{\Omega}$.
\end{enumerate}
\end{proposition}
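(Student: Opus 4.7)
The plan is to construct $\dot\Omega_i$ as $(\Omega\setminus\overline{\Omega_0})$ with a thin collar removed from the boundary piece $\partial\Omega\setminus\overline{\Gamma_{1,i}}$. The overall strategy has three stages: first produce a strongly stable base $\Gamma_{1,i}\subset B_{z_i}$ arbitrarily large and its associated eikonal neighborhood $V_{\Gamma_{1,i}}$; second use the boundary coordinates $(x',x_d)$ of Definition~\ref{def:x'xd} to define an interior Lipschitz hypersurface $\Gamma_{2,i}$ with the required two-piece smooth decomposition and transverse meeting at $\partial\Gamma_{1,i}$; third check that the geometric conditions (1)--(7) are simultaneously achievable by tuning a small thickness parameter $\delta'>0$ and shrinking $V_{\Gamma_{1,i}}$ transversally.

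Concretely, I would first invoke Proposition~\ref{stronglystableexis} to pick a $C^\infty$ simply-connected, strongly stable $\Gamma_{1,i}\subset B_{z_i}$ with $\overline{\Gamma_{1,i}}$ as large as desired and $z_i\in \Gamma_{1,i}$, then apply Proposition~\ref{TTH} to obtain $V_{\Gamma_{1,i}}$ on which $\Psi_i=\Phi-f(z_i)$, which gives (1). By choosing $V_{\Gamma_{1,i}}\subset V_\varepsilon$ we get (2). Next, pick $\delta'>0$ small and define the wall
\[
\Gamma_{2,i}^{1}=\{x\in V_\varepsilon:\ x'(x)\in\partial\Gamma_{1,i},\ x_d(x)\in(-\delta',0)\}
\]
and the floor
\[
\Gamma_{2,i}^{2}=\{x\in V_\varepsilon:\ x_d(x)=-\delta',\ x'(x)\notin\overline{\Gamma_{1,i}}\}.
\]
Both are smooth since $\Gamma_{1,i}$ is smooth and the map $x\mapsto(x',x_d)$ is a $C^\infty$ diffeomorphism. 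Set $\Gamma_{2,i}=\Gamma_{2,i}^{1}\cup\Gamma_{2,i}^{2}$ (so~\eqref{eq.decGammaN} holds) and define $\dot\Omega_i$ as the interior of $(\Omega\setminus\overline{\Omega_0})$ after removing the open collar $\{x\in V_\varepsilon:\ x'(x)\notin\overline{\Gamma_{1,i}},\ x_d(x)>-\delta'\}$, so that $\partial\dot\Omega_i=\Gamma_0\cup\overline{\Gamma_{1,i}}\cup\overline{\Gamma_{2,i}}$ and (3) holds.

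For (4), at any $\sigma\in\partial\Gamma_{1,i}$ the wall $\Gamma_{2,i}^{1}$ is locally a graph over $\partial\Omega$ perpendicular to it in the $(x',x_d)$ coordinates, while $\Gamma_{1,i}\subset\{x_d=0\}$; these meet at a right angle, which is strictly less than $\pi$ in the sense of~\eqref{eq:angle} (with $\tilde\varphi$ having slopes $\pm 1$ in the chart obtained by straightening $x'$ across $\partial\Gamma_{1,i}$). For (5) on $\Gamma_{2,i}^{2}$ the outward normal to $\dot\Omega_i$ is $-\nabla f_-/|\nabla f_-|$, which on $\partial\Omega$ coincides with the outward normal to $\Omega$; since $\partial_n f>0$ on $\partial\Omega$ by~\textbf{[H3]}, continuity forces $\partial_n f>0$ on $\Gamma_{2,i}^{2}$ for $\delta'$ small. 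On $\Gamma_{2,i}^{1}\cap V_{\Gamma_{1,i}}$ the normal is the outward co-normal to $\Gamma_{1,i}$ inside $\partial\Omega$, and using $f=f(z_i)+f_{+,i}$ on $\partial\Omega\cap V_{\Gamma_{1,i}}$ together with the strong stability~\eqref{eq.panf>0} yields both (5) and (6); extend by continuity to the interior for $\delta'$ small. Finally, to achieve (6) globally on $\Gamma_{2,i}\cap V_{\Gamma_{1,i}}$ I would shrink $V_{\Gamma_{1,i}}$ in the $x'$-direction so that $\Gamma_{2,i}^{2}\cap V_{\Gamma_{1,i}}=\emptyset$, which is possible because $\Gamma_{2,i}^{2}$ lies over $\partial\Omega\setminus\overline{\Gamma_{1,i}}$. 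The estimates~\eqref{eq:gamma2_close_to_Bzc} and~\eqref{eq:gamma0_close_to_x0} in (7) follow by taking $\delta'$ small and $\Gamma_{1,i}$ arbitrarily large in $B_{z_i}$ (so that $\partial\Omega\setminus\overline{\Gamma_{1,i}}$ lies in an arbitrarily small neighborhood of $B_{z_i}^c$), and by shrinking $\Omega_0$.

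The main obstacle is a compatibility issue: the conditions (4), (5), (6) must hold simultaneously, but~(6) forces $\Gamma_{2,i}^{2}$ to stay outside $V_{\Gamma_{1,i}}$ (since on $V_{\Gamma_{1,i}}$ one has $\nabla f_{+,i}\cdot\nabla f_{-,i}=0$, making $\partial_n f_{+,i}$ vanish on level sets of $f_{-,i}$), while (4) forces $\Gamma_{2,i}^{1}$ to be transverse at $\partial\Gamma_{1,i}$. Reconciling this requires choosing the transverse width of $V_{\Gamma_{1,i}}$ strictly smaller than the lateral extent of $\Gamma_{2,i}^{1}$ over $\partial\Gamma_{1,i}$; this is where the decomposition into the smooth wall and smooth floor, rather than a single smoothed hypersurface, plays an essential role, and it justifies the slightly awkward hypothesis~\eqref{eq.decGammaN} in Lemma~\ref{le.GreenWeak}.
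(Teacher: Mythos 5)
Your construction is correct and follows the same overall strategy as the paper's proof: a strongly stable, arbitrarily large $\Gamma_{1,i}$ from Proposition~\ref{stronglystableexis}, the eikonal identity and the collar coordinates $(x',x_d)$ from Proposition~\ref{TTH} and Definition~\ref{def:x'xd}, and then the excision of a thin collar from $\Omega\setminus\overline{\Omega_0}$ so that the new boundary piece meets $\Gamma_{1,i}$ at a right angle, with items 5--7 obtained from \textbf{[H3]}, strong stability~\eqref{eq.panf>0}, continuity, and the choice of a large $\Gamma_{1,i}$ and a thin collar. The collar itself differs: the paper removes a wedge of variable thickness $\varphi(x')$ only over a neighborhood of $\partial\Gamma_{1,i}$ in $\Gamma_{1,i}^c$ (with $\max\varphi$ small), so that most of its $\Gamma_{2,i}$ is the untouched remainder of $\partial\Omega$ and item 5 there is just \textbf{[H3]}; you remove a collar of uniform depth $\delta'$ over all of $\partial\Omega\setminus\overline{\Gamma_{1,i}}$, so that $\Gamma_{2,i}$ becomes an interior hypersurface, namely a vertical wall over $\partial\Gamma_{1,i}$ plus a floor inside the level set $\{f_{-,i}=\delta'\}$. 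Your variant makes the two-piece decomposition $\overline{\Gamma_{2,i}}=\overline{\Gamma^1_{2,i}}\cup\overline{\Gamma^2_{2,i}}$ of item 3 completely explicit (the paper is terser on this), at the price of checking $\partial_nf>0$ on the floor by continuity from $\partial\Omega$ — your observation that $\nabla f_{-}=-(\partial_nf)\,n$ on $\partial\Omega$, so the floor normal tends to the outward normal of $\Omega$, is exactly right and plays the role of~\eqref{eq:limit_nsigma} in the paper — and of keeping the floor out of $V_{\Gamma_{1,i}}$ for item 6, since $\nabla f_{+,i}\cdot\nabla f_{-,i}=0$ there by~\eqref{eq:f+f-}. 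On this last point there is a small slip to fix: shrinking $V_{\Gamma_{1,i}}$ ``in the $x'$-direction'' does not remove the floor from it, because points $(x',-\delta')$ with $x'$ just outside $\partial\Gamma_{1,i}$ survive any lateral shrinking; what works is the transversal shrinking you announce in your opening plan, e.g.\ replacing $V_{\Gamma_{1,i}}$ by $V_{\Gamma_{1,i}}\cap\{x_d>-\delta'/2\}$ after $\delta'$ has been fixed, which leaves items 1 and 2 (and $\overline{V_{\Gamma_{1,i}}\cap\partial\Omega}\subset B_{z_i}$) untouched. With that one-line correction your argument is complete and delivers the same conclusion as the paper's tapering-wedge construction.
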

We refer to 
Figure \ref{fig:Sets for mixed Witten  Laplacian} for a schematic
representation of $\dot \Omega_i$.
\begin{proof} The domain $\dot{\Omega}_i \subset \Omega$ is built as follows. First,
let us fix a neighborhood $\Omega_0$ of $x_0$ such
that~\eqref{eq:gamma0_close_to_x0} is satisfied and 
$$\partial_n f < 0 \text{ on } \Gamma_0 = \partial \Omega_0$$
where $n$ denotes the outward normal to $\Omega \setminus \Omega_0$ on
$\Gamma_0$ (this can be done for example by considering $\Omega_0=\{x, \,
f(x) < f(x_0) + \eta\}$ for some positive $\eta$).
Second, let us consider a smooth subset $\Gamma_{1,i}$ of $B_{z_i}$ which can 
be as large as needed in $B_{z_i}$, and which is strongly stable (see
Proposition~\ref{stronglystableexis} for the existence of such a set):
\begin{equation}\label{eq:derivee_normale_positive}
\langle \nabla f|_{\partial \Omega} , n(\Gamma_{1,i}) \rangle_{T \pa \Omega} > 0 \text{ on
} \partial \Gamma_{1,i} \text{ and thus } \pa_{n(\Gamma_{1,i})}f_{+,i}>0 \text{ on
} \partial \Gamma_{1,i},
\end{equation}
where $n(\Gamma_{1,i})$ denotes the outward normal derivative of
$\Gamma_{1,i}$ (see Definition~\ref{stronglystable} and~\eqref{eq.panf>0}).

Once $\Gamma_{1,i}$ is fixed, the existence of a neighborhood
$V_{\Gamma_{1,i}}$ of $\overline{\Gamma_{1,i}}$ in $\overline{\Omega}$
such that $\overline{V_{\Gamma_{1,i}} \cap \partial \Omega} \subset
B_{z_i}$ and such that items 1 and 2 are fulfilled is a direct consequence of Proposition~\ref{TTH}.

Let us now consider the system of coordinates $(x',x_d)$ introduced in
Definition~\ref{def:x'xd}. Let $ V_{\partial
  \Gamma_{1,i}} \subset \partial \Omega$ denotes a neighborhood of
$\partial \Gamma_{1,i}$ in $\partial \Omega$ and
$$ V^+_{\partial\Gamma_{1,i}} =\overline{V_{\partial
  \Gamma_{1,i}} \cap \Gamma_{1,i}^c}.$$
The domain $\dot \Omega_i$ is then defined as follows:
$$\dot{\Omega}_i = \Omega \setminus (\overline{\Omega_0} \cup \{x=(x',x_d), \,x'  \in
V^+_{\partial\Gamma_{1,i}} \text{ such that } x_d(x) \in [-\varphi(x'),0] \})$$
where $\varphi:V^+_{\partial\Gamma_{1,i}}  \to \R_+$ is a smooth
function such that
$$\exists \varepsilon > 0, \, \forall x' \in \partial
  \Gamma_{1,i}, \, \varphi(x') \ge \varepsilon,$$
see Figure~\ref{fig:zoom-angle} for a schematic representation.
Notice that by construction,   $\dot{\Omega}_i$ is a connected Lipschitz subset of $\Omega$ and, denoting by $\Gamma_{2,i}=\partial \dot \Omega_i
\setminus (\overline{\Gamma_{1,i}} \cup \Gamma_0)$, item 3 is satified. For each point $z \in \partial \Gamma_{1,i}$,
there is a small neighborhood $\mathcal V$ of $z$ such that 
\begin{equation}\label{eq:gamma_2}
\mathcal V \cap \Gamma_{2,i} \subset \{x=(x',x_d), x' \in \partial \Gamma_{1,i} \text{ and } x_d(x) \in (-\eta,0] \},
\end{equation}
for some $\eta \in (0,\ve)$.
By choosing $\Gamma_{1,i}$ sufficiently large in $B_{z_i}$, and
$\varphi$ such that $\max \varphi$ is sufficiently small,~\eqref{eq:gamma2_close_to_Bzc} is satisfied. This concludes the
verification of item 7.

For each point  $y \in \partial \Gamma_{1,i}$, it is possible to
construct locally a normal system of coodinate $x'=x_T=(x_{T,1},x_{T,2},
\ldots, x_{T,d-1})$ in a neighborhood $V_y$ of $y$ in $\partial \Omega$,
such that $\Gamma_{1,i} \cap V_y = \{x \in V_y, x_{T,1}(x) \le 0\}$, 
 $V^+_{\partial\Gamma_{1,i}}\cap V_y = \{x \in V_y, x_{T,1}(x) \ge 0\}$    and $\partial \Gamma_{1,i} \cap V_y = \{x \in V_y, x_{T,1}(x) =
0\}$. As explained after Definition~\ref{def:x'xd}, by extending this
system of coordinate inside $\Omega$ as constant along the curve
associated with the vector field $\frac{\nabla f_{-,i}}{|\nabla f_{-,i}|^2}$, $x\mapsto (x'(x),x_d(x))$ then defines a local system of
coordinates in a neighborhood $W_y$ of $y$ in $\overline{\Omega}$. For all $x
\in \partial \Gamma_{1,i}$, the vector $n_z(\Gamma_1)=\frac{\nabla x_{T,1}(x)}{|\nabla x_{T,1}(x)|}$ is the
outward normal vector to $\Gamma_{1,i}$ on $\partial
\Gamma_{1,i}$. By a compactness
argument, one gets that $\partial \Gamma_{1,i} \subset
\cup_{k=1}^K \mathcal V_{y_k}$ for a  finite number of points $y_k
\in \partial \Gamma_{1,i}$. See
Figure~\ref{fig:phik} for a schematic representation of the function $\varphi$
in this system of coordinates.

Let us now look at the boundary of $\dot{\Omega}_i$ in a neighborhood of
$\partial \Gamma_{1,i}$ (see Figure~\ref{fig:zoom-angle}). For $\sigma \in \partial \dot{\Omega}_i$, let us
denote by $n_\sigma(\dot \Omega_i)$ the unit outward normal to $\dot
\Omega_i$. Let us show that for all $z
\in \partial \Gamma_{1,i}$,
\begin{equation}\label{eq:limit_nsigma}
\lim_{\sigma \to z} n_\sigma(\dot{\Omega}_i)=n_z(\Gamma_{1,i})
\end{equation}
where the limit is taken for $\sigma \in \Gamma_{2,i}$. Let us prove~\eqref{eq:limit_nsigma}. For any point $z \in \partial \Gamma_{1,i}$,
there is a small neighborhood $\mathcal V$ of $z$ in
$\overline{\Omega}$ such that the system of coordinates $(x_T,x_d)$
introduced above is well defined. In this system of coordinates,
$$\partial \dot{\Omega}_i \cap \mathcal V \cap \Gamma_{2,i} \subset \{x \in \mathcal
V, \, x_{T,1}(x)=0 \text{ and } x_d(x) \in [-\varphi(x'(x_T(x))),0] \}.
$$
Moreover, the outward normal to
$\dot{\Omega}_i$ on this subset is $n(\dot{\Omega}_i)=\frac{\nabla
  x_{T,1}}{|\nabla x_{T,1}|}$ and thus, by construction, for all $z
\in \partial \Gamma_{1,i}$,~\eqref{eq:limit_nsigma} holds.

As a consequence of~\eqref{eq:limit_nsigma}, the two submanifolds $\Gamma_{1,i}$ and $\Gamma_{2,i}$ meet at an angle smaller than $\pi$
(see~\eqref{eq:angle} and Figure~\ref{fig:phi-x1}). This shows that
item 4 is satisfied.
Moreover, using~\eqref{eq:derivee_normale_positive},  one has: for all $z
\in \partial \Gamma_{1,i}$,
$$\lim_{\sigma \to z} \nabla f_{+,i} (\sigma) \cdot n_\sigma(\dot{\Omega}_i) 
= \nabla f_{+,i} (z) \cdot n_z(\Gamma_{1,i})>0,$$
and 
$$\lim_{\sigma \to z} \nabla f (\sigma) \cdot n_\sigma(\dot{\Omega}_i) 
= \nabla f (z) \cdot n_z(\Gamma_{1,i})>0,$$
where the limits are taken for $\sigma\in   \Gamma_{2,i}$. Thus, up
to choosing $\varphi$ with $\max \varphi$ sufficiently small, it is
possible to build $\dot \Omega_i$ such that (see Figure~\ref{fig:zoom-angle})
\begin{equation}\label{eq:dnf>0}
\forall x \in \Gamma_{2,i} \text{ such that } x'(x) \in \partial \Gamma_{1,i}, \, \partial_n
f_{+,i}(x) > 0 \text{ and } \partial_n
f(x) > 0,
\end{equation}
where $n$ here denotes the outward normal to $\dot \Omega_i$. 
Finally, by using~\eqref{eq:dnf>0} and since $\partial_n f > 0$ on
$\partial \Omega$, up to choosing $\varphi$ with $\max \varphi$ sufficiently small, it is
possible to build $\dot \Omega_i$ such that (see Figure~\ref{fig:zoom-angle})
$$\forall x \in \partial \dot \Omega_i \setminus \Big( \big (\overline{\Gamma_{2,i}^1}\cap \overline{\Gamma_{2,i}^2}
 \, \big )\cup \big (\overline{\Gamma_{1,i}}\cap \overline{\Gamma_{2,i}}
 \, \big ) \cup \Gamma_0\Big ),  \ \partial_n f(x) > 0$$
where $n$ again denotes the outward normal to $\dot \Omega_i$. This is
item 5, and this concludes the proof of Proposition~\ref{prop:dotomega}.

\end{proof}

\begin{definition} \label{Sets}  Let us assume that the hypotheses \textbf{[H1]}, \textbf{[H2]} and \textbf{[H3]} hold.
Let us consider a critical point $z_i$ of $f|_{\partial \Omega}$. In the following, we denote by $$\mathcal S_{M,i}:=\{\dot{\Omega}_i,
\Gamma_0,\Gamma_{1,i},\Gamma_{2,i},V_{\Gamma_{1,i}}\}$$ an ensemble of
sets satisfying the
requirements of Proposition~\ref{prop:dotomega}.\label{page.SMi}
\end{definition}
\noindent
In the following, in order to reduce the amount of notation, the index
$i$ will sometimes be omitted. Thus, we will denote
$$z=z_i, \ \Gamma_{1}=\Gamma_{1,i}, \ \Gamma_{2}=\Gamma_{2,i}, \, \dot
\Omega=\dot \Omega_i, \ V_{\Gamma_{1}}=V_{\Gamma_{1,i}},\ \Psi=\Psi_i,
\ f_{+}=f_{+,i} \ {\rm and} \ f_{-} =f_{-,i}.  \label{page.nota0}$$
We shall warn the reader whenever
the index $i$ is dropped.

\begin{figure}
\begin{center}

\begin{tikzpicture}

\draw[very thick]   (0,3)  ..controls (0,2.6) and (0.3,2.9).. (1,2.9);      
\draw[very thick] (0,-3)  ..controls (0,-2.6) and (0.3,-2.9).. (1,-2.9); 

\draw[very thick] (1,2.9) ..controls (5,2) and (5.1,-2) .. (1,-2.9)node[midway,left=0.03cm]{$\Gamma_{2,i}$} ;

\draw (0,0) ellipse (4 and 3)  ;

\tikzstyle{vertex}=[draw,circle,fill=black,minimum size=6pt,inner sep=0pt]

\draw (-4,0) node[vertex,label=above left:{$z_i$}] (v) {};
\draw (0,3) node[vertex,label=above left:{$  \ \overline \Gamma_{1,i} \cap \overline \Gamma_{2,i} $}] (v) {};
\draw (0,-3) node[vertex,label=below:{$  \ \overline \Gamma_{1,i} \cap \overline \Gamma_{2,i} $}] (v) {};
\draw (0,0) node[vertex,label=below:{$x_0$}] (v) {};
\draw (0,0) circle (0.7cm)  node[midway,right=0.7cm]{$\Gamma_0$} ;
\draw (-3.3,1.5) node[label=below:{$\Gamma_{1,i}$}] (v) {};
\draw (-2,0) node {$\dot \Omega_i$};

\draw[<->] (-0.5,2.95) arc (220:287:0.6cm) node[label=below:{$ \pi/2$}] (v) {};
\draw[<->] (0.03,-2.85) arc (45:160:0.3cm) node[above=0.2cm] {$ \pi/2$};
\draw (0,8.5) node[label=above:{$\Gamma_{1,i}$ {\rm in}  $\partial \Omega$}] (v) {};
\draw[->] (-4,6)--(4,6);
\draw[->] (0,4)--(0,8);
\draw (0.3,8) node[below] { $x_N$};

\draw (-3,6) ..controls (0,7) .. (3,6) node[midway,right=0.9cm]{$V_{\Gamma_{1,i}}$} ;

\draw (0 ,6) node[vertex,label=north east: {$z_i$}](v){};
\draw[<->](-2.5,5.5)--(2.5,5.5)  node[midway,below right] {\Ga{1,i}} ;
\draw[<->](-3.5,4.5)--(3.5,4.5)  node[midway,below right] {$B_{z_i}$} ;

\draw[dashed] (-2.5,5.5)--(-2.5,6) ;
\draw[dashed] (2.5,5.5)--(2.5,6) node[midway,right=1cm] {$\partial \Omega$};
\draw[dashed] (-3.5,4.5)--(-3.5,6);
\draw[dashed] (3.5,4.5)--(3.5,6);

\end{tikzpicture}
\caption{The ensemble of sets ${\mathcal S}_{M,i}=\{\dot{\Omega}_i,
\Gamma_0,\Gamma_{1,i},\Gamma_{2,i},V_{\Gamma_{1,i}}\}$ associated with a
critical point~$z_i$ of $f|_{\partial \Omega}$.}
 \label{fig:Sets for mixed Witten  Laplacian}
 \end{center}
\end{figure}
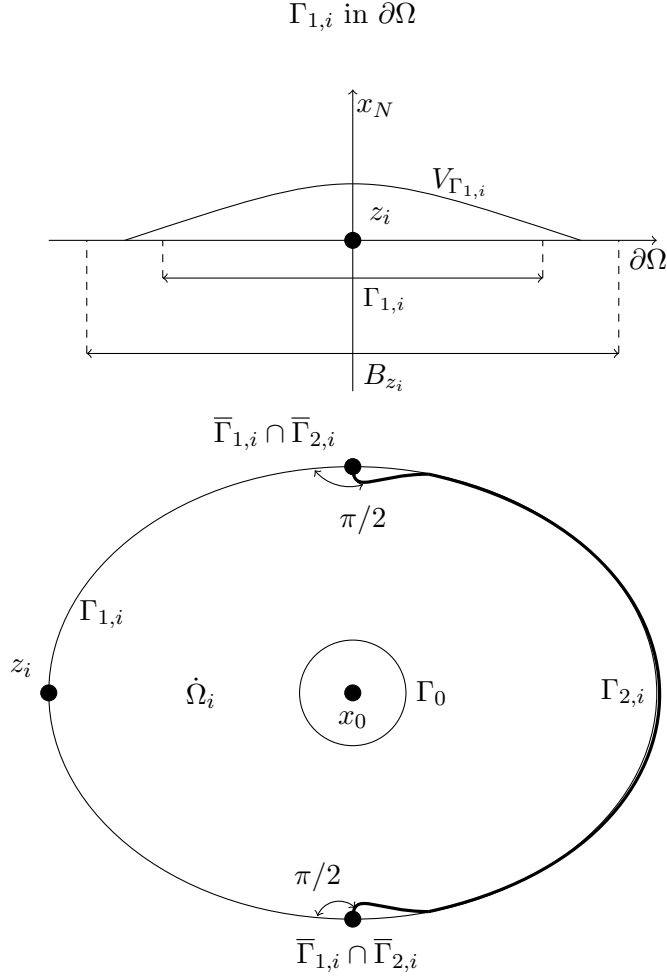

\begin{figure}[!h]
  \begin{center}
 \begin{tikzpicture}
\draw (1.4,1.4) arc (130:138.5:33cm);
\draw (1.8,1.7) node  {$\partial \Omega$};
\draw[->, dashed] (-1.5,-1.35)-- (0.5,-2.94);
\draw[-,  thick] (-1.5,-1.35)-- (-0.3,-2.3);
\draw [ thick] (-0.3,-2.3)  ..controls (1,-1)  and (-1.1,-0.7) .. (0.72 ,0.81);
\draw[-,  thick] (-1.5,-1.35)-- (-0.3,-2.3);
\draw[-,  thick]  (-2.28,-2.21)-- (-1.5,-1.35);
\tikzstyle{vertex}=[draw,circle,fill=black,minimum size=6pt,inner sep=0pt]
\draw (-1.5,-1.35) node[vertex](v){};
\draw (-2 ,-1.1) node  {$\partial\Gamma_{1,i}$};
\draw[->] (-0.98,-1.75)-- (-0.3,-1);
\draw (-2.6 ,-2.4) node  {$\Gamma_{1,i}$};
\draw (-0.8 ,-1.1)  node  {\tiny{$n(\dot \Omega_i)$}};
\draw (2 ,-1) node  {$\Gamma_{2,i}$};
\draw (2 ,-3) node  {$\dot \Omega_i$};
\draw[->, densely dashdotted]  (1.6 ,-1) -- (-0.8,-1.8);
\draw[->, densely dashdotted]  (1.6 ,-1) -- (0,-0.6);
\draw (0.56,-3.2) node  {$-x_d$};
\draw [<->,dashed]  (-1.1,-1.7)  arc (0:-140:0.5cm);
\draw (-1.3 ,-2.4) node  {$ \frac{\pi}{2}$};
\end{tikzpicture}
\caption{$\dot \Omega_{i}$ near $\partial \Gamma_{1,i}$. }
 \label{fig:zoom-angle}
  \end{center}
\end{figure}
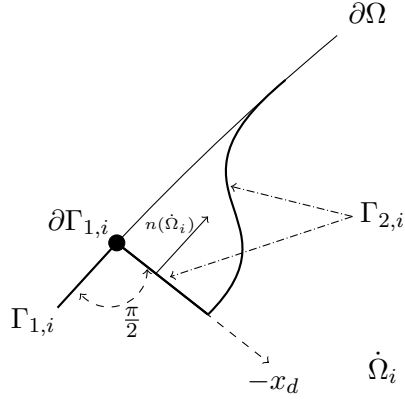

 \begin{figure}[!h]
  \begin{center}
 \begin{tikzpicture}
\draw[->] (0,0)--(4,0) node[right] {$x_T=(x_{T,1},\dots, x_{T,d-1})$} ;
\draw[->] (0,0)--(0,2);
\draw (0 ,0) node [anchor=north] {$(0,x_{T,2},\dots, x_{T,d-1})\in \pa \Gamma_{1,i}$};
\draw (-3.2 ,1) node [anchor=south] {$\varphi(0,x_{T,2}, \dots, x_{T,d-1})\ge \varepsilon$};
\draw[->, densely dashdotted] (-0.9,1.35) -- (-0.14,1.45);
\draw (1.5,1.5)  ..controls (2.5,1.55)  and  (2.5,0) .. (3.5,0);
\draw (0,1.5) -- (1.5,1.5) node[midway,above=0.4cm] {$\varphi$};
\tikzstyle{vertex}=[draw,circle,fill=black,minimum size=6pt,inner sep=0pt]
\tikzstyle{point}=[circle, fill, minimum size=.01cm, draw]
\draw (0,0) node[vertex](v){};
\end{tikzpicture}
\caption{The function $x_T\in \mathbb R^{d-1}\mapsto \varphi(x_T)$.}
 \label{fig:phik}
  \end{center}
\end{figure}
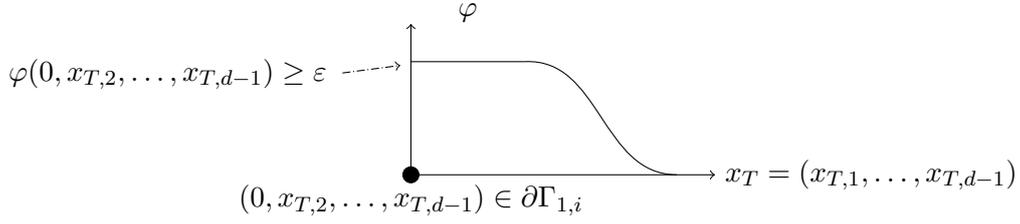

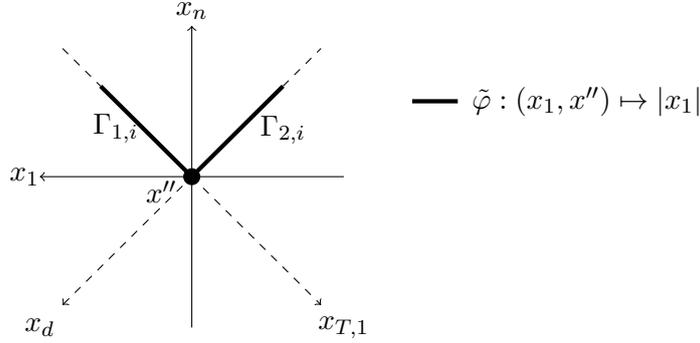
\begin{figure}[!h]
  \begin{center}
 \begin{tikzpicture}
\draw[<-] (-2,0)--(2,0);
\draw[->] (0,-2)--(0,2);
\draw[  dashed] (-1.7,1.7)--(-1.2,1.2);
\draw[-, ultra thick] (-1.2,1.2)--(0,0);
\draw [->,  dashed] (0,0)--(1.7,-1.7);
\draw (-0.4 ,-0.2) node {$x''$};
\draw[<-,  dashed] (-1.7,-1.7)--(0,0);
\draw[-, ultra thick] (0,0)--(1.2,1.2);
\draw[  dashed] (1.2,1.2) --(1.7,1.7);
\draw (2 ,-2) node  {$x_{T,1}$};
\draw (0 ,2.2) node  {$x_{n}$};
\draw (-2.2 ,0) node  {$x_{1}$};
\draw (-2 ,-2) node  {$x_{d}$};
\draw (-1 ,0.64) node  {$\Gamma_{1,i}$};
\draw (1.2 ,0.62) node  {$\Gamma_{2,i}$};
\draw[-, ultra thick] (2.9,1)-- (3.5,1);
\draw (5.2,1) node  {$\tilde \varphi: (x_1,x'')\mapsto \vert x_1\vert$};
\tikzstyle{vertex}=[draw,circle,fill=black,minimum size=6pt,inner sep=0pt]
\tikzstyle{point}=[circle, fill, minimum size=.01cm, draw]
\draw (0,0) node[vertex](v){};
\end{tikzpicture}
\caption{The sets $\Gamma_{1,i}$ and $\Gamma_{2,i}$ meet at an angle
  smaller than $\pi$, according to \eqref{eq:angle}: take
  $x_1=\frac{x_d-x_{T,1}}{2}$, $x_n=-\frac{x_d+x_{T,1}}{2}$,
  $x''=(x_{T,2},\dots, x_{T,d-1})$ and $\tilde \varphi(x_1,x'')=|x_1|$. }
 \label{fig:phi-x1}
  \end{center}
\end{figure}

\subsubsection{On the spectrum of the Witten Laplacian $\Delta^M_{f,h}(\dot{\Omega}_i)$}\label{sec:def_mixed}
Troughout this section, one assumes \textbf{[H1]}, \textbf{[H2]} and \textbf{[H3]}. In this section, we introduce a Witten Laplacian with mixed
tangential and normal Dirichlet boundary conditions,
associated with the critical point $z_i$. Let $\mathcal S_{M,i}:=\{\dot{\Omega}_i,\Gamma_0,
\Gamma_{1,i},\Gamma_{2,i},V_{\Gamma_{1,i}}\}$ be an ensemble of sets associated
with $z_i$, see Definition~\ref{Sets}.
\medskip

\noindent
Let us now consider the Witten Laplacian $\Delta^M_{f,h}$
on $\dot\Omega_i$ with homogeneous Dirichlet  tangential boundary
conditions on $$\Gamma_T=\Gamma_{0}\cup\Gamma_{1,i}$$
\label{page.gammatbis} and homogeneous Dirichlet  normal boundary
conditions on $$\Gamma_N=\Gamma_{2,i}$$ 
as defined at the end of Section~\ref{sec:trace_diff_form} (see~\eqref{eq.DeltaTN}--\eqref{eq.domDeltaTN}).
%
%
%
%
The main result of this section concerns the spectrum of the operator $\Delta_{f,h}^{M} ( \dot \Omega_i )$. 
\begin{proposition}
\label{pr.DeltaTN} Let us assume that the hypotheses \textbf{[H1]}, \textbf{[H2]} and \textbf{[H3]} hold. 
Let $\Delta_{f,h}^{M,(p)} ( \dot \Omega_i )$ be the unbounded
 nonnegative selfadjoint  operator   on 
$ L^{2} ( \dot \Omega_i )$ defined by \eqref{eq.DeltaTN}
and with domain~\eqref{eq.domDeltaTN} with $\Gamma_T=\Gamma_{1,i}\cup \Gamma_0$ and $\Gamma_N=\Gamma_{2,i}$. One has:
\begin{enumerate}
\item[(i)] The operator $\Delta_{f,h}^{M,(p)} ( \dot
  \Omega_i )$ has compact resolvent.
\item[(ii)] For any eigenvalue $\lambda_{p}$ of $\Delta_{f,h}^{M,(p)} ( \dot \Omega_i )$
and associated eigenform $u_{p}\in D\left (\Delta_{f,h}^{M,(p)} ( \dot \Omega_i )\right)$, one has
$
d_{f,h}u_{p}\in D\left (\Delta_{f,h}^{M,(p+1)} \right)
$ and
$
d^{*}_{f,h}u_{p}\in D\left (\Delta_{f,h}^{M,(p-1)} \right)
$,
with
$$
d_{f,h}\Delta_{f,h}^{M,(p)}u_{p}= 
\Delta_{f,h}^{M,(p+1)}d_{f,h}u_{p}
= \lambda_{p}d_{f,h}u_{p}$$
and
$$
d^*_{f,h}\Delta_{f,h}^{M,(p)} ( \dot \Omega_i )u_{p}= 
\Delta_{f,h}^{M,(p-1)} ( \dot \Omega_i )d^*_{f,h}u_{p}
= \lambda_{p}d^*_{f,h}u_{p}.
$$
If in addition $\lambda_{p}\neq 0$, either $d_{f,h}u_{p}$ or $d^*_{f,h}u_{p}$ is nonzero.
\item[(iii)] There exist $c>0$ and $h_0>0$ such that for any $p\in\{0,\dots,n\}$ and $h\in (0,h_0)$,
$$
\dim\Ran \ \pi_{[0,ch^\frac32)}\left(\Delta_{f,h}^{M,(p)} ( \dot \Omega_i )\right)
=\delta_{1,p}\quad\text{and}\quad
\Sp\!\!\left(\Delta_{f,h}^{M,(1)} ( \dot \Omega_i )\right)\cap [0,ch^\frac32)
=
\{0\},
$$
where $\delta$ is the Kronecker delta:  $\delta_{1,p}=1$ iff $p=1$. 
\end{enumerate}
\end{proposition}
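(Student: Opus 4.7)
The plan is to treat the three assertions in order, with (iii) being by far the most substantial.

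For (i), compact resolvent follows immediately from the subelliptic estimate \eqref{eq:subelliptic} of Proposition~\ref{pr.QTN}(i): it provides a continuous embedding of the form domain $D\big(\mathcal Q^{M,(p)}_{f,h}(\dot\Omega_i)\big)$ into $\Lambda^p H^{1/2}(\dot\Omega_i)$, which by Rellich--Kondrachov on the bounded Lipschitz domain $\dot\Omega_i$ embeds compactly into $L^2(\dot\Omega_i)$. For (ii), the commutation is the classical consequence of $d^2_{f,h}=(d^*_{f,h})^2=0$ together with the fact that the boundary conditions $\mathbf t u|_{\Gamma_T}=0$ and $\mathbf n u|_{\Gamma_N}=0$ are preserved by $d_{f,h}$ and $d^*_{f,h}$ respectively, a consequence of \eqref{eq.dom-dT}; thus if $\Delta^{M,(p)}_{f,h}u_p=\lambda_p u_p$, then
\[
\Delta^{M,(p+1)}_{f,h}d_{f,h}u_p\,=\,d_{f,h}d^*_{f,h}d_{f,h}u_p\,=\,d_{f,h}\Delta^{M,(p)}_{f,h}u_p\,=\,\lambda_p\,d_{f,h}u_p,
\]
and analogously for $d^*_{f,h}u_p$. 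If $\lambda_p\neq 0$ and both $d_{f,h}u_p$ and $d^*_{f,h}u_p$ vanished, $\Delta^{M,(p)}_{f,h}u_p$ would be zero, a contradiction.

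The main part is (iii), which we address by transposing the Helffer--Nier analysis of small eigenvalues of Witten Laplacians with tangential Dirichlet conditions (recalled in Proposition~\ref{ran}) to the present mixed-boundary framework. The first step is to identify the \emph{generalized critical points} of $f$ on $\dot\Omega_i$ with the prescribed boundary data. The geometric construction of $\dot\Omega_i$ in Proposition~\ref{prop:dotomega} is designed precisely so that: (a) $f$ has no interior critical point in $\dot\Omega_i$, since by \textbf{[H2]} the only such point $x_0$ lies in $\Omega_0$; (b) on $\Gamma_0$, which carries tangential Dirichlet conditions with $\partial_n f<0$, $\Omega_0$ may be chosen (for example as a slightly perturbed sublevel set of $f$) so that $f|_{\Gamma_0}$ produces no generalized critical point of index $\leq 1$; (c) on $\Gamma_{1,i}\subset B_{z_i}$ (tangential Dirichlet, $\partial_n f>0$), $z_i$ is the unique critical point of $f|_{\Gamma_{1,i}}$ and is a local minimum, so it contributes one generalized critical point of index $1$; (d) on $\Gamma_{2,i}$ (normal Dirichlet, $\partial_n f>0$), the transversality condition \eqref{eq:dnf+positive} excludes any critical point of $f|_{\Gamma_{2,i}}$. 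Therefore $z_i$ is the unique generalized critical point, of index $1$.

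Using (ii), the spaces $F^{(p)}_h:=\Ran\pi_{[0,ch^{3/2})}\big(\Delta^{M,(p)}_{f,h}(\dot\Omega_i)\big)$ then form a finite-dimensional subcomplex under $d_{f,h}$. Following the WKB--quasi-mode scheme of \cite{helffer-nier-06,le-peutrec-10}, one constructs a single quasi-mode $\tilde\psi_i$ for $\Delta^{M,(1)}_{f,h}$ localized near $z_i$, using the Ansatz $\chi e^{-\Phi/h}\theta$, where $\Phi$ is the eikonal solution from Corollary~\ref{cor:eikonal_min} and $\theta$ is an explicit WKB 1-form compatible with the mixed boundary conditions. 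Agmon estimates (to be developed in Sections~\ref{sec:agmonestimate}--\ref{sec:wkb}) show that every form in $F^{(1)}_h$ is exponentially concentrated near $z_i$, yielding $\dim F^{(1)}_h\leq 1$, while the quasi-mode provides the matching lower bound. An analogous (and simpler, since no generalized critical point is present) Agmon localization argument gives $\dim F^{(p)}_h=0$ for $p\neq 1$. Finally, the equality $\Sp\big(\Delta^{M,(1)}_{f,h}\big)\cap[0,ch^{3/2})=\{0\}$ follows from (ii): any nonzero eigenvalue $\lambda\in(0,ch^{3/2})$ with eigenform $u_1$ would send $d_{f,h}u_1$ or $d^*_{f,h}u_1$ to a nonzero element of $F^{(2)}_h$ or $F^{(0)}_h$, contradicting their triviality.

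The hard part is step~(iii), specifically the transfer of the Helffer--Nier Morse count to mixed boundary conditions at the singular interface $\overline{\Gamma_{1,i}}\cap\overline{\Gamma_{2,i}}$: this is where the angle condition \eqref{eq:angle} enforced by Proposition~\ref{prop:dotomega}(4) becomes essential, since it is exactly what makes the trace regularity and subelliptic statements of Proposition~\ref{pr.QTN} available, and hence what allows Agmon estimates and WKB quasi-modes to be carried out up to the non-smooth part of $\partial\dot\Omega_i$.
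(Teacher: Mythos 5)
Your treatments of (i) and (ii) coincide with the paper's: compact resolvent via the continuous embedding of the form domain into $\Lambda^p H^{1/2}(\dot\Omega_i)$ from Proposition~\ref{pr.QTN}, and the intertwining plus the non-vanishing of $d_{f,h}u_p$ or $d^*_{f,h}u_p$ for $\lambda_p\neq 0$ via the quadratic form. The final reduction of $\Sp\big(\Delta^{M,(1)}_{f,h}\big)\cap[0,ch^{3/2})=\{0\}$ to the dimension count through (ii) is also exactly the paper's argument.

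The gap is in how you obtain the dimension count in (iii). You assert that Agmon estimates show every form in $F^{(1)}_h$ is exponentially concentrated near $z_i$, ``yielding $\dim F^{(1)}_h\leq 1$'', and that an analogous localization gives $\dim F^{(p)}_h=0$ for $p\neq 1$. Concentration near a point does not bound the dimension of a spectral subspace: a priori several orthonormal low-lying eigenforms could all concentrate at $z_i$. What actually produces the count is a quantitative lower bound on the quadratic form away from $z_i$, combined with the spectral count for a \emph{model} operator near $z_i$. Concretely, the paper runs an IMS partition \eqref{eq.IMS} and proves, via the Green formula \eqref{eq.compfor} of Lemma~\ref{le.GreenWeak}, that $\mathcal Q^{M,(p)}_{f,h}(\chi u)\geq C\|\chi u\|^2$ for cut-offs supported (a) in the interior (no critical point of $f$ in $\dot\Omega_i$), (b) near $\Gamma_0\cup\Gamma_{2,i}$ (the boundary terms have a favourable sign since $\partial_n f<0$ on $\Gamma_0$ and $\partial_n f>0$ on $\Gamma_{2,i}$ — note that no special choice of $\Omega_0$ is needed to kill critical points of $f|_{\Gamma_0}$; the sign of $\partial_n f$ alone suffices), (c) near points of $\Gamma_{1,i}\setminus\{z_i\}$, and (d) near the corner $\partial\Gamma_{1,i}$. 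Step (d) is genuinely non-routine and your sketch passes over it: it uses the decomposition $f=f(z_i)+f_{+,i}-f_{-,i}$, the nonnegativity of the auxiliary form $\mathcal Q^{M}_{-f_{-,i},h}$, the absence of critical points of $f_{+,i}$ near $\partial\Gamma_{1,i}$ (cf.~\eqref{eq:nablaf+}), and precisely the condition $\partial_n f_{+,i}>0$ on $\Gamma_{2,i}$ near $\partial\Gamma_{1,i}$, i.e.~\eqref{eq:dnf+positive} — this condition enters the energy estimate, not merely the identification of generalized critical points as in your item (d). Finally, for the cut-off supported near $z_i$ one invokes the known spectral result for the Witten Laplacian with tangential Dirichlet conditions on a half-space neighbourhood of a boundary local minimum with $\partial_n f>0$ (\cite[Proposition~4.3.2]{helffer-nier-06}): exactly one exponentially small eigenvalue if $p=1$, none in $[0,h^{3/2})$ otherwise. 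The IMS formula then yields $\dim F^{(p)}_h\leq\delta_{1,p}$, and the lower bound for $p=1$ comes from extending the model eigenform by zero (close in spirit to, but cheaper than, your WKB ansatz, and requiring none of the Agmon or WKB machinery of Sections~\ref{sec:agmonestimate}--\ref{sec:wkb}, which the paper only develops afterwards to estimate $u^{(1)}_{h,i}$ itself). Without the localized lower bounds and the model-operator count, the step from ``concentration at $z_i$'' to the dimension formula is unjustified.
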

\begin{proof}
 Since the criticial point $z_i$
is fixed, for the ease of notation, we
drop the subscript~$i$ in the proof. 

The point $(i)$ follows from the compactness
of the embedding $ H^{\frac12} ( \dot \Omega )\hookrightarrow
 L^{2} ( \dot \Omega )$ (since additionally $D\left ( \mathcal Q^{M}_{f,h} ( \dot \Omega )\right )\hookrightarrow H^{\frac12} ( \dot \Omega )$ is continuous
according to Proposition~\ref{pr.QTN}). 
The point $(ii)$ is then a straightforward  consequence
of the characterization of the domain of $\Delta_{f,h}^{M,(p)} ( \dot \Omega )$
together with \eqref{eq.dom-dT}. The statement in the case
$\lambda_{p}\neq 0$  follows from
$$
0\ \neq\  \lambda_{p}\langle u_{p},u_{p}\rangle_{L^{2} ( \dot \Omega )}= \langle \Delta_{f,h}^{M,(p)} ( \dot \Omega )u_{p},u_{p}\rangle_{L^{2} ( \dot \Omega )}
= \langle d_{f,h} u,d_{f,h}u_{p} \rangle_{L^{2} ( \dot \Omega )} +\langle d^{*}_{f,h}u_{p},d^{*}_{f,h}u_{p} \rangle_{L^{2} ( \dot \Omega )} .
$$
Let us now give the proof of the last point $(iii)$, which is a consequence of $(ii)$ together with ideas from~\cite{helffer-nier-06,le-peutrec-10},
since $z_{i}$ is the only generalized critical point of $f$ in
$\overline{\dot\Omega}$ for $\Delta_{f,h}^{D,(1)}(\dot\Omega)$. Let us first prove  that for some $c>0$, one has for any $p\in\{0,\dots,n\}$ and $h$ small enough,
$$
\dim\Ran \ \pi_{[0,ch^\frac32)}\left(\Delta_{f,h}^{M,(p)} ( \dot \Omega )\right)
=\delta_{1,p}.
$$
Pick up $u\in D\left ( \mathcal Q^{M,(p)}_{f,h} ( \dot \Omega )\right)$.
From the Green formula~\eqref{eq.compfor} and from the fact that $\mathcal{L}_{\nabla
  f}+\mathcal{L}_{\nabla f}^{*}$
  is a $0^{\text{th}}$ order differential operator,   there exists $C_0>0$ such that  
 for all $u\in  D\left ( \mathcal Q^{M,(p)}_{f,h} ( \dot \Omega )\right)$ and all smooth cut-off function $\chi$  supported in $\dot\Omega$ (whose support avoids $\pa\dot\Omega$):

 $$\left\| d_{f,h}\chi u\right\|^{2}_{L^{2} ( \dot \Omega )}+
\left\|d_{f,h}^{*}\chi u\right\|^{2}_{L^{2} ( \dot \Omega )} \geq \Big(\inf_{{\rm supp}\chi} \vert \nabla f \vert^2 -hC_0\Big)\left\|\chi u\right\|^{2}.$$
 Thus, since $f$ has no critical point in $\dot\Omega$,  there exists some $C>0$ independent of $u\in  D\left ( \mathcal Q^{M,(p)}_{f,h} ( \dot \Omega )\right)$ such that
 for any smooth cut-off function $\chi$  supported in $\dot\Omega$
 (whose support avoids $\pa\dot\Omega$)  and $h$ small enough,
\begin{equation*}
\label{eq-iv)-1}
\mathcal{Q}^{M,(p)}_{f,h} ( \dot \Omega )(\chi u) \ \geq\  C\|\chi u\|^{2}.
\end{equation*} 
Note in addition that owing to $\pa_{n}f> 0$ on $\Gamma_{2}\setminus \big (\overline{\Gamma_{2,i}^1}\cap \overline{\Gamma_{2,i}^2}
 \, \big )$
and $\pa_{n}f<0$ on $\Gamma_{0}$,  the boundary terms in the Green formula~\eqref{eq.compfor} are non negative, for any smooth cut-off function
$\chi$ supported in a neighborhood of any point in $\Gamma_{2}\cup\Gamma_{0}$ (whose
support avoids some neighborhood of~$\Gamma_{1}$). Thus, the
 same considerations show that 
for $h$ small enough, for such functions $\chi$, 
taking maybe $C$ smaller, $\forall u \in  D\left ( \mathcal Q^{M,(p)}_{f,h} ( \dot \Omega )\right)$
\begin{equation*}
\label{eq-iv)-2}
\mathcal{Q}^{M,(p)}_{f,h}  ( \dot \Omega )(\chi u) \ \geq\  C\|\chi u\|^{2}.
\end{equation*} 
 According to the analysis done in \cite[Section~3.4]{helffer-nier-06}, the same estimate also holds 
for $\chi$ supported in a sufficiently small neighborhood of some point $x\neq z$, $x\in \Gamma_{1} $
(whose support avoids a neighborhood of $\{z\}\cup\pa\Gamma_{1}$). This  is related to the fact
that $\Gamma_{1}$ does not contain any generalized critical point of $f$ in the tangential sense
except $z$. Let us now show that such an estimate is also valid near $\pa\Gamma_{1}$.
In order to prove it, one recalls
that 
$$
f=f(z)+f_{+}-f_{-} \text{ a.e  on } \Omega \text{ and }
|\nabla f|^2=|\nabla f_{-}|^2
+|\nabla f_{+}|^2  \text{ a.e \ near } \pa\Gamma_{1},
$$
where $f_{\pm} $ are smooth and satisfy the following relations
on $B_{z}$:
$$
f_{+}= f -f(z), \
f_{-}= 0, \
\pa_{n} f_{+} = 0
\text{ and }
\pa_{n} f_{-} = -\pa_{n} f.
$$
Hence, for any $\chi$ supported in a sufficiently
small neighborhood of $\pa\Gamma_{1}$, 
one deduces from 
  the relation
$\mathcal{Q}^{M,(p)}_{-f_{-},h} ( \dot \Omega )(\chi u)\geq 0$,
the Green formula~\eqref{eq.compfor}, and the fact that
$\mathcal{L}_{-\nabla
  f_{-}}+\mathcal{L}_{-\nabla f_{-}}^{*}$
  is a $0^{\text{th}}$ order differential operator,
  that there exists $C_{1}>0$ independent of $u \in  D\left ( \mathcal Q^{M,(p)}_{f,h} ( \dot \Omega )\right)$ such that:
\begin{align*}
  h\left(\int_{\Gamma_{1}}-\int_{\Gamma_{2}}\right)\langle\chi u ,
  \chi u
\rangle_{ T^{*}_{\sigma}\dot\Omega}\pa_{n} f_{-}\, d \sigma 
 &\geq -h^{2}\left\| d\chi u\right\|^{2}_{L^{2} ( \dot \Omega )}-
h^{2}\left\| d^{*}\chi u\right\|^{2}_{L^{2} ( \dot \Omega )}\\
 &\quad -\left\|\left|\nabla f_{-}\right|\chi u\right\|^{2}_{L^{2} ( \dot \Omega )}
-C_{1}h\|\chi u\|^{2}_{L^{2} ( \dot \Omega )}.
\end{align*}
Using again the Green formula \eqref{eq.compfor}, 
the relation $f-f(z)=f_{+}-f_{-}$ with $\pa_{n}f_{+}=0$ on $\Gamma_{1}$, 
 and the fact that
$\mathcal{L}_{\nabla
  f}+\mathcal{L}_{\nabla f}^{*}$
  is a $0^{\text{th}}$ order differential operator then leads to the existence of
  $C_{2}>0$ independent of $u \in  D\left ( \mathcal Q^{M,(p)}_{f,h} ( \dot \Omega )\right)$ s.t.
$$\mathcal{Q}^{M,(p)}_{f,h} ( \dot \Omega )(\chi u) \geq 
\left\| \left|\nabla f_{+}\right|\chi u\right\|^{2}_{L^{2} ( \dot \Omega )}
-C_{2}h\|\chi u\|^{2}_{L^{2} ( \dot \Omega )}+h\int_{\Gamma_{2}}\langle\chi u|\chi u
\rangle_{ T^{*}_{\sigma}\dot\Omega}\pa_{n} f_{+}\, d \sigma.
$$
Since $f_{+}$ has no critical point around $\pa\Gamma_{1}$ (see~\eqref{eq:nablaf+}), one has then for $h$ small enough, taking maybe $C$ smaller:
\begin{equation} \label{eq.pa-n-f+}
\mathcal{Q}^{M,(p)}_{f,h} ( \dot \Omega )(\chi u)
\ge C\|\chi u\|^{2}_{L^{2} ( \dot \Omega )}+h\int_{\Gamma_{2}}\langle\chi u|\chi u
\rangle_{ T^{*}_{\sigma}\dot\Omega}\pa_{n} f_{+}\, d \sigma.
\end{equation}
Let us recall that due to our construction of $\Gamma_{2}$ near $\partial \Gamma_{1}$,
one has (see~\eqref{eq:dnf+positive} in Proposition~\ref{prop:dotomega}): 
$$
\pa_{n} f_{+}(\sigma)> 0\text{ for } \sigma\in\Gamma_{2}
\text{ sufficiently close to } \pa\Gamma_{1}.$$
 This implies that for $\chi$ supported near $\pa\Gamma_{1}$ with sufficiently small support
and $h$ small enough:
$$\mathcal{Q}^{M,(p)}_{f,h} ( \dot \Omega )(\chi u)
\geq C\|\chi u\|^{2}_{L^{2} ( \dot \Omega )}.
$$
Lastly, since $z$ is a generalized critical point with index $1$ in the tangential sense, 
it follows from \cite[Proposition~4.3.2]{helffer-nier-06} that 
for $\chi$ supported in  a neighborhood of  $z$  and $h$ small enough, the spectrum of the Friedrichs extension associated with the quadratic form
$$
\left\{v\in \Lambda^{p} H^{1}(\supp\chi);\
  \mathbf{t}v|_{\Gamma_{1}}=v|_{\pa\supp\chi\setminus\overline\Gamma_{1}}=0\right\}\ni v \mapsto
\left\|d_{f,h} v\right\|_{ L^2}^{2}
 + \left\|d_{f,h}^{*} v\right\|_{ L^2}^{2}
$$
does not meet  $[0,h^{\frac32})$
if $p\neq 1$, and consists of exactly one eigenvalue in  $[0,h^{\frac32})$ which is actually exponentially small
-- i.e. of the size $O(e^{-\frac{C_{3}}{h}})$ --
 if $p=1$.
Denote by $\psi_{1}\in \Lambda^{1} H^{1}(\supp\chi)$ some normalized eigenvalue associated with this exponentially
small eigenvalue.
\medskip

\noindent
Using the IMS localization formula  (see for
example~\cite[Theorem 3.2]{cycon-froese-kirsch-simon-87})
\begin{align}
\nonumber
&\forall(\chi_{k})_{k\in\{1,\dots,K\}} \in  \left(C^{\infty}\left
  (\, \overline{\dot\Omega}\, \right)\right)^K 
\text{ s.t. } \sum_{k=1}^{K}\chi_{k}^{2}=1 \\
\label{eq.IMS}
&\mathcal{Q}^{M,(p)}_{f,h}\left (\dot \Omega\right) (u)
= \sum_{k=1}^{K}\left(\mathcal{Q}^{M,(p)}_{f,h} ( \dot \Omega )(\chi_{k}u)
-h^2 \left\| |\nabla \chi_k | u\right\|_{L^2 ( \dot \Omega )}^2\right),
\end{align}
 the previous analysis then shows that choosing $\chi_{1}\in C^{\infty}\left (\overline{\dot\Omega}\right)$ 
supported in  a neighborhood of~$z$ with $\chi_{1}=1$ near~$z$, one gets 
for some $C,C'>0$ independent of $u\in D\left ( \mathcal Q^{M,(p)}_{f,h} ( \dot \Omega )\right)$
and $h$ small enough:
\begin{equation}
\label{eq.IMS2}
\mathcal{Q}^{M,(p)}_{f,h} ( \dot \Omega )(u)
 \geq  \mathcal{Q}^{M,(p)}_{f,h} ( \dot \Omega )(\chi_{1}u) +
C\| (1-\chi_{1}^{2})^{\frac12} u\|_{L^{2} ( \dot \Omega )}^2
-C'h^2 \left\| u\right\|_{L^{2} ( \dot \Omega )}^2.
\end{equation}
If $p\neq 1$, one deduces immediately from \eqref{eq.IMS2} that for $h$ small enough,
$$
\mathcal{Q}^{M,(p)}_{f,h} ( \dot \Omega )(u)
 \geq  h^{\frac32} \left\| \chi_{1}u\right\|_{L^{2} ( \dot \Omega )}^2+
C\| (1-\chi_{1}^{2})^{\frac12} u\|_{L^{2} ( \dot \Omega )}^2
-C'h^2 \left\| u\right\|_{L^{2} ( \dot \Omega )}^2,
$$
and then that for some $c>0$ and $h$ small enough: 
$$
\mathcal{Q}^{M,(p)}_{f,h} ( \dot \Omega )(u)
\ \geq\  c h^{\frac32} \left\| u\right\|_{L^{2} ( \dot \Omega )}^2.
$$
If $p=1$, one obtains from \eqref{eq.IMS2} the same conclusion
for any $u$ such that $\int_{\dot \Omega} u\chi_{1}\psi_{1}=0$ and therefore
$\Delta_{f,h}^{M,(p)} ( \dot \Omega )$ has no eigenvalue in $[0,ch^{\frac32})$
if $p\neq1$ and at most one if $p=1$.
To end up the proof, it is sufficient to remark that $\tilde\psi_{1}$,
the extension of $\psi_{1}$ to $ \dot \Omega $ by $0$
outside $\supp\chi_{1}$, belongs to $ D\left (  \mathcal Q^{M,(1)}_{f,h} ( \dot \Omega )  \right)$
and satisfies for $h$ small:
$$
\mathcal{Q}^{M,(1)}_{f,h} ( \dot \Omega )(\tilde \psi_{1})= 
\|d_{f,h} \psi_{1}\|_{L^2 ( \dot \Omega )}
 + \|d_{f,h}^{*}  \psi_{1}\|_{L^2 ( \dot \Omega )} = O(e^{-\frac {C_{3}}{h}})\ <\ ch^{\frac32}.
$$
This proves that  there exists $c>0$ and $h_0>0$ such that for all  $p\in\{0,\dots,n\}$ and $h\in (0,h_0)$,
\begin{equation}\label{eq.firstii}
\dim\Ran \ \pi_{[0,ch^\frac32)}\left(\Delta_{f,h}^{M,(p)} ( \dot \Omega_i )\right)
=\delta_{1,p}.
\end{equation}
Then, the fact that there exists $c>0$ and $h_0>0$ such that  for all $h\in (0,h_0)$,
$$
\Sp\!\!\left(\Delta_{f,h}^{M,(1)} ( \dot \Omega_i )\right)\cap [0,ch^\frac32)
=
\{0\},
$$
is a direct consequence of item (ii) in Proposition~\ref{pr.DeltaTN} together with~\eqref{eq.firstii}. This concludes the proof of  Proposition~\ref{pr.DeltaTN}. 

\end{proof}

Following Proposition~\ref{pr.DeltaTN}, let us introduce an
$L^2$-normalized eigenform $u^{(1)}_{h,i}$ of
$\Delta_{f,h}^{M,(1)}(\dot\Omega_i)$ associated with the eigenvalue
$0$:
\begin{equation} \label{eq.uh=0}
\Delta_{f,h}^{M,(1)} ( \dot \Omega_i ) \, u^{(1)}_{h,i}=0  \text{ in }
\dot\Omega_i \text{ and } \left\|u^{(1)}_{h,i} \right\|_{  L^2 ( \dot \Omega_i )}=1.
\end{equation}
Using standard elliptic
regularity results, one can check that
$u^{(1)}_{h,i}$ is actually in $C^\infty( \dot \Omega_i )$ and is smooth in a neighborhood of any regular point of $\pa \dot \Omega_i  $.  Notice that thanks to item $(iii)$ in
Proposition~\ref{pr.DeltaTN}, $u^{(1)}_{h,i}$ is unique up to a
multiplication by $\pm1$: this multiplicative constant will be fixed
in Proposition~\ref{apriori} below. The quasi-mode $\tilde{\phi}_i$ will be built using a suitable
truncation of $u^{(1)}_{h,i}$.

\subsection{Definition of the quasi-modes}\label{sec:contruct_quasimode}
Troughout this section, one assumes \textbf{[H1]}, \textbf{[H2]} and \textbf{[H3]}. In this section, we  construct the function $\tilde u$ and a family of
$1$-forms $(\tilde \phi_i)_{i=1,\ldots,n}$ which will satisfy the
estimates stated in Section~\ref{flat}. For each $i\in
\left\{1,\ldots,n\right\}$, the $1$-form $\tilde \phi_i$ will be
constructed by a suitable truncation of an eigenfunction $u_{h,i}$ associated with
the eigenvalue $0$ of the mixed Witten  Laplacian attached with $z_i\in
\{z_1,\ldots,z_n\}$, as defined in Section~\ref{sec:def_mixed}. \label{page.sigmai1}


 We recall that  $\Sigma_i $ is an open set included in $\partial \Omega$ containing $z_i$ which is such that $\overline\Sigma_i \subset B_{z_i}$ (see Definition~\ref{Bz}). 

\subsubsection{Definition of the quasi-mode $\tilde u$}\label{sec:utilde}

\begin{definition} \label{tildeu}  Let us consider the global minimum $x_0$ introduced in the hypothesis  \textbf{[H2]}.
Let $\chi \in C^{\infty}_c(\Omega)$ such that $\{x \in \Omega | \chi(x)=1\}$ is a neighborhood of $x_0$ and such that  $0\leq \chi\leq 1$ (in particular $\chi(x_0)=1$). The quasi-mode $\tilde u$ is defined by
\begin{equation*}\label{page.tildeu}
\tilde u := \frac{\chi}{ \sqrt{\displaystyle \int_{\Omega} \chi^2 e^{- \frac{2f}{h}}}}.
\end{equation*}
\end{definition}
\noindent
The function $\tilde u$ belongs to $C^{\infty}_c(\Omega)$ and therefore $\tilde u \in  H^1_0\left(e^{-\frac{2}{h} f(x)} dx\right)$.
The function $\chi$ will be chosen such that ${\rm supp}(|\nabla
\chi|)$ is as close as needed to $\partial \Omega$, as will be made
precise in~Section~\ref{sec:goodquasimodes}.

Let us first prove that $\tilde{u}$ satisfies item $2(b)$ in Proposition~\ref{ESTIME}.
\begin{lemma}\label{nablauu} 
 Let us assume that the hypotheses \textbf{[H1]} and \textbf{[H2]}  hold.
Then for any  $\delta>0$, there exist $h_0>0$, $C>0$, and  there exists $\chi\in C^{\infty}_c(\Omega)$ such that the set $\{x \in \Omega|\chi(x)=1\}$ is a neighborhood of $x_0$,  $0\leq \chi\leq 1$ and for all $h\in (0,h_0)$ 
$$\int_{\Omega} \left\vert  \nabla  \tilde u  \right\vert^2e^{-\frac{2f}{h}}     \leq C h^{-\frac d2}e^{-2\frac{(f(z_1)-f(x_0))-\delta }{h}},$$
where $\tilde u$ is defined in Definition~\ref{tildeu}.
\end{lemma}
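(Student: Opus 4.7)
The strategy is a direct Laplace-method estimate after a careful choice of the cut-off $\chi$. Writing $Z_h := \int_\Omega \chi^2 e^{-2f/h}$, one has
$$\int_\Omega |\nabla \tilde u|^2 e^{-2f/h} = \frac{1}{Z_h}\int_\Omega |\nabla \chi|^2 e^{-2f/h},$$
so the task splits into a lower bound on $Z_h$ (controlled by the behaviour of $f$ at $x_0$) and an upper bound on $\int_\Omega |\nabla \chi|^2 e^{-2f/h}$ (which should be exponentially small, reflecting the fact that the gradient of $\chi$ can be placed in a region where $f$ is close to $f(z_1)$).

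The plan is to exploit the freedom to choose $\chi$. Given $\delta > 0$, since $f$ is continuous and $\min_{\partial \Omega} f = f(z_1)$, there exists a neighborhood $V_\delta$ of $\partial \Omega$ in $\overline \Omega$ on which $f > f(z_1) - \delta$. I will then pick $\chi \in C^\infty_c(\Omega)$ with $0 \le \chi \le 1$, $\chi \equiv 1$ on $\Omega \setminus V_\delta$ (which contains a neighborhood of $x_0$ by hypothesis \textbf{[H2]}) and $\mathrm{supp}(\chi) \cap V_\delta$ arranged so that $\mathrm{supp}(|\nabla \chi|) \subset V_\delta$. For such $\chi$, one immediately gets
$$\int_\Omega |\nabla \chi|^2 e^{-2f/h} \le \|\nabla \chi\|_\infty^2 \,|\mathrm{supp}(|\nabla \chi|)| \, e^{-2(f(z_1)-\delta)/h} \le C_1 \, e^{-2(f(z_1)-\delta)/h}.$$

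For the denominator, since $x_0$ is the unique global minimum of $f$ in $\overline \Omega$, is non-degenerate (Morse by \textbf{[H1]}), and $\chi \equiv 1$ on a neighborhood of $x_0$, the standard Laplace method (applied in a local chart around $x_0$ where $f(x) = f(x_0) + \tfrac12 (\mathrm{Hess}\,f(x_0)(x-x_0),x-x_0) + O(|x-x_0|^3)$) yields
$$Z_h = \int_\Omega \chi^2 e^{-2f/h}\,dx = \frac{(\pi h)^{d/2}}{\sqrt{\det \mathrm{Hess}\,f(x_0)}} \, e^{-2f(x_0)/h}(1+o(1)),$$
so in particular $Z_h \ge C_2 \, h^{d/2} e^{-2f(x_0)/h}$ for $h$ small enough. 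Combining these two bounds produces
$$\int_\Omega |\nabla \tilde u|^2 e^{-2f/h} \le \frac{C_1}{C_2} \, h^{-d/2} \, e^{-2(f(z_1)-f(x_0)-\delta)/h},$$
which is the claimed inequality.

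There is no real obstacle here: the argument is elementary once the cut-off $\chi$ is adapted to the prescribed $\delta$. The only mild point requiring attention is to ensure, when choosing $V_\delta$, that $V_\delta$ stays in the open set $\Omega$ (so that $\chi$ can indeed be taken compactly supported in $\Omega$) while $\mathrm{supp}(|\nabla\chi|)$ still lies where $f > f(z_1) - \delta$; this is easily achieved by using a one-parameter family of tubular neighborhoods of $\partial \Omega$ and shrinking if necessary. Note also that the statement is sharp in the sense that $\delta$ cannot be taken to be $0$: the exponential decay rate of $\int |\nabla \tilde u|^2 e^{-2f/h}$ is controlled by the infimum of $f$ on $\mathrm{supp}(|\nabla \chi|)$, which can be made arbitrarily close to $f(z_1)$ but not reach it with a compactly supported $\chi$.
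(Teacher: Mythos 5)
Your proof is correct and follows essentially the same route as the paper: bound $\int_\Omega|\nabla\tilde u|^2e^{-2f/h}$ by $Z_h^{-1}\int_{\operatorname{supp}\nabla\chi}|\nabla\chi|^2e^{-2f/h}$, place $\operatorname{supp}(\nabla\chi)$ in a neighborhood of $\partial\Omega$ where $f>f(z_1)-\delta$, and estimate $Z_h$ from below by Laplace's method at the non-degenerate minimum $x_0$. The only minor point (which you already note) is to take $\delta$ small enough, or shrink the tubular neighborhood, so that $x_0$ lies in the region where $\chi\equiv1$; this is immediate since $f(x_0)<f(z_1)$ by \textbf{[H2]}.
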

\begin{proof}
There exists a constant $C$ such that
$$\int_{\Omega} \left\vert  \nabla  \tilde u(x)  \right\vert^2e^{-\frac{2f(x)}{h}}  dx   \leq     C \frac{ \int_{{\rm supp} \nabla \chi } e^{-2\frac{f(x)}{h}} dx}{\int_{\Omega}  \chi^2(y)  e^{-2\frac{f(y)}{h}}dy}.$$
Since $\rm{supp} \nabla \chi$ can be chosen arbitrarly close to $\partial \Omega$ and since $z_1$ is the minimum of $ V$ on $\partial \Omega$, by continuity of $f$, for any  $\delta>0$ there exists $\chi\in C^{\infty}(\Omega)$ such that $\{x \in \Omega|\chi(x)=1\}$ is a neighborhood of $x_0$,  $0\leq \chi\leq 1$ and 
$$\int_{{\rm supp} \nabla \chi } e^{-2\frac{f(x)}{h}} dx \leq C  e^{-2\frac{f(z_1)+2\delta }{h}}.$$
Moreover, since $x_0$ is the global minimum of $f$ in $\Omega$, one gets, using Laplace's method
$$\int_{\Omega}  \chi^2(y)  e^{-2\frac{f(y)}{h}} dy = \frac{  (\pi \ h)^{\frac{d}{2} } }{ \sqrt{  {\rm det \ Hess } f   (x_0)   }   }e^{-2\frac{f(x_0) }{h}}(1+O(h)).$$
This yields the desired estimate.
\end{proof}
\noindent
Notice that item 2(b) in Proposition~\ref{ESTIME} is a direct consequence of  Lemma~\ref{nablauu}.

\subsubsection{Definition of the quasi-mode $\tilde{\phi}_i$ attached to
  $z_i$}  \label{construction}

 Let $z_i$ be a local minimum of $f|_{\pa \Omega}$. Let us recall that $\Sigma_i$ is an open subset of $\pa \Omega$ such that $z_i\in \Sigma_i$ and $\overline{\Sigma_i  } \subset B_{z_i}$. 
Let $\mathcal S_{M,i}:=\{\dot\Omega_i,\Gamma_0,
\Gamma_{1,i},\Gamma_{2,i},V_{\Gamma_{1,i}}\}$ be an ensemble of sets
associated with $z_i$, see Definition~\ref{Sets}. Thanks to Propositions
\ref{stronglystableexis} and~\ref{prop:dotomega},  the set $\Gamma_{1,i}$ can be taken such
that 
\begin{equation}\label{eq.sigmai-gammai}
\overline\Sigma_i \subset\Gamma_{1,i}.
\end{equation} We recall that Section
\ref{geometry} was dedicated to the construction of a domain $\dot
\Omega_i\subset \Omega$ and a mixed Witten  Laplacian
$\Delta^{M,(1)}_{f,h} ( \dot \Omega_i )$ (see~\eqref{eq.DeltaTN})
associated with this ensemble of sets~$\mathcal S_{M,i}$. Proposition 
\ref{pr.DeltaTN} gives the spectral properties of the operator $\Delta^{M}_{f,h} ( \dot \Omega_i )$. In the following, we consider a normalized eigenform $u^{(1)}_{h,i}\in D\left(\Delta_{f,h}^{M,(1)} ( \dot \Omega_i )\right)$ associated with the first eigenvalue $0$, i.e. such that \eqref{eq.uh=0} holds

The quasi-mode $\tilde \phi_i$ is defined as the following truncation of $u^{(1)}_{h,i}$.
  \begin{definition} \label{tildephii}
   Let us assume that the hypotheses \textbf{[H1]}, \textbf{[H2]} and \textbf{[H3]} hold.
 Let $\chi_i \in C^{\infty}\left(\overline{ \Omega} \right)$ be such that: 
 \begin{enumerate}
 \item $\chi_i \in C^{\infty}_c\left( \dot \Omega_i \cup \Gamma_{1,i}\right)$ (and thus $\chi_i=0$ on a neighborhood of $\Gamma_{2,i}\cup\Gamma_0$ and  on a neighborhood of $\partial \Omega \setminus \Gamma_{1,i}$),
\item $\chi_i=1$ on a neighborhood of $\overline \Sigma_i$ in $\overline{\dot \Omega_i}$,
\item $0\leq\chi_i\leq 1$.
\end{enumerate}
One defines $\mathcal V_i:=\left\{x\in \Omega | \chi_i(x)=1
\right\} $. The   quasi-mode $\tilde \phi_i$ is defined on $\Omega$ by:
\begin{equation}
\label{eq:phi1} 
\tilde \phi_i :=  \frac{\chi_i u^{(1)}_{h,i}}{\sqrt{\displaystyle \int_{\Omega} \left\vert \chi_i(x) u^{(1)}_{h,i}(x) \right\vert^2 dx }}.
\end{equation}
\label{page.tildephii}
\end{definition}

 \begin{figure}
  \begin{center}
 \begin{tikzpicture}
\draw[->] (-6,0)--(6,0) node[right] {$\partial \Omega$} ;
\draw[->] (0,0)--(0,2);
\draw (0 ,0) node [anchor=north east] {$z_i$};
\draw (0 ,1) node [anchor=north east] {1};
\draw[dashed] (-2,-1)--(-2,1);
\draw[dashed] (2,-1)--(2,1);
\draw[dashed] (-4.5,-1.7)--(-4.5,0);
\draw[dashed] (4.5,-1.7)--(4.5,0);
\draw[dashed] (-5.5,-2.7)--(-5.5,0);
\draw[dashed] (5.5,-2.7)--(5.5,0);
\draw[<->](-2,-1)--(2,-1)  node[midway,below] {$\Sigma_i$} ;
\draw[<->](-4.5,-1.7)--(4.5,-1.7)  node[midway,below] {$\Gamma_{1,i}$} ;
\draw[<->](-5.5,-2.7)--(5.5,-2.7)  node[midway,below] {$B_{z_i}$} ;
\draw (-4.1,0) ..controls (-3.5,0) and  (-3.5,1) .. (-2.5,1);
\draw (2.5,1)  ..controls (3.5,1)  and  (3.5,0) .. (4.1,0);
\draw (-2.5,1) -- (2.5,1) node[midway,above=1cm] {$\chi_i$};
\end{tikzpicture}
\caption{The support of $\chi_i$ on $\partial \Omega$.}
 \label{fig:chii}
  \end{center}
\end{figure}
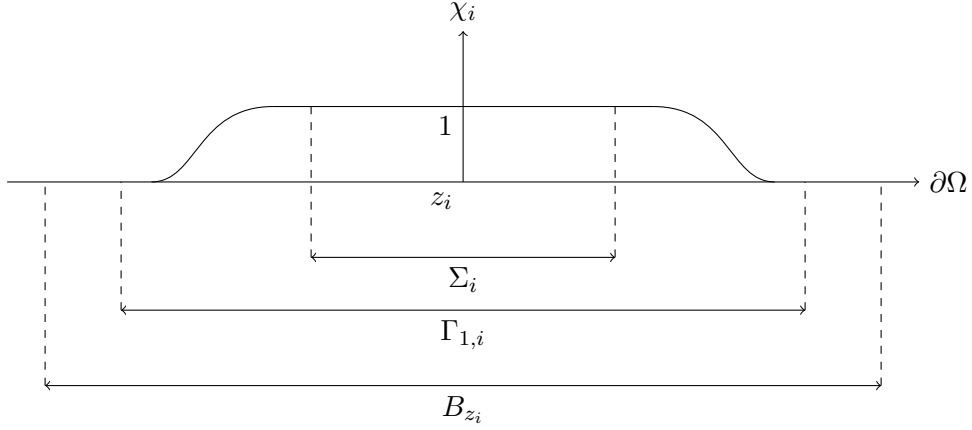

\begin{figure}[!h]
\begin{center}
\begin{tikzpicture}
\draw[very thick,red] (0,3) arc (220:310:0.7cm) ;
\draw[very thick,red] (0.6,-2.95) arc (45:140:0.4cm);
\draw[very thick, red] (1,2.9) ..controls (5,2.15) and (5.1,-2.36) .. (0.6,-2.95) node[midway,right=0.03cm]{$\Gamma_{2,i}$} ;
\draw (0,0) ellipse (4 and 3) ;
\draw[very thick] (4,2) node[midway,above=1cm]{$\{\chi_i=1\}=\mathcal V_i$} ;
\tikzstyle{vertex}=[draw,circle,fill=black,minimum size=6pt,inner sep=0pt]
\draw (0,3) node[vertex,label=above :{$  \ \overline \Gamma_{1,i} \cap \overline \Gamma_{2,i} $}] (v) {};
\draw (0,-3) node[vertex,label=below:{$  \ \overline \Gamma_{1,i}  \cap \overline \Gamma_{2,i} $}] (v) {};
\draw (0,0) node[vertex,label=below:{$x_0$}] (v) {};
\draw (0,0) circle (0.5cm)  node[midway,right=0.4cm]{$\Gamma_0$} ;
\draw[ultra thick] (-2.4,6) node[below=3.7cm]{$\Gamma_{1,i} $};
\draw[double=gray,double distance=3pt] (-1.26, 2.8) ..controls (4.5,2.15) and (4.5,-2.15) .. (-1.25, -2.8);
\draw[ultra thick] (0,3) ..controls (-5.3,2.71) and (-5.3,-2.71) .. (0,-3) ;
\draw[|-|,ultra thick,blue] (-3.29,1.7) ..controls (-4.2,0.8) and (-4.2,-0.8) .. (-3.29,-1.7) node[midway,above right=0.4cm]{$\Sigma_i$} ;
\draw (-4,0) node[vertex,label=above left:{$z_i$}] (v) {};
\draw[double=gray,double distance=3pt] (0,0) circle (1cm) ;
\draw[<-](1.9,1.75)--(3,3) ;
\draw[<-](0.7,0.7)--(3,3);
\draw (3.3,3.1) node[above] {${\rm supp} \ \nabla \chi_i$} ;
\end{tikzpicture}
\caption{The set $\mathcal V_i=\left\{x\in \Omega \big |\chi_i(x)=1
  \right\}$ and, in gray, the support of $\nabla
  \chi_i$.}
 \label{fig:support}
 \end{center}
\end{figure}
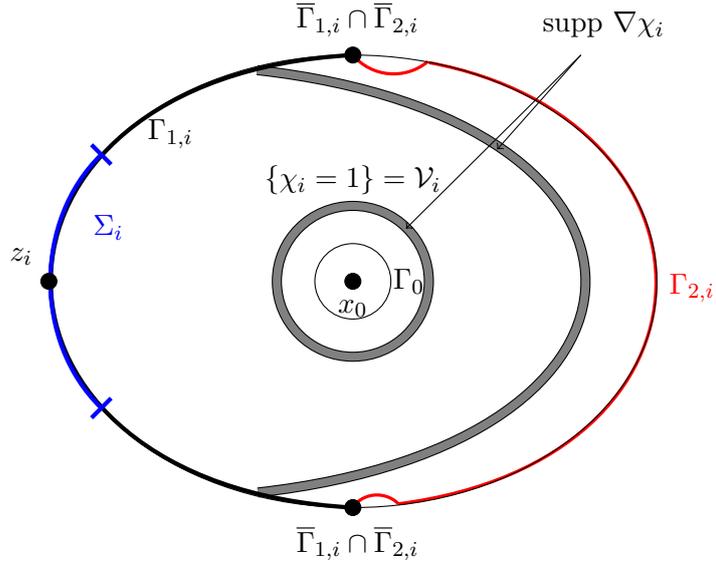

\noindent
The support of $\chi_i$ on $\partial \Omega$ is represented on Figure \ref{fig:chii} and the support of $\chi_i$ in $\Omega$ is represented in Figure \ref{fig:support}. 
The sets $\Gamma_{1,i}$, $\Gamma_{2,i}$ and, the function $\chi_i$ will be chosen such that   ${\rm supp}(|\nabla
\chi_i|)$ is as close as needed from $B_{z_i}^c \subset \partial
\Omega $ or from $x_0$, as will be made precise at the end of
Section~\ref{sec:goodquasimodes}. This is possible thanks to item 7 in Proposition~\ref{prop:dotomega}.
Using  Lemma~\ref{le.Gaffney} and the fact that ${\bf t}
\chi_i u_{h,i}^{(1)}=0$ on $\partial \Omega$, one easily shows that
$$\tilde{\phi}_i \in \Lambda^1 H^1_T(\Omega).$$ 
Using now the regularity of $u_{h,i}^{(1)}$, one can check that
$\tilde{\phi}_i$ is actually a $C^\infty_c(\Omega\cup \Gamma_{1,i})$ function.
\medskip

\noindent
We will show in Section~\ref{sec:goodquasimodes} that the family of
forms $(\tilde{u},\tilde{\phi}_1, \ldots ,\tilde{\phi}_n)$ satisfy the
estimates stated in Sections~\ref{sec221} and~\ref{flat}. This requires some preliminary
results on the eigenforms $(u^{(1)}_{h,i})_{i \in \{1, \ldots n\}}$ that are
provided in Section~\ref{sec:agmonestimate} and~\ref{sec:wkb}.

\subsection{Agmon estimates on $u^{(1)}_{h,i}$} \label{sec:agmonestimate}
Throughout this section, one assumes \textbf{[H1]}, \textbf{[H2]} and \textbf{[H3]}. In all this section, we consider, for a fixed critical point $z_i$, an
ensemble of sets~$\mathcal S_{M,i}$ associated with $z_i$ (see Definition~\ref{Sets}) and
an $L^2$-normalized eigenform 
$u^{(1)}_{h,i}$ of $\Delta^{M,(1)}_{f,h}(\dot \Omega_i)$ associated
with the eingevalue $0$, as introduced at the end of Section~\ref{sec:def_mixed}. 

 The aim of this section is to prove the
following proposition.
\begin{proposition}
\label{pr.Agmon}
 Let us assume that the hypotheses \textbf{[H1]}, \textbf{[H2]} and \textbf{[H3]} hold.
Any $L^2$-normalized
eigenform $u^{(1)}_{h,i}$ of 
$ \Delta_{f,h}^{M,(1)} ( \dot \Omega_i )$ 
associated with the eigenvalue $0$ satisfies:
\begin{equation} \label{eq.Agmon}
\exists  N \in\mathbb N, \,
\Big\|e^{\frac{\Psi_i}{ h}} u^{(1)}_{h,i}\Big\|_{  L^{2} ( \dot \Omega_i )}
+\Big\|d \Big (e^{\frac{ \Psi_i}{ h}} u^{(1)}_{h,i}\Big) \Big\|_{  L^{2} ( \dot \Omega_i )}+\Big\|d^*\Big (e^{\frac{ \Psi_i}{ h}} u^{(1)}_{h,i}\Big) \Big\|_{  L^{2} ( \dot \Omega_i )} 
=O(h^{-N})
\end{equation}
where, we recall, $\Psi_i(x)=d_a(x,z_i)$ (see Definition~\ref{fplus}).
\end{proposition}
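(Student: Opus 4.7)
\noindent
The plan is to adapt the classical Agmon estimate methodology of Helffer--Sjöstrand, as extended to Witten Laplacians with boundary in \cite{helffer-nier-06, le-peutrec-10}, to our setting with mixed tangential/normal Dirichlet conditions. The crucial observation is that because $\lambda=0$ is an eigenvalue of $\Delta^{M,(1)}_{f,h}(\dot\Omega_i)$, the eigenform $u^{(1)}_{h,i}$ satisfies the stronger relations
\begin{equation*}
d_{f,h}u^{(1)}_{h,i}=0 \quad\text{and}\quad d^{*}_{f,h}u^{(1)}_{h,i}=0 \quad \text{in } \dot\Omega_i,
\end{equation*}
since $0=\mathcal Q^{M,(1)}_{f,h}(\dot\Omega_i)(u^{(1)}_{h,i})=\|d_{f,h}u^{(1)}_{h,i}\|^2+\|d^*_{f,h}u^{(1)}_{h,i}\|^2$. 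This is what makes the calculation tractable even for a merely Lipschitz weight.

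\noindent
The key identity is obtained as follows. For any Lipschitz function $\varphi:\overline{\dot\Omega_i}\to\mathbb R$, set $v=e^{\varphi/h}u^{(1)}_{h,i}$. Then $v\in D(\mathcal Q^{M,(1)}_{f,h}(\dot\Omega_i))$ since multiplication by the scalar $e^{\varphi/h}$ preserves the boundary conditions $\mathbf t v|_{\Gamma_T}=0$ and $\mathbf n v|_{\Gamma_N}=0$. A Leibniz rule computation using $d_{f,h}u^{(1)}_{h,i}=d^{*}_{f,h}u^{(1)}_{h,i}=0$ yields
\begin{equation*}
d_{f,h}v=d\varphi\wedge v \quad\text{and}\quad d^{*}_{f,h}v=-\mathbf i_{\nabla\varphi}v,
\end{equation*}
so that $\mathcal Q^{M,(1)}_{f,h}(\dot\Omega_i)(v)=\int_{\dot\Omega_i}|\nabla\varphi|^2|v|^2$, using the pointwise identity $|d\varphi\wedge v|^2+|\mathbf i_{\nabla\varphi}v|^2=|\nabla\varphi|^2|v|^2$. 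On the other hand, Lemma~\ref{le.GreenWeak} applied to $v$ expands the same quadratic form as a sum of $h^2\|dv\|^2+h^2\|d^*v\|^2$, a mass term $\int|\nabla f|^2|v|^2$, an $O(h)\|v\|^2$ zeroth order contribution from $\mathcal L_{\nabla f}+\mathcal L^{*}_{\nabla f}$, and boundary integrals involving $\partial_n f$. Equating the two expressions gives the fundamental estimate
\begin{equation*}
h^2\|dv\|^2+h^2\|d^*v\|^2+\!\!\int_{\dot\Omega_i}\!\!\bigl[(|\nabla f|^2-|\nabla\varphi|^2)-C_0 h\bigr]|v|^2\,\le\, h\!\int_{\Gamma_{1,i}}\!\!\partial_n f\,|v|^2\,d\sigma+ h\!\int_{\Gamma_{2,i}}\!\!|\partial_n f|\,|v|^2\,d\sigma,
\end{equation*}
where we have used that $\partial_n f<0$ on $\Gamma_0$ and that $\partial_n f\ge 0$ on $\Gamma_{2,i}$ (by item~5 of Proposition~\ref{prop:dotomega}).

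\noindent
The weight $\varphi$ is then chosen as a smoothing of a slight contraction of $\Psi_i$. Concretely, one picks $\varphi=\varphi_\eta$, a smooth function with $\varphi_\eta\le\Psi_i$, $\varphi_\eta(z_i)=0$, $|\nabla\varphi_\eta|\le(1-\eta)|\nabla f|$ on $\dot\Omega_i\setminus B(z_i,\delta)$, and $\varphi_\eta\to\Psi_i$ as $(\eta,\delta)\to 0$; such a $\varphi_\eta$ exists thanks to Proposition~\ref{propo5} and standard regularization of the Agmon distance (working, if needed, with the Lipschitz $\Psi_i$ directly via an approximation argument). Since $|\nabla f|^2$ is bounded below by a positive constant on $\overline{\dot\Omega_i}\setminus B(z_i,\delta)$ --- recall that $x_0\notin\overline{\dot\Omega_i}$ by construction, that the other minima $z_j$, $j\ne i$, lie in $B_{z_j}\not\subset\overline{B_{z_i}}$, and that $|\nabla f(z_i)|=\partial_n f(z_i)>0$ by \textbf{[H3]} --- we obtain a coercive bound
\begin{equation*}
\int_{\dot\Omega_i\setminus B(z_i,\delta)}(c-C_0 h)|v|^2\le (\text{boundary terms}),
\end{equation*}
while inside $B(z_i,\delta)$ the weight $e^{2\varphi_\eta/h}$ is bounded and $\|u^{(1)}_{h,i}\|_{L^2}=1$ gives $\int_{B(z_i,\delta)}|v|^2=O(1)$ trivially.

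\noindent
The main obstacle is controlling the unfavorable boundary integral on $\Gamma_{1,i}$, where $\partial_n f>0$. On $\Gamma_{1,i}\subset B_{z_i}$, however, Propositions~\ref{TTH} and~\ref{prop:dotomega} give $\Psi_i(x)=\Phi(x)-f(z_i)=f(x)-f(z_i)$, so that $e^{2\varphi_\eta/h}|u^{(1)}_{h,i}|^2$ on $\Gamma_{1,i}$ is bounded, up to a polynomial factor in $h^{-1}$, by a trace of $|u^{(1)}_{h,i}|^2$ weighted by a uniformly bounded function. This trace is in turn controlled by the subelliptic estimate of Proposition~\ref{pr.QTN}(i) applied to $u^{(1)}_{h,i}$ itself (yielding a bound of the order $O(h^{-N_0})$ for some fixed $N_0$, thanks to interior elliptic regularity). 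Plugging this back into the fundamental estimate and absorbing the boundary integral on the left at the price of a polynomial loss in $h$, we obtain $\|e^{\varphi_\eta/h}u^{(1)}_{h,i}\|_{L^2(\dot\Omega_i)}=O(h^{-N})$ uniformly in $\eta$ small. Letting $\eta\to 0$ and using the monotone convergence of $e^{\varphi_\eta/h}\nearrow e^{\Psi_i/h}$ yields the desired $L^2$ bound. The bounds on $d(e^{\Psi_i/h}u^{(1)}_{h,i})$ and $d^{*}(e^{\Psi_i/h}u^{(1)}_{h,i})$ then follow by differentiating under the exponential using $d_{f,h}u^{(1)}_{h,i}=d^{*}_{f,h}u^{(1)}_{h,i}=0$, namely $h du^{(1)}_{h,i}=-df\wedge u^{(1)}_{h,i}$ and $h d^{*}u^{(1)}_{h,i}=-\mathbf i_{\nabla f}u^{(1)}_{h,i}$, combined with the $L^2$ estimate just obtained and the pointwise bound $|\nabla\Psi_i|\le|\nabla f|$.
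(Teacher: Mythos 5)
Your energy identity for $v=e^{\varphi/h}u^{(1)}_{h,i}$ (using $d_{f,h}u^{(1)}_{h,i}=d^*_{f,h}u^{(1)}_{h,i}=0$ and Lemma~\ref{le.GreenWeak}) is essentially the paper's Lemma~\ref{le.intbypart}, and your last step (deducing the bounds on $d$ and $d^*$ of $e^{\Psi_i/h}u^{(1)}_{h,i}$ from the $L^2$ bound) is fine. But the $L^2$ bound itself is not established, for two reasons. First, your treatment of the unfavorable boundary term on $\Gamma_{1,i}$ is incorrect: on $\Gamma_{1,i}$ one has $\Psi_i=f-f(z_i)$, which is of order one away from $z_i$, so the weight $e^{2\varphi_\eta/h}$ there is exponentially large in $1/h$, not ``uniformly bounded up to a polynomial factor''. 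The subelliptic estimate of Proposition~\ref{pr.QTN}(i) applied to $u^{(1)}_{h,i}$ only controls the \emph{unweighted} trace, and applying it instead to $e^{\varphi/h}u^{(1)}_{h,i}$ is circular, since the right-hand side then involves exactly the weighted interior quantities you are trying to bound, with no spare power of $h$ to absorb them. This is precisely why the paper runs a separate first step: an Agmon estimate along the boundary with the weight $f_{+,i}$ (the tangential part of $\Psi_i$), relying on the orthogonal decomposition $f=f(z_i)+f_{+,i}-f_{-,i}$ with $\nabla f_{+,i}\cdot\nabla f_{-,i}=0$, on an IMS localization, on $\partial_n f_{+,i}>0$ on $\Gamma_{2,i}$ near $\partial\Gamma_{1,i}$ (item~6 of Proposition~\ref{prop:dotomega}), and crucially on the nonnegativity of the auxiliary mixed quadratic form $\mathcal Q^{M,(1)}_{-f_{-,i},h}$, which converts the bad $\Gamma_{1,i}$ term (where $\partial_n f_{-,i}=-\partial_n f$) into absorbable interior terms. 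Only after this does one get the weighted trace bound $\|e^{(f-f(z_i))/h}u^{(1)}_{h,i}\|_{L^2(\Gamma_{1,i})}=O(h^{-N_0})$, which then controls the $\Gamma_{1,i}$ term in the second, interior step with weight $\Psi_i$. Nothing in your argument supplies this ingredient.

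Second, your choice of weight cannot give a polynomial error. Taking a fixed contraction $|\nabla\varphi_\eta|\le(1-\eta)|\nabla f|$ and then ``letting $\eta\to 0$'' does not work: for fixed $\eta$ the resulting bound is of the form $\|e^{\varphi_\eta/h}u^{(1)}_{h,i}\|=O(h^{-N})$ with $\Psi_i-\varphi_\eta\gtrsim\eta$ on part of $\dot\Omega_i$, so $\|e^{\Psi_i/h}u^{(1)}_{h,i}\|$ picks up a factor $e^{c\eta/h}$; the coercivity constant degenerates as $\eta\to0$, and optimizing $\eta$ against $h$ yields at best $O(e^{c/\sqrt h})$, not $O(h^{-N})$. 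As the paper points out explicitly, the $(1-\varepsilon)$-weight only gives $O(e^{\varepsilon/h})$ errors; to reach $O(h^{-N})$ one must use the Helffer--Sj\"ostrand logarithmic refinement $\varphi=\Psi_i-Ch\ln(\Psi_i/h)$ (and $f_{+,i}-Ch\ln(f_{+,i}/h)$ in the boundary step), for which $|\nabla f|^2-|\nabla\varphi|^2\ge Ch|\nabla f|^2/\Psi_i$ supplies exactly the $O(h)$ gap needed to absorb the zeroth-order terms while losing only a factor $h^{-C}$ in the weight. Both of these points would need to be repaired for the proposal to prove the proposition.
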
 
For the ease of notation, we drop the
subscript $i$ in the remaining of this section.

The proof is inspired by the first part of the proof of 
 \cite[Proposition~4.3.2]{helffer-nier-06}
where the authors consider a Witten Laplacian
with mixed tangential -- full Dirichlet boundary conditions
in a local system of coordinates in a neighborhood of $z$. The proof actually
only requires that  $u^{(1)}_{h}$ is an eigenform of 
$ \Delta_{f,h}^{M,(1)} ( \dot \Omega )$ 
associated with an eigenvalue $\lambda_{h}=O (h)$.
It crucially relies on the following Agmon-type energy equality.
\begin{lemma}
 \label{le.intbypart}
  Let us assume that the hypotheses \textbf{[H1]}, \textbf{[H2]} and \textbf{[H3]} hold.
Let $\varphi$ be a real-valued Lipschitz
function on $\overline{\dot\Omega}$. Then, for any $ u\in
D\left ( \mathcal Q_{f,h}^{M,(1)} ( \dot \Omega )\right)$, one has: 
\begin{equation}
  \label{eq.intbypartphi}
\begin{aligned}
\mathcal{Q}^{M,(1)}_{f,h} ( \dot \Omega )( u,e^{2\frac{\varphi}{h}} u)&=
 h^{2}\left\| d e^{\frac{\varphi}{h}} u\right\|^{2}_{ L^{2} ( \dot \Omega )}+
h^{2}\left\| d^{*} e^{\frac{\varphi}{h}} u\right\|^{2}_{ L^{2} ( \dot \Omega )}\\
&\quad + \langle
(|\nabla f|^{2}-|\nabla \varphi|^{2}+h\mathcal{L}_{\nabla
  f}+h\mathcal{L}_{\nabla
  f}^{*})e^{\frac{\varphi}{h}} u, e^{\frac{\varphi}{h}}
 u\rangle_{ L^{2} ( \dot \Omega )}\\
&\quad +h\left(-\int_{\Gamma_{0}\cup \Gamma_{1}}
+\int_{\Gamma_{2}}\right)
\langle u, u
\rangle_{ T_{\sigma}^{*}\dot\Omega}\;e^{\frac{2 }{h}\varphi }\pa_{n}
f\, d\sigma.
\end{aligned}
\end{equation}
Moreover, when $ u\in D\left (\Delta_{f,h}^{M,(1)} ( \dot \Omega )\right)$, the left-hand
 side equals $\langle
 e^{2\frac{\varphi}{h}}\Delta_{f,h}^{M,(1)} ( \dot \Omega ) u, u\rangle_{L^{2} ( \dot \Omega )}$.   
\end{lemma}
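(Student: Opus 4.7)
The plan is to reduce the claim to the Green-type identity of Lemma~\ref{le.GreenWeak} through the standard trick of conjugating by the exponential weight. Specifically, introduce the twisted form $w := e^{\varphi/h}u$. Since $\varphi$ is Lipschitz on $\overline{\dot\Omega}$, multiplication by $e^{\varphi/h}$ (and its inverse) preserves $L^2$, $H_d$ and $H_{d^*}$, as well as the tangential and normal boundary conditions, so that $w\in D\bigl(\mathcal{Q}_{f,h}^{M,(1)}(\dot\Omega)\bigr)$ and likewise $e^{2\varphi/h}u\in D\bigl(\mathcal{Q}_{f,h}^{M,(1)}(\dot\Omega)\bigr)$. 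The elementary intertwining relations
\begin{equation*}
d_{f,h}\bigl(e^{\pm\varphi/h}v\bigr)=e^{\pm\varphi/h}\,d_{f\pm\varphi,h}v,\qquad
d_{f,h}^{*}\bigl(e^{\pm\varphi/h}v\bigr)=e^{\pm\varphi/h}\,d_{f\mp\varphi,h}^{*}v,
\end{equation*}
which follow directly from $d_{f,h}=hd+df\wedge$, $d^{*}_{f,h}=hd^{*}+\mathbf{i}_{\nabla f}$ and the Leibniz rule, allow us to rewrite each factor in $\langle d_{f,h}u,d_{f,h}(e^{2\varphi/h}u)\rangle$ and $\langle d_{f,h}^{*}u,d_{f,h}^{*}(e^{2\varphi/h}u)\rangle$ in terms of $w$.

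First I would write $u=e^{-\varphi/h}w$ and $e^{2\varphi/h}u=e^{\varphi/h}w$ and apply the intertwining relations. Using $d_{f\pm\varphi,h}=d_{f,h}\pm d\varphi\wedge$ and $d_{f\mp\varphi,h}^{*}=d_{f,h}^{*}\mp \mathbf{i}_{\nabla\varphi}$, the cross terms of the form $\langle d_{f,h}w,d\varphi\wedge w\rangle_{L^{2}(\dot\Omega)}$ cancel between the two scalar products (they are real and appear with opposite signs), while the diagonal contributions combine via the pointwise identity $|d\varphi\wedge w|^{2}+|\mathbf{i}_{\nabla\varphi}w|^{2}=|\nabla\varphi|^{2}|w|^{2}$ to give
\begin{equation*}
\mathcal{Q}^{M,(1)}_{f,h}(\dot\Omega)\bigl(u,e^{2\varphi/h}u\bigr)
=\bigl\|d_{f,h}w\bigr\|_{L^{2}(\dot\Omega)}^{2}+\bigl\|d_{f,h}^{*}w\bigr\|_{L^{2}(\dot\Omega)}^{2}-\bigl\||\nabla\varphi|\,w\bigr\|_{L^{2}(\dot\Omega)}^{2}.
\end{equation*}
At this stage Lemma~\ref{le.GreenWeak} applied to $w\in D\bigl(\mathcal{Q}_{f,h}^{M,(1)}(\dot\Omega)\bigr)$ expands $\|d_{f,h}w\|^{2}+\|d_{f,h}^{*}w\|^{2}$ as the sum of $h^{2}\|dw\|^{2}+h^{2}\|d^{*}w\|^{2}$, the potential term $\||\nabla f|w\|^{2}$, the zeroth-order Lie derivative contribution $h\langle(\mathcal{L}_{\nabla f}+\mathcal{L}_{\nabla f}^{*})w,w\rangle$, and the boundary integrals $-h\bigl(\int_{\Gamma_T}-\int_{\Gamma_N}\bigr)\langle w,w\rangle\,\partial_{n}f\,d\sigma$. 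Recalling that $\Gamma_T=\Gamma_{0}\cup\Gamma_{1}$, $\Gamma_N=\Gamma_{2}$, and that on the boundary $\langle w,w\rangle=e^{2\varphi/h}\langle u,u\rangle$, one obtains exactly~\eqref{eq.intbypartphi} after subtracting $\||\nabla\varphi|\,w\|^{2}$.

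The second assertion is immediate: when $u\in D\bigl(\Delta_{f,h}^{M,(1)}(\dot\Omega)\bigr)$, the defining identity of the Friedrichs extension gives $\mathcal{Q}^{M,(1)}_{f,h}(\dot\Omega)(u,v)=\langle\Delta_{f,h}^{M,(1)}(\dot\Omega)u,v\rangle_{L^{2}}$ for every $v\in D\bigl(\mathcal{Q}_{f,h}^{M,(1)}(\dot\Omega)\bigr)$, and it suffices to take $v=e^{2\varphi/h}u$.

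The main technical obstacle is to justify the intertwining and Leibniz manipulations when $\varphi$ is merely Lipschitz, since then $d\varphi$ is only defined a.e.\ and $w$ need not lie in $H^{1}$. This is handled by a standard regularization argument: replace $\varphi$ by a mollified version $\varphi_{\varepsilon}\in C^{\infty}(\overline{\dot\Omega})$ with $\|\varphi_{\varepsilon}\|_{W^{1,\infty}}$ uniformly bounded and $\nabla\varphi_{\varepsilon}\to\nabla\varphi$ a.e.\ and in $L^{p}$ for every $p<\infty$, verify the identity for each $\varphi_{\varepsilon}$ (where every quantity is unambiguous and the Green formula from Lemma~\ref{le.GreenWeak} applies cleanly), and then pass to the limit $\varepsilon\to 0$ using the dominated convergence theorem and the $L^{2}$-trace continuity supplied by Proposition~\ref{pr.QTN}(i). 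Apart from this point, the proof is entirely algebraic and uses no additional ingredients beyond the twisting formulas and the quadratic identity of Lemma~\ref{le.GreenWeak}.
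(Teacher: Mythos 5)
Your proof is correct and follows essentially the same route as the paper: conjugate by $e^{\varphi/h}$, observe that the cross terms cancel by symmetry of the real scalar product while the pointwise identity $|d\varphi\wedge w|^{2}+|\mathbf{i}_{\nabla\varphi}w|^{2}=|\nabla\varphi|^{2}|w|^{2}$ produces the $-\||\nabla\varphi|w\|^{2}$ correction, and then expand $\|d_{f,h}w\|^{2}+\|d_{f,h}^{*}w\|^{2}$ via Lemma~\ref{le.GreenWeak}, finishing with the Friedrichs-extension identity for the last assertion. The only difference is your mollification step for the Lipschitz phase, which the paper omits (working directly with the a.e.\ defined $\nabla\varphi$); this extra care is harmless and does not constitute a different argument.
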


\begin{proof}
This result is standard for manifolds without boundary or for bounded
manifolds and quadratic forms
with full normal or tangential boundary conditions (see e.g.
\cite{dimassi-sjostrand-99,helffer-nier-06,le-peutrec-10}). We extend
it here to our setting.

 Note first that $ u\in
  D\left (  {\mathcal Q}^{M,(1)}_{f,h} ( \dot \Omega )  \right)$ implies $e^{2\frac{\varphi}{h}} u\in
  D\left (  {\mathcal Q}^{M,(1)}_{f,h} ( \dot \Omega )  \right)$, since  for $u\in D\left ( \mathcal Q^{M,(1)}_{f,h} ( \dot \Omega )\right )$,
 $n^{\flat}\wedge e^{2\frac \varphi h} u=e^{2\frac \varphi h} n^{\flat}\wedge u$ and 
 $\mathbf{i}_{n} e^{2\frac \varphi h}u=e^{2\frac \varphi h}
\mathbf{i}_{n} u$. One then gets by straightforward computations:
\begin{align*}
 \mathcal Q^{M,(1)}_{f,h} ( \dot \Omega ) ( u,e^{2\frac{\varphi}{h}} u)
&=
\langle d_{f,h} u,
d_{f,h}(e^{2\frac{\varphi}{h}} u)\rangle
+
\langle d_{f,h}^{*} u,
d_{f,h}^{*}(e^{2\frac{\varphi}{h}} u)\rangle
\\
&=
\langle e^{\frac{\varphi}{h}}d_{f,h} u,
d_{f,h}(e^{\frac{\varphi}{h}} u)\rangle
+ 
 \langle e^{\frac{\varphi}{h}}d_{f,h} u,
d\varphi\wedge(e^{\frac{\varphi}{h}} u)\rangle
\\
&\quad +
\langle e^{\frac{\varphi}{h}}d_{f,h}^{*} u,
d_{f,h}^{*}(e^{\frac{\varphi}{h}} u)\rangle
-
 \langle e^{\frac{\varphi}{h}}d_{f,h}^{*} u,
\mathbf{i}_{\nabla \varphi}(e^{\frac{\varphi}{h}} u)\rangle 
\\
&= \langle d_{f,h}(e^{\frac{\varphi}{h}} u),
d\varphi\wedge(e^{\frac{\varphi}{h}} u)\rangle
- \langle d\varphi\wedge(e^{\frac{\varphi}{h}} u),
d_{f,h}(e^{\frac{\varphi}{h}} u)\rangle
\\
&\quad + 
\| d_{f,h}(e^{\frac{\varphi}{h}} u)\|^{2}
-\| d \varphi\wedge (e^{\frac{\varphi}{h}} u)\|^{2}
 +
\| d_{f,h}^{*}(e^{\frac{\varphi}{h}} u)\|^{2}
-  \| \mathbf i_{\nabla \varphi} e^{\frac{\varphi}{h}} u\|^{2}
\\
&\quad +
\langle\mathbf{i}_{\nabla \varphi}(e^{\frac{\varphi}{h}} u),
d_{f,h}^{*}(e^{\frac{\varphi}{h}} u)\rangle
 -
\langle d_{f,h}^{*}(e^{\frac{\varphi}{h}} u),
\mathbf{i}_{\nabla \varphi}(e^{\frac{\varphi}{h}} u)\rangle 
\; .
\end{align*}
Let us set $\tilde{ u}:=e^{\frac{\varphi}{h}} u\in
 D\left (  Q^{M,(1)}_{f,h} ( \dot \Omega )  \right)$. The formulas stated in~\eqref{eq.relationII} lead to:
\begin{align*}
 \mathcal Q^{M,(1)}_{f,h} ( \dot \Omega ) ( u,e^{2\frac{\varphi}{h}} u)
&=
 \mathcal Q^{M,(1)}_{f,h} ( \dot \Omega ) (\tilde u) -
\langle
\left|\nabla\varphi\right|^{2}\tilde u,\tilde u\rangle -
\langle d\varphi\wedge \tilde u, d_{f,h}\tilde u\rangle
\\
&\quad+\langle d_{f,h}\tilde u, d\varphi\wedge \tilde u\rangle + 
\langle \mathbf{i}_{\nabla
\varphi}\tilde u,d_{f,h}^{*}\tilde u\rangle
-
\langle d_{f,h}^{*}\tilde u, \mathbf{i}_{\nabla \varphi}
\tilde  u\rangle
\end{align*}
and hence to
$$
 \mathcal Q^{M,(1)}_{f,h} ( \dot \Omega ) ( u,e^{2\frac{\varphi}{h}} u)
=
 \mathcal Q^{M,(1)}_{f,h} ( \dot \Omega ) (\tilde u) -
\langle
\left|\nabla\varphi\right|^{2}\tilde u,\tilde u\rangle\;.
$$
One concludes by applying Lemma~\ref{le.GreenWeak}. 
\end{proof}
\noindent
We are now in position to prove Proposition~\ref{pr.Agmon}. 
\begin{proof} (of Proposition~\ref{pr.Agmon})\\
Following the proof of \cite[Proposition~4.3.2]{helffer-nier-06}, one
proves the result in two steps. First, the
Agmon estimate along $\Gamma_{1}\subset\pa\Omega$ is proven by applying
Lemma~\ref{le.intbypart} with a function $\varphi$ close
 to $f_{+}$ (recall that on $\Gamma_1$, $\Psi=f_+$). The Agmon estimate in $\dot\Omega$ is
then obtained using again Lemma~\ref{le.intbypart} with $\varphi$
close to $\Psi$, and the Agmon estimate along $\Gamma_{1}$.

\begin{figure}[!h]

\begin{center}

\begin{tikzpicture}

\tikzstyle{vertex}=[draw,circle,fill=black,minimum size=6pt,inner sep=0pt]

\draw[->] (0,-5.5)--(0,6);

\draw (0,6.3) node {$\partial \Omega$};

\draw[->] (-0,4.8)--(4,4.8);

\draw (4.6,4.8) node {$-x_d$};

\draw[very thick] (0,-3.5)--(0,3.5);

\draw[very thick] (0,3.5)--(3.5,3.5);

\draw[very thick] (0,-3.5)--(3.5,-3.5);

\draw (3.9,3.5) node {$\Gamma_{2}$};

\draw (3.9,-3.5) node {$\Gamma_{2}$};

\draw[<->,dashed] (-3.5,-3.5)--(-3.5,3.5);

\draw[dashed] (-3.5,-3.5)--(0,-3.5);

\draw[dashed] (-3.5,3.5)--(0,3.5);

\draw (-3.8,0) node {$\Gamma_{1}$};

\draw[<->,dashed] (-2,-2.5)--(-2,2.5);

\draw[dashed] (-2,-2.5)--(0,-2.5);

\draw[dashed] (-2,2.5)--(0,2.5);

\draw (-2.4,0) node {$\Gamma_{St}'$};

\draw (1.4,-2.5)--(1.4,2.5);

\draw (1.4,2.5)--(0,2.5);

\draw (1.4,-2.5)--(0,-2.5);

\draw (1,2) node {$V_{\Gamma_{St}'}$};

\draw (2,-2.8)--(2,2.8);

\draw (2,2.8)--(0,2.8);

\draw (2,-2.8)--(0,-2.8);

\draw (1.7,-2.2) node {$V_{\eta}$};

\draw[<->,dashed] (-1,-1.5)--(-1,1.5);

\draw[dashed] (-1,-1.5)--(0,-1.5);

\draw[dashed] (-1,1.5)--(0,1.5);

\draw (-1.4,0) node {$\Gamma_{St}$};

\draw (0.8,-1.5)--(0.8,1.5);

\draw (0.8,1.5)--(0,1.5);

\draw (0.8,-1.5)--(0,-1.5);

\draw (0.4,1) node {$V_{\Gamma_{St}}$};

\draw (0,0) node[vertex](v){};

\draw (0.3,0) node {$z$};

\draw [] (0,-4) ..controls (4.2,-4) and (4.2,4) .. (0,4);

\draw (2.7,0) node {$V_{\Gamma_1}$};

\draw [] (0,-4.5) ..controls (5.1,-4.8) and (5.1,4.8) .. (0,4.5);

\draw (3.5,0) node {$V_{\Gamma_1}'$};

\end{tikzpicture}

\caption{Neighborhoods of $z$.}

\label{fig:representation_des_gamma}

\end{center}

\end{figure}
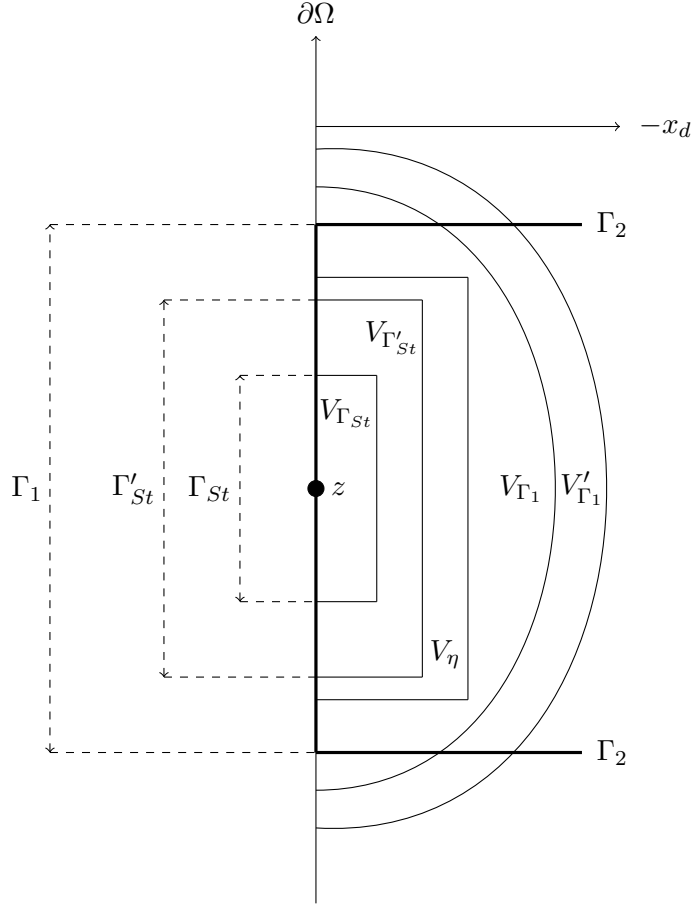

\noindent
In order to separate the analysis along $\Gamma_{1}$ and elsewhere,
one introduces two smooth cut-off functions
$\chi_{0}$ and $\chi_{1}$ on $\overline{\Omega}$
such that: 
$$\chi_1:= \sqrt{1-\chi_{0}^2}\ , \quad \chi_{0}=1
\text{ on } \overline{V_{\Gamma_{1}}} \text{ with }
\supp \chi_{0}\subset V_{\Gamma_{1}}',$$
for a set $V_{\Gamma_{1}}'\subset \overline{\Omega}$ such that for some
$\varepsilon>0$, (see Figure~\ref{fig:representation_des_gamma})
\begin{itemize}
\item[(i)] $(V_{\Gamma_{1}}+B(0,\varepsilon))\cap \overline{\Omega} \subset V_{\Gamma_{1}}'$,
\item[(ii)] $\Gamma_{1}':=V_{\Gamma_1}'\cap \pa\Omega$ is smooth
and $(\Gamma_{1} + B(0,\varepsilon)) \cap \partial \Omega
\subset \Gamma_{1}'$ and $(\Gamma_{1}' + B(0,\varepsilon)) \cap \partial \Omega  \subset B_{z}$,
\item[(iii)] $ \Psi=d_a(z,\cdot)$ is a smooth solution to the following  eikonal equation in $V_{\Gamma_{1}}'$ (see Proposition~\ref{TTH}):
$$\left\{\begin{aligned}
|\nabla\Psi |^2= |\nabla f |^2&\text{ in } V_{\Gamma_{1}}'\\
 \Psi = f-f(z) &\text{ on } \Gamma_{1}'\\
\pa_{n} \Psi = -\pa_{n}f &\text{ on } \Gamma_{1}'\ .
\end{aligned}\right.
$$
\end{itemize}
It is possible to choose $V_{\Gamma_{1}}'$ such that all the
properties stated previously on $V_{\Gamma_1}$ also hold on
$V_{\Gamma_{1}}'$ (in particular~\eqref{eq:nablaf-}, \eqref{eq:f+f-}  and the properties stated in Proposition~\ref{prop:dotomega}).
We recall that one has by~\eqref{eq:eqq}:
$$
 |\nabla\Psi|^2
\leq |\nabla f|^2\quad \text{a.e. in } \Omega.$$
Thus 
$$|\nabla f_{\pm} \vert
= \left|\nabla \left(\frac{ \Psi\pm (f-f(z))}2\right)\right|
\leq |\nabla f| \text{ a.e. in } \Omega.$$
Thanks to the relations
$f-f(z)=f_{+}-f_{-}$ and
 $ \Psi=f_{+}+f_{-}$, together with the equality  $|\nabla\Psi |^2=
 |\nabla f |^2$ a.e in  $V_{\Gamma_{1}}'$, one has
$$ \nabla f_{-} \cdot\nabla f_{+} =0 \quad\text{a.e \ in}\ V_{\Gamma_{1}}', 
\  |\nabla  \Psi|^2 = |\nabla f|^2
= |\nabla f_{+} |^2+ |\nabla f_{-} |^2
\quad\text{a.e \ in}\ V_{\Gamma_{1}}'.
$$
Let now $u^{(1)}_{h}\in D\left (\Delta_{f,h}^{M,(1)} ( \dot \Omega )\right )$
satisfy
$$
\Delta_{f,h}^{M,(1)} ( \dot \Omega )\, u^{(1)}_{h}=0 \text{ and }  \left\|u^{(1)}_{h}\right\|_{L^{2} ( \dot \Omega )}=1.
$$

\noindent
\underline{Step 1:} Agmon estimate in $\Gamma_1$.
\medskip

\noindent
In this part, we are going to prove the estimate~\eqref{eq.Agmon} 
with $ \Psi$ replaced by $f_{+}$ namely:
$$  \| e^{\frac{f_{+}}{h}}u^{(1)}_{h} \|_{  L^{2} ( \dot \Omega )}
+\|d(e^{\frac{f_{+}}{h}}u^{(1)}_{h})\|_{  L^{2} ( \dot \Omega
  )}+\|d^*(e^{\frac{f_{+}}{h}}u^{(1)}_{h})\|_{  L^{2} ( \dot \Omega )}
= O(h^{-N_{0}})$$
for some integer $N_0$. By the trace result~\eqref{eq:subelliptic},
this will give the estimate $$  \left\|e^{\frac{f-f(z)}{h}}u^{(1)}_{h}
  \right\|_{ L^{2}(\Gamma_{1})}= O(h^{-N_{0}}),$$ which is the first
  step to prove~\eqref{eq.Agmon}.

To get these results, the idea is to apply
Lemma~\ref{le.intbypart} to a convenient $\varphi$ comparable with $f_{+}$
and such that $|\nabla \varphi|\leq |\nabla f_{+}|$.
This kind of estimate is classic and the ideas behind the computations presented below,
which follow \cite{helffer-nier-06,le-peutrec-10},
originate from the article \cite{helffer-sjostrand-84} where similar estimates were obtained in the case of manifolds without boundary.  The presence of a boundary leads here to some technicalities
which can somehow hide the reasoning and  we refer for example to \cite{helffer-88,dimassi-sjostrand-99} for 
a presentation of semi-classical Agmon estimates
in manifolds without boundary.  We recall from the work~\cite{helffer-sjostrand-84} that
if one just wants to get an error of the size 
$O(e^{\frac{\varepsilon}{h}})$ 
with $\varepsilon>0$ arbitrarily small, the choice
$\varphi=(1-\varepsilon)f_{+}$ is sufficient, but it does not yield an
error of the size $O(h^{-N})$. To get such an error term, a good choice for $\varphi$ is~the following.
\medskip

\noindent
Let $\varphi:\dot \Omega \to \R$ be the following Lipschitz function: 
\begin{equation}\label{eq:choicephi}
\varphi=
\left\{
  \begin{aligned}[c]
f_{+}-Ch\ln \frac{f_{+}}{h} \  &\text{if}\ f_{+} > Ch,\\
f_{+}-Ch\ln C\ &\text{if}\ f_{+} \leq Ch ,
  \end{aligned}
\right.
\end{equation}
for some constant $C> 1$ that will be fixed at the end of this step.
Define the   level sets:
\begin{eqnarray*}
 && \Omega_{-}=\left\{x\in \dot\Omega\ \text{s.t.}\  f_{+}(x) \leq Ch\right\}
 \quad
\text{and}\quad
\Omega_{+}=\dot\Omega\setminus \Omega_{-}.
\end{eqnarray*}
Then $\nabla  \varphi = \nabla f_{+} $
a.e. in $\Omega_{-}$ and
$$
\nabla \varphi=\nabla f_{+}\left(1-\frac{Ch}{f_{+}}\right)
\quad \text{a.e. in } \Omega_{+}.
$$
This implies in particular the two following inequalities
valid a.e. on $\Omega_{+}$ and that will be used in the sequel:
\begin{align}
|\nabla f_{+}|^2-
|\nabla \varphi |^{2}&=
|\nabla f_{+}|^2\left(\frac{2Ch}{f_{+}}-\frac{C^{2}h^{2}}{f_{+}^{2}}\right)
\ \geq\  Ch \frac{|\nabla f_{+}|^{2}}{f_{+}}
\ \text{on $\Omega_{+}$}\label{eq.diff.f+/Ag}
\\
 |\nabla f|^2-|\nabla \varphi |^{2}
&\geq
|\nabla f|^2-|\nabla f_{+}|^2\left(1-\frac{Ch}{f_{+}}\right)
\geq Ch \frac{|\nabla f |^{2}}{f_{+}}
\ \text{on $\Omega_{+}$.}\label{eq.diff.f/Ag}
\end{align}
The last inequality in~\eqref{eq.diff.f/Ag} is a consequence of the
inequality $ |\nabla f_{+}|^2 \leq |\nabla f|^2$. This implies in particular that
$|\nabla \varphi| \le |\nabla f|$ a.e. in $\dot\Omega$.

Note lastly that there exists a constant $K>0$ depending on $f_{+}$ and $f$, 
such that
\begin{equation}
\label{eq.lowboundnablaf}
\frac{|\nabla f |^{2}}{f_{+}}\geq
K
\text{ in } \dot\Omega \text{  and  }
 \frac{|\nabla f_{+}|^{2}}{f_{+}}
\geq
K
\text{ in }V_{\Gamma_{1}}'\, ,
\end{equation}
the last inequality being a consequence of the facts that
$f_{+}(x',x_{d})=f_{+}(x',0)$ and 
$x' \mapsto f(x',0)=f(z)+f_{+}(x',0)$ is a Morse function with $z$ as only critical point.

Now, using the fact that $\Delta^{M,(1)}_{f,h}(\dot
\Omega) \, u^{(1)}_h=0$ and the IMS localisation formula~\eqref{eq.IMS},
one gets 
$$
\begin{aligned}
0&=\mathcal{Q}^{M,(1)}_{f,h} ( \dot \Omega )(u^{(1)}_{h},e^{2\frac{\varphi}{h}}u^{(1)}_{h})\\
&= \sum_{k\in\{0,1\}} \left[
\mathcal{Q}^{M,(1)}_{f,h} ( \dot \Omega )(\chi_{k}u^{(1)}_{h},e^{2\frac{\varphi}{h}}\chi_{k}u^{(1)}_{h})
-h^2 \left\| |\nabla \chi_k |
  e^{\frac{\varphi}{h}}u^{(1)}_{h}\right\|_{ L^2 ( \dot \Omega )}^2
\right].
\end{aligned}
$$
Setting $$\tilde{u}^{(1)}_{h}:=e^{\frac{\varphi}{h}}u^{(1)}_{h},$$ and applying \eqref{eq.intbypartphi}
to $\chi_k u^{(1)}_{h}$, $k\in \{0,1\}$, one obtains:
\begin{align*}
 &C_{1}h^2\sum_{k\in\{0,1\}}\left\| \chi_{k}\tilde{u}^{(1)}_{h} \right\|_{ L^2 ( \dot \Omega )}^{2}=C_{1}h^2\left\| \tilde{u}^{(1)}_{h} \right\|_{ L^2 ( \dot \Omega )}^{2}\\
&\geq \sum_{k\in\{0,1\}} \Big[
 \left\| hd\chi_{k}\tilde{u}^{(1)}_{h} \right\|^{2}_{L^2 ( \dot \Omega )}
+\left\|hd^{*}\chi_{k}\tilde{u}^{(1)}_{h}\right\|^{2}_{ L^2 ( \dot \Omega )}
+\ 
\langle(|\nabla f|^{2}-|\nabla \varphi|^{2})\chi_{k} \tilde{u}^{(1)}_{h},\chi_{k} \tilde{u}^{(1)}_{h}
  \rangle _{ L^2 ( \dot \Omega )} \\
&\quad+\  h\langle(\mathcal{L}_{\nabla
  f}+\mathcal{L}_{\nabla f}^{*})\chi_{k} \tilde{u}^{(1)}_{h},\chi_{k} \tilde{u}^{(1)}_{h}
  \rangle_{ L^2 ( \dot \Omega )} \Big]
  +\ h\left(\int_{\Gamma_{2}}-\int_{\Gamma_{1}}\right)\langle\chi_{0}\tilde{u}^{(1)}_{h},\chi_{0}\tilde{u}^{(1)}_{h}
\rangle\pa_{n} f\, d\sigma,
\end{align*}
where $C_{1}=\max(\|\nabla \chi_{0}\|^2_{\infty},\|\nabla \chi_{1}\|^2_{\infty})$.  Note that one has used that  $\chi_0=0$ on $\Gamma_{0}$, $\chi_1=0$ on $\Gamma_{1}$ and 
$$\left(-\int_{\Gamma_{0}}+\int_{\Gamma_{2}}\right)\langle\chi_{1}\tilde{u}^{(1)}_{h},\chi_{1}\tilde{u}^{(1)}_{h} \rangle\pa_{n} f\, d\sigma\geq 0,$$
which follows from $\pa_{n}f>0$
on $\Gamma_{2}$
and from $\pa_{n}f<0$ on $\Gamma_{0} $.

Now, since
$\mathcal{L}_{\nabla
  f}+\mathcal{L}_{\nabla f}^{*}$
  is a $0^{\text{th}}$ order differential operator,  
and $\left|\tilde{u}^{(1)}_{h}(x)\right|\leq e^C |u^{(1)}_{h}(x)|$ a.e. on $ \Omega_{-}$,
one obtains that for some constants $C_{2}$ (independent of
$C$) and $C_{3}(C)$ depending 
on $C$,
\begin{equation}\label{eq.sum}
\begin{aligned}
  C_{3}(C)h&\geq 
  \sum_{k\in\{0,1\}}\Big[
  \left\|hd\chi_{k}\tilde{u}^{(1)}_{h}\right\|^{2}_{ L^2 ( \dot \Omega )}
+\left\|hd^{*}\chi_{k}\tilde{u}^{(1)}_{h}\right\|^{2}_{ L^2 ( \dot \Omega )}+ \langle(|\nabla f|^{2}-|\nabla \varphi|^{2} )\chi_{k} \tilde{u}^{(1)}_{h},\chi_{k} \tilde{u}^{(1)}_{h}
  \rangle _{ L^2 ( \dot \Omega )}\\
&\quad -C_{2}h\left\|\chi_{k} \tilde{u}^{(1)}_{h}\right\|^2_{ L^{2}(\Omega_{+})} \Big]
+h\left(\int_{\Gamma_{2}}-\int_{\Gamma_{1}}\right)\langle\chi_{0}\tilde{u}^{(1)}_{h},\chi_{0}\tilde{u}^{(1)}_{h}
\rangle\pa_{n} f \, d\sigma.
\end{aligned}
\end{equation}
\noindent
Let us first consider the case $k=1$. Using $|\nabla \varphi|\leq |\nabla f|$ and  
\eqref{eq.diff.f/Ag}--\eqref{eq.lowboundnablaf}, one gets:
\begin{align}
&\langle(|\nabla f|^{2}-|\nabla \varphi|^{2} )\chi_{1} \tilde{u}^{(1)}_{h},\chi_{1} \tilde{u}^{(1)}_{h}
  \rangle_{ L^{2} ( \dot \Omega )}
-C_{2}h\left\|\chi_{1} \tilde{u}^{(1)}_{h}\right\|^2_{ L^{2}(\Omega_{+})}
\nonumber
\\
&\ge
\langle(|\nabla f|^{2}-|\nabla \varphi|^{2} )\chi_{1} \tilde{u}^{(1)}_{h},\chi_{1} \tilde{u}^{(1)}_{h}
  \rangle_{ L^{2}(\Omega_+)}
-C_{2}h\left\|\chi_{1} \tilde{u}^{(1)}_{h}\right\|^2_{ L^{2}(\Omega_{+})}
\nonumber\\
&\geq
\left\langle \left(Ch \frac{|\nabla f|^2}{f_{+}} -C_{2}h\right) \chi_{1} \tilde{u}^{(1)}_{h},\chi_{1} \tilde{u}^{(1)}_{h}
  \right\rangle_{ L^{2}(\Omega_{+})}
\nonumber\\
& \geq 
(KC-C_{2})h \left\|\chi_{1} \tilde{u}^{(1)}_{h}
  \right\|^2_{ L^{2}(\Omega_{+})}
.    \label{eq.k=n}
\end{align}
\noindent
Let us then consider the case $k=0$. In this case, one deduces from $\supp \chi_{0}\subset V_{\Gamma_{1}}'$ where $|\nabla f|^2=|\nabla f_{+}|^2
+|\nabla f_{-}|^2$, from $|\nabla \varphi|^2=|\nabla f_{+}|^2$ on~$\Omega_-$, and from \eqref{eq.diff.f+/Ag}--\eqref{eq.lowboundnablaf} the inequality:
\begin{align}
&\langle(|\nabla f|^{2}-|\nabla \varphi|^{2} )\chi_{0} \tilde{u}^{(1)}_{h},\chi_{0} \tilde{u}^{(1)}_{h}
  \rangle_{ L^{2} ( \dot \Omega )}
-C_{2}h\left\|\chi_{0} \tilde{u}^{(1)}_{h}\right\|^2_{ L^{2}(\Omega_{+})}\nonumber\\
&=
\left\| |\nabla f_{-}|\chi_{0} \tilde{u}^{(1)}_{h}   \right\|^2_{ L^{2} ( \dot \Omega )}
+\langle(\underbrace{|\nabla f_{+}|^2-|\nabla \varphi|^{2} }_{=0})\chi_{0} \tilde{u}^{(1)}_{h},\chi_{0} \tilde{u}^{(1)}_{h}
  \rangle_{ L^{2}(\Omega_{-})}\nonumber\\
&\quad + \langle(|\nabla f_{+}|^2-|\nabla \varphi|^{2} -C_{2}h )\chi_{0} \tilde{u}^{(1)}_{h},\chi_{0} \tilde{u}^{(1)}_{h}
  \rangle_{ L^{2}(\Omega_{+})}
\nonumber\\
&\geq
\left\| |\nabla f_{-}|\chi_{0} \tilde{u}^{(1)}_{h}   \right\|^2_{ L^{2} ( \dot \Omega )}
+
(KC-C_{2})h \left\|\chi_{0} \tilde{u}^{(1)}_{h}
  \right\|^2_{ L^{2}(\Omega_{+})}\nonumber\\
&  \geq
(1+ 2C_{4}(C)h)  \left\| |\nabla f_{-}|\chi_{0} \tilde{u}^{(1)}_{h}   \right\|^2_{ L^{2} ( \dot \Omega )}
- (KC-C_{2})h
 \left\|\chi_{0} \tilde{u}^{(1)}_{h}
  \right\|^2_{ L^{2}(\Omega_{-})}
,\label{eq.k<n1}
\end{align}
where $C_{4}(C):= \frac{KC-C_{2}}{2\left\|\nabla
    f_{-}\right\|^2_{L^\infty(V_{\Gamma_1}')}}$  (see~\eqref{eq:nablaf-})
and $C$ has been chosen large enough to ensure
that $KC-C_{2}>0$.

In order to get a lower bound for the boundary term in~\eqref{eq.sum},
one uses the fact that the mixed Witten Laplacian $\Delta^{M,(1)}_{\hat f, \hat h} ( \dot \Omega )$
associated with
$\hat f=-\tilde \chi_{0}f_{-}$ where $\tilde \chi_{0}\in C^{\infty}(\overline \Omega,[0,1])$, $\tilde \chi_{0}=1$ on ${\rm supp}\, \chi_{0}$, ${\rm supp} \, \tilde \chi_{0} \subset (V_{\Gamma_1}'+B(0,\alpha))\cap \overline \Omega$     for $\alpha>0$ such that $f_-$ is smooth on ${\rm supp}\, \tilde \chi_{0}$ and $\hat h= \frac{h}{1+C_{4}(C)h}$, is nonnegative.
Starting from the inequality $(1+C_{4}(C)h) \mathcal Q_{\hat f,\hat h}^{M,(1)} ( \dot \Omega )(\chi_{0}\tilde u^{(1)}_{h},\chi_{0}\tilde u^{(1)}_{h})\geq 0$
 and then applying Lemma~\ref{le.GreenWeak} to $\chi_{0}\tilde
 u^{(1)}_{h}\in D\left(\mathcal Q^{M,(1)}_{\hat f,\hat h} ( \dot \Omega
   )\right)$  lead to (since $\chi_0=0$ on $\Gamma_0$):
\begin{multline}
\label{eq.k<n2}
 h \left(\int_{\Gamma_{1}}-\int_{\Gamma_{2}}\right)
\langle\chi_{0}\tilde u^{(1)}_{h},\chi_{0}\tilde u^{(1)}_{h}
\rangle
\pa_{n}f_{-}\, d\sigma \geq - 
(1+C_{4}(C)h)\left\| |\nabla f_{-}|\chi_{0}\tilde
  u^{(1)}_{h}\right\|_{L^2(\dot \Omega)}^2
\\
 -\frac{h^2}{1+C_{4}(C)h} \left(\left\|d\chi_{0}\tilde u^{(1)}_{h}\right\|^{2}_{L^2(\dot \Omega)}
+\left\|d^{*}\chi_{0}\tilde u^{(1)}_{h}\right\|_{L^2(\dot \Omega)}^{2}
 \right)
- h C_{5} \| \chi_{0}\tilde u^{(1)}_{h}\|_{L^2(\dot \Omega)}^{2},
\end{multline}
where $C_{5}$ is some positive constant independent of $C$ (it only
depends on $f_{-}$).


Injecting  the estimates \eqref{eq.k=n}--\eqref{eq.k<n2}
in~\eqref{eq.sum}
and 
using $f=-f_{-}+f_++f(z)$ on $V_{\Gamma_{1}}'$ with $\pa_{n}f_{+}=0$ on $\Gamma_{1}$,
then leads to:
\begin{align*}
C_{3}h&\geq 
  \sum_{k\in\{0,1\}}
  \frac{C_{4}h^3}{1+C_{4}h}\left(
  \left\|d\chi_{k}\tilde{u}^{(1)}_{h}\right\|^{2}_{ L^{2}(\dot \Omega)}
+\left\|d^{*}\chi_{k}\tilde{u}^{(1)}_{h}\right\|^{2}_{ L^{2}(\dot \Omega)}\right)
+(KC-C_2)h \left\|\chi_{1} \tilde{u}^{(1)}_{h}
  \right\|^2_{ L^{2}(\Omega_{+})}\\
&\quad+
C_{4}h\left\| |\nabla f_{-}|\chi_{0}\tilde u^{(1)}_{h}\right\|^2_{ L^2 (\dot \Omega)}
- (KC-C_{2})h
 \left\|\chi_{0} \tilde{u}^{(1)}_{h}
  \right\|^2_{ L^{2}(\Omega_{-})}\\
&\quad - h C_{5} \| \chi_{0}\tilde u^{(1)}_{h}\|^{2}_{ L^2 (\dot \Omega)}
  +h\int_{\Gamma_{2}}\langle\chi_{0} u|\chi_{0} u
\rangle_{ T^{*}_{\sigma}\dot\Omega}\pa_{n} f_{+}\,d \sigma.
\end{align*}
In the last computation, one has used that $1 \geq
\frac{C_{4}h}{1+C_{4}h}$,
$1-\frac{1}{1+C_{4}h}=\frac{C_{4}h}{1+C_{4}h}$.
It follows moreover from~\eqref{eq:dnf+positive} that  $\pa_{n} f_{+} > 0$ on $\supp\chi_{0}\cap \Gamma_{2}$.
Then, 
since $
 |\tilde{u}^{(1)}_{h}(x) |\leq e^C |u^{(1)}_{h}(x)|$ a.e. on $ \Omega_{-}$,
there exists $C_{6}(C,C_{2},K)$ such that
\begin{align*}
C_{6}h&\geq 
  \sum_{k\in\{0,1\}}
  \frac{C_{4}h^3}{1+C_{4}}\left(
  \left\|d\chi_{k}\tilde{u}^{(1)}_{h}\right\|^{2}_{ L^{2}(\dot \Omega)}
+\left\|d^{*}\chi_{k}\tilde{u}^{(1)}_{h}\right\|^{2}_{ L^{2}(\dot \Omega)}\right)
+2C_{4}\left\|\nabla f_{-}\right\|^2_{L^\infty(V'_{\Gamma_1})}h \left\|\chi_{1} \tilde{u}^{(1)}_{h}
  \right\|^2_{ L^{2}(\dot \Omega)}\\
&\quad+
C_{4}h\left\| |\nabla f_{-}|\chi_{0}\tilde u^{(1)}_{h}\right\|^2_{ L^2 (\dot \Omega)}
- h C_{5} \| \chi_{0}\tilde u^{(1)}_{h}\|^{2}_{ L^2 (\dot \Omega)}.
\end{align*}
One has used that $1+C_{4}\geq 1+C_{4}h$, for $h\leq 1$ and $KC-C_2 =2C_4\left\|\nabla
  f_{-}\right\|^2_{L^\infty(V'_{\Gamma_1})}$. 
Additionally, since $|\nabla f_{-}|\geq c>0$ on
$\overline{V_{\Gamma_{1}}'}$ (see~\eqref{eq:nablaf-}), $\lim_{C \to
  \infty} C_{4}(C)= +\infty$ and $C_{5}$ is independent of $C$,
one can then choose $C$ such that $c^2C_{4}-C_{5}>0$. This implies the existence of a constant $C_{7}>0$ such that for
$h_0>0$ small enough and for all $h\in (0,h_0]$,
\begin{equation}
\| \tilde{u}^{(1)}_{h} \|_{  L^{2} ( \dot \Omega )}
+\|d\tilde{u}^{(1)}_{h}\|_{  L^{2} ( \dot \Omega )}+\|d^*\tilde{u}^{(1)}_{h}\|_{  L^{2} ( \dot \Omega )}
\ \leq
\ \frac{C_{7}}{h}.
\end{equation}
Since $\varphi-f_{+}\ge - C_8 h \ln \frac1h$ for some constant $C_8$,
there exists $N_0>0$ such that:
\begin{equation}
  \label{eq.step1}
  \| e^{\frac{f_{+}}{h}}u^{(1)}_{h} \|_{  L^{2} ( \dot \Omega )}
+\|d(e^{\frac{f_{+}}{h}}u^{(1)}_{h})\|_{  L^{2} ( \dot \Omega
  )}+\|d^*(e^{\frac{f_{+}}{h}}u^{(1)}_{h})\|_{  L^{2} ( \dot \Omega )}
= O(h^{-N_{0}}).
\end{equation}
One has in particular, owing to 
the trace result~\eqref{eq:subelliptic} stated in  Proposition~\ref{pr.QTN} and since $f_{+}=f-f(z)$
on $\Gamma_{1}$,
\begin{equation}
  \label{eq.step1trace}
  \left\|e^{\frac{f-f(z)}{h}}u^{(1)}_{h}
  \right\|_{ L^{2}(\Gamma_{1})}= O(h^{-N_{0}}).
\end{equation}

\noindent\underline{Step 2:} Agmon estimate in $\dot\Omega$.
\medskip

\noindent
One follows the same approach as in step 1 but with the function
$$\varphi=
\left\{
  \begin{aligned}[c]
& \Psi-Ch\ln \frac{ \Psi}{h},  \ \text{if}\ \Psi > Ch,\\
&\Psi-Ch\ln C, \ \text{if}\ \Psi \leq Ch,
  \end{aligned}
\right.
$$
where the constant $C> 1$ will be fixed later on, and with the  
 level sets:
\begin{eqnarray*}
 && \Omega_{-}=\left\{x\in \dot\Omega\ \text{s.t.}\   \Psi(x) \leq Ch\right\}
 \quad
\text{and}\quad
\Omega_{+}=\dot\Omega\setminus \Omega_{-}.
\end{eqnarray*}
 Applying formula \eqref{eq.intbypartphi} then leads to
(note that $\vert \tilde{u}^{(1)}_{h}\vert \leq e^C
\vert {u}_{h}\vert$ on $\Omega_-$, $\partial_nf <0$ on $\Gamma_{0}$ and $\partial_nf >0$ on $\Gamma_{2}$):
\begin{align}
 C_2(C) h\left(1+
 \int_{\Gamma_{1}}
\langle\tilde u^{(1)}_{h},\tilde u^{(1)}_{h}
\rangle_{ T_{\sigma}^{*}\Omega}~d\sigma
\right)
 &\ge
\left\|hd\tilde{u}^{(1)}_{h}\right\|^{2}_{ L^{2}(\dot \Omega)}
+\left\|hd^{*}\tilde{u}^{(1)}_{h}\right\|^{2}_{ L^{2}(\dot \Omega)}
\label{eq.trace0}
\\
&\quad +\langle (|\nabla
f|^{2}-|\nabla\varphi|^{2})\tilde{u}^{(1)}_{h},\tilde{u}^{(1)}_{h}\rangle_{L^2(\dot
  \Omega)}-C_{1}h\left\|\tilde{u}^{(1)}_{h}\right\|_{ L^{2}(\Omega_{+})}^2,
\nonumber\end{align}
where  $\tilde{u}^{(1)}_{h}:=e^{\frac{\varphi}{h}} u^{(1)}_h$, the constant $C_{1}$ is independent of $C$, whereas $C_2$ is a
constant depending on $C$.
Besides, due to the relations
$$
 \Psi= f-f(z) \text{ on } \Gamma_{1}
\text{ and } e^{\frac{\varphi}{h}}\ \leq\ 
e^{\frac{ \Psi}{h}} \text{ on $\dot \Omega$},
$$
the trace estimate obtained in~\eqref{eq.step1trace} implies
\begin{equation}
\label{eq.trace1}
\int_{\Gamma_{1}}
\langle\tilde u^{(1)}_{h},\tilde u^{(1)}_{h}
\rangle_{ T_{\sigma}^{*}\Omega}~d\sigma =O
(h^{-2N_{0}}).
\end{equation}
Injecting \eqref{eq.trace1} in~\eqref{eq.trace0} then gives
\begin{align}
&\left\|hd\tilde{u}^{(1)}_{h}\right\|^{2}_{ L^{2}(\dot \Omega)}
+\left\|hd^{*}\tilde{u}^{(1)}_{h}\right\|^{2}_{ L^{2}(\dot \Omega)}\nonumber\\
&+\langle (|\nabla
f|^{2}-|\nabla\varphi|^{2})\tilde{u}^{(1)}_{h},\tilde{u}^{(1)}_{h}\rangle
_{ L^{2}(\dot \Omega)}-C_{1}h\left\|\tilde{u}^{(1)}_{h}\right\|_{
  L^{2}(\Omega_{+})}^2 = O( h^{1-2N_{0}}). 
\label{eq.trace2}
\end{align}
Since moreover $|\nabla  \Psi|^2\leq |\nabla f|^2$ (see~\eqref{eq:eqq}) and $f$ has no critival point in $\overline{\dot\Omega}$, one gets:
$$ 
|\nabla f|^2-|\nabla \varphi |^{2}
 \geq 
|\nabla f|^2-|\nabla  \Psi|^2\left(1-\frac{Ch}{ \Psi}\right)
 \geq Ch \frac{|\nabla f |^{2}}{ \Psi}
 \geq  CC_{3}h
 \text{ on $\Omega_{+}$}
$$
where $C_3>0$ is independent of $C$.
Since $|\nabla  f|^2\geq |\nabla \Psi|^2=|\nabla \varphi|^2$ a.e. on $\Omega_-$, adding the term $(CC_{3}-C_{1})h\left\| \tilde{u}^{(1)}_{h}\right\|^2_{ L^{2}(\Omega_{-})}$
to \eqref{eq.trace2} leads to
$$ \left\|hd\tilde{u}^{(1)}_{h}\right\|^{2}_{ L^{2}(\dot \Omega)}
+\left\|hd^{*}\tilde{u}^{(1)}_{h}\right\|^{2}_{ L^{2}(\dot \Omega)}
+ (CC_3-C_{1})h\left\|\tilde{u}^{(1)}_{h}\right\|^2_{ L^{2} ( \dot
  \Omega )} = O(h^{1-2N_0})
$$
Now,   since $\varphi - \Psi \ge -C_4 h \ln
\frac{1}{h}$, taking
$C>  \frac{C_{1}}{C_{3}}$,  there exists $N_1>0$ such that:
\begin{equation}
  \label{eq.step2}
  \| e^{\frac{ \Psi}{h}}u^{(1)}_{h} \|_{ L^{2} ( \dot \Omega )}
+\|d(e^{\frac{ \Psi}{h}}u^{(1)}_{h})\|_{ L^{2} ( \dot \Omega )}+\|d^*(e^{\frac{ \Psi}{h}}u^{(1)}_{h})\|_{ L^{2} ( \dot \Omega )}
  = O(h^{-N_{1}}).
\end{equation}
This concludes the proof of~\eqref{eq.Agmon}.
\end{proof}

\subsection{Comparison of the eigenform $u^{(1)}_{h,i}$ and its WKB approximation}\label{sec:wkb}
Throughout this section, one assumes \textbf{[H1]}, \textbf{[H2]} and \textbf{[H3]}. In all this section, we consider, for a fixed critical point $z_i$, an
ensemble of sets~$\mathcal S_{M,i}$ associated with $z_i$ (see Definition~\ref{Sets}) and
an $L^2$-normalized eigenform 
$u^{(1)}_{h,i}$ of $\Delta^{M,(1)}_{f,h}(\dot \Omega_i)$ associated
with the eigenvalue $0$, as introduced at the end of Section~\ref{sec:def_mixed}. 
For the ease of notation, we drop the
subscript $i$ in all this section.

\subsubsection{Construction of the WKB expansion of $u^{(1)}_{h}$} \label{sectionwkb}
Let $z$ be a local minimum of $f|_{\pa \Omega}$. 
Before going through a rigorous construction of a WKB expansion  $u^{(1)}_{z,wkb}$ of $u^{(1)}_{h}$ in a neighborhood of $z$, let us explain formally how  we  proceed. Let us recall that the $1$-form $u^{(1)}_{h}$ satisfies:
\begin{equation}\label{eq.uhh1}
\left\{
\begin{aligned}
&\Delta_{f,h}^{(1)}u^{(1)}_{h}=0 \text{ in }
\dot \Omega,\\
&\mbf{t} u^{(1)}_{h}=0 \text{ and } \mbf{t} d^*_{f,h}u^{(1)}_h =0 \text{ on }
\Gamma_{1},
\end{aligned}\right.
\end{equation}
plus additional boundary conditions on $\Gamma_0\cup\Gamma_2$ that we do not recall since the objective is to approximate $u^{(1)}_h$ in a neighborhood of $\Sigma$ in $\overline{\dot{\Omega}}$ (where we recall $\Sigma$ is an open subset of $\pa \Omega$ containing $z$ and such that $\overline\Sigma\subset \Gamma_1$, see~\eqref{eq.sigmai-gammai}). The behavior of $u^{(1)}_{h}$ in a neighborhood of $\Gamma_1$ exhibited in Proposition~\ref{pr.Agmon} suggests to take $u^{(1)}_{z,wkb}$ of the form $u^{(1)}_{z,wkb}(x,h)=a^{(1)}(x,h)\, e^{-\frac{d_a(x ,z) }{h}}$ where $a^{(1)}$ is expanded in powers of $h$: $a^{(1)}(x,h)=\sum_{k\ge 0} a_k^{(1)}(x)h^k$ and to look for $1$-forms $(a_k^{(1)})_{k\ge 0}$ so that $u^{(1)}_{z,wkb}$ is a nontrivial $1$-form satisfying (compare with~\eqref{eq.uhh1}):
\begin{equation}\label{eq.formally_wkb1}
\left\{
\begin{aligned}
&\Delta_{f,h}^{(1)}u^{(1)}_{z,wkb}=O(h^{\infty})\, e^{-\frac{d_a(\cdot ,z) }{h}} \text{ in }
\dot \Omega,\\
&\mbf{t} u^{(1)}_{z,wkb} =0 \text{ and } \mbf{t} d^*_{f,h}u^{(1)}_{z,wkb} =O(h^{\infty})\, e^{-\frac{d_a(\cdot ,z) }{h}} \text{ on }
\Gamma_{1},
\end{aligned}\right.
\end{equation}
where the meaning of $O(h^{\infty})$ is formally $h^sO(h^{\infty})=o_h(1)$ for any $s\in \mathbb R$.  
The boundary conditions in~\eqref{eq.formally_wkb1} ensures that when cutting suitably a solution to~\eqref{eq.formally_wkb1} near $\Gamma_1$, the resulting $1$-form belongs to the form domain of $\Delta^{M,(1)}_{f,h}(\dot \Omega)$ (this is needed if one wants to approximate $u^{(1)}_{h} $ on $\pa \dot \Omega$).  Instead of directly trying to solve~\eqref{eq.formally_wkb1}, the construction of $u^{(1)}_{z,wkb}$ can be simply done as follows (see \cite[Section 4.2]{helffer-nier-06}). Using the complex property, one considers  $u^{(1)}_{z,wkb}=d_{f,h}u^{(0)}_{z,wkb}$ where the function $u^{(0)}_{z,wkb}=a^{(0)}(\cdot,h) \, e^{-\frac{d_a(. ,z) }{h}}$ where $a^{(0)}(x,h)=\sum_{k\ge 0} a_k^{(0)}(x)h^k$ for a non trivial family of functions $(a_k)_{k\ge 0}$ such that: 
\begin{equation}\label{eq.formally_wkb0}
\left\{
\begin{aligned}
&\Delta_{f,h}^{(0)}u^{(0)}_{z,wkb}=O(h^{\infty})\, e^{-\frac{d_a(\cdot ,z) }{h} } \text{ in }
\dot \Omega,\\
& u^{(0)}_{z,wkb} =e^{-\frac 1h f} \text{ on }
\Gamma_{1}.
\end{aligned}\right.
\end{equation}
\begin{sloppypar}
\noindent
This implies the boundary condition: $a^{(0)}=1$ on $\Gamma_1$.  Then, if $u^{(0)}_{z,wkb}=a^{(0)} \, e^{-\frac{d_a(. ,z) }{h}}$ is a solution to~\eqref{eq.formally_wkb0},  we set: $$u^{(1)}_{z,wkb}=d_{f,h}u^{(0)}_{z,wkb}.$$
One can easily check that the $1$-form $u^{(1)}_{z,wkb}$ then satisfies~\eqref{eq.formally_wkb1} and the extra boundary condition $\mbf{t} d^{*}_{f,h}u^{(1)}_{z,wkb} = O(h^{\infty})\, e^{-\frac{f-f(z) }{h}}$ on $\Gamma_{1}$. Indeed, it holds:
$$d_{f,h} u^{(0)}_{z,wkb}=e^{-\frac{d_a(\cdot ,z) }{h} } \left( d(f-d_a(\cdot,z))\, a^{(0)}+ h\,d a^{(0)} \right),$$
which implies $\mbf{t} u^{(1)}_{z,wkb} =0$ since $a^{(0)}=1$ and \begin{equation}\label{eq:cond_limit_phase}
f-d_a(\cdot,z)=f(z) \text{ on } \Gamma_1.
\end{equation} In addition, one has
$$d_{f,h}^* d_{f,h} u^{(0)}_{z,wkb}=\Delta_{f,h}^{(0)}u^{(0)}_{z,wkb}=O(h^{\infty})\, e^{-\frac{d_a(\cdot ,z) }{h}}$$
 which implies  $\mbf{t} d^*_{f,h}u^{(1)}_{z,wkb} =O(h^{\infty})\, e^{-\frac{d_a(\cdot ,z) }{h}}$ and
$$
\Delta_{f,h}^{(1)} d_{f,h} u^{(0)}_{z,wkb}=d_{f,h}\Delta_{f,h}^{(0)}u^{(0)}_{z,wkb}=O(h^{\infty})\, e^{-\frac{d_a(\cdot ,z) }{h}}.
$$
Thus, the $1$-form $u^{(1)}_{z,wkb}$ satisfies~\eqref{eq.formally_wkb1}. \\
 Expanding in powers of $h$ the function $e^{\frac{d_a(x ,z) }{h} }\, \Delta_{f,h}^{(0)} \Big(\big (\sum_{k\ge 0} a_k^{(0)}(x)h^k\big ) e^{-\frac{d_a(x ,z) }{h} }\Big)$, $u^{(0)}_{z,wkb}$ is a solution to~\eqref{eq.formally_wkb0} if it holds:
\begin{equation}\label{eq.ju1}
\vert \nabla d_a(x, z)\vert =\vert \nabla f(x)\vert, \ {\rm for }\ x\in \dot \Omega,
\end{equation}
which is satisfied at least in a neighborhood of $z$ (see Proposition~\ref{TTH}) 
 and if $(a_k^{(0)})_{k\ge 0}$ satisfies the following transport equations,
defined recursively by:
\begin{equation}\label{eq.ju2}
 \left\{
    \begin{array}{ll}
        (\Delta \Phi-\Delta f +2\nabla \Phi \cdot \nabla)a_0^{(0)}=0 & \mbox{in }  \dot \Omega \\
      (\Delta \Phi-\Delta f +2\nabla \Phi \cdot \nabla)a_{k+1}^{(0)}=\Delta a_k^{(0)}  & \mbox{in } \dot \Omega , \  \forall k\geq 0,
    \end{array}
\right.
\end{equation}
with boundary conditions 
$
 \left\{
    \begin{array}{ll}
        a_0^{(0)}=1 & \mbox{on } \Gamma_1 \\
       a_k^{(0)} =0 & \mbox{on } \Gamma_1, \ \forall k\geq 1
    \end{array}
\right.$. The equation \eqref{eq.ju1} together with the boundary condition~\eqref{eq:cond_limit_phase} justify {\em a posteriori} the choice of the function $d_a(\cdot,z)$ in the exponential for the ansatz on $u^{(1)}_{z,wkb}$. 
Let us mention that  $\partial_n f > 0$ on $\Gamma_1$ implies that there exists a non trivial solution $u^{(0)}_{z,wkb}$ to~\eqref{eq.formally_wkb0} in a neighborhood of $\Gamma_1$ since in that case the transport equations~\eqref{eq.ju2} are non degenerate. Let us now justify rigorously the construction of the WKB expansion $u^{(1)}_{z,wkb}$ of $u^{(1)}_{h}$, which is, in view of~\eqref{eq.ju1} and~\eqref{eq.ju2},  possible near $\Gamma_1$. 
\end{sloppypar}

\medskip
\noindent
\underline{A preliminary construction.}
\medskip

\noindent
Let $\Phi$ be the solution to the eikonal equation (\ref{eikonalequationboundary}) on a neighborhood $V_{\partial \Omega}$ of the boundary $\partial \Omega$. Let us introduce the formal transport operator 
$$T:=\Delta \Phi-\Delta f +2\nabla \Phi \cdot \nabla.$$
Let us consider the solutions to the following transport equations,
defined recursively by
\begin{equation}\label{eq:transport}
 \left\{
    \begin{array}{ll}
        Ta_0=0 & \mbox{in } V_{\partial \Omega} \\
       Ta_{k+1}=\Delta a_k  & \mbox{in } V_{\partial \Omega}, \quad \forall k\geq 0,
    \end{array}
\right.
\end{equation}
with boundary conditions 
\begin{equation}\label{eq:transport_bdd}
 \left\{
    \begin{array}{ll}
        a_0=1 & \mbox{on } \partial \Omega \\
       a_k =0 & \mbox{on } \partial \Omega, \quad \forall k\geq 1.
    \end{array}
\right.
\end{equation}
For a fixed $k$, the transport equation can be solved locally around each $z\in \partial \Omega$
thanks to the condition $\partial_n\Phi=-\partial_nf<0$ on $\partial
\Omega$ and thus on a neighborhood of $\partial \Omega$ (independent
of $k$) using a compactness argument. Therefore, up to choosing a
smaller neighborhood $V_{\partial \Omega}$ of $\partial \Omega$ in
$\overline{\Omega}$, there exists a unique sequence of 
$C^{\infty}(V_{\partial \Omega})$ functions $(a_k)_{k\geq 0}$ solution to~\eqref{eq:transport}-\eqref{eq:transport_bdd}.

There exists a function $a=a(x,h)$  (called a resummation of the
formal symbol  $\sum_{k=0}^{+\infty} a_kh^k$) $C^\infty$ and uniformly
bounded together with all its derivatives such that 
$$a(x,h)=1 \ {\rm on} \ \partial \Omega  \quad {\rm and}  \quad
a(x,h)\sim \sum_{k=0}^{+\infty} a_k(x)h^k.$$
This means that $a-\sum_{k=0}^{+\infty} a_kh^k$ is $O(h^\infty)$
in the following sense: for all compact $K$ in $V_{\partial \Omega}$, for all
$\alpha \in \mathbb{N}^d$, for all $N \in \mathbb{N}$,
\begin{equation}\label{eq.hinfty}\left\|\partial^\alpha_x \left( a- \sum_{k=0}^N a_k(x) h^k
  \right)\right\|_{L^\infty(K)} \le C_{K,\alpha,N} h^{N+1}.
  \end{equation}
Such a construction is standard and can be found in
\cite{dimassi-sjostrand-99} or in\cite{helffer-nier-06}, where it is
done using a Borel summation. Moreover $a$ is unique up to a term of
 order $O(h^{\infty})$. Let us now define on $V_{\partial \Omega}$: 
$$u_{wkb}^{(0)}(x,h):=a(x,h) \,e^{-\frac{\Phi}{h}}.$$
\label{page.uowkb}
By construction of the sequence $(a_k)_{k\geq 0}$, the function $u_{wkb}^{(0)}$ solves 
$$\left\{
\begin{aligned}
\Delta_{f,h}^{(0)}u^{(0)}_{wkb} =O(h^{\infty})\,e^{-\frac{\Phi}{h}} \text{ in }
V_{\partial \Omega},\\
u^{(0)}_{wkb}=e^{-\frac{\Phi}{h}}=e^{-\frac{f}{h}} \text{ on }
\partial \Omega,
\end{aligned}\right.
$$
where $O(h^{\infty})$ is defined in~\eqref{eq.hinfty}. 
Indeed, using~\eqref{eq:laplwitt0}, $\vert \nabla f \vert^2=\vert
\nabla \Phi \vert^2$ on $V_{\partial \Omega}$, and the
equations~\eqref{eq:transport} satisfied by $(a_k)_{k \ge 0}$,
\begin{align*}
e^{\frac{\Phi}{h}}\Delta_{f,h}^{(0)}u^{(0)}_{wkb}&=-h^2\Delta a(x,h)+h[a(x,h)\Delta \Phi +2 \nabla \Phi\cdot \nabla a(x,h)] -a(x,h) \vert \nabla \Phi\vert^2\\
&\quad + a(x,h) \vert \nabla f \vert^2 -h a(x,h)\Delta f\\
&\sim hTa_0 + h^2
\sum_{k=0}^{+\infty} h^k(Ta_{k+1}-\Delta a_{k})\\
& =O(h^\infty).
\end{align*}
In addition, it holds $u^{(0)}_{wkb}=e^{-\frac{\Phi}{h}}$  on $\partial \Omega$ since $a(x,h)=1$ on $\partial \Omega$. Let us now define on $V_{\partial \Omega}$:
$$u^{(1)}_{wkb}:=d_{f,h} u^{(0)}_{wkb} .$$
\label{page.u1wkb}
The $1$-form $u^{(1)}_{wkb}$  satisfies:
\begin{equation}\label{lem:wkb1}
\left\{
\begin{aligned}
\Delta_{f,h}^{(1)} u^{(1)}_{wkb} =O(h^{\infty})e^{-\frac{\Phi}{h}}
\text{ in } 
V_{\partial \Omega} ,\\
\mathbf{t}u^{(1)}_{wkb}=0\text{ on }
\partial \Omega ,\\
\mathbf{t}d_{f,h}^{*}u^{(1)}_{wkb}= 
O (h^{\infty})e^{-\frac{\Phi}{h}}\text{ on }
\partial \Omega,
\end{aligned}\right.
\end{equation}
\noindent 
\begin{sloppypar}
\noindent
where $O(h^{\infty})$ is defined in~\eqref{eq.hinfty}.
Indeed, one has $\mathbf{t}u^{(1)}_{wkb}=\mathbf{t}d_{f,h} u^{(0)}_{wkb}=d_{f,h}
\mathbf{t} u^{(0)}_{wkb}=d_{f,h} \mathbf{t} \left(a(x,h)
e^{-\frac{\Phi}{h}}\right)=d_{f,h} \mathbf{t} e^{-\frac{f}{h}} = d_{f,h} e^{-\frac{f}{h}} = 0$
 since $a(x,h)=1$ and $\Phi=f$ on 
$\partial \Omega $. Moreover $\mathbf{t}d_{f,h}^{*}u^{(1)}_{wkb}=\mathbf{t}d_{f,h}^{*}d_{f,h}u^{(0)}_{wkb}= \mathbf{t} \Delta_{f,h}^{(0)}u^{(0)}_{wkb}= O (h^{\infty}) e^{-\frac{\Phi}{h}}$. Finally,
$\Delta_{f,h}^{(1)}u^{(1)}_{wkb}= \Delta_{f,h}^{(1)}d_{f,h}u^{(0)}_{wkb}=d_{f,h}\Delta_{f,h}^{(0)}u^{(0)}_{wkb}=(hd+df \wedge) O (h^{\infty}) e^{-\frac{\Phi}{h}}=O (h^{\infty}) e^{-\frac{\Phi}{h}}$. \end{sloppypar}

\medskip
\noindent
\underline{WKB expansion of $u^{(1)}_{h}$.}
\medskip

\noindent
Let $z$ be a local minimum of $f|_{\pa \Omega}$. 
Let us now define the WKB expansion of $u^{(1)}_{h}$ on $V_{\partial \Omega}$ by:
\begin{equation}\label{wkbi}
u^{(1)}_{z,wkb}:=e^{\frac{f(z)}{h}} u^{(1)}_{wkb}= e^{\frac{f(z)}{h}}
d_{f,h}u^{(0)}_{wkb} = d_{f,h} \left(a(\cdot,h) e^{-\frac{\Phi-f(z)}{h}}\right).
\end{equation}\label{page.u1zwkb}
One recalls (see Proposition~\ref{TTH}) that for any smooth open domain
$\Gamma$  such that $\overline{\Gamma} \subset \Gamma_1$ and $z\in \Gamma$, there exists a
neighborhood  of $\Gamma$ in $\overline \Omega$, denoted by
$V_{\Gamma}\subset V_{\partial \Omega} \cap (\Gamma_1\cup \dot{\Omega})$, such that for all $x\in V_{\Gamma}$,
$$\Psi(x)=d_a(x,z)=\Phi(x)-f(z).$$
\begin{lemma}\label{wkb1}
 Let us assume that the hypotheses \textbf{[H1]} and \textbf{[H3]} hold. Let us consider $z$ a local minimum of $f|_{\partial \Omega}$ as introduced in hypothesis \textbf{[H2]}.
The $1$-form $u^{(1)}_{z,wkb}$  satisfies 
\begin{equation}\label{WW}
\left\{
\begin{aligned}
\Delta_{f,h}^{(1)}   u^{(1)}_{z,wkb} =O(h^{\infty})e^{-\frac{\Phi-f(z)}{h}}
& \text{ in } 
V_{\partial \Omega} ,\\
\mathbf{t}u^{(1)}_{z,wkb}=0 &\text{ on }
\partial \Omega ,\\
\mathbf{t}d_{f,h}^{*}u^{(1)}_{z,wkb} = 
O (h^{\infty})e^{-\frac{\Phi-f(z)}{h}} &\text{ on }
\partial \Omega,
\end{aligned}\right.
\end{equation}
where $O(h^{\infty})$ is defined in~\eqref{eq.hinfty}.
For any $\chi\in C^{\infty}_c(V_{\Gamma})$ such that $\chi=1$ on a
neighborhood of $z$, it
holds: in
the limit $h\to 0$,
\begin{equation}\label{laplacewkb}
\int_{\Omega} \vert \chi(x) u^{(1)}_{z,wkb}(x)\vert^2dx=C_{z,wkb}^2\
h^{\frac{d+1}{2}} \ (1+O(h)),
\end{equation}
 where 
\begin{equation}\label{cwkbi}
C_{z,wkb}:= \pi^{\frac{d-1}{4}}\frac{\sqrt{2\partial_nf(z)}}{({\rm det \ Hess} f|_{\partial \Omega}(z) )^{\frac 14} }.
\end{equation}\label{page.czwkb}
Furthermore, there exists $C>0$ such that for $h$ small enough,
\begin{equation}\label{eq:estim_wkb_H1}
\Vert \chi  u^{(1)}_{z,wkb}\Vert_{H^1(\Omega)} \leq Ch^{-1},
\end{equation}
and $\mathcal Q_{f,h}^{M,(1)} ( \dot \Omega )(\chi u^{(1)}_{z,wkb})=O(h^{\infty})$.
 \end{lemma}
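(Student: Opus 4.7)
The first statement concerning the PDE and boundary identities satisfied by $u^{(1)}_{z,wkb}$ is essentially a restatement of~\eqref{lem:wkb1}: multiplying $u^{(1)}_{wkb}$ by the constant factor $e^{f(z)/h}$ commutes with $\Delta^{(1)}_{f,h}$, with $d^*_{f,h}$, and with the tangential trace, so the $O(h^{\infty})e^{-\Phi/h}$ errors of~\eqref{lem:wkb1} become $O(h^{\infty})e^{-(\Phi-f(z))/h}$ errors, and the homogeneous trace condition $\mathbf{t} u^{(1)}_{wkb} = 0$ on $\partial\Omega$ is preserved.

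The computation of $\|\chi u^{(1)}_{z,wkb}\|_{L^2}^2$ is a direct application of Laplace's method, and is where the constant $C_{z,wkb}$ actually gets built. The plan is to expand
$$u^{(1)}_{z,wkb} = e^{-(\Phi-f(z))/h}\bigl(a(\cdot,h)\,d(f-\Phi) + h\,da(\cdot,h)\bigr),$$
and to work in the tangential-normal coordinates $(x',x_d)$ of Definition~\ref{def:x'xd} near $z$, in which $x_d = -f_-$, $\Phi - f(z) = f_+(x',0) - x_d$, and hence $d(f-\Phi) = 2\,dx_d$. Two facts drive the prefactor: first, $|d(f-\Phi)|^2 = 4|\nabla f_-|^2$, which at $z$ equals $4(\partial_n f(z))^2$ since $|\nabla f_-| = \partial_n f$ on $\partial\Omega$ (compute from $f_- = (\Phi-f)/2$, using $\partial_n \Phi = -\partial_n f$); second, the Jacobian of the coordinate change in the normal direction contributes a factor $|\nabla f_-|^{-1}$, giving $dx \sim \partial_n f(x')^{-1}(1+O(x_d))\,d\sigma(x')\,dx_d$ near $\partial\Omega$. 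Combining these with $a_0(z) = 1$ (from the boundary condition $a_0 \equiv 1$ on $\Gamma_1$), the leading contribution is
$$\frac{4(\partial_n f(z))^2}{\partial_n f(z)} \!\int\! e^{-2 f_+(x',0)/h}\,d\sigma(x') \!\int_{-\infty}^0\! e^{2x_d/h}\,dx_d = 4\partial_n f(z) \cdot \frac{(\pi h)^{(d-1)/2}}{\sqrt{\det\Hess f|_{\partial\Omega}(z)}} \cdot \frac{h}{2},$$
modulo $O(h)$, because $f_+(\cdot,0) = f - f(z)$ has a nondegenerate tangential minimum at $z$ (item~\textbf{[H1]}); this yields exactly $C_{z,wkb}^2 h^{(d+1)/2}(1+O(h))$. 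The full $(1+O(h))$ asymptotic expansion is obtained from the standard Laplace method with a boundary, since the integrand is smooth, $a$ is uniformly bounded with all its derivatives, and the error coming from the $h\,da$ term in the expansion of $|u^{(1)}_{z,wkb}|^2$ is manifestly of smaller order.

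The $H^1$ estimate $\|\chi u^{(1)}_{z,wkb}\|_{H^1} \le Ch^{-1}$ is then immediate: differentiating $\chi u^{(1)}_{z,wkb}$ produces at worst a factor $h^{-1}\nabla \Phi$ from hitting the exponential, plus terms where derivatives land on $\chi$ (whose gradient is supported where $\Phi - f(z) \ge c > 0$, giving exponentially small contributions) or on $a$ (which is smooth and bounded). Squaring gives $\|\chi u^{(1)}_{z,wkb}\|_{H^1}^2 = O(h^{(d-3)/2})$, which is bounded by $Ch^{-2}$.

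For the quadratic form estimate, the plan is to exploit the complex property: since $u^{(1)}_{z,wkb} = d_{f,h}(e^{f(z)/h} u^{(0)}_{wkb})$, one has $d_{f,h} u^{(1)}_{z,wkb} = 0$ exactly (by $d_{f,h}^2=0$) and $d^*_{f,h} u^{(1)}_{z,wkb} = e^{f(z)/h}\Delta^{(0)}_{f,h} u^{(0)}_{wkb} = O(h^{\infty})e^{-(\Phi-f(z))/h}$. Using the Leibniz identities $d_{f,h}(\chi v) = \chi\,d_{f,h}v + h\,d\chi\wedge v$ and $d^*_{f,h}(\chi v) = \chi\,d^*_{f,h}v - h\,\mathbf{i}_{\nabla\chi}v$, the quadratic form splits into three groups of terms: the $\chi\,d^*_{f,h} u^{(1)}_{z,wkb}$ piece is $O(h^{\infty})$ by the same Laplace argument used in part 2 with an extra $h^{\infty}$ factor; the pieces involving $\nabla\chi$ are all exponentially small because $\nabla\chi$ is supported where $\Phi - f(z)$ is bounded away from zero, so the $e^{-2(\Phi-f(z))/h}$ weight gives an $O(e^{-c/h})$ bound. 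Summing these contributions produces $\mathcal Q^{M,(1)}_{f,h}(\dot\Omega)(\chi u^{(1)}_{z,wkb}) = O(h^{\infty})$. The main technical step is the bookkeeping in the Laplace computation of part 2: one must recognize that the $1/\partial_n f(z)$ Jacobian factor exactly halves the $4(\partial_n f(z))^2$ coming from $|d(f-\Phi)(z)|^2$, producing the $\sqrt{2\partial_n f(z)}$ that appears in $C_{z,wkb}$.
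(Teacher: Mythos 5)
Your proposal is correct, and for the identities \eqref{WW}, the Laplace computation of $\|\chi u^{(1)}_{z,wkb}\|_{L^2}^2$, and the $H^1$ bound it follows essentially the paper's own route: multiplication by the constant $e^{f(z)/h}$, then Laplace's method in the boundary-adapted coordinates $(x',x_d)$ of Definition~\ref{def:x'xd}, where the factor $|d(f-\Phi)|^2=4|\nabla f_-|^2$ combined with the co-area/Jacobian factor $|\nabla f_-|^{-1}$ and the tangential Laplace integral $(\pi h)^{(d-1)/2}(\det\Hess f|_{\partial\Omega}(z))^{-1/2}$ and the normal integral $h/2$ reproduces $C_{z,wkb}^2 h^{(d+1)/2}$ exactly as in the paper. (Your closing sentence that the Jacobian "halves" $4(\partial_nf(z))^2$ is loosely phrased — dividing by $\partial_nf(z)$ gives $4\partial_nf(z)$, and the extra $1/2$ comes from the normal integration — but the displayed arithmetic above it is the correct one.)

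Where you genuinely diverge from the paper is the last claim, $\mathcal Q_{f,h}^{M,(1)}(\dot\Omega)(\chi u^{(1)}_{z,wkb})=O(h^{\infty})$. The paper computes $\Delta^{(1)}_{f,h}(\chi u^{(1)}_{z,wkb})$ through the commutator $[\Delta^{(1)}_{f,h},\chi]$, applies Cauchy--Schwarz to $\langle\chi u^{(1)}_{z,wkb},\Delta^{(1)}_{f,h}(\chi u^{(1)}_{z,wkb})\rangle$, and then identifies this with the quadratic form by an integration by parts that uses the boundary conditions in \eqref{WW}. You instead exploit the exact closedness $d_{f,h}u^{(1)}_{z,wkb}=0$ (from $d_{f,h}^2=0$) and the identity $d^*_{f,h}u^{(1)}_{z,wkb}=e^{f(z)/h}\Delta^{(0)}_{f,h}u^{(0)}_{wkb}=O(h^\infty)e^{-(\Phi-f(z))/h}$, and then bound $\|d_{f,h}(\chi u^{(1)}_{z,wkb})\|_{L^2}$ and $\|d^*_{f,h}(\chi u^{(1)}_{z,wkb})\|_{L^2}$ directly via the Leibniz rules, the cut-off terms being $O(e^{-c/h})$ since $\Phi-f(z)\ge c>0$ on $\supp\nabla\chi$. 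This is valid (note $\chi u^{(1)}_{z,wkb}$ does lie in the form domain, since $\supp\chi$ meets $\partial\dot\Omega$ only inside $\Gamma_1$ where $\mathbf{t}u^{(1)}_{z,wkb}=0$), and it buys a more elementary argument: no Green formula and no boundary-term bookkeeping are needed, since the quadratic form is by definition the sum of the two squared norms you estimate. The paper's route, on the other hand, only needs the information recorded in \eqref{WW} and does not invoke the explicit representation $u^{(1)}_{z,wkb}=d_{f,h}u^{(0)}_{z,wkb}$ at this stage.
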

\begin{proof}  
Equation~\eqref{WW} is easily obtained from~\eqref{lem:wkb1}. Let
us now prove~\eqref{laplacewkb} and~\eqref{cwkbi}. Notice that one can write
(using~\eqref{eq:dfh}) 
\begin{align} \nonumber u^{(1)}_{z,wkb} &=  d_{f,h}\left[e^{-\frac{\Phi-f(z)}{h}}  \sum_{k=0}^{+\infty} a_kh^k\right]\\
\nonumber
&=e^{-\frac fh}(hd)e^{\frac fh}\left[ e^{-\frac{\Phi-f(z)}{h}}  \sum_{k=0}^{+\infty} a_kh^k\right]     \\
\nonumber
&=e^{-\frac{\Phi-f(z)}{h}} e^{-\frac{f-\Phi}{h}}(hd)e^{\frac{f-\Phi}{h}}\left[  \sum_{k=0}^{+\infty} a_kh^k\right]  \\
\nonumber
&= e^{-\frac{\Phi-f(z)}{h}} \left( d(f-\Phi)\wedge a_0 \right) \\
\label{eq.carre}
&\quad +e^{-\frac{\Phi-f(z)}{h}} \left[h d  \sum_{k=0}^{+\infty} a_kh^k   + d(f-\Phi)\wedge  \sum_{k=1}^{+\infty} a_kh^k \right].
\end{align} 
Recall that the function $\chi$ is supported in $V_{\Gamma}$ and the
function $x\mapsto \Phi(x)-f(z)$ has a unique minimum on $V_{\Gamma}$ which
is $z$ since  $\Phi(x)-f(z)=d_a(x,z) \ge 0$ on
$V_{\Gamma}$. Therefore, in the limit $h\to 0$:
$$\sqrt{\int_{\Omega} \left\vert \chi(x)
    u^{(1)}_{z,wkb}(x)\right\vert^2dx}=\sqrt{\int_{\Omega}\left\vert
    e^{-\frac{\Phi-f(z)}{h}} \chi d(f-\Phi)\wedge
    a_0\right\vert^2} \,  (1+O(h)).$$
 Additionaly, since $\chi(z)=1$ and $\vert d(f-\Phi)(z)\vert ^2=\vert
 \nabla (f-\Phi)(z)\vert^2= \vert \nabla_T (f-\Phi)(z)\vert^2+
 (2\partial_nf(z))^2=(2\partial_nf(z))^2$, one gets  using Laplace's method
\begin{equation}\label{eq:laplace}
\int_{\Omega} \left\vert e^{-\frac{\Phi-f(z)}{h}} \chi
  d(f-\Phi)\wedge a_0\right\vert^2=(\pi
h)^{\frac{d+1}{2}}\frac{2\partial_nf(z)}{\pi\sqrt{ {\rm det \ Hess}
    f|_{\partial \Omega}(z)   } }\ (1+O(h)).
\end{equation}

Let us give more details on how to obtain~\eqref{eq:laplace}. Recall that on ${\rm
  supp}(\chi)$, $\Phi-f(z)=f_++f_-$, $f-\Phi=f-\Psi-f(z)=-2f_-$ and,
on ${\rm supp}(\chi) \cap \partial \Omega$, $\partial_nf = -\partial_nf_- =|\nabla  f_-|$.
Thus, using the coordinate set introduced in Definition~\ref{def:x'xd} and the co-area formula
$dx=\frac{d\sigma_{\Sigma_\eta}}{|\nabla f_-|} d\eta$ (see for example~\cite{ambrosio-fusco-pallara-00})
\begin{align*}
&\int_{\Omega} \left\vert e^{-\frac{\Phi(x)-f(z)}{h}} \chi(x)
  d(f-\Phi) (x) \wedge a_0 (x)\right\vert^2dx
=4\int_{-\alpha}^0  e^{-2\frac{\eta}{h}} \int_{\Sigma_\eta} 
  e^{-2\frac{f_+}{h}} \chi^2a_0^2  \vert \nabla f_-\vert
  d\sigma_{\Sigma_\eta} \, d\eta \\
&=4\int_{-\alpha}^0  e^{-2\frac{\eta}{h}} \int_{\partial \Omega} 
  e^{-2\frac{f_+(x',0)}{h}} \chi^2(x',\eta)a_0^2(x',\eta)  |\nabla f_-|(x',\eta)
  j(x',\eta) d\sigma_{\partial \Omega}(x') \, d\eta
\end{align*}
where $\Sigma_\eta=\{x, \, f_-(x)=-\eta\}$, $\sigma_{\Sigma_\eta}$ is
the Lebesgue measure on $\Sigma_\eta$. In the last equality, $j(x',\eta)$ is the Jacobian of
the parametrization of $\Sigma_\eta$ by $x' \in \partial \Omega$. Using the
Laplace formula, for any $\eta \in [-\eta_0,0]$ with $\eta_0>0$
sufficiently small so that $ \chi^2(z,\eta)\neq 0$ for all $\eta \in [-\eta_0,0]$, one has
\begin{align}
&\int_{\partial \Omega} 
  e^{-2\frac{f_+(x',0)}{h}} \chi^2(x',\eta) a_0^2(x',\eta)  |\nabla f_-|(x',\eta)
  j(x',\eta) d\sigma_{\partial \Omega}(x')\nonumber\\
&=(\pi h)^{\frac{d-1}{2}} (\det
  \Hess f_+(z))^{-1/2} \chi^2(z,\eta)  |\nabla f_-|(z,\eta)
  j(z,\eta) a_0^2(z,\eta)  (1+O(h))\label{eq:wkb_bord}
\end{align}
where $O(h)$ is a function of $\eta$ and $h$ with $L^\infty$ norm in
$\eta \in [0,\eta_0]$
bounded from above by a constant times $h$ (thanks to the regularity
of the involved terms), for sufficiently small~$h$.
Thus, using again Laplace's method: 
\begin{align*}
&\int_{\Omega} \left\vert e^{-\frac{\Phi-f(z)}{h}} \chi
  d(f-\Phi)\wedge a_0\right\vert^2\\
&=4 \int_{-\alpha}^0  e^{-2\frac{\eta}{h}} (\pi h)^{\frac{d-1}{2}} (\det
  \Hess f_+(z))^{-1/2} \chi^2(z,\eta) a_0^2(z,\eta) |\nabla f_-|(z,\eta)
  j(z,\eta) \, d\eta (1+O(h))\\
&=2h (\pi h)^{\frac{d-1}{2}} (\det
  \Hess f_+(z))^{-1/2} \chi^2(z,0)a_0^2(z,0)  |\nabla f_-|(z,0)
  j(z,0) (1+O(h))
\end{align*}
Since $\chi(z,0)=1$, $a_0(z,0)=1$, $\Hess f_+(z)=\Hess f|_{\partial \Omega}(z)$ and
$j(z,0)=1$,
this concludes the proof of~\eqref{eq:laplace}, and thus of~\eqref{laplacewkb}-\eqref{cwkbi}.

Now, writing  
$$
u^{(1)}_{z,wkb} = e^{-\frac{\Phi-f(z)}{h}} \left[d(f-\Phi)\wedge a(\cdot,h)
+h \, d  a(\cdot,h)\right],$$
and noticing that $\Phi-f(z)\geq 0$ on ${\rm supp} \chi$, one has
$\Vert \chi  u^{(1)}_{z,wkb}\Vert_{H^1( \Omega)} \leq Ch^{-1}$.

It remains to prove the last statement. Using the fact that supp$[\Delta_{f,h}^{(1)},\chi]\subset {\rm supp}\, \chi$, $\Phi-f(z)= \Psi\ge c'>0$ on supp$\nabla \chi$ and~(\ref{WW}), one gets 
 \begin{align*}
\Delta_{f,h}^{(1)}(\chi u^{(1)}_{z,wkb})&=\chi \Delta_{f,h}^{(1)}\left( u^{(1)}_{z,wkb}\right)+[\Delta_{f,h}^{(1)},\chi]\left(u^{(1)}_{z,wkb} \right) \\
&=O(h^{\infty})+ O(e^{-\frac ch})=O\left(h^{\infty}\right).
\end{align*}
Therefore, from the Cauchy-Schwartz inequality, one has $\langle \chi
u^{(1)}_{z,wkb}, \Delta_{f,h}^{(1)}(\chi
u^{(1)}_{z,wkb})\rangle_{L^2(\dot \Omega)}=O(h^{\infty})$. The fact
that $\mathcal Q_{f,h}^{M,(1)} ( \dot \Omega )(\chi
u^{(1)}_{z,wkb})=O(h^{\infty})$ then follows from an integration by
parts and the boundary conditions in~\eqref{WW}.
\end{proof}

\subsubsection{A first estimate of the accuracy of the WKB approximation}\label{sec:WKB_first_estim}

Recall that $z\in \{z_1,\ldots,z_n\}$ is a local minimum of
$f|_{\partial \Omega}$ and that $u^{(1)}_{h}$ is a $L^2$-normalized
eigenform of $\Delta^{M,(1)}_{f,h}(\dot \Omega)$
associated with the eigenvalue $0$. The objective of this section is
to prove that $u^{(1)}_{h}$ is accurately approximated by the function
$u^{(1)}_{z,wkb}$ defined in the previous section. The computations
below are inspired by those made in \cite[Chapter~4]{helffer-nier-06} where the authors were adapting \cite{helffer-sjostrand-84,helffer-88}
to manifolds with boundary. The novelty is that we compare the two
$1$-forms in a neighborhood of $B_{z}$, instead of a neighborhood of $z$.

Take two smooth open sets $\Gamma_{St}\subset
\Gamma_{St}'\subset\Gamma_{1}$ which are
strongly stable (see Definition~\ref{stronglystable} and Proposition~\ref{stronglystableexis}) and such that, for some positive $\varepsilon$, $(\Gamma_{St}+B(0,\varepsilon))
\cap \partial \dot{\Omega} \subset \Gamma_{St}'$ and $(\Gamma_{St}'+B(0,\varepsilon))
\cap \partial \dot{\Omega} \subset \Gamma_{1}$, see Figure~\ref{fig:representation_des_gamma}. The fact that
$\Gamma_{St}$ and $\Gamma_{St}'$ are strongly stable and thus that $V_{\Gamma_{St}}$ and $V_{\Gamma_{St}}'$ are stable under the dynamics \eqref{eq:flow_Phi} -see below-  will actually
be needed only to get refined estimates in Section~\ref{sec:wkb_precis}.
Let us now consider the system of coordinate $(x',x_d)$ 
(see~Definition~\ref{def:x'xd}) which is well defined on
$V_{\Gamma_1}$ by assumption (see item 2 in
Proposition~\ref{prop:dotomega}). Let us introduce  the Lipschitz sets
$V_{\Gamma_{St}}$ and $V_{\Gamma_{St}'}$
\begin{equation}
\label{eq.Omega-1-rectangle}
V_{\Gamma_{St}} = \{(x',x_d)\in\Gamma_{St}\times(-a,0)
\}
\text{ and } V_{\Gamma_{St}'} = \{(x',x_d)\in\Gamma_{St}'\times(-a',0)
\}
\end{equation}\label{page.vgammast}
where $0< a < a'$ are small enough so that $V_{\Gamma_{St}} \subset
V_{\Gamma_{St}'} \subset V_{\Gamma_1}$. By construction,
there exists $\varepsilon > 0$ such that $V_{\Gamma_{St}} +
B(0,\varepsilon) \subset V_{\Gamma_{St}'} $ and $V_{\Gamma_{St}'} +
B(0,\varepsilon)\subset V_{\Gamma_1}  \cap (\dot \Omega \cup
\Gamma_1)$ (see again Figure~\ref{fig:representation_des_gamma} for a
schematic representation of these sets). In addition
$V_{\Gamma_{St}}\cap \Gamma_1=\Gamma_{St}$ and $V_{\Gamma_{St}'}\cap \Gamma_1=\Gamma_{St}'$.
Moreover, $a$ and $a'$ can be
chosen sufficiently small so that the sets $\overline{V_{\Gamma_{St}}}$ and $\overline{V_{\Gamma_{St}'}}$ are stable under the dynamics 
\begin{equation}\label{eq:flow_Phi}
 x'(t)= \left\{
\begin{aligned}
 -\nabla \Phi (x(t))\ &\text{ on } 
\Omega   \\
-\nabla_T \Phi (x(t))  &\text{ on } 
\partial \Omega .
\end{aligned}\right.
\end{equation}
\begin{sloppypar} 

\noindent
This stability is a consequence of two
facts. First, for $x(t)$ solution to~\eqref{eq:flow_Phi}, $\frac{d}{dt}
f_{-}(x(t))'=-\vert \nabla f_{-}(x(t))\vert^2$ (since $\nabla \Phi
\cdot \nabla f_-=|\nabla f_-|^2$ on $V_{\Gamma_1}$, thanks
to~\eqref{eq:f+f-}, and $\nabla_T \Phi
\cdot \nabla f_-=0$ in $\partial \Omega$) so that $\forall t\geq 0,\, x_d(x(0))\leq x_d(x(t))\leq
0$. Second, by construction, for sufficiently
small $a$ and $a'$,
$$\forall x \in \partial V_{\Gamma_{St}} \text{ such that } x'(x)
\in \partial \Gamma_{St}, \, \nabla \Phi(x) \cdot n_x(V_{\Gamma_{St}})>0$$
(where $n(V_{\Gamma_{St}})$ is the unit ouward normal to
$V_{\Gamma_{St}}$). Indeed for any $z \in \partial \Gamma_{St}$, $\lim_{\sigma
  \to z} n_\sigma(V_{\Gamma_{St}})=n_z(\Gamma_{St})$ (where the limit
is taken for $\sigma \in \partial V_{\Gamma_{St}}$ with $x'(\sigma)
\in \partial \Gamma_{St}$),
see~\eqref{eq:limit_nsigma} for a proof, and since
 $\Gamma_{St}$ is chosen strongly
stable, for $z \in \partial \Gamma_{St}$, $\nabla \Phi(z) \cdot
n_z(V_{\Gamma_{St}})=(\nabla f_+ + \nabla f_-) \cdot
n_z(\Gamma_{St})=\nabla f_+ \cdot n_z(\Gamma_{St})=\nabla f|_{\partial
  \Omega}\cdot n_z(\Gamma_{St}) > 0$. The argument is of course the same for $V_{\Gamma_{St}'}$.
\end{sloppypar}


Let us now introduce two smooth cut-off functions $0\leq\chi\leq\eta\in
\mathcal{C}^{\infty}_c(\dot \Omega \cup \Gamma_{1})$
satisfying
\begin{align}
\label{eq.cut-off}
&\chi=1  \text{ in a neighborhood of } \overline{V_{\Gamma_{St}}},
\quad
\supp\chi\subset V_{\Gamma_{St}'}
\\
\text{ and } &
\eta=1\text{ in a neighborhood of } \overline{V_{\Gamma_{St}'}},\quad 
\supp \eta\subset V_{\Gamma_{1}} \cap (\dot \Omega \cup \Gamma_1).\label{eq.cut-off-2}
\end{align}
Notice that by construction, $\eta=0$ on $\Gamma_2$.
In the following, we moreover assume that $\chi$ and $\eta$ are tensor
products in the system of coordinates $(x',x_d)$ (this will actually
be needed only to get refined estimates in Section~\ref{sec:wkb_precis}):
\begin{equation}\label{eq.chi-separate}
\chi(x',x_d)=\chi_1(x')\chi_d(x_d) \text{ and } \eta(x',x_d)=\eta_1(x')\eta_d(x_d).
\end{equation}\label{page.chi1eta1}
Let $\kappa\in\{\chi,\eta\}$\label{page.kappa}. Owing to Lemma~\ref{le.Gaffney}, the $1$-form  $\kappa u^{(1)}_h$ belongs to $\Lambda^{1}H_{T}^{1}(\dot \Omega)$. The first a priori estimate on $\kappa(u^{(1)}_h-c(h)u^{(1)}_{z,wkb})$  is the following:
\begin{proposition} \label{apriori}
 Let us assume that the hypotheses \textbf{[H1]}, \textbf{[H2]} and \textbf{[H3]} hold.
For $\kappa\in\{\chi,\eta\}$, one has
\begin{equation}
\label{eq.a-priori-comparison}
\left\|\kappa(u^{(1)}_h-c_z(h)u^{(1)}_{z,wkb})\right\|_{ H^{1}(\dot \Omega)}=O (h^{\infty})
\end{equation}
where 
\begin{equation}\label{eq:czh_def}
 c_z(h)^{-1}=\langle  u^{(1)}_h, \chi u^{(1)}_{z,wkb}
\rangle_{L^2(\dot \Omega)}.
\end{equation}
 The $1$-form $u^{(1)}_h$ can be chosen
such that $c_z(h)>0$.
Additionally, when $h\to 0$
\begin{equation}\label{eq:czh}
 c_z(h) =C_{z,wkb}^{-1}h^{-\frac{d+1}{4}}(1+O(h^{\infty})),
\end{equation}\label{page.czh}
where $C_{z,wkb}$ is defined by~\eqref{cwkbi}.
\end{proposition}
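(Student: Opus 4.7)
My plan is to exploit the spectral gap of $\Delta_{f,h}^{M,(1)}(\dot\Omega)$ provided by Proposition~\ref{pr.DeltaTN}(iii), combined with the fact that $\chi u_{z,wkb}^{(1)}$ is, by Lemma~\ref{wkb1}, an almost null-element of the quadratic form $\mathcal{Q}_{f,h}^{M,(1)}(\dot\Omega)$.

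First I would verify that $\chi u_{z,wkb}^{(1)}$ lies in the form domain $D\!\left(\mathcal{Q}_{f,h}^{M,(1)}(\dot\Omega)\right)$: since $\supp \chi\subset V_{\Gamma_{St}'}$ avoids $\Gamma_0\cup\Gamma_2$ and $\mathbf t u_{z,wkb}^{(1)}=0$ on $\Gamma_1$ by~\eqref{WW}, this is immediate. Letting $P_0$ denote the $L^2$-orthogonal projection onto $\ker\Delta_{f,h}^{M,(1)}(\dot\Omega)=\mathrm{span}(u_h^{(1)})$, I would then apply Lemma~\ref{quadra} with the gap $ch^{3/2}$ to obtain
\[
\left\|(1-P_0)\chi u_{z,wkb}^{(1)}\right\|_{L^2(\dot\Omega)}^2\ \leq\ \frac{\mathcal{Q}_{f,h}^{M,(1)}(\dot\Omega)(\chi u_{z,wkb}^{(1)})}{ch^{3/2}}\ =\ O(h^\infty).
\]
After fixing the sign of $u_h^{(1)}$ so that $\langle u_h^{(1)},\chi u_{z,wkb}^{(1)}\rangle_{L^2}>0$ and setting $c_z(h)$ as in~\eqref{eq:czh_def}, the orthogonality identity $\|\chi u_{z,wkb}^{(1)}\|_{L^2}^2=c_z(h)^{-2}+\|(1-P_0)\chi u_{z,wkb}^{(1)}\|_{L^2}^2$ combined with~\eqref{laplacewkb} will give $c_z(h)^{-2}=C_{z,wkb}^2 h^{(d+1)/2}(1+O(h^\infty))$ and hence~\eqref{eq:czh}; in particular $c_z(h)$ is well-defined, positive and polynomially bounded in $h^{-1}$ for small $h$.

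Next, setting $v:=\chi u_{z,wkb}^{(1)}-c_z(h)^{-1}u_h^{(1)}$, we have $\|v\|_{L^2}=O(h^\infty)$, and because $\mathcal{Q}_{f,h}^{M,(1)}(\dot\Omega)(u_h^{(1)},\chi u_{z,wkb}^{(1)})=\langle\Delta_{f,h}^{M,(1)}(\dot\Omega)u_h^{(1)},\chi u_{z,wkb}^{(1)}\rangle_{L^2}=0$, also $\mathcal{Q}_{f,h}^{M,(1)}(\dot\Omega)(v)=O(h^\infty)$. Since $\kappa$ vanishes near $\Gamma_0\cup\Gamma_2$, the Leibniz formulas $d_{f,h}(\kappa v)=\kappa d_{f,h}v+h\,d\kappa\wedge v$ and $d_{f,h}^{*}(\kappa v)=\kappa d_{f,h}^{*}v-h\,\mathbf i_{\nabla\kappa}v$, combined with the $L^2$-smallness of $v$, yield $\mathcal{Q}_{f,h}^{M,(1)}(\dot\Omega)(\kappa v)=O(h^\infty)$, while $\mathbf t(\kappa v)$ vanishes on all of $\partial\dot\Omega$. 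Writing $hd=d_{f,h}-df\wedge$ and $hd^{*}=d_{f,h}^{*}-\mathbf i_{\nabla f}$ will then convert these bounds into $\|d(\kappa v)\|_{L^2}+\|d^{*}(\kappa v)\|_{L^2}=O(h^\infty)$, and Lemma~\ref{le.Gaffney} applied to $\kappa v\in\Lambda^1 H^1_T(\dot\Omega)$ will finally deliver $\|\kappa v\|_{H^1(\dot\Omega)}=O(h^\infty)$.

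To conclude I will use the decomposition
\[
\kappa\bigl(u_h^{(1)}-c_z(h)u_{z,wkb}^{(1)}\bigr)=-c_z(h)\,\kappa v+c_z(h)\,\kappa(\chi-1)u_{z,wkb}^{(1)},
\]
which reduces~\eqref{eq.a-priori-comparison} to bounding the second summand; since $\chi\equiv 1$ near $V_{\Gamma_{St}}\ni z$, the support of $\kappa(\chi-1)$ lies in a region where $\Psi=\Phi-f(z)\geq c_1>0$, and~\eqref{eq.carre} combined with the uniform boundedness of the symbol $a(\cdot,h)$ and its derivatives forces $\|\kappa(\chi-1)u_{z,wkb}^{(1)}\|_{H^1(\dot\Omega)}=O(e^{-c_1/h}h^{-N})=O(h^\infty)$. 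The hard point will be the $L^2$-to-$H^1$ upgrade: the spectral gap $ch^{3/2}$ only delivers $L^2$-control, and mixed tangential-normal Dirichlet boundary conditions generally forbid $H^1$-regularity at the corner $\overline{\Gamma_T}\cap\overline{\Gamma_N}$, as reflected by the mere $H^{1/2}$-regularity available in Proposition~\ref{pr.QTN}(i); the cut-off by $\kappa$, which has been designed to vanish near this corner, is precisely what enables Lemma~\ref{le.Gaffney} to produce the full $H^1$-bound.
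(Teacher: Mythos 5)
Your proposal is correct, and it rests on the same three pillars as the paper's own proof: the spectral gap of $\Delta_{f,h}^{M,(1)}(\dot\Omega)$ from Proposition~\ref{pr.DeltaTN}~$(iii)$ combined with Lemma~\ref{quadra}, the bounds $\mathcal Q_{f,h}^{M,(1)}(\dot\Omega)(\chi u^{(1)}_{z,wkb})=O(h^\infty)$ and \eqref{laplacewkb} from Lemma~\ref{wkb1}, and a Gaffney-type inequality to upgrade $L^2$ plus form bounds to $H^1$. Your bookkeeping is however genuinely different. The paper works with $\alpha_h=\chi(u^{(1)}_{z,wkb}-k(h)u^{(1)}_h)$ and must therefore invoke the Agmon estimates of Proposition~\ref{pr.Agmon} several times: to control $k(h)(1-\chi)u^{(1)}_h$ in $L^2$, to bound $\mathcal Q_{f,h}^{M,(1)}(\dot\Omega)(\chi u^{(1)}_h)=O(e^{-c/h})$ in the $H^1$ step, and again for $\kappa=\eta$ through $\|(\eta-\chi)u^{(1)}_h\|_{H^1}$. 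By comparing $\chi u^{(1)}_{z,wkb}$ directly with its spectral projection, i.e.\ working with $v=(1-P_0)\chi u^{(1)}_{z,wkb}$ whose quadratic form equals $\mathcal Q_{f,h}^{M,(1)}(\dot\Omega)(\chi u^{(1)}_{z,wkb})$ exactly (no cross terms, since $u^{(1)}_h$ is in the kernel), and by pushing all localization errors onto $c_z(h)\kappa(\chi-1)u^{(1)}_{z,wkb}$, which is killed by the explicit factor $e^{-\Psi/h}$ with $\Psi\geq c_1>0$ on $\supp\big(\kappa(\chi-1)\big)$ and the polynomial size of $c_z(h)$, you bypass Proposition~\ref{pr.Agmon} entirely and treat $\kappa=\chi$ and $\kappa=\eta$ uniformly; this is a modest but real simplification of this step (the Agmon estimates remain indispensable later, e.g.\ in Proposition~\ref{pr.WKB-comparison} and Section~\ref{sec:goodquasimodes}, so nothing is saved globally). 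Two remarks: your application of Lemma~\ref{le.Gaffney} to $\kappa v$ relies, exactly as the paper's application to $\alpha_h$, on the smoothness of $u^{(1)}_h$ away from the corners $\overline{\Gamma_T}\cap\overline{\Gamma_N}$, which $\supp\kappa$ avoids — you correctly identify this as the point of the cut-off; and, as in the paper's own computation, \eqref{laplacewkb} literally yields only $c_z(h)^{-1}=C_{z,wkb}\,h^{\frac{d+1}{4}}(1+O(h))$, the genuinely $O(h^\infty)$ content of \eqref{eq:czh} being $c_z(h)^{-1}=\|\chi u^{(1)}_{z,wkb}\|_{L^2(\dot\Omega)}\,(1+O(h^\infty))$, which is the form in which the estimate is used afterwards.
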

Notice that $|c_z(h)|^{-1}$ is equivalent (in the limit $h \to 0$) to
$\|\kappa u^{(1)}_{z,wkb}\|_{L^2(\dot \Omega)}$ (see~\eqref{laplacewkb}), and can thus be simply
understood as a normalizing factor.
\begin{proof} Let us first consider the case $\kappa=\chi$, the other case is considered at the end of the proof. 
One defines $$k(h):= \langle  u^{(1)}_h, \chi u^{(1)}_{z,wkb}
\rangle_{L^2(\dot \Omega)} \in \mathbb R.$$ If $k(h)<0$, then one
changes $u^{(1)}_h$ to $-u^{(1)}_h$ so that one can suppose without
loss of generality that
$$k(h) \ge 0.$$
 For $h$ small enough, one
has (from Proposition~\ref{pr.DeltaTN}, item $(iii)$)
$$\pi_{[0,ch^{3/2})}( \Delta_{f,h}^{M, (1)}(\dot \Omega) )  ( \chi
u^{(1)}_{z,wkb})= k(h) u^{(1)}_h.$$
Let us define  $$\alpha_h:=\chi(u^{(1)}_{z,wkb}- k(h) u^{(1)}_h).$$ 
Thus, the following identity holds for $h$ small enough
$$\alpha_h=  k(h) \left(1-\chi\right)u^{(1)}_h+\pi_{[h^{3/2}, +\infty]}( \Delta_{f,h}^{M,(1)} ( \dot \Omega ) ) ( \chi u^{(1)}_{z,wkb}).$$
Notice that, using Cauchy-Schwarz inequality and Lemma~\ref{wkb1}, there exist $C>0$ and  $h_0>0$ such that for all $ h\in (0,h_0)$
$$\vert k(h)\vert \leq Ch^{\frac{d+1}{4}}.$$

Therefore, using Lemma~\ref{quadra}, Proposition~\ref{pr.Agmon} and Lemma~\ref{wkb1} we get
\begin{align*}
\Vert  \alpha_h \Vert^2_{L^2(\dot{\Omega})} &\leq 2  k(h)^2 \Vert
\left(1-\chi\right)u^{(1)}_h\Vert^2_{L^2(\dot{\Omega})}  + 2 \left\Vert \pi_{[ch^{3/2}, +\infty]}\left ( \Delta_{f,h}^{M,(1)} ( \dot \Omega ) \right) ( \chi u^{(1)}_{z,wkb}) \right\Vert^2_{L^2(\dot{\Omega})}\\
&\leq Ch^{\frac{d+1}{2}}  \left\Vert \left(1-\chi\right)u^{(1)}_he^{ \frac{ \Psi}{h}}e^{ \frac{ -\Psi}{h}}\right\Vert^2_{L^2(\dot{\Omega})}   +   C h^{-3/2} \mathcal Q_{f,h}^{M,(1)} ( \dot \Omega )(\chi u^{(1)}_{z,wkb})\\
&\leq Ch^{\frac{d+1}{2}} h^{-N_0} e^{-\frac{c}{h} }  +   C h^{-3/2} \mathcal Q_{f,h}^{M,(1)} ( \dot \Omega )(\chi u^{(1)}_{z,wkb})\\
&= O(h^{\infty}),
\end{align*}
 with $c:= \inf_{{\rm supp}( 1-\chi)}  \Psi>0$ (since $\chi=1$ near $z$) and the integer $N_0$ is given by Proposition  \ref{pr.Agmon}. 
Moreover, since $d_{f,h} =hd+df\wedge$ and $d_{f,h}^* =hd^*+\mbf{i}_{\nabla
  f}$, one obtains  using the triangular inequality, the Gaffney
inequality~\eqref{eq.Gaffney} (since $\alpha_h \in H^1_T(\dot
\Omega)$), the fact that $\mathcal Q_{f,h}^{M,(1)} ( \dot \Omega
)(\chi u^{(1)}_h)=O(e^{-\frac{c}{h}})$ (from the Agmon estimate~\eqref{eq.Agmon}) and $\mathcal Q_{f,h}^{M,(1)} ( \dot \Omega
)(\chi u^{(1)}_{z,wkb})=O(h^\infty)$,
\begin{align*}
\Vert \alpha_h\Vert_{H^1(\dot \Omega)}^2
&\leq C( \Vert d\alpha_h\Vert^2_{L^2(\dot \Omega)}+\Vert d^*\alpha_h\Vert^2_{L^2(\dot \Omega)}+\Vert \alpha_h\Vert^2_{L^2(\dot \Omega)} )\\
&\leq Ch^{-2} \left(\mathcal Q_{f,h}^{M,(1)} ( \dot \Omega )(\alpha_h)
  + \Vert \alpha_h \Vert^2_{L^2(\dot \Omega)} \right) \\
&= O(h^{\infty}).
\end{align*}
Moreover since $ \Vert \chi u^{(1)}_h
\Vert_{L^2(\dot \Omega)} =1+O(e^{-\frac ch})$ (from the Agmon estimate~\eqref{eq.Agmon}), by considering $ \Vert \chi (  u^{(1)}_{z,wkb}-  k(h) u^{(1)}_h
)\Vert_{L^2(\dot \Omega)} =O(h^{\infty})$, one gets:  
\begin{align*}
 k(h) ^2  & = \frac{\Vert \chi u^{(1)}_{z,wkb}
  \Vert^2_{L^2(\dot \Omega)} + O(h^{\infty})}{2-\Vert \chi u^{(1)}_h\Vert^2_{L^2(\dot \Omega)}}\\
&= \frac{C^2_{z,wkb} h^{\frac{d+1}{2}} +O(h^{\infty})}{1+O(e^{-\frac ch})},
\end{align*}
with $C_{z,wkb} $  given by~\eqref{cwkbi} in  Lemma~\ref{wkb1}. Therefore, since $k(h)
\ge 0$, 
$k(h)=C_{z,wkb} h^{\frac{d+1}{4}}(1+O(h^{\infty}))$. This concludes
the proof of~\eqref{eq.a-priori-comparison} for $\kappa=\chi$, by choosing
$$c_z(h):=k(h)^{-1}.$$

Let us now deal with the case $\kappa=\eta$. There exists $c>0$ such
that, for $h$ sufficiently small,
 \begin{align*}
 \Vert \eta(u^{(1)}_{z,wkb}- k(h) u^{(1)}_h) \Vert_{H^1(\dot \Omega)} &\leq \Vert \alpha_h \Vert_{H^1(\dot \Omega)}+\Vert (\eta-\chi)(u^{(1)}_{z,wkb}- k(h) u^{(1)}_h) \Vert_{H^1(\dot \Omega)} \\
 &\leq O(h^{\infty}) +\Vert (\eta-\chi)u^{(1)}_{z,wkb}\Vert_{H^1(\dot \Omega)} + \vert k(h)\vert \Vert (\eta-\chi) u^{(1)}_h \Vert_{H^1(\dot \Omega)}\\
 &\leq O(h^{\infty}) + e^{-\frac ch}.
 \end{align*}
The last inequality is the consequence of two facts. First, $\Vert
(\eta-\chi) u^{(1)}_h \Vert_{H^1(\dot \Omega)}=e^{-\frac ch}$ thanks
to Proposition~\ref{pr.Agmon} and~\eqref{eq.Gaffney} together with the fact that $\chi= \eta$ near $z$. Second, a direct computation shows that 
$$\Vert (\eta-\chi)u^{(1)}_{z,wkb}\Vert_{H^1(\dot \Omega)}\leq Ch^{-1}e^{-\frac{\inf_{ {\rm supp}(\eta-\chi)  }  \Psi}{h}}\leq e^{-\frac ch} .$$ 
This concludes the proof of Proposition~\ref{apriori}. 
\end{proof}

The estimate we obtained in Proposition~\ref{apriori} is sufficient to get
the result of Theorem~\ref{TBIG0}. The more precise estimates obtained in 
Section~\ref{sec:wkb_precis} are only needed to prove Theorem~\ref{th.gene_sigma}.

\subsubsection{A more accurate comparison on the WKB approximation}\label{sec:wkb_precis}

The objective of this section is to combine the techniques used to
obtain the Agmon estimates of Proposition~\ref{pr.Agmon} and the first
estimate of the accuracy of the WKB approximation of
Proposition~\ref{apriori} in order to obtain a more precise estimate of the latter.
\medskip

\noindent
Let us start with estimates which are simple consequences of Propositions~\ref{pr.Agmon} and~\ref{apriori}. Notice that,  for
$\kappa\in\{\chi,\eta\}$, one obviously gets from
Proposition~\ref{pr.Agmon}  the following relation in
$\Lambda^1H^1(\dot \Omega)$:
\begin{equation}
\label{eq.a-priori-comparison2}
\exists N_{0}\in\mathbb N ,\,
 e^{\frac{ \Psi}{h}} \kappa(u^{(1)}_h-c_z(h)u^{(1)}_{z,wkb})\ = O (h^{-N_{0}}).
\end{equation}
For the term involving $u^{(1)}_{z,wkb}$, this is due to $\Psi(x)=\Phi(x)-f(z)$ on
$\supp \kappa$ and the estimate~\eqref{eq:czh} on $c_z(h)$.
Let us now set 
\begin{equation}
\label{eq.wh}
w_{h}:=\kappa(u^{(1)}_h-c_z(h)u^{(1)}_{z,wkb}).
\end{equation}\label{page.wh}
The $1$-form $w_{h}$ is in $C^\infty_c(\dot \Omega \cup \Gamma_1)$ and  satisfies in $\dot{\Omega}$:
\begin{align}
\nonumber
\Delta_{f,h}^{(1)}w_{h}&=\kappa \Delta_{f,h}^{(1)}(
u^{(1)}_{h}-c_z(h)u^{(1)}_{z,wkb})
+[\Delta_{f,h}^{(1)},\kappa](u^{(1)}_{h}-c_z(h)u^{(1)}_{z,wkb})\\
\nonumber
&=  -c_z(h)\kappa \Delta_{f,h}^{(1)}u^{(1)}_{z,wkb}
+[\Delta^{(1)}_{f,h},\kappa] (u^{(1)}_{h}-c_z(h)u^{(1)}_{z,wkb})\\
\label{eq.Delta-r1}
& = (r_1+r_{1}') e^{-\frac{ \Psi}{h}}\;,
\end{align}
where, owing to \eqref{WW} and \eqref{eq:czh}:
\begin{equation}
\label{eq.r1}
r_1:= -   e^{\frac{ \Psi}{h}} c_z(h)\kappa
\Delta_{f,h}^{(1)}u^{(1)}_{z,wkb}=O(h^\infty)
\end{equation}
in $\Lambda^{1} L^2(\Omega)$ and, from~\eqref{eq.a-priori-comparison2}:
\begin{align}
\nonumber
&r_1':= e^{\frac{
    \Psi}{h}} \, [\Delta^{(1)}_{f,h},\kappa] (u^{(1)}_{h}-c_z(h)u^{(1)}_{z,wkb}) = O
(h^{-N_{0}})\ {\rm in }\, \Lambda^{1} L^2(\Omega) \text{ and }\\
\label{eq.r1'}
&\supp r_1'\subset \supp \nabla \kappa.
\end{align}
Additionally, one gets similarly on the boundary $\Gamma_{1}$:
$$  
\mathbf{t}w_{h}|_{\Gamma_{1}}=0
\text{ and }
\mathbf{t}d_{f,h}^{*}w_{h}|_{\Gamma_{1}}
= (r_2 +r'_{2})e^{-\frac{ \Psi}{h}}
=(r_2 +r'_{2})e^{-\frac{f-f(z)}{h}},  
$$
where   owing to \eqref{WW} and \eqref{eq:czh}:
\begin{equation}
\label{eq.r2}
 r_2 :=\mathbf{t} e^{\frac{
    \Psi}{h}}\kappa \, d^*_{f,h} (u^{(1)}_{h}-c_z(h)u^{(1)}_{z,wkb}) |_{\Gamma_{1}} =-
 \mathbf{t} e^{\frac{
    \Psi}{h}} \, \kappa\, 
 c_z(h) \, d^*_{f,h} u^{(1)}_{z,wkb}|_{\Gamma_{1}}=O(h^\infty)
\end{equation}
in  $ L^{2}(\pa\Omega)$ and
\begin{align}
\nonumber 
&r_2':= \mathbf{t} e^{\frac{
    \Psi}{h}} h \, \mbf i_{\nabla \kappa } (u^{(1)}_{h}-c_z(h)u^{(1)}_{z,wkb})|_{\Gamma_{1}}= O (h^{-N_{0}})\ {\rm in } \, L^2(\pa \dot \Omega) \, \text{with}\\
    \label{eq.r2'}
 & \supp r_2'\subset \Gamma_{1}\cap\supp \nabla \kappa.
\end{align}\label{page.r}

\noindent
We are now in position to prove the following proposition. 
\begin{proposition}
\label{pr.WKB-comparison} Let us assume that the hypotheses \textbf{[H1]}, \textbf{[H2]} and \textbf{[H3]} hold.
One has the following estimate in the limit $h\to0$:
\begin{equation}
\label{eq.WKB-comparison}
\left \|e^\frac{ \Psi}{h}(u^{(1)}_{h}-c_z(h)u^{(1)}_{z,wkb})\right \|_{ H^1(V_{\Gamma_{St}})}
= 
O(h^{\infty}),
\end{equation}
where $c_z(h)$ is defined by~\eqref{eq:czh_def} and where, we recall, $\Psi(x)=d_a(x,z)$ and $V_{\Gamma_{St}}$ is defined by~\eqref{eq.Omega-1-rectangle}.
\end{proposition}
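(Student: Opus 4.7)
The plan is to upgrade the a priori estimate \eqref{eq.a-priori-comparison2}, which yields $\|e^{\Psi/h} w_h\|_{H^1(\dot \Omega)} = O(h^{-N_0})$, to the $O(h^\infty)$ bound on the strict subdomain $V_{\Gamma_{St}} \Subset V_{\Gamma_{St}'} \Subset \{\kappa \equiv 1\}$. The key structural input is the source decomposition \eqref{eq.Delta-r1}--\eqref{eq.r2'}: the interior and boundary remainders split into \emph{intrinsically small} pieces $r_1,r_2 = O(h^\infty)$ (supported throughout $\supp\kappa$) and \emph{a priori large} pieces $r_1',r_2' = O(h^{-N_0})$ whose supports are contained in $\supp \nabla \kappa$, hence at a fixed positive distance from $\overline{V_{\Gamma_{St}'}}$. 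This geometric separation is exactly what allows the large remainders to produce only exponentially small contributions on $V_{\Gamma_{St}}$.

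I would first construct, for each small $\varepsilon > 0$, a Lipschitz phase $\varphi_\varepsilon:\overline{\dot\Omega}\to \R$ satisfying $\varphi_\varepsilon \leq \Psi$ and $|\nabla \varphi_\varepsilon| \leq (1-\varepsilon)|\nabla f|$ on $\overline{\dot\Omega}$, $\varphi_\varepsilon = (1-\varepsilon)\Psi$ on a neighborhood of $\overline{V_{\Gamma_{St}}}$, and $\varphi_\varepsilon + \eta_0 \leq \Psi$ on $\supp \nabla \kappa$ for some $\eta_0 > 0$ independent of $\varepsilon$. The construction exploits that $\Psi = \Phi - f(z)$ is smooth on $V_{\Gamma_1}$ and solves the eikonal equation \eqref{eikonalequationboundary} there, that the flow of $-\nabla \Phi$ preserves $V_{\Gamma_{St}'}$ by the strong stability of $\Gamma_{St}'$, and Corollary~\ref{re.phi_curve} which guarantees that interpolating $\Phi$ with a function having strictly smaller gradient away from a neighborhood of $V_{\Gamma_{St}}$ preserves the constraint $|\nabla \varphi_\varepsilon|\leq |\nabla f|$. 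Applying Lemma~\ref{le.intbypart} to $w_h$ with weight $\varphi_\varepsilon$, the left-hand side rewrites, via \eqref{eq.Delta-r1}--\eqref{eq.r2'}, as four source contributions: the $r_1,r_2$ pieces yield $O(h^\infty)\,\|e^{\varphi_\varepsilon/h} w_h\|_{H^1}$ since $\varphi_\varepsilon \leq \Psi$, while the $r_1', r_2'$ pieces live on $\supp \nabla \kappa$ where $\varphi_\varepsilon \leq \Psi - \eta_0$, and therefore yield $O(h^{-N_0} e^{-\eta_0/h})\,\|e^{\varphi_\varepsilon/h} w_h\|_{H^1} = O(h^\infty)\,\|e^{\varphi_\varepsilon/h} w_h\|_{H^1}$ by \eqref{eq.a-priori-comparison2}. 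The right-hand side provides the gradient squares, the coercive potential $\langle (|\nabla f|^2-|\nabla \varphi_\varepsilon|^2)e^{\varphi_\varepsilon/h} w_h, e^{\varphi_\varepsilon/h} w_h\rangle \geq c\varepsilon\,\|e^{\varphi_\varepsilon/h} w_h\|_{L^2}^2$ (using that $f$ has no critical point in $\overline{\dot\Omega}$), the favorably signed boundary term on $\Gamma_2$ (since $\partial_n f > 0$ there), and an $O(h)$ Lie perturbation which is absorbed by Gaffney's inequality \eqref{eq.Gaffney} together with the coercivity, for $h$ small depending on $\varepsilon$. This yields $\|e^{\varphi_\varepsilon/h} w_h\|_{H^1(\dot \Omega)} = O(h^\infty)$, and in particular $\|e^{(1-\varepsilon)\Psi/h} w_h\|_{H^1(V_{\Gamma_{St}})} = O(h^\infty)$ for every fixed $\varepsilon > 0$.

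To remove the factor $(1-\varepsilon)$ and recover the full weight $\Psi$ in \eqref{eq.WKB-comparison}, I would feed this improved bound back into the bound on $r_1', r_2'$ on a nested pair $V_{\Gamma_{St}} \Subset V' \Subset V_{\Gamma_{St}'}$, re-run the Agmon estimate on the shrunken cutoff with weight closer to $\Psi$, and iterate. Each iteration improves the exponent in the weight by an amount controlled by the gap $\eta_0$, so finitely many rounds suffice to reach $\Psi$ itself. The main analytical obstacle is precisely the saturation $|\nabla \Psi|^2 = |\nabla f|^2$ imposed by the eikonal equation on $V_{\Gamma_1}$, which forbids a direct coercive estimate with the weight $\Psi$; the remedy is the controlled slack $\varepsilon > 0$ in the intermediate weight $\varphi_\varepsilon$, combined with the flow invariance of $V_{\Gamma_{St}'}$ and the strict gap on $\supp \nabla \kappa$ that turns the crude a priori bound on $r_1', r_2'$ into an $O(h^\infty)$ contribution on $V_{\Gamma_{St}}$.
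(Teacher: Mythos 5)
Your scheme misses the term that actually drives the difficulty: the boundary integral over $\Gamma_{1}$ in Lemma~\ref{le.intbypart}. Since $\mathbf{n}w_h\neq 0$ on $\Gamma_{1}$ and $\partial_n f>0$ there, the identity~\eqref{eq.intbypartphi} produces the term $-h\int_{\Gamma_{1}}\langle w_h,w_h\rangle e^{2\varphi_\varepsilon/h}\,\partial_n f\,d\sigma$ with an \emph{unfavorable} sign, and with your weight $\varphi_\varepsilon=(1-\varepsilon)\Psi$ near $V_{\Gamma_{St}}$ the a priori bound~\eqref{eq.a-priori-comparison2} only gives $e^{2\varphi_\varepsilon/h}|w_h|^2\lesssim h^{-2N_0}e^{-2\varepsilon(f-f(z))/h}$ on $\Gamma_{1}$, which is merely $O(h^{-2N_0})$ near $z$ (where $f=f(z)$). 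This term cannot be absorbed by the interior coercivity $c_\varepsilon\|e^{\varphi_\varepsilon/h}w_h\|_{L^2}^2$, and you list only the benignly signed $\Gamma_2$ contribution. This is exactly why the paper's proof is in two steps: Step~1 first proves the tangential estimate $\|e^{f_+/h}w_h\|_{H^1(V_{\Gamma_{St}'})}=O(h^\infty)$, using a phase built from the \emph{boundary} Agmon distance $d_a^{\partial\Omega}$ (capped by the auxiliary function $\psi$ on $\supp\nabla\eta$) and handling the adverse $\Gamma_1$ term through the $f_-$ trick, i.e.\ the nonnegativity of $\mathcal Q^{M}_{-\tilde\eta f_-,h}$ as in~\eqref{eq.lower-int}; the resulting trace bound~\eqref{eq.step1comparison} is then what controls the $\Gamma_1$ boundary term in the interior propagation of Step~2. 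Your single energy estimate has no substitute for either mechanism, so it does not close.

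The second gap is the passage from the weight $(1-\varepsilon)\Psi$ to $\Psi$. An $O(h^\infty)$ bound for $e^{(1-\varepsilon)\Psi/h}w_h$, valid for each fixed $\varepsilon>0$, is weaker than~\eqref{eq.WKB-comparison} by the exponentially large factor $e^{\varepsilon\Psi/h}$, and your proposed iteration does not repair this: the deficit is proportional to $\varepsilon\Psi$ (not to a fixed gap $\eta_0$ that finitely many rounds could exhaust), and every re-run with a weight closer to $\Psi$ faces the same loss of coercivity caused by the saturation $|\nabla\Psi|=|\nabla f|$ on $V_{\Gamma_1}$, plus the untreated $\Gamma_1$ term above. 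The paper circumvents this by never introducing a global $(1-\varepsilon)$ deficit: the phase is $\varphi_N=\min\{\varphi+Nh\ln\frac1h,\psi\}$ with $\varphi=\Psi-Ch\ln\frac{\Psi}{h}$, so on the target region (where, by strong stability and the strict triangle inequality~\eqref{eq.psi>phi}--\eqref{eq.psi>phi2}, the minimum is attained by the first entry) the weight equals $\Psi$ up to $O(h\ln\frac1h)$, costing only polynomial factors that the arbitrary boost $Nh\ln\frac1h$ absorbs, while the $(1-\varepsilon)$ only enters through $\psi$, i.e.\ through the Agmon distance to $\supp\nabla\kappa$, where it is needed to cap the weight below $\Psi$ and neutralize the $O(h^{-N_0})$ commutator remainders $r_1',r_2'$. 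Your identification of the support separation of $r_1',r_2'$ as the key structural input is correct, but without the logarithmic weights, the cap by $\psi$, and the boundary step, the argument as written does not yield~\eqref{eq.WKB-comparison}.
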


\begin{proof}
As for the proof of Proposition~\ref{pr.Agmon}, 
one first proves an estimate along the boundary $\Gamma_1$  before
propagating it
in $V_{\Gamma_{St}}$.

\medskip
\noindent
\underline{Step 1.} Comparison in $\Gamma_{1}$.
\medskip

\noindent
Let us consider $w_h$ defined by~\eqref{eq.wh} and the cut-off function $\kappa=\eta$
defined in~\eqref{eq.cut-off-2}. 
Like in the first step of the proof of 
Proposition~\ref{pr.Agmon}, we are going to prove an estimate
of the form \eqref{eq.WKB-comparison} with $ \Psi$ replaced by $f_{+}$.
More precisely, we want to show that 
\begin{equation}
\label{eq.estimtoprove}
\left\|e^{\frac{f_{+}}{h}}w_{h}\right\|_{H^{1}(V_{\Gamma_{St}'})}=
\left\| e^\frac{f_{+}}{h}(u^{(1)}_{h}-c_z(h)u^{(1)}_{z,wkb})\right\|_{ H^{1}(V_{\Gamma_{St}'})}=
\mathcal O(h^\infty),
\end{equation}
which implies in particular the following estimate along the boundary,
since
$
f_{+}=f-f(z)
$
in $\Gamma_{1}$,
\begin{equation}
\label{eq.step1comparison}
\left\|e^\frac{f-f(z)}{h}(u^{(1)}_{h}-c_z(h)u^{(1)}_{z,wkb})
\right\|_{ H^{1/2}(\Gamma_{St}')}
=\mathcal O(h^{\infty}).
\end{equation}
In the following, we denote (see Figure~\ref{fig:representation_des_gamma} for a
schematic representation of the set $V_\eta$) 
$$V_\eta=\supp \eta.$$
\label{page.Veta}
In the system of coordinates $(x',x_d)$, $x \in V_\eta$ if and
only if $x'(x) \in \supp \eta_1$ and $x_d(x) \in \supp \eta_d$.
We recall that $V_\eta$ is a compact set of $\dot \Omega \cup \Gamma_1$.
As for the proof of 
Proposition~\ref{pr.Agmon}, we  introduce the sets
$$ \Omega_{-}=\left\{x\in V_\eta\ \text{s.t.}\  f_{+}(x) \leq Ch\right\}
\text{ and }
\Omega_{+}=V_\eta\setminus \Omega_{-},
$$
and define the Lipschitz function $\varphi:V_\eta \to \R$ by
$$
\varphi=
\left\{
\begin{aligned}
f_{+}-Ch\ln \frac{f_{+}}{h}   &\text{ if } f_{+} > Ch,\\
f_{+}-Ch\ln C &\text{ if } f_{+} \leq Ch ,
\end{aligned}
\right.
$$
for some constant $C> 1$ that will be fixed at the end of this
step. Notice for further purposes that 
\begin{equation}\label{eq:cv_unif_phi}
\lim_{h \to 0} \|\varphi - f_+\|_{L^\infty(V_\eta)}=0.
\end{equation}
 We recall that in the system of coordinates $(x',x_d)$,
$\varphi$ is independent of $x_d$.

The reasoning below is based on~\cite{helffer-sjostrand-84}, see also~\cite[p.~49--52]{helffer-88}
for a presentation in the case without boundary.
According to~\eqref{eq.estimtoprove}, we want to get an error of the form $\mathcal O(h^{N})$ with $N$ arbitrary.
We are not going to work with  the above phase function
$\varphi$ as we did in the proof of Proposition~\ref{pr.Agmon},
but with a phase function $\varphi_{N}$ also  depending 
on some arbitrary $N\in\nz$.  
Let us define
$$\tilde{w}_{h} =e^{\frac{\varphi_{N}}{h}}w_{h}= e^{\frac{\varphi_{N}}{h}}\eta(u^{(1)}_h-c_z(h)u^{(1)}_{z,wkb}).$$\label{page.tildewh}
Combining the integration by parts formula~\eqref{eq.intbypartphi}
(with $u=w_h$ and $\varphi=\varphi_N$)
with the Green formula~\eqref{eq.Green} (with $u=w_h$ and $v=e^{2\frac{\varphi_{N}}{h}}w_h$)  leads to the estimate
\begin{align}
 \left\|e^{\frac{\varphi_{N}}{h}} \Delta^{(1)}_{f,h}
 w_{h}\right\|_{ L^{2}(V_\eta)} &\left\|\tilde{w_{h}}\right\|_{ L^{2}(V_\eta)}
+ h
\left\| e^{\frac{\varphi_{N}}{h}}
\mathbf{t}d^*_{f,h}w_{h}
\right\|_{ L^{2}(\Gamma_{1})}\left\|\tilde{w}_{h}\right\|_{ L^{2}(\Gamma_{1})} 
\nonumber
\\
&
\geq 
\left\|hd\tilde{w}_{h}\right\|^{2}_{ L^{2}(V_\eta)}+\left\|hd^{*}\tilde{w}_{h}\right\|^{2}_{ L^{2}(V_\eta)}
-h\int_{\Gamma_{1}}\langle\tilde{w}_{h},\tilde{w}_{h}
\rangle_{ T_{\sigma}^{*}\Omega}\pa_{n} f\, d\sigma
\nonumber
\\
&\quad
+
\langle (|\nabla
f|^{2}-|\nabla\varphi_{N}|^{2}+h\mathcal L_{\nabla f}+
h\mathcal L^*_{\nabla f}) 
\tilde{w}_{h},\tilde{w}_{h}\rangle_{L^2(V_\eta)}  .
\label{eq.first-energy}
\end{align}
\begin{sloppypar}
Let us explain formally how the function $\varphi_N$ is chosen.  Roughly speaking, using similar arguments as in the proof of Proposition~\ref{pr.Agmon}, it is natural to choose $\varphi_{N}=\varphi+Nh\ln \frac1h$  and to try to prove that  the left-hand side of
\eqref{eq.first-energy} is bounded from above by 
$\mathcal O(h^{-N_{1}})\left\|\tilde{w_{h}}\right\|_{ H^{1}(V_\eta)}$ for some $N_{1}\in\mathbb N$ independent of~$N$. This would indeed 
 lead to an estimate of the form $\left\|\tilde{w_{h}}\right\|_{ H^{1}(V_\eta)}=\mathcal O(h^{-N_{1}})$
(for some maybe larger~$N_1$) and finally to the desired estimate on $w_h$ since $\|\tilde w_h\|_{H^1(V_\eta)} \simeq h^{-N} \|e^{\frac{f_+}{h}} w_h\|_{H^1(V_\eta)}$. To get this upper bound,  a trace theorem and  \eqref{eq.Delta-r1}--\eqref{eq.r2'} yield the following estimate from~\eqref{eq.first-energy}:
$$\|e^{\frac{\varphi_N-\Psi}{h}} \|_{L^\infty(V_\eta)} O(h^\infty) + \|e^{\frac{\varphi_N-\Psi}{h}} \|_{L^\infty({\rm supp} (\nabla \eta))} O(h^{-N_1}) \ge \|\tilde w_h\|_{H^1(V_\eta)} \simeq h^{-N} \|e^{\frac{f_+}{h}} w_h\|_{H^1(V_\eta)}$$
for some  $N_{1}\in\mathbb N$ independent of~$N$. It can be checked that $\|e^{\frac{\varphi_N-\Psi}{h}} \|_{L^\infty(V_\eta)}= \|e^{\frac{\varphi_N-\Psi}{h}} \|_{L^\infty({\rm supp} (\nabla \eta))} = O(h^{-N})$ so that the first term is well controlled, but the second one is of order $O(h^{-N-N_1})$. These relations suggest
 a choice of $\varphi_{N}$ satisfying
$\varphi_{N}\leq  f_+ \leq \Psi$ on $\supp\nabla\eta$ so that 
$\|e^{\frac{\varphi_N-\Psi}{h}} \|_{L^\infty({\rm supp} (\nabla \eta))} = O(1)$. This would yield the desired estimate $\|e^{\frac{f_+}{h}} w_h\|_{H^1(V_\eta)}=O(h^{N-N_1})$.
\end{sloppypar}

Let us now enter the rigorous proof. The above considerations (see also \cite[p.~49--52]{helffer-88})
lead to define, for any $N\in\nz$,
\begin{equation}\label{eq:phi_N}
\varphi_{N}=\min \left\{ \varphi+Nh\ln \frac1h , 
  \psi \right\},
\end{equation}
where the Lipschitz function $\psi:V_\eta \to \R$ is defined by the following
relation, for some $\varepsilon\in (0,1)$
that will be specified below:
\begin{equation}\label{eq:psi_phi}
\psi(x',x_d)=\psi(x',0)=\min\left\{\varphi(y',0)+(1-\varepsilon) d^{\pa\Omega}_{a}(x',y'),\
  y'\in \supp \nabla\eta_{1}\right\} .
\end{equation}
Here, $d^{\pa\Omega}_{a}(x',y')$ denotes the Agmon distance
associated with $f|_{\pa\Omega}$
between $x'$ and $y'$ along the boundary (see Definition~\ref{defff}),
i.e. the distance induced by the metric
$|\nabla(f|_{\pa\Omega})|^{2} \,ds^2$,
where $ds^2$ denotes the restriction of the Euclidean metric to the boundary $\pa\Omega$
.

\medskip
\noindent
\underline{Step 1-a:} Preliminary estimates on $\varphi_N$.
\medskip

\noindent
Let us first show that there exists $\varepsilon \in (0,1)$ such that
for any $h\in (0,h_0(N,\varepsilon))$ with $h_{0}=h_0(N,\varepsilon)$ small enough,
\begin{equation}
\label{eq.psi>phi}
\varphi_{N}=\varphi+Nh\ln \frac1h <
\psi  \text{ in }\ V_\eta\cap\left\{x'\in \overline{\Gamma'_{St}}\right\}.
\end{equation}


The proof of~\eqref{eq.psi>phi} is as follows. From~\eqref{eq:ineq}
applied to $d^{\pa\Omega}_{a}$,
\begin{equation}
\label{eq.triang-ineq}
\forall (x',y')\in \overline{\Gamma_{St}'}\times\supp\nabla\eta_{1}\ ,\quad f_{+}(x',0)
 < f_{+}(y',0)+ d^{\pa\Omega}_{a}(x',y').
\end{equation}
The inequality above is strict since if $f_{+}(x',0)
 = f_{+}(y',0)+ d^{\pa\Omega}_{a}(x',y')$ for some
$(x',y') \in \overline{\Gamma_{St}'}\times\supp\nabla\eta_{1}$, then
there exists a generalized integral curve (in the sense of Definition~\ref{generalized_curve}) of $-\nabla
(f|_{\pa\Omega})=-\nabla f_+$ 
joining $x'\in \overline{\Gamma_{St}'}$ to $y\in\supp\nabla\eta_{1}
$ (this is  a consequence of Corollary~\ref{ccc} applied to
the Agmon distance $d^{\pa\Omega}_{a}$ on $\pa \Omega$ rather than the Agmon distance $d_{a}$ in $\overline \Omega$). But since
$\Gamma'_{St}$ is strongly stable, any integral curve  of $-\nabla
f_+$  remains in $\overline{\Gamma'_{St}}$, and thus cannot reach $y'$ which is
not in $\overline{\Gamma'_{St}}$ (see~\eqref{eq.cut-off-2}).

From the strict inequality (\ref{eq.triang-ineq}), there exists
$\ve_0>0$ such that for all $\ve \in [0,\ve_0)$,
$$
\forall (x',y')\in
\overline{\Gamma_{St}'}\times\supp\nabla\eta_{1}\ ,\quad f_{+}(x',0)
\le f_{+}(y',0)+ (1-\ve)d^{\pa\Omega}_{a}(x',y'),
$$
and thus, considering the limit $h \to 0$ (see~\eqref{eq:cv_unif_phi})
and the infimum over $y'\in \supp\nabla\eta_{1}$ of the right-hand
side, there exists $\ve>0$ used to define $\psi$ (see~\eqref{eq:psi_phi}) and such that, for sufficiently small $h$,
$$
\forall x'\in \overline{\Gamma_{St}'}\ ,\quad f_{+}(x',0)
 < \psi(x',0).
$$
Moreover, since $\lim_{h \to 0} \|\varphi+Nh\ln \frac1h
-f_+\|_{L^\infty(V_\eta)}=0$ (thanks to~\eqref{eq:cv_unif_phi}), one
obtains for $h$ small enough, $\forall x' \in
\overline{\Gamma_{St}'}$, $ \varphi(x',0) + N h \ln \frac1h
 < \psi(x',0)$, and by definition of $\varphi_N$, $\varphi_N(x',0)=
\varphi (x',0)+Nh\ln \frac1h$ which leads to
(\ref{eq.psi>phi}) for $x=(x',0)$, with $x' \in
\overline{\Gamma'_{St}}$. The fact that $\varphi_N$ and $\psi$ do 
not depend on $x_d$ in the system of coordinates $(x',x_d)$ concludes the proof of~\eqref{eq.psi>phi}.

Let us now prove that
\begin{equation} \label{Me}
\exists M< \frac{1}{1-\varepsilon},\,
\forall x \in V_\eta, \left|\nabla \psi(x) \right|\leq
 M(1-\varepsilon)\left|\nabla
f_{+}(x) \right|.
\end{equation}
The triangular inequality applied to $d^{\pa\Omega}_{a}$ leads to the
relation (since $\psi(x',x_d)$ does not
depend on $x_d$)
\begin{equation}
\label{eq.Lip}
\forall x,y \in V_\eta, \, 
|\psi(x',x_d)-\psi(y',y_d)| \leq (1-\varepsilon)d^{\pa\Omega}_{a}(x',y').
\end{equation}
where we denote $(x',x_d)$ (resp. $(y',y_d)$) the coordinates of $x$
(resp. $y$) in the system of coordinates~\eqref{eq:x'xd}. Let us first
show that~\eqref{eq.Lip} implies that
\begin{equation}
\label{eq.psi(x')}
\text{for a.e. $x' \in V_\eta \cap \partial \Omega$,} \, 
\left|\nabla (\psi|_{\pa\Omega}) (x')\right|\leq
(1-\varepsilon)\left|\nabla
(f|_{\pa\Omega}) (x')\right| = (1-\varepsilon)\left|\nabla
(f_+|_{\pa\Omega} )(x') \right| .
\end{equation}
Indeed, let us consider a local parametrization in $\R^{d-1}$ of a
neighborhood in $\partial \Omega$ of a point $x' \in \partial
\Omega$. In this local chart, let us consider $y_\alpha=x'+\alpha
\frac{\displaystyle\nabla (\psi|_{\pa\Omega})}{\displaystyle\left|\nabla
  (\psi|_{\pa\Omega})\right|}(x')$. One has, in the limit $\alpha \to 0$,
$$\psi(y'_\alpha,0)-\psi(x',0)=\alpha |\nabla
(\psi |_{\pa\Omega}) (x')| + o(\alpha)$$
and likewise using the inequality~\eqref{eq:eqq} applied to
$d^{\pa\Omega}_{a}$ (see also \cite[p.~53]{dimassi-sjostrand-99})
$$d^{\pa\Omega}_a(x,'y'_\alpha)-d^{\pa\Omega}_a(x',x')\le\alpha |\nabla
(f|_{\pa\Omega}) (x')| + o(\alpha).$$
By considering the limit $\alpha \to 0$, one thus
deduces~\eqref{eq.psi(x')} from~\eqref{eq.Lip}.

Now, one can check that, uniformly in $x' \in V_\eta \cap \partial
\Omega$,
\begin{equation}\label{eq:limit_xd}
\lim_{x_d \to 0} |\nabla \psi(x',x_d)|=|\nabla (\psi|_{\partial
  \Omega}) (x')| \text{ and } \lim_{x_d \to 0} |\nabla f_+(x',x_d)|=|\nabla ( f_+|_{\partial
  \Omega} )(x')|.
\end{equation}
Indeed, using the fact that $\psi$ does not depend on $x_d$, one first
has almost everywhere (see~\eqref{eq.psi(x')_0}) $|\nabla \psi(x',x_d)|=|\nabla (\psi|_{\Sigma_{x_d}})(x')|$ where
$\forall a>0$, $\Sigma_a=\{x \in V_\eta, \, x_d(x)=a\}$ is endowed
with the Riemannian structure induced by the Riemannian structure in
$\Omega$. Now, let us consider the smooth diffeomorphism $\Gamma_{x_d}:
\Sigma_{x_d} \to \partial \Omega$ such that for all $x=(x',x_d) \in
\Sigma_{x_d}$, $\Gamma_{x_d}(x)=(x',0) \in \partial \Omega$. The
result~\eqref{eq:limit_xd} on $\psi$ is then a consequence of the fact that
$\psi|_{\partial \Omega} \circ \Gamma_{x_d} = \psi|_{\Sigma_{x_d}}$
and $\lim_{x_d \to 0} \| \Gamma_{x_d} - {\rm Id}
\|_{W^{1,\infty}(\Sigma_{x_d})} = 0$ so that the Jacobian associated
to the change of metric from $\Sigma_{x_d}$ to $\partial \Omega$
converges to $ {\rm Id}$, uniformly on $\Sigma_{x_d}$.
The same reasoning show that~\eqref{eq:limit_xd} also holds for $f_+$ since $f_+$ does
not depend on $x_d$.

By combining~\eqref{eq.psi(x')} and~\eqref{eq:limit_xd}, one
obtains~\eqref{Me} for some $M>1$. Moreover, $M$ can be chosen as close to $1$
as needed, up to modifying $\eta$ (and thus $V_{\Gamma_{St}}\subset V_{\Gamma'_{St}}\subset V_\eta$) such that for all $x \in V_\eta$,
$\| \Gamma_{x_d} - {\rm Id} \|_{W^{1,\infty}(\Sigma_{x_d})}$ is as
  close to $0$ as needed.

Let us finally mention the following inequalities, valid for $h\in (0,h_0)$ 
with $h_0=h_{0}(N,\varepsilon)>0$ small enough:
\begin{align}
\label{eq.AgmonWeight1}
  &\varphi_{N}
   \leq  f_{+}+Nh\ln \frac1h
   \leq   \Psi+Nh\ln \frac1h
  \quad \text{ in } V_\eta\\
  \label{eq.AgmonWeight2}
& \varphi_{N}=\psi \leq \varphi \leq  f_{+} \leq   \Psi
  \quad \text{ in } V_\eta \cap \{x' \in \supp\nabla\eta_{1}\}
\end{align}
 and since $  \Psi=f_++f_{-}>f_{+}$ on $\{x_d\in\supp \eta_{d}'\}$, one has 
   \begin{equation}
  \label{eq.AgmonWeight3}
\varphi_{N} \leq  f_{+}+Nh\ln \frac1h \leq   \Psi\quad
  \text{ in } V_\eta \cap \{ x_d\in\supp  \eta_{d}' \}.
\end{equation}

\medskip
\noindent
\underline{Step 1-b:} Proof of~\eqref{eq.estimtoprove}.
\medskip

\noindent
We are now ready 
to prove
 \eqref{eq.estimtoprove}. Controlling the left-hand side of \eqref{eq.first-energy}
using the relations \eqref{eq.Delta-r1}--\eqref{eq.r2'} gives
\begin{align*}
 \left\| (r_{1} +r_{1}')e^{\frac{\varphi_{N}- \Psi}{h}}
\right\|_{ L^{2}(V_\eta)} &\left\|\tilde{w_{h}}\right\|_{ L^{2}(V_\eta)}
+ 
\left\|  (r_{2}+r_{2}')e^{\frac{\varphi_{N}- \Psi}{h}}
\right\|_{ L^{2}(\Gamma_{1})}\left\|\tilde{w}_{h}\right\|_{ L^{2}(\Gamma_{1})} 
\\
&
\geq 
\left\|hd\tilde{w}_{h}\right\|^{2}_{L^{2}(V_\eta)}+\left\|hd^{*}\tilde{w}_{h}\right\|^{2}_{L^{2}(V_\eta)}
-h\int_{\Gamma_{1}}\langle\tilde{w}_{h},\tilde{w}_{h}
\rangle_{ T_{\sigma}^{*}\Omega}\pa_{n} f \, d\sigma
\\
&\quad
+
\langle (|\nabla
f|^{2}-|\nabla\varphi_{N}|^{2}+h\mathcal L_{\nabla f}+
h\mathcal L^*_{\nabla f}) 
\tilde{w}_{h},\tilde{w}_{h}\rangle_{L^2(V_\eta)}\,  ,
\end{align*}
where, since $\varphi_{N}- \Psi\leq Nh\ln\frac1h$ (by
\eqref{eq.AgmonWeight1}) and $r_{i}=\mathcal O(h^\infty)$ for
$i\in\{1,2\}$ (by~\eqref{eq.r1} and~\eqref{eq.r2}),
$$
\left\|r_{1}e^{\frac{\varphi_{N}- \Psi}{h}}\right\|_{ L^2(V_\eta)}
+
\left\|r_{2}e^{\frac{\varphi_{N}- \Psi}{h}}\right\|_{ L^2(\Gamma_{1})}
=\mathcal O(h^\infty),
$$
and, since $\varphi_{N}\leq \Psi$ on $\supp\nabla\eta$ (by
\eqref{eq.AgmonWeight2}--\eqref{eq.AgmonWeight3}) and $\supp
r_i'\subset \supp\nabla\eta$ for $i\in\{1,2\}$ (by~\eqref{eq.r1'} and~\eqref{eq.r2'}),
$$
\left\|r_{1}'e^{\frac{\varphi_{N}- \Psi}{h}}\right\|_{ L^{2}(V_\eta)}
+
\left\|r_{2}'e^{\frac{\varphi_{N}- \Psi}{h}}\right\|_{ L^{2}(\Gamma_{1})} 
=\mathcal O(h^{-N_{0}}).
$$
This leads to the existence of  $C_{1}=C_{1}(N)>0$ such that
for $h$ small enough:
\begin{align*}
C_{1}h^{-N_{0}}\left\|\tilde{w_{h}}\right\|_{ H^1(V_\eta)}
&\geq 
\left\|hd\tilde{w}_{h}\right\|^{2}_{ L^{2}(V_\eta)}+\left\|hd^{*}\tilde{w}_{h}\right\|^{2}_{ L^{2}(V_\eta)}
-h\int_{\Gamma_{1}}\langle\tilde{w}_{h},\tilde{w}_{h}
\rangle_{ T_{\sigma}^{*}\Omega}\pa_{n} f \, d\sigma
\\
&\quad+
\langle (|\nabla
f|^{2}-|\nabla\varphi_{N}|^{2}+h\mathcal L_{\nabla f}+
h\mathcal L^*_{\nabla f}) 
\tilde{w}_{h},\tilde{w}_{h}\rangle_{L^2(V_\eta)}  .
\end{align*}
Since
$\varphi_N\leq \varphi+Nh\ln \frac1h$, $\varphi \leq Ch$ on
$\Omega_{-}$ and $\left\|w_{h}\right\|_{ H^{1}(V_\eta)}=\mathcal O
(h^{\infty})$ (see~\eqref{eq.a-priori-comparison})
\begin{equation}\label{eq.star0}
\left\|\tilde{w}_{h}\right\|_{ L^{2}(\Omega_{-})}\leq
e^C h^{-N} \left\|w_{h}\right\|_{ L^{2}(\Omega_{-})}
\leq C_{2}(C,N).
\end{equation}
Thus, since $\mathcal L_{\nabla f}+
\mathcal L^*_{\nabla f}$ is a $0^{\text{th}}$ order differential operator, we get the existence of  $C_{3}>0$ independent
of $(C,N)$ and of $C_{4}=C_{4}(C,N)$ such that: 
\begin{align*}
C_{4}(h^{-N_{0}}\left\|\tilde{w}_{h}\right\|_{ H^{1}(V_\eta)}+1)
&\geq 
\left\|hd\tilde{w}_{h}\right\|^{2}_{ L^{2}(V_\eta)}+\left\|hd^{*}\tilde{w}_{h}\right\|^{2}_{ L^{2}(V_\eta)}
-h\int_{\Gamma_{1}}\langle\tilde{w}_{h},\tilde{w}_{h}
\rangle_{ T_{\sigma}^{*}\Omega}\pa_{n} f \, d\sigma
\\
&\quad
+ \langle |\nabla f_{-}|^{2}\tilde{w}_{h},\tilde{w}_{h}\rangle_{ L^{2}(V_\eta)}+
\langle (|\nabla
f_{+}|^{2}-|\nabla\varphi_{N}|^{2}-C_{3}h)
\tilde{w}_{h},\tilde{w}_{h}\rangle_{ L^{2}(\Omega_+)}.
\end{align*}
Moreover, by definition of $\varphi_{N}$, a.e. in $\Omega_+$,
$\nabla \varphi_N=\nabla\psi 1_{\{\varphi_{N}=\psi\}} + \nabla
f_{+}(1-\frac{Ch}{f_{+}}) 1_{\{\varphi_{N}<\psi\}}$. Now,
\begin{itemize}
\item On $\{\varphi_{N}=\psi\} $, since by~\eqref{eq.psi>phi} $\{\varphi_{N}=\psi\}$  avoids a neighborhood of
$\{(z,x_d),x_d\in {\rm supp}\, \eta_d\}=\{x\in V_\eta,|\nabla f_{+}(x)|=0\}$ (see~\eqref{eq:nablaf+}), we get
$$
  \left|\nabla
  f_{+}\right|^{2}-\left|\nabla\varphi_{N}\right|^{2}
 \geq  \big(1-M^{2}(1-\varepsilon)^{2}\big)\left|\nabla
  f_{+}\right|^{2} \geq  c_{\varepsilon}>0,
$$
  where~\eqref{Me} have been used;
\item On $\{\varphi_{N}<\psi\}\cap \Omega_+ $, we get like in the proof of 
Proposition~\ref{pr.Agmon} (see~\eqref{eq.diff.f+/Ag} and~\eqref{eq.lowboundnablaf}),
$$
\left|\nabla f_{+}\right|^{2}-\left|\nabla\varphi_{N}\right|^{2}\geq KCh.
$$
\end{itemize}
Choosing $C> \max (1,\frac{C_{3}}{K})$, we 
obtain that for $h$ small enough: 
\begin{align}
\nonumber
C_{4}(h^{-N_{0}}&\left\|\tilde{w}_{h}\right\|_{ H^{1}(V_\eta)}+1)
\geq 
\left\|hd\tilde{w}_{h}\right\|^{2}_{ L^{2}(V_\eta)}+\left\|hd^{*}\tilde{w}_{h}\right\|^{2}_{ L^{2}(V_\eta)}-h\int_{\Gamma_{1}}\langle\tilde{w}_{h},\tilde{w}_{h}
\rangle_{ T_{\sigma}^{*}\Omega}\pa_{n} f\, d\sigma
\\
&+ \langle |\nabla f_{-}|^{2}\tilde{w}_{h},\tilde{w}_{h}\rangle_{ L^{2}(V_\eta)}
+
(KC-C_{3})h\left(\left\|
\tilde{w}_{h}\right\|^2_{{ L^{2}}(V_\eta)}
-\left\|
\tilde{w}_{h}\right\|^2_{{ L^{2}(\Omega_{-})}}
\right). \label{eq.estim-step2}
\end{align}
We can now control from below the r.h.s. of the above estimate
exactly as we did  at the end of the first step of Proposition~\ref{pr.Agmon}:
defining $C_{5}(C):= \frac{KC-C_{3}}{2\left\||\nabla
    f_{-}|^2\right\|_{L^\infty(V_\eta)}}$ (see~\eqref{eq:nablaf-}),
one gets the inequality
\begin{equation}
\label{eq.recall-pr-Agmon}
(KC-C_{3})h\left\|
\tilde{w}_{h}\right\|^2_{ L^{2}(V_\eta)}
+
\langle |\nabla f_{-}|^{2}\tilde{w}_{h},\tilde{w}_{h}\rangle_{ L^{2}(V_\eta)}
\ 
\geq\
(1+2C_{5}h) \langle |\nabla f_{-}|^{2}\tilde{w}_{h},\tilde{w}_{h}\rangle_{ L^{2}(V_\eta)}
\end{equation}
and from Lemma~\ref{le.GreenWeak} applied with 
$u=\tilde{w}_{h}$,
$ f=-\tilde\eta f_{-}$ where $\tilde \eta \in C^{\infty}(\overline \Omega,[0,1])$, $\tilde \eta =1$ on ${\rm supp}\, \eta $, ${\rm supp} \,\tilde\eta \subset ({\rm supp} \,\eta  +B(0,\alpha))\cap \overline \Omega$     for $\alpha>0$ such that $f_-$ is smooth on ${\rm supp}\, \tilde \eta $  and $\frac{h}{1+C_{5}h}$ instead of $h$,
one gets the following lower bound: 
\begin{align}
 -h \int_{\Gamma_{1}}
\langle\tilde{w}_{h},\tilde{w}_{h}
\rangle
\pa_{n}f \, d\sigma &= h \int_{\Gamma_{1}}
\langle\tilde{w}_{h},\tilde{w}_{h}
\rangle
\pa_{n}f_{-} \, d\sigma \nonumber \\
& \geq\  - 
(1+C_{5}h)\left\| |\nabla f_{-}|\tilde{w}_{h}\right\|_{L^2(V_\eta)}^2
\label{eq.lower-int}
\\
& \quad -\frac{h^2}{1+C_{5}h} \left(\left\|d\tilde{w}_{h}\right\|^2_{L^2(V_\eta)}
+\left\|d^{*}\tilde{w}_{h}\right\|_{L^2(V_\eta)}^{2}
\right)
-  C_{6}h \| \tilde{w}_{h}\|_{L^2(V_\eta)}^{2},
\nonumber\end{align}
where $C_{6}$ is some positive constant independent of $C$
(it only depends on $f_-$).
Injecting  the estimates \eqref{eq.recall-pr-Agmon} and \eqref{eq.lower-int}
in~\eqref{eq.estim-step2}
then leads to:
\begin{align*}
C_{4}(h^{-N_{0}}&\left\|\tilde{w}_{h}\right\|_{ H^{1}(V_\eta)}+1)\geq 
  \frac{C_{5}h^3}{1+C_{5}h}\left(
  \left\|d\tilde{w}_{h}\right\|^{2}_{ L^{2}(V_\eta)}
+\left\|d^{*}\tilde{w}_{h}\right\|^{2}_{ L^{2}(V_\eta)}\right)
\\
&\quad +
C_{5}h\left\| |\nabla f_{-}|\tilde{w}_{h}\right\|^2_{ L^2(V_\eta)}
- (KC-C_{3})h
 \left\|\tilde{w}_{h}\right\|^2_{ L^{2}(\Omega_{-})}
 -  C_{6}h \| \tilde{w}_{h}\|^{2}_{ L^2(V_\eta)}.
\end{align*}
Then, since $|\nabla f_{-}|\geq c>0$ on $\overline{V_\eta}$
(see~\eqref{eq:nablaf-}), $\lim_{C\to \infty} C_{5}(C)= +\infty$. Therefore, since $C_{6}$ is independent of $C$,
one can choose $C$ such that $c^2C_{5}-C_{6}>0$, which implies, remembering also $ \left\|\tilde{w}_{h}\right\|_{ L^{2}(\Omega_{-})}
\leq C_{2}(C,N)$ (see~\eqref{eq.star0}),
 the existence of a constant $C_{7}>0$ and a constant
$h_0>0$ such that, for every $h\in (0,h_0)$,
\begin{equation*}
\| \tilde{w}_{h} \|^{2}_{  L^{2}(V_\eta)}
+\|d\tilde{w}_{h}\|^{2}_{  L^{2}(V_\eta)}+\|d^*\tilde{w}_{h}\|^{2}_{  L^{2}(V_\eta)}
 \leq
 \frac{C_{7}}{h^{3}}(h^{-N_{0}}\left\|\tilde{w}_{h}\right\|_{ H^{1}(V_\eta)}+1).
\end{equation*}
According to Gaffney's inequality \eqref{eq.Gaffney}, this finally leads to the existence
of a positive constant
$C_{8}$ such that
$$
\left\|\tilde{w}_{h}\right\|_{ H^{1}(V_\eta)}\ \leq\  C_8
h^{-N_{0}-3}.
$$
Moreover, according to \eqref{eq.psi>phi}, we have $\varphi_{N}=\varphi+ Nh\ln\frac1h$ in $V_\eta\cap\left\{x'\in \overline{\Gamma'_{St}}\right\}$ 
and then $\varphi_{N}-Nh\ln\frac1h-f_{+} \ge  - C_9 h\ln\frac1h$
(with a constant $C_9$ independent of $N$)
in $V_{\Gamma_{St}'}\subset V_\eta\cap\left\{x'\in \overline{\Gamma'_{St}}\right\}$.
Therefore, there exists $N_1$ independent of $N$
such that for $h$ small enough,
$$
\left\|e^{\frac{f_{+}}{h}}w_{h}\right\|_{ H^{1}(V_{\Gamma_{St}'})}
 \leq  C_{N} h^{N-N_{1}},
$$
from which \eqref{eq.estimtoprove} and  \eqref{eq.step1comparison} follow
since $N$ is arbitrary.\\

\medskip
\noindent\underline{Step 2:} Comparison in $V_{\Gamma_{St}}$.
\medskip

\noindent
We work now with the cut-off function $\chi$
defined in~\eqref{eq.cut-off}. Recall that $\eta\chi=\chi$.
Similarly as in the previous step, let us define
the sets 
$$  \Omega_{-}=\left\{x\in V_{\Gamma_{St}'}  \text{ s.t. }  \Psi(x)\leq Ch\right\}
\text{ and }
\Omega_{+}=V_{\Gamma_{St}'}\setminus \Omega_{-},
$$
and the function 
$$\varphi_{N}= \min\left\{\varphi+Nh\ln \frac1h,
   \psi \right\},
$$   
where $\varphi$ and $\psi$
are respectively defined by
\begin{align*}
\varphi=
\left\{
  \begin{aligned}[c]
 \Psi-Ch\ln \frac{ \Psi}{h}\  &\text{if}\  \Psi> Ch\\
 \Psi-Ch\ln C\ &\text{if}\   \Psi \leq Ch,
  \end{aligned}
\right.
\end{align*}
and
$$
\psi(x)=\min\left\{\varphi(y)+(1-\varepsilon) d_{a}\left(x,y\right),\
  y\in  \supp \nabla\chi\right\}.$$
The constant $C>1$ will be chosen at the end of the proof. Following
the proof of~\eqref{eq.psi>phi}, there exists $\varepsilon\in (0,1)$ such that for any $h\in (0,h_0)$
with $h_{0}=h_0(N,\varepsilon)$ small enough,
\begin{equation}
\label{eq.psi>phi2}
\varphi_{N}=\varphi+Nh\ln \frac1h < 
\psi  \mbox{ in }\ V_{\Gamma_{St}}.
\end{equation}
Indeed, using
the fact that $\Psi(x)=d_a(x,z)$ and a triangular inequality,
$$
\forall\left(x,y\right)\in\overline{V_{\Gamma_{St}}}\times\supp\nabla\chi,
\quad
 \Psi(x)< \Psi(y)+ d_{a}\left(x,y\right).
$$
The inequality is strict since if $\Psi(x)= \Psi(y)+ d_{a}\left(x,y\right)$
for some
$\left(x,y\right) \in\overline{V_{\Gamma_{St}}}\times\supp\nabla\chi$,
then $\Phi(x)-\Phi(y)=d_a(x,y)$ and from Corollary~\ref{re.phi_curve}, up to modifying $\eta$ such that $V_{\eta}\subset V_{\alpha}$ (see Corollary~\ref{re.phi_curve} for the definition of $V_{\alpha}$) there exists a generalized integral curve (in the sense of Definition~\ref{generalized_curve}) of 
$\left\{\begin{aligned}
-\nabla\Phi & \text{ on } V_{\alpha}\cap  \Omega\\
-\nabla_{\!T}\Phi &  \text{ on }
\pa\Omega \end{aligned}\right.$ 
joining $x\in\overline{V_{\Gamma_{St}}}$ to  $y\notin\overline{V_{\Gamma_{St}}}$. This
contradicts the fact that $\overline{V_{\Gamma_{St}}}$ is stable
for~\eqref{eq:flow_Phi}. The end of the proof of~\eqref{eq.psi>phi2}
then follows exactly the same lines of the proof of~\eqref{eq.psi>phi}.

Moreover, owing to the properties of $d_{a}$, one has
analogously to \eqref{eq.psi(x')}
the following estimate valid a.e. in $V_{\Gamma_{St}'}$:
\begin{equation}
\label{eq.psi(x)}
\left|\nabla \psi \right|\leq
(1-\varepsilon)\left|\nabla
f \right|= (1-\varepsilon)\left|\nabla
 \Psi \right|.
\end{equation}

Let us finally mention the following inequalities, valid for $h\in (0,h_0)$ 
with $h_0=h_{0}(N,\varepsilon)>0$ small enough:
 \begin{align}
\label{eq.AgmonWeight'1}
  &\varphi_{N} \leq   \Psi+ Nh\ln \frac1h
  \quad \text{ in } V_{\Gamma_{St}'}\\
  \label{eq.AgmonWeight'2}
\text{and } &\varphi_{N}=\psi \leq \varphi \leq   \Psi  \quad \text{ on } \supp\nabla\chi.
\end{align}

We are now in position to prove~\eqref{eq.WKB-comparison}. Let us define
$$\tilde{w_{h}}=
e^{\frac{\varphi_{N}}{h}}w_{h}= e^{\frac{\varphi_{N}}{h}} \chi(u^{(1)}_{h}-c_z(h)u^{(1)}_{z,wkb}).$$ 
Using the
relations \eqref{eq.Delta-r1}--\eqref{eq.r2'}
and the integration by parts formulae
\eqref{eq.intbypartphi} and \eqref{eq.Green}, there exists $C_{1}>0$
(only depending on $f$) such that
\begin{align}
\nonumber
 &\left\| (r_{1}
 +r_{1}')e^{\frac{\varphi_{N}- \Psi}{h}}\right\|_{ L^{2}(V_{\Gamma_{St}'})}\left\|\tilde{w_{h}}\right\|_{ L^{2}(V_{\Gamma_{St}'})}
+ 
\left\|  (r_{2}
 +r_{2}')e^{\frac{\varphi_{N}- \Psi}{h}}
\right\|_{ L^{2}(\Gamma_{1})}\left\|\tilde{w}_{h}\right\|_{ L^{2}(\Gamma_{1})} 
\\
&+  C_{1} h\int_{\Gamma_{1}}\langle\tilde{w}_{h},\tilde{w}_{h}
\rangle_{ T_{\sigma}^{*}\Omega}~d\sigma
\geq 
\left\|hd\tilde{w}_{h}\right\|^{2}_{
  L^{2}(V_{\Gamma_{St}'})}+\left\|hd^{*}\tilde{w}_{h}\right\|^{2}_{
  L^{2}(V_{\Gamma_{St}'})}
\nonumber
\\
\label{eq.big-estim-last-step}
&\qquad\qquad +
\langle (|\nabla
f|^{2}-|\nabla\varphi_{N}|^{2}-C_{1}h) 
\tilde{w}_{h},\tilde{w}_{h}\rangle_{L^2 (\Omega_+)}
-C_{1}h\left\|\tilde{w}_{h}\right\|^2_{ L^{2}(\Omega_{-})},
\end{align}
where we have used the fact that almost everywhere on $\Omega_-$, $|\nabla \varphi_N|$ is
either equal to $|\nabla \varphi|=|\nabla \Psi|=|\nabla f|$ or to
$|\nabla \psi| \le (1-\varepsilon) |\nabla f|$. Moreover, since
$\varphi_{N}- \Psi\leq Nh\ln\frac1h$ (see~\eqref{eq.AgmonWeight'1}), one has from~\eqref{eq.r1} and~\eqref{eq.r2}
$$
\left\|r_{1}e^{\frac{\varphi_{N}- \Psi}{h}}\right\|_{ L^2(V_{\Gamma_{St}'})}
+
\left\|r_{2}e^{\frac{\varphi_{N}- \Psi}{h}}\right\|_{ L^2(\Gamma_{1})}
=\mathcal O(h^\infty)
$$
and, since $\varphi_{N}\leq \Psi$ on $\supp\nabla\chi$ (see~\eqref{eq.AgmonWeight'2}), one gets from~\eqref{eq.r1'} and~\eqref{eq.r2'}
$$
\left\|r_{1}'e^{\frac{\varphi_{N}- \Psi}{h}}\right\|_{ L^{2}(V_{\Gamma_{St}'})}
+
\left\|r_{2}'e^{\frac{\varphi_{N}- \Psi}{h}}\right\|_{ L^{2}(\Gamma_{1})} 
=\mathcal O(h^{-N_{0}}).
$$
Additionally, since $ \Psi=f-f(z)$ on $\Gamma_{1}$ and $\varphi_{N}-
\Psi\leq Nh\ln\frac1h$ (see~\eqref{eq.AgmonWeight'1}), we deduce from the relation
\eqref{eq.step1comparison} obtained in the first step the following estimate:
$$
\left\|\tilde{w}_{h}\right\|_{ L^2(\Gamma_{1})}= 
\left\|e^{\frac{\varphi_{N}}{h}} \chi(u^{(1)}_{h}-c_z(h)u^{(1)}_{z,wkb})\right\|_{ L^2(\Gamma_{St}')}
= \mathcal O(h^\infty) .
$$
Consequently, using in addition the relation 
\begin{equation*}
\left\|\tilde{w}_{h}\right\|_{ L^{2}(\Omega_{-})}\leq
e^C h^{-N} \left\|w_{h}\right\|_{ L^{2}(\Omega_{-})}
\leq C_{2}(C,N),
\end{equation*}
(since $\varphi_N\leq \varphi+Nh \ln \frac1h$, $\varphi \leq Ch$ on
$\Omega_-$ and $\|w_h\|_{L^2(V_{\Gamma_{St}'})}=O(h^\infty)$ from~\eqref{eq.a-priori-comparison}), we deduce from \eqref{eq.big-estim-last-step} the existence of some positive constant $C_{3}=C_{3}(C,C_{1},N)$ such that
\begin{align}
 C_{3}
 \left(h^{-N_{0}}\left\|\tilde{w}_{h}\right\|_{ L^2 (V_{\Gamma_{St}'})}+ 1 \right)
&\geq
\left\|hd\tilde{w}_{h}\right\|^{2}_{ L^{2}(V_{\Gamma_{St}'})}+\left\|hd^{*}\tilde{w}_{h}\right\|^{2}_{ L^{2}(V_{\Gamma_{St}'})}\nonumber\\
&\quad +\langle (|\nabla
f|^{2}-|\nabla\varphi_{N}|^{2}-C_{1}h)\tilde{w}_{h},
\tilde{w}_{h}\rangle_{L^2(\Omega_+)}
. \label{eq.last-lower-bound}
\end{align}
Lastly, one has a.e. in $\Omega_{+}$, $\nabla \varphi_N=\nabla \psi
1_{\{\varphi_N=\psi\}} + \nabla  \Psi(1-\frac{Ch}{ \Psi})
1_{\{\varphi_N<\psi\}}$, and thus
\begin{itemize}
\item on $\{\varphi_N=\psi\}$, from~\eqref{eq.psi(x)},
$$
\left|\nabla f\right|^{2}-\left|\nabla \varphi_{N}\right|^{2}\geq
(2\varepsilon-\varepsilon^{2})\left|\nabla f\right|^{2}\geq
c_{\varepsilon}>0,
$$
\item on $\{\varphi_N<\psi\} \cap \Omega_+$, there exists $C_{4}>0$ independent of 
$C$ such that,
$$
\left|\nabla f\right|^{2}-\left|\nabla\varphi_{N}\right|^{2}
\geq Ch\frac{|\nabla f|^2}{ \Psi}\geq
C_{4}C h.
$$
\end{itemize}
Taking
$C>  \frac{C_{1}}{C_{4}}$ and adding  $(CC_{4}-C_{1})h\left\| \tilde{w}_{h}\right\|^2_{ L^{2}(\Omega_{-})}$
to \eqref{eq.last-lower-bound} then leads to
$$C_5\left( h^{-N_{0}}\left\|\tilde{w}_{h}\right\|_{ L^2
    (V_{\Gamma_{St}'})}+ 1 \right)
 \ge   \left\|hd\tilde{w}_{h}\right\|^{2}_{ L^{2}(V_{\Gamma_{St}'})}
+\left\|hd^{*}\tilde{w}_{h}\right\|^{2}_{ L^{2}(V_{\Gamma_{St}'})}
+ (CC_4-C_{1})h\left\|\tilde{w}_{h}\right\|^2_{ L^{2}(V_{\Gamma_{St}'})},
$$
for a constant $C_5$ depending on $C$ and $N$.
Using Gaffney's inequality \eqref{eq.Gaffney}, we consequently get
the existence of
$C_{6}>0$  such that
$$
\left\|\tilde{w}_{h}\right\|_{ H^{1}(V_{\Gamma_{St}'})}\leq C_6
h^{-N_{0}-3/2}.
$$
Now, since  $\varphi_{N}-Nh\ln\frac1h- \Psi \ge -C_7 h\ln\frac1h$ in $V_{\Gamma_{St}}$
(with a constant $C_7$ independent of~$N$, from the definition of
$\varphi$ and the fact that $\varphi_{N}-Nh\ln\frac1h=\varphi$ in $V_{\Gamma_{St}}$, see~\eqref{eq.psi>phi2}),
we also get the existence of $N_2$ independent of $N$
such that for $h$ small enough,
$$
\left\|e^{\frac{ \Psi}{h}}w_{h}\right\|_{ H^{1}(V_{\Gamma_{St}})}
 \leq  C_{N} h^{N-N_{2}},
$$
for some constant $C_N >0$, which concludes the proof of Proposition~\ref{pr.WKB-comparison}. 
\end{proof}

\subsection{Proof of Theorem~\ref{TBIG0}} \label{sec:goodquasimodes}

  
 The aim of this section is to conclude the proof of Theorem~\ref{TBIG0} by checking that the function $\tilde{u}$ and
the family of $1$-forms $ (\tilde \phi_i )_{i=1,\ldots,n}$ introduced in Section~\ref{sec:contruct_quasimode} 
satisfy the estimates appearing in Proposition~\ref{ESTIME} rewritten
in the flat space (see~Section~\ref{flat}).  In all this section, we assume in addition to the  hypotheses \textbf{[H1]}, \textbf{[H2]} and \textbf{[H3]}, that~\eqref{hypo1} and~\eqref{hypo2} hold.

From
Sections~\ref{sec:utilde} and~\ref{construction}, it
only remains to
prove~\eqref{eq:assump_1_phi}, \eqref{eq:assump_2_phi}, \eqref{eq:assump_3_phi}
and~\eqref{eq:assump_4_phi}.
Let us start with a lemma about the normalisation term appearing in~\eqref{eq:phi1}. 
\begin{lemma} \label{N1}  Let us assume that the hypotheses \textbf{[H1]}, \textbf{[H2]} and \textbf{[H3]} hold.
Let us define $\displaystyle \Theta_i:=\sqrt{\int_{\Omega} \left\vert \chi_i(x) u^{(1)}_{h,i}(x) \right\vert^2 dx}$. There exist $c>0$ and $h_0>0$ such that for all $h\in (0,h_0)$,
$$\Theta_i^2=1+O\left(e^{- \frac{c}{h} }  \right).$$\label{page.thetai}
\end{lemma}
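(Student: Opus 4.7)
The plan is to express $\Theta_i^2$ as $1$ minus a tail integral, and then show that the tail is exponentially small using the Agmon estimate of Proposition~\ref{pr.Agmon}. Since $\chi_i$ is compactly supported in $\dot\Omega_i \cup \Gamma_{1,i}$ (so that $\chi_i u^{(1)}_{h,i}$, extended by $0$ outside $\overline{\dot\Omega_i}$, is supported in $\overline{\dot\Omega_i}$) and $\|u^{(1)}_{h,i}\|_{L^2(\dot\Omega_i)} = 1$ by~\eqref{eq.uh=0}, I would write
$$\Theta_i^2 \;=\; \int_{\dot\Omega_i} \chi_i^2 \,|u^{(1)}_{h,i}|^2\, dx \;=\; 1 \;-\; \int_{\dot\Omega_i} (1-\chi_i^2)\,|u^{(1)}_{h,i}|^2\, dx.$$
It then suffices to show that the remaining integral is $O(e^{-c/h})$ for some $c>0$.

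The key geometric observation is that $1-\chi_i^2$ vanishes on a neighborhood of $z_i$ in $\overline{\dot\Omega_i}$. Indeed, by construction of $\chi_i$ (see Definition~\ref{tildephii}), $\chi_i \equiv 1$ on a neighborhood of $\overline\Sigma_i$, and $z_i \in \Sigma_i$. Therefore the closed set $K_i := \mathrm{supp}(1-\chi_i^2) \cap \overline{\dot\Omega_i}$ is a compact subset of $\overline{\dot\Omega_i}$ which does not contain $z_i$. Under assumptions \textbf{[H1]} and \textbf{[H3]}, Proposition~\ref{opoo} ensures that $d_a(\cdot, z_i)$ is a genuine distance on $\overline\Omega$, so that $\Psi_i = d_a(\cdot, z_i)$ is continuous, vanishes only at $z_i$, and is strictly positive elsewhere. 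By compactness of $K_i$,
$$c_0 \;:=\; \inf_{x \in K_i} \Psi_i(x) \;>\; 0.$$

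Now I would invoke the Agmon estimate \eqref{eq.Agmon} from Proposition~\ref{pr.Agmon}: there exists $N \in \mathbb{N}$ such that $\|e^{\Psi_i/h} u^{(1)}_{h,i}\|_{L^2(\dot\Omega_i)} = O(h^{-N})$. Since $\Psi_i \geq c_0$ on $K_i$, one obtains
$$\int_{\dot\Omega_i} (1-\chi_i^2)\,|u^{(1)}_{h,i}|^2\, dx \;\leq\; e^{-2c_0/h} \int_{K_i} e^{2\Psi_i/h}\,|u^{(1)}_{h,i}|^2\, dx \;\leq\; e^{-2c_0/h}\,O(h^{-2N}),$$
which is $O(e^{-c/h})$ for any $c \in (0, 2c_0)$ and $h$ small enough. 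This yields the stated estimate. No significant obstacle is expected; the only point worth checking carefully is that $\Psi_i$ is indeed a distance so that the lower bound $c_0 > 0$ is available, which is exactly the content of Proposition~\ref{opoo} under the standing assumptions.
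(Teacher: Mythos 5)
Your proof is correct and follows essentially the same route as the paper: both rest on the Agmon estimate of Proposition~\ref{pr.Agmon} combined with the fact that $\chi_i\equiv 1$ near $z_i$, so that $d_a(\cdot,z_i)$ is bounded below by a positive constant on the support of $1-\chi_i$ (resp. $1-\chi_i^2$). The only difference is cosmetic: you use the exact identity $\Theta_i^2=1-\int(1-\chi_i^2)|u^{(1)}_{h,i}|^2$, whereas the paper obtains the same conclusion via an upper bound and a triangle-inequality lower bound.
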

\begin{proof}
On the one hand, one has the upper bound
$$\|\chi_i u^{(1)}_{h,i}\|_{L^2(\Omega)}=\|\chi_i
u^{(1)}_{h,i}\|_{L^2(\dot \Omega_i)} \le \|
u^{(1)}_{h,i}\|_{L^2(\dot \Omega_i)} =1.$$
On the other hand, the triangular inequality yields the lower bound
\begin{align*}
\|\chi_i
u^{(1)}_{h,i}\|_{L^2(\dot \Omega_i)} &\ge
\|u^{(1)}_{h,i}\|_{L^2(\dot \Omega_i)} - \|(1-\chi_i)
u^{(1)}_{h,i}\|_{L^2(\dot \Omega_i)} = 1- \|(1-\chi_i)
u^{(1)}_{h,i}\|_{L^2(\dot \Omega_i)} .
\end{align*}
Thanks to Proposition~\ref{pr.Agmon}, there exist $N\in \mathbb N$ and $c>0$ independent of $h$ such that
\begin{align*}
\|(1-\chi_i)
u^{(1)}_{h,i}\|_{L^2(\dot \Omega_i)}^2&=
\int_{{\dot \Omega_i}} \left\vert\left(1-\chi_i(x)\right)
  u^{(1)}_{h,i}(x) e^{ \frac{1}{h} d_a(x,z_i)} e^{ -\frac{1}{h} d_a(x,z_i)} \right\vert^2  dx \\
& \le \int_{{\dot \Omega_i} \setminus \mathcal V_i} \left\vert
  u^{(1)}_{h,i}(x) e^{ \frac{1}{h} d_a(x,z_i)} e^{ -\frac{1}{h} d_a(x,z_i)} \right\vert^2  dx \\
&\leq C\,h^{-N} e^{-\inf_{{\dot \Omega_i} \setminus \mathcal V_i}
  \frac{2}{h} d_a(.,z_i)} \leq C\,  e^{- \frac{c}{h} },
\end{align*}
where, we recall ${\mathcal V_i}=\{x \in \Omega, \, \chi_i=1\}$. This concludes the proof of Lemma~\ref{N1}.\end{proof}
\noindent
We are now in position to check the estimates stated in
Section~\ref{flat}.

\medskip
\noindent \underline{{Step 1.}} Study of the term $\left\|   \left ( \  1-\pi_{[0,h^{\frac32} )} \left( \Delta^{D,(1)}_{f,h}(\Omega) \right) \right)\tilde \phi_i\right\|_{H^1(\Omega)}$.
\medskip

\noindent
We recall that from~\eqref{eq.Gaffney}, $\tilde \phi_i$ belongs to $\Lambda^1H^1_T(\Omega)$ and then we get from   Lemma~\ref{quadra} that there exist $c>0$ and $h_0>0$ such that for all $h\in (0,h_0)$,
\begin{align*}
\left\|    \left(1-\pi_{[0,h^{\frac32} )} \left( \Delta^{D,(1)}_{f,h}(\Omega)\right)\right) \tilde \phi_i\right\|_{L^2(\Omega)}^2  &\leq c h^{-3/2} \left( \left\|  d_{f,h} \tilde \phi_i\right\|_{L^2(\Omega)}^2+\left  \| d_{f,h}^* \tilde \phi_i \right\|_{L^2(\Omega)}^2 \right)\\
&= c h^{-3/2}  \left( \left\|  d_{f,h} \tilde \phi_i\right\|_{L^2 ( {\dot \Omega_i} )}^2+ \left \| d_{f,h}^* \tilde \phi_i\right\|_{L^2 ( {\dot \Omega_i} )}^2\right).
\end{align*}
 Moreover, from Proposition~\ref{pr.DeltaTN} (items $(ii)$ et $(iii)$)
\begin{equation}\label{eq:dfhphitilde}
 d_{f,h} \tilde \phi_i =  \Theta_i^{-1} \left( \chi_i \, d_{f,h}
   u^{(1)}_{h,i} + h  \, d\chi_i \wedge  u^{(1)}_{h,i} \right)  =
 \Theta_i^{-1}   h  \,d\chi_i \wedge  u^{(1)}_{h,i},
\end{equation}
and
$$
 d_{f,h}^* \tilde \phi_i =  \Theta_i^{-1} \left( \chi_i\,d_{f,h}^* u^{(1)}_{h,i} - h\, u^{(1)}_{h,i} \cdot \nabla \chi_i\right)=-\Theta_i^{-1}  h\, u^{(1)}_{h,i} \cdot \nabla \chi_i.
$$
As a consequence, using Lemma~\ref{N1} and Proposition~\ref{pr.Agmon}, one gets for some  $N\in
 \mathbb N$ and for some $c>0$ which may change from one occurrence to another, 
 \begin{align}
 \left \|    \left(1-\pi_{[0,h^{\frac32} )} \left( \Delta^{D,(1)}_{f,h}(\Omega)\right)\right) \tilde \phi_i\right\|_{L^2(\Omega)}^2  &\leq c h^{-3/2}  \left( \left\| h  d\chi_i \wedge  u^{(1)}_{h,i}\right\|_{L^2 ( {\dot \Omega_i} )}^2+  \left\| h u^{(1)}_{h,i} \cdot \nabla \chi_i\right\|_{L^2 ( {\dot \Omega_i} )}^2\right) \nonumber\\
   &\leq c\, h^{1/2} \int_{{\rm supp} \nabla \chi_i} \left\vert
    u_{h,i}^{(1)}(x)e^{ \frac{1}{h} d_a(x,z_i)} e^{ -\frac{1}{h} d_a(x,z_i)}\right\vert ^2 dx  \nonumber\\
 &\leq c\,  h^{1/2-N} e^{  -  \inf_{\rm{supp} \nabla \chi_i}
   \frac2h d_a(\cdot ,z_i)  }.\label{eq:majo_inf_supp}
 \end{align}
The function $\chi_i$ can be chosen such that the set $\big\{x \in {\dot \Omega_i}, \, \nabla \chi_i(x)\neq
  0\big\}$ is as  close as needed to $\Gamma_{2,i}$ and
to $\Gamma_0$ (see Figure~\ref{fig:support}). 
Therefore by continuity of the Agmon distance, 
using~\eqref{eq:gamma2_close_to_Bzc}--\eqref{eq:gamma0_close_to_x0},                                for
                                any~$\delta>0$, one can
                                choose $\chi_i$ satisfying the three
                                conditions stated in  Definition
                                \ref{tildephii} and such that 
  \begin{align} \label{eq:da1}
   \inf_{z\in{\rm supp} \nabla \chi_i} d_a(z,z_i) &\geq \min \left(  d_a(x_0,z_i),  \inf_{z\in B_{z_i}^c} d_a(z,z_i) \right)-\delta.
 \end{align}
From \eqref{hypo1}, there exists $r>0$ such that
 $$\inf_{z\in B_{z_i}^c} d_a(z,z_i) \geq \max\left[f(z_n)-f(z_i),f(z_i)-f(z_1)\right] +r.$$
In addition, using~\eqref{hypo2}, there exists $r'>0$ such that
\begin{align*}
d_a(z_i,x_0)&\geq f(z_i)-f(x_0)\geq f(z_1)-f(x_0)\ge f(z_n) - f(z_1) + r'\\
&\geq\max[f(z_n)-f(z_i),f(z_i)-f(z_1)]+r'.
\end{align*}
Therefore, choosing $\chi_i$ such that $\delta < \min(r,r')$,  there exists $\ve'>0$ such that
  \begin{equation} \label{eq:daa1}
   \inf_{z\in{\rm supp} \nabla \chi_i} d_a(z,z_i) \geq\max\left[f(z_n)-f(z_i),f(z_i)-f(z_1)\right] +\ve'.
 \end{equation}
Using the estimate~\eqref{eq:daa1} in~\eqref{eq:majo_inf_supp}, there exist $\ve_1>0$, $c>0$, $N\in \mathbb N$ and $h_0>0$, such that for every $ h\in (0,h_0)$
\begin{align} 
\nonumber
\left\|    \left(1-\pi_{[0,h^{\frac32} )} \left(
      \Delta^{D,(1)}_{f,h}(\Omega)\right)\right) \tilde
  \phi_i\right\|_{L^2(\Omega)}^2 &\leq ch^{-N} e^{- \frac2 h  \left(\max[f(z_n)-f(z_i),f(z_i)-f(z_1)]+ \ve'\right)} \\ \label{dodo}
&\leq e^{ -\frac2h \left(\max[f(z_n)-f(z_i),f(z_i)-f(z_1)]+ \ve_1\right)}.
\end{align}
This last inequality leads to the desired estimate in the $L^2(\Omega)$-norm. In order to get the same upper bound in the $H^1(\Omega)$-norm, notice now that one has
   \begin{align*}
& \left(1-\pi_{[0,h^{\frac32} )} \left( \Delta^{D,(2)}_{f,h}(\Omega) \right)\right) d_{f,h} \tilde \phi_i=d_{f,h}  \left(1-\pi_{[0,h^{\frac32} )} \left( \Delta^{(D,1)}_{f,h}(\Omega)\right)\right) \tilde \phi_i\\
&= h  d \left(1-\pi_{[0,h^{\frac32} )} \left( \Delta^{D,(1)}_{f,h}(\Omega)\right)\right) \tilde \phi_i + df\wedge \left(1-\pi_{[0,h^{\frac32} )} \left( \Delta^{D,(1)}_{f,h}(\Omega)\right)\right) \tilde \phi_i.
 \end{align*}
Therefore it holds
   \begin{align*}
 h d \left(1-\pi_{[0,h^{\frac32} )} \left(
     \Delta^{D,(1)}_{f,h}(\Omega)\right)\right) \tilde
 \phi_i&=\left(1-\pi_{[0,h^{\frac32} )} \left(
     \Delta^{D,(2)}_{f,h}(\Omega) \right)\right) d_{f,h}\, \tilde
 \phi_i\\
&\quad - df\wedge \left(1-\pi_{[0,h^{\frac32} )} \left( \Delta^{D,(1)}_{f,h}(\Omega)\right)\right) \tilde \phi_i.
\end{align*}
Let us introduce $K_i:=\max[f(z_n)-f(z_i),f(z_i)-f(z_1)]$. From \eqref{dodo}, there exist $C>0$ and $h_0>0$, such that for all $ h\in (0,h_0)$
$$\left\Vert df\wedge \left(1-\pi_{[0,h^{\frac32} )} \left(
      \Delta^{D,(1)}_{f,h}(\Omega)\right)\right) \tilde \phi_i
\right\Vert_{L^2(\Omega)}^2\leq C e^{- \frac 2 h (K_i+ \ve_1)}.$$ 
Moreover, using~\eqref{eq:dfhphitilde} and~\eqref{eq:majo_inf_supp} there exist $\ve > 0$, $C>0$ and $h_0>0$, such that for all $ h\in (0,h_0)$,
\begin{align*}
\left \Vert  \left(1-\pi_{[0,h^{\frac32} )} ( \Delta^{D,(2)}_{f,h}(\Omega)
  ) \right) d_{f,h}  \tilde \phi_i \right\Vert_{L^2(\Omega)}^2 &\leq
\left\Vert d_{f,h} \tilde \phi_i \right\Vert_{L^2(\Omega)}^2 = \Theta_i^{-2} \left\| h  d\chi_i \wedge  u^{(1)}_{h,i}\right\|_{L^2 ( {\dot \Omega_i} )}^2\\
&\leq C e^{- \frac 2 h (K_i+ \ve)} .
\end{align*}
Thus one gets: there exist $\ve > 0$, $C>0$ and $h_0>0$, such that for all $ h\in (0,h_0)$,
$$h^2 \left\Vert d \left(1-\pi_{[0,h^{\frac32} )} \left( \Delta^{D,(1)}_{f,h}(\Omega)\right)\right) \tilde \phi_i\right\Vert_{L^2(\Omega)}^2\leq C e^{- \frac 2 h (K_i+ \ve)} .$$
Similarly, there exist $\ve>0$, $C>0$ and $h_0>0$, such that for all $ h\in (0,h_0)$
$$h^2 \left\Vert d^* \left(1-\pi_{[0,h^{\frac32} )} \left( \Delta^{D,(1)}_{f,h}(\Omega)\right)\right) \tilde \phi_i\right\Vert_{L^2(\Omega)}^2\leq C e^{- \frac 2 h (K_i+ \ve)}  .$$
As a consequence, using~\eqref{eq.Gaffney}, there exist $\ve>0$, $C>0$ and $h_0>0$, such that for all $ h\in (0,h_0)$
\begin{align*}
 \left\Vert \left(1-\pi_{[0,h^{\frac32} )} \left( \Delta^{D,(1)}_{f,h}(\Omega)\right)\right)\tilde \phi_i\right\Vert_{H^1(\Omega)}^2
 &\leq C e^{- \frac 2 h (K_i+ \ve)}.
 \end{align*}
This concludes the proof of~\eqref{eq:assump_1_phi}.

\medskip
\noindent
\underline{{Step 2}}. Study of the terms $\displaystyle \int_{\Omega} \tilde \phi_i
\cdot \tilde \phi_j $ for $(i,j)\in
\left\{1,\ldots,n\right\}^2$.
\medskip

\noindent
Let $(i,j)\in \left\{1,\ldots,n\right\}^2$ be such that $i<j$. One then has $f(z_i)\leq f(z_j)$.  From
Proposition~\ref{PoPo}, it holds 
$d_a(z_i,z_j)>f(z_j)-f(z_i)$.
Now, according to Proposition~\ref{pr.Agmon} and Lemma~\ref{N1} and to the triangular inequality for $d_a$,  there exist $\ve>0$, $N\in \mathbb N$ and $h_0>0$ such that for all $ h\in (0,h_0)$,
\begin{align}
\left\vert \int_{\Omega} \tilde \phi_i(x) \cdot \tilde \phi_j(x) \, dx
\right\vert & \le e^{ -\frac{d_a(z_j,z_i)}{h}} \int_{{\rm
    supp}\chi_i\cap {\rm supp}\chi_j} e^{ \frac{d_a(x,z_i)}{h}}
\vert\tilde \phi_i(x)\vert e^{ \frac{d_a(x,z_j)}{h}} \vert\tilde \phi_j(x) \vert \, dx \nonumber \\
&\leq \Theta_i^{-1} \Theta_j^{-1} \left\Vert
  e^{\frac{d_a(.,z_i)}{h} } \chi_i u^{(1)}_{h,i}
  \right\Vert_{L^2(\dot \Omega_i)}  \left\Vert   e^{\frac{d_a(.,z_j)}{h}} \chi_j u^{(1)}_{h,j}
  \right\Vert_{L^2(\dot \Omega_j)} \!\!\! e^{ -\frac{d_a(z_i,z_j)}{h} } \label{eq:majo_phii_phij} \\
&\leq C  h^{-N} e^{ - \frac1h (f(z_j)-f(z_i)+\ve) }. \nonumber
 \end{align}
This concludes the proof of~\eqref{eq:assump_2_phi}.

\medskip
\noindent 
\underline{{Step 3.}} Study of the terms $\displaystyle \int_{\Sigma_i} \tilde
\phi_j \cdot n  \ e^{-\frac fh}$ for $(i,j)\in
\left\{1,\ldots,n\right\}^2$.
\medskip

\noindent
By construction, for all $(i,j)\in \{1,\ldots,n\}^2$ such that $i\neq j$, one has 
 $$\int_{\Sigma_i}   \tilde \phi_j \cdot n  \   e^{- \frac{1}{h}f}=0.$$
Now, let us compute the term $\int_{\Sigma_i}   \tilde \phi_i \cdot n
\   e^{- \frac{1}{h}f} $. Let $u^{(1)}_{z_i,wkb}$ be the WKB expansion
defined by \eqref{wkbi}. Following the beginning of
Section~\ref{sec:WKB_first_estim}, let us consider
\begin{enumerate}
\item a neighborhood $V_{\Gamma_{St,i}}$ of $\Sigma_i$ in $\overline
  \Omega$, which is stable under the dynamics~\eqref{eq:flow_Phi} and
  such that, for some $\ve > 0$, $V_{\Gamma_{St,i}} + B(0,\ve) \subset  V_{\Gamma_{1,i}} \cap (\Gamma_{1,i} \cup \dot
  \Omega_i)$
\item and a cut-off function $\chi_{wkb,i}\in
  C^{\infty}_c(\dot \Omega_i \cup \Gamma_{1,i})$ with $\chi_{wkb,i} \equiv 1$ on a
  neighborhood of $\overline{V_{\Gamma_{St,i}}}$ and such that $\supp
  \chi_{wkb,i} \subset V_{\Gamma_{1,i}} \cap (\dot \Omega_i \cup \Gamma_{1,i})$.
\end{enumerate}
Using Proposition~\ref{apriori}, there exists $c_{z_i}(h)\in
\mathbb R_+^*$ such that
$$
\left\| \chi_{wkb,i} \left(u^{(1)}_{h,i}-c_{z_i}(h)u^{(1)}_{z_i,wkb}\right)\right\|_{ H^{1}(\dot\Omega_i)}\!\!\!=O (h^{\infty}).
$$
Let us now introduce
\begin{equation}\label{page.tildephiwkb}
\tilde \phi_{z_i,wkb}:=c_{z_i}(h) \chi_{wkb,i} \ u_{z_i,wkb}^{(1)}
\end{equation} 
so that
$$\int_{\Sigma_i}   \tilde \phi_i \cdot n  \   e^{- \frac{1}{h}f}  = \int_{\Sigma_i}    \tilde \phi_{z_i,wkb} \cdot n  \ e^{- \frac{1}{h}f}  +\int_{\Sigma_i}  \left  (\tilde \phi_i-\tilde \phi_{z_i,wkb}\right) \cdot n \,  e^{- \frac{1}{h}f} . $$
Let us first deal with the term $\int_{\Sigma_i}    \tilde
\phi_{z_i,wkb} \cdot n  \,  e^{- \frac{1}{h}f} $. Using Laplace's method (the computation is
similar to~\eqref{eq:wkb_bord}), one gets
when $h\to 0$ (since $\Phi=f$ and $\partial_n\Phi=-\partial_nf$ on~$\partial \Omega$, see~\eqref{eikonalequationboundary} and~\eqref{wkbi})
\begin{align*}
\int_{\Sigma_i}  \chi_{wkb,i} \  u_{z_i,wkb}^{(1)} \cdot n \,  e^{-
  \frac{1}{h}f}&=\int_{\Sigma_i}  e^{-\frac{\Phi-f(z_i)}{h}}
\, a_0\, \partial_n(f-\Phi)   \, e^{- \frac{1}{h}f} \  (1+O(h))\\
&=2e^{\frac{f(z_i)}{h}} \int_{\Sigma_i}  e^{-\frac{2f}{h}}  \, a_0\, \partial_nf \  (1+O(h))\\
&= \frac{2\,\partial_nf(z_i)\, \pi ^{\frac{d-1}{2} }} {  \sqrt{ {\rm  det \ Hess } f|_{\partial \Omega }   (z_i)  }}  h ^{\frac{d-1}{2} }\ e^{- \frac{1}{h}f(z_i)}  (1+O(h)). 
\end{align*} 
Then one obtains when $h\to 0$
$$\int_{\Sigma_i}    \tilde \phi_{z_i,wkb} \cdot n  \, e^{-
  \frac{1}{h}f}  =  c_{z_i}(h) \frac{2\,\partial_nf(z_i)\, \pi ^{\frac{d-1}{2} }} {  \sqrt{ {\rm  det \ Hess } f|_{\partial \Omega }   (z_i)  }}  h ^{\frac{d-1}{2} }\ e^{- \frac{1}{h}f(z_i)}
(1+O(h)).$$
We recall from Proposition~\ref{apriori} that in the limit $h\to 0$:
$$c_{z_i}(h)= C_{z_i,wkb}^{-1}h^{-\frac{d+1}{4}}(1+O(h^{\infty})),$$
where the constant $C_{z_i,wkb}$ is defined by
\eqref{cwkbi}. Therefore, in the limit $h\to 0$
$$\int_{\Sigma_i}    \tilde \phi_{z_i,wkb} \cdot n  \, e^{-
  \frac{1}{h}f}  =  \frac{ \pi ^{\frac{d-1}{4}} \sqrt{2 \partial_n
    f(z_i) }   }{ ( {\rm det \ Hess } f|_{\partial \Omega }   (z_i)
  )^{1/4}  } \  h^{\frac{d-3}{4}} \ e^{- \frac{1}{h}f(z_i)}
(1+O(h)).$$

Let us now deal with the term $ \int_{\Sigma_i} \left(\tilde
  \phi_i-\tilde \phi_{z_i,wkb} \right) \cdot n \ e^{- \frac{1}{h}f}
$. One obtains using  Lemmata~\ref{wkb1}~and~\ref{N1}, that there exist $C>0$, $h_0>0$ and $\eta>0$ such that for all $ h\in (0,h_0)$
 \begin{align*}
\left\vert \int_{\Sigma_i} \left(\tilde \phi_i-\tilde \phi_{z_i,wkb} \right) \cdot n \ e^{- \frac{1}{h}f} \right\vert&=\left \vert \int_{\Sigma_i} \left(\frac{u^{(1)}_{h,i}}{\Theta_i}-c_{z_i}(h)u_{z_i,wkb}^{(1)}\right) \cdot n \ e^{- \frac{1}{h}f}  \right\vert\\
&= \frac{1}{\Theta_i}\left\vert \int_{\Sigma_i} \left(u^{(1)}_{h,i}-\Theta_i \ c_{z_i}(h) u_{z_i,wkb}^{(1)}\right) \cdot n \ e^{- \frac{1}{h}f} \right\vert\\
&\leq \frac{e^{- \frac{1}{h}f(z_i)}}{\Theta_i}   \int_{\Sigma_i} \left\vert \left (u^{(1)}_{h,i}- c_{z_i}(h) u_{z_i,wkb}^{(1)}\right) \cdot n \right\vert   \\
&\quad +e^{- \frac{1}{h}f(z_i)}\,\frac{\vert \Theta_i-1\vert
}{\Theta_i} |c_{z_i}(h)| \int_{\Sigma_i}  \left\vert u_{z_i,wkb}^{(1)} \cdot n  \right\vert  \\
 &\leq C e^{- \frac{1}{h}f(z_i)}  \left \Vert \chi_{wkb,i}
   \left(u^{(1)}_{h,i}- c_{z_i}(h)
     u_{z_i,wkb}^{(1)}\right)\right\Vert_{H^1(\dot \Omega_i)}   \\
&\quad +C e^{- \frac{1}{h}f(z_i)}\,e^{-\frac{\eta}{h}}
h^{-\frac{d+1}{4}} \left\Vert \chi_{wkb,i} u_{z_i,wkb}^{(1)}
    \right\Vert_{H^1(\dot \Omega_i)}.
\end{align*}
Therefore, one obtains using Proposition~\ref{apriori} and~\eqref{eq:estim_wkb_H1}
$$ \ e^{\frac{1}{h}f(z_i)} \left\vert \int_{\Sigma_i} \left(\tilde
    \phi_i-\tilde \phi_{z_i,wkb} \right) \cdot n \ e^{- \frac{1}{h}f}
\right\vert  = O(h^{\infty}) + C e^{-\frac{\eta}{h}} h^{-\frac{d+5}{4}} = O(h^\infty).$$
In conclusion, we have when $h\to 0$
$$\int_{\Sigma_i}     \tilde \phi_i \cdot n  \   e^{- \frac{1}{h}f}      = \int_{\Sigma_i}   \tilde \phi_{z_i,wkb}\cdot n \,  e^{- \frac{1}{h}f}   \  (1+O(h^{\infty})),$$
which gives the expected estimate 
\begin{equation}\label{eq:result_step3} 
  \int_{\Sigma_i}   \  \tilde \phi_j \cdot n  \,  e^{- \frac{1}{h}f} =\begin{cases}   B_i h^m   \     e^{- \frac{1}{h}f(z_i)}  \    ( 1  +     O(h   )    )   &  \text{ if } i=j,  \\
 0   &   \text{ if } i\neq j,
  \end{cases} 
  \end{equation}
where 
\begin{equation} \label{mBi}
m=\frac{d-3}{4} \quad {\rm and} \quad B_i= \frac{ \pi ^{\frac{d-1}{4}} \sqrt{2\partial_n f(z_i) }   }{ ( {\rm det \ Hess } f|_{\partial \Omega }   (z_i)   )^{1/4}  }
.
\end{equation}\label{page.bim}
This concludes the proof of~\eqref{eq:assump_4_phi}.

\medskip
\noindent 
\underline{{Step 4.}} Study of the term  $\displaystyle{\int_{\Omega} \nabla  \tilde
u  \cdot \tilde \phi_i \,  e^{- \frac{1}{h}f}}$.
\medskip

\noindent
First one has the equality by Definition~\ref{tildeu}, 
$$\int_{\Omega} \nabla  \tilde u  \cdot \tilde \phi_i \,  e^{-
  \frac{1}{h}f} = \frac{ \displaystyle \int_{\Omega} \nabla  \chi \cdot \tilde \phi_i \,  e^{- \frac{1}{h}f}  } { \sqrt{ \displaystyle\int_{\Omega} \chi^2 e^{-\frac{2}{h}  f} }     },$$
 where $\nabla  \chi \cdot \tilde \phi_i=\mbf i_{\nabla  \chi} \tilde \phi_i=  \tilde \phi_i(\nabla  \chi)$. 
The denominator of the right-hand side is  easily computed thanks to Laplace's method:
$$\sqrt{\int_{\Omega} \chi^2 e^{- \frac{2}{h}f}}= \frac{  (\pi  h)^{\frac{d}{4} } }{ ( {\rm det \ Hess } f   (x_0)   )^{1/4}   }e^{- \frac{f(x_0)}{h}} (1+O(h) ).$$
Using an integration by parts and the fact that $d^*(u^{(1)}_{h,i} \
e^{-  f/h} )=0$ in $L^2({\dot \Omega_i})$ (see
Proposition~\ref{pr.DeltaTN} items $(ii)$ and $(iii)$) which is valid for all $h$ small enough, one obtains 
\begin{align}
\nonumber
\int_{\Omega} \nabla  \chi \cdot \tilde \phi_i \,  e^{-  \frac{f}{h}} 
&= - \int_{\Omega} \nabla(1- \chi) \cdot \chi_i \frac{ u^{(1)}_{h,i}}{\Theta_i} \,  e^{-  \frac{f}{h}} \\
\nonumber
&=  \int_{\Omega} \left(1-\chi\right) \ \nabla \chi_i \cdot
\frac{ u^{(1)}_{h,i}}{\Theta_i} \,  e^{-  \frac{f}{h}}  -
\int_{\partial \Omega} \left(1-\chi\right)\
\tilde \phi_i \cdot n \,  e^{-  \frac{f}{h}}  .
\end{align}
Using the fact that $\chi = 0$ on $\partial \Omega$, one then obtains:
\begin{align}
\nonumber
\int_{\partial \Omega} \left(1-\chi\right)\
\tilde \phi_i \cdot n \,  e^{-  \frac{f}{h}}  
&=   \int_{\partial
  \Omega \cap {\rm supp} \chi_i} \tilde \phi_i \cdot n \,  e^{-
  \frac{f}{h}}  \\\nonumber
&= \int_{\Sigma_i}  \ \tilde \phi_i \cdot n  \,  e^{-  \frac{f}{h}} \
 + \int_{(\partial \Omega \cap {\rm supp} \chi_i) \setminus
\Sigma_i} \hspace{-1.6cm} \tilde \phi_i \cdot n \,  e^{-  \frac{f}{h}}   .
\nonumber
\end{align}
 Using~\eqref{eq:result_step3}, in the limit $h\to 0$:
\begin{align}
\nonumber
\int_{\Omega} \nabla  \chi \cdot \tilde \phi_i \,  e^{-  \frac{f}{h}}
&=  - B_i h^m   \     e^{-\frac 1h f(z_i)}  \    (  1  +     O(h  ))\\
&\quad - \int_{(\partial \Omega \cap {\rm supp} \chi_i) \setminus
\Sigma_i} \hspace{-1.6cm}  \tilde \phi_i \cdot n \,  e^{-  \frac{f}{h}}  +\int_{\Omega} \left(1-\chi\right) \ \nabla \chi_i \cdot
\frac{ u^{(1)}_{h,i}}{\Theta_i} \,  e^{-  \frac{f}{h}}.\label{dodoo}
\end{align}
Let us now prove that the two last terms in~\eqref{dodoo} are
negligible compared to the first one.
\medskip

\noindent
  On the compact set $(\partial \Omega \cap {\rm supp}
  \chi_i)\setminus \Sigma_i$ one  has $f(z)>f(z_i)$ since $z_i\in
  \Sigma_i$ is the only global minimum of $f$ on $B_{z_i}$
  and $\supp \chi_i \cap \partial \Omega \subset \Gamma_{1,i} \subset
  B_{z_i} $. Then, using Proposition~\ref{pr.Agmon} and~\eqref{eq.Gaffney}, there exist $\ve>0$, $h_0>0$, $C>0$ and $N\in \mathbb N$ such that for all $ h\in (0,h_0)$
 \begin{align}
\left\vert \, \int_{(\partial \Omega \cap {\rm supp} \chi_i) \setminus
    \Sigma_i} \hspace{-1.6cm} \tilde \phi_i \cdot n \,  e^{-  \frac{f}{h}}  \right \vert  &\leq 
 \,  e^{-  \frac{f(z_i)+\ve}{h}}\left\vert  \, \int_{(\partial \Omega \cap {\rm supp} \chi_i) \setminus \Sigma_i} \hspace{-1.4cm}  \tilde \phi_i  \cdot n   \ \ \ \ \right \vert \nonumber \\
 &\leq C e^{-  \frac{f(z_i)+\ve}{h}}    \left  \vert \,     \int_{(\partial \Omega \cap {\rm supp} \chi_i) \setminus \Sigma_i} \hspace{-1.6cm}  \tilde \phi_i  \cdot n\ e^{ \frac{d_a(.,z_i)}{h}} \right  \vert    \nonumber    \\
  &\leq C \frac{e^{-  \frac{f(z_i)+\ve}{h}}}{\Theta_i} \    \left\Vert
          \chi_i  u^{(1)}_{h,i}\  e^{
            \frac{d_a(.,z_i)}{h}}\right\Vert_{H^1(\dot \Omega_i)}  \nonumber   \\
 &\leq C e^{ -  \frac{f(z_i)+\ve}{h} } h^{-N}\leq C e^{- \frac{f(z_i)+\ve/2}{h}}.\label{dodoo1}
   \end{align} 
Let us now deal with the last term of \eqref{dodoo}. The support of $
\left(1-\chi\right)  \nabla \chi_i$ is included in the support of
$\nabla \chi_i$ and  does not contain $x_0$ since $\chi \equiv 1$
around $x_0$. The function $\chi_i$ can be chosen such that the set $\{x \in \dot\Omega_i, \, |\nabla \chi_i| \neq 0
\text{ and } \chi \neq 1\}$ is as close as one wants from~$\Gamma_{2,i}$ (see Figure~\ref{fig:support}). Therefore, by continuity of the Agmon distance,
using~\eqref{eq:gamma2_close_to_Bzc}, for any $\delta > 0$, one can
choose $\chi_i$ satisfying the three conditions stated in
Definition~\ref{tildephii} and such that  
\begin{align*}
\inf_{ {\rm supp  }\left(1-\chi\right)\nabla \chi_i }\left(
  d_a( \cdot ,z_i)+f\right)
&\ge \inf_{ B_{z_i}^c}\left( d_a(\cdot ,z_i)+f\right)-\delta.
\end{align*}
Thus, using the Cauchy-Schwarz inequality and Proposition~\ref{pr.Agmon}, there exists $N\in \mathbb N$ such that
  \begin{align}
 \int_{{\rm supp    } \left(1-\chi\right)  \nabla \chi_i}
 \left\vert\left(1-\chi\right)   \nabla \chi_i \cdot  u^{(1)}_{h,i}\
   e^{-  \frac{f}{h}} \right\vert &\leq  C\left\Vert u^{(1)}_{h,i} e^{
     \frac{d_a(\cdot,z_i)}{h}}\right\Vert_{L^2(\dot \Omega_i)}
 e^{-\frac1h \inf_{ {\rm supp    }\left(1-\chi\right) \nabla \chi_i  } (d_a(\cdot,z_i)+f)} \nonumber \\
&\leq  C h^{-N}  e^{- \frac1h \inf_{B_{z_i}^c} (d_a(\cdot,z_i)+f-\delta)}. \label{dodoo2}
 \end{align} 
Besides, from assumption \eqref{hypo1}  
$$\inf_{z\in B_{z_i}^c}\left[ d_a(z,z_i)+f(z)\right]> f(z_i).$$
Indeed,  the inequality \eqref{hypo1} implies that there exists $r>0$ such that for all $z\in B_{z_i}^c$, $d_a(z,z_i)\geq f(z_i)-f(z_1)+r$ and therefore for all $z\in B_{z_i}^c$ one obtains
\begin{align*}
d_a(z,z_i)+f(z)&\geq f(z_i)+ (f(z)-f(z_1)) +r \geq f(z_i)+r.
\end{align*}
Therefore, taking $\chi_i$ such that $\delta < r/2$, one has, when $h\to 0$
$$ \int_{{\rm supp    } \left(1-\chi\right)  \nabla \chi_i}
 \left\vert\left(1-\chi\right)   \nabla \chi_i \cdot  u^{(1)}_{h,i}\
   e^{-  \frac{f}{h}} \right\vert = O\left(
 e^{-\frac{f(z_i)+c}{h}} \right)$$
for some constant $c>0$.\\
In conclusion, for all $i\in \{1,\ldots,n\}$, one has in the limit $h \to 0$,
$$\int_{\Omega} \nabla  \tilde u  \cdot \tilde \phi_i \,  e^{- \frac{1}{h}f} dx =        C_i h^p    e^{-\frac{1}{h}(f(z_i)- f(x_0))}   \    (  1  +     O(h  )    ) ,$$
with \label{page.ci}
\begin{equation}\label{Cip}
\left\{
\begin{aligned}
& C_i=-B_i \, \frac{( {\rm det \ Hess } f   (x_0)   )^{1/4}  }{
  \pi^{\frac{d}{4} }}=- \frac{ \pi ^{-\frac{1}{4}} \sqrt{2\partial_n f(z_i) }  ( {\rm det \ Hess } f   (x_0)   )^{1/4}  }{ ( {\rm det \ Hess } f|_{\partial \Omega }   (z_i)   )^{1/4}  }\, ,\\
& p=m-\frac{d}{4}=-\frac34,
\end{aligned}\right.
\end{equation}
where $B_i$ and $m$ have both been defined in~\eqref{mBi}.
This concludes the proof of~\eqref{eq:assump_3_phi}, and thus the proof of Theorem~\ref{TBIG0}.

\section{Consequences and generalizations of Theorem \ref{TBIG0}}\label{sec:proofs}

\subsection{Proofs  of Proposition~\ref{moyenneu}, Proposition~\ref{lambdah}, Corollary~\ref{co.proba-sigmai} and Corollary~\ref{cc1}} \label{section17}


\subsubsection{Proof of Proposition~\ref{moyenneu}} \label{sec:moyeu}
Assume that \textbf{[H1]}, \textbf{[H2]} and \textbf{[H3]} hold.
From Lemma~\ref{pi0u} and since the function $\tilde u$ is non negative in $\Omega$, there exists $h_0>0$ such that for all $h\in (0,h_0)$
$$u_h=\frac{\pi_h^{(0)} \tilde u}{\left\Vert \pi_h^{(0)}\tilde u \right\Vert_{L^2_w}},$$
where $u_h$ is the eigenfunction associated with the smallest eigenvalue of $-L^{(D),(0)}_{f,h}(\Omega)$ (see Proposition~\ref{UU})  and $\tilde u$ is introduced in Definition~\ref{tildeu}.
Then, there exists $h_0>0$ such that for all $h\in (0,h_0)$,
\begin{align*}
\int_{\Omega}u_h \ e^{-\frac{2}{h} f}&=\int_{\Omega}\frac{\pi_h^{(0)} \tilde u}{\left\Vert \pi_h^{(0)} \tilde u \right\Vert_{L^2_w}}e^{-\frac{2}{h} f}=\frac{1}{\left\Vert \pi_h^{(0)} \tilde u \right\Vert_{L^2_w}  }\int_{\Omega} \left[\tilde u + \left(\pi_h^{(0)} -1\right) \tilde u\right] e^{-\frac{2}{h} f} .
\end{align*}
 From the definition of $\chi$ (see Definition~\ref{tildeu}) and using Laplace's method, one obtains (in the limit $h\to0$)
 $$\int_{\Omega} \chi^2 e^{-\frac{2}{h} f } = \frac{ h^{\frac{d}{2}} \pi ^{\frac{d}{2} } }{ \sqrt{  {\rm det \ Hess } f   (x_0)   }   }e^{-\frac{2}{h} f(x_0)} (1+O(h) )$$
 and likewise
 $$\int_{\Omega} \chi e^{- \frac{2}{h}f } = \frac{ h^{\frac{d}{2}} \pi ^{\frac{d}{2} } }{ \sqrt{  {\rm det \ Hess } f   (x_0)   }   }e^{-\frac{2}{h} f(x_0)} (1+O(h) ).$$
In addition, using Lemma~\ref{pi0u}, one has $\left\Vert \pi_h^{(0)} \tilde u\right\Vert_{L^2_w}=1+O\left(e^{-\frac{c}{h}}\right)$. Therefore, it holds when $h\to 0$,
\begin{align*}
\frac{1}{\left\Vert \pi_h^{(0)} \tilde u \right\Vert_{L^2_w}}\int_{\Omega} \tilde u \ e^{- \frac{2}{h} f }&=  \frac{\displaystyle \int_\Omega\chi\ e^{- \frac{2}{h} f }}{ \sqrt{\displaystyle \int_{\Omega} \chi^2 e^{- \frac{2f}{h}}}}\big (1+O\big(e^{-\frac{c}{h}}\big) \big )\\
&= \frac{ h^{\frac{d}{4}}\pi ^{\frac{d}{4} } }{ (  {\rm det \ Hess } f   (x_0)   )^{1/4}  }e^{-\frac{1}{h} f(x_0)} (1+O(h) ).
\end{align*}
Moreover, from Lemma~\ref{pi0u}, there exist $c>0$, $h_0>0$ and $C>0$ such that for $h\in (0,h_0)$
\begin{align*}
\frac{1}{\left\Vert \pi_h^{(0)} \tilde u \right\Vert_{L^2_w}  }\left\vert  \int_{\Omega}  \left(\pi^{(0)}_h -1\right) \tilde u \ e^{-\frac{2}{h} f} \right\vert &\leq  C \left\Vert (1-\pi_h^{(0)} )\tilde u\right\Vert_{L^2_w}  \sqrt{  \int_{\Omega} e^{-\frac{2}{h} f}  }\leq  C   e^{-\frac{c}{h} }  \ e^{-\frac{1}{h} f(x_0)}
 \end{align*}
Thus, one has when $h\to 0$, $$\int_{\Omega} u_h \ e^{- \frac{2}{h} f }= \frac{  \pi ^{\frac{d}{4} } }{  (  {\rm det \ Hess } f   (x_0)   )^{1/4}   } \ h^{\frac{d}{4} } \ e^{-\frac{1}{h}f(x_0)} (1+O(h) ).$$ This proves Proposition~\ref{moyenneu}.

\subsubsection{Proof of Proposition~\ref{lambdah} } 
\label{sec:lambdah} 
The aim of this section is to prove~\eqref{eq:lhh}.  To this end, we first state in Proposition~\ref{pr.lh} some estimates that  the quasi-modes constructed  in Section~\ref{sec:contruct_quasimode} satisfy under hypotheses \textbf{[H1]}, \textbf{[H2]} and \textbf{[H3]}. Let us emphasize that these estimates are weaker than those obtained in Section~\ref{sec:goodquasimodes} where in addition to \textbf{[H1]}-\textbf{[H2]}-\textbf{[H3]},   the hypotheses \eqref{hypo1} and \eqref{hypo2} were also assumed. Then, we prove that the estimates of Proposition~\ref{pr.lh} imply \eqref{eq:lhh}.  
\begin{proposition}
\label{pr.lh}
Let us assume that the hypotheses \textbf{[H1]}, \textbf{[H2]} and \textbf{[H3]} hold. Then there exist $n+1$ quasi-modes $( ( \tilde \psi_i)_{i=1,\ldots,n}, \tilde u)$ which satisfy the following estimates:
\begin{enumerate}
\item $\forall i\in \left\{1,\ldots,n\right\}$, $\tilde \psi_i\in \Lambda^1 H^1_{w,T}(\Omega)$ and $\tilde u \in \Lambda^0 H^1_{w,T}(\Omega)$. The function $\tilde u$ is non negative in $\Omega$. Moreover  $\forall i\in \left\{1,\ldots,n\right\}$,  $\left\|\tilde \psi_i\right\| _{L^2_w}= \left\|\tilde u \right\| _{L^2_w} = 1$.
\item
\begin{itemize}
\item[(a)]       There exists  $\ve_1>0$,  for all $ i\in \{1,\ldots,n\}$, in the limit $h\to 0$:
$$
  \left \|    \left(1-\pi_h^{(1)}\right) \tilde \psi_i \right\|_{H^1_w}^2  =    O\left(e^{-\frac{\ve_1}{h}}  \right).
$$ 
\item[(b)] For any $\delta>0$, $    \|   \nabla  \tilde u\|_{L^2_w}^2     =   O \left(e^{-\frac{2}{h}(f(z_1)-f(x_0) - \delta)} \right)$.
\end{itemize}
\item   There exists $\ve_0>0$ such that $\forall (i,j) \in \left\{1,\ldots,n\right\}^2$, $i<j$, in the limit $h\to 0$: $\left\lp \tilde \psi_i, \tilde \psi_j\right\rp_{L^2_w}=O \left( e^{-\frac{\ve_0}{h}}  \right )$.
\item  There exist $\ve_0>0$, such that for all $i\in \{1,\ldots,n\}$,  in the limit $h\to 0$:
$$
  \left \lp       \nabla \tilde u  ,    \tilde \psi_i\right\rp_{L^2_w}  =      C_i \ h^p  e^{-\frac{1}{h}(f(z_i)- f(x_0))}   \,    (  1  +     O(h )   )+O\left(\,e^{-\frac{1}{h}(f(z_{1} )- f(x_0)+\ve_0)}\right),
$$
where the constants $p$ and  $(C_i)_{i=1,\dots,n}$ are given by \eqref{Cip}. 
  
 \end{enumerate}
\end{proposition}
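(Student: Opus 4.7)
The plan is to take the same quasi-modes $\tilde u$ (Definition~\ref{tildeu}) and $\tilde\psi_i = e^{f/h}\tilde\phi_i$ (Definition~\ref{tildephii}) that were constructed in Section~\ref{sec:contruct_quasimode}, and revisit Steps~1--4 of Section~\ref{sec:goodquasimodes} to track precisely where the hypotheses \eqref{hypo1} and \eqref{hypo2} were used, weakening each conclusion accordingly. Items~1 and~2(b) of Proposition~\ref{pr.lh} are identical to the statements proved there and follow directly from Lemma~\ref{nablauu} and from the fact that $\tilde\psi_i\in\Lambda^1H^1_{w,T}(\Omega)$ by construction; neither \eqref{hypo1} nor \eqref{hypo2} enters.

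For item~2(a), I would reproduce Step~1 of Section~\ref{sec:goodquasimodes} verbatim up through the inequality \eqref{eq:majo_inf_supp}; this step never used \eqref{hypo1}--\eqref{hypo2}. What \eqref{hypo1}--\eqref{hypo2} supplied, in \eqref{eq:daa1}, was the explicit lower bound $\inf_{\supp\nabla\chi_i} d_a(\cdot,z_i)\geq \max[f(z_n)-f(z_i),f(z_i)-f(z_1)]+\varepsilon'$. Without them, one still has $\inf_{\supp\nabla\chi_i}d_a(\cdot,z_i)\geq c>0$ for some $c$ depending only on the choice of $\chi_i$, because $\chi_i\equiv 1$ on a neighborhood of~$z_i$ and because $d_a$ is a genuine distance on $\overline\Omega$ under \textbf{[H1]} (Proposition~\ref{opoo}). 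This immediately yields item~2(a) with some $\varepsilon_1>0$. Item~3 is almost a free estimate: Step~2 of Section~\ref{sec:goodquasimodes} relied on the strict inequality $d_a(z_i,z_j)>f(z_j)-f(z_i)\geq 0$ for $i<j$, proved in Proposition~\ref{PoPo}, which uses only \textbf{[H1]} and \textbf{[H3]}; the bound \eqref{eq:majo_phii_phij} thus provides $\varepsilon_0>0$ directly.

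For item~4, I would repeat Step~4 of Section~\ref{sec:goodquasimodes} term by term in the decomposition \eqref{dodoo}. The leading boundary term $\int_{\Sigma_i}\tilde\phi_i\cdot n\,e^{-f/h}$ produces the announced $C_ih^p e^{-(f(z_i)-f(x_0))/h}(1+O(h))$ unchanged, and the trace contribution over $(\partial\Omega\cap\supp\chi_i)\setminus\Sigma_i$ is bounded as in \eqref{dodoo1} by $Ce^{-(f(z_i)+\varepsilon/2)/h}$ using only that $z_i$ is the unique minimum of $f|_{\overline{B_{z_i}}}$; since $f(z_i)\geq f(z_1)$, this is absorbed in the $O\bigl(e^{-(f(z_1)-f(x_0)+\varepsilon_0)/h}\bigr)$ remainder. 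The main obstacle is handling the bulk term \eqref{dodoo2}: here \eqref{hypo1} was precisely invoked to get $\inf_{B_{z_i}^c}[d_a(\cdot,z_i)+f]>f(z_i)$. Without \eqref{hypo1}, the right substitute is the weaker but still strictly positive gap
\begin{equation*}
\inf_{z\in B_{z_i}^c\cap\partial\Omega}\bigl[d_a(z,z_i)+f(z)\bigr]\ >\ f(z_1),
\end{equation*}
which follows because the infimum is attained at some $z_j$, $j\neq i$, with $f(z_j)\geq f(z_1)$ and $d_a(z_j,z_i)>0$ (Proposition~\ref{opoo}); if $j>n_0$ the bound is clear, and if $j\leq n_0$ then $f(z_j)=f(z_1)$ but $d_a(z_j,z_i)>0$ still yields strict inequality. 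Combined with \eqref{eq:gamma2_close_to_Bzc} and the continuity of $d_a$, one can choose $\chi_i$ and $\chi$ so that $\inf_{\supp((1-\chi)\nabla\chi_i)}[d_a(\cdot,z_i)+f]\geq f(z_1)+c/2$, which after division by $\sqrt{\int\chi^2 e^{-2f/h}}\sim h^{d/4}e^{-f(x_0)/h}$ produces the required $O\bigl(e^{-(f(z_1)-f(x_0)+\varepsilon_0)/h}\bigr)$ remainder. This is the main technical point and the only place where the weakening of the hypotheses has a genuine effect on the form of the conclusion.
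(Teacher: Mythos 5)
Your proposal follows the paper's own route essentially verbatim: the paper proves Proposition~\ref{pr.lh} by reusing the quasi-modes of Section~\ref{sec:contruct_quasimode} and rerunning Steps 1--4 of Section~\ref{sec:goodquasimodes}, with 2(a) coming from \eqref{eq:majo_inf_supp}, 2(b) from Lemma~\ref{nablauu}, 3 from \eqref{eq:majo_phii_phij} via Proposition~\ref{PoPo}, and item 4 from \eqref{dodoo}--\eqref{dodoo1}--\eqref{dodoo2} with the substitute bound $\inf_{B_{z_i}^c}\bigl(d_a(\cdot,z_i)+f\bigr)>f(z_1)$ replacing the role of \eqref{hypo1}. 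This is exactly the substitute inequality you identified, so in substance your argument is the paper's.

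The one step you should repair is your justification of that inequality: you assert that the infimum of $d_a(\cdot,z_i)+f$ over $B_{z_i}^c$ ``is attained at some $z_j$, $j\neq i$''. There is no reason for the minimizer over the compact set $B_{z_i}^c$ to be a critical point of $f|_{\partial\Omega}$ (it may well lie on $\partial B_{z_i}$), and when $n=1$ there is no such $z_j$ at all, so the case distinction $j>n_0$ versus $j\le n_0$ does not cover the situation. The correct argument is more elementary and uses only the facts you already cite: for every $z\in B_{z_i}^c\subset\partial\Omega$ one has $d_a(z,z_i)+f(z)\ge f(z)\ge f(z_1)$, and equality would force $d_a(z,z_i)=0$, hence $z=z_i$ by Proposition~\ref{opoo} (under \textbf{[H1]}, \textbf{[H3]}), contradicting $z\in B_{z_i}^c$; since $d_a(\cdot,z_i)+f$ is continuous and $B_{z_i}^c$ is compact, the strict pointwise inequality upgrades to a uniform gap $\inf_{B_{z_i}^c}\bigl(d_a(\cdot,z_i)+f\bigr)\ge f(z_1)+2c$ for some $c>0$. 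With this gap, your choice of $\chi_i$ via \eqref{eq:gamma2_close_to_Bzc} and the continuity of $d_a$, together with the Agmon estimate of Proposition~\ref{pr.Agmon}, yields the remainder $O\bigl(e^{-\frac1h(f(z_1)-f(x_0)+\ve_0)}\bigr)$ in item 4 exactly as you describe, and the rest of your treatment of Steps 1--4 is correct.
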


\begin{proof}  
Thanks to the hypotheses \textbf{[H1]}, \textbf{[H2]} and \textbf{[H3]}, one can introduce the  $n+1$ quasi-modes $( ( \tilde \phi_i)_{i=1,\ldots,n}, \tilde u)$ built in Section~\ref{sec:contruct_quasimode}. Recall that $\tilde \psi_i=e^{\frac 1h f}\tilde \phi_i$ for $i\in \{1,\ldots,n\}$.

Then, one easily obtains that  $( ( \tilde \psi_i)_{i=1,\ldots,n}, \tilde u)$ satisfy the estimates stated in Proposition~\ref{pr.lh}, following exactly the computations made on $( ( \tilde \phi_i)_{i=1,\ldots,n}, \tilde u)$ in Section~\ref{sec:goodquasimodes}: 2(a) follows from~\eqref{eq:majo_inf_supp}, 2(b) is a consequence of Lemma~\ref{nablauu}, 3 follows from~\eqref{eq:majo_phii_phij} and 4 is a consequence of \eqref{dodoo}-\eqref{dodoo1}-\eqref{dodoo2} (in~\eqref{dodoo2}, one uses that for $\delta>0$ small enough, there exists $c>0$ such that  $\inf_{B_{z_i^c}} (d_a(.,z_i)+f-\delta)\ge f(z_1)+c$) . \end{proof}

Let us now prove that the estimates stated in Proposition~\ref{pr.lh} imply~\eqref{eq:lhh}, which will conclude the proof of Proposition~\ref{lambdah}.
\begin{proof}

From~\eqref{eq:u} together with the assumption $\Vert  u_h \Vert_{L^2_w}=1$, it holds 
\begin{equation}\label{eq:lambdah-dec}
\lambda_h =-\langle L^{D,(0)}_{f,h}(\Omega)\, u_h,u_h \rangle_{L^2_w}=\frac{h}{2} \Vert \nabla  u_h \Vert^2_{L^2_w}
\end{equation}
where $u_h$ is the eigenfunction associated with the smallest eigenvalue of $-L^{D,(0)}_{f,h}(\Omega)$ (see Proposition~\ref{UU}). Recall that $\nabla u_h \in \range   \pi_h^{(1)}$ (see~\eqref{eq:uh_in_span}).

In addition, let us recall that from items 1, 2(a) and 3 in Proposition~\ref{pr.lh} and using the proof of Lemma~\ref{indep}, there exists $h_0$ such that for all $h \in (0,h_0)$,
$$
{\rm span}\left( \pi_h^{(1)}\tilde \psi_i, \,
 i=1, \ldots,n\right)=\range   \pi_h^{(1)}.
$$
Let us denote by $(\psi_i)_{
 i=1, \ldots,n }$ the orthornormal basis of $\range   \pi_h^{(1)}$ resulting from the Gram-Schmidt orthonormalisation procedure applied to the set $(\pi_h^{(1)}\tilde \psi_i)_{
 i=1, \ldots,n}$ (see Lemma~\ref{gram}) so that
\begin{equation}\label{eq:dnuh_decomp1}
 \Vert \nabla  u_h \Vert^2_{L^2_w} =
\sum_{j=1}^n \left | \langle \nabla u_h , \psi_j\rp_{L^2_w}\right |^2.
\end{equation}
We now want to estimate the terms $\langle \nabla u_h , \psi_j\rp_{L^2_w}$.
 
 Using  2(b) in Proposition~\ref{pr.lh}  and using the proof of Lemma~\ref{pi0u}, one has that for~$h$
small enough $\pi_h^{(0)} \tilde u\neq 0$ and therefore, since moreover $\tilde u$ is non negative in $\Omega$,  
 $u_h  =      \frac{\pi_h^{(0)} \tilde u}{\left\|\pi_h^{(0)}  \tilde
     u\right\|_{L^2_w}}$. Thus one has (see~\eqref{eq:nablauh_psij}), for $j \in \{1, \ldots, n\}$,
 \begin{align*}  \lp    \nabla u_h ,   \psi_j \rp_{L^2_w}    &=\frac{Z^{-1}_j}{\left\|\pi_h^{(0)} \tilde u \right\|_{L^2_w}} \left[   \left\lp       \nabla \tilde u  ,    \tilde \psi_j  \right\rp_{L^2_w} +  \left\lp   \nabla \tilde u, \left( \pi_h^{(1)}-1   \right) \tilde \psi_j  \right\rp_{L^2_w}   \right]  \nonumber \\
 &\quad +\frac{Z^{-1}_j}{\left\|\pi_h^{(0)} \tilde u \right\|_{L^2_w}}\left[\sum_{i=1}^{j-1}\kappa_{ji} \ \left(  \left\lp       \nabla \tilde u  ,    \tilde \psi_i  \right\rp_{L^2_w} + \left\lp   \nabla \tilde u, \left( \pi_h^{(1)}-1   \right) \tilde \psi_i  \right\rp_{L^2_w}  \right)\  \right]
  \end{align*} 
where $(\kappa_{ji})_{1\le i<j \le n}$ and $(Z_j)_{1\le j \le n}$ are defined in Lemma~\ref{gram}.
 
Now, using the items 1, 2(a) and 3 of Proposition~\ref{pr.lh} and the proof of Lemma~\ref{estimate1}, one  obtains that there exist $\ve_0>0$ and $h_0>0$ such that  $\forall h\in (0,h_0)$, $\forall (i,j) \in \left\{1,\ldots,n\right\}^2$, it holds
\begin{equation}\label{eq.kij_lh}
\kappa_{ji}=O \left(e^{-\frac{\varepsilon_0}{h}} \right ) \text{ and } Z_i=1+O \left( e^{-\frac{\ve_0}{h}}\right),
\end{equation}
Injecting \eqref{eq.kij_lh} and the estimates 2 and 4 of Proposition~\ref{pr.lh} into~\eqref{eq:nablauh_psij} leads to the existence of $\ve'>0$ and $h_0>0$ such that  $\forall h\in (0,h_0)$,
\begin{equation}\label{eq.nablaupsi_lh}
  \left \lp       \nabla u_h  ,\psi_j\right\rp_{L^2_w}  =        C_i \ h^p  e^{-\frac{1}{h}(f(z_j)- f(x_0))}   \,    (  1  +     O(h )   )+O\left(e^{-\frac{1}{h}(f(z_{1} )- f(x_0)+\ve')}\right),
\end{equation}
where  the constants $p$ and  $(C_i)_{i=1,\dots,n}$ are given by~\eqref{Cip}.
Using \eqref{eq.nablaupsi_lh} in~\eqref{eq:lambdah-dec} and~\eqref{eq:dnuh_decomp1}, there exists $\ve'>0$ such that in the limit $h\to 0$
\begin{equation*}
\lambda_h =\frac h2
\sum_{j=1}^n  C_i^2 \ h^{2p}  e^{-\frac{2}{h}(f(z_i)- f(x_0))}   \,    (  1  +     O(h )   ) + O\left(\,e^{-\frac{2}{h}(f(z_{1} )- f(x_0)+\ve')}\right).\end{equation*}
Therefore, the estimate~\eqref{eq:lhh}  holds and Proposition~\ref{lambdah} is proved.
\end{proof}

\subsubsection{Proof of Corollary~\ref{co.proba-sigmai}} \label{proofproba} 
 According to (\ref{eq:dens}) one  has for $i\in\{1,\dots,n\}$:
$$\P_{\nu_h} [  X_{\tau_{\Omega}} \in \Sigma_i]=- \frac{h}{2\lambda_h} \frac{\int_{\Sigma_i} (\partial_n u_h)(z) \, e^{-\frac{2}{h}  f(z)}\sigma(dz)}{\int_\Omega u_h(y) e^{-\frac{2}{h}  f(y)}dy}.$$
Let us assume that \textbf{[H1]}, \textbf{[H2]} and
\textbf{[H3]} together with the inequalities
\eqref{hypo1} and \eqref{hypo2}  hold.
From Propositions~\ref{moyenneu} and~\ref{lambdah}, one obtains when $h\to 0$
\begin{align*}
\lambda_h \frac{2}{h} \int_{\Omega} u_h \ e^{- \frac{2}{h}f }dx&=2 \pi ^{\frac{d-2}{4}} \left ( {\rm det \ Hess } f   (x_0)  \right )^ {\frac{1}{4} }  h^{\frac{d-6}{4}}\\
&\quad \times \sum_{k=1}^{n_0} \frac{  \partial_nf(z_k)    }{ \sqrt{ {\rm det \ Hess } f|_{\partial \Omega}   (z_k) }  } \, e^{-\frac{1}{h}(2f(z_1)- f(x_0))} (1+O(h) ).
\end{align*}
Then, using in addition Theorem~\ref{TBIG0} to estimate $\int_{\Sigma_i} (\partial_n u_h) \, e^{-\frac{2}{h}  f}d\sigma$ ($i\in\{1,\dots,n\}$), one proves Corollary~\ref{co.proba-sigmai}.

\subsubsection{Proof of Corollary~\ref{cc1}}
Before starting the proof of Corollary~\ref{cc1}, let us notice that under the
assumptions stated in  Corollary~\ref{co.proba-sigmai}, for all $i\in \{1,\dots,n\}$ and for any test function
$F\in C^{\infty}(\partial \Omega)$ satisfying ${\rm supp}\, F
\subset B_{z_i}$ and $z_i\in {\rm int} \left ({\rm supp} \, F\right)$,  when $h\to 0$,
\begin{equation}\label{eq:EnuF}
\E_{\nu_h} [ F(X_{\tau_{\Omega}} )]=     \frac{\partial_nf(z_i)  }{ \sqrt{ {\rm det \ Hess } f_{|\partial \Omega }   (z_i) }}  
\left ( \sum_{k=1}^{n_0} \frac{ \partial_nf(z_k) }{\sqrt{ {\rm det \ Hess } f_{|\partial \Omega }   (z_k) } } \right)^{-1}  e^{-\frac{2}{h} (f(z_i)-f(z_1))} (   F(z_i) +   
 O(h) ).
\end{equation}
The strategy for the proof of Corollary~\ref{cc1} is to first extend~\eqref{eq:EnuF} to a deterministic initial condition, and then to deduce the result of Corollary~\ref{cc1}.
To this end, let $i_0\in\{2,\ldots, n\}$ be as in~\eqref{eq:hypo2_bis}, $j\in \{1,\ldots,i_0\}$ and $\alpha \in \mathbb R$ be such that
$$f(x_0)<\alpha<2f(z_1)-f(z_{j}).$$
Notice that we can
assume without loss of generality (up to increasing $\alpha$ if $\alpha$ is
smaller than $f(x_0)+f(z_{j})-f(z_1)$, see~\eqref{eq:hypo2_bis}) that
\begin{equation}\label{eq:hyp_alpha}
f(x_0)+f(z_{j})-f(z_1)<\alpha< 2 f(z_1)-f(z_{j}).
\end{equation}
For such an $\alpha$, let us define
\label{page.kalpha}
$$K_\alpha:=f^{-1}\left( \left(-\infty, \alpha \right]\right) \cap \Omega.$$
For $i\in \{1,\ldots,j\}$, we would like to show that~\eqref{eq:EnuF} holds  when  $X_0= x\in K_\alpha $, for any test function
$F\in C^{\infty}(\partial \Omega)$ satisfying ${\rm supp}\, F
\subset B_{z_i}$ and $z_i\in {\rm int} \left ({\rm supp} \, F\right)$.
\medskip

\noindent
 Let us  introduce the principal eigenfunction $\tilde{u}_h$ of $-L^{D,(0)}_{f,h}(\Omega)$:
 \label{page.tildeuh}
\begin{equation}\label{eq:vh_eig}
\left\{
\begin{aligned}
 -L^{ (0)}_{f,h}  \tilde{u}_h &=  \lambda_h \tilde{u}_h    \ {\rm on \ }  \Omega,  \\ 
\tilde{u}_h&= 0 \ {\rm on \ } \partial \Omega, 
\end{aligned}
\right.
\end{equation}
with
 $\tilde{u}_h>0$ on $\Omega$ and normalized such that
\begin{equation}\label{eq:vh_norm}
 \int_{\Omega} \tilde{u}_h^2\, dx=1.
\end{equation} Notice that $u_h$ solution to~\eqref{eq:u} only differs from $\tilde{u}_h$ by
a multiplicative constant so that, from Proposition~\ref{uniqueQSD}
\begin{equation} \label{eq:expQSD_bis}
\nu_h(dx)=Z_h(\Omega)^{-1}  \tilde{u}_h(x) e^{-\frac{2}{h}  f(x)}dx,
\end{equation}
where, for any set $O\subset \Omega$,
\label{page.zho}
$$Z_h(O):=\int_{O} \tilde{u}_h \ e^{- \frac{2}{h} f }.$$
For $F\in C^{\infty}(\partial \Omega)$, let us define 
\label{page.whh}
$$w_h(x)=\mathbb
E_{x}   [ F (X_{\tau_{\Omega}} )] \text{ for all } x\in \overline
\Omega.$$
 The function $w_h$ is such that: $\forall h >0$ and $x \in
\Omega$,
$$\vert w_h(x)\vert \leq \|F\|_{L^\infty}.$$  
Moreover, a standard Feynman-Kac formula shows that $w_h$ satisfies
\begin{equation}\label{eq:wh}
\left\{
\begin{aligned}
L^{(0)}_{f,h} w_h &=  0    \ {\rm on \ }  \Omega,  \\ 
w_h&= F \ {\rm on \ } \partial \Omega,
\end{aligned}
\right.
\end{equation}
where, we recall, the differential operator $L^{(0)}_{f,h}$ is defined by~\eqref{eq:L0}.
Our objective is to compare $w_h(x)$ with $\E_{\nu_h}  [F(X_{\tau_{\Omega}})]$.
\medskip

\noindent
By the Markov property,
using~\eqref{eq:expQSD_bis}, we have 
\begin{align}
\E_{\nu_h}  [F(X_{\tau_{\Omega}})]&= \left (\int_{\Omega} \tilde{u}_h\,
  e^{-\frac{2}{h}f} \right )^{-1}\left (\int_{\Omega}w_h\, \tilde{u}_h\,
  e^{-\frac{2}{h}f} \right ) \nonumber \\
&= Z_h^{-1}(\Omega)\left (\int_{\Omega \setminus K_{\alpha}}w_h\,
  \tilde{u}_h\, e^{-\frac{2}{h}f} \right) + Z_h^{-1}(\Omega)\left
  (\int_{K_{\alpha}} w_h\, \tilde{u}_h\, e^{-\frac{2}{h}f}  \right ). \label{eq:prob_sep}
\end{align}
\noindent
In order to estimate the first term in~\eqref{eq:prob_sep}, we need a
leveling property for $\tilde{u}_h$, which is stated in~\cite[Theorem 2.4]{devinatz-friedman-78a}.
\begin{lemma}\label{de1}  Let us assume that \textbf{[H1]},
  \textbf{[H2]} and \textbf{[H3]} hold and let us consider $\tilde{u}_h$ the
  principal eigenfunction of $L^{D,(0)}_{f,h}(\Omega)$ (see~\eqref{eq:vh_eig}) with normalization~\eqref{eq:vh_norm}. Then, for any compact set $K\subset \Omega$,
 $$\lim_{h\to 0} \left\|\tilde{u}_h - \left(\int_{\Omega}  dx\right )^{-1/2}\right\|_{L^\infty(K)}=0.$$ 
\end{lemma}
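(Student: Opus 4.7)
The plan is to exploit the probabilistic representation of $\tilde u_h$ via the Feynman--Kac formula combined with the exponential smallness of $\lambda_h$ (Proposition~\ref{lambdah}) and the lower bound of order $\sqrt h$ on the second Dirichlet eigenvalue of $-L^{(0)}_{f,h}$, which follows from the semiclassical analysis in Section~\ref{sec:witten}. The heuristic is that, on time scales intermediate between $1/\sqrt h$ and $1/\lambda_h$, the killed semigroup $e^{t L^{(0)}_{f,h}}$ is essentially the rank-one projector onto $\tilde u_h$, so that the non-exit probability $\mathbb P_x[t<\tau_\Omega]$ is, up to a vanishing error, proportional to $\tilde u_h(x)$.

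More precisely, I would first use the spectral decomposition of the killed semigroup in the basis of Dirichlet eigenfunctions of $-L^{D,(0)}_{f,h}(\Omega)$ to obtain, uniformly in $x\in\overline{\Omega}$,
$$\mathbb P_x[t < \tau_\Omega] \,=\, B_h\,\tilde u_h(x)\, e^{-\lambda_h t}\,+\, O\bigl(e^{-c\sqrt h\, t}\bigr),$$
for some positive constant $B_h$ (collecting the $L^2$ and $L^2_w$ normalization factors) and some $c>0$ independent of $h$. Taking for example $t = t_h = h^{-1}$, we have $\lambda_h t_h \to 0$ and $\sqrt h\, t_h \to +\infty$, so this reads
$$\tilde u_h(x)\ =\ B_h^{-1}\,\mathbb P_x[t_h<\tau_\Omega]\,(1+o(1)) \,+\, O\bigl(e^{-c/\sqrt h}\bigr).$$
This identity has two consequences: it yields the uniform upper bound $\|\tilde u_h\|_{L^\infty(\Omega)} \leq B_h^{-1}(1+o(1))$, and, since $\mathbb E_x[\tau_\Omega]$ is of order $1/\lambda_h$ hence exponentially large in $1/h$, a standard large deviation argument (or Markov's inequality applied on compacts) gives $\mathbb P_x[t_h<\tau_\Omega] \to 1$ uniformly on any compact $K\subset\Omega$. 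Consequently, $\tilde u_h \to B_h^{-1}$ uniformly on~$K$.

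The value of $B_h^{-1}$ is then identified via the normalization $\int_\Omega\tilde u_h^2 dx = 1$. Exhausting $\Omega$ by compact sets $K_n\nearrow\Omega$ with $|\Omega\setminus K_n|\to 0$, one splits the integral into a bulk contribution $\int_{K_n}\tilde u_h^2 \to B_h^{-2}|K_n|$ and a boundary-layer contribution $\int_{\Omega\setminus K_n}\tilde u_h^2 \leq \|\tilde u_h\|_{L^\infty}^2\,|\Omega\setminus K_n|$, the latter being controlled by the uniform $L^\infty$ bound obtained above. Sending $n\to\infty$ gives $B_h^{-2}\,|\Omega| \to 1$, and hence $B_h^{-1}\to|\Omega|^{-1/2}$. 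The main technical obstacle will be making the spectral expansion above rigorous in $L^\infty$ (rather than only $L^2$) and ensuring that the remainder is uniform in $x\in\overline\Omega$ despite the singular limit $h\to 0$; this can be achieved by combining elliptic regularity applied to the eigenvalue equation~\eqref{eq:vh_eig} with the uniform spectral gap provided by the semiclassical analysis of $-L^{D,(0)}_{f,h}(\Omega)$ in Section~\ref{sec:witten}.
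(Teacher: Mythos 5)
The paper does not actually prove this lemma: it simply invokes the leveling result of Devinatz--Friedman (\cite[Theorem 2.4]{devinatz-friedman-78a}). Your self-contained route through the killed semigroup, the quasi-stationary interpretation of $\tilde u_h$ and the normalization bookkeeping is therefore genuinely different in spirit, and the final identification of the constant via $\int_\Omega \tilde u_h^2=1$ and an exhaustion by compacts is fine. However, two steps have real gaps. First, the claim that $\mathbb{P}_x[t_h<\tau_\Omega]\to 1$ uniformly on compacts ``by Markov's inequality applied on compacts'' is wrong: Markov's inequality bounds $\mathbb{P}_x[\tau_\Omega>t]$ \emph{from above} by $\mathbb{E}_x[\tau_\Omega]/t$, and a large mean is compatible with early exit occurring with probability close to one. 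What you need is a genuine Freidlin--Wentzell lower bound on the survival probability, uniform over an \emph{arbitrary} compact $K\subset\Omega$; note that for points $x$ with $f(x)>f(z_1)$ the energy bound $\frac14\int|\dot\gamma+\nabla f|^2\ge f(z_1)-f(x)$ (the one used in Lemma~\ref{le.eiz}) is useless, so the positivity of the exit cost must come from the strict inward drift in a collar of $\partial\Omega$ (using $\partial_n f>0$), and since $t_h\to\infty$ a naive union bound over unit time intervals fails (near $\partial\Omega$ the exit probability in unit time is not small); one needs the standard excursion argument between neighborhoods of $x_0$ and $\partial\Omega$.

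Second, the choice $t_h=h^{-1}$ is too short once you try to make the remainder uniform in $x$, which you yourself identify as the main obstacle. The spectral gap of order $\sqrt h$ gives the remainder only in $L^2_w(\Omega)$, and converting this to an $L^\infty$ bound (via elliptic/parabolic regularity and removal of the weight $e^{-2f(x)/h}$) costs factors that are exponentially large in $1/h$, which the gain $e^{-c\sqrt h\,t_h}=e^{-c/\sqrt h}$ cannot absorb. The fix is available within your scheme: take $t_h$ exponentially large but still $o(1/\lambda_h)$ (possible since $\lambda_h$ is exponentially small by Proposition~\ref{lambdah}), e.g.\ $t_h=e^{\delta/h}$ with $0<\delta<2(f(z_1)-f(x_0))$, so that $e^{-c\sqrt h\,t_h}$ beats any $e^{C/h}$ loss while $\lambda_h t_h\to0$. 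With these two repairs (a quantitative FW survival estimate on arbitrary compacts and the longer time scale), your argument would give a correct, self-contained alternative to the cited result; as written, it is not yet a proof.
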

\noindent
Notice that the reason why we consider a smooth test function $F$ rather than $1_{\Sigma_i}$ is that we would like to apply the results in~\cite{devinatz-friedman-78a}.
\medskip

\noindent
 A direct consequence of Lemma~\ref{de1} is the following limit,
\begin{equation}\label{e3}
\lim \limits_{h\to 0} h\ln  \left(Z_h(\Omega)\right)=-2f(x_0).
\end{equation}
Indeed,  from the normalization of $\tilde{u}_h$, we get $ Z_h(\Omega)\leq  e^{-\frac{2}{h}f(x_0)}$, and from Lemma~\ref{de1} we have,   for $h$ small enough and for $r>0$ such that the open ball $B(x_0,2r)$ is included in~$\Omega$,
$$ Z_h(\Omega) \geq  Z_h(B(x_0,r))\geq \frac{1}{2}\left(\int_{\Omega}  dx\right )^{-1/2} \int_{B(x_0,r)}e^{-\frac{2f}{h}}.$$
Since $\displaystyle{\lim_{h\to 0} h\ln  \left(\int_{B(x_0,r)}
    e^{-\frac{2f}{h}}dx\right)=-2f(x_0)}$, we get \eqref{e3}. 
\medskip

\noindent
Let us now consider $i\in \{1,\ldots,j\}$. For the first term in~\eqref{eq:prob_sep}, using \eqref{e3} and~\eqref{eq:vh_norm}, we have for any $\delta>0$, for $h$ small enough,
\begin{align*}
Z_h(\Omega)^{-1}\left \vert \ \int_{\Omega \setminus K_{\alpha}}w_h\,
  \tilde{u}_h\, e^{-\frac{2}{h}f}\right \vert&\leq  \|F\|_{L^\infty}\,
e^{\frac{\delta}{h} }\, e^{-\frac{2}{h}\left(\inf_{\Omega \setminus
      K_{\alpha}} f-f(x_0)\right) }\\
&=\|F\|_{L^\infty} \, e^{\frac{\delta}{h} }\, e^{-\frac{2}{h}(\alpha
  -f(x_0)) }
\end{align*}
and thus, thanks to~\eqref{eq:hyp_alpha} and the fact that $f(z_i)\le f(z_j)$, by choosing $\delta$ small enough, there exists $c>0$ such
that, for all $h$ small enough,
\begin{equation}\label{eq:estim_1}
Z_h(\Omega)^{-1}\left \vert \ \int_{\Omega \setminus K_{\alpha}}w_h\,
  \tilde{u}_h\, e^{-\frac{2}{h}f} \right \vert \leq \|F\|_{L^\infty} \, e^{-\frac{2}{h}(f(z_i)-f(z_1) + c) }.
\end{equation}
In order to estimate the second term in~\eqref{eq:prob_sep}, we need 
a leveling property for $w_h$.
\begin{lemma}\label{le.eiz}
Let us assume that \textbf{[H1]}, \textbf{[H2]} and \textbf{[H3]}
hold, as well as~\eqref{eq:hyp_alpha}. Let us consider $w_h$ solution
to~\eqref{eq:wh}. Then there exists $C>0$ such
that for any $\delta>0$, for any $h$ small enough, for all $x,y\in K_\alpha$, 
$$\vert w_h(x)-w_h(y)\vert \leq C\, e^{\frac{\delta}{h} }\,  e^{-\frac{2}{h}(f(z_1)-\alpha ) }.$$
\end{lemma}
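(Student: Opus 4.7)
The strategy combines the strong Markov property at an intermediate time $T=T(h)$ with two standard ingredients: a Freidlin--Wentzell large deviation bound on the early exit, and an exponential convergence to the QSD via the spectral gap of $L^{D,(0)}_{f,h}(\Omega)$. I would set $T:=e^{\delta/(4h)}$ and, for $x,y\in K_\alpha$, decompose via the Markov property
\begin{equation*}
w_h(x)-w_h(y)=\bigl(\E_x[F(X_{\tau_\Omega});\tau_\Omega\le T]-\E_y[F(X_{\tau_\Omega});\tau_\Omega\le T]\bigr)+\bigl((P_T^D w_h)(x)-(P_T^D w_h)(y)\bigr),
\end{equation*}
where $(P_T^D g)(z):=\E_z[g(X_T);T<\tau_\Omega]$ is the Dirichlet semigroup generated by $L^{D,(0)}_{f,h}(\Omega)$. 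This choice of $T$ is tuned so that exit before time $T$ is rare (first bracket small) while the conditioned dynamics has time to mix (second bracket controlled via the gap), so that both brackets end up governed by the same exponential rate $e^{-2(f(z_1)-\alpha)/h}$.

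For the first bracket, a Freidlin--Wentzell estimate yields $\sup_{z\in K_\alpha}\P_z(\tau_\Omega\le T)\le C e^{\delta/(2h)}e^{-2(f(z_1)-\alpha)/h}$: starting below level $\alpha$, the quasi-potential cost to reach $\partial\Omega$ is $2(f(z_1)-\alpha)$, and the $e^{\delta/(2h)}$ absorbs both the $\epsilon$-perturbation from the LDP and the polynomial-in-$T$ prefactor. This bounds the first bracket by $2\|F\|_\infty\cdot Ce^{\delta/(2h)}e^{-2(f(z_1)-\alpha)/h}$. For the second bracket, I would invoke the spectral gap $\lambda_{h,2}-\lambda_h\ge c>0$ uniformly in $h$, following from \textbf{[H2]} (a single local minimum of $f$ in $\Omega$ yields a unique exponentially small Dirichlet eigenvalue; this is the $0$-form analogue of Proposition~\ref{ran}). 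Writing $(P_T^D w_h)(x)=\P_x(T<\tau_\Omega)\,\E_x[w_h(X_T)\mid T<\tau_\Omega]$ and using the spectral expansion to obtain $|\E_x[w_h(X_T)\mid T<\tau_\Omega]-\E_{\nu_h}[w_h]|\le Ch^{-N}e^{-cT}$---a doubly-exponentially small remainder given $T=e^{\delta/(4h)}$---the second bracket reduces to $\bigl(\P_y(\tau_\Omega\le T)-\P_x(\tau_\Omega\le T)\bigr)\E_{\nu_h}[w_h]$ modulo negligible terms, bounded again by $C\|F\|_\infty e^{\delta/(2h)}e^{-2(f(z_1)-\alpha)/h}$. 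Summing both contributions yields the claim.

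The main obstacle is turning the $L^2_w$-spectral-gap estimate into the required pointwise convergence statement for $\E_x[w_h(X_T)\mid T<\tau_\Omega]$, uniformly in $x\in K_\alpha$: the higher Dirichlet eigenmodes decay in $L^2_w$ with rate $\lambda_{h,k}\ge c$, but obtaining pointwise control demands $L^\infty(K_\alpha)$ bounds of the form $\|u_{h,k}\|_{L^\infty(K_\alpha)}\le h^{-N}\|u_{h,k}\|_{L^2_w}$ provided by classical elliptic regularity, with polynomial-in-$h^{-1}$ prefactors absorbed into $e^{\delta/h}$ thanks to the superpolynomial choice $T=e^{\delta/(4h)}$. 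A parallel technical point is ensuring that the Freidlin--Wentzell bound holds uniformly in $z\in K_\alpha$ and not only pointwise, which follows from compactness of $K_\alpha$ combined with the classical Ventcel--Freidlin exit estimates adapted to sublevel sets of $f$.
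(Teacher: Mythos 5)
Your proposal is correct in outline, but it takes a genuinely different route from the paper. The paper's proof is essentially a citation plus a calculus identity: it invokes \cite[Theorem 1]{eizenberg-90}, which gives the leveling bound $\vert w_h(x)-w_h(y)\vert \leq C e^{\delta/h} e^{-\frac{2}{h}V_\Omega(K_\alpha)}$ with $V_\Omega(K_\alpha)$ the Freidlin--Wentzell quasipotential from $K_\alpha$ to $\partial\Omega$, and then lower-bounds $V_\Omega(K_\alpha)\geq f(z_1)-\alpha$ via the identity $\int_0^T\vert\dot\gamma+\nabla f(\gamma)\vert^2-\int_0^T\vert\dot\gamma-\nabla f(\gamma)\vert^2=4\big(f(\gamma(T))-f(\gamma(0))\big)$. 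You instead re-derive the leveling property itself: Markov decomposition at the intermediate time $T=e^{\delta/(4h)}$, a quantitative exit-probability bound on $\{\tau_\Omega\le T\}$, and mixing of the conditioned dynamics towards $\nu_h$ through the Dirichlet spectral gap; your reduction of the second bracket works because $w_h-\E_{\nu_h}[w_h]$ is $L^2_w$-orthogonal to $u_h$ and $\P_x(T<\tau_\Omega)\geq 1/2$, so even crude $e^{C/h}$ elliptic constants are crushed by the doubly exponential factor $e^{-cT}$ (and it is harmless that the gap may only be of order $\sqrt h$ rather than a constant). What the paper's route buys is brevity and no need for any mixing argument; what yours buys is a self-contained probabilistic proof that reuses the QSD machinery already in the paper. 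Two caveats on your version: first, the estimate $\sup_{z\in K_\alpha}\P_z(\tau_\Omega\le T)\le C\,T\,e^{-\frac{2}{h}(f(z_1)-\alpha-\eta)}$, valid up to exponentially long $T$ and uniformly on $K_\alpha$, is not the bare sample-path LDP but requires the Freidlin--Wentzell cycle/renewal argument (or Day's exit-law results); it carries essentially the same weight as the Eizenberg citation it replaces, so your proof is not more elementary, only differently organized. Second, the quasipotential lower bound ``cost at least $2(f(z_1)-\alpha)$'' that you assert without proof is precisely the only part the paper proves explicitly (via the identity above); you should include that short computation rather than treat it as given.
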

\begin{proof}
From~\cite[Theorem 1]{eizenberg-90}, it is known that for any $\delta
>0$, for any $h$ small enough and for all $x,y\in K_\alpha$, 
$$\vert w_h(x)-w_h(y)\vert \leq C\, e^{\frac{\delta}{h} }\,  e^{-\frac{2}{h} V_\Omega(K_\alpha) },$$
where $V_\Omega(K_\alpha)$ is defined by,
$$V_\Omega(K_\alpha)=\inf \limits_{x\in K_\alpha}\, \inf \limits_{T> 0} \,  \inf \limits_{ \gamma \in {\rm Abs}(T,x,\partial \Omega)}  \, \frac{1}{4}\int_0^T \left \vert \dot{\gamma}+\nabla f(\gamma)\right \vert^2 dt $$
where ${\rm Abs}(T, x,\partial \Omega)$ is the set of absolutely continuous functions $\gamma:[0,T]\to \overline \Omega$ satisfying $\gamma(0)=x$ and $\gamma(T)\in \partial \Omega$. For any $\gamma \in {\rm Abs}(T, x,\partial \Omega)$, we have 
$$ \int_0^T \vert \dot{\gamma}+\nabla f(\gamma)\vert^2 dt-\int_0^T \vert \dot{\gamma}-\nabla f(\gamma)\vert^2 dt=4\int_0^T  \dot{\gamma}\cdot \nabla f(\gamma) dt=4\left ( f(\gamma(T))-f(x)\right ),$$
and therefore, it holds 
$$ \int_0^T \vert \dot{\gamma}+\nabla f(\gamma)\vert^2 dt\geq 4\left ( f(\gamma(T))-f(x)\right )\geq 4\left ( f(z_1)-f(x) \right ).$$
Finally we obtain 
$$V_\Omega(K_\alpha)\geq f(z_1)-\max \limits_{x\in K_\alpha} f(x)=f(z_1)-\alpha.$$
This concludes the proof of Lemma~\ref{le.eiz}.
\end{proof}
\noindent
We are now in position to estimate the second term
in~\eqref{eq:prob_sep}. Using Lemma~\ref{le.eiz}, we get, for any $\delta
>0$, in the limit $h \to 0$, uniformly in $y_0\in K_\alpha$,

\begin{align*}
Z_h^{-1}(\Omega)\left (\int_{K_{\alpha}} w_h\, \tilde{u}_h\,
  e^{-\frac{2}{h}f}dx\right )&=
w_h(y_0)\frac{Z_h(K_\alpha)}{Z_h(\Omega)}+ O\left (
  e^{\frac{\delta}{h} }\,  e^{-\frac{2}{h}(f(z_1)-\alpha ) }\right) \,
\frac{Z_h(K_\alpha)}{Z_h(\Omega)}.
\end{align*}
Therefore, by choosing $\delta > 0$ small enough, thanks
to~\eqref{eq:hyp_alpha} and the fact that $f(z_i)\le f(z_j)$ (since we recall that $i\in \{1,\ldots,j\}$),  there exists $c>0$ such that, in the limit $h \to 0$,
\begin{equation}\label{eq:estim_2}
Z_h^{-1}(\Omega)\left (\int_{K_{\alpha}} w_h\, \tilde{u}_h\,
  e^{-\frac{2}{h}f}dx\right )
=w_h(y_0)\frac{Z_h(K_\alpha)}{Z_h(\Omega)}+ O\left (  e^{-\frac{2}{h} (f(z_i)-f(z_1)+c )}\right) \, \frac{Z_h(K_\alpha)}{Z_h(\Omega)}.
\end{equation}
In addition we have 
 $\frac{Z_h(K_{\alpha})}{Z_h(\Omega)}=1+O\left (e^{-\frac{c}{h}}\right )$ for some $c>0$ independent of $h$. Indeed $\frac{Z_h(K_{\alpha})}{Z_h(\Omega)}=1-\frac{Z_h(\Omega\setminus K_{\alpha})}{Z_h(\Omega)}$ and using \eqref{e3}, we get  for any $\delta>0$,
\begin{equation}\label{eq:estim_3}
\frac{Z_h(\Omega\setminus K_{\alpha})}{Z_h(\Omega)}\leq
e^{\frac{\delta}{h}}\, e^{-\frac{2}{h} \left(\min_{\Omega\setminus
      K_{\alpha}} f-f(x_0) \right) }=O\left (e^{-\frac{c}{h}}\right ),
\end{equation}
for some $c>0$ independent of $h$ by choosing $\delta$ small
enough. Gathering the results~\eqref{eq:estim_1}--\eqref{eq:estim_2}--\eqref{eq:estim_3} in~\eqref{eq:prob_sep}, for all $i\in \{1,\ldots,j\}$, there exists $c>0$ independent of $h$ such that, in the
limit $h\to 0$, it holds: uniformly in $y_0\in K_\alpha$,
$$\E_{\nu_h}  [ F (X_{\tau_{\Omega}} )]=w_h(y_0) \left( 1+O\left (e^{-\frac{c}{h}}\right )\right ) + O\left (  e^{-\frac{2}{h} (f(z_i)-f(z_1)+c ) }\right).$$
Let $i\in
\{1,\ldots,j\}$ and let us assume  that ${\rm supp}\, F
\subset B_{z_i}$ and $z_i\in {\rm int} \left ({\rm supp} \, F\right)$. Then, combining the last estimate with~\eqref{eq:EnuF} implies that uniformly in $x\in
f^{-1}( (-\infty, \alpha ]) \cap \Omega$, in the limit $h\to 0$: 
\begin{equation}\label{eq:EnuFx}
\E_{x} [ F(X_{\tau_{\Omega}} )]=     \frac{\partial_nf(z_i)  }{ \sqrt{ {\rm det \ Hess } f_{|\partial \Omega }   (z_i) }}  
\left ( \sum_{k=1}^{n_0} \frac{ \partial_nf(z_k) }{\sqrt{ {\rm det \ Hess } f_{|\partial \Omega }   (z_k) } } \right)^{-1}  e^{-\frac{2}{h} (f(z_i)-f(z_1))} (   F(z_i) +   
 O(h) ).
\end{equation}
Let  $\Sigma_i \subset \partial \Omega$ containing $z_i$ and
 such that $\overline\Sigma_i \subset B_{z_i}$. Then, there exit $F,G\in C^{\infty}(\partial \Omega)$ such that ${\rm supp}\, F
\subset B_{z_i}$, $z_i\in {\rm int} \left ({\rm supp} \, F\right)$, ${\rm supp}\, G
\subset B_{z_i}$, $z_i\in {\rm int} \left ({\rm supp} \, G\right)$, $F\le 1_{\Sigma_i}\le G$ and $F(z_i)=G(z_i)=1$. From the inequality 
$$\E_{x} [ F(X_{\tau_{\Omega}} )]\le \P_{x} [ X_{\tau_{\Omega}} \in \Sigma_i]\le \E_{x} [ G(X_{\tau_{\Omega}} )],$$
together with \eqref{eq:EnuFx} applied to $F$ and $G$, one gets, for all $i\in \{1,\ldots,j\}$,  in the limit $h\to 0$:
$$
\P_{x} [ X_{\tau_{\Omega}}\in \Sigma_i]=      \frac{\partial_nf(z_i)  }{ \sqrt{ {\rm det \ Hess } f_{|\partial \Omega }   (z_i) }}  
\left ( \sum_{k=1}^{n_0} \frac{ \partial_nf(z_k) }{\sqrt{ {\rm det \ Hess } f_{|\partial \Omega }   (z_k) } } \right)^{-1}  e^{-\frac{2}{h} (f(z_i)-f(z_1))} (   1 +   
 O(h) ).$$This concludes the proof of Corollary~\ref{cc1}.


\subsection{Proofs of Theorem~\ref{th.gene_sigma} and Corollary~\ref{co.gene_sigma}}\label{sec.section-thm.gene} 
In this section, we prove Theorem~\ref{th.gene_sigma}. The proof is similar to the one made for Theorem~\ref{TBIG0}:   the estimates of Proposition~\ref{ESTIME} and the construction of the quasi-mode associated with $z_{j_0}$ are modified. The proof of Theorem~\ref{th.gene_sigma} is organized as follows. In Section~\ref{sec.required-estimates-THM2}, we give the estimates required for the $n+1$~quasi-modes. Then, in Section~\ref{sec.estime3}, we prove that these estimates imply  Theorem~\ref{th.gene_sigma}. In Section~\ref{sec:const-qm}, the construction of the quasi-modes is given and we check that they satisfy the estimates  stated in Section~\ref{sec.required-estimates-THM2}. 
   
\subsubsection{Statement of the assumptions required for the
  quasi-modes}\label{sec.required-estimates-THM2}
Let su recall that, for $p\in \{0,1\}$, the
orthogonal projector $\pi_{[0,\frac{\sqrt h}{2})}\left(
  -L^{D,(p)}_{f,h}(\Omega)\right)$ is still denoted by $\pi^{(p)}_h$, see~\eqref{eq.pih}.
\medskip

\noindent
The next proposition gives the assumptions   needed on the quasi-modes
$(  \tilde \psi_i)_{i=1,\ldots,n}$ whose span approximates $\range
\pi_h^{(1)}$, and $\tilde u$ whose span approximates $\range
\pi_h^{(0)}$, in order to prove Theorem~\ref{th.gene_sigma}. It is the equivalent of Proposition~\ref{ESTIME} in the more general setting of Theorem~\ref{th.gene_sigma}.

 \begin{proposition} \label{ESTIME3}
 Let us assume that the hypotheses \textbf{[H1]}, \textbf{[H2]} and \textbf{[H3]} hold. Let $\Sigma_i $ denotes an
open set included in $\partial \Omega$ containing $z_i$ ($i\in \{1,\dots,n\}$) and such that
$\overline\Sigma_i \subset B_{z_i}$.  Let $k_0\in\{1,\dots,n\}$ and $f^*$ such that $$f(z_{k_0}) \le f^*\le f(z_{k_0+1}),$$ 
\label{page.ko}
(with the convention $f(z_{k_0+1})=+\infty$ if $k_0=n$). Finally, let $\Sigma\subset \partial \Omega$ be a smooth open set such that $\inf_{\Sigma} f=f^*$ and  $\overline \Sigma\subset B_{z_{j_0}}$,   for some $j_0\in\{1,\dots,k_0\}$.\label{page.jo}

Let us assume  that there exist $n$
quasi-modes $(  \tilde \psi_i)_{i=1,\ldots,n}$ and a family of quasi-modes $(\tilde u=\tilde u_{\delta})_{\delta>0}$ satisfying the following conditions:

\begin{enumerate}
\item $\forall i\in \left\{1,\ldots,n\right\}$, $\tilde \psi_i\in \Lambda^1 H^1_{w,T}(\Omega)$ and $\tilde u \in \Lambda^0 H^1_{w,T}(\Omega)$. The function $\tilde u$ is non negative in $\Omega$. Moreover, $\forall i\in \left\{1,\ldots,n\right\}$,  $\left\|\tilde \psi_i\right\| _{L^2_w}= \left\|\tilde u \right\| _{L^2_w} = 1$.

\item
\begin{itemize}
\item[(a)]       There exists  $\ve_1>0$   such that    for all $ i\in \{1,\ldots,k_0\}$, it holds in the limit $h\to 0$:
$$
  \left \|    \left(1-\pi_h^{(1)}\right) \tilde \psi_i \right\|_{H^1_w}^2  =    O\left(e^{-\frac{2}{h}\left( \max[f^*-f(z_i), f(z_i)-f(z_1)] +    \ve_1 \right)}  \right), 
$$ 
and for all $ i\in \{k_0+1,\ldots,n\}$,  $\left \|    \left(1-\pi_h^{(1)}\right) \tilde \psi_i \right\|_{H^1_w}^2   \   =  \    O \left(e^{-\frac{2}{h}(f^*-f(z_1) +    \ve_1 )}\right)$.
\item[(b)] For any $\delta>0$, in the limit $h\to 0$: $    \|   \nabla  \tilde u\|_{L^2_w}^2     =   O \left(e^{-\frac{2}{h}(f(z_1)-f(x_0) - \delta)} \right)$.

\end{itemize}

\item   There exists $\ve_0>0$, $\forall (i,j) \in \left\{1,\ldots,n\right\}^2$,  in the limit $h\to 0$,  if $i<j\leq k_0$:
$$\left\lp \tilde \psi_i, \tilde \psi_j\right\rp_{L^2_w}=O \left( e^{-\frac{1}{h}(f(z_j)- f(z_i)+\varepsilon_0)}  \right ),$$
and, if  $ k_0<j$, $i<j$:
$$\left\lp \tilde \psi_i, \tilde \psi_j\right\rp_{L^2_w}=O \left( e^{-\frac{1}{h}(f^*- f(z_1)+\varepsilon_0)} \right).$$
\item
\begin{itemize}
\item[(a)]  There exists $\ve'>0$ and there exist constants
  $(C_i)_{i=1,\ldots,n}$ and  $p$ which do not depend on~$h$ such that, in the limit $h\to 0$,  for $i\in \left\{1,\ldots,k_0\right\}$:
$$
  \left \lp       \nabla \tilde u  ,    \tilde \psi_i\right\rp_{L^2_w}  =        C_i \ h^p  e^{-\frac{1}{h}(f(z_i)- f(x_0))}   \,    (  1  +     O(h )   ),
  $$
and for $i\in \{k_0+1,\ldots,n\}$:
$$
  \left \lp       \nabla \tilde u  ,    \tilde \psi_i\right\rp_{L^2_w}  =        C_i \ h^p  e^{-\frac{1}{h}(f(z_i)- f(x_0))}   \,    (  1  +     O(h )   )+O\left(\,e^{-\frac{1}{h}(f^*- f(x_0)+\ve')}\right).
$$
\item[(b)]  
There exist constants
  $(B_i)_{i=1,\ldots,n}$ and  $m$ which do not depend on~$h$ such that for all $(i,j) \in
  \left\{1,\ldots,n\right\}^2$, in the limit $h\to 0$:

\begin{equation*}
  \int_{\Sigma_i}   \,  \tilde \psi_j \cdot n  \,   e^{- \frac{2}{h} f}d\sigma   =\begin{cases}   B_i \, h^m   \,     e^{-\frac{1}{h} f(z_i)}  \    (  1  +     O(h )    )   &  \text{ if } i=j   \\
 0   &   \text{ if } i\neq j.
  \end{cases} 
  \end{equation*}

 \item[(c)] There exist $C^*$  and $p^*$ independent of $h$ such that for all  $i\in \{1,\ldots,n\}$, in the limit $h\to 0$: 
 $$
  \int_{\Sigma }   \,  \tilde \psi_i \cdot n  \,   e^{- \frac{2}{h} f} d\sigma  =\delta_{j_0,i}\, C^*h^{q^* } e^{-\frac{1}{h}(2f^*-f(z_{j_0}))} ( 1+ O(h)  ).
  $$
  \label{page.cstarqstar}
   \end{itemize}
  
\end{enumerate}
Let $u_h$ be the eigenfunction associated with the smallest eigenvalue of $-L^{D,(0)}_{f,h}(\Omega)$ (see Proposition~\ref{UU}) which satisfies \eqref{eq.u_norma0}.  Then, one has:
 \begin{itemize}
  
\item  For all $i\in\{1,\dots,k_0\}$, in the limit $h\to 0$
  \begin{equation*} 
 \int_{\Sigma_i}      (\partial_{n}u_h) \,   e^{-\frac{2}{h}f} d\sigma=C_iB_ih^{p+m} \, e^{-\frac{1}{h}(2f(z_i)- f(x_0))}    \   (1+  O(h) ).
  \end{equation*} 
Moreover, if $f(z_{k_0})<f(z_{k_0+1})$, there exists $\ve>0$ such that for all $i \in\{k_0+1,\dots,n\}$ in the limit $h\to 0$
  \begin{equation*} 
 \int_{\Sigma_i}      (\partial_{n}u_h) \,   e^{-\frac{2}{h}f } d\sigma=\left ( \int_{\Sigma_{k_0}}    (\partial_{n}u_h)  \,   e^{-\frac{2}{h}f } d\sigma\right ) O\left ( e^{-\frac{\ve}{h}}\right).
  \end{equation*} 
  \item In the limit $h\to 0$:
\begin{equation*} 
 \int_{\Sigma}       (\partial_{n}u_h) \,   e^{-\frac{2}{h}f} d\sigma =C^*C_{j_0}h^{q^*+p} \, e^{-\frac{1}{h}(2f^*- f(x_0))}    \   (1+  O(h) ).  
  \end{equation*} 
  \end{itemize}
\end{proposition}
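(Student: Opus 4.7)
The proof proceeds along the same lines as the proof of Proposition~\ref{ESTIME} in Section~\ref{estime}, with the key lemmas refined to accommodate the two regimes $i\le k_0$ and $i>k_0$, together with an additional contribution coming from the new assumption~4(c) when estimating the exit through $\Sigma$. First, the analogues of Lemmata~\ref{pi0u} and~\ref{indep} hold in the present setting since their proofs only use items~1, 2(a), 2(b) and~3; hence for $h$ small enough, $u_h=\pi_h^{(0)}\tilde u/\|\pi_h^{(0)}\tilde u\|_{L^2_w}$ with $\|\pi_h^{(0)}\tilde u\|_{L^2_w}=1+O(e^{-c/h})$, and $\mathrm{span}(\pi_h^{(1)}\tilde\psi_i,\, i=1,\ldots,n)=\Ran\pi_h^{(1)}$. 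Applying Gram--Schmidt yields an orthonormal basis $(\psi_i)_{i=1,\ldots,n}$ of $\Ran\pi_h^{(1)}$, and since $\nabla u_h\in\Ran\pi_h^{(1)}$, one obtains the decomposition $\int_{\Sigma_k}(\partial_n u_h)e^{-\frac{2}{h}f}d\sigma=\sum_j\langle\nabla u_h,\psi_j\rangle_{L^2_w}\int_{\Sigma_k}\psi_j\cdot n\,e^{-\frac{2}{h}f}d\sigma$ (and the analogous formula with $\Sigma$ replacing $\Sigma_k$).

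Next I would establish the refined analogues of Lemmata~\ref{scalarproduct} and~\ref{estimate1}, tracking the split between $i,j\le k_0$ and $j>k_0$. Combining assumption~3 with the identity~\eqref{fg} and assumption~2(a), one gets $\langle\pi_h^{(1)}\tilde\psi_i,\pi_h^{(1)}\tilde\psi_j\rangle_{L^2_w}=O(e^{-(f(z_j)-f(z_i)+\varepsilon)/h})$ for $i<j\le k_0$, and $O(e^{-(f^*-f(z_1)+\varepsilon)/h})$ for $i<j$ with $j>k_0$. The Gram--Schmidt coefficients $\kappa_{ji}$ and normalizations $Z_j$ inherit these bounds with $Z_j=1+O(e^{-c/h})$. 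Then, following Lemma~\ref{interaction} and using assumption~4(a), one gets $\langle\nabla u_h,\psi_j\rangle_{L^2_w}=C_jh^pe^{-(f(z_j)-f(x_0))/h}(1+O(h))$ when $j\le k_0$, and
\[
\langle\nabla u_h,\psi_j\rangle_{L^2_w}=C_jh^pe^{-(f(z_j)-f(x_0))/h}(1+O(h))+O\!\left(e^{-(f^*-f(x_0)+\varepsilon)/h}\right)
\]
when $j>k_0$. The boundary terms $\int_{\Sigma_k}\psi_j\cdot n\,e^{-\frac{2}{h}f}d\sigma$ are estimated exactly as in Lemma~\ref{gamma}, by combining assumption~4(b) with the trace estimate and the refined version of assumption~2(a); likewise, $\int_\Sigma\psi_j\cdot n\,e^{-\frac{2}{h}f}d\sigma$ is controlled using assumption~4(c) for $j=j_0$ (giving the leading term $C^*h^{q^*}e^{-(2f^*-f(z_{j_0}))/h}$) and via the trace bound combined with 2(a) and $\inf_\Sigma f=f^*$ for the other indices.

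Finally, one combines these estimates in the decomposition of $\int_{\Sigma_k}(\partial_n u_h)e^{-\frac{2}{h}f}d\sigma$ and of $\int_\Sigma(\partial_n u_h)e^{-\frac{2}{h}f}d\sigma$. For $\Sigma_i$ with $i\le k_0$, the dominant contribution comes from the diagonal term $j=i$, yielding $C_iB_ih^{p+m}e^{-(2f(z_i)-f(x_0))/h}(1+O(h))$; the subleading terms are controlled by hypothesis~\eqref{eq.hyp.gene.1}--\eqref{eq.hyp.gene.2} (with $f^*$ replacing $f(z_n)$), following the case-by-case analysis of Section~\ref{sec:conc_prop}. For $\Sigma_i$ with $i>k_0$, all contributions can be bounded (up to polynomial factors in $h$) by $e^{-(2f^*-f(x_0))/h}$, which is exponentially smaller than the leading contribution from $\Sigma_{k_0}$ by a factor $e^{-\varepsilon/h}$ as soon as $f(z_{k_0+1})>f(z_{k_0})$. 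For $\Sigma$, the dominant term in the sum is $j=j_0$ and gives $C_{j_0}C^*h^{p+q^*}e^{-(2f^*-f(x_0))/h}(1+O(h))$, all other terms being controlled by the strict inequality~\eqref{eq.hyp.gene.2}.

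The main technical obstacle is the bookkeeping of exponents in the second regime $i>k_0$: one must verify that the interplay between the weaker estimates in assumptions~2(a), 3 and 4(a) on the one hand, and the two-scale geometric hypotheses~\eqref{eq.hyp.gene.1}--\eqref{eq.hyp.gene.2} on the other hand, is precisely what is needed to guarantee that every cross term in the decomposition is smaller than the leading contribution from $\Sigma_{k_0}$ by at least a factor $e^{-\varepsilon/h}$. All other manipulations are parallel to those carried out in Section~\ref{estime}.
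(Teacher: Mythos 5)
Your overall strategy is the paper's: reduce to the decomposition of $\int_\Gamma(\partial_n u_h)e^{-\frac{2}{h}f}d\sigma$ over the Gram--Schmidt basis of $\range\pi_h^{(1)}$, prove two-regime analogues of Lemmata~\ref{scalarproduct}, \ref{estimate1}, \ref{interaction} and \ref{gamma}, and then sum. However, the step you yourself single out as the main difficulty --- the final bookkeeping --- is where the argument as written does not close. First, you control the subleading and cross terms by invoking \eqref{eq.hyp.gene.1}--\eqref{eq.hyp.gene.2}, but these geometric hypotheses are \emph{not} among the assumptions of Proposition~\ref{ESTIME3}; they only enter later, when one constructs quasi-modes satisfying items 1--4 (Section~\ref{sec:const-qm}). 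Inside the proof of the proposition you may only use the assumed estimates 1--4 together with the orderings $f(z_1)\le\cdots\le f(z_n)$ and $f(z_{k_0})\le f^*\le f(z_{k_0+1})$ (plus $f(z_{k_0})<f(z_{k_0+1})$ for the second bullet). The exponential margins must therefore be extracted from the $\varepsilon_0,\varepsilon_1,\varepsilon'$ already built into assumptions 2(a), 3 and 4(a) (fed through the trace estimate into the off-diagonal boundary terms, as in Lemma~\ref{gamma32}), not from \eqref{eq.hyp.gene.1}--\eqref{eq.hyp.gene.2}.

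Second, and more concretely, your bound for $i>k_0$ is too weak to give the stated conclusion. You claim every contribution to $\int_{\Sigma_i}(\partial_n u_h)e^{-\frac{2}{h}f}d\sigma$ is $O\bigl(h^{-N}e^{-\frac{1}{h}(2f^*-f(x_0))}\bigr)$ and that this is smaller than the $\Sigma_{k_0}$ term by $e^{-\varepsilon/h}$ once $f(z_{k_0+1})>f(z_{k_0})$. But the hypotheses allow $f^*=f(z_{k_0})$, in which case $e^{-\frac{1}{h}(2f^*-f(x_0))}=e^{-\frac{1}{h}(2f(z_{k_0})-f(x_0))}$ is of the \emph{same} exponential order as the leading contribution $C_{k_0}B_{k_0}h^{p+m}e^{-\frac{1}{h}(2f(z_{k_0})-f(x_0))}$, and no relative gain $e^{-\varepsilon/h}$ follows, whatever the sign of $f(z_{k_0+1})-f(z_{k_0})$. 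The correct accounting (this is what the paper does) shows that each cross term actually carries an extra factor $e^{-\varepsilon/h}$ inherited from the margins in 2(a) and 3, so it is $O\bigl(e^{-\frac{1}{h}(2f(z_{k_0})-f(x_0)+\varepsilon)}\bigr)$, while the diagonal term $j=i>k_0$ is $O\bigl(h^{p+m}e^{-\frac{1}{h}(2f(z_i)-f(x_0))}\bigr)$ and it is precisely here, via $f(z_i)\ge f(z_{k_0+1})>f(z_{k_0})$, that the strict inequality is used. The same remark applies to your treatment of $\int_\Sigma$: the non-$j_0$ terms are killed by the $\varepsilon$-margins in the assumed estimates (e.g. they are $O\bigl(e^{-\frac{1}{h}(2f^*-f(x_0)+\varepsilon)}\bigr)$), not by \eqref{eq.hyp.gene.2}. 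Until this exponent bookkeeping is carried out term by term, as in Section~\ref{sec:conc_prop} adapted to the two regimes, the two last bullets of the proposition are not actually proved.
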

\noindent
The asymptotic estimates~\eqref{eq.dnuh_expch} and~\eqref{eq.gene_dnuh} in Theorem~\ref{th.gene_sigma} are  consequences of this proposition and of the construction of some    appropriate quasi-modes  
$((  \tilde \psi_i)_{i=1,\ldots,n}, \tilde u)$, see Section \ref{sec:const-qm}, which will  show that $(B_i)_{i=1,\dots,n}$, $m$, $(C_i)_{i=1,\dots,n}$, $p$ are given by \eqref{mBi}-\eqref{Cip} and $C^*$, $q^*$ will be given in   Lemma~\ref{lemmawkb} below. Moreover, the other asymptotic estimates~\eqref{eq.expch} and~\eqref{eq.gene_sig} in Theorem~\ref{th.gene_sigma} are  consequences  of the asymptotic estimates~\eqref{eq.dnuh_expch} and~\eqref{eq.gene_dnuh} together with Proposition~\ref{moyenneu},  Proposition~\ref{lambdah} and \eqref{eq:dens}. 

\subsubsection{Proof of Proposition~\ref{ESTIME3} } \label{sec.estime3}
The proof of  Proposition~\ref{ESTIME3} follows closely the same steps as the proof of  Proposition~\ref{ESTIME}. We only highlight the main differences.  
In all this section, we assume that the hypotheses \textbf{[H1]}, \textbf{[H2]} and \textbf{[H3]} hold. Let $f^*\in \mathbb R$, $k_0\in \{1,\dots,n\}$,  $j_0\in \{1,\dots,k_0\}$, $(\Sigma_i)_{i\in\{1,\dots,n\}}$ and $\Sigma$ be as stated in Proposition~\ref{ESTIME3}. In addition, let us assume the existence of $n+1$ quasi-modes $(\tilde u, ( \tilde
\psi_i)_{i=1,\ldots,n})$ satisfying all the conditions of
Proposition~\ref{ESTIME3}. In the following, $\varepsilon$ denotes a
positive constant independent of $h$, smaller than $\min (\varepsilon_1,
\varepsilon_0, \ve')$, and whose precise value may vary (a finite number of times) from one occurrence
to the other.
\begin{sloppypar} 

Let us recall a result relating
$\tilde{u}$ with $u_h$ on the one hand, and $ {\rm
   span}\big(\tilde \psi_j,j=1,\ldots,n\big) $ with $ \range   \pi_h^{(1)}$ on the
 other hand.  The following lemma is a direct consequence of Lemma~\ref{pi0u}, Lemma~\ref{indep} and the assumptions 1, 2 and 3 of Proposition~\ref{ESTIME3}. 
 \begin{lemma} \label{le.pre_thm2}
  Let us assume that the assumptions of Proposition~\ref{ESTIME3} hold. Then,  there exist $c>0$ and $h_0>0$ such that for $h \in (0,h_0)$,
$$\left\|\pi_h^{(0)}  \tilde u\right\|_{L^2_w}=1+O \left( e^{-\frac{c}{h}}\right).$$
 In addition, there exists $h_0>0$ such that for all $h \in (0,h_0)$,
$$
{\rm span}\left( \pi_h^{(1)}\tilde \psi_i, \,
 i=1, \ldots,n\right)=\range   \pi_h^{(1)}.
$$
  \end{lemma}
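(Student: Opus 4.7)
The plan is to observe that both conclusions reduce directly to the proofs of Lemma~\ref{pi0u} and Lemma~\ref{indep}, since those proofs only exploit the \emph{exponential} decay of the relevant quantities and not the precise rates, whereas the assumptions of Proposition~\ref{ESTIME3} differ from those of Proposition~\ref{ESTIME} only in the value of the exponential rates. Therefore, the strategy is to re-examine the arguments of those two lemmata and check that each exponentially small remainder used in them is still exponentially small under the (weaker) assumptions of Proposition~\ref{ESTIME3}.

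For the first equality, I would follow the proof of Lemma~\ref{pi0u} verbatim: since $\tilde u\in \Lambda^0 H^1_{w,T}(\Omega)$ by assumption~1, applying Lemma~\ref{quadra} to the non negative self-adjoint operator $-L_{f,h}^{D,(0)}(\Omega)$ yields
\[
\Bigl\|\bigl(1-\pi_h^{(0)}\bigr)\tilde u\Bigr\|_{L^2_w}^{2} \;\le\; h^{1/2}\,\|\nabla \tilde u\|_{L^2_w}^{2}.
\]
Assumption~2(b) of Proposition~\ref{ESTIME3} is \emph{identical} to the one of Proposition~\ref{ESTIME}, so for any $\delta\in(0,f(z_1)-f(x_0))$ one gets $\|\nabla \tilde u\|_{L^2_w}^{2}=O\bigl(e^{-\frac{2}{h}(f(z_1)-f(x_0)-\delta)}\bigr)$, hence $\|(1-\pi_h^{(0)})\tilde u\|_{L^2_w}=O(e^{-c/h})$ for some $c>0$. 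Combined with the normalization $\|\tilde u\|_{L^2_w}=1$ and the Pythagorean identity, this gives the first claim.

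For the second equality, the key observation is that the proof of Lemma~\ref{indep} relies only on the algebraic identity
\[
\langle \pi_h^{(1)}\tilde\psi_i,\pi_h^{(1)}\tilde\psi_j\rangle_{L^2_w}
\;=\;-\,\langle (1-\pi_h^{(1)})\tilde\psi_i,(1-\pi_h^{(1)})\tilde\psi_j\rangle_{L^2_w}
\;+\;\langle \tilde\psi_i,\tilde\psi_j\rangle_{L^2_w},
\]
on Cauchy--Schwarz, and on the fact that both terms on the right are exponentially small in $h$ when $i\neq j$. Now, under the assumptions of Proposition~\ref{ESTIME3}, assumption~2(a) still yields $\|(1-\pi_h^{(1)})\tilde\psi_i\|_{H^1_w}^{2}=O(e^{-c_1/h})$ for some $c_1>0$ (the exponent differs depending on $i\le k_0$ or $i>k_0$, but it is positive in both cases), and assumption~3 still yields $\langle \tilde\psi_i,\tilde\psi_j\rangle_{L^2_w}=O(e^{-c_2/h})$ for $i\neq j$, again with $c_2>0$. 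The Gram matrix of $(\pi_h^{(1)}\tilde\psi_i)_{i=1,\ldots,n}$ is therefore the identity plus an exponentially small remainder, so its determinant is $1+O(e^{-c/h})$ and in particular the family $(\pi_h^{(1)}\tilde\psi_i)_{i=1,\ldots,n}$ is linearly independent for $h$ small enough.

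To conclude, I would invoke Proposition~\ref{ran} which gives $\dim \Ran \pi_h^{(1)} = n$ for $h$ small enough; this equality does not use any hypothesis specific to Proposition~\ref{ESTIME} nor to Proposition~\ref{ESTIME3}, but only the assumptions~\textbf{[H1]}, \textbf{[H2]}, \textbf{[H3]}. Combining with the linear independence of the $n$ vectors $\pi_h^{(1)}\tilde\psi_i$ in $\Ran \pi_h^{(1)}$ yields the second claim. There is no real obstacle in this argument: the only thing worth highlighting is that one must verify carefully that the exponential rates appearing in assumptions~2(a) and~3 of Proposition~\ref{ESTIME3}, although indexed by the integer $k_0$ and weaker than those of Proposition~\ref{ESTIME}, remain \emph{strictly positive}, which is the only property used in the proofs of Lemmata~\ref{pi0u} and~\ref{indep}.
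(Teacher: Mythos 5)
Your proof is correct and follows essentially the same route as the paper, which disposes of this lemma by noting it is a direct consequence of Lemma~\ref{pi0u}, Lemma~\ref{indep} and assumptions 1, 2 and 3 of Proposition~\ref{ESTIME3}. Your careful check that the exponents in assumptions 2(a) and 3 of Proposition~\ref{ESTIME3}, though weaker than those of Proposition~\ref{ESTIME}, remain strictly positive is exactly the point the paper's argument implicitly relies on.
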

 A direct consequence of Lemma~\ref{le.pre_thm2} and the fact that $\tilde u$ is non negative in $\Omega$ is that it holds for $h$ small enough:
   \begin{equation}\label{eq.uh3}
 u_h  =      \frac{\pi_h^{(0)} \tilde u}{\left\|\pi_h^{(0)}  \tilde
     u\right\|_{L^2_w}}.
\end{equation}
Let us denote by $(\psi_i)_{i=1,\ldots,n}$   the orthonormal basis of $\range   \pi_h^{(1)}$ resulting from the Gram-Schmidt orthonormalization procedure on the set $ (\pi_h^{(1)}\tilde \psi_i)_{
 i=1, \ldots,n }$ (see Lemma~\ref{gram}). 
  Then, since $\nabla u_h \in {\rm Ran} \left(\pi_h^{(1)}\right)=  {\rm
   span}\left(\psi_j,j=1,\ldots,n\right)$ (see~\eqref{eq:uh_in_span})
 and  $\lp \psi_j,\psi_i\rp_{L^2_w}=\delta_{i,j}$,  one has for any $\Gamma\subset \partial \Omega$
\begin{equation}\label{eq:dnuh_decomp3}
\int_{\Gamma}   (\partial_{n}u_h)\,   e^{-\frac{2}{h} f}d\sigma=
\sum_{j=1}^n \langle \nabla u_h , \psi_j\rp_{L^2_w}  \int_{\Gamma}
\psi_j  \cdot n \,  e^{- \frac{2}{h}  f }d\sigma.
\end{equation}
Let $(\kappa_{ji})_{(i,j) \in \left\{1,\ldots,n\right\}^2, \, i<j}$ and $(Z_j)_{j\in\{1,\dots,n\}}$ be the matrix and vector obtained through the Gram-Schimdt othonormalization procedure, see Lemma~\ref{gram}. 
 \medskip

\noindent
The strategy to  prove Proposition~\ref{ESTIME3}  consists in  estimating  precisely the following terms in the limit $h\to 0$: 
$$\kappa_{ji} , \, Z_j, \, \langle \nabla u_h , \psi_j\rp_{L^2_w} \text{ and } \left(\int_{\Gamma} \psi_j \cdot n \,  e^{- \frac{2}{h}  f} d\sigma\right)_{ \Gamma\in \{\Sigma, \Sigma_1,\dots,\Sigma_n\}}$$
 for $(i,j)\in \{1,\ldots,n\}^2$, $i<j$. Then, they will be used to obtain a precise estimate  of the right-hand-side of~\eqref{eq:dnuh_decomp3} when $h\to 0$. \end{sloppypar} 
\medskip

\noindent
\underline{Step 1.} Estimates on the terms $(\kappa_{ji})_{(i,j) \in \left\{1,\ldots,n\right\}^2, \, i<j}$ and $(Z_i)_{i\in\{1,\dots,n\}}$. 

\begin{lemma} \label{scalarproduct3}  Let us assume that the assumptions of Proposition~\ref{ESTIME3} hold. Then, there exist $\ve>0$ and $h_0>0$ such that for all $(i,j) \in
\left\{1,\ldots,n\right\}^2$ with $i<j$ and all $ h\in (0,h_0)$, if $j\le k_0$:
$$
\left\lp \pi_h^{(1)}  \tilde  \psi_i,  \pi_h^{(1)}  \tilde \psi_j \right\rp_{L^2_w}=O\left(e^{-\frac{1}{h}(f(z_j) - f(z_i)  +     \ve )}\right),
$$
and if $j>k_0$:
$$
\left\lp \pi_h^{(1)}  \tilde  \psi_i,  \pi_h^{(1)}  \tilde \psi_j \right\rp_{L^2_w}=O\left(e^{-\frac{1}{h}(f^* - f(z_1)  +     \ve )}\right).
$$
\end{lemma}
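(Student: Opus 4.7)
The plan is to mirror the proof of Lemma~\ref{scalarproduct}, starting from the algebraic identity~\eqref{fg}, namely
$$\lp \pi_h^{(1)}\tilde\psi_i,\pi_h^{(1)}\tilde\psi_j\rp_{L^2_w}=-\lp(1-\pi_h^{(1)})\tilde\psi_i,(1-\pi_h^{(1)})\tilde\psi_j\rp_{L^2_w}+\lp\tilde\psi_i,\tilde\psi_j\rp_{L^2_w},$$
applied to our pair of quasi-modes. The second summand is directly controlled by assumption~3 of Proposition~\ref{ESTIME3}, while the first summand is bounded, via Cauchy--Schwarz in $L^2_w$ (using $\Lambda^1H^1_w\hookrightarrow\Lambda^1L^2_w$ continuously), by the product of the $H^1_w$-norms of $(1-\pi_h^{(1)})\tilde\psi_i$ and $(1-\pi_h^{(1)})\tilde\psi_j$, which is estimated through assumption~2(a). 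It then only remains to verify that the resulting exponential rates dominate the announced ones in each of the two regimes for $j$.

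For the regime $i<j\le k_0$, assumption~2(a) yields
$$\|(1-\pi_h^{(1)})\tilde\psi_i\|_{L^2_w}\|(1-\pi_h^{(1)})\tilde\psi_j\|_{L^2_w}=O\!\left(e^{-\frac{1}{h}\bigl(\max[f^*-f(z_i),f(z_i)-f(z_1)]+\max[f^*-f(z_j),f(z_j)-f(z_1)]+2\ve_1\bigr)}\right).$$
Since $i\le k_0$ and $j\le k_0$ give $f(z_i)\le f(z_j)\le f^*$, one has $\max[f^*-f(z_i),f(z_i)-f(z_1)]\ge f^*-f(z_i)\ge f(z_j)-f(z_i)$ and $\max[f^*-f(z_j),f(z_j)-f(z_1)]\ge 0$, so the exponent is at least $f(z_j)-f(z_i)+2\ve_1$. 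Combined with the bound of assumption~3 in this range, this yields the first estimate with any $\ve\in(0,\min(2\ve_1,\ve_0))$.

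For the regime $j>k_0$, $i<j$, the term from assumption~3 is already $O(e^{-\frac{1}{h}(f^*-f(z_1)+\ve_0)})$, so it suffices to verify the matching decay for the projection remainder. If $i\le k_0$, assumption~2(a) gives an exponent of the form $\max[f^*-f(z_i),f(z_i)-f(z_1)]+(f^*-f(z_1))+2\ve_1\ge(f(z_i)-f(z_1))+(f^*-f(z_1))+2\ve_1\ge f^*-f(z_1)+2\ve_1$ since $f(z_i)\ge f(z_1)$. If $k_0<i<j$, the exponent is $2(f^*-f(z_1))+2\ve_1\ge f^*-f(z_1)+2\ve_1$, again using $f^*\ge f(z_1)$. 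In both subcases, Cauchy--Schwarz thus produces the desired $O(e^{-\frac{1}{h}(f^*-f(z_1)+\ve)})$, which concludes the proof with any $\ve\in(0,\min(2\ve_1,\ve_0))$. No substantial obstacle is expected, since the argument is a straightforward bookkeeping of the exponential weights in the two regimes of Proposition~\ref{ESTIME3}, the only care being to keep track of the fact that the two-sided maximum in assumption~2(a) for indices $i\le k_0$ is always at least as large as the simple differences appearing in the target estimates.
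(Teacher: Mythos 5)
Your proof is correct and follows essentially the same route as the paper: the identity~\eqref{fg}, Cauchy--Schwarz on the $(1-\pi_h^{(1)})$ terms, assumptions 2(a) and 3 of Proposition~\ref{ESTIME3}, and the bookkeeping $f(z_1)\le f(z_i)\le f(z_{k_0})\le f^*$. The only negligible difference is in the regime $j>k_0$, where the paper simply bounds one factor by $\|\tilde\psi_i\|_{L^2_w}=1$ instead of invoking 2(a) for both indices as you do; both choices give the stated rate.
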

\begin{proof}
The proof follows the same lines of the proof of Lemma~\ref{scalarproduct}.
If $i<j$ and $j\le k_0$, from  assumption 2(a) in Proposition~\ref{ESTIME3} and since $f^*\ge f(z_1)$, one gets
\begin{align*}
 \left\lp (1-\pi_h^{(1)})  \tilde  \psi_i,   (1-\pi_h^{(1)}) \tilde \psi_j \right\rp_{L^2_w} &\le \Vert (1-\pi_h^{(1)})  \tilde  \psi_i\Vert_{L^2_w}\Vert (1-\pi_h^{(1)})  \tilde  \psi_j\Vert_{L^2_w} \\
&\le O\left(e^{-\frac{1}{h}(f^*- f(z_i)  + f(z_j)-f(z_1) +   \ve )}\right)= O\left(e^{-\frac{1}{h}(f(z_j) - f(z_i)  +     \ve )}\right).
\end{align*}
If $i<j$ and $k_0<j$, from  assumptions 1 and 2(a) in Proposition~\ref{ESTIME3}, one gets
$$
  \left\lp (1-\pi_h^{(1)})  \tilde  \psi_i,   (1-\pi_h^{(1)}) \pi_h^{(1)}  \tilde \psi_j \right\rp_{L^2_w}\le \Vert   \tilde  \psi_i\Vert_{L^2_w} \Vert (1-\pi_h^{(1)})  \tilde  \psi_j\Vert_{L^2_w} 
\le  O\left(e^{-\frac{1}{h}(f^* - f(z_1)  +     \ve )}\right).
$$
Lemma~\ref{scalarproduct3} is then a consequence of~\eqref{fg} together with assumption 3 in Proposition~\ref{ESTIME3}.\end{proof}

\begin{lemma} \label{estimate13}
 Let us assume that the assumptions of Proposition~\ref{ESTIME3} hold.  
Then, there exist $\ve>0$ and $h_0>0$ such that for all $(i,j) \in
\left\{1,\ldots,n\right\}^2$ with $i<j$ and all $ h\in (0,h_0)$, if $j\le k_0$:
$$
\kappa_{ji}=O\left(e^{-\frac{1}{h}(f(z_j) - f(z_i)  +     \ve )}\right),
$$
and if $j>k_0$:
$$
\kappa_{ji}=O\left(e^{-\frac{1}{h}(f^* - f(z_1)  +     \ve )}\right).
$$
In addition, there exist $c>0$ and $h_0>0$ such that for all $j \in
\left\{1,\ldots,n\right\}$ and $ h\in (0,h_0)$,
$$
Z_j=1+O \left( e^{-\frac{c}{h}}\right).
$$
\end{lemma}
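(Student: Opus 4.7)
The plan is to prove Lemma~\ref{estimate13} by induction on $j$, mirroring closely the proof of Lemma~\ref{estimate1} but carefully distinguishing two regimes according to whether the index exceeds $k_0$ or not. The three ingredients that will be combined at each step are: (i) Lemma~\ref{scalarproduct3} for the cross scalar products of the $\pi_h^{(1)}\tilde\psi_i$'s, (ii) the Pythagorean identity $Z_j^2 = \|\tilde\psi_j\|^2_{L^2_w} - \|(1-\pi_h^{(1)})\tilde\psi_j\|^2_{L^2_w}$ together with assumption 2(a) of Proposition~\ref{ESTIME3}, and (iii) the explicit Gram-Schmidt recursion formula~\eqref{eq.kappaqk}.

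First, I would handle the base cases. For $j=1$, $v_1 = \pi_h^{(1)}\tilde\psi_1$ and $Z_1 = 1 + O(e^{-c/h})$ follows directly from assumption~2(a): indeed, if $1 \le k_0$, we get $\|(1-\pi_h^{(1)})\tilde\psi_1\|^2 = O(e^{-2\ve_1/h})$, and if $k_0 = 0$ is excluded by $k_0 \ge 1$, the estimate also holds. For $j=2$, one gets $\kappa_{21} = -Z_1^{-2}\langle \pi_h^{(1)}\tilde\psi_1, \tilde\psi_2\rangle_{L^2_w}$, which by Lemma~\ref{scalarproduct3} gives the required estimate according to whether $2 \le k_0$ or $2 > k_0$, and $Z_2 = 1 + O(e^{-c/h})$ follows as for $Z_1$.

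For the induction step, assuming the estimates hold up to index $k$, I would apply formula~\eqref{eq.kappaqk}
\[
\kappa_{(k+1) q} = - \sum_{j=q}^k \sum_{l=1}^j  \frac{1}{Z_{j}^2}  \langle \pi_h^{(1)} \tilde \psi_{k+1}, \pi_h^{(1)} \tilde \psi_l \rangle_{L^2_w} \kappa_{jl}\kappa_{jq}
\]
and insert Lemma~\ref{scalarproduct3} plus the inductive hypothesis on $\kappa_{jl}, \kappa_{jq}$. The main bookkeeping task is to verify that in each of the four subcases ($k+1 \le k_0$ vs $k+1 > k_0$, combined with $q \le k_0$ vs $q > k_0$) the collected exponential factors add up correctly. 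When $k+1 \le k_0$, all indices are $\le k_0$ and the computation reproduces verbatim the one in Lemma~\ref{estimate1}, yielding the bound $O(e^{-\frac{1}{h}(f(z_{k+1}) - f(z_q) + \ve)})$ after using $f(z_j) \ge f(z_l)$. When $k+1 > k_0$, Lemma~\ref{scalarproduct3} provides the factor $O(e^{-\frac{1}{h}(f^*-f(z_1)+\ve)})$, and the remaining factors $\kappa_{jl}\kappa_{jq}$ are nonnegative in their exponents (after telescoping using $f(z_j) \ge f(z_l)$ and $f(z_j) \ge f(z_q)$, which still holds since $f(z_j)$ need not be related to $f^*$), so the target bound $O(e^{-\frac{1}{h}(f^*-f(z_1)+\ve)})$ follows immediately.

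The estimate $Z_{k+1} = 1 + O(e^{-c/h})$ is then obtained exactly as in Lemma~\ref{estimate1}: since all $\kappa_{(k+1)q}$ are exponentially small in $h$ and $\|\pi_h^{(1)}\tilde\psi_{k+1}\|^2_{L^2_w} = 1 + O(e^{-c/h})$ (again by Pythagoras and assumption~2(a), taking $c < 2\min(\ve_1, f^*-f(z_1))$ appropriately), the recursive Gram-Schmidt expression for $v_{k+1}$ gives $Z_{k+1}^2 = 1 + O(e^{-c/h})$. The main obstacle I anticipate is purely notational: keeping track of the exponential weights through the double sum in~\eqref{eq.kappaqk} in the mixed regime ($q \le k_0 < k+1$ or $q, k+1 > k_0$) requires careful case analysis, but the underlying mechanism is identical to that of Lemma~\ref{estimate1}, so no new idea beyond the sharper decomposition provided by Lemma~\ref{scalarproduct3} is needed.
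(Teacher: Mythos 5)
Your proposal is correct and follows essentially the same route as the paper: both treat the indices $j\le k_0$ exactly as in Lemma~\ref{estimate1} and then run the Gram--Schmidt recursion~\eqref{eq.kappaqk} for $j>k_0$, using Lemma~\ref{scalarproduct3} for the cross terms, the observation that all factors $\kappa_{jl}\kappa_{jq}$ are $O(1)$ because their exponents are nonnegative, and the Pythagorean identity with assumption~2(a) to get $Z_j=1+O(e^{-c/h})$. One trivial slip: your parenthetical choice $c<2\min(\ve_1,f^*-f(z_1))$ degenerates if $f^*=f(z_1)$; any $c\in(0,2\ve_1)$ works since the exponent in assumption~2(a) is at least $2\ve_1>0$.
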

\begin{proof}
If $i<j$ and $j\le k_0$, the estimates on $\kappa_{ji}$ and $Z_j$ are proved by induction as in the proof of Lemma~\ref{estimate1}. Let us now deal with the case $i<j$ and $ k_0<j$. For $j=k_0+1$, it follows from~\eqref{eq.kappaqk} that for all $i<k_0+1$,
$$
\kappa_{(k_0+1) i} =- \sum_{k=i}^{k_0} \sum_{l=1}^k  \frac{1}{Z_{k}^2}  \lp \pi_h^{(1)} \tilde \psi_{k_0+1}  ,   \pi_h^{(1)} \tilde \psi_l  \rp_{L^2_w} \   \kappa_{kl}\kappa_{ki},    
$$
where we use the notation $\kappa_{ii}=1$ for every $i\in\{1,\dots,n\}$. Since $1\le k\le k_0$, one has  $Z_k^{-1}=1+O \left( e^{-\frac{c}{h}}\right)$. In addition, since $1\le l\le k\le k_0$ and $1\le i\le k\le k_0$, one has $ \kappa_{kl}\kappa_{ki}=O  ( 1 )$. From Lemma~\ref{scalarproduct3}, one has for $1\le l<k_0+1$, $\lp \pi_h^{(1)} \tilde \psi_{k_0+1}  ,   \pi_h^{(1)} \tilde \psi_l  \rp_{L^2_w}=O\left(e^{-\frac{1}{h}(f^* - f(z_1)  +     \ve )}\right)$. Therefore, one obtains for all $i<k_0+1$, 
$$
\kappa_{(k_0+1) i}=O\left(e^{-\frac{1}{h}(f^* - f(z_1)  +     \ve )}\right).$$
The fact that $Z_{k_0+1}=1+O \left( e^{-\frac{c}{h}}\right)$, comes from
the fact that the terms $(\kappa_{(k_0+1) i})_{i \in \{1,\ldots ,k_0\}}$ are exponentially small and the fact that $\left\Vert \pi_h^{(1)} \tilde \psi_{k_0+1}\right\Vert_{L^2_w} = 1+O \left( e^{-\frac{c}{h}}\right)$. 
In order to prove  by induction the estimates on $\kappa_{ji}$ for $i<j$ and $j>k_0$, let us now assume that for some $k\in \left\{k_0+1,\ldots,n\right\}$ and for all  $j\in \{k_0+1,\ldots,k\}$, $i\in \{1,\ldots,j-1\}$,
$$
\kappa_{ji}= O \left(e^{-\frac{1}{h}(f^*-f(z_1)+\ve)}\right) \ {\rm and} \ 
Z_j=1+O \left( e^{-\frac{c}{h}}\right).
$$
It follows from~\eqref{eq.kappaqk}, for $q\in \{1, \ldots ,k\}$,
$$
\kappa_{(k+1) q} =- \sum_{j=q}^k \sum_{l=1}^j  \frac{1}{Z_{j}^2}  \lp \pi_h^{(1)} \tilde \psi_{k+1}  ,   \pi_h^{(1)} \tilde \psi_l  \rp_{L^2_w} \   \kappa_{jl}\kappa_{jq},    
$$
where we used the notation $\kappa_{ii}=1$. Since $1\le j\le k $, one has $Z_j^{-1}=1+O \left( e^{-\frac{c}{h}}\right)$. In addition, since $1\le l\le j\le k$ and $1\le q\le j\le k$, one has $ \kappa_{jl}\kappa_{jq}=O( 1 )$. From Lemma~\ref{scalarproduct3}, one has for $1\le l<k+1$ and $k> k_0$, $\lp \pi_h^{(1)} \tilde \psi_{k+1}  ,   \pi_h^{(1)} \tilde \psi_l  \rp_{L^2_w}=O\left(e^{-\frac{1}{h}(f^* - f(z_1)  +     \ve )}\right)$. Therefore, one obtains for all $1\le q<k+1$, 
$$
\kappa_{(k+1) q}=O\left(e^{-\frac{1}{h}(f^* - f(z_1)  +     \ve )}\right).$$
The fact that $Z_{k+1}=1+O \left( e^{-\frac{c}{h}}\right)$, comes from
the fact that the $(\kappa_{(k+1) q})_{q \in \{1,\ldots ,k\}}$ are exponentially small and the fact that $\left\Vert \pi_h^{(1)} \tilde \psi_{k+1}\right\Vert_{L^2_w} = 1+O \left( e^{-\frac{c}{h}}\right)$. This concludes the proof by induction. 
\end{proof}

\noindent
\underline{Step 2.} Estimates on the interaction terms $(\langle \nabla u_h , \psi_j\rp_{L^2_w})_{j\in\{1,\dots,n\}}$.
\begin{lemma} \label{interaction3}
 Let us assume that the assumptions of Proposition~\ref{ESTIME3} hold.  Then,  for $j\in \{1,\ldots,k_0\}$, in the limit $h\to 0$:
$$
  \left \lp       \nabla u_h  ,    \psi_j\right\rp_{L^2_w}  =        C_j \ h^p  e^{-\frac{1}{h}(f(z_j)- f(x_0))}   \,    (  1  +     O(h )   ),
  $$
and for $j\in \{k_0+1,\ldots,n\}$,  there exists $\ve'>0$ such that in the limit $h\to 0$:
$$
  \left \lp       \nabla u_h  ,     \psi_j\right\rp_{L^2_w}  =        C_j \ h^p  e^{-\frac{1}{h}(f(z_j)- f(x_0))}   \,    (  1  +     O(h )   )+O\left(\,e^{-\frac{1}{h}(f^*- f(x_0)+\ve')}\right).
$$

\end{lemma}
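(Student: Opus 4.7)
My plan for proving Lemma~\ref{interaction3} is to follow the same architecture as the proof of Lemma~\ref{interaction}, but to split the analysis according to whether $j\le k_0$ or $j>k_0$, because assumptions 2(a), 3 and 4(a) of Proposition~\ref{ESTIME3} are stated with two different exponential rates in these two regimes. The starting point is the decomposition~\eqref{eq:nablauh_psij}, which remains valid here: using~\eqref{eq.uh3}, the intertwining relation~\eqref{eq.comut_nabla_pi}, the Gram--Schmidt formula~\eqref{eq:gram}--\eqref{eq:psij}, and the self-adjointness of $\pi_h^{(1)}$, one rewrites $\langle \nabla u_h,\psi_j\rangle_{L^2_w}$ as
\[
\frac{Z_j^{-1}}{\|\pi_h^{(0)}\tilde u\|_{L^2_w}}\Bigl[\langle\nabla\tilde u,\tilde\psi_j\rangle_{L^2_w}+\langle\nabla\tilde u,(\pi_h^{(1)}-1)\tilde\psi_j\rangle_{L^2_w}+\sum_{i=1}^{j-1}\kappa_{ji}\bigl(\langle\nabla\tilde u,\tilde\psi_i\rangle_{L^2_w}+\langle\nabla\tilde u,(\pi_h^{(1)}-1)\tilde\psi_i\rangle_{L^2_w}\bigr)\Bigr].
\]
The prefactor is $1+O(e^{-c/h})$ thanks to Lemma~\ref{le.pre_thm2} and Lemma~\ref{estimate13}, so the task reduces to estimating the four types of terms above.

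The dominant term is $\langle\nabla\tilde u,\tilde\psi_j\rangle_{L^2_w}$ which is supplied directly by assumption 4(a) of Proposition~\ref{ESTIME3}: it gives the claimed leading asymptotic $C_jh^p e^{-\frac{1}{h}(f(z_j)-f(x_0))}(1+O(h))$ when $j\le k_0$, and the leading asymptotic plus the extra residual $O(e^{-\frac{1}{h}(f^*-f(x_0)+\ve')})$ when $j>k_0$. It remains to show that every other term is absorbed into the corresponding error.

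For the remaining terms I would apply Cauchy--Schwarz and combine assumption 2(b) with assumption 2(a): fixing $\delta>0$ arbitrarily small,
\[
|\langle\nabla\tilde u,(\pi_h^{(1)}-1)\tilde\psi_i\rangle_{L^2_w}|\le \|\nabla\tilde u\|_{L^2_w}\,\|(\pi_h^{(1)}-1)\tilde\psi_i\|_{H^1_w}
\]
is bounded by $O(e^{-\frac1h(f(z_1)-f(x_0)-\delta)})$ times either $e^{-\frac1h(\max[f^*-f(z_i),f(z_i)-f(z_1)]+\ve_1)}$ (if $i\le k_0$) or $e^{-\frac1h(f^*-f(z_1)+\ve_1)}$ (if $i>k_0$). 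Using $\max[\cdots]\ge f(z_i)-f(z_1)$ in the first case gives a bound $O(e^{-\frac1h(f(z_i)-f(x_0)+\ve_1-\delta)})$, while the second case gives $O(e^{-\frac1h(f^*-f(x_0)+\ve_1-\delta)})$. For the cross-terms $\kappa_{ji}\langle\nabla\tilde u,\tilde\psi_i\rangle_{L^2_w}$, one uses Lemma~\ref{estimate13} together with assumption 4(a) applied to index~$i$; the bound on $\kappa_{ji}$ is $O(e^{-\frac1h(f(z_j)-f(z_i)+\ve)})$ when $j\le k_0$ and $O(e^{-\frac1h(f^*-f(z_1)+\ve)})$ when $j>k_0$. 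Multiplying by the estimate on $\langle\nabla\tilde u,\tilde\psi_i\rangle_{L^2_w}$ (which is always at worst $O(h^p e^{-\frac1h(f(z_i)-f(x_0))})$ plus the $f^*$ residual when $i>k_0$) produces in each case an error of the form $O(h^p e^{-\frac1h(f(z_j)-f(x_0)+\ve)})$ when $j\le k_0$, and $O(h^p e^{-\frac1h(f^*-f(x_0)+\ve'')})$ when $j>k_0$, for some positive constant. A completely analogous estimate handles $\kappa_{ji}\langle\nabla\tilde u,(\pi_h^{(1)}-1)\tilde\psi_i\rangle_{L^2_w}$.

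The step I expect to require the most care is the bookkeeping when $j>k_0$: one must verify that every error term is either genuinely dominated by the main $C_jh^p e^{-\frac1h(f(z_j)-f(x_0))}$ or is absorbed into the residual $O(e^{-\frac1h(f^*-f(x_0)+\ve')})$. This relies on choosing $\delta<\min(\ve,\ve_0,\ve_1,\ve')$ small enough so that all the $-\delta$ losses coming from assumption 2(b) are compensated by the positive gaps $\ve,\ve_0,\ve_1,\ve'$ appearing in the other assumptions, and on exploiting the orderings $f(z_1)\le f(z_i)\le f^*$ valid for $i\le k_0$ and $f^*\le f(z_i)$ valid for $i>k_0$ to combine exponentials correctly. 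Once these inequalities are chained, both cases of the lemma follow.
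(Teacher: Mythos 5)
Your proposal is correct and follows essentially the same route as the paper: the paper also starts from the decomposition~\eqref{eq:nablauh_psij} (splitting it into the $j$-term, the sum over $i\le k_0$ and the sum over $k_0<i<j$), controls the prefactor by Lemmata~\ref{le.pre_thm2} and~\ref{estimate13}, invokes assumptions 2, 3 and 4(a) of Proposition~\ref{ESTIME3} with the two different exponential rates, and concludes by taking $\delta$ smaller than the available gaps and using the orderings $f(z_1)\le f(z_i)\le f^*\le f(z_{i'})$ for $i\le k_0<i'$. Your bookkeeping of the error terms matches the paper's case analysis, so no changes are needed.
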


\begin{proof} For $j\in \{1,\dots,k_0\}$, the proof of the estimate of $\left \lp       \nabla u_h  ,    \psi_j\right\rp_{L^2_w} $ is exactly the same as for Lemma~\ref{interaction}. Let $j\in\{k_0+1,\dots,n\}$. Using \eqref{eq.comut_nabla_pi}--\eqref{eq:uh}--\eqref{eq:gram}--\eqref{eq:psij}, one has
$$  \lp    \nabla u_h ,   \psi_j \rp_{L^2_w}    =a_j+b_j+c_j,$$
  where $a_j$, $b_j$ and $c_j$ are defined by 
      
 $$a_j=\frac{Z^{-1}_j}{\left\|\pi_h^{(0)} \tilde u \right\|_{L^2_w}} \left(   \left\lp       \nabla \tilde u  ,    \tilde \psi_j  \right\rp_{L^2_w} +  \left\lp   \nabla \tilde u, \left( \pi_h^{(1)}-1   \right) \tilde \psi_j  \right\rp_{L^2_w}   \right),  $$
 $$b_j= \frac{Z^{-1}_j}{\left\|\pi_h^{(0)} \tilde u \right\|_{L^2_w}}\sum_{i=1}^{k_0}\kappa_{ji} \ \left(  \left\lp       \nabla \tilde u  ,    \tilde \psi_i  \right\rp_{L^2_w} + \left\lp   \nabla \tilde u, \left( \pi_h^{(1)}-1   \right) \tilde \psi_i  \right\rp_{L^2_w}  \right)\  ,$$
 and
 $$c_j=\frac{Z^{-1}_j}{\left\|\pi_h^{(0)} \tilde u \right\|_{L^2_w}}\sum_{i=k_0+1}^{j-1}\kappa_{ji} \ \left(  \left\lp       \nabla \tilde u  ,    \tilde \psi_i  \right\rp_{L^2_w} + \left\lp   \nabla \tilde u, \left( \pi_h^{(1)}-1   \right) \tilde \psi_i  \right\rp_{L^2_w}  \right)\  ,  
$$
  with the convention $\sum_{i=k_0+1}^{k_0}=0$. From  Lemmata \ref{le.pre_thm2} and \ref{estimate13}, one has  $\frac{Z_{j}^{-1}}{
  \left\|\pi_h^{(0)} \tilde u\right\|_{L^2_w} }=1+O \left(
  e^{-\frac{c}{h}}\right)$. Using assumptions 2 and 4(a) in
Proposition~\ref{ESTIME3} and Lemma~\ref{estimate13}, there exists
$\delta_0>0$ such that for all $\delta\in (0,\delta_0)$,
\begin{align*} 
 a_j&=C_j h^p  e^{-\frac{1}{h}(f(z_j)- f(x_0))}     (     1 + O(h ) )\, + O \left (  e^{-\frac{1}{h}(f^*-f(x_0) + \ve)}  \right ) + O\left(  e^{-\frac{1}{h}(f^*- f(x_0) -\delta + \ve)}  \right),\\
b_j&= \sum_{i=1}^{k_0}O\left (  e^{-\frac{1}{h}(f^*-f(z_1) + \ve  +f(z_i)- f(x_0)-\delta)}    \right )+O\left(  e^{-\frac{1}{h}(f^*-f(z_1) + \ve+f(z_1)- f(x_0) -\delta+ f(z_i)-f(z_1) + \ve)}  \right)\\
   &=O\left(  e^{-\frac{1}{h}( f^*-f(x_0) -\delta+ \ve)}\right), \\
c_j&=\sum_{i=k_0+1}^{j-1}O\left (  e^{-\frac{1}{h}(f^*-f(z_1) + \ve)}\right) \,\left (  O\left(  e^{-\frac{1}{h}(f(z_i)- f(x_0)-\delta)}\right ) + O\left (  e^{-\frac{1}{h}(f^*-f(x_0) + \ve)} \right )\right )\\
   &\quad +O\left (  e^{-\frac{1}{h}(f^*-f(z_1) + \ve+f(z_1)- f(x_0) -\delta+ f^*-f(z_1) + \ve)}  \right )=O \left (  e^{-\frac{1}{h}( f^*-f(x_0) -\delta+ \ve)}\right ).\end{align*}
Therefore, choosing $\delta\in (0,\varepsilon)$, there exists $\ve'>0$ such that
\begin{align*} 
 \lp    \nabla u_h ,   \psi_j \rp_{L^2_w}  
&=C_j h^p  e^{-\frac{1}{h}(f(z_j)- f(x_0))}     (     1  +     O(h ) )
+ O \left(  e^{-\frac{1}{h}(f^*- f(x_0) + \ve')} \right).
 \end{align*} 
This concludes the proof of Lemma~\ref{interaction3}.
\end{proof}

\noindent
\underline{Step 3.} Estimates on the boundary terms $\left (\displaystyle \int_{\Gamma}
\psi_j \cdot n \,  e^{- \frac{2}{h}  f }d\sigma \right )_{j\in\{1,\dots,n\}, \Gamma\in \{\Sigma, \Sigma_1,\dots,\Sigma_n\}}$.

\medskip
\noindent

   \begin{lemma} \label{gamma32}
 Let us assume that the assumptions of Proposition~\ref{ESTIME3} hold. Then, there exists $\ve>0$ such that in the limit $h\to 0$ one has:
\begin{itemize}
\item  If  $k\in \{1,\dots,k_0\}$ and  $j\in \{1,\dots,k_0\}$,
\begin{align*}  
\int_{\Sigma_k }  \psi_j \cdot n  \, e^{- \frac{2}{h} f} d\sigma&=   
\delta_{j,k}B_k h^m      e^{-\frac{1}{h} f(z_k)} (  1  +     O(h )    ) + O\left(e^{-\frac{1}{h} (2f(z_{k})-f(z_j)+\ve)}\right)\\
&\quad+1_{j>k}O\left(e^{-\frac{1}{h} (f(z_k)+\ve)}\right).
\end{align*}  
\item If  $k\in \{1,\dots,k_0\}$ and  $j\in \{k_0+1,\dots,n\}$,
$\displaystyle\int_{\Sigma_k }  \psi_j \cdot n  \, e^{- \frac{2}{h} f} d\sigma= O\left ( e^{- \frac{1}{h}( f(z_k)+\ve)}  \right)$.
\item If  $k\in \{k_0+1,\dots,n\}$ and  $j\in \{1,\dots,k_0\}$, $\displaystyle \int_{\Sigma_k }  \psi_j \cdot n  \, e^{- \frac{2}{h} f} d\sigma=O\left(e^{-\frac{1}{h} (2f(z_{k_0})-f(z_j)+\ve)}\right).$ 
\item If  $k\in \{k_0+1,\dots,n\}$ and $j\in \{k_0+1,\dots,n\}$, $$\displaystyle \int_{\Sigma_k }  \psi_j \cdot n  \ e^{- \frac{2}{h} f}d\sigma=  \delta_{j,k}O\left( h^m      e^{-\frac{1}{h} f(z_k)}\right)   +O\left ( e^{- \frac{1}{h}( f(z_{k_0+1})+\ve)} \right).$$

\end{itemize}

\end{lemma}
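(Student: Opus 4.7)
The plan is to follow the same strategy as in the proof of Lemma~\ref{gamma}, adapted to handle the dichotomy between indices in $\{1,\dots,k_0\}$ and $\{k_0+1,\dots,n\}$. First, using the Gram--Schmidt decomposition \eqref{eq:gram}--\eqref{eq:psij} and writing $\pi_h^{(1)}\tilde\psi_i=\tilde\psi_i+(\pi_h^{(1)}-1)\tilde\psi_i$, I split
$$\int_{\Sigma_k}\psi_j\cdot n\,e^{-\frac{2}{h}f}\,d\sigma \;=\; a_{jk}+b_{jk}+\sum_{i=1}^{j-1}(c_{jki}+d_{jki}),$$
where $a_{jk}$ carries the principal contribution of $\tilde\psi_j$, $b_{jk}$ involves $(\pi_h^{(1)}-1)\tilde\psi_j$, and $c_{jki}$, $d_{jki}$ are the analogous contributions weighted by $\kappa_{ji}$, with the same definitions as in the proof of Lemma~\ref{gamma}. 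The terms $a_{jk}$ and $c_{jki}$ are treated directly with assumption 4(b) in Proposition~\ref{ESTIME3} together with the Gram--Schmidt estimates from Lemma~\ref{estimate13}. The terms $b_{jk}$ and $d_{jki}$ are bounded via the trace theorem and Cauchy--Schwarz:
$$\left|\int_{\Sigma_k} \big((\pi_h^{(1)}-1)\tilde\psi_\ell\big)\cdot n\, e^{-\frac{2}{h}f}\,d\sigma\right| \;\le\; \frac{C}{h}\left\|(\pi_h^{(1)}-1)\tilde\psi_\ell\right\|_{H^1_w}\sqrt{\int_{\Sigma_k}e^{-\frac{2}{h}f}\,d\sigma},$$
where the second factor is of order $h^{(d-1)/4}e^{-\frac{f(z_k)}{h}}$ by Laplace's method (since $z_k\in\Sigma_k$ is the unique minimum of $f$ on $\overline\Sigma_k$), and the first is controlled by assumption 2(a) with an exponent depending on whether $\ell\le k_0$ or $\ell>k_0$.

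The core of the argument is then a case analysis according to the relative positions of $j$, $k$ and $k_0$. When $j\le k_0$ and $k\le k_0$, the dominant term is $a_{jk}$, which produces the leading order $\delta_{j,k}B_k h^m e^{-f(z_k)/h}$ by assumption 4(b); the $b_{jk}$ contribution is bounded by $e^{-\frac{1}{h}(\max[f^*-f(z_j),f(z_j)-f(z_1)]+f(z_k)+\ve)}$, which is absorbed in $e^{-\frac{1}{h}(2f(z_k)-f(z_j)+\ve)}$ using $f^*\ge f(z_k)$; the $c_{jki}$ terms vanish by assumption 4(b) except when $i=k$ with $k<j$, giving the residual $e^{-\frac{1}{h}(f(z_k)+\ve)}$ via the bound on $\kappa_{ji}$ in Lemma~\ref{estimate13}; the $d_{jki}$ terms are similarly absorbed into the two remainders. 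When $j>k_0$ and $k\le k_0$, $a_{jk}$ vanishes and $b_{jk}$ contributes $e^{-\frac{1}{h}(f^*-f(z_1)+f(z_k)+\ve)}\le e^{-\frac{1}{h}(f(z_k)+\ve)}$ since $f^*\ge f(z_1)$. The cases $k>k_0$ rely on the additional observations $f(z_k)\ge f(z_{k_0+1})\ge f^*\ge f(z_{k_0})$, so that $f^*+f(z_k)\ge 2f(z_{k_0})$, which converts the $b$ and $d$ bounds into the stated $e^{-\frac{1}{h}(2f(z_{k_0})-f(z_j)+\ve)}$ (third case) and $e^{-\frac{1}{h}(f(z_{k_0+1})+\ve)}$ (fourth case).

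No new analytic tool is required beyond those already employed in Lemma~\ref{gamma}; the main obstacle is the careful bookkeeping of the various exponential weights to ensure that in each of the four regimes, the claimed dominant term emerges and all remainders collapse into the stated form. The trickiest bookkeeping arises in the third and fourth cases, where one must carefully exploit $f^*\ge f(z_{k_0})$ and $f(z_k)\ge f(z_{k_0+1})$ to avoid losing the strict positive gain $\ve$ when balancing contributions from both the tangential boundary loss (from trace) and from the Gram--Schmidt coefficients.
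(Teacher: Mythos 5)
Your proposal is correct and follows essentially the same route as the paper: the same Gram--Schmidt decomposition into $a_{jk}+b_{jk}+\sum_i(c_{jki}+d_{jki})$ as in Lemma~\ref{gamma}, the same trace/Laplace bound $\frac{C}{h}\|(1-\pi_h^{(1)})\tilde\psi_\ell\|_{H^1_w}O\bigl(e^{-f(z_k)/h}\bigr)$ for the projection errors, and the same case analysis using assumptions 2(a), 4(b), Lemma~\ref{estimate13}, together with the key inequalities $f^*\ge f(z_k)$ for $k\le k_0$, $f^*\ge f(z_1)$, and $f(z_k)\ge f(z_{k_0+1})\ge f^*\ge f(z_{k_0})$ for $k>k_0$. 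The bookkeeping you outline in each of the four regimes matches the paper's computation, so no gap to report.
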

\begin{proof}
For all $(j,k)\in \{1,\dots,n\}^2$, using \eqref{eq:gram}--\eqref{eq:psij} and Lemmata \ref{estimate13} and \ref{le.pre_thm2} together with assumption 4(b) in Proposition~\ref{ESTIME3}, one has in the limit $h\to 0$:
\begin{align}  
\nonumber
\int_{\Sigma_k }  \psi_j \cdot n  \ e^{- \frac{2}{h} f}d\sigma&=  Z^{-1}_j \left[ \int_{\Sigma_k }  \tilde \psi_j \cdot n  \ e^{- \frac{2}{h} f} d\sigma + \int_{\Sigma_k } ( \pi_h^{(1)}-1    ) \tilde \psi_j \cdot n \ e^{- \frac{2}{h} f}  d\sigma \right]  \\
\nonumber
 &\quad +Z^{-1}_j\sum_{i=1}^{j-1}\kappa_{ji} \ \left(  \int_{\Sigma_k }  \tilde \psi_i \cdot n  \ e^{- \frac{2}{h} f} d\sigma  + \int_{\Sigma_k } ( \pi_h^{(1)}-1    ) \tilde \psi_i \cdot n \ e^{- \frac{2}{h} f}  d\sigma \right) \\
 \nonumber
  &= \delta_{j,k}B_k h^m      e^{-\frac{1}{h} f(z_k)} (  1  +     O(h )    )   +\frac 1h\left \|    \left(1-\pi_h^{(1)}\right) \tilde \psi_j\right \|_{H^1_w}O\left ( e^{- \frac{1}{h} f(z_k)} \right)  \\
  \label{eq.dec-sigmak}
 &\quad +\sum_{i=1}^{j-1}\kappa_{ji}  \delta_{i,k}O\left( h^m      e^{-\frac{1}{h} f(z_k)}\right)   + \frac{\kappa_{ji}}{h}  \left \|    \left(1-\pi_h^{(1)}\right) \tilde \psi_i\right \|_{H^1_w} O\left ( e^{- \frac{1}{h} f(z_k)}  \right).
   \end{align} 
   Let us know deal separately with the two cases $k\in \{1,\dots,k_0\}$ and $k\in \{k_0+1,\dots,n\}$. In the following, we use assumption 2(a) in Proposition~\ref{ESTIME3} and Lemma~\ref{estimate13} to  estimate  \eqref{eq.dec-sigmak}.
   \medskip
   
   \noindent
\underline{Case 1}: $k\in \{1,\dots,k_0\}$. 
 \medskip
   
   \noindent
   If $j\in  \{1,\dots,k_0\}$, from~\eqref{eq.dec-sigmak}, one gets in the limit $h\to 0$:
\begin{align*}  
\int_{\Sigma_k }  \psi_j \cdot n  \ e^{- \frac{2}{h} f}d\sigma&=  \delta_{j,k}B_k h^m      e^{-\frac{1}{h} f(z_k)} (  1  +     O(h )    )   +O\left ( e^{- \frac{1}{h}(f^*-f(z_j)+ f(z_k)+\ve)} \right)  \\
 &\quad +\sum_{i=1}^{j-1}  \delta_{i,k}O\left(      e^{-\frac{1}{h} (f(z_k)+\ve)}\right)  +  \sum_{i=1}^{j-1}  O\left ( e^{- \frac{1}{h}(f(z_j)-f(z_i)+f^*-f(z_i)+ f(z_k)+\ve)}  \right).
   \end{align*} 
   Since $f^*\ge f(z_k)$ for all $k\in \{1,\dots,k_0\}$ and since there exists $i<j$ such that $\delta_{i,k}=1$ if and only if $j>k$, there exists $\ve>0$ such that for all $(k,j)\in \{1,\dots,k_0\}^2$ and for all $h$ small enough,
 $$\int_{\Sigma_k }  \psi_j \cdot n\, e^{- \frac{2}{h} f}d\sigma=\delta_{j,k}B_k h^m      e^{-\frac{1}{h} f(z_k)} (  1  +     O(h )    ) + O\left(e^{-\frac{1}{h} (2f(z_{k})-f(z_j)+\ve)}\right)+1_{j>k}O\left(e^{-\frac{1}{h} (f(z_k)+\ve)}\right).$$
 If $j\in \{k_0+1,\dots,n\}$, from  \eqref{eq.dec-sigmak}, one gets 
\begin{align*}  
\int_{\Sigma_k }  \psi_j \cdot n  \ e^{- \frac{2}{h} f}d\sigma&=  O\left ( e^{- \frac{1}{h}(f^*-f(z_1)+ f(z_k)+\ve)} \right)  +\sum_{i=1}^{j-1} O\left(     e^{-\frac{1}{h} ( f(z_k)+\ve)}\right).
   \end{align*}

   \noindent
\underline{Case 2}:  $k\in \{k_0+1,\dots,n\}$. 
  \medskip
   
   \noindent
   If $j\in \{1,\dots,k_0\}$, from~\eqref{eq.dec-sigmak} and  since  $f(z_k)\ge f^*\ge f(z_{k_0})$, one has
\begin{align*}  
\int_{\Sigma_k }  \psi_j \cdot n  \ e^{- \frac{2}{h} f} d\sigma&=  O\left ( e^{- \frac{1}{h} (f^*-f(z_j)+\ve+f(z_k))} \right) +\sum_{i=1}^{j-1} O\left ( e^{- \frac{1}{h} (f^*+f(z_k)+f(z_j)-2f(z_i)+\ve)} \right)\\
&=O\left ( e^{- \frac{1}{h} (2f(z_{k_0})-f(z_j)+\ve)} \right).
   \end{align*} 
 If $j\in \{k_0+1,\dots,n\}$, from  \eqref{eq.dec-sigmak}, one gets     \begin{align*}  
\int_{\Sigma_k }  \psi_j \cdot n  \ e^{- \frac{2}{h} f}d\sigma&=  \delta_{j,k}O\left( h^m      e^{-\frac{1}{h} f(z_k)}\right)   +O\left ( e^{- \frac{1}{h}( f(z_k)+\ve)} \right),
   \end{align*}  
which leads to the desired estimate since $f(z_k)\ge f(z_{k_0+1})$. This concludes the proof of Lemma~\ref{gamma32}.\end{proof}

\begin{lemma} \label{gamma31}
 Let us assume that the assumptions of Proposition~\ref{ESTIME3} hold.  Then, for $j\in \{1,\dots,k_0\}$ one has when $h\to 0$:
$$\int_{\Sigma }  \psi_j \cdot n  \, e^{- \frac{2}{h} f}d\sigma=\delta_{j_0,j}\, C^* h^{q^* }e^{-\frac{1}{h}(2f^*-f(z_{j_0}))}\left( 1+ O(h) \right) +O\left ( e^{- \frac{1}{h} (2f^*-f(z_j)+\ve)} \right)$$
and for $j\in\{k_0+1,\dots, n\}$ one has $\displaystyle \int_{\Sigma }  \psi_j \cdot n  \, e^{- \frac{2}{h} f}d\sigma = 
O\left ( e^{- \frac{1}{h} (2f^*-f(z_1)+\ve)} \right)$.
\end{lemma}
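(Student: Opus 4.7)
The proof follows closely the structure of the proof of Lemma~\ref{gamma32}. The plan is to exploit the Gram--Schmidt decomposition of $\psi_j$ exactly as in~\eqref{eq:gram}--\eqref{eq:psij}, namely
$$\psi_j = Z_j^{-1}\left[\pi_h^{(1)}\tilde\psi_j + \sum_{i=1}^{j-1}\kappa_{ji}\pi_h^{(1)}\tilde\psi_i\right],$$
and then to split each projected form via $\pi_h^{(1)}\tilde\psi_i = \tilde\psi_i + (\pi_h^{(1)}-1)\tilde\psi_i$. This produces four kinds of contributions to $\int_\Sigma \psi_j\cdot n\, e^{-\frac{2}{h}f}d\sigma$, namely $a_j = Z_j^{-1}\int_\Sigma \tilde\psi_j\cdot n\, e^{-\frac{2}{h}f}d\sigma$, its projection error $b_j$, and the analogous sums $c_{ji}, d_{ji}$ for $i<j$ weighted by $\kappa_{ji}$, exactly as in the proof of Lemma~\ref{gamma32}.

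The two key ingredients for the estimates are: (a) the Laplace-type upper bound $\int_\Sigma e^{-\frac{2}{h}f}d\sigma = O(h^{m'})e^{-\frac{2f^*}{h}}$, which follows from $\inf_\Sigma f = f^*$; and (b) the trace inequality~\eqref{eq:subelliptic}, which together with assumption~2(a) of Proposition~\ref{ESTIME3} gives
$$\int_\Sigma \left|(\pi_h^{(1)}-1)\tilde\psi_i \cdot n\right|\, e^{-\frac{2}{h}f}d\sigma = O(h^{-N})e^{-\frac{1}{h}(K_i + f^*)},$$
where $K_i = \max[f^*-f(z_i), f(z_i)-f(z_1)] + \ve_1$ if $i\le k_0$ and $K_i = f^* - f(z_1) + \ve_1$ if $i>k_0$. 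Combined with assumption~4(c) in Proposition~\ref{ESTIME3}, which reads $\int_\Sigma \tilde\psi_i\cdot n\, e^{-\frac{2}{h}f}d\sigma = \delta_{j_0,i}C^*h^{q^*}e^{-\frac{1}{h}(2f^*-f(z_{j_0}))}(1+O(h))$, only the index $i=j_0$ supplies a nonvanishing leading boundary contribution.

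In the case $j \in \{1,\dots,k_0\}$, this isolates the main term $a_j$ when and only when $j=j_0$, producing the $\delta_{j_0,j}$ factor in the statement. All other contributions are shown to be $O(e^{-\frac{1}{h}(2f^*-f(z_j)+\ve)})$: for $b_j$, one uses $K_j \ge f^*-f(z_j)+\ve_1$; for $c_{ji}$, the only surviving term is $i=j_0$ (requiring $j_0<j$), and Lemma~\ref{estimate13} provides $\kappa_{jj_0} = O(e^{-\frac{1}{h}(f(z_j)-f(z_{j_0})+\ve)})$, whose product with $a_{j_0}$ is bounded by $O(e^{-\frac{1}{h}(2f^*-f(z_j)+\ve)})$ thanks to $f(z_j)-f(z_{j_0})\ge 0$; and for $d_{ji}$, one combines the estimates on $\kappa_{ji}$ with the trace bound above, using $f(z_j)\ge f(z_i)$ to recover the claimed exponential rate. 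In the case $j \in \{k_0+1,\dots,n\}$, the index $j$ is never equal to $j_0$ so $a_j = 0$, and each remaining term is bounded by $O(e^{-\frac{1}{h}(2f^*-f(z_1)+\ve)})$, the dominant check being that for $c_{jj_0}$ one has $f^*-f(z_1)+\ve+2f^*-f(z_{j_0})\ge 2f^*-f(z_1)+\ve$ since $f^*\ge f(z_{j_0})$, and similarly for $d_{ji}$ using the global inequality $f^*\ge f(z_i)$ valid for every $i \le k_0$ together with Lemma~\ref{estimate13}.

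The main subtlety, as is usual for this family of estimates, lies in correctly combining the various exponential weights so that every error term fits under the stated bound; none of the steps requires a new analytical idea beyond what was already used in Lemma~\ref{gamma32}, but the accounting is slightly more involved because of the distinction between the regimes $i,j \le k_0$ and $i,j > k_0$, and because the right-hand side depends on $f^*$ (rather than on $f(z_n)$) through both the exponent $2f^*$ and the polynomial factor $h^{q^*}$ coming from assumption~4(c).
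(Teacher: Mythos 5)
Your proposal is correct and follows essentially the same route as the paper's proof of Lemma~\ref{gamma31}: the Gram--Schmidt decomposition of $\psi_j$, assumption 4(c) isolating the single leading boundary term $i=j_0$, Lemma~\ref{estimate13} for the $\kappa_{ji}$, a trace/Cauchy--Schwarz bound for the projection errors, and the bookkeeping via $f(z_j)\ge f(z_i)$, $f^*\ge f(z_i)$ for $i\le k_0$ and $f^*\ge f(z_{j_0})$. The only cosmetic remarks are that the trace bound you invoke is the standard trace estimate for $H^1_w(\Omega)$ used in~\eqref{eq:estim_Sk} rather than the subelliptic estimate~\eqref{eq:subelliptic} for the mixed-boundary Laplacian on $\dot\Omega_i$, and that the crude bound $\int_\Sigma e^{-\frac{2}{h}f}d\sigma\le C e^{-\frac{2f^*}{h}}$ suffices (no polynomial factor $h^{m'}$ is needed, and any such factor is anyway absorbed into the $\ve$'s).
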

\begin{proof}
Let $j\in\{1,\dots,n\}$. Using \eqref{eq:gram}--\eqref{eq:psij} and Lemmata \ref{estimate13} and \ref{le.pre_thm2} together with assumption 4(c) in Proposition~\ref{ESTIME3}, one has
\begin{align}  
\nonumber\int_{\Sigma }  \psi_j \cdot n  \ e^{- \frac{2}{h} f}  d\sigma  &=Z^{-1}_j\left[ \int_{\Sigma }  \tilde \psi_j \cdot n  \ e^{- \frac{2}{h} f}  d\sigma + \int_{\Sigma } ( \pi_h^{(1)}-1    ) \tilde \psi_j \cdot n \ e^{- \frac{2}{h} f}  d\sigma \right]  \\
\nonumber
 &\quad +Z^{-1}_j\sum_{i=1}^{j-1}\kappa_{ji} \ \left(  \int_{\Sigma }  \tilde \psi_i \cdot n  \ e^{- \frac{2}{h} f} d\sigma + \int_{\Sigma } ( \pi_h^{(1)}-1    ) \tilde \psi_i \cdot n \ e^{- \frac{2}{h} f}  d\sigma \right) \\
 \nonumber
 &=     \delta_{j_0,j}\, C^*h^{q^* } e^{-\frac{1}{h}(2f^*-f(z_{j_0}))} ( 1+ O(h)  )  +\frac 1h\left   \|    \left(1-\pi_h^{(1)}\right) \tilde \psi_j\right \|_{H^1_w}O\left(e^{- \frac{1}{h} f^*} \right)  \\
  \label{eq.decompo-psij}
 &\quad +\sum_{i=1}^{j-1}\delta_{j_0,i} \kappa_{ji} O\left(h^{q^*} e^{-\frac{1}{h}(2f^*-f(z_{j_0}))} \right)+ \frac{\kappa_{ji} }{h}  \left  \|    \left(1-\pi_h^{(1)}\right) \tilde \psi_i\right \|_{H^1_w}O\left( e^{- \frac{1}{h} f^*}  \right).
  \end{align} 
  Let us first deal with the case $j\in \{1,\dots,k_0\}$. Using assumption 2(a)  in Proposition~\ref{ESTIME3} and Lemma~\ref{estimate13}, one gets from~\eqref{eq.decompo-psij}
\begin{align*}  
\int_{\Sigma }  \psi_j \cdot n  \ e^{- \frac{2}{h} f}  d\sigma   &=  \delta_{j_0,j}\, C^* h^{q^* }e^{-\frac{1}{h}(2f^*-f(z_{j_0}))}\left( 1+ O(h) \right) +O\left ( e^{- \frac{1}{h} (f^*-f(z_j)+\ve+f^*)} \right)    \\
 &\quad + \sum_{i=1}^{j-1}O\left (   e^{- \frac{1}{h} ( 2f^*+f(z_j)-2f(z_{i})+\ve)} \right)\\
   &=  \delta_{j_0,j}\, C^* h^{q^* }e^{-\frac{1}{h}(2f^*-f(z_{j_0}))}\left( 1+ O(h) \right) +O\left ( e^{- \frac{1}{h} (2f^*-f(z_j)+\ve)} \right).
   \end{align*} 
Let us now deal with the case $j\in\{k_0+1,\dots,n\}$. In that case, one obtains from~\eqref{eq.decompo-psij}, assumption 2(a)  in Proposition~\ref{ESTIME3}, Lemma~\ref{estimate13} together with the fact that $f^*\ge f(z_{i})$ for all $i\in\{1,\dots,k_0\}$ and $f^*\ge f(z_{k_0})\ge f(z_{j_0})$,
  \begin{align*}  
\int_{\Sigma }  \psi_j \cdot n  \ e^{- \frac{2}{h} f}  d\sigma   &=   O\left ( e^{- \frac{1}{h} (f^*-f(z_1)+\ve+f^*)} \right)  + \sum_{i=1}^{j-1} \delta_{j_0,i} \, O\left ( e^{-\frac{1}{h}(f^*-f(z_1)+ 2f^*-f(z_{j_0} ) +\ve)} \right)\\
  &\quad + \sum_{i=1}^{k_0}O\left ( e^{- \frac{1}{h} (2f^*-f(z_1)+f^*-f(z_i)+\ve)} \right) + \sum_{i=k_0+1}^{j-1}O\left ( e^{- \frac{1}{h} (2f^*-2f(z_1)+\ve+f^*)} \right) \\
  &=O\left ( e^{- \frac{1}{h} (2f^*-f(z_1)+\ve)} \right).
     \end{align*} 
 This concludes the proof of Lemma~\ref{gamma31}.\end{proof}
\noindent
\underline{Step 4.} Estimates on the boundary terms $\left (\displaystyle \int_{\Gamma}
(\partial_nu_h) \,  e^{- \frac{2}{h}  f}d\sigma \right )_{ \Gamma\in \{\Sigma, \Sigma_1,\dots,\Sigma_n\}}$.\\

\noindent
We are now in position to conclude the proof of
Proposition~\ref{ESTIME3}.  
\begin{proof}  Let us assume that the assumptions of Proposition~\ref{ESTIME3} hold.   The proof is divided into two cases.

\medskip
\noindent
\underline{Case 1}: $\Gamma=\Sigma_k$ in~\eqref{eq:dnuh_decomp3} for some $k\in \{1,\dots,n\}$.
\medskip

\noindent
 If $k\in \{1,\dots,k_0\}$, from Lemmata~\ref{interaction3} and~\ref{gamma32} and the fact that $f^*\ge f(z_{k_0})\ge f(z_k)$, one obtains that there exists $\ve >0$ such that for all $j\in \{1,\dots,n\}$, in the limit $h\to 0$
\begin{align*}
\lp    \nabla u_h ,   \psi_j \rp_{L^2_w}   \int_{\Sigma_k }  \psi_j \cdot n  \, e^{- \frac{2}{h} f}d\sigma&=\delta_{jk}B_kC_k h^{m+p}      e^{-\frac{1}{h} (2f(z_k)-f(x_0))} (  1  +     O(h )    )\\
&\quad +O \left(  e^{-\frac{1}{h}(2f(z_k)- f(x_0) + \ve)} \right).
\end{align*}
   Therefore, from~\eqref{eq:dnuh_decomp3}, one gets for all $k\in \{1,\dots,k_0\}$,  in the limit $h\to 0$
$$\int_{\Sigma_k } (\partial_nu_h) \ e^{- \frac{2}{h} f }  d\sigma= B_kC_k h^{m+p}      e^{-\frac{1}{h} (2f(z_k)-f(x_0))} (  1  +     O(h )    ).$$
If $k\in \{k_0+1,\dots,n\}$. From Lemmata \ref{interaction3} and  \ref{gamma32}, one has for $j\in \{1,\dots,k_0\}$:
$$\lp    \nabla u_h ,   \psi_j \rp_{L^2_w}   \int_{\Sigma_k }  \psi_j \cdot n  \, e^{- \frac{2}{h} f}d\sigma=O \left( h^p e^{-\frac{1}{h}(2f(z_{k_0})- f(x_0)+ \ve)} \right).$$
and for $j\in \{k_0+1,\dots,n\}$ (since $f(z_j)\ge f(z_{k_0+1})$ and $f(z_{k_0})\le f^*\le(z_{k_0+1})$):
\begin{align*} 
\lp    \nabla u_h ,   \psi_j \rp_{L^2_w}   \int_{\Sigma_k }  \psi_j \cdot n  \, e^{- \frac{2}{h} f}d\sigma&=\delta_{jk} O\left ( h^{p+m}  e^{-\frac{1}{h}(2f(z_{k})- f(x_0))}     \right) + O \left(  e^{-\frac{1}{h}(2f(z_{k_0})- f(x_0) + \ve)} \right).
     \end{align*} 
Therefore, if one assumes that $f(z_{k_0+1})>f(z_{k_0})$, from~\eqref{eq:dnuh_decomp3}, one gets for all $k\in \{k_0+1,\dots,n\}$ and for all $h$ small enough
$$\int_{\Sigma_k } (\partial_nu_h )  \ e^{- \frac{2}{h} f }  d\sigma= O \left(  e^{-\frac{1}{h}(2f(z_{k_0})- f(x_0) + \ve)} \right).$$

\medskip
\noindent
\underline{Case 2}: $\Gamma=\Sigma$ in~\eqref{eq:dnuh_decomp3}. 
\medskip

\noindent
From~\eqref{eq:dnuh_decomp3} and using Lemmata~\ref{gamma31} and~\ref{interaction3}, one has 
\begin{align*}  
\int_{\Sigma } \partial_nu_h  \ e^{- \frac{2}{h} f}  d\sigma   &=  C_{j_0}C^* h^{q^*+p} e^{-\frac{1}{h}(2f^*-f(x_{0}))}(1+O(h)) +  \sum_{j=1, j\neq j_0}^{k_0}O\left ( h^p  e^{-\frac{1}{h}(- f(x_0)+2f^*+\ve)} \right)  \\
  &\quad  + \sum_{j=k_0+1}^{n}  O\left ( h^p  e^{-\frac{1}{h}(f(z_j)- f(x_0)+2f^*-f(z_1)+\ve)}\right)+   O\left (e^{-\frac{1}{h}(f^*- f(x_0) +2f^*-f(z_1)+\ve)} \right)\\
&= C_{j_0}C^* h^{q^* } e^{-\frac{1}{h}(2f^*-f(x_{0}))}(1+O(h)),
 \end{align*}
 which is the desired result. 
 Proposition~\ref{ESTIME3} is proved. \end{proof}

\subsubsection{Construction of the quasi-modes which satisfy the estimates of Proposition~\ref{ESTIME3}}
\label{sec:const-qm}
In this section, we first construct the
quasi-modes $(\tilde \psi_i)_{i\in \{1,\ldots,n\}}$ and the family of quasi-modes $(\tilde u=\tilde u_{\delta})_{\delta>0}$. Then, we prove that they satisfy the estimates stated in Proposition~\ref{ESTIME3}. In all this section,  one assumes that the hypotheses \textbf{[H1]}, \textbf{[H2]} and \textbf{[H3]} hold.  Let  $(\Sigma_i)_{i\in\{1,\dots,n\}}$ and $\Sigma$ be as in Proposition~\ref{ESTIME3}.

\medskip
\noindent
\underline{Construction of the quasi-modes.}
\medskip

\noindent
The $n+1$ quasi-modes $( ( \tilde \psi_i)_{i\in \{1,\ldots,n\} }, \tilde u)$ are constructed as in Section~\ref{sec:contruct_quasimode} except that one adds  an extra condition on the set $\Gamma_{1,{j_0}}$ used to define $\tilde \psi_{j_0}$ (where we recall that $j_0\in \{1,\dots,n\}$ is such that $\overline{\Sigma}\subset  B_{z_{j_0}}$). Let us be more precise on this   point. 
 Let us recall that  for all $i\in \{1,\ldots,n\}\setminus \{j_0\}$, the $1$-form~$\tilde \psi_{i}$ is defined as:
$$
\tilde \psi_{i}=e^{\frac 1hf}\tilde \phi_{{i}} \in \Lambda^1H^1_{w,T}(\Omega),
$$
where   the $n-1$ quasi-modes $( \tilde \phi_i)_{i\in \{1,\ldots,n\}\setminus \{j_0\}}$ are built in Section~\ref{sec:contruct_quasimode} (see Definition~\ref{tildephii}). Let us also recall that  the function $\tilde u$ is the one introduced in~Definition~\ref{tildeu}. 
 The construction of  $\tilde \psi_{j_0}$  
  requires to take into account the set $\Sigma$ in addition to the set $\Sigma_{j_0}$ when defining the cut off function $\chi_{j_0}$ in Definition~\ref{tildephii}\label{page.phijo}. More precisely, and thanks to Proposition
\ref{stronglystableexis},  the set $\Gamma_{1,{j_0}}$ can be taken such
that $$\overline\Sigma_{j_0} \cup \overline\Sigma \subset\Gamma_{1,{j_0}}.$$
 Then,  with  $\Gamma_{1,{j_0}}$ satisfying  the previous condition, the cut-off fonction $\chi_{j_0}$ and the $1$-form $\tilde \phi_{j_0}\in  \Lambda^1H^1_{T}(\Omega)$ are defined exactly as in Definition~\ref{tildephii} for $i=j_0$. Finally, one sets: 
\begin{equation} \label{eq.psij0}
\tilde \psi_{j_0}=e^{\frac 1hf}\tilde \phi_{{j_0}} \in \Lambda^1H^1_{w,T}(\Omega).
\end{equation}
\medskip

\noindent
\underline{The quasi-modes satisfy the estimates stated in Proposition~\ref{ESTIME3}.}
\medskip

\noindent
Using in addition to \textbf{[H1]}-\textbf{[H2]}-\textbf{[H3]} the hypotheses \eqref{eq.hyp.gene.1} and \eqref{eq.hyp.gene.2}, one easily obtains that  $( ( \tilde \psi_i)_{i=1,\ldots,n}, \tilde u)$ satisfy the estimates 1, 2, 3, 4(a) and 4(b) stated in Proposition~\ref{ESTIME3}, following exactly the computations made on $( ( \tilde \phi_i)_{i=1,\ldots,n}, \tilde u)$ in Section~\ref{sec:goodquasimodes}: 2(a) follows from~\eqref{eq:majo_inf_supp}-\eqref{eq:da1}-\eqref{eq.hyp.gene.1}-\eqref{eq.hyp.gene.2}, 2(b) is proven in Lemma~\ref{nablauu}, 3 follows from~\eqref{eq:majo_phii_phij}-\eqref{eq.hyp.gene.1}, 4(b) is proven in Step 3 in Section~\ref{sec:goodquasimodes}  and 4(a) is a consequence of \eqref{dodoo}-\eqref{dodoo1}-\eqref{dodoo2}-\eqref{eq.hyp.gene.1}. In particular, one gets that the constants $(B_i)_{i=1,\dots,n}$, $m$, $(C_i)_{i=1,\dots,n}$ and $p$ in Proposition~\ref{ESTIME3} are given by \eqref{mBi}-\eqref{Cip}.

The following lemma  deals with the assumption 4(c) in  Proposition~\ref{ESTIME3} which requires to use Proposition~\ref{pr.WKB-comparison}. 

\begin{lemma}  \label{lemmawkb} Let us assume that the hypotheses \textbf{[H1]}, \textbf{[H2]} and \textbf{[H3]} hold. Let $j\in \{1,\dots,n\}$. Then, when $h\to 0$, one has
 $$\int_{\Sigma} \tilde \psi_{j}\cdot n\,  e^{-\frac{2}{h}f} d\sigma=\delta_{j_0,j}\frac{B^* \sqrt{2 }\,({\rm det \ Hess } f|_{\partial \Omega}   (z_{j_0}) )^{\frac 14}  }  { \pi^{\frac{d-1}{4}}\sqrt{\partial_nf(z_{j_0})} } \, h^{p^*-\frac{d+1}{4} }  e^{-\frac{1}{h}(2f^*-f(z_{j_0}))}\left( 1+ O(h) \right),$$\label{page.pstarr}
 where $B^*$ and $p^*$ are defined by \eqref{eq:DL}. 
\end{lemma}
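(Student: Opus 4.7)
The plan is to split the analysis according to whether $j\neq j_0$ or $j=j_0$ and, in the latter case, to reduce the computation of $\int_\Sigma \tilde\psi_{j_0}\cdot n\, e^{-2f/h}\, d\sigma$ to an explicit Laplace-type computation on the WKB ansatz, controlling the error via the comparison estimate of Proposition~\ref{pr.WKB-comparison}.

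First, I would treat the case $j\neq j_0$. By the construction of $\tilde \phi_j = \chi_j u^{(1)}_{h,j}/\Theta_j$, the support of $\chi_j$ meets $\partial \Omega$ only in $\Gamma_{1,j}\subset B_{z_j}$. Since $\overline\Sigma\subset B_{z_{j_0}}$ and the basins of attraction are pairwise disjoint, $\Sigma\cap \supp\chi_j=\emptyset$ whenever $j\neq j_0$, so $\tilde\psi_j\equiv 0$ on $\Sigma$ and the integral vanishes identically. This yields the Kronecker $\delta_{j_0,j}$ factor.

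The core of the proof is the case $j=j_0$. By the modified construction of $\chi_{j_0}$ given above, $\chi_{j_0}\equiv 1$ in a neighborhood of $\overline\Sigma$, so
$$\int_\Sigma \tilde\psi_{j_0}\cdot n\, e^{-\frac{2}{h}f}\, d\sigma=\Theta_{j_0}^{-1}\int_\Sigma u^{(1)}_{h,j_0}\cdot n\, e^{-\frac{1}{h}f}\, d\sigma.$$
I would now choose a smooth strongly stable domain $\Gamma_{St}\subset \Gamma_{1,j_0}$ containing $\overline\Sigma$ (possible by Proposition~\ref{stronglystableexis}) together with the associated tubular neighborhood $V_{\Gamma_{St}}\subset V_{\Gamma_{1,j_0}}$ as in Section~\ref{sec:WKB_first_estim}. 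Proposition~\ref{pr.WKB-comparison} then gives $\|e^{\Psi_{j_0}/h}(u^{(1)}_{h,j_0}-c_{z_{j_0}}(h)u^{(1)}_{z_{j_0},wkb})\|_{H^1(V_{\Gamma_{St}})}=O(h^\infty)$. On $\Sigma\subset B_{z_{j_0}}$, Proposition~\ref{TTH} yields $\Psi_{j_0}=\Phi-f(z_{j_0})=f-f(z_{j_0})$, so writing $e^{-f/h}=e^{-f(z_{j_0})/h}e^{-\Psi_{j_0}/h}$ and using the continuity of the trace $H^1(V_{\Gamma_{St}})\to L^2(\Sigma)$ gives
$$\int_\Sigma u^{(1)}_{h,j_0}\cdot n\, e^{-\frac{1}{h}f}\, d\sigma=c_{z_{j_0}}(h)\int_\Sigma u^{(1)}_{z_{j_0},wkb}\cdot n\, e^{-\frac{1}{h}f}\, d\sigma+e^{-\frac{f(z_{j_0})}{h}}O(h^\infty).$$

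It then remains to compute the main term by Laplace's method. From~\eqref{eq.carre} and the eikonal relation $\partial_n\Phi=-\partial_n f$ on $\partial\Omega$, together with $a(\cdot,h)|_{\partial\Omega}=1+O(h^\infty)$ coming from the transport equations~\eqref{eq:transport_bdd}, one gets on $\Sigma$
$$u^{(1)}_{z_{j_0},wkb}\cdot n=2\,\partial_n f\, e^{-\frac{1}{h}(f-f(z_{j_0}))}(1+O(h^\infty)),$$
whence, by definition~\eqref{eq:DL} of $(B^*,p^*)$,
$$\int_\Sigma u^{(1)}_{z_{j_0},wkb}\cdot n\, e^{-\frac{1}{h}f}\, d\sigma=2 B^* h^{p^*}e^{-\frac{1}{h}(2f^*-f(z_{j_0}))}(1+O(h)).$$
Plugging in the asymptotic $c_{z_{j_0}}(h)=C_{z_{j_0},wkb}^{-1}h^{-\frac{d+1}{4}}(1+O(h^\infty))$ from~\eqref{eq:czh} with the explicit value \eqref{cwkbi} of $C_{z_{j_0},wkb}$, and using $\Theta_{j_0}=1+O(e^{-c/h})$ from Lemma~\ref{N1}, yields the announced formula.

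The only delicate point is to make sure that the WKB comparison is sharp enough at the boundary: Proposition~\ref{pr.WKB-comparison} is stated in $H^1(V_{\Gamma_{St}})$, so one needs both a strongly stable $\Gamma_{St}$ large enough to contain $\overline\Sigma$ (which is why the refined Proposition~\ref{pr.WKB-comparison}, and not merely the $O(h^\infty)$ of Proposition~\ref{apriori} in a small neighborhood of $z_{j_0}$, is required) and a trace inequality used together with the exact identification $\Psi_{j_0}=f-f(z_{j_0})$ on $\Sigma$ to recover a pure $O(h^\infty)$ error after extracting the exponential weight $e^{-f(z_{j_0})/h}$.
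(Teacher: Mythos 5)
Your overall route is the same as the paper's: the vanishing for $j\neq j_0$ by support considerations, and for $j=j_0$ the splitting into a WKB main term (computed via \eqref{eq:DL}, \eqref{eq:czh}, \eqref{cwkbi} and Lemma~\ref{N1}) plus a remainder controlled through Proposition~\ref{pr.WKB-comparison} on a stable neighborhood of $\Sigma$ and a trace estimate. However, your control of the remainder is too weak to conclude. You factor $e^{-f/h}=e^{-f(z_{j_0})/h}e^{-\Psi_{j_0}/h}$ on $\Sigma$ and bound the remainder by $e^{-f(z_{j_0})/h}O(h^\infty)$. The main term has size $h^{p^*-\frac{d+1}{4}}e^{-\frac1h(2f^*-f(z_{j_0}))}$, and in the regime the lemma is designed for (e.g. Corollary~\ref{co.gene_sigma}, where $z_{j_0}\notin\overline\Sigma$ and hence $f^*>f(z_{j_0})$) the ratio of your error bound to the main term behaves like $h^{N}e^{\frac{2}{h}(f^*-f(z_{j_0}))}$, which blows up as $h\to0$ for every $N$: an $O(h^\infty)$ prefactor does not compensate an exponentially larger weight. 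The fix, which is what the paper does, is to keep the full weight on $\Sigma$: since $d_a(\cdot,z_{j_0})=f-f(z_{j_0})$ there, write $e^{-\frac fh}=e^{-\frac1h(2f-f(z_{j_0}))}e^{\frac1h d_a(\cdot,z_{j_0})}$, bound $e^{-\frac1h(2f-f(z_{j_0}))}\le e^{-\frac1h(2f^*-f(z_{j_0}))}$, and let the trace of $e^{\frac1h d_a(\cdot,z_{j_0})}\bigl(u^{(1)}_{h,j_0}-c_{z_{j_0}}(h)u^{(1)}_{z_{j_0},wkb}\bigr)$, which is $O(h^\infty)$ by Proposition~\ref{pr.WKB-comparison}, absorb the Agmon weight; the remainder is then $e^{-\frac1h(2f^*-f(z_{j_0}))}O(h^\infty)$, i.e. genuinely negligible relative to the main term.

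A second, minor point: on $\Sigma$ one has $u^{(1)}_{z_{j_0},wkb}\cdot n = e^{-\frac1h(f-f(z_{j_0}))}\bigl(2\,\partial_n f\, a(\cdot,h) + h\,\partial_n a(\cdot,h)\bigr)$, and only $a$ itself, not its normal derivative, equals $1$ on $\partial\Omega$; so the relative correction at that step is $O(h)$, not the $O(h^\infty)$ you claim. This is harmless for the final statement, which carries a $1+O(h)$ error anyway, but the intermediate claim should be weakened accordingly.
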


\begin{proof} By construction, if $j\neq j_0$ then $\tilde \psi_{j}\equiv 0$ on $B_{z_{j_0}}$. Let us deal with the case $j=j_0$. Using \eqref{eq.psij0}, one has
\begin{equation}\label{eq.relaj0}
\int_{\Sigma} \tilde \psi_{j_0}\cdot n\,  e^{-\frac{2}{h}f}d\sigma=\int_{\Sigma}   \tilde \phi_{j_0} \cdot n  \   e^{- \frac{1}{h}f}d\sigma.
\end{equation}
Let $u^{(1)}_{z_{j_0},wkb}$ be the WKB expansion
defined by \eqref{wkbi}. Following the beginning of
Section~\ref{sec:WKB_first_estim}, let us consider
\begin{enumerate}
\item a neighborhood $V_{\Gamma_{St,j_0}}$ of $\Sigma$ in $\overline
  \Omega$, which is stable under the dynamics~\eqref{eq:flow_Phi} and
  such that, for some $\ve > 0$, $V_{\Gamma_{St,j_0}} + B(0,\ve) \subset V_{\Gamma_{1,j_0}} \cap (\dot \Omega_{j_0} \cup \Gamma_{1,j_0})$,
\item and a cut-off function $\chi_{wkb,j_0}\in
  C^{\infty}_c(\dot \Omega_{j_0} \cup \Gamma_{1,j_0})$ with $\chi_{wkb,j_0} \equiv 1$ on a
  neighborhood of $\overline{V_{\Gamma_{St,j_0}}}$ such that $\supp
  \chi_{wkb,j_0} \subset V_{\Gamma_{1,j_0}} \cap (\dot \Omega_{j_0} \cup \Gamma_{1,j_0})$.
\end{enumerate}
Using Proposition~\ref{pr.WKB-comparison}, there exists $c_{z_{j_0}}(h)\in
\mathbb R_+^*$ such that
$$
\left\| e^{{\frac 1h}d_a(\cdot,\,z_{j_0})} \left(u^{(1)}_{h,j_0}-c_{z_{j_0}}(h)u^{(1)}_{z_{j_0},wkb}\right)\right\|_{ H^{1}(V_{\Gamma_{St,j_0}})}\!\!\!=O (h^{\infty}).
$$
Let us now introduce
$$
\tilde \phi_{z_{j_0},wkb}:=c_{z_{j_0}}(h) \chi_{wkb,j_0} \ u_{z_{j_0},wkb}^{(1)}
$$
so that
\begin{equation}\label{numero} \int_{\Sigma}   \tilde \phi_{j_0} \cdot n  \   e^{- \frac{1}{h}f} d\sigma = \int_{\Sigma}    \tilde \phi_{z_{j_0},wkb} \cdot n  \ e^{- \frac{1}{h}f} d\sigma +\int_{\Sigma}  \left  (\tilde \phi_{j_0}-\tilde \phi_{z_{j_0},wkb}\right) \cdot n \,  e^{- \frac{1}{h}f} d\sigma. 
\end{equation}
Let us first deal with the term $\int_{\Sigma}    \tilde
\phi_{z_{j_0},wkb} \cdot n  \,  e^{- \frac{1}{h}f} $ in~\eqref{numero}. Using \eqref{eq:DL}, one has (since $\Phi=f$, $\partial_n\Phi=-\partial_nf$ and $a_0=1$ on $\partial \Omega$, see~\eqref{eq.carre}) when $h\to 0$,
\begin{align*}
\int_{\Sigma}    \tilde \phi_{z_{j_0},wkb} \cdot n \,  e^{- \frac{1}{h}f} d\sigma&= c_{z_{j_0}}(h) \int_{\Sigma} \chi_{wkb,{j_0}} \ u_{z_{j_0},wkb}^{(1)} \cdot n \ e^{- \frac{1}{h}f}\\
 &=2\, c_{z_{j_0}}(h)\int_{\Sigma} \partial_nf \, e^{- \frac{1}{h}(2f-f(z_{j_0}))} \, \left( 1+ O(h) \right)\\
 &=2\,c_{z_{j_0}}(h)\,B^*\, h^{p^* } \, e^{-\frac{1}{h}(2f^*-f(z_{j_0}))} \left( 1+ O(h) \right).
\end{align*}
Then using \eqref{eq:czh}, one obtains in the limit $h\to 0$:
\begin{equation}\label{numero2}
\int_{\Sigma}  \tilde \phi_{z_{j_0},wkb} \cdot n \,  e^{- \frac{1}{h}f} d\sigma=
\frac{B^* \sqrt{2 }\, ({\rm det \ Hess } f|_{\partial \Omega}   (z_{j_0}) )^{\frac 14}  }  { \pi^{\frac{d-1}{4}}\sqrt{\partial_nf(z_{j_0})} }  h^{p^*-\frac{d+1}{4} }  e^{-\frac{1}{h}(2f^*-f(z_{j_0}))}\left( 1+ O(h) \right).
\end{equation}
Let us now estimate the term $\int_{\Sigma} \left(\tilde \phi_{j_0}-\tilde \phi_{z_{j_0},wkb} \right) \cdot n \ e^{- \frac{1}{h}f}$ in~\eqref{numero}. Since $d_a(\cdot,z_{j_0})=f-f(z_{j_0})=\Phi-f(z_{j_0})$ on $\Sigma$, one obtains using  Lemma~\ref{N1}: there exist $C>0$, $h_0>0$ and $\eta>0$ such that for all $ h\in (0,h_0)$,
 \begin{align*}
\left\vert \int_{\Sigma} \left(\tilde \phi_{j_0}-\tilde \phi_{z_{j_0},wkb} \right) \cdot n \ e^{- \frac{1}{h}f} d\sigma\right\vert&=\left\vert \int_{\Sigma}  \left(\frac{u^{(1)}_{h,{j_0}}}{\Theta_{j_0}}-c_{z_{j_0}}(h)u_{z_{j_0},wkb}^{(1)}\right) \cdot n \ e^{- \frac{1}{h}f} d\sigma  \right\vert\\
&\leq \frac{e^{- \frac{1}{h}(2f^*-f(z_{j_0}))}}{  \Theta_{j_0}}   \int_{\Sigma} \left\vert  \left(u^{(1)}_{h,{j_0}}-c_{z_{j_0}}(h) u_{z_{j_0}, wkb}^{(1)}\right)   \, e^{ \frac{d_a(\cdot,z_{j_0})}{h}}d\sigma\right\vert    \\
&\quad +e^{- \frac{1}{h}(2f^*-f(z_{j_0}))}\, \frac{\vert \Theta_{j_0}-1\vert }{\Theta_{j_0}} \vert c_{z_{j_0}}(h)\vert\int_{\Sigma} \left\vert  \,u_{z_{j_0},wkb}^{(1)}  e^{ \frac{\Phi-f(z_{j_0})}{h}} d\sigma\right\vert  \\
 &\leq C e^{- \frac{1}{h}(2f^*-f(z_{j_0}))} \left\| e^{ \frac{d_a(\cdot,z_{j_0})}{h}} \left(u^{(1)}_{h,j_0}-c_{z_{j_0}}(h)u^{(1)}_{z_{j_0},wkb}\right)\right\|_{ H^{1}(V_{\Gamma_{St,j_0}})}  \\
&\quad +C e^{- \frac{1}{h}(2f^*-f(z_{j_0}))} \,\vert c_{z_{j_0}}(h)\vert  e^{-\frac{\eta}{h}}\left\Vert \chi_{wkb,j_0}\, u_{z_{j_0},wkb}^{(1)} e^{ \frac{\Phi-f(z_{j_0})}{h}}\right\Vert_{H^1(\dot \Omega_{j_0})} .
 \end{align*}
 Since it holds  $  u_{z_{j_0},wkb}^{(1)} e^{ \frac{\Phi-f(z_{j_0})}{h}}= d_{f-(\Phi-f(z_{j_0})),h}a(\cdot,h)=hda(\cdot,h)+\nabla(f-\Phi)\wedge a(\cdot,h)$ (see~\eqref{wkbi}), there exists $C>0$ such that for all $h$ small enough,
 $$ \left\Vert \chi_{wkb,j_0}\, u_{z_{j_0},wkb}^{(1)} e^{ \frac{\Phi-f(z_{j_0})}{h}}\right\Vert_{H^1(\dot \Omega_{j_0})} \le C.$$
Then, one obtains using Proposition~\ref{pr.WKB-comparison} and \eqref{eq:czh}:
\begin{equation}\label{numero3}
 \ e^{\frac{1}{h}(2f^*-f(z_{j_0}))} \left\vert \int_{\Sigma} \left(\tilde
    \phi_{j_0}-\tilde \phi_{z_{j_0},wkb} \right) \cdot n \ e^{- \frac{1}{h}f}
\right\vert  = O(h^{\infty}) + C e^{-\frac{\eta}{h}} h^{-\frac{d+1}{4}} = O(h^\infty).
\end{equation}
Injecting the estimates \eqref{numero2}--\eqref{numero3} in  \eqref{numero} and using \eqref{eq.relaj0} imply that in the limit $h\to 0$:
 $$\int_{\Sigma} \tilde \psi_{j_0}\cdot n\,  e^{-\frac{2}{h}f}d\sigma =\frac{B^* \sqrt{2 }\,({\rm det \ Hess } f|_{\partial \Omega}   (z_{j_0}) )^{\frac 14}  }  { \pi^{\frac{d-1}{4}}\sqrt{\partial_nf(z_{j_0})} } \ h^{p^*-\frac{d+1}{4} } \ e^{-\frac{1}{h}(2f^*-f(z_{j_0}))}\left( 1+ O(h) \right).$$
This proves Lemma~\ref{lemmawkb}.\end{proof}
\noindent
In conclusion, the $n$
quasi-modes $(  \tilde \psi_i)_{i=1,\ldots,n}$ and the family of quasi-modes $(\tilde u=\tilde u_{\delta})_{\delta>0}$ satisfy all the conditions of Proposition~\ref{ESTIME3}. This concludes the proof of Theorem~\ref{th.gene_sigma}.

\subsubsection{Proof of Corollary~\ref{co.gene_sigma}}

Let us assume that the hypotheses \textbf{[H1]}-\textbf{[H2]}-\textbf{[H3]} hold and let us assume that
$f|_{\partial \Omega}$ has only two local minima $z_1$ and $z_2$ such that $f(z_1)<f(z_2)$. Let $\Sigma \subset \partial \Omega$ be a smooth open set such that
$\overline{\Sigma} \subset B_{z_1}$ and 
$$f^*:=\inf_{\Sigma} f.$$
In addition, let us assume that \eqref{hypo11_bis} and \eqref{hypo22_bis} hold and let us assume that $f^*=f(z_2)$. Then, the inequalities \eqref{hypo11_bis} and \eqref{hypo22_bis} are exactly \eqref{eq.hyp.gene.1} and~\eqref{eq.hyp.gene.2} (in the case $n=2$ with $j_0=1$ and $k_0=2$). Therefore,~\eqref{eq.gene_sig} holds. It remains to compute the prefactor in~\eqref{eq.gene_sig}. To this end, we need the constants $B^*$ and $p^*$  in~\eqref{eq:DL}. Let us assume that there is only one minimizer $z^*$ of  $f$ on $ \Sigma$. This implies  that  $z^*\in \partial \Sigma$ since $z_1$ is the only critical point of $f|_{\partial \Omega}$ in $B_{z_1}$. Furthermore, we assume  that $z^*$ is a non degenerate minimum  of $f_{|\partial \Sigma }$ with $\partial_{n(\partial \Sigma)} f(z^*)<0$ where $n(\partial \Sigma)$ is the unit outward normal to $\partial \Sigma\subset \partial \Omega$. Then, using Laplace's method, in the limit $h \to 0$:
$$\int_{\Sigma}  \partial_n  f  \, e^{-\frac{2}{h} f} d\sigma=-\frac{ \partial_n f(z^*)(\pi h)^{\frac d2}}{2\pi \partial_{n(\partial \Sigma)} f(z^*) \sqrt{ {\rm det \ Hess } f_{|\partial \Sigma }   (z^*) }  } e^{-\frac{2}{h} f^*} (1+O(h)),$$
with by convention, ${\rm det \ Hess } f_{|\partial \Sigma }   (z^*) =1$ if $d=2$.
This specifies the constants $B^*$ and $p^*$ appearing in~\eqref{eq:DL}. This ends  the proof of Corollary~\ref{co.gene_sigma}.

\section*{Acknowledgements}

The authors would like to thank Guy Barles, Bernard Helffer, Fran\c cois
Laudenbach, Francis Nier and Beno\^it Perthame for fruitful discussions on
preliminary versions of this work. This work is supported by the
European Research Council under the European Union's Seventh Framework
Programme (FP/2007-2013) / ERC Grant Agreement number 614492.

 
\section*{Main notation used in this work}

\begin{multicols}{2}
\begin{itemize}
\item[$\bullet$] $(\tau_\Omega,X_{\tau_\Omega})$, p.~\pageref{page.tau}
\vspace{-0.1cm}

\item[$\bullet$] $\P_x$, p.~\pageref{page.px}
\vspace{-0.1cm}


\item[$\bullet$] $\nu_h$, p.~\pageref{page.nuh} 

\vspace{-0.1cm}

\item[$\bullet$] $L_{f,h}^{(0)}$ and $L_{f,h}^{D,(0)}(\Omega)$, p.~\pageref{page.LOFH}

\vspace{-0.1cm}

\item[$\bullet$] $\lambda_h$, p.~\pageref{page.lambdah} (see also p.~\pageref{page.uhlambdah})

\vspace{-0.1cm}

\item[$\bullet$] $u_h$, p.~\pageref{page.uh} (see also p.~\pageref{page.uhlambdah})

\vspace{-0.1cm}

\item[$\bullet$] $\{k_{0,1},\ldots,k_{0,n}\}$, p.~\pageref{page.koi}

\vspace{-0.1cm}

\item[$\bullet$]  $g$, p.~\pageref{page.g}

\vspace{-0.1cm}

\item[$\bullet$]  $L (\gamma,I)$, p.~\pageref{page.L}

\vspace{-0.1cm}

\item[$\bullet$]  $d_a$, p.~\pageref{page.dagmon}

\vspace{-0.1cm}

\item[$\bullet$]  $x_0$, p.~\pageref{page.z1zn}

\vspace{-0.1cm}

\item[$\bullet$]  $\{z_1,\ldots,z_n\}$, p.~\pageref{page.z1zn}

\vspace{-0.1cm}

\item[$\bullet$]  Hypotheses \textbf{[H1]}, \textbf{[H2]}, and \textbf{[H3]}, p.~\pageref{page.z1zn}

\vspace{-0.1cm}

\item[$\bullet$]  $n_0$, p.~\pageref{page.n0}

\vspace{-0.1cm}

\item[$\bullet$]   $B_{z_i}$ and $B_{z_i}^c$, p.~\pageref{page.Bzi}

\vspace{-0.1cm}

\item[$\bullet$]  $\Sigma_i$, p.~\pageref{page.Sigmai}

\vspace{-0.1cm}

\item[$\bullet$] $f^*$,  p.~\pageref{page.fstar}

\vspace{-0.1cm}

\item[$\bullet$] $B^*$ and $p^*$,  p.~\pageref{page.fstar}
\vspace{-0.1cm}

\item[$\bullet$]  $z^*$,  p.~\pageref{page.zstar}

%
%
%

\vspace{-0.1cm}

\item[$\bullet$] $\Lambda^pC^\infty(\overline \Omega)$, p.~\pageref{page.cinfty}

\vspace{-0.1cm}

\item[$\bullet$] $\Lambda^pC^\infty_T(\overline \Omega)$ and $\Lambda^pC^\infty_N(\overline \Omega)$, p.~\pageref{page.cinftyt}

\vspace{-0.1cm}

\item[$\bullet$] $\Lambda^pL^2_w( \Omega)$ and  $\Lambda^pH^q_w(  \Omega)$,   p.~\pageref{page.wsobolevq}

\vspace{-0.1cm}

\item[$\bullet$]    $\Lambda^pH^q_{w,T}( \Omega)$ and $\Lambda^pH^q_{w,N}(  \Omega)$,   p.~\pageref{page.wsobolevqt}

\vspace{-0.1cm}

\item[$\bullet$] $\Lambda^pL^2(  \Omega)$ and $\Lambda^pH^q(  \Omega)$,   p.~\pageref{page.psl2}

\vspace{-0.1cm}

\item[$\bullet$] $\Lambda^pH^q_{T}(  \Omega)$ and $\Lambda^pH^q_{N}(  \Omega)$,   p.~\pageref{page.psl2}

\vspace{-0.1cm}

\item[$\bullet$] $\Vert .\Vert_{H^q_w}$ and $\langle\, ,\, \rangle_{L^2_w}$,  p.~\pageref{page.psl2w}

\vspace{-0.1cm} 

\item[$\bullet$]$ \Vert .\Vert_{H^q}$ and $\langle\, ,\, \rangle_{L^2}$,  p.~\pageref{page.psl2}

\vspace{-0.1cm}

\item[$\bullet$] $\Delta^{(p)}_{f,h}$,  p.~\pageref{page.wlaplacien}

\vspace{-0.1cm}

\item[$\bullet$] $\Delta^{D,(p)}_{f,h}(\Omega)$ and  $\Delta^{N,(p)}_{f,h}(\Omega)$,  p.~\pageref{page.wlaplaciend}

\vspace{-0.1cm}

\item[$\bullet$] $L^{D,(p)}_{f,h}(\Omega)$,  p.~\pageref{page.generatord}

\vspace{-0.1cm}

\item[$\bullet$] $\pi_E(A)$,  p.~\pageref{page.piE}

\vspace{-0.1cm}

\item[$\bullet$]  For $p\in \{0,1\}$,  $\pi^{(p)}_h$, p.~\pageref{page.pihp}

\vspace{-0.1cm}

\item[$\bullet$] $\tilde u $,  p.~\pageref{page.pre} (construction    \pageref{page.tildeu})

\vspace{-0.1cm}
 
\item[$\bullet$] $p$, $m$,   $C_i$, and $B_i$,  p.~\pageref{page.pre} (explicit values  p.~\pageref{page.bim} and \pageref{page.ci})

\vspace{-0.1cm}

\item[$\bullet$]  $\tilde \psi_i$,  p.~\pageref{page.pre}

\vspace{-0.1cm}

\item[$\bullet$] $\tilde \phi_i$,  p.~\pageref{page.tildephi0}
 (construction  p.~\pageref{page.tildephii})

\vspace{-0.1cm}

\item[$\bullet$] $\kappa_{ji}$,  p.~\pageref{page.kappaji}

\vspace{-0.1cm}

\item[$\bullet$] $Z_j$ and  $\psi_j$ p.~\pageref{page.zjpsij}

\vspace{-0.1cm}

%
%
%
%

\item[$\bullet$]  $A(x,y)$, p.~\pageref{page.axy}

\vspace{-0.1cm}

\item[$\bullet$]  $d_a^{\pa \Omega}$, p.~\pageref{page.dapaomega}

\vspace{-0.1cm}

\item[$\bullet$]  $\{x_1,\ldots,x_m\}$, p.~\pageref{page.x1xm}

\vspace{-0.1cm}

\item[$\bullet$]  $V_{\pa \Omega}$, p.~\pageref{page.vpaomega}

\vspace{-0.1cm}

\item[$\bullet$]  $f_-$, p.~\pageref{page.fmoins} 

%
%
%
%
%
%

\vspace{-0.1cm}

\item[$\bullet$] $\dot \Omega$, p.~\pageref{page.omegapoint} 

\vspace{-0.1cm}

\item[$\bullet$] $\Lambda^pH_{d}(\dot \Omega)$ and $\Lambda^pH_{d^*}(\dot \Omega)$, p.~\pageref{page.omegapoint} 

\vspace{-0.1cm}

\item[$\bullet$]  $\Gamma_T$ and $\Gamma_N$, p.~\pageref{page.gammatn} (see also  p.~\pageref{page.gammatbis})

\vspace{-0.1cm}

\item[$\bullet$]  $\Lambda^pH_{d,\Gamma}(\dot \Omega)$ and $\Lambda^pH_{d^*,\Gamma}(\dot \Omega)$, p.~\pageref{page.Hdgamma} 

\vspace{-0.1cm}

\item[$\bullet$] $d^{(p)}_T(\dot \Omega)$ and $\delta^{(p)}_N(\dot \Omega)$, p.~\pageref{page.dtp} 

\vspace{-0.1cm}

\item[$\bullet$] $\Delta^{M,(p)}_{f,h}(\dot \Omega)$, p.~\pageref{page.deltam} 

\vspace{-0.1cm}

\item[$\bullet$] $\mathcal Q^{M,(p)}_{f,h}(\dot \Omega)$, p.~\pageref{page.qm} 

\vspace{-0.1cm}

\item[$\bullet$] $x_d$, p.~\pageref{page.xd} 

\vspace{-0.1cm}

\item[$\bullet$]  $\Psi_i$, $f_{+,i}$, and $f_{-,i}$, p.~\pageref{page.fplus} 

\vspace{-0.1cm}

\item[$\bullet$]   $\Sigma_a$ ($a>0$), p.~\pageref{page.sigmaa}

\vspace{-0.1cm}

\item[$\bullet$]  $\Gamma_{1,i}$, p.~\pageref{page.gamma1i}

\vspace{-0.1cm}

\item[$\bullet$]  $V_{\Gamma_{1,i}}$, p.~\pageref{page.vgamma1i}

\vspace{-0.1cm}

\item[$\bullet$]  $\dot{ \Omega_i}$, p.~\pageref{page.omegaipoint}

\vspace{-0.1cm}

\item[$\bullet$] $\Omega_0$, $\Gamma_{0}$, p.~\pageref{page.gamma0}

\vspace{-0.1cm}

\item[$\bullet$]  $\Gamma_{2,i}$, p.~\pageref{page.gamma2i}

\vspace{-0.1cm}

\item[$\bullet$]  $\mathcal S_{M,i}$, p.~\pageref{page.SMi}

\vspace{-0.1cm}

\item[$\bullet$] When the index $i\in \{1,\ldots,n\}$ is dropped (p.~\pageref{page.nota0}): $z=z_i$,  $ \Gamma_{1}=\Gamma_{1,i}$, $\Gamma_{2}=\Gamma_{2,i}$, $\dot
\Omega=\dot \Omega_i$, $V_{\Gamma_{1}}=V_{\Gamma_{1,i}}$, $ \Psi=\Psi_i$, $f_{+}=f_{+,i} \ {\rm and} \ f_{-} =f_{-,i}$.

%
%

\vspace{-0.1cm}


%



\vspace{-0.1cm}

\item[$\bullet$]  $u_{wkb}^{(0)}$, p.~\pageref{page.uowkb}

\vspace{-0.1cm}

\item[$\bullet$]  $u_{wkb}^{(1)}$, p.~\pageref{page.u1wkb}

\vspace{-0.1cm}

\item[$\bullet$]  $u_{z,wkb}^{(1)}$, p.~\pageref{page.u1zwkb}

\vspace{-0.1cm}

\item[$\bullet$]  $C_{z,wkb}$, p.~\pageref{page.czwkb}

\vspace{-0.1cm}

\item[$\bullet$] $V_{\Gamma_{st}}$ and $V_{\Gamma_{st}'}$, p.~\pageref{page.vgammast}

\vspace{-0.1cm}

\item[$\bullet$] $\kappa$, p.~\pageref{page.kappa}

\vspace{-0.1cm}

\item[$\bullet$]  $c_{z}(h)$, p.~\pageref{page.czh}

\vspace{-0.1cm}

\item[$\bullet$]  $\Theta_i$, p.~\pageref{page.thetai}

\vspace{-0.1cm}

\item[$\bullet$] $\tilde \phi_{z_i,wkb}$, p.~\pageref{page.tildephiwkb}

%
%
%
%
%
%
%
%
%

%
%
%


\item[$\bullet$]  $K_\alpha$, p.~\pageref{page.kalpha}

\vspace{-0.1cm}

\item[$\bullet$]  $\tilde u_h$, p.~\pageref{page.tildeuh}

\vspace{-0.1cm}

\item[$\bullet$]  $Z_h(O)$, p.~\pageref{page.zho}

\vspace{-0.1cm}

\item[$\bullet$]   $w_h$, p.~\pageref{page.whh}

\vspace{-0.1cm}

\item[$\bullet$] $\Sigma$  and   $j_0$, p.~\pageref{page.jo}

\vspace{-0.1cm}

\item[$\bullet$] $k_0$, p.~\pageref{page.ko}
\vspace{-0.1cm}

\item[$\bullet$] $C^*$  and   $q^*$, p.~\pageref{page.cstarqstar}

\vspace{-0.1cm}

\item[$\bullet$]  $\chi_{j_0}$ and  $\tilde \phi_{j_0}$, p.~\pageref{page.phijo}


%
%
%
%
%
%
%
%
%
%

\end{itemize}
  \end{multicols}

\bibliography{biblio_sharp} 
\bibliographystyle{plain}

\end{document}